\documentclass[12pt]{article}
\usepackage{amsmath, amsfonts, amssymb,amsthm, amscd}

\usepackage{graphicx}
\usepackage{epsf}
\usepackage{epsfig}
\usepackage{rlepsf}
\usepackage{psfrag}
\usepackage{color}
\usepackage{hyperref}


\newtheorem{thm}{Theorem}[section]    
\newtheorem{lem}[thm]{Lemma}          
\newtheorem{prop}[thm]{Proposition}          
\newtheorem{cor}[thm]{Corollary}  
\theoremstyle{definition}
\newtheorem{defn}[thm]{Definition}    
\newtheorem{rem}[thm]{Remark}
\theoremstyle{definition}
\newtheorem{examp}[thm]{Example}    
%
\newenvironment{Myitemize}{%
\begin{itemize}}{\end{itemize}}

\theoremstyle{definition}
\newtheorem*{quest}{Question}

\theoremstyle{definition}
\newtheorem*{induction}{${\bf ( S_{k})}$}
\theoremstyle{definition}

\newcommand{\bp}{\begin{proof}}
\newcommand{\ep}{\end{proof}}
%



\newcommand{\avk}{\text{av}_{a_k}}

\newcommand{\nr}{\pmb{|}\negthinspace\pmb{|}}
\newcommand{\norm}[1]{\left\Vert#1\right\Vert}
\newcommand{\abs}[1]{\left|#1\right|}

\newcommand{\carem}{\ov{\partial}_0}
\newcommand{\care}{\ov{\partial}}
\newcommand{\ssc}{\text{sc}}
\newcommand{\loc}{{\text{loc}}}
\newcommand{\av}{\text{av}_a}
\newcommand{\co}{{\mathcal O}}

\renewcommand{\epsilon}{\varepsilon}

\newcommand{\what}{\widehat}
\newcommand{\wh}{\widehat}

\newcommand{\wt}{\widetilde}

\newcommand{\ov}{\overline}

\newcommand{\Z}{{\mathbb Z}}

\newcommand{\id}{\operatorname{id}}

\newcommand{\R}{{\mathbb R}}

\providecommand{\ker}[1]{$\text{ker}\ {#1}$}

\newcommand{\N}{{\mathbb N}}

\newcommand{\C}{{\mathbb C}}
\newcommand{\mcd}{{\mathcal D}}


\def\abs#1{\mathopen|#1\mathclose|}

\def\norm#1{\mathopen\|#1\mathclose\|}


{\catcode`"=12 \gdef\hex{"}}

\mathchardef\laplace=\hex0001

\mathchardef\nabla=\hex0272

\makeatletter

\def\@@dalembert#1#2{\setbox0\hbox{$#1\mathrm I$}

  \vrule height\ht0 depth\z@ width.04\ht0

  \rlap{\vrule height\ht0 depth-.96\ht0 width.8\ht0}

  \vrule height.1\ht0 depth\z@ width.8\ht0

  \vrule height\ht0 depth\z@ width.1\ht0 }

\def\dalembert{\mathbin{\mathpalette\@@dalembert{}}\,}

\makeatother



\begin{document}
\title{Sc-Smoothness, Retractions and
New Models for Smooth Spaces}
\author{ H. Hofer\footnote{Funding provided by NSF grant  DMS-0603957.}\\
Institute for Advanced Study\\ USA\\ \small{\texttt{hofer@ias.edu}}
\and K. Wysocki \footnote{Funding provided by NSF grants DMS-0606588, DMS-0906280,  and the Oswald Veblen Fund.  }\\Penn State University\\
USA\\ \small{\texttt{wysocki@math.psu.com}}
\and E. Zehnder\footnote{Funding provided by the Fund for Mathematics and the Oswald Veblen Fund.}\\ETH-Zurich\\Switzerland\\ \small{\texttt{zehnder@math.ethz.ch}}}

 \maketitle

\begin{center}
\Large{Dedicated to Louis Nirenberg on the Occasion of his 85th Birthday}
\end{center}
\tableofcontents
\section{Sc-Smoothness and M-Polyfolds}\label{chapter1}

In  paper \cite{HWZ2}, the authors  have described a generalization of  differential geometry based on the notion of splicings. The associated Fredholm theory  in polyfolds, presented in  \cite{HWZ2,HWZ3,HWZ3.5}, is a crucial ingredient  in the  functional analytic foundation of  the Symplectic  Field Theory (SFT). The theory also applies to the Floer theory as well as  to the Gromov-Witten theory and quite generally should have  applications in nonlinear analysis, in particular in  studies of  families of elliptic pde's on varying domains,
which can even change their topology.

A basic ingredient for the generalization of differential geometry is a new notion of differentiability in infinite dimensions, called sc-smoothness. The goal of this paper is
to describe these ideas and, in particular,  to provide some of the ``hard''  analysis results which enter the polyfold constructions in symplectic field theory (SFT).
The advantage of the polyfold Fredholm theory can be summarized  as follows.

\begin{Myitemize}
\item Many spaces, though they do not carry a classical smooth structure,  can be equipped with a  weak version of a smooth structure. The local models for the spaces,  be they finite- or infinite-dimensional, can  even have locally varying dimensions. 
\item Since the notion of the smooth structure is so weak,  there are many charts so that  many spaces carry a manifold structure  in the new smoothness category.
\item Finite-dimensional subsets  in good position in these generalized manifold inherit  an induced differentiable structure in the familiar  sense.
\item There is a notion of a bundle. Smooth sections
of such  bundles,  which, under a suitable coordinate change,  can be brought  into a sufficiently nice form,  are Fredholm sections. A Fredholm section  looks nice (near a point)  only
in a very particular coordinate system and not necessarily in the smoothly compatible other ones. Since we have plenty of  coordinate  systems,   many sections turn out to be Fredholm.
\item The zero sets of Fredholm sections lie in the smooth parts of the big ambient space, so that they look smooth  in all coordinate descriptions (systems). The invariance of  the properties of solution sets under arbitrary coordinate changes is, of course, a crucial input for having a viable theory.
\item  There is an intrinsic perturbation theory,  and moreover, a version of Sard-Smale's theorem  holds true. In applications, for example to a geometric problem, one might try to make the problem generic by perturbing auxiliary geometric data. As the Gromov-Witten and SFT-examples show,  this  is,  in general,  not possible and one needs to find a sufficiently large abstract universe,   which offers  enough freedom to construct  generic perturbations. The abstract polyfold Fredholm theory provides such a framework. 
\item  Important for the applications is  a version of this new Fredholm theory for an even more general class of spaces,  called polyfolds. In this case the generic solution spaces can be thought of locally as a finite union of (classical) manifolds divided out by a finite group action. Moreover,  the points in these spaces carry rational weights. Still the  integration of differential forms can be defined for such spaces and Stokes' theorem is valid. This is used in order to  define invariants. The Gromov-Witten invariants provide an example.
\end{Myitemize}

The current paper develops the analytical foundations for some of the applications  of the theory described above. It also provides  examples illustrating the ideas.

The organization of the paper is as follows. 

The introductory chapter describes the new notions of smoothness for spaces and mappings leading, in particular, to novel local models  of spaces,  which generalize  manifolds and which  are called M-polyfolds. The general Fredholm theory in this analytical setting is outlined and an outlook to some applications is given, the proofs of which are postponed to  chapter  \ref{longcylinders}.

The second chapter is of technical nature and is devoted to detailed proofs of the new smoothness results which are crucial for many applications. 

The third chapter illustrates the concepts by constructing M-polyfold structures on a set of  mappings between conformal cylinders which break apart as the modulus tends to infinity. A strong bundle over this M-polyfold is constructed which admits the Cauchy-Riemann operator as an sc-smooth Fredholm section. Its zero-set consists of the holomorphic isomorphisms between cylinders of various sizes. Since the solution set carries a smooth structure, this  has interesting functional analytic consequences for the behavior of families of holomorphic mappings. 
\mbox{}\\[1ex]

{\bf Acknowledgement:} We  would like to thank  J. Fish for useful comments and suggestions.

\subsection{Sc-Structures on Banach Spaces}\label{Sc_Banach-ssection}
Sc-structures on Banach spaces  generalize  the  smooth structure from  finite dimensions to infinite dimensions. We first  recall  the definition of an sc-structure on a Banach space $E$ from \cite{HWZ2}.  In the following $\N_0$ stands  for $\N\cup \{0\}$.

\begin{defn}
 An sc-structure on a Banach space $E$  is a nested sequence of Banach spaces $(E_m)_{m\in \N_0}$,
 $$E=:E_0\supset E_1\supset \ldots \supset E_{\infty}:=\bigcap_{m \in \N_0}E_m, $$ 
 so that the following two conditions  are satisfied:
\begin{itemize}
\item[(1)] The inclusion maps $E_{m+1}\to  E_m$ are compact operators.
 \item[(2)] The vector space $E_\infty$  is dense in every $E_m$.
  \end{itemize}
\end{defn}

Points in $E_{\infty}$ are called smooth points. What  just has been defined is a compact discrete scale of Banach spaces  which is a standard object in interpolation theory for which we refer to \cite{Tr}.  Our  interpretation as a generalization of a smooth structure on $E$ seems to be new. The only sc-structure on a finite-dimensional vector space $E$ is given by the constant structure $E_m=E$ for all $m$. If $E$ is an  infinite-dimensional Banach space, the constant structure is not  an sc-structure because it fails  property (1).

\begin{defn}
 A linear map $T:E\to  F$ between  the two sc-Banach spaces $E$ and $F$ is called an $\ssc$-operator if  $T(E_m)\subset F_m$ and  if $T:E_m\to  F_m$ is continuous for every $m\in \N_0$.
\end{defn}
We shall need the notion of a partial quadrant $C$ in an $\ssc$-Banach space $E$.
\begin{defn} 
A closed subset of an sc-Banach space $E$ is called a partial quadrant if there are  an sc-Banach space  $W$, a nonnegative integer $k$,  and a linear $\ssc$-isomorphism $T:E\to  \R^k\oplus W$  so that $T(C)=[0,\infty)^k\oplus W$.
\end{defn}
Given a partial quadrant  $C$ in an sc-Banach space $E$, we define   the degeneration index
$$
d_C:C\to  \N_0
$$
as follows. We choose a linear $\ssc$-isomorphism $T:E\to  \R^k\oplus W$ satisfying $T(C)=[0,\infty)^k\oplus W$. Hence for $x\in C$, we have  
$$T(x)=(r_1, \ldots, r_k, w), \quad x\in C, $$
where   $(r_1, \ldots, r_k)\in [0,\infty )^k$ and $w\in W$. Then we  define  the integer $d_C(x)$ by
$$
d_C(x)=\sharp\{i\in \{1,\ldots, k\}\vert \, r_i=0\}.$$
It is not difficult to see that this definition is independent of  the choice of an sc-linear isomorphism $T$.

Let $U$ be  a relatively open subset of a  partial quadrant $C$  in an sc-Banach space $E$. Then the  sc-structure on $E$ induces  the sc-structure on $U$  defined by the sequence $U_m=U\cap E_m$ equipped with the topology of $E_m$ and  called  the induced sc-structure on $U$.  The points of $U_{\infty}=U\cap E_{\infty}$ are called  smooth points of $U$.  We adopt the convention that  $U^k$  denotes  the set $U_k$  equipped  with the sc-structure $(U_{k})_{m}:=U_{k+m}$ for all $m\in \N_0$.  If $U$ and $V$ are open subsets equipped with the induced sc-structures, we write  $U\oplus V$ for the product $U\times V$ equipped with the sc-structure $(U_m\times V_m)_{m\in \N_0}$.
\begin{defn}
If  $U$ is a relatively open subset of a partial quadrant $C$ in an sc-Banach space $E$,  then its 
tangent $TU$ is  defined by
$$
TU=U^1\oplus E.
$$
\end{defn}
\begin{examp}\label{sc-example1}
A good example which illustrates  the concepts,  and is also  relevant for SFT,  is  as follows.  We choose a strictly increasing sequence $(\delta_m)_{m\in \N_0}$  of real numbers starting with $\delta_0=0$.  We consider the Banach spaces  $E=L^2(\R\times S^1)$ and  $E_m=H^{m,\delta_m}(\R\times S^1)$ where   the space  $H^{m,\delta_m}(\R\times S^1)$ consists of those  elements in $E$  having 
 weak  partial derivatives up to order $m$ which if  weighted by $e^{\delta_m |s|}$ belong to $E$.  Using Sobolev's compact embedding theorem for bounded domains and the assumption that the sequence $(\delta_m)$ is strictly increasing, one sees that the sequence $(E_m)_{m\in \N_{0}}$ defines  
an  sc-structure on  $E$.  We take as the partial quadrant $C$ the whole space $E$ and let 
$B_E$ be the  open unit ball  centered at $0$  in $E$. Then the tangent of $B_E$  is given by
$$
TB_E =(B_E)^1\oplus E=\{(u,h)\vert \,  u\in H^{1,\delta_1},\, \abs{u}_{L^2}<1,\ h\in L^2\}.
$$
The  sc-structure  on  $TB_E$ is defined  by
$$
{(TB_E)}_m=\left\{(u,h)\ |\ u\in H^{m+1,\delta_{m+1}},\, \abs{u}_{L^2}<1, \, h\in H^{m,\delta_m} \right\}.
$$
\end{examp}

The notion of a continuous map $f:U\to  V$ between two relatively open subsets of partial quadrants in sc-Banach spaces is as follows.  
\begin{defn}
A map $f:U\to  V$ is said to be $\ssc^0$  if  $f(U_m)\subset V_m$ for all $m\in \N_{0}$ and  if the induced maps  $f:U_m\to  V_m$ are  continuous.
\end{defn}
\begin{examp}\label{shift-example1}
An important example used later on  is the shift-map.  We consider the Hilbert space   $E=L^2(\R\times S^1)$ equipped with  the sc-structure $(E_m)_{m\in \N_0}$ introduced  in Example \ref{sc-example1}. 
Then we define the map
$$
\Phi:\R^2\oplus L^2(\R\times S^1)\to  L^2(\R\times S^1), \quad (R,\vartheta ,u)\mapsto  (R,\vartheta )\ast u
$$
where 
$$((R,\vartheta )\ast u)(s,t)=u(s+R,t+\vartheta) .$$

The shift-map $\Phi$  is $\ssc^0$  as proved in Proposition \ref{sc}. It is clearly not differentiable in the classical sense.   However, in Proposition \ref{sc-sm} we shall prove that the map $\Phi$ is  sc-smooth for the new notion of smoothness which we shall introduce  next.
The shift map will be an important  ingredient in later constructions and its sc-smoothness will be crucial.

\end{examp}

\subsection{Sc-Smooth Maps and M-Polyfolds}
Having defined an appropriate notion of continuity we define what it means that  the map is of class $\ssc^1$. This is the notion corresponding to a map being $C^1$ in our  sc--framework.
\begin{defn}\label{sc-def}
Let $U$ and $V$ be relatively open subsets of partial quadrants $C$ and $D$ in sc-Banach spaces $E$ and $F$, respectively.  An  $\ssc^0$-map $f:U\to  V$ is said to be $\ssc^1$ if  for every $x\in U_1$ there exists a bounded  linear operator $Df(x):E_0\to  F_0$ so that the following holds:
\begin{itemize}
\item[(1)]  If  $h\in E_1$ and  $x+h\in C$,  then
$$
\lim_{\abs{h}_1\to  0} \frac{1}{\abs{h}_1}\cdot \abs{f(x+h)-f(x)-Df(x)h}_0 =0.
$$
\item[(2)] The map $Tf:TU\to  TV$ , called the tangent map of $f$,  and defined by 
$$
(x, h)\mapsto (f(x),Df(x)h),$$ 
is of class $\ssc^0$.
\end{itemize}
 \end{defn}
In general,  the map $U_1\to  L(E_0,F_0)$, $x\to  Df(x)$ will not(!) be continuous if the space of bounded linear operators is equipped with the operator norm. However,  if we equip it with the compact open topology it will be continuous. The $\ssc^1$-maps between finite dimensional Banach spaces are the familiar $C^1$-maps.

Proceeding inductively,  we define  what it means for  the map $f$  to be  $\ssc^k$ or $\ssc^\infty$. Namely, an $\ssc^0$--map $f$ is said to be an $\ssc^2$--map if  it is $\ssc^1$ and if  its tangent map $Tf:TU\to TV$ is  $\ssc^1$. By Definition \ref{sc-def},  the  tangent map of $Tf$,
$$T^2f:=T(Tf):T^2(U)=T(TU)\to T^2(V)=T(TV),$$
is of class $\ssc^0$. 
If the tangent map $T^2f$  is $\ssc^1$, then $f$ is said to be $\ssc^3$,  and so on. The map $f$ is $\ssc^{\infty}$, if it is $\ssc^k$ for all $k$.

Useful in our applications are the next two propositions  which relate the  sc-smoothness with the familiar notion of smoothness. 

\begin{prop}[Upper Bound]\label{up-prop}
 Let $E$ and $F$ be sc-Banach spaces and  let $U$ be a relatively  open subset of a partial quadrant $C$ in $ E$. Assume that $f:U\to  F$  is an $\ssc^0$-map so that for every $0\leq l \leq k$ and every $m\geq 0$ the induced map 
$$
f:U_{m+l}\to  F_m
$$
is  of class $C^{l+1}$. Then  $f$ is $\ssc^{k+1}$.
\end{prop}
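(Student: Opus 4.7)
The plan is to prove the statement by induction on $k$. In the base case $k = 0$, the hypothesis specializes to $f \colon U_m \to F_m$ being classically $C^1$ for every $m \ge 0$, and we must show $f$ is $\ssc^1$. For $x \in U_1 \subset U_0$ I take $Df(x) \colon E_0 \to F_0$ to be the classical Fr\'echet derivative of $f \colon U_0 \to F_0$ at $x$. Condition (1) of Definition \ref{sc-def} follows from this classical differentiability together with the estimate $|h|_0 \le c\,|h|_1$ coming from the continuous embedding $E_1 \hookrightarrow E_0$. Condition (2) requires that the tangent map $Tf \colon (TU)_m = U_{m+1} \oplus E_m \to V_{m+1} \oplus F_m = (TV)_m$ be continuous for every $m$; its first component $x \mapsto f(x)$ is continuous by the $\ssc^0$ assumption on $f$, while the second component $(x, h) \mapsto Df(x)h$ is jointly continuous because the classical $C^1$-ness of $f \colon U_m \to F_m$ makes $Df \colon U_m \to L(E_m, F_m)$ operator-norm continuous, and evaluation is bounded bilinear.

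For the inductive step, I assume the proposition holds at level $k - 1$ (with $k \ge 1$). The base case already gives $f \in \ssc^1$, so it only remains to prove $Tf \in \ssc^k$. By the inductive hypothesis applied to $Tf$, this reduces to verifying that $Tf \colon (TU)_{m+l} \to (TV)_m$ is classically $C^{l+1}$ for every $0 \le l \le k - 1$ and $m \ge 0$. On $(TU)_{m+l} = U_{m+l+1} \oplus E_{m+l}$, the first component $x \mapsto f(x) \in V_{m+1}$ is $C^{l+1}$ by the given hypothesis with indices $(l, m+1)$; the second, $g(x, h) := Df(x)h \in F_m$, is the technical crux.

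To show $g$ is $C^{l+1}$, I factor it as $(x, h) \mapsto (Df(x), h) \mapsto Df(x)h$, where the final step is the bilinear (hence smooth) evaluation $L(E_{m+l}, F_m) \oplus E_{m+l} \to F_m$. Thus it suffices to prove that $Df \colon U_{m+l+1} \to L(E_{m+l}, F_m)$ is classically $C^{l+1}$. The value $Df(x) \in L(E_{m+l}, F_m)$ is well-defined because $f \colon U_{m+l} \to F_m$ is $C^{l+1}$ (hypothesis with $(l, m)$), while the $C^{l+1}$-smoothness of $x \mapsto Df(x)$ into this space is to be established using the stronger fact that $f \colon U_{m+l+1} \to F_m$ is $C^{l+2}$ (hypothesis with $(l+1, m)$, valid since $l + 1 \le k$). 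I would carry this out by iteratively identifying $D^{j+1} f(x)$, $j \le l+1$, as bounded multilinear maps on appropriate products of scale Banach spaces via Taylor's theorem with integral remainder applied at the higher scale level, combined with the operator-norm estimates coming from the bounded inclusions $E_{i+1} \hookrightarrow E_i$.

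The main obstacle is exactly this last technical sub-step. Classical $C^{l+1}$-smoothness of $Df$ into $L(E_{m+l+1}, F_m)$ is immediate from the hypothesis, but the conclusion requires the strictly stronger statement that $Df$ is $C^{l+1}$ into $L(E_{m+l}, F_m)$, which carries a finer operator-norm topology. In essence, one has to show that the loss of one scale level in the second argument of the iterated derivatives is compensated by the gain of one order of classical differentiability, and this must be checked by hand through a careful Taylor expansion that identifies the candidate higher derivatives pointwise and then controls their remainder in the correct operator norm. Granting this sub-lemma, the induction closes and $f$ is $\ssc^{k+1}$.
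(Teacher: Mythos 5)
Your base case is fine, and the high-level plan (induct on $k$, reduce to $Tf$ being $\ssc^k$, feed $Tf$ back into the inductive hypothesis) is a natural one. The gap is exactly the sub-lemma you flag: you reduce the second component of $Tf$ to the claim that $Df \colon U_{m+l+1} \to L(E_{m+l}, F_m)$ is classically $C^{l+1}$ in the \emph{operator norm}. This is a strictly stronger statement than what the hypotheses deliver (they give $D^{j+1}f(x)$ as a multilinear map on $E_{m+l}^{j+1}$, with continuity of $D^{j+1}f$ into $L^{j+1}(E_{m+l}^{j+1};F_m)$ only with respect to the coarser $U_{m+l}$-topology on $x$, and give $D^{l+2}f(x)$ only with all $l+2$ arguments in $E_{m+l+1}$, not with the last argument in the larger space $E_{m+l}$) --- and it is also strictly stronger than what the conclusion requires. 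Being $\ssc^{k+1}$ demands joint continuity of evaluation maps $(x,h_1,\dots,h_j)\mapsto d^jf(x)(h_1,\dots,h_j)$ at carefully matched scale levels, not operator-norm regularity of $x\mapsto d^jf(x)$; the paper's remark immediately after Definition \ref{sc-def} warns precisely that $x\mapsto Df(x)$ into $L(E_0,F_0)$ can already fail to be operator-norm continuous for $\ssc^1$ maps. So the factoring through $L(E_{m+l},F_m)$ commits you to proving more than is true in the generality needed, and your ``Taylor expansion'' sketch does not produce the required extension of $D^{l+2}f(x)$ to $E_{m+l+1}^{l+1}\times E_{m+l}$.

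The paper sidesteps this by never appealing to operator-norm smoothness of $Df$. Its proof of Proposition \ref{ABC-x} strengthens the induction hypothesis to a structural statement $({\bf S_k})$: after composing $T^{k+1}f$ with any coordinate projection $\pi\colon T^{k+1}F\to F^l$, the result is a linear combination of explicit multilinear evaluations $(x_1,x_{k_1},\dots,x_{k_j})\mapsto d^jf(x_1)(x_{k_1},\dots,x_{k_j})$, with each argument $x_{k_i}$ living at a scale level $n_i$ constrained by $j+l-1\le n_i\le k$. Well-definedness and $\ssc^1$-ness of each such piece then reduce to the hypotheses $f\colon U^{j+l-1}\to F^l$ being $C^j$ and $f\colon U^{j+l}\to F^l$ being $C^{j+1}$ --- statements about joint continuity of the evaluation, not about continuity into $L(E_{m+l},F_m)$. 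The bookkeeping of the levels $n_i$ is exactly what replaces your unprovable sub-lemma. If you want to salvage your plan, you should replace ``$Df$ is $C^{l+1}$ into $L(E_{m+l},F_m)$'' by the weaker statement that the \emph{evaluation} $(x,\delta x_1,\dots,\delta x_j,h)\mapsto D^{j+1}f(x)(\delta x_1,\dots,\delta x_j,h)$ is jointly continuous with $\delta x_i\in E_{m+l+1}$, $h\in E_{m+l}$, and verify the Taylor remainder in this jointly-continuous sense; at that point you will essentially have rediscovered the paper's $({\bf S_k})$.
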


\begin{prop}[Lower Bound] \label{lo-prop} Let $E$ and $F$ be sc-Banach spaces and  let $U$ be a relatively  open subset of a partial quadrant $C$ in $ E$.  If the map 
$f:U\to  F$ is $\ssc^k$, then the induced map
$$
f:U_{m+l}\to  F_m
$$
is of class $C^l$  for every $0\leq l \leq k$  and every  $m\geq 0$.
\end{prop}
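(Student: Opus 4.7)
My plan is induction on $l$, simultaneously for all $k\ge l$ and $m\ge 0$. The base case $l=0$ is just the definition of $\ssc^0$. The essential work is the case $l=1$; higher $l$ will follow by applying the inductive hypothesis to $Tf$ (which is of class $\ssc^{k-1}$) and iterating the compactness-based upgrade that drives the $l=1$ step.

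For $l=1$, I would fix $m\ge 0$ and $x\in U_{m+1}$. From the $\ssc^1$ definition, $Df(x)\in L(E_0,F_0)$ exists and $Tf:TU\to TV$ is of class $\ssc^0$. Reading the $\ssc^0$ property of $Tf$ at level $m$ gives joint continuity of $(x,h)\mapsto (f(x),Df(x)h)$ from $U_{m+1}\oplus E_m$ to $V_{m+1}\oplus F_m$, so in particular $Df(x):E_m\to F_m$ is bounded for every $x\in U_{m+1}$. The critical step is to upgrade this to \emph{operator-norm} continuity of $x\mapsto Df(x)$ as a map $U_{m+1}\to L(E_{m+1},F_m)$. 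This is where the sc-structure enters: if $x_n\to x$ in $U_{m+1}$ but the operator-norm difference does not tend to zero, choose $h_n$ with $\abs{h_n}_{m+1}\le 1$ realizing the discrepancy; by the compact embedding $E_{m+1}\hookrightarrow E_m$ a subsequence converges in $E_m$ to some $h_*$, and joint continuity applied to $(x_n,h_n)$ and $(x,h_n)$ forces $Df(x_n)h_n-Df(x)h_n\to 0$ in $F_m$, a contradiction. Armed with norm continuity of $Df$, the fundamental theorem of calculus applied at level $0$ gives $f(x+h)-f(x)=\int_0^1 Df(x+th)h\,dt$ in $F_0$; since the integrand is continuous from $[0,1]$ into $F_m$, the identity lifts to $F_m$, and the estimate
\[
\abs{f(x+h)-f(x)-Df(x)h}_m \le \abs{h}_{m+1}\cdot \sup_{t\in[0,1]}\norm{Df(x+th)-Df(x)}_{L(E_{m+1},F_m)}
\]
shows the remainder is $o(\abs{h}_{m+1})$, completing the $C^1$ check.

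For $l\ge 2$, since $Tf$ is $\ssc^{k-1}$ with $k-1\ge l-1$, the inductive hypothesis applied to $Tf$ at level $l-1$ yields $Tf:U_{m+l}\oplus E_{m+l-1}\to V_{m+1}\oplus F_m$ of class $C^{l-1}$. In particular $G(x,h):=Df(x)h$ is $C^{l-1}$ jointly into $F_m$; since $G$ is linear in $h$, the compactness argument from the $l=1$ step (now using $E_{m+l}\hookrightarrow E_{m+l-1}$), applied to each partial derivative $D_x^j G$ for $j\le l-1$, upgrades $x\mapsto Df(x)$ to a $C^{l-1}$ map $U_{m+l}\to L(E_{m+l},F_m)$. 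Combined with the $l=1$ conclusion applied at level $m+l-1$ (which gives $f$ Fr\'echet differentiable with derivative $Df$), this yields $f:U_{m+l}\to F_m$ of class $C^l$. The main obstacle throughout is precisely this compactness-based upgrade: joint continuity of $(x,h)\mapsto Df(x)h$ alone gives only strong-operator continuity of $x\mapsto Df(x)$, and the passage to norm continuity (and its iterated versions for higher derivatives) is exactly where the compact inclusions built into the sc-structure are indispensable; everything else is calculus and bookkeeping of the correct Banach-space levels.
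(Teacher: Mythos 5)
Your proposal is substantively correct and follows essentially the same approach as the paper (Propositions~\ref{x1} and~\ref{lower}): the compact inclusions $E_{m+1}\hookrightarrow E_m$ upgrade the strong-operator continuity of $x\mapsto Df(x)$ furnished by the $\ssc^0$-property of $Tf$ to operator-norm continuity, the fundamental theorem of calculus then controls the Fr\'echet remainder, and higher orders are reached by applying the inductive hypothesis to the iterated tangent map. The organization differs only cosmetically---you induct on $l$ keeping $m$ general and invoke the characterization ``$C^l$ equals differentiable with $C^{l-1}$ derivative,'' while the paper inducts on $k$ (reducing to $m=0$ via Proposition~\ref{ias}) and constructs the candidate $(k{+}1)$-th derivative $\Gamma$ explicitly, spelling out at every order the FTC verification that your $l\ge 2$ step compresses into a single phrase---but the compactness-plus-FTC mechanism driving each step is identical.
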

The proofs of the two propositions will be carried out  in section \ref{section2.1}.
In view of the following chain rule,  the sc-smoothness  is a viable concept.

\begin{thm}[Chain Rule]\label{chain-thm1}
Assume that $U$,  $V$,  and $W$ are relatively open subsets of partial quadrants in sc-Banach spaces and  let  $f:U\to  V$ and $g:V\to  W$ be  $\ssc^1$. Then the composition $g\circ f:U\to  W$ is $\ssc^1$ and 
$$T(g\circ f) =(Tg)\circ (Tf).$$
\end{thm}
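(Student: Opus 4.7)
The plan is to mimic the classical chain-rule proof while respecting the level shifts built into the sc-framework. First, $g \circ f$ is $\ssc^0$, since both $f$ and $g$ preserve every level and are continuous on it. For $x \in U_1$ I propose the candidate derivative
$$
D(g \circ f)(x) := Dg(f(x))\circ Df(x): E_0 \to G_0,
$$
which is bounded as the composition of two bounded operators. Granted this candidate, the tangent-map identity will be automatic: $T(g\circ f)(x,h) = (g(f(x)), Dg(f(x))Df(x)h) = Tg(Tf(x,h))$, and the composite $Tg\circ Tf$ of two $\ssc^0$-maps is $\ssc^0$.

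The real work is to verify the $\ssc^1$-limit condition. Fix $x \in U_1$ and, for $h \in E_1$ with $x + h \in C$, decompose
\begin{align*}
(g\circ f)(x+h) &- (g\circ f)(x) - Dg(f(x))Df(x)h \\
&= \underbrace{g(f(x+h)) - g(f(x)) - Dg(f(x))\,k(h)}_{A(h)} \\
&\quad + \underbrace{Dg(f(x))\bigl[f(x+h) - f(x) - Df(x)h\bigr]}_{B(h)},
\end{align*}
where $k(h) := f(x+h) - f(x)$. The term $B(h)$ is immediate: $|B(h)|_0 \le \|Dg(f(x))\|_{F_0 \to G_0}\cdot|f(x+h)-f(x)-Df(x)h|_0 = o(|h|_1)$ by the $\ssc^1$ hypothesis on $f$.

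For $A(h)$, note that $f$ maps $U_1$ into $V_1$ (since it is $\ssc^0$), so $k(h) \in F_1$ with $|k(h)|_1 \to 0$ as $|h|_1 \to 0$. Applying the $\ssc^1$ differentiability of $g$ at the level-one point $f(x) \in V_1$ with $F_1$-increment $k(h)$ gives $|A(h)|_0 \le \tau(h)\,|k(h)|_1$ with $\tau(h) \to 0$. The main obstacle is controlling the ratio $|k(h)|_1/|h|_1$: the $\ssc^1$-definition for $f$ only provides an $o(|h|_1)$-estimate for $k(h) - Df(x)h$ in the $F_0$-norm, so a level-one Lipschitz bound is not directly available. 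To close the gap, I plan to use the integral representation
$$
A(h) = \int_0^1 \bigl[Dg(f(x) + t\,k(h)) - Dg(f(x))\bigr]\,k(h)\,dt,
$$
which is valid because $g: V_1 \to G_0$ is classically $C^1$ by Proposition~\ref{lo-prop}, together with the operator-norm continuity of $Dg|_{F_1}: V_1 \to L(F_1, G_0)$ that classical $C^1$-ness supplies. A complementary compactness argument, exploiting the compact embedding $E_1 \hookrightarrow E_0$ and the $F_0$-continuity of $Df(x)$, handles the boundedness of $|k(h)|_1/|h|_1$ by contradiction: if the limit failed one would rescale a bad sequence $h_n$, extract a subsequence of $h_n/|h_n|_1$ converging in $E_0$, and derive a contradiction from the two estimates above. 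Once $|A(h)|_0 = o(|h|_1)$ is secured, combining with the bound on $B(h)$ yields the $\ssc^1$-limit condition, and $T(g\circ f) = Tg\circ Tf$ follows as observed.
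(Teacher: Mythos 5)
Your candidate derivative, the decomposition into $A(h)+B(h)$, the bound on $B(h)$, and the recognition that the crux is the term $A(h)$ are all correct. However, the plan to close the argument by proving that $|k(h)|_1/|h|_1$ is bounded, and then invoking the operator-norm continuity of $y\mapsto Dg(y)\in L(F_1,G_0)$, contains a genuine gap: the ratio $|k(h)|_1/|h|_1$ is \emph{not} bounded in general. Since $f$ is $\ssc^1$, the induced map $f:U_1\to F_1$ is merely $\ssc^0$-continuous, not Lipschitz, so the $F_1$-norm of $k(h)=f(x+h)-f(x)$ can decay to $0$ at an arbitrarily slow rate compared with $|h|_1$. (Example~\ref{shift-example1} already illustrates this: for the shift map, at a point $u_0$ of level-one regularity only, the $H^{1,\delta_1}$-modulus of continuity of $R\mapsto (R,\vartheta)\ast u_0$ has no rate.) Moreover, the compactness argument you sketch cannot rescue the boundedness claim: passing $h_n/|h_n|_1\to e$ in $E_0$ only shows that $k(h_n)/|h_n|_1\to Df(x)e$ in the \emph{weaker} $F_0$-topology, and that says nothing about the $F_1$-norms. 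So the contradiction you hope to extract does not materialize, and the product of the $o(1)$-operator-norm factor with the (possibly unbounded) ratio $|k(h)|_1/|h|_1$ is genuinely indeterminate.

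The correct closing step replaces the $L(F_1,G_0)$ operator-norm continuity with the $\ssc^0$-continuity of the tangent map $Tg$, i.e.\ the joint continuity of $(y,v)\mapsto Dg(y)v$ as a map $V_1\oplus F_0\to G_0$ (part (2) of Definition~\ref{sc-def}, equivalently part (2) of Proposition~\ref{x1}). Argue by contradiction from the start: take a bad sequence $h_n\to 0$ in $E_1$, pass to a subsequence so that $h_n/|h_n|_1\to e$ in $E_0$ (compactness of $E_1\hookrightarrow E_0$); then the $\ssc^1$-property of $f$ gives $k(h_n)/|h_n|_1\to Df(x)e$ in $F_0$, while $\ssc^0$-continuity of $f$ gives $k(h_n)\to 0$ in $F_1$. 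In the integral representation of $A(h_n)$, the arguments $(f(x)+tk(h_n),\,k(h_n)/|h_n|_1)$ converge to $(f(x),Df(x)e)$ in $V_1\oplus F_0$ uniformly in $t\in[0,1]$; the $\ssc^0$-continuity of $Tg$ then forces the integrand to tend to $0$ in $G_0$ uniformly in $t$, hence $A(h_n)/|h_n|_1\to 0$, contradicting the assumed lower bound. This is the role the compactness of the inclusions plays — not to upgrade the level at which $k(h)/|h|_1$ is controlled, but to convert the sequence $h_n/|h_n|_1$ into something that converges one level down, where the continuity of $Dg$ can actually be applied.
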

The proof can be found in   \cite{HWZ2}. We would like to point out that  the proof relies  on the assumption that the inclusion operators between spaces in the  nested sequence of Banach spaces are compact.

The next  definition introduces the notions of an sc-smooth retraction and an sc-smooth retract. This will be the starting point for a differential geometry based on new local models.
\begin{defn}
Let $U$ be a relatively open subset of a partial quadrant $C$ in an  $\ssc$-Banach space $E$. An  $\ssc$-smooth map $r:U\to  U$ is called an  $\ssc^\infty$-retraction provided it satisfies
$$r\circ r=r.$$
A subset $O$ of a partial quadrant $C$ is called an sc-smooth or $\ssc^\infty$-retract (relative to  $C$) if  there exists a relatively open subset $U\subset C$ and an $\ssc$-smooth retraction $r:U\to  U$ so that $$O=r(U).$$
\end{defn}

If $r:U\to  U$ is an $\ssc^\infty$-retraction,  then its  tangent map  $Tr:TU\to  TU$ is also an  $\ssc^\infty$-retraction. 
This follows from the chain rule.  Next  comes the crucial definition of the new local models of smooth spaces.
\begin{defn}\label{m-pl}
A local M-polyfold model is a triple $(O,C,E)$ in which  $E$ is an sc-Banach space, $C$ is  a partial quadrant of $E$, and $O$ is a  subset of $C$  having the following properties:
\begin{itemize}
\item[(1)] There is an sc-smooth retraction $r:U\to  U$ defined on a relative open subset $U$ of $C$ so that 
$$O=r(U).$$
\item[(2)] For every  smooth point $x\in O_\infty$,  the kernel of the map  $(\id-Dr(x))$ possesses  an sc-complement which is  contained in $C$.
\item[(3)] For every $x\in O$,  there exists a sequence of smooth points $(x_k)\subset O_\infty$ converging to $x$ in $O$ and satisfying $d_C(x_k)=d_C(x)$.
\end{itemize}
\end{defn}
The choice of $r$ in the above definition is irrelevant as long as it is an sc-smooth retraction onto $O$ defined on a relatively open subset $U$ of $C$.

A special  M-polyfold model has the form $(O,E,E)$.
Such triples  can be viewed as the local models for sc-smooth space $S$  without boundary whereas the more general triples are models for spaces with boundaries with corners. In the case without boundary the conditions (2) and (3) of Definition \ref{m-pl} are automatically satisfied.

In our applications the local sc-models $(O,C,E)$ quite often arise in the following way. We assume that we  are given a partial quadrant $D$ in an sc-Banach space $W$ and a relatively open subset $V$ of $D$. Moreover, we assume that for every $v\in V$ we have a bounded  linear 
projection 
$$\pi_v:F\to F$$
into  another sc-Banach space $F$.  In general,  the projection $\pi_v$ is not an sc-operator. 
We require that the  map  
$$
V\oplus F\to  F, \quad (v,f)\mapsto  \pi_v(f)
$$
is  sc-smooth. Then we look at  the sc-Banach space $E=W\oplus F$, the partial quadrant $C=D\oplus F$,  and the relatively open subset $U=V\oplus F$ of $E$. Finally,  we define the map 
$r:U\to U$  by 
$$r(v,f)=(v,\pi_v(f)).$$ 
Then  the map  $r$ is an sc--smooth retraction and the set 
$$K=\{(v, f)\vert \, \pi_v (f)=f\}$$
is an  sc--smooth retract.  We call this particular retraction,  due to its partially linear character, a splicing. For   more details on splicings we refer to \cite{HWZ2}.

\begin{lem}
Let  $(O, C, E)$ be  an sc-smooth  local model and assume  that $r:U\to U$ and $r':U'\to U'$ are two sc-smooth retractions defined on relatively open subsets $U$ and $U'$ of $C$  and satisfying $r(U)=r'(U')=O$. Then 
$$Tr (TU)=Tr'(TU').$$
\end{lem}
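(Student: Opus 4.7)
The plan is to exploit the fact that both $r$ and $r'$ act as the identity on $O$, which will let me compose them and then differentiate via the chain rule. First I observe that $r|_O=\Id_O$: any $y\in O=r(U)$ has the form $y=r(x)$, so $r(y)=r(r(x))=r(x)=y$, and the same holds for $r'$. Since $O\subseteq U'$, for every $x\in U$ the composition $r'(r(x))$ is defined and equals $r(x)$; that is, $r'\circ r=r$ on $U$. Symmetrically, $r\circ r'=r'$ on $U'$.

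Next I apply the chain rule (Theorem~\ref{chain-thm1}) to deduce $Tr'\circ Tr=Tr$ on $TU$ and $Tr\circ Tr'=Tr'$ on $TU'$. This requires the compositions to be well-defined, i.e.\ $Tr(TU)\subseteq TU'$ and $Tr'(TU')\subseteq TU$. Indeed, an element of $Tr(TU)$ has the form $(r(x),Dr(x)h)$ with $x\in U_1$ and $h\in E$; its first coordinate lies in $O\cap E_1\subseteq U'\cap E_1=U'_1$, so the pair lies in $(U')^1\oplus E=TU'$. The symmetric inclusion follows in the same way.

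Finally, from $Tr=Tr'\circ Tr$ I read off
\[
Tr(TU)=Tr'\bigl(Tr(TU)\bigr)\subseteq Tr'(TU'),
\]
and the reverse inclusion comes from $Tr'=Tr\circ Tr'$ by the same argument. The step I expect to be most delicate is the application of the chain rule, which presupposes that the compositions $r'\circ r$ and $r\circ r'$ make sense on their full domains; this is where the geometric information $r(U)=r'(U')=O\subseteq U\cap U'$ is used in an essential way, and it is precisely what makes the argument go through.
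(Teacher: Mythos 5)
Your proof is correct and follows essentially the same route as the paper: you establish $r'\circ r = r$ on $U$ (the paper derives this from $r(y)=r'(y')$ for some $y'$, you from $r|_O=\Id_O$, which is the same observation), apply the chain rule to get $Tr'\circ Tr = Tr$, verify the inclusion $Tr(TU)\subseteq TU'$ needed for the composition to make sense, and conclude by symmetry. The paper phrases the final step as showing $(x,h)=Tr'(x,h)$ directly, whereas you read it off from $Tr(TU)=Tr'(Tr(TU))\subseteq Tr'(TU')$; these are the same argument.
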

\begin{proof}
If $y\in U$, then there exists $y'\in U'$ so that $r(y)=r'(y')$.  Consequently, $r'\circ r(y)=r'\circ r'(y')=r'(y')=r(y)$, and hence $r'\circ r=r$.  Similarly, one sees that $r\circ r'=r'$.
If  $(x, h)\in Tr (TU)$,  then $(x, h)=Tr(y, k)$ for a pair  $(y, k)\in TU$.  
Moreover,  $x\in O_1\subset U'_1$ so that $(x, h)\in TU'$.  From $r'\circ r=r$ it follows using the chain rule that 
$$Tr'(x, h)=Tr' \circ Tr(y,k)=T(r'\circ r)(y, k)=Tr(y, k)=(x, h)$$
implying $Tr (TU)\subset Tr'(TU')$.  Similarly one shows that $Tr' (TU')\subset Tr(TU)$ and the proof of the proposition is complete.
\end{proof}

The lemma allows us  to  define the tangent of a local M-polyfold model $(O,C,E)$ as follows.
\begin{defn}
The tangent of a local M-polyfold model $(O,C,E)$,  denoted by $T(O,C,E)$,  is defined as a triple 
$$
T(O,C,E)=(TO,TC,TE),
$$
in which $TC=C^1\oplus E$ is the tangent of the partial quadrant $C$ and  $TO:=Tr(TU)$, where  $r:U\to U$ is any sc-smooth retraction onto $O$.
\end{defn}
As we already pointed out,  the tangent map $Tr:TU\to TU$  of the retraction is an sc-smooth retraction. It is  defined on the relatively open subset $TU$ of $TC$ and  $TO=Tr (TU)$ is an $\ssc^{\infty}$-retract. Thus, the tangent $T(O, C, E)$ of a local  M-polyfold model is also a a local  M-polyfold model.

It is clear what it means that the map $f:(O,C,E)\to  (O',C',E')$ between two local M-polyfold models 
is $\ssc^0$.
In order to define $\ssc^k$--maps  between local models we need the following  lemma.
\begin{lem}\label{easy-lem}
Let   $f:(O,C,E)\to  (O',C',E')$  be a map between two local M-polyfold models 
and let $r:U\to  U$ and $s:V\to  V$ be  $\ssc$-smooth retractions onto $O$. Then the map 
$f\circ r:U\to  E'$ is $\ssc^1$ if and only if  the same holds true for the map $f\circ s:V\to  E'$. Moreover,  
the map $$T(f\circ r)\vert Tr(TU):TO\to TO'$$
does not depend on the choice of an  sc-smooth retraction $r$ as long as $r$ is an sc-smooth retraction onto $O$.
\end{lem}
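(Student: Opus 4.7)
The plan is to exploit the identities $s\circ r=r$ and $r\circ s=s$, which are established (on their natural domains) in the proof of the preceding lemma using only the hypothesis $r(U)=s(V)=O$. Once these are in hand, both parts of the assertion reduce to a single application of the chain rule (Theorem \ref{chain-thm1}).

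For the equivalence, suppose first that $f\circ r:U\to E'$ is $\ssc^1$. Since $s(V)=O=r(U)\subset U$, the composition $(f\circ r)\circ s:V\to E'$ is well-defined, and because $s$ is sc-smooth the chain rule shows it is $\ssc^1$; the identity $r\circ s=s$ then rewrites this composition as $f\circ s$. The reverse implication is symmetric. For the tangent invariance, the identity $s\circ r=r$ gives $f\circ r=(f\circ s)\circ r$ on $U$, and the chain rule yields $T(f\circ r)=T(f\circ s)\circ Tr$. Since $Tr$ is itself an sc-smooth retraction (by differentiating $r\circ r=r$), it fixes every point of its image $TO=Tr(TU)$, so the restriction to $TO$ equals $T(f\circ s)\vert Tr(TU)$, which by the symmetric argument is the same object computed via $s$.

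The only nuance requiring care is domain compatibility --- that $s(V)\subset U$ so the chain rule applies to $(f\circ r)\circ s$, and that the first coordinate of any element of $TO$ lies in $V_1$ so that $T(f\circ s)$ is defined there. Both follow at once from $O\subset U\cap V$ together with the level-preserving nature of sc-smooth retractions, so this is not a substantive obstacle. The essential content is entirely carried by the chain rule combined with the retraction identities extracted from the previous lemma, and no further analysis is needed.
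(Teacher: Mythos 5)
Your argument for the equivalence and for the independence of the restricted tangent map is correct, and is essentially the paper's proof run in mirror image: you use $s\circ r=r$ together with the fact that $Tr$ fixes $TO=Tr(TU)$, whereas the paper uses $r\circ s=s$ together with the fact that $Ts$ fixes $TO=Ts(TV)$. Both routes rest on the retraction identities from the preceding lemma plus the chain rule, and neither has an advantage over the other.

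However, you have not proved the full claim, and your closing remark that ``no further analysis is needed'' is premature. The lemma asserts not only that $T(f\circ r)\vert Tr(TU)$ is independent of $r$, but also that this restriction is a map $TO\to TO'$; that is, its image is contained in $TO'$. Nothing in your argument addresses the codomain. The paper supplies a separate step for this: choose any $\ssc$-smooth retraction $q:W\to W$ defined on a relatively open subset $W$ of $C'$ with $q(W)=O'$. Since $f$ maps $O$ into $O'$, one has $q\circ f=f$ on $O$, hence $q\circ f\circ r=f\circ r$ on $U$; the chain rule then gives $Tq\circ T(f\circ r)=T(f\circ r)$, so the image of $T(f\circ r)$ lies in the fixed-point set of $Tq$, which is exactly $TO'$. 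This is not a cosmetic point: the lemma is used immediately afterward to define $Tf:TO\to TO'$ for $\ssc^1$-maps between local models, and without the codomain verification that definition would be unjustified.
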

\begin{proof}
Assume that $f\circ r:U\to  E'$ is $\ssc^1$. Since  $s:V\to  U\cap V$ is $\ssc^\infty$, the chain rule implies that the composition $f\circ r\circ s:V\to  F$ is $\ssc^1$. 
Using  the identity  $f\circ r\circ s=f\circ s$, we conclude that $f\circ s$ is $\ssc^1$. Interchanging the role of $r$ and $s$,  the first part of the lemma is proved.
If $(x, h)\in TO$, then $(x, h)=Ts (x, h)$ and using the identity $f\circ r\circ s=f\circ s$ and the chain rule, we conclude 
\begin{equation*}
T(f\circ r)(x,h)=T(f\circ r)(Ts)(x,h)=T(f\circ r\circ s)(x,h)=T(f\circ s)(x,h)
\end{equation*}
Now take any  sc--smooth retraction $q:W\to  W$ defined on a  relatively open subset $W$ of the the partial quadrant $C'$ in $E'$  satisfying $q(W)=O'$. Then $q\circ f= f$ so that $q\circ f\circ r=f\circ r$. Application of  the chain rule  yields the identity  
$$
T(f\circ r)(x,h)=T(q\circ f\circ r)(x,h)=Tq\circ T(f\circ r)(x,h)
$$
for all $(x, h)\in Tr(TU).$
Consequently, $T(f\circ r)\vert Tr(TU):TO\to TO'$ and this map is  independent of the choice of an sc-smooth retraction onto $O$.
\end{proof}

In view of the lemma, we define the map  $f:(O,C,E)\to (O',C',E')$ between local models to be  of class $\ssc^1$ if  the composition $f\circ r:U\to E'$  is of class $\ssc^1$. 
If this is the case, we define the tangent map $Tf$ as
$$
Tf =T(f\circ r)\vert Tr(TU),
$$
where  $r:U\to U$ is any sc-smooth retraction onto $O$.  Similarly,  $f:(O,C,E)\to (O',C',E')$ is of class $\ssc^k$ provided that  the composition $f\circ r:U\to E'$  is of class $\ssc^k$ where $r:U\to U$ is any sc-smooth retraction defined on relatively open subset $U$ of $C$  satisfying $O=r(U)$.

In the following we simply write  $O$ instead of $(O,C,E)$ for  the local M-polyfold model, however, we always keep in mind that there are  more data in the background.

With the above definition of $\ssc^1$--maps between local M-polyfold models, the next theorem is 
an immediate consequence of the chain rule stated in Theorem \ref{chain-thm1}.
\begin{thm}[General Chain Rule]
Assume that $f:O\to  O'$ and $g:O'\to  O''$ are $\ssc^1$-maps between local M-polyfold models. Then the composition $g\circ f:O\to  O''$ is an $\ssc^1$-map
and 
$$T(g\circ f)=Tg\circ Tf.$$
\end{thm}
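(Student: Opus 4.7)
The plan is to reduce the general chain rule to the chain rule of Theorem \ref{chain-thm1} by unwinding the definitions and exploiting the fact that an sc-smooth retraction acts as the identity on its image.

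First I would fix sc-smooth retractions $r:U\to U$ with $r(U)=O$ and $s:V\to V$ with $s(V)=O'$, defined on relatively open subsets of the respective partial quadrants. By hypothesis the compositions $f\circ r:U\to E'$ and $g\circ s:V\to E''$ are both $\ssc^1$. Since $(f\circ r)(U)=f(O)\subset O'\subset V$, the map $f\circ r$ can be regarded as an $\ssc^1$-map from $U$ into $V$, and Theorem \ref{chain-thm1} yields that $(g\circ s)\circ(f\circ r):U\to E''$ is $\ssc^1$. The decisive observation is that $s$ is the identity on its image $O'$, so $s\circ f\circ r=f\circ r$ on all of $U$, and therefore
\[
g\circ f\circ r=(g\circ s)\circ(f\circ r).
\]
This shows $g\circ f\circ r$ is $\ssc^1$, and by the definition of $\ssc^1$-maps between local M-polyfold models together with the independence of the notion from the choice of retraction (Lemma \ref{easy-lem}), this proves $g\circ f:O\to O''$ is $\ssc^1$.

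For the tangent identity, applying the chain rule of Theorem \ref{chain-thm1} once more gives
\[
T\bigl((g\circ s)\circ(f\circ r)\bigr)=T(g\circ s)\circ T(f\circ r).
\]
By definition, $T(g\circ f)$ is the restriction of $T(g\circ f\circ r)=T((g\circ s)\circ(f\circ r))$ to $TO=Tr(TU)$, and for $(x,h)\in TO$ one has $Tf(x,h)=T(f\circ r)(x,h)$ by Lemma \ref{easy-lem}. Differentiating the identity $s\circ f\circ r=f\circ r$ via the chain rule yields $Ts\circ T(f\circ r)=T(f\circ r)$, so that $T(f\circ r)(x,h)\in Ts(TV)=TO'$; consequently $T(g\circ s)$ evaluated at this point coincides with $Tg$. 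Putting everything together produces $T(g\circ f)(x,h)=Tg(Tf(x,h))$, as required.

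I do not anticipate a serious obstacle: the argument is essentially a bookkeeping exercise organised around the identity $s\circ f\circ r=f\circ r$ and the retraction-independence statements of Lemma \ref{easy-lem}. The only small point that must be verified explicitly is that $f\circ r$ actually takes values in $V$ and not merely in $E'$, which is immediate from $f(O)\subset O'\subset V$ and allows Theorem \ref{chain-thm1} to be applied directly to the composition.
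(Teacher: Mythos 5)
Your proof is correct and follows the route the paper intends: the paper presents the general chain rule as an ``immediate consequence'' of Theorem \ref{chain-thm1} without writing out the argument, and you have filled in exactly the bookkeeping that makes it immediate, namely the identity $s\circ f\circ r = f\circ r$ (which uses that the retraction $s$ is the identity on $O'$) together with the retraction-independence statements of Lemma \ref{easy-lem}. The tangent computation is also verified correctly, in particular the step where you show $T(f\circ r)(x,h)\in TO'$ by differentiating $s\circ f\circ r = f\circ r$, which is what lets you replace $T(g\circ s)$ by $Tg$ at that point.
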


The degeneracy index $d_C:C\to \N_0$ introduced in Section \ref{Sc_Banach-ssection} generalizes to local models as follows. 
\begin{defn}
The degeneracy index
$d:O\to  \N_0$  of the local M-polyfold model  $(O,C,E)$ is defined by
$$
d(x):=d_C(x), \quad x\in O.
$$
\end{defn}

The next result shows that sc-diffeomorphisms recognize  the difference between
a straight boundary and a corner.  Of course, this is true also for the usual notion of smoothness  but not for homeomorphisms.

\begin{thm}[Boundary Recognition]\label{breco-thm}
Consider  local M-polyfold models $(O,C,E)$ and $(O',C',E')$ and let $f:O\to  O'$ be an  sc-diffeomorphism. Then
$$
d_C(x)=d_{C'}(f(x))
$$
at each point $x\in O$.
\end{thm}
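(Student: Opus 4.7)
The plan is to reduce to smooth points using condition~$(3)$ of Definition~\ref{m-pl} together with the upper semicontinuity of $d_C$, and then at a smooth point identify $d_C(x)$ with a linear invariant of the tangent wedge which is preserved by the sc-linearization $Tf(x)$.

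\textbf{Reduction to smooth points.} Granting the conclusion at smooth points, let $x\in O$, set $y:=f(x)$, $k:=d_C(x)$, and $k':=d_{C'}(y)$. Using condition~$(3)$ of Definition~\ref{m-pl}, pick $x_n\in O_\infty$ with $x_n\to x$ in $O$ and $d_C(x_n)=k$. Then $y_n:=f(x_n)\in O'_\infty$, $y_n\to y$, and, by the smooth case, $d_{C'}(y_n)=k$. Because a coordinate which vanishes along a convergent sequence vanishes in the limit, $d_{C'}$ is upper semicontinuous, so $d_{C'}(y)\geq \limsup_n d_{C'}(y_n)=k$. Running the same argument with $f^{-1}$ applied to a sequence of smooth points approximating $y$ and realizing $d_{C'}$-index $k'$ gives $d_C(x)\geq k'$; hence $k=k'$.

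\textbf{Tangent wedge preservation at a smooth point.} Fix $x\in O_\infty$ and $y:=f(x)\in O'_\infty$, and choose sc-smooth retractions $r\colon U\to U$ onto $O$ and $r'\colon U'\to U'$ onto $O'$. Set $T_xO:=\operatorname{image}(Dr(x))$, $T_yO':=\operatorname{image}(Dr'(y))$, and $Tf(x):=T(f\circ r)(x)|_{T_xO}$. At smooth points the linearization is an sc-operator, and applying Theorem~\ref{chain-thm1} to $f\circ f^{-1}=\id_{O'}$ and $f^{-1}\circ f=\id_O$ shows that $Tf(x)\colon T_xO\to T_yO'$ is an sc-linear isomorphism with inverse $Tf^{-1}(y)$. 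I claim $Tf(x)$ sends the wedge $T_xO\cap T_xC$ onto $T_yO'\cap T_yC'$. Take $v\in E_\infty\cap T_xO\cap T_xC$; for small $t\geq 0$ the point $x+tv$ lies in $C$ (since $v\in T_xC$) and hence in $U$. Because $Dr(x)v=v$, the sc-Taylor expansion of $r$ at $x$ gives $r(x+tv)=x+tv+o(t)$ in $E_0$, so $r(x+tv)\in O$ with $(r(x+tv)-x)/t\to v$; the sc-smoothness of $f\circ r$ then yields $(f(r(x+tv))-y)/t\to Tf(x)v$, which places $Tf(x)v$ in $T_yO'\cap T_yC'$. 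Density of $E_\infty$ vectors and closedness of the wedges extend this to all of $T_xO\cap T_xC$, and the reverse inclusion follows by symmetry from $f^{-1}$.

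\textbf{Identification of $d_C(x)$ as a linear invariant.} Pick sc-coordinates $E\cong\R^{k^*}\oplus W$ in which $C=[0,\infty)^{k^*}\oplus W$, and assume after reordering that the first $k$ coordinates of $x$ vanish and the remaining are strictly positive; then $T_xC=[0,\infty)^k\oplus\R^{k^*-k}\oplus W$ inside $E$. By condition~$(2)$ of Definition~\ref{m-pl}, $T_xO=\ker(\id-Dr(x))$ admits an sc-complement $V\subset C$; since $V$ is a linear subspace lying inside the cone $C$, one has $V\subset C\cap(-C)=\{0\}^{k^*}\oplus W$. Hence the projection $\pi\colon E\to\R^{k^*}$ onto the first factor annihilates $V$ and restricts to a surjection $\pi|_{T_xO}\colon T_xO\to\R^{k^*}$. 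The maximal linear subspace of the wedge $T_xO\cap T_xC$ is therefore
$$L_x=\pi|_{T_xO}^{-1}\bigl(\{0\}^k\oplus\R^{k^*-k}\bigr),$$
of codimension exactly $k$ in $T_xO$, and analogously the linealing $L_y$ of $T_yO'\cap T_yC'$ has codimension $k'$ in $T_yO'$. The sc-linear isomorphism $Tf(x)$ carries the wedge onto the wedge and hence $L_x$ onto $L_y$, inducing a linear isomorphism $T_xO/L_x\cong T_yO'/L_y$ between vector spaces of dimensions $k$ and $k'$, forcing $k=k'$. The main technical obstacle is the wedge-preservation step: the sc-Taylor remainder in Definition~\ref{sc-def} is controlled only for increments in $E_1$, so smooth tangent directions must be handled first and density plus closedness of the wedges invoked to reach arbitrary tangent vectors.
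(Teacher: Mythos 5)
Your proof is correct, and it tracks the paper's argument very closely: the reduction to smooth points via condition~(3) and upper semicontinuity of the degeneration index is word-for-word the same, and the smooth-point case in both proofs comes down to identifying $d_C(x)$ as the codimension in $T_xO$ of a distinguished closed subspace (your $L_x$ is exactly the paper's $\Sigma$, the set of tangent vectors whose coordinates in the directions indexed by $I$ all vanish) and showing that $Tf(x)$ respects these subspaces. The only organizational difference is that you phrase the key step as ``$Tf(x)$ carries the tangent wedge $T_xO\cap T_xC$ onto $T_yO'\cap T_yC'$'' and then pass to linealities, whereas the paper shows directly $Tf(x)\Sigma\subset\Sigma'$ by applying the coordinate functionals $\lambda_j$ to the one-sided Taylor expansion and using the $\pm$ trick, then does a codimension count and invokes $f^{-1}$ for the opposite inequality. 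Your framing is a shade more conceptual (it makes visible that $d_C(x)$ is a linear-algebraic invariant of the tangent wedge), but it buys you a slightly more delicate density step: smooth vectors must be shown dense in the \emph{cone} $T_xO\cap T_xC$, not merely in a closed sc-subspace. That is true (split $T_xO=L_x\oplus F$ with $F$ finite dimensional and spanned by smooth vectors surjecting onto $\mathbb{R}^I$; then every wedge element is $\ell+f$ with $\ell\in L_x$ and $f\in F\cap T_xC$ smooth, and $L_x\cap E_\infty$ is dense in $L_x$ since $L_x$ is a finite-codimension closed subspace of the sc-subspace $T_xO$), but you should spell it out, since the paper sidesteps it by working only with the subspace $\Sigma$. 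You should also make explicit the one-line verification that the difference quotient $(f(r(x+tv))-y)/t$ lies in the closed cone $T_yC'$ (because $\lambda_j(f(r(x+tv)))\ge 0=\lambda_j(y)$ for $j\in I'$), since that, together with closedness of $T_yC'$, is what actually ``places'' the limit $Tf(x)v$ there. With those two small points filled in, the argument is complete and equivalent to the paper's.
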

\begin{proof}
We slightly modify the argument in \cite{HWZ2}. 
First, assuming that the theorem  holds at smooth points $x\in O$, we show that it also holds at points on level $0$. Indeed, take a  point $x\in O$.
By the condition (3) of Definition \ref{m-pl}, we find a sequence of smooth points $(x_k)\subset O$ converging to $x$ in $O$ and satisfying $d_C(x_k)=d_C(x)$. That is, 
$d(x_k)=d(x).$  By assumption, $d(f(x_k))=d(x_k)=d(x)$. Since $f(x_k)\to  f(x)$,  it follows immediately that $d(f(x))\geq d(x)$. The same argument applied to $f^{-1}$  shows that we must have equality
at every point on the  level $0$ in  $O$.

Now we prove the equality for smooth points. Without loss of generality we may assume that $E=\R^n\oplus W$ and $C=[0,\infty)^n\oplus W$ and, similarly,  $E'=\R^{n'}\oplus W'$ and $C'=[0,\infty)^{n'}\oplus W'$. Take a smooth point $x=(a,w)\in  O$ and let $r:U\to U$ be an sc-smooth retract defined on the  relatively open subset $U$ of $C$ satisfying  $O=r(U)$. Abbreviate by  $N$ the kernel of $\id -Dr(x)$ at the point $x$. The kernel $N$ is a closed subspace of $\R^n\oplus E$ possessing, by the condition  (2) 
of Definition \ref{m-pl}, an sc-complement $M$ which is contained in $\{0\}\oplus W$.
Then $N$ is the sc-direct sum of two closed sc-subspaces, namely, 
$$
N=(N\cap (\{0\}\oplus W))\oplus \{(q,p)\in N\ |\ (0,p)\in M\}=:N_1\oplus N_2.
$$
Indeed, if $(p, q)\in N_1\cap N_2$, then  $p=0$ implying that $(0, q)\in N\cap M=\{(0, 0)\}$, i.e., $q=0$.  If $(p, q)\in N$, then there is a unique decomposition 
$(p, 0)=(A, B)+(0, C)=(A, B+C)$ with $(A, B)\in N$ and $(0, C)\in M$  implying that $p=A$ and $B=-C$ and hence $(0, B)=(0, -C)\in M$. So,
$$(p, q)=(0, q-B)+(A, B) $$
with  $(0, q-B)=(A,q)-(A, B)=(p, q)-(A, B)\in N\cap (\{0\}\oplus W)=N_1$ and $(A, B)\in N_2$.

For our smooth point $x=(a,w)$, we denote by  $I$ be the set of indices $1\leq i\leq n$ at which $a_i=0$. In particular, $d(x)=\sharp I$. 
We denote  by $\Sigma$  the subspace of $N$ consisting of vectors $(p,q)\in N$ with $p_i=0$ for $i\in I$. From the decomposition $N=N_1\oplus N_2$,  we see that $\Sigma$ has codimension $\sharp I$ in $N$.
The same arguments  apply at the point $x'=(a',w')=f(x)$. Abbreviate by  $I'$  the set of indices $1\leq j\leq n'$  at which  $a_j'=0$. With the 
 kernel $N'$  of the map $\id -Dr'(x')$ at  $x'$, we let  $\Sigma'$ be  the codimension $\sharp I'$ subspace of $N'$ consisting of all vectors $(p',q')$ satisfying $p_{j}'=0$ for $j\in I'$.  We observe  that the subspaces $N$ and $N'$ are precisely the tangent spaces $T_xO$ and $T_{x'}O'$.

Take any smooth vector $(p,q)\in \Sigma$. Then $x+\tau(p,q)\in U\cap C$ for   $0<\tau <\varepsilon$ and sufficiently small $\varepsilon>0$, so that  $r(x+\tau(p,q))\in O$ for $0<\tau<\varepsilon$. Hence
$$
f\circ r(x+\tau(p,q))\in O'\quad \text{for all $0<\tau<\varepsilon$.}
$$
By assumption,  the map $f\circ r$ is sc-smooth. This implies that the map
$$
[0,\varepsilon)\to  E_m', \quad \tau\mapsto f\circ r(x+\tau(q,p))
$$
is smooth for  every $m\geq 0$.  Now for every $1\leq j\leq n'$ we introduce the  bounded  linear functional   $\lambda_j:\R^{n'}\oplus W'\to \R$ defined by 
$\lambda_j (b, y)=b_j$.  Then we have 
 $$
0\leq \lambda_{j }(f\circ r(x+\tau(q,p))),\quad 0<\tau <\varepsilon.
$$
Fix  $j\in I'$. Then, since $f\circ r(x+\tau(q,p))=f(x)+\tau Df(x)(p,q)+o_m(\tau)$ where $\frac{o_m((\tau)}{\tau}\to 0$ as $\tau\to 0$ and $\lambda_j (f(x))=0$, we conclude  that 
\begin{equation*}
0\leq \frac{1}{\tau} \lambda_{j }(f(x)+\tau Df(x)(p,q)+o_m(\tau))= \lambda_{j }\left( Df(x)(p,q)+\frac{o_m(\tau)}{\tau}\right)
\end{equation*}
which after letting $\tau\to 0$ gives
$$0\leq \lambda_{j }\left( Df(x)(p,q) \right).$$
If $(p, q)\in \Sigma$, then $(-p, -q)\in \Sigma$ and the  above inequality with $(-p,-q)$ replacing $(p, q)$ implies that  $\lambda_{j }\left( Df(x)(p,q)\right)=0$. 
Consequently,
$$\lambda_{j }\left( Df(x)(p,q)\right) =0$$
for all $j\in I'$ and all  vectors $(p, q)\in \Sigma$. At this point we have proved that $Tf(x)\Sigma \subset \Sigma'$.
Recalling that $N$ and $N'$ are the  tangent spaces $T_xO$ and $T_{x'}O'$ and that $Tf:T_xO\to T_{x'}O'$ is an  sc--linear isomorphism, we see that 
the subspace $Tf(x)\Sigma$ has codimension $\sharp I$ in $N'$. It follows that $\sharp I'\leq \sharp I$. Repeating the same argument for the sc-diffeomorphism $f^{-1}:O'\to O$, we obtain 
the opposite inequality. Hence we conclude that $f$ preserves indeed  the degeneracy index for smooth points and the proof of Theorem \ref{breco-thm} is complete.
\end{proof}

At this point it is clear that one can take potentially any recipe from differential geometry and construct new objects. We begin with the recipe for a manifold. Let $X$ be a metrizable space. A chart for $X$ is a triple $(\varphi,U,(O,C,E))$ in which  $(O,C,E)$ is a local  M-polyfold model, $U$ is an open subset of $X$,  and $\varphi:U\to  O$ is  a homeomorphism. Two such charts 
$$
(\varphi,U,(O,C,E))\quad \text{and}\quad (\varphi',U',(O',C',E'))
$$
are called  sc-smoothly compatible if the composition  
$$\varphi'\circ\varphi^{-1}:O\to O'$$ 
is an sc-smooth map between local M-polyfold models. 
\begin{defn}
An $\ssc$-smooth atlas for $X$ consists of a collection of charts $(\varphi,U,(O,C,E))$ such that  the associated open sets  $U$ cover $X$ and the transition maps are $\ssc$-smooth.
Two atlases are equivalent if the union is also an sc--smooth atlas.
The space $X$ equipped with an equivalence class of sc--smooth atlases is called an  M-polyfold.
\end{defn}
Observe that an  M-polyfold $X$ inherits a filtration 
$$X=X_0\supset X_1\supset \ldots \supset X_{\infty}=\bigcap_{m\in \N_0}X_i$$
The same is true for  subsets of $X$. The tangent $TX$ is defined  by the usual recipe used in the infinite-dimensional situation.  Consider tuples
$$
t=(x,\varphi,U,(O,C,E),h)
$$ 
in which $(\varphi,U,(O,C,E))$ is a chart, $x\in U_1$, and  
$(\varphi(x),h)\in TO\subset E^1\oplus E$.
Two such tuples, 
$$
(x,\varphi,U,(O,C,E),h)\quad  \text{and}\quad (x',\varphi',U',(O',C',E'),h'),
$$
are called  equivalent 
if  $x=x'$ and
$$
T(\varphi'\circ\varphi^{-1})(\varphi(x),h)=(\varphi'(x),h').
$$
An equivalence class $[t]$ of a tuple $t$ is called a tangent vector at $x$. The collection of all tangent vectors at $x$ is denoted by $T_xX$ and the tangent $TX$ is defined by
$$
TX=\bigcup_{x\in X_1} \{x\}\times T_xX.
$$
One easily verifies that $TX$ is an  M-polyfold in a natural way with specific charts $T\varphi$ given by
$$
T\varphi:TU\to  TO, \quad T\varphi([x,\varphi,U,(O,C,E),h])=(\varphi(x),h).
$$
Here $TU$ is the union of all $T_xX$ with $x\in U_1$, 
$$
TU=\bigcup_{x\in U_1} \{x\}\times T_xX.
$$

\begin{rem} For SFT we need a more general class of spaces called polyfolds.
They are essentially a Morita equivalence class of ep-groupoids, where the latter is a generalization of the notion of an \'etale proper Lie groupoid to the sc-world. In a nutshell, this is a category in which  the class of objects as well as the collection of morphisms are sets, which in addition carry M-polyfold structures. Further all category operations are sc-smooth and between any two objects there are only finitely many morphisms.  A polyfold is a generalization of the modern notion  of orbifold as presented in \cite{Mj}.  We note that the notion of proper has to be reformulated for  our  generalization as  is explained in \cite{HWZ3.5}.
\end{rem}
Next, we  illustrate the previous  concepts by an example. We shall construct a connected subset
of a  Hilbert space which is an sc--smooth retract and which has one- and two-dimensional parts. (By using the fact that the direct sum of two separable Hilbert spaces is isomorphic to itself one can use the following ideas  to construct connected sc--smooth subsets which have pieces of many different finite dimensions.)

\begin{examp}\label{funny-example}
We take a strictly increasing sequence of weights $(\delta_m) _{m\in \N_0}$ starting at $\delta_0=0$ and equip the Hilbert space  $E=L^2(\R)$ with the sc-structure given by $E_m=H^{m,\delta_m}(\R)$ for all $m\in \N_0$. Next we choose  a smooth compactly supported function $\beta:\R\to  [0,\infty)$ having $L^2$-norm equal to $1$, 
$$
\int_{\R} \beta(s)^2 ds=1.
$$
Then we  define a family of sc-operators $\pi_t:E\to E$ as follows.
For $t\leq 0$,  we put  $\pi_t=0$, and for $t>0$,  we define 
$$
\pi_t(f) =\langle f,\beta(\cdot+e^{\frac{1}{t}})\rangle_{L^2}\cdot \beta(\cdot+e^{\frac{1}{t}}), \quad f\in E.
$$
In  other words,  for $t>0$ we take the $L^2$-orthogonal projection
onto the $1$-dimensional subspace span by the  function $\beta$ with argument shifted by $e^{\frac{1}{t}}$. 
Define
$$
r:\R\oplus E\to  \R\oplus E,\quad r(t,f)=(t,\pi_t(f)).
$$
Clearly, $r\circ r=r$. We shall prove below that the map $r$ is sc-smooth. Consequently, $r$ is an sc-smooth retraction and the image $r(\R\oplus E)$ of $r$, which is the subset 
$$
\{(t,0)\ |\ t\leq 0\}\cup\{(t,s\cdot\beta(\cdot+e^{\frac{1}{t}}))\ |\ t>0,\ \ s\in \R\}
$$
of  $\R\times E$,   is an sc-smooth retract.
Observe that the above retract is connected and consists of $1$- and $2$-dimensional parts. 
\begin{figure}[htbp]
\mbox{}\\[1ex]
\centerline{\relabelbox
\epsfxsize 3.2truein \epsfbox{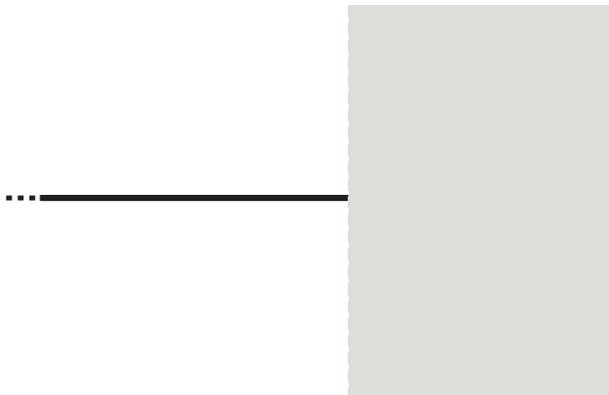}
\endrelabelbox}
\caption{The retract  in the example is homeomorphic
to  the set $(\R^-\times \{0\} )\cup (\R^+\times \R)^0$. }
\end{figure}

Out of this example one can define more retractions which have parts
of any finite dimension.  The above example is enough to show that the subspace $S$
of the plane obtained by taking the open unit disk and attaching a closed interval to
it by mapping the end points to the unit circle in fact admits an sc-smooth atlas, i.e.
is a generalization of a manifold in the sc-smooth world, see Figure 2. 

\begin{figure}[htbp]
\mbox{}\\[2ex]
\centerline{\relabelbox
\epsfxsize 4.5truein \epsfbox{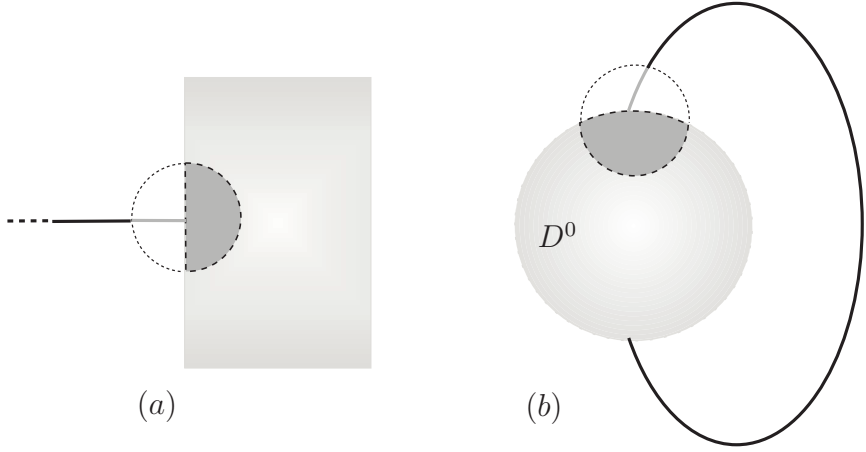}
\relabel {di}{$D^0$}
\relabel {a}{$(a)$}
\relabel {b}{$(b)$}
\endrelabelbox}
\caption{This figure shows a topological space obtained from the open unit
disk by adding a closed arc. This space carries a smooth M-polyfold structure.}\label{Fig2}
\end{figure}

Observe that
$S$ with this M-polyfold structure satisfies $S_m=S$, i.e. the induced filtration is constant. This implies that $S$ has a tangent space at every point. As a consequence of later constructions one could even construct a family of  sc--projections 
$$
\rho_t:E\to  E
$$
where $t\in \R$ and where $E$ is the sc-Banach space from above, such that
 $\rho_t=0$ for $t\leq 0$ and  $\rho_t$  has an infinite-dimensional image for $t>0$,  and such that  the map  $(t,f)\mapsto  (t,\rho_t(f))$ is sc-smooth. Hence the local jumps of dimension can be quite stunning.
Of course, we can even combine the two previous examples in various ways.

We shall prove that the retraction $r:V\oplus E\to V\oplus E$ is sc-smooth.
We recall  that  taking a strictly increasing sequence $(\delta_m)_{m\geq 0}$ of real numbers starting with $\delta_0=0$, the space $E=L^2(\R)$  is equipped with the  sc-structure $(E_m)_{m\in \N_0}$ where $E_m=H^{m,\delta_m}(\R)$.  We choose a smooth function 
$\beta:{\mathbb R}\rightarrow [0,\infty)$ having support contained in  the compact  interval $[-A, A]$  and satisfying $\abs{\beta}_{L^2}=1$.  Define the map $\Phi:{\mathbb R}\oplus E\rightarrow E$  by
$$
\Phi (t, u)=\begin{cases}\langle u,\beta(\cdot+e^\frac{1}{t})\rangle _{L^2}\cdot \beta(\cdot+e^\frac{1}{t})&\quad \text{$t>0$}\\
0&\quad \text{$t\leq 0$}.
\end{cases}
$$
\begin{lem}\label{proof-funy-example}
The map $\Phi$ is of class $\ssc^{\infty}$.
\end{lem}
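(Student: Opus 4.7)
The plan is to apply Proposition \ref{up-prop}: it suffices to show that for every $m, \ell \in \N_{0}$ the induced map $\Phi \colon (\R \oplus E)_{m+\ell} \to E_m$ is of class $C^{\ell+1}$. Two simple observations reduce the work: first, $\Phi$ is linear in $u$, so its $u$-derivatives of order at least two vanish; second, for $t \leq 0$ we have $\Phi \equiv 0$, while for $t > 0$ classical calculus shows that $\Phi$ is $C^{\infty}$ into every $E_m$, because $\beta \in C_c^\infty(\R)$ makes $t \mapsto \beta_t := \beta(\cdot + e^{1/t})$ smooth from $(0,\infty)$ into any $H^{m,\delta_m}(\R)$. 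The real work is to show that every partial derivative extends continuously to $t = 0$ as the zero operator.

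The key is two weighted $L^2$ estimates valid for small $t > 0$. Because the support of $\beta_t$ lies in the far-left interval $[-A - e^{1/t},\, A - e^{1/t}]$, splitting the weight $e^{\pm \delta_{m+\ell}|s|}$ between $u$ and $\beta_t$ inside the inner product gives
$$|c(t,u)| := |\langle u, \beta_t\rangle_{L^2}| \leq C\, e^{\delta_{m+\ell}A}\, \|u\|_{E_{m+\ell}}\, e^{-\delta_{m+\ell} e^{1/t}},$$
while a direct computation using $|s| \leq e^{1/t} + A$ on the support yields $\|\beta_t\|_{E_m} \leq C\, e^{\delta_m (e^{1/t} + A)}$. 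Iterated $t$-differentiation of the composition $\beta(\cdot + e^{1/t})$ via the chain rule produces finite sums of terms of the form $R_{j,i}(1/t, e^{1/t})\, \beta^{(i)}(\cdot + e^{1/t})$ with polynomial coefficients $R_{j,i}$, so the quantities $\|\partial_t^j \beta_t\|_{E_m}$ and the analogous objects appearing inside $\partial_t^j c(t, u)$ are controlled by polynomial-in-$(1/t, e^{1/t})$ multiples of $e^{\delta_m e^{1/t}}$ and $\|u\|_{E_{m+\ell}}\, e^{-\delta_{m+\ell} e^{1/t}}$, respectively.

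Multiplying these bounds, every mixed partial derivative $\partial_t^j \partial_u^i \Phi(t, u)$ (with $i \in \{0, 1\}$, since higher $u$-derivatives vanish) has $E_m$-norm bounded by
$$C_j\, Q_j(1/t, e^{1/t})\, e^{-(\delta_{m+\ell} - \delta_m)\, e^{1/t}}\, \|u\|_{E_{m+\ell}}^{1-i}$$
for some polynomial $Q_j$. Because $\delta_{m+\ell} \geq \delta_m$, with strict inequality whenever $\ell \geq 1$, and any factor $e^{-\eta e^{1/t}}$ with $\eta > 0$ dominates every polynomial in $e^{1/t}$ and $1/t$, each such derivative tends to zero as $t \to 0^{+}$, uniformly on bounded subsets of $E_{m+\ell}$. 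Hence all classical derivatives extend continuously to $t = 0$ as the zero operator, giving $\Phi \in C^{\ell+1}((\R \oplus E)_{m+\ell}, E_m)$ for all $m$ and all $\ell \geq 1$. The base $\ssc^0$-continuity on the diagonal $\ell = 0$ is handled separately by a tail-of-$L^2$ argument on $v := u\, e^{\delta_m|\cdot|}$, and then Proposition \ref{up-prop} yields sc-smoothness of $\Phi$. The principal obstacle is essentially combinatorial: organizing the iterated chain-rule expansion of $\partial_t^j \beta_t$ and confirming that no polynomial-in-$e^{1/t}$ factor produced there ever outruns the double-exponential gain $e^{-(\delta_{m+\ell} - \delta_m)\, e^{1/t}}$.
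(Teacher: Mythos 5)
Your plan is to invoke Proposition \ref{up-prop}, whose hypothesis requires that $\Phi\colon (\R\oplus E)_{m+l}\to E_m$ be of class $C^{l+1}$ for \emph{every} $0\leq l\leq k$ and every $m\geq 0$, \emph{including the diagonal case $l=0$}, where $C^1$ (not merely $C^0$) is demanded. Your write-up only establishes the higher-regularity cases $l\geq 1$ (where the gain $\delta_{m+l}>\delta_m$ produces the decisive exponential decay) and, for $l=0$, falls back to the $\ssc^0$-continuity. That does not meet the hypothesis. Worse, the $l=0$ hypothesis actually \emph{fails} for this map: take $\delta t_n\to 0^+$, set $R_n=e^{1/\delta t_n}$, and let $\delta u_n=\varepsilon_n\, e^{-\delta_m R_n}\,\beta(\cdot+R_n)$ with $\delta t_n\ll\varepsilon_n\to 0$. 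Then $\|\delta u_n\|_{E_m}\asymp\varepsilon_n\to 0$, while $\langle\delta u_n,\beta_{\delta t_n}\rangle=\varepsilon_n e^{-\delta_m R_n}\|\beta\|_{L^2}^2$ and $\|\beta_{\delta t_n}\|_{E_m}\asymp e^{\delta_m R_n}$, so $\|\Phi(\delta t_n,\delta u_n)\|_{E_m}\asymp\varepsilon_n$ and the Fr\'echet quotient $\|\Phi(\delta t_n,\delta u_n)\|_{E_m}/(|\delta t_n|+\|\delta u_n\|_{E_m})$ stays bounded away from zero. So $\Phi\colon\R\oplus E_m\to E_m$ is not classically differentiable at $(0,0)$, and Proposition \ref{up-prop} is inapplicable.

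The point is structural: Proposition \ref{up-prop} gives a \emph{sufficient}, not necessary, condition, tailored to maps that are classically differentiable on a single level. The definition of $\ssc^1$ only asks for the linearization to be defined at points of $U_1$ and to approximate in the level-$0$ norm — one level of slack — and that slack is precisely where the crucial gain $e^{-(\delta_{m+1}-\delta_m)e^{1/t}}$ comes from. The paper therefore does not use Proposition \ref{up-prop}; it carries out a direct induction on $k$ verifying the approximation property for $T^k\Phi$ by hand, using the level shift built into the definition of $\ssc^1$ together with a structural description of the iterated tangent maps. If you want to keep a criterion-based route, you would need to work with the characterization in Proposition \ref{x1} (which only asks for $C^1$-ness of $f\colon U_m\to F_{m-1}$, plus the continuity of the extended linearization), iterated appropriately — but making that precise essentially reproduces the paper's inductive argument.
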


\begin{proof} It is clear that the restriction $\Phi:(\R\setminus \{0\})\oplus E_m\to E_m$ is smooth.  Abbreviate $F(t, s)=\beta (s+e^{\frac{1}{t}})$ for $s\in \R$ and $t>0$. 
Observe that for the $k$-th derivative of  $F$ with respect to $t$  we have the  estimate,
\begin{equation}\label{sF1}
\abs{F^{(k)}(t, s)} \leq p(t),
\end{equation}
where $p(t)=P\left( e^{\frac{1}{t} }, \frac{1}{t} \right) $ is a polynomial  of degree $k$ in the variable 
$e^{\frac{1}{t}}$ and of degree $2k$ in the variable $\frac{1}{t}.$ It depends on $\beta$ and has nonnegative coefficients. In addition, the function $s\mapsto F^{(k)}(t, s)$  has  its support contained in  the interval $I_t:=[-A-e^{\frac{1}{t}}, A-e^{\frac{1}{t}}]$. 
Note  that $I_t\subset (-\infty ,0)$ if  $t>0$ is  sufficiently small. 

Then if  $u\in E_m$ and if  $t>0$ is  small we can estimate
\begin{equation}\label{sF0}
\begin{split}
\langle u,F( t, \cdot )\rangle_{L^2}^2&=\int_{\R}\abs{u(s)}^2 F(t, s)^2\ ds\\
&=\int_{I_t}\abs{u(s)}^2 F(t, s)^2 e^{2\delta_m s}e^{-2\delta_m s}\ ds\\
&\leq Ce^{-2\delta_m e^{\frac{1}{t}} } \int_{I_t}\abs{u (s)}^2e^{-2\delta_m s}\ ds
\end{split}
\end{equation}
where we have used that 
$$\max_{s\in I_t}\abs{F(s, t)}^2e^{2\delta_ms}\leq Ce^{-\delta_m e^{\frac{1}{t}}}.$$
Similarly one finds
\begin{equation}\label{sF2}
\begin{split}
&\langle u,F^{(j)}( t, \cdot )\rangle_{L^2}^2=\int_{\R}\abs{u}^2 F^{(j)}(t, s)^2\ ds\\
&=\int_{I_t}\abs{u}^2 F^{(j)}(t, s)^2 e^{2\delta_m s}e^{-2\delta_m s}\ ds\\
&\leq  Cp\left(t\right)^2e^{-2\delta_m e^{\frac{1}{t}} } \int_{I_t}\abs{u}^2e^{-2\delta_m s}\ ds.
\end{split}
\end{equation}
Using  the estimate \eqref{sF0} we shall first show that the map $\Phi:\R\oplus E_m\to E_m$ is continuous at a  point $(0, u_0)$.  Take $h\in E_m$ satisfying $\abs{h}_m<\varepsilon$ and set $u:=u_0+h$. By definition,  $\Phi (0, u_0)=0$ and so, 
\begin{equation*}
\begin{split}
&\abs{\Phi (t, u)-\Phi (0, u_0)}^2_m=\langle u, F(t, \cdot )\rangle_{L^2}\sum_{i\leq m}\int_{I_t}\abs{\partial_s^iF(t, s)}^2e^{-2\delta_m s}\ ds\\
&\leq Ce^{-2\delta_{m}e^{\frac{1}{t}}}\left(  \int_{I_t}\abs{u}^2e^{-2\delta_m s}\ ds \right)  \cdot 
\left(  \int_{I_t}e^{-2\delta_m s}\ ds  \right)\\
&\leq  C\int_{I_t}\abs{u}^2e^{-2\delta_m s}\ ds\leq C\left( \int_{I_t}\abs{u_0}^2e^{-2\delta_m s}\ ds+\int_{I_t}\abs{h}^2e^{-2\delta_m s}\ ds\right)\\
&\leq  C \int_{I_t}\abs{u_0}^2e^{-2\delta_m s}\ ds+ C\cdot \varepsilon.
\end{split}
\end{equation*}
Since $\int_{I_t}\abs{u_0(s)}^2e^{-2\delta_m s}\ ds\to 0$ as $t\to 0^+$, one concludes that $\Phi (t, u)\to 0$ as $(t, u)\to (0, u_0)$ in $\R\oplus E_m$. So far we have proved that the map $\Phi:\R\oplus E\to E$ is 
$\ssc^0$.  In order to prove  that the map $\Phi$ is $\ssc^{\infty}$ we proceed by induction. Our inductive  statements are  as follows. 
\begin{induction} The map $\Phi:\R\oplus E\to E$ is $\ssc^k$ and 
$T^k\Phi (t_1, u_1, \ldots ,t_{2^k}, u_{2^k})=0$ if $t_1\leq 0$. 
Moreover, if $\pi:T^kE\to E_m$  is a projection onto the factor $E_m$ of $T^kE$, then the composition $\pi\circ T^kE$ at the point  $(t_1, u_1, \ldots ,t_{2^k}, u_{2^k})$ where  $t_1>0$ is a linear combination
of maps $\Gamma$ of the following type,
\begin{gather*}
\Gamma: \R^{k+1}\oplus E_n\to E_m\\
(t_1, t_2, \ldots ,t_{k+1}, u)\mapsto \langle u, F^{(i)}(t_1, \cdot )\rangle_{L^2}F^{(j)}(t_1, \cdot )\cdot t_2\cdot \ldots  \cdot t_{i+j}
\end{gather*}
where $n\geq m$ and $i+j\leq k$, and this holds for every $m\geq 0$. 
\end{induction}

We start with $k=1$. The candidate for the linearization $D\Phi (t_1, u_1): \R\oplus E_m\to E_m$ at a  point $(t_1, u_1)\in \R\oplus E_{m+1}$ is given by
\begin{equation*}
D\Phi  (t_1 u_1)=0, \quad\text{if $ t_1\leq 0$}
\end{equation*}
and if  $t_1>0$,  then the candidate is equal to 
\begin{equation}\label{sF4}
\begin{split}
&D\Phi (t_1, u_1)(t_2, u_2)=\langle u_2, F(t_1, \cdot )\rangle_{L^2}F(t_1, \cdot )\\
&\phantom{==}+
\langle u_1, F'(t_1, \cdot )\rangle_{L^2}F(t_1, \cdot )t_2+
\langle u_1, F(t_1, \cdot )\rangle_{L^2}F'(t_1, \cdot )t_2.
\end{split}
\end{equation}
Clearly, $D\Phi (t_1, u_1):\R\oplus E_m\to E_m$ is a bounded linear map.  Moreover, for $t_1\neq 0$, the linear map $D\Phi (t_1,  u_1): \R\oplus E_{m+1}\to E_m$ is the  derivative of the map  $\Phi:\R\oplus E_{m+1}\to E_m$ at the point $(t_1, u_1)$. We shall show that  the  derivative of $\Phi:\R\oplus E_{m+1}\to E_m$ at the point $(0, u_1)$ is the zero map.  To do this,  we estimate  $\abs{\Phi (t, u)}_m$ where  $u\in E_n$ and $n>m$ using \eqref{sF0}, 
\begin{equation}\label{sF3}
\begin{split}
\abs{\Phi (t, u)}^2_m&=\langle u, F(t, \cdot )\rangle_{L^2}\sum_{i\leq m}\int_{I_t}\abs{\partial_s^iF(t, s)}^2e^{-2\delta_m s}\ ds\\
&\leq Ce^{-2\delta_{n}e^{\frac{1}{t}}}\left(  \int_{I_t}\abs{u}^2e^{-2\delta_n s}\ ds \right)  \cdot 
\left(  \int_{I_t}e^{-2\delta_m s}\ ds  \right)\\
&\leq  Ce^{-2(\delta_{n}-\delta_m)e^{\frac{1}{t}}} \int_{I_t}\abs{u}^2e^{-2\delta_n s}\ ds\leq 
Ce^{-2(\delta_{n}-\delta_m)e^{\frac{1}{t}}}\abs{u}^2_n.
\end{split}
\end{equation}
For $n=m+1$ we conclude, using  that $\delta_{m+1}>\delta_m$,  
\begin{equation*}
\begin{split}
\frac{1}{\abs{\delta t}+\abs{\delta u_1}_{m+1} } \abs{\Phi (\delta t, u_1+\delta u_1) }_m 
& \leq  C\cdot \frac{ e^{-(\delta_{m+1}-\delta_m )e^{\frac{1}{\delta t} }}}{\abs{\delta t} } \cdot 
\abs{u_1+\delta u_1 }_{m+1}\to 0
\end{split}
\end{equation*}
as $\abs{\delta t}+\abs{\delta u_1}_{m+1}\to 0$ proving  that $D\Phi (0, u)=0$.   Moreover, the estimate  \eqref{sF2} implies as $t_1\to 0$ that 
$$D\Phi (t_1, u_1)(t_2,u_2)\to 0\quad \text{in $E_m$}$$
where  $ (t_1, u_1)\in \R\oplus E_{m+1}$ and  $(t_2,u_2) \in \R\oplus E_m$. Consequently, 
the tangent map 
\begin{gather*}
T\Phi:T(\R\oplus E)\to T E\\
T\Phi(t_1, u_1, t_2, u_2)=(\Phi (t_1, u_1), D\Phi (t_1, u_1)(t_2, u_2))
\end{gather*}
is $\ssc^0$,  and hence the map $\Phi:\R\oplus E\to E$ is of class $\ssc^1$. 
Moreover, in view of \eqref{sF4},  one sees that for $t_1>0$ the composition $\pi\circ T\Phi$ is of the form required  in ${\bf S_1}$. We have proved  that  the statement  ${\bf S_1}$ holds.

Assuming  that we have proved the statement ${\bf S_k}$, we shall show that 
also the statement ${\bf S_{k+1}}$ holds true. To see this it suffices to consider the map 
$\Gamma:\R^{k+1}\oplus E_n\to E_m$ defined by 
$$\Gamma (t_1, \ldots ,t_{i+j}, u)=0$$
if $t_1\leq 0$,  and if $t_1>0$ it is defined as
$$\Gamma (t_1, \ldots ,t_{i+j}, u)=\langle u, F^{(i)}(t_1, \cdot )\rangle_{L^2}F^{(j)}(t_1, \cdot )\cdot t_2\cdot \ldots \cdot t_{i+j}$$
where $n\geq m$ and $i+j\leq k$. We have to show that $\Gamma$ is of class $\ssc^1$. This  map is clearly smooth for $t_1\neq 0$.  Take a  point  $(t_1, t_2, \ldots ,t_{k+1}, u)\in 
\R^{k+1}\oplus E_{n+1}$. If $t_1<0$, then $D\Gamma (t_1, t_2, \ldots ,t_{k+1}, u)=0$ and we claim that this  holds  also at $t_1=0$. 
Indeed,  we consider the quotient 
$$\frac{1}{\abs{\delta t_1} +\ldots +\abs{\delta t_{i+j} }+\abs{u}_{n+1} }\abs{\Gamma (\delta t_1, t_2+\delta t_2, \ldots , t_{i+j}+\delta t_{i+j},  u+\delta u )}_m.
$$
If $\delta t_1\leq 0$, then $\Gamma$ is equal to $0$ and if $\delta t_1>0$,  one obtains for $\Gamma$ an estimate similar to the one in \eqref{sF3}. Hence the quotient is bounded by 
\begin{equation*}
\begin{split}
\frac{ \abs{\Gamma (\delta t_1, t_2+\delta t_2, \ldots , t_{k+1}+\delta t_{k+1},  u+\delta u) }_m}{\abs{\delta t_1} } \leq 
C\frac{ e^{-2( \delta _{n+1}-\delta_m ) e^{ \frac{1}{\delta t_1 } } } }{\abs{\delta t_1} }
\cdot \abs{ u+\delta u }_{n+1}\to 0
\end{split}
\end{equation*}
as $\delta t_1\to 0^+$.  We have proved that  $D\Gamma (0, t_2, \ldots ,t_{i+j}, u)=0$.  If $t_1>0$, then the linearization $D\Gamma (t_1, t_2, \ldots ,t_{i+j}, u):\R^{k+1}\oplus E_{n}\to E_m$ is equal to 
\begin{equation*}
\begin{split}
&D\Gamma(t_1, \ldots, t_{i+j}, u)(\delta t_1, \ldots ,\delta t_{i+j}, \delta u)\\
&\phantom{=}=
\langle \delta u, F^{(i)}(t_1, \cdot )\rangle_{L^2}F^{(j)}(t_1,\cdot )t_2\cdot \ldots \cdot t_{i+j}\\
&\phantom{==}+
\langle  u, F^{(i+1)}(t_1, \cdot )\rangle_{L^2}F^{(j)}(t_1,\cdot )\cdot \delta t_1\cdot t_2\cdot \ldots \cdot t_{i+j}\\
&\phantom{==}+\langle  u, F^{(i)}(t_1, \cdot )\rangle_{L^2}F^{(j+1)}(t_1,\cdot )\cdot \delta t_1\cdot t_2\cdot \ldots \cdot t_{i+j}\\
&\phantom{==}+\sum_{l=2}^{i+j}\langle  u, F^{(j)}(t_1, \cdot )\rangle_{L^2}F^{(j)}(t_1,\cdot )\cdot 
 t_2\cdot \ldots \cdot \delta t_l\cdot \ldots  \cdot t_{i+j}.
 \end{split}
 \end{equation*}
 Now one easily  verifies that the remaining statements of ${\bf S_{k+1}}$ are satisfied. Hence the map  $\Phi$ is $\ssc^k$ for every $k$ and consequently $\ssc^{\infty}$.
 \end{proof}

\end{examp}
\begin{rem}
Using the standard definition of Frech\'et differentiability a subset of a Banach space
is a (classically) smooth retract of an open neighborhood if and only if it is a submanifold.
So relaxing the notion of smoothness to sc-smoothness, which still is fine enough to detect boundaries with corners,  we obtain new smooth spaces as the above example illustrates.
\end{rem}

In order to carry out basic constructions known from differential geometry
one quite often needs partitions of unity.  It is known that Hilbert spaces always admit smooth partitions of unity, whereas there are Banach spaces which do not.
However, as it turns out, there are many sc-Banach spaces for which we have
sc-smooth partitions of unity. This will become clearer after the next section, particularly from Proposition \ref{ABC-x} and Corollary \ref{ABC-y}.

We finish this section with  a question.
\begin{quest} Assume that $X$ is a connected  second countable paracompact M-polyfold without boundary built on sc-smooth retracts in separable Hilbert spaces equipped with sc-structures of Hilbert spaces. Is it true that there exists a
local sc-model $(O,H,H)$ in which  $H$ is a separable Hilbert space equipped with sc-structures of Hilbert spaces, so that $X$ is sc-diffeomorphic to $O$? In other words,  can  $X$ be  covered by only one chart?
\end{quest}
\subsection{Sc-Smoothness Arising in Applications}\label{arisingx}

We describe several constructions which will be used  in the constructions of polyfolds (a generalization of M-polyfolds) in the Gromov-Witten theory and  the Symplectic Field Theory (SFT). It is not our intention to prove the most general results. We just choose  the examples in such a way that they cover the cases needed for the construction in SFT.

We denote by $D$  the closed unit disk in $\C$ and  equip the Hilbert space  $E=H^3(D,\R^N)$ of maps from $D$ into $\R^N$   with the sc-structure defined by  the sequence $E_m=H^{3+m}(D,\R^N)$ for all $m\in \N_0$. 
We choose   a map $u$ belonging to  $E_\infty$,   satisfying $u(0)=0$ and, in addition,  has a derivative  $Du(0):\C\to \R^N$ which is injective.   By $H$ we abbreviate an algebraic complement of the image  of the derivative $Du(0)$.  Thus, $u$ intersects  $H$ transversally at $0$.  By the implicit function theorem,  we find an open neighborhood $U$ of $u$ in $C^1(D,\R^N)$ and $0<\varepsilon<1$ so that for every $v\in U$, there exists a unique point $z_v$ in the $\varepsilon$-disk around $0$ in $\C$  whose image under $v$,  $v(z_v)$,  lies in $H$ and such that  the image $Dv(z_v)$ is transversal to $H$.  Moreover, the map 
$$
U\to  D, \quad v\mapsto   z_v
$$
is $C^1$. If we view this map as defined on  $U\cap C^k$,  then  it is actually of class  $C^k$. \begin{thm}\label{constr-thm1}
If $U\cap E$ is equipped with the induced sc-structure, then the map 
$$
U\cap E\to  \C,\quad v\mapsto z_v
$$
is $\ssc$-smooth.
\end{thm}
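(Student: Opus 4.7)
The strategy is to invoke the Upper Bound Proposition \ref{up-prop}, which reduces the verification of sc-smoothness to classical $C^{l+1}$-differentiability of the induced maps $E_{m+l}\to F_m$, with a one-step shift from source to target level. Since the target $F=\C$ is finite-dimensional, its only sc-structure is the trivial constant one $F_m=\C$, so the target level plays no role. Hence it suffices to show that for every $k\ge 0$, every $l\in\{0,\ldots,k\}$, and every $m\ge 0$, the map
\[
E_{m+l}=H^{3+m+l}(D,\R^N)\;\longrightarrow\;\C,\qquad v\mapsto z_v,
\]
is of class $C^{l+1}$ on the relatively open subset $U\cap E_{m+l}$. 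If this holds for all such $k,l,m$, Proposition \ref{up-prop} immediately delivers $\ssc^{k+1}$ for every $k$, and therefore $\ssc^{\infty}$.

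The decisive input is the Sobolev embedding on the two-dimensional disk $D$: because $3+m+l>(m+l+1)+1$, there is a continuous linear inclusion
\[
\iota_{m,l}:\;H^{3+m+l}(D,\R^N)\;\hookrightarrow\;C^{m+l+1}(D,\R^N).
\]
The classical hypothesis recalled just before the theorem (obtained by applying the ordinary implicit function theorem in the $C^k$-category to the equation $\pi_{\Im Du(0)}\,v(z)=0$, whose $z$-linearization at $(0,u)$ is the isomorphism $\pi_{\Im Du(0)}\,Du(0):\C\to\Im Du(0)$) states that $v\mapsto z_v$ is of class $C^{m+l+1}$ on $U\cap C^{m+l+1}(D,\R^N)$. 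The composition of a continuous linear map with a $C^{m+l+1}$-map is again $C^{m+l+1}$, and in particular $C^{l+1}$. This verifies the hypothesis of the Upper Bound Proposition.

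I do not expect a genuine obstacle here: the argument is a routine assembly of three already-stated facts (Sobolev embedding in dimension two, the classical $C^k$ implicit function theorem, and Proposition \ref{up-prop}). The only place where one must be careful is to check that the sc-structure has enough built-in Sobolev room; this is precisely why the sequence $E_m=H^{3+m}(D,\R^N)$ begins at $H^3$, since on a two-dimensional domain this guarantees that a single step up the sc-tower purchases one extra classical derivative of $v$, and hence one extra classical derivative of $v\mapsto z_v$, exactly matching the derivative shift allowed by the Upper Bound. If anything mildly delicate arises, it is the openness of $U\cap E_{m+l}$ in $E_{m+l}$, which follows from the continuity of $\iota_{m,l}$ together with the openness of $U$ in $C^1(D,\R^N)$.
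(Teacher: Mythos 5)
Your proof is correct and follows exactly the paper's own two-line argument: Sobolev embedding $H^{3+m}(D,\R^N)\hookrightarrow C^{m+1}(D,\R^N)$ on the two-dimensional disk combined with the classical $C^k$-regularity of $v\mapsto z_v$ recalled before the theorem, then Proposition~\ref{up-prop}. Your write-up is simply more explicit about parameterizing by $l$ and about the irrelevance of the constant sc-structure on the finite-dimensional target.
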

\begin{proof}
The map $U\cap E_m\to \C$ defined by  $v\mapsto  z_v$ is of class $C^{m+1}$ in view of  the Sobolev embedding $H^{m+3}(D,\R^N)\to  C^{m+1}(D,\R^N)$. 
By  Proposition \ref{up-prop} below, the map is sc-smooth.
\end{proof}

In order to describe a typical application of Theorem \ref{constr-thm1} we consider  a map $u:S^2\to  M$  of class $H^3$ defined on the Riemann sphere $(S^2, i)$ into  a smooth manifold $M$ and assume  that
$H_1$, $H_2$ and $H_3$ are three submanifolds of codimension $2$  intersecting $u$  transversally at the three different points $z_1$, $z_2$ and $z_3$ in $S^2$. In view of the previous result, under the map  $v$ which  is $H^3$-close to $u$,  the points
$z_i$ move to the points $z_i(v)$ at which the map $v$ intersects submanifolds $H_i$ transversally. There exists a unique M\"obius transformation on $(S^2,i)$ mapping $z_i$ to $z_i(v)$. We denote this 
M\"obius transformation by $\phi_v$ and consider  the composition 
$$
\Phi: E\to E, \quad v\mapsto \Phi(v):=v\circ \phi_v
$$
defined for maps $v$ which are $H^3$-close to $u$. 
Note that $\Phi(v)$ satisfies the transversal constraints at the original points $z_i$. We shall prove that the map $\Phi$ is sc-smooth. This  is a consequence of the  result we  describe next.  

We consider  a family   $v\mapsto \phi_v$ of maps $\phi_v:D\to \C$ parametrized by $v$ belonging to some open neighborhood of the origin  in $\R^n$. Moreover, we assume that the map 
$$
V\times D\to  {\mathbb C}, \quad (v,x)\mapsto  \phi_v(x)
$$
is smooth, satisfies $\phi_0(0)=0$  and 
$$
\phi_v(x)\in D\quad \text{for all $x\in D$ and $v\in V$},$$
so that the map 
$$
\Phi:V\oplus E\to  E, \quad (v,u)\mapsto  u\circ \phi_v
$$
is well-defined. The  map $\Phi$ is not even differentiable in the classical sense, due to the loss of one derivative, but it is smooth in the sense of sc-smoothness, as the next result shows.

\begin{thm}\label{thm-125}
The composition 
$\Phi:V\oplus E\to  E$ defined by $(v,u)\mapsto  u\circ \phi_v$ is an sc-smooth map.
\end{thm}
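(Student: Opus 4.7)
The strategy is to verify $\ssc^0$ directly, then $\ssc^1$ from Definition \ref{sc-def}, and to iterate to all orders via the explicit form of the tangent map. The $\ssc^0$ property follows at once from standard Sobolev composition estimates: for each $m\geq 0$, composition with $\phi_v$ is a bounded linear operator on $H^{3+m}(D,\R^N)$ whose norm is controlled by the $C^{3+m}$-norm of $\phi_v$, a quantity that varies smoothly in $v$, so $(v,u)\mapsto u\circ\phi_v$ is jointly continuous on $V\oplus E_m$.

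For $\ssc^1$, at $(v_0,u_0)\in V\oplus E_1=V\oplus H^4(D,\R^N)$ the candidate derivative is
\[
D\Phi(v_0,u_0)(\delta v,\delta u)=\delta u\circ\phi_{v_0}+(Du_0\circ\phi_{v_0})\cdot D_v\phi_{v_0}[\delta v],
\]
which is a bounded linear map $V\oplus E_0\to E_0$ because the one-level shift $E_1\to E_0$ in Definition \ref{sc-def} turns $u_0\in H^4$ into $Du_0\in H^3=E_0$. To verify the limit condition with $h=(\delta v,\delta u)\in V\oplus E_1$, I split the remainder as $R=R_1+R_2$, with $R_2=\delta u\circ\phi_{v_0+\delta v}-\delta u\circ\phi_{v_0}$ and $R_1$ the analogous expression for $u_0$. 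Rewrite $R_2$ via the fundamental theorem of calculus along $s\mapsto \delta u\circ\phi_{v_0+s\delta v}$: since $D\delta u\in H^3$, this yields $\abs{R_2}_{E_0}\leq C\abs{\delta u}_{E_1}\abs{\delta v}$, and together with the inequality $\abs{\delta u}_{E_1}\abs{\delta v}\leq\tfrac{1}{4}\abs{h}_1^2$ this gives $\abs{R_2}_{E_0}/\abs{h}_1\to 0$. For $R_1$ the same trick writes it as the $H^3$-valued integral over $s\in[0,1]$ of $F(s)-F(0)$ with $F(s)=(Du_0\circ\phi_{v_0+s\delta v})D_v\phi_{v_0+s\delta v}[\delta v]$; continuity of $s\mapsto F(s)$ in $H^3$, from composition continuity in $H^3$ for the fixed $H^3$-valued function $Du_0$ together with smoothness of $\phi_v$ in $v$, then yields $\abs{R_1}_{E_0}=o(\abs{\delta v})$. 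Continuity of $T\Phi$ as an $\ssc^0$-map is obtained by a parallel argument.

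To iterate, note the explicit form
\[
T\Phi(v,u,\delta v,\delta u)=\bigl(u\circ\phi_v,\;\delta u\circ\phi_v+(Du\circ\phi_v)D_v\phi_v[\delta v]\bigr),
\]
and proceed by induction on $k\geq 1$: assuming $\Phi$ is $\ssc^{k-1}$, one shows $T\Phi$ is $\ssc^{k-1}$ and hence $\Phi$ is $\ssc^k$. The tangent map is built from three $\ssc^{k-1}$ ingredients --- composition maps $w\mapsto w\circ\phi_v$ of the form of $\Phi$ itself, applied respectively to $w=u$, $w=\delta u$, and $w=Du$; the sc-linear (hence $\ssc^\infty$) operator $u\mapsto Du$ from $E^1$ to $E$, which loses exactly one sc-level to match the shift in Definition \ref{sc-def}; and multiplication by the smooth family $D_v\phi_v[\delta v]$ --- which the chain rule (Theorem \ref{chain-thm1}), combined with the fact that sums and products with smooth multipliers preserve the class $\ssc^{k-1}$, assembles into an $\ssc^{k-1}$ map. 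The main obstacle is the $R_1$ estimate: a naive second-order Taylor expansion involves $D^2u_0$, which lies in $H^2$ and not in the target $E_0=H^3$, so such an expansion cannot bound $R_1$ in $E_0$. The FTC reformulation keeps the remainder in $H^3$ throughout, trading quantitative second-derivative control for qualitative continuity of $v\mapsto Du_0\circ\phi_v$ in $H^3$, and the one-level shift $E_1\to E_0$ in Definition \ref{sc-def} absorbs precisely the single derivative lost under composition with a $v$-varying smooth map.
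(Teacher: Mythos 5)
Your proof is correct and follows the same overall architecture as the paper's (direct $\ssc^0$, identical formula for $D\Phi$, FTC for the $u_0$-part of the remainder, induction for higher orders), but it diverges from the paper in two places in a way worth noting. First, for the $\delta u$-piece of the remainder $R_2=\delta u\circ\phi_{v_0+\delta v}-\delta u\circ\phi_{v_0}$: the paper normalizes $\frac{\delta u}{\abs{\delta u}_1}$ and exploits compactness of the inclusion $E_1\hookrightarrow E_0$ to extract a convergent subsequence in $E_0$ and then invoke $\ssc^0$-continuity; you instead derive the quantitative bound $\abs{R_2}_0\leq C\abs{\delta u}_1\abs{\delta v}$ from the fundamental theorem of calculus and close with the arithmetic--geometric inequality $\abs{\delta u}_1\abs{\delta v}\leq\tfrac14\abs{h}_1^2$. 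Both arguments work; yours is a touch more self-contained (no compactness needed at this step) but requires the uniform Sobolev product bound, which is fine on a fixed compact domain. Second, for the passage to all orders: the paper proves by induction an explicit structural statement ${\bf (S_k)}$ that $\pi\circ T^k\Phi$ is a linear combination of maps $(v,h,a)\mapsto\Phi(v,D^\alpha h)\cdot f(v,a)$ with controlled index relations, then uses the chain rule on each such canonical piece; you instead read off only the first tangent map $T\Phi$ and bootstrap: $\Phi\in\ssc^{k-1}$ implies each ingredient of $T\Phi$ (the three compositions $\Phi(v,\cdot)$ applied to $u$, $\delta u$, $Du$; the sc-operator $D\colon E^1\to E$; the smooth multiplier) is $\ssc^{k-1}$, so $T\Phi$ is $\ssc^{k-1}$ by the chain rule and closure under sums and smooth-multiplier products, whence $\Phi\in\ssc^k$. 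This bootstrap is legitimate and in fact leaner, but it quietly leans on two facts not stated in the paper as lemmas: that $\Phi\in\ssc^{k-1}$ on $V\oplus E$ implies the same on $V\oplus E^1$ (this is Proposition \ref{ias}), and that multiplication by a jointly smooth finite-dimensional-parameter family of functions preserves $\ssc^{k-1}$. These are true and easy, but you should flag them rather than fold them silently into ``the chain rule.''

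One genuine (if small) gap: your $\ssc^0$ paragraph concludes joint continuity of $(v,u)\mapsto u\circ\phi_v$ on $V\oplus E_m$ merely from the fact that the composition operators are uniformly bounded on $H^{3+m}$. That gives you continuity in $u$ for fixed $v$, but not continuity in $v$: for fixed $u\in H^{3+m}$ the map $v\mapsto u\circ\phi_v$ is not norm-continuous into the operator space, and its continuity as an $H^{3+m}$-valued map must still be proved, typically via density of smooth functions (which is exactly how the paper proceeds, with the intermediate smooth $w_0$). As written, your sentence asserts the conclusion rather than deriving it; one line inserting the density approximation closes it.
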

Theorem \ref{thm-125} will be proved in Section \ref{actionsmooth-section} below. 

The concept of an sc-smooth retract is motivated by our gluing and anti-gluing constructions described next. The retract is a model for a smooth structure on a 2-dimensional family of Sobolev spaces of functions defined on finite cylinders which become longer and longer and eventually break  apart into two half-cylinders. It also incorporates a twisting with respect to the angular variable. Such a  situation arises in the  study of function spaces on Riemann surfaces if  the surfaces develop nodes.

We take a strictly increasing sequence $(\delta_m)_{m\in \N_0}$ starting with $\delta_0>0$ and denote by  $E$ the space  consisting of pairs  $(u^+,u^-)$ of maps 
$$
u^\pm:\R^\pm\times S^1\to  \R^N
$$
having the following properties. There is  a common asymptotic limit $c\in \R^N$  so that the maps 
 $$r^\pm:=u^\pm-c$$
have partial derivatives up to order $3$ which  if  weighted by $e^{\delta_0|s|}$ belong to $L^2(\R^\pm\times S^1,\R^N)$. We equip the Hilbert space $E$ with the sc--structure $(E_m)_{m\in \N_0}$  where $E_m$  consists of those  pairs  $(u^+, u^-)$ in $E$ for which  $(r^+, r^-)$ are of Sobolev  class $(3+m, \delta_m)$.  The $E_m$-norm of the pair $(u^+, u^-)$ is defined as
$$
\abs{(\eta^+,\eta^-)}_{E_m}^2 =\abs{c}^2+\abs{r^+}^2_{H^{3+m, \delta_m}}+\abs{r^-}^2_{H^{3+m, \delta_m}},
$$
where 
\begin{equation*}
\abs{r^\pm}_{3+m, \delta_m}^2:=\sum_{\abs{\alpha}\leq 3+m}\int_{\R^\pm\times S^1}\abs{D^{\alpha}r^\pm(s, t)}^2e^{2\delta_m \abs{s}} \ ds dt.
\end{equation*} 
 Our aim is the construction of an M-polyfold model $(O,{\mathbb C}\oplus E,{\mathbb C}\oplus E)$ where the set $O$ is obtained by an sc-smooth retraction $r:B_{\frac{1}{2}}\oplus E\to  B_{\frac{1}{2}}\oplus E$ of the special  form
$$
r(a,u^+,u^-)=(a,\pi_a(u^+,u^-)),
$$
where $a\mapsto \pi_a$ is a  family of linear and bounded  projections  parametrized by $a\in B_\frac{1}{2}$. The set $B_\frac{1}{2}$ is the open disk of radius $\frac{1}{2}$ in ${\mathbb C}$.

To proceed we need a  gluing profile $\varphi$.  By definition,   this is a diffeomorphism $(0,1]\to  [0,\infty)$ with suitable growth properties. Convenient for our purposes is  the exponential gluing profile defined by 
$$
\varphi(r)=e^{\frac{1}{r}}-e, \qquad r\in (0,1].
$$

With the complex numbers $\abs{a}<\frac{1}{2}$ we associate the abstract cylinders $C_a$ as follows. 
If $a=0$, the cylinder $C_0$ is the disjoint union 
$$C_0=\R^-\times S^1\bigsqcup \R^+\times S^1.$$
We also use $Z_0$ to denote $C_0$. If $a\neq 0$ and $a=\abs{a}e^{-2\pi i \vartheta}$ is its polar form, we set 
$$R=\varphi (\abs{a}).$$
We call $a$ the  gluing parameter and $R$ obtained this  way the  gluing length. To define the cylinder $C_a$, we  take the disjoint union of  the  half-cylinders $\R^+\times S^1$ and $\R^-\times S^1$ and identify  points $(s,t)\in [0,R]\times S^1$ with  points $(s',t')\in [-R,0]\times S^1$  if the following relations hold, 
$$
s=s'+R\quad \text{and}\quad t=t'+\vartheta.
$$
On $C_a$ we have two sets of conformal coordinates, namely, 
$[s,t]\mapsto   (s,t)$ obtained  by extending  the coordinates coming from  $\R^+\times S^1$ and $[s',t']'\mapsto   (s',t')$ obtained  by extending the coordinates coming from $\R^-\times S^1$. The infinite cylinder contains  the finite subcylinder $Z_a$ consisting of  the equivalence classes $[s,t]$ with $s\in [0,R]$.

\begin{figure}[htbp]
\mbox{}\\[2ex]
\centerline{\relabelbox \epsfxsize 4truein \epsfbox{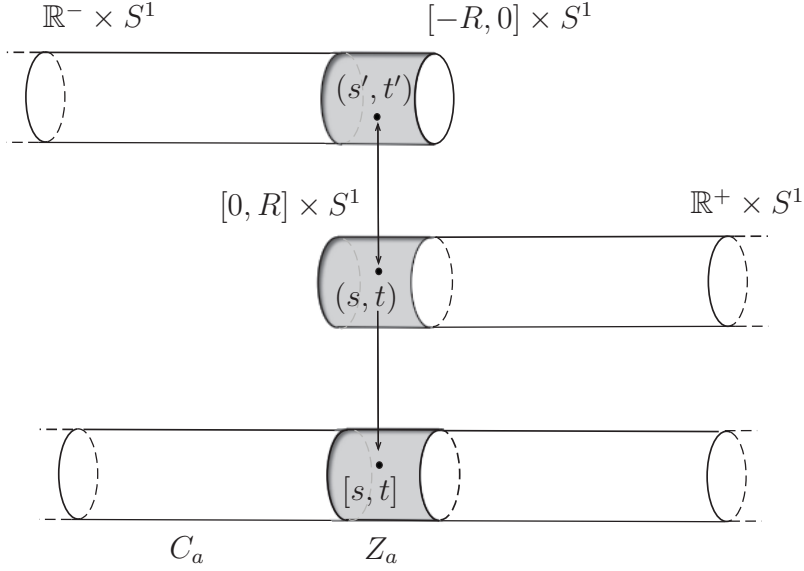}
\relabel {a1}{$(s', t')$}
\relabel {a2}{$(s,t)$}
\relabel {a3}{$[s,t]$}
\relabel {p1}{$\R^-\times S^1$}
\relabel {p2}{$\R^+\times S^1$}
\relabel {r1}{$[-R, 0]\times S^1$}
\relabel {r2}{$[0,R]\times S^1$}
\relabel {p3}{$Z_a$}
\relabel {c}{$C_a$}
\endrelabelbox}
\caption{Glued finite and  infinite cylinders $Z_a$ and $C_a$}\label{Fig3}
\end{figure}

Next, for $a\in B_\frac{1}{2}$,  we define the sc-Hilbert space $G^a$. First,  we introduce the space  $H_c^{3,\delta_0}(C_a)$.  To do this, we choose a smooth function $\zeta:\R\to [0,1]$ satisfying $\zeta(s)=1$ for $s\leq -1$, $\zeta'(s)<0$ for $s\in (-1,1)$,  and $\zeta(s)+\zeta(-s)=0$.  The space $H^{3,\delta_0}_c(C_a)$ consists of those maps $u:C_a\to \R^N$ which belong to $H^3_{\text{loc}}(C_a)$ and there is a constant $c\in \R^N$ such that 
$u+\zeta_a \cdot c\in H^{3,\delta_0}(C_a)$, where 
$$
\zeta_a(s):=\zeta \left(s-\frac{R}{2}\right).
$$
We note that if $u\in H^{3,\delta_0}_c(C_a)$ and $c$ is an asymptotic constant of $u$  at $\infty$, then the asymptotic constant at $-\infty$ is equal to $-c$.  
With the nested sequence $( H^{3+m,\delta_m}_c(C_a))_{m\in \N_0}$ of Hilbert spaces,  the Hilbert space $H_c^{3,\delta_0}(C_a)$ becomes an sc-Hilbert space.  The norms of these Hilbert spaces will be introduced in Section \ref{esttotalgluing} We also equip the Hilbert space $H^3(Z_a)$ with the sc-structure given by $(H^{3+m}(Z_a))_{m\in \N_0}$.  

Now,  if $a=0$, we set 
$$G^0=E\oplus \{0\},$$
and  if  $a\neq 0$, we  define  
$$
G^a = H^3(Z_a)\oplus H^{3,\delta_0}_c(C_a).
$$
The sc-structure of $G^a$ is  given by the sequence $H^{3+m}(Z_a)\oplus H_c^{3+m, \delta_m}(C_a)$ for all $m\in \N_0$.

For every $a\in B_\frac{1}{2}$, we define the so called total gluing map

$$
\boxdot_a=(\oplus_a,\ominus_a):E\to  G^a
$$
as follows.  We fix  a smooth map $\beta:\R\to  [0,1]$ satisfying
\begin{equation}\label{cutoff}
\begin{aligned}
\bullet &\:\:\beta (-s)+\beta (s)=1\quad \text{for all $s\in \R$}\\
\bullet &\:\:\beta (s)=1\quad \text{for all $s\leq -1$}\\
\bullet &\:\:\beta'(s)<0\quad \text{for all $s\in (-1, 1)$}
\end{aligned}
\end{equation}
Moreover, if  $0<\abs{a}<\frac{1}{2}$ we  abbreviate by  $\beta_a$ or $\beta_R$  the translated function 
$$\beta_a(s)=\beta_R (s)=\beta \biggl(s-\frac{R}{2}\biggr), \quad s\in \R, $$
where $R=\varphi (a).$

\begin{figure}[htbp]
\mbox{}\\[2ex]
\centerline{\relabelbox
\epsfxsize 4truein \epsfbox{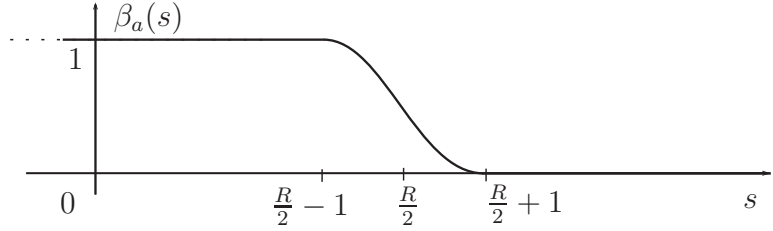}
\relabel {o}{$1$}
\relabel {oo}{$0$}
\relabel {o1}{$\frac{R}{2}+1$}
\relabel {o2}{$\frac{R}{2}-1$}
\relabel {o0}{$\frac{R}{2}$}
\relabel {t}{$\beta_a(s)$}
\relabel {s}{$s$}
\endrelabelbox}
\caption{Graph of the function $\beta_a$ where $R=\varphi (\abs{a})$.}
\end{figure}

Now,  if  $a=0$,  we define 
$$
\oplus_0(u^+,u^-)=(u^+,u^-)\quad \text{and}\quad \ominus_0(u^+,u^-)=0\in\{0\}.
$$
If $a\neq 0$ and  $a=\abs{a}e^{-2\pi i \vartheta}$ is  its polar form,  we set  $R=\varphi(|a|)$ and glue the two maps $u^\pm:\R^\pm \times S^1\to \R^N$ which are defined on positive respectively negative cylinder to a map defined on the glued cylinder $Z_a$ by means of the following convex sum  
$$
\oplus_a(u^+,u^-)([s,t])=\beta_a (s)u^+(s,t)+
(1-\beta_a(s) )u^-(s-R,t-\vartheta).
$$
for $[s,t]\in Z_a$ where $0\leq s\leq R$. Further,  we introduce the map $\ominus_a(u^+, u^-)$ on the glued infinite cylinder $C_a$ by the formula  for $[s, t]\in C_a$,  
\begin{equation*}
\begin{split}
\ominus_a(u^+,u^-)([s,t])&=-(1-\beta_a(s)) \left[ u^+(s,t)-\text{av}_a(u^+,u^-)\right]\\
&\phantom{= \ }+\beta_a(s))\left[ u^-(s-R,t-\vartheta)-\text{av}_a(u^+,u^-)\right]
\end{split}
\end{equation*}
where $s\in \R$ and where the average is defined by 
$$
\av (u^+,u^-):=\frac{1}{2}\int_{S^1}\left[u^+\left(\frac{R}{2},t\right)
+u^-\left(-\frac{R}{2},t\right)\right]dt
$$
and $R=\varphi (\abs{a})$.  We have used that $(1-\beta_a(s))=0$ for $s\leq \frac{R}{2}-1$ and $\beta_a(s)=0$ if $s\geq \frac{R}{2}+1.$

The map $\oplus_a(u^+,u^-)$ is called the glued map for  the gluing parameter $a$ and the map $\ominus_a(u^+,u^-)$ is called the anti-glued map.

\begin{thm}\label{propn-1.27}
For every $a\in B_{\frac{1}{2}}$ the  total gluing map 
$$
\boxdot_a=(\oplus_a, \ominus_a):E\to G^a
$$
is a linear sc-isomorphism. In particular, 
$$E=(\ker \oplus_a)\oplus (\ker \ominus_a)$$
and for $a\neq 0$, the map 
$$
\oplus_a:\ker \ominus_a\to H^3(Z_a)
$$ 
is a linear  sc-isomorphism. If $a=0$,  the map $\oplus_0$ is the identity map.
\end{thm}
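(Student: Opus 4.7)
The plan is as follows. The case $a = 0$ is immediate, since by definition $\boxdot_0 = (\id_E, 0): E \to E \oplus \{0\} = G^0$ is trivially a linear sc-isomorphism. Fix henceforth $a \neq 0$ with polar form $a = |a| e^{-2\pi i \vartheta}$ and $R = \varphi(|a|)$. First I verify that $\boxdot_a$ is sc-linear, which is routine: both components multiply $(u^+, u^-)$ by the smooth cutoff $\beta_a$ (or $1 - \beta_a$) having uniformly bounded derivatives and pull back along a rigid shift, so every level $H^{3+m, \delta_m}$ is preserved. I also check directly that $\ominus_a(u^+, u^-)$ has asymptotic constants $\mp(c - \av(u^+, u^-))$ at $\pm\infty$, where $c$ is the common asymptotic of $u^+$ and $u^-$, so that the anti-glued map indeed belongs to $H^{3, \delta_0}_c(C_a)$.

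The substantive task is constructing an sc-continuous inverse $\Psi_a$. The argument will rest on two observations. First, on the overlap band $\{s \in [R/2 - 1, R/2 + 1]\}$, writing $\alpha := \beta_a(s)$, the two components jointly satisfy
\begin{equation*}
\begin{pmatrix} \oplus_a(u^+, u^-)([s, t]) \\ \ominus_a(u^+, u^-)([s, t]) + (2\alpha - 1)\, \av(u^+, u^-) \end{pmatrix}
=
\begin{pmatrix} \alpha & 1 - \alpha \\ -(1 - \alpha) & \alpha \end{pmatrix}
\begin{pmatrix} u^+(s, t) \\ u^-(s - R, t - \vartheta) \end{pmatrix},
\end{equation*}
whose determinant $\alpha^2 + (1 - \alpha)^2 \geq 1/2$ is uniformly bounded below, so the matrix is pointwise invertible with smooth uniformly bounded inverse. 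Second, the average is recoverable from $v := \oplus_a(u^+, u^-)$ alone: since $\beta_a(R/2) = 1/2$, integrating over the circle at $s = R/2$ yields the identity $\int_{S^1} v([R/2, t])\, dt = \av(u^+, u^-)$.

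Using these, I will build $\Psi_a: G^a \to E$ region by region. Given $(v, w) \in G^a$, set $\mu := \int_{S^1} v([R/2, t])\, dt$. On the overlap band I solve the $2 \times 2$ system above (with $\mu$ substituted for the unknown average) to obtain $u^+(s, t)$ and $u^-(s - R, t - \vartheta)$ simultaneously for $s \in [R/2 - 1, R/2 + 1]$. Off the overlap, $\beta_a$ takes only the values $0$ or $1$ and the equations decouple: for $s \in [0, R/2 - 1]$ put $u^+(s, t) := v([s, t])$, for $s \geq R/2 + 1$ put $u^+(s, t) := \mu - w([s, t])$, and symmetrically for $u^-$ using the other chart of $C_a$. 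At the endpoints $s = R/2 \pm 1$, the $2 \times 2$ system degenerates exactly to these decoupled formulas, so the pieces match and the globally defined $u^\pm$ inherit the regularity of $v$ and $w$. Because $w$ has asymptotic constants $\pm d$ at $\mp\infty$, the reconstructed $u^\pm$ share a common asymptotic $c := \mu - d$, and since the weight $e^{\delta_m |s|}$ on $C_a$ matches the corresponding weights on $\R^\pm \times S^1$ in the regions where they are used, one obtains $u^\pm - c \in H^{3+m, \delta_m}$ for every $m$; over the compact overlap band the matrix inversion preserves each $E_m$ since it amounts to multiplication by smooth bounded entries divided by a uniformly positive determinant. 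A direct algebraic check then confirms $\Psi_a \circ \boxdot_a = \id_E$ and $\boxdot_a \circ \Psi_a = \id_{G^a}$, and sc-continuity of $\Psi_a$ on each level is transparent from the explicit formula.

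The remaining assertions will be formal consequences: $\ker \oplus_a = \boxdot_a^{-1}(\{0\} \oplus H^{3, \delta_0}_c(C_a))$ and $\ker \ominus_a = \boxdot_a^{-1}(H^3(Z_a) \oplus \{0\})$ are sc-closed complementary subspaces of $E$ splitting $E$ as a direct sum, and $\oplus_a|_{\ker \ominus_a}: \ker \ominus_a \to H^3(Z_a)$, being the composition of $\boxdot_a^{-1}$ restricted to $H^3(Z_a) \oplus \{0\}$ with the evident projection, is then an sc-isomorphism. I expect the principal technical obstacle to be the careful verification that the reconstructed $u^\pm$ lie in the weighted Sobolev spaces with the correct common asymptotic; this comes down to tracking how the weight $e^{\delta_m |s|}$ on $C_a$, centered about $s = R/2$, pulls back to the two half-cylinders under the overlapping charts, and checking that the $2 \times 2$ inversion over the compact overlap band does not degrade Sobolev regularity on any level.
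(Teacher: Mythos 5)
Your proposal is correct and follows essentially the same route as the paper: both rest on the same $2\times 2$ system with determinant $\gamma_a = \beta_a^2 + (1-\beta_a)^2 \geq 1/2$, the same recovery of the average $\av(u^+,u^-)$ by integrating $\oplus_a(u^+,u^-)$ over the circle at $s=R/2$ where $\beta_a = 1/2$, and the same asymptotic-constant bookkeeping to conclude the reconstructed pair lies in $E$. The only cosmetic difference is that the paper inverts the matrix globally and writes a single closed formula valid for all $s$ (the degenerate regions emerging automatically from $\beta_a \in \{0,1\}$), while you construct the inverse region by region and match at the seams; these are the same computation.
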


We postpone the proof until after the next theorem.

The gluing  and anti-gluing maps determine the linear projection $\pi_a:E\to E$  onto the kernel 
of the anti-glued map $\ominus_a:E\to H_c^{3, \delta_0}(C_a)$ along   the kernel  of the glued map $\oplus_a:E\to H^3(Z_a)$ by means of the following formulae  for $(h^+, h^-)\in E$,
\begin{align*}
\oplus_a\circ (1-\pi_a)(h^+, h^-)&=0\\
\ominus_a\circ  \pi_a (h^+, h^-)&=0.
\end{align*}
Given the pair $(h^+, h^-)\in E$, we set 
$$\pi_a(h^+, h^-)=(\eta^+, \eta^-)$$
so that the pair $(\eta^+, \eta^-)\in E$ is determined by the two equations
\begin{align*}
\oplus_a (\eta^+, \eta^-)&=\oplus_a (h^+, h^-)\\
\ominus_a (\eta^+, \eta^-)&=0.
\end{align*}
Explicitly, abbreviating $\beta_a=\beta_a(s)$, 
\begin{equation*}
\begin{split}
\beta_a\cdot \eta^+(s, t)&+(1-\beta_a )\cdot  \eta^-(s-R, t-\vartheta)\\
&=\beta_a \cdot  h^+(s, t)+(1-\beta_a)\cdot  h^-(s-R, t-\vartheta)\\
\end{split}
\end{equation*}
if $0\leq s\leq R$,  and 
\begin{equation*}
-(1-\beta_a)\cdot \left[ \eta^+(s, t)-\text{av}_R(\eta^+, \eta^-)\right]+\beta_a \cdot \left[ \eta^-(s-R, t-\vartheta )- \text{av}_R(\eta^+, \eta^-)\right]=0
\end{equation*}
for all $s\in \R$. 
Observing that $\beta_a (\frac{R}{2})=\frac{1}{2}$ and integrating the first equation at $s=\frac{R}{2}$ over the circle $S^1$, we obtain for the  averages 
$$\av (\eta^+, \eta^-)=\av (h^+, h^-)$$
so that the equations  for $(\eta^+, \eta^-)$  become
$$
\begin{bmatrix}
\phantom{-}\beta_a&1-\beta_a\\
-(1-\beta_a)&\beta_a
\end{bmatrix}
\begin{bmatrix}
\eta^+\\\eta^-
\end{bmatrix}
=
\begin{bmatrix}
\beta_a h^++(1-\beta_a)h^-\\(2\beta_a-1)\av (h^+, h^-)
\end{bmatrix}
$$
where we have abbreviated $\beta_a=\beta_a(s)$, $h^+=h^+(s, t)$ and $h^-=h^-(s-R, t-\vartheta).$
Introducing the nowhere vanishing function $\gamma$ by
$$
\gamma (s):=\beta (s)^2+(1-\beta (s))^2,
$$
 the determinant of the matrix on the left hand side is equal to $\gamma_a$ which is defined by
 $$
 \gamma_a(s):=\gamma\left(s-\frac{R}{2}\right).
 $$
Hence,  multiplying both sides by the inverse of this matrix, we arrive at the formula
$$
\begin{bmatrix}
\eta^+(s, t)\\ \eta^-(s-R, t-\vartheta)
\end{bmatrix}
=\frac{1}{\gamma_a}
\begin{bmatrix}
\beta_a&-(1-\beta_a)\\
1-\beta_a&\phantom{-}\beta_a
\end{bmatrix}
\begin{bmatrix}
\beta_a h^++(1-\beta_a)h^-\\(2\beta_a-1)\av (h^+, h^-)
\end{bmatrix}
$$
where we  abbreviated $\gamma_a =\gamma_a (s)$. If we denote by $A$ the common asymptotic limit of $(h^+, h^-)$ so that 
$$(h^+, h^-)=(A+r^+, A+r^-)$$
we arrive at the following explicit representation of the map $\pi_a$. 

If $(h^+, h^-)=(A+r^+, A+r^-)\in E$ and $(\eta^+, \eta^-)=\pi_a (h^+, h^-)$, then 
\begin{equation*}
\begin{split}
\eta^+(s, t)&=A+\left(1-\frac{\beta_a (s)}{\gamma_a (s)}\right)\av (r^+, r^-)\\
&\phantom{=}+\frac{\beta_a (s)}{\gamma_a (s)}\left[ \beta_a (s)\cdot r^+(s, t)+(1-\beta_a (s))\cdot r^-(s-R, t-\vartheta)\right]
\end{split}
\end{equation*}
for all $s\geq 0$.  There is a similar formula for $\eta^-(s',t')$.
  
  One reads off that $(\eta^+, \eta^-)\in E$  so that the asymptotic limits of $\eta^+$ and $\eta^-$ coincide. Indeed, the asymptotic limits for $(\eta^+, \eta^-)=\pi_a(h^+, h^-)$ are the following,
\begin{equation*}
\begin{split}
\lim_{s\to \infty}\eta^+(s, t)&=\lim_{s\to -\infty}\eta^-(s, t)\\
&=\av (h^+, h^-)\\
&=A+\av (r^+, r^-),
\end{split}
\end{equation*}
if $h^\pm =A+r^\pm$, with the common limit $A$ of $h^\pm$. In addition, 
$$\av (\eta^+, \eta^-)=\av (h^+, h^-),$$
so that  the projection map $\pi_a$ leaves the averages invariant. A computation shows that 
$$\pi_a\circ \pi_a (h^+, h^-)=\pi_a (h^+, h^-)$$
so that the linear map $\pi_a$ is indeed a projection.

\begin{thm}\label{first-splice}
If  $\pi_a:E\to E$ denotes  the above  linear projection
of $E$ onto $\ker \ominus_a $ along $\ker  \oplus_a$, then  the  map
$$
r:B_\frac{1}{2}\oplus E\to  B_\frac{1}{2}\oplus E, \quad (a,u^+,u^-)\mapsto (a,\pi_a(u^+,u^-))
$$
is an sc-smooth retraction.
\end{thm}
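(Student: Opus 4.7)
The idempotency $r\circ r=r$ is immediate from the fact that $\pi_a$ is a linear projection for every fixed $a\in B_\frac{1}{2}$, so the real content is the sc-smoothness. I would first replace the variable $(u^+,u^-)$ by the decomposition $u^\pm=A+r^\pm$ with $A$ the common asymptotic constant; this identifies $E$ sc-isomorphically with $\R^N\oplus H^{3,\delta_0}(\R^+\times S^1,\R^N)\oplus H^{3,\delta_0}(\R^-\times S^1,\R^N)$, and in these coordinates the explicit formula for $\pi_a$ becomes a finite sum of the following elementary building blocks: (i)~the constant $A$ is untouched, (ii)~multiplication by the smooth scalar-valued functions $s\mapsto \beta_a(s)/\gamma_a(s)$, $s\mapsto 1-\beta_a(s)/\gamma_a(s)$ and $s\mapsto \beta_a(s)(1-\beta_a(s))/\gamma_a(s)$, (iii)~the shift $u\mapsto u(\cdot -R,\cdot-\vartheta)$ that sends a function on the negative half-cylinder to one on the positive half-cylinder (or vice versa), and (iv)~the averaging functional $\text{av}_a(r^+,r^-)$, which is a weighted boundary integral at $s=\pm R/2$.

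Away from $a=0$, each building block is smooth in the classical sense on every level $E_m\to E_m$, since $R=\varphi(|a|)$ and $\vartheta$ depend smoothly on $a\in B_\frac{1}{2}\setminus\{0\}$, and the cutoff $\beta_a(\cdot)=\beta(\cdot-R/2)$ depends smoothly on $R$ in $C^\infty_b$. The shift map, parametrised by the smooth real parameters $(R,\vartheta)$, is sc-smooth by Proposition \ref{sc-sm} (its sc-smoothness for variable translation parameters is precisely the content of Example \ref{shift-example1}); composing with smooth pointwise multiplication and the smooth linear average gives an sc-smooth map on $(B_\frac{1}{2}\setminus\{0\})\oplus E$. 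The Chain Rule, Theorem \ref{chain-thm1}, then assembles these pieces into an sc-smooth map off the origin, and shows $r$ restricted to $(B_\frac{1}{2}\setminus\{0\})\oplus E$ is $\ssc^\infty$.

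The main obstacle is sc-smoothness at $a=0$, where $R=e^{1/|a|}-e\to\infty$ and the parameter dependence has the same kind of essential singularity as in Example \ref{funny-example}. The plan is to copy the induction scheme of Lemma \ref{proof-funy-example}: show by induction on $k$ that every iterated tangent map extends continuously by zero across $\{a=0\}$, and that on $\{a\neq 0\}$ the $k$-th tangent consists of linear combinations of model terms in which each derivative in $a$ produces at most polynomial-in-$(R,1/|a|)$ factors multiplying shifted copies of $r^\pm$ and derivatives of $\beta_a$. The crucial estimate is that every model term contains a factor supported in a translate of $[-1,1]+R/2$, and hence, after passing to the exponential weight $e^{\delta_m|s|}$ on the target level $E_m$ while using the stronger weight $e^{\delta_{m+1}|s|}$ on the input level $E_{m+1}$, one gains a decaying factor of order $e^{-(\delta_{m+1}-\delta_m)R/2}=e^{-(\delta_{m+1}-\delta_m)(e^{1/|a|}-e)/2}$; this decay beats every polynomial in $e^{1/|a|}$ and $1/|a|$, so the candidate linearisations at $a=0$ vanish and all difference quotients tend to zero, exactly as in the estimate \eqref{sF3}.

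Combining the two regimes, one concludes that $r$ is $\ssc^\infty$ on all of $B_\frac{1}{2}\oplus E$, hence an $\ssc$-smooth retraction, and the image $O=r(B_\frac{1}{2}\oplus E)=\{(a,\pi_a(u^+,u^-))\}$ is an $\ssc^\infty$-retract; the splicing structure is of the partially linear form discussed after Definition \ref{m-pl}, so verifying the two remaining conditions of that definition reduces to checking that $(\id-D_{(u^+,u^-)}\pi_a)$ has range in $E$ itself, which is automatic here since the partial quadrant $C$ is all of $\R^2\oplus E$.
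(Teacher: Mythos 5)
Your proposal is correct and takes essentially the same approach as the paper's proof in Section~\ref{helmut}: the same decomposition of $\pi_a$ via $u^\pm=c+r^\pm$ into the building blocks the paper labels M1--M5 (asymptotic constant, weighted averaging, multiplication by $\beta_a^2/\gamma_a$ and $\beta_a(1-\beta_a)/\gamma_a$, and shift), the same use of the chain rule on $B_{1/2}\setminus\{0\}$, and the same plan of a structural induction on iterated tangent maps together with exponential decay estimates of the type in Lemmata \ref{lkk} and \ref{lkj} to cross the essential singularity at $a=0$, exactly as in Lemma \ref{proof-funy-example}. The only organizational difference worth noting is that the paper proves Proposition \ref{qed} once on the full cylinder $\R\times S^1$ and then reduces the half-cylinder blocks M4 and M5 to it by cutting off near the boundary, extending, and restricting back (Lemmata \ref{lem2.21-nnew} and \ref{lem2-22}) --- a step your appeal to Proposition \ref{sc-sm} (which lives on $L^2(\R\times S^1)$, not on half-cylinders) quietly requires --- whereas your plan would in effect redo the same induction separately for each block; the paper's modularization is what lets it reuse Proposition \ref{qed} verbatim later for Theorem \ref{second-splicing} and for the chart transitions in Proposition \ref{prop3.14-new}.
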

Theorem \ref{first-splice} will be proved in Section \ref{helmut}. At this point we shall prove Theorem  \ref{propn-1.27}. 
\begin{proof}[Proof of Theorem \ref{propn-1.27}]
If $a=0$, there is nothing to prove. We assume that $a\neq 0\in B_{\frac{1}{2}}$, take $(\eta^+, \eta^-)\in E$, and set $\oplus_a(\eta^+, \eta^-)=u$ and $\ominus_a(\eta^+, \eta^-)=v$. Explicitly, 
\begin{equation*}
\begin{split}
\beta_a (s)\cdot \eta^+(s, t)+(1-\beta_a (s))\cdot \eta^-(s-R, t-\vartheta)=u(s, t)
\end{split}
\end{equation*}
for $0\leq s\leq R$, and 
\begin{equation*}
\begin{split}
-(1-\beta_a (s))&\left[ \eta^+(s, t)-\av (\eta^+, \eta^-)\right]\\
& +\beta_a (s)\left[ \eta^-(s-R, t-\vartheta)-\av (\eta^+, \eta^-)\right]=v(s, t)
\end{split}
\end{equation*}
for all $s\in \R$. Since $\beta_a (s)=0$ if $s\geq \frac{R}{2}+1$ and $\beta_a(s)=1$ if $s\leq \frac{R}{2}-1$ , we conclude $\lim_{s\to \infty}v(s, t)=-\lim_{s\to \infty}\eta^+(s, t)+\av (\eta^+, \eta^-)$ and $\lim_{s\to -\infty}v(s, t)=\lim_{s\to -\infty}\eta^-(s, t)-\av (\eta^+, \eta^-)$. Consequently, $(u, v)\in G^a$.

Conversely, 
given $(u, v)\in G^a$ we look for a solution $(\eta^+, \eta^-)$ of the two equations $\oplus_a(\eta^+, \eta^-)=u$ and $\ominus_a(\eta^+, \eta^-)=v$. Integrating the first equations at $s=\frac{R}{2}$ over $S^1$ and observing that $\beta_a (\frac{R}{2})=\beta(0)=\frac{1}{2}$, we obtain 
$$\av (\eta^+, \eta^-)=\int_{S^1}u\left(\frac{R}{2}, t\right)\ dt=:[u]$$
and proceeding as above we arrive at the presentation
\begin{equation*}
\begin{split}
\begin{bmatrix}
\eta^+(s, t)\\
\eta^-(s-R, t-\vartheta)
\end{bmatrix}&=\frac{1}{\gamma_a}
\begin{bmatrix}
\beta_a&-(1-\beta_a)\\
1-\beta_a&\phantom{-}\beta_a
\end{bmatrix}
\begin{bmatrix}
u\\
(2\beta_a-1)[u]+v
\end{bmatrix}\\
&=
\begin{bmatrix}
\frac{1}{\gamma_a}\left( \beta_a u-(1-\beta_a)v\right)-\frac{1}{\gamma_a}(1- \beta_a)(2\beta_a -1)[u] \\
\frac{1}{\gamma_a}\left( (1-\beta_a ) u+\beta_a v\right)+\frac{1}{\gamma_a}\beta_a(2\beta_a -1)[u] 
\end{bmatrix}
\end{split}
\end{equation*}
where we have abbreviated $\beta_a=\beta_a(s)$, $\gamma_a=\gamma_a (s)$, $u=u(s, t)$ and 
$v=v(s,t)$. For the asymptotic limits  we read off that 
 $\lim_{s\to \infty}\eta^+(s, t)=[u]-\lim_{s\to \infty}v(s, t)$ and   $\lim_{s\to -\infty}\eta^-(s, t)=[u]+\lim_{s\to -\infty}v(s, t)$.  Since the asymptotic limits of $v$ have opposite signs, we conclude that 
$\lim_{s\to \infty}\eta^+(s, t)= \lim_{s\to -\infty}\eta^-(s, t)$ and hence 
$(\eta^+, \eta^-)\in E$ as desired.

\end{proof}

If $\co =r(B_\frac{1}{2}\oplus E)$ is the retract of the sc-smooth retraction guaranteed  by Theorem \ref{first-splice},  the  map
$$
\co \to  (\{0\}\times E)\bigcup \biggl(\bigcup_{a\in B_\frac{1}{2}\setminus\{a\}} \{0\}\times H^3(Z_a)\biggr)
$$
defined by
$$
(a,u^+,u^-)\mapsto  (a,\oplus_a(u^+,u^-))
$$
is, by construction, a bijection between $\text{ker}\ \ominus_a$ and $E$ if $a=0$,  respectively  $H^3(Z_a)$ if $a\neq 0$. We equip the target  space with the topology making this  map a homeomorphism. Then the inverse of this map is a chart
on  the local model $\co$.  This way we obtain  a construction describing a smooth structure for a suitable 2-dimensional family of function spaces on cylinders which have increasing modulus. We will study 
this smooth structure in detail in chapter \ref{longcylinders}.

There is another version of gluing which will be used in the proofs in Section \ref{longcylinders}. Taking the strictly increasing sequence  $(\delta_m)_{m\in \N_0}$,  we denote by $F$ the  sc--Hilbert space  consisting  of pairs $(u^+,u^-)$
of maps 
$$u^\pm:\R^\pm\times S^1\to  \R^N$$
whose partial derivatives up to order $2$  weighted by $e^{\delta_0|s|}$ 
belong to $L^2(\R^\pm\times S^1)$. We equip  $F$  with the sc-structure  defined by the sequence $F_m=H^{2+m,\delta_m}(\R^+\times S^1)\oplus H^{2+m,\delta_m}(\R^-\times S^1)$. For $(u^+, u^-)\in F$,  the glued map   $\wh{\oplus}_a(u^+, u^-)$ is defined  by the same formula as $\oplus_a(u^+, u^-)$. However, the anti-glued map $\wh{\ominus}_a$   takes the  simpler form
$$
\wh{\ominus}_a(u^+,u^-)([s,t])=-(1-\beta_a (s))\cdot  u^+(s,t)+
\beta_a(s)\cdot u^-(s-R,t-\vartheta)
$$
for  $[s,t]\in C_a$. The image of  the anti-glued map $\wh{\ominus}_a$ is   $H^{2,\delta_0}(C_a)$. Proceeding as before we can define a projection $\wh{\pi}_a:F\to F$ onto the kernel $\ker \wh{\ominus}_a$ of  the anti-gluing map $\wh{\ominus}_a$ along the kernel $\ker  \wh{\oplus}_a$ of the gluing map $\wh{\oplus}_a$.

We abbreviate by $\wh{G}^a$ the following spaces. If $a=0$, we set 
$$\wh{G}^0=F\oplus \{0\},$$
and  if  $a\neq 0$, we  define  
$$
\wh{G}^a = H^2(Z_a)\oplus H^{2,\delta_0}(C_a).
$$
The sc-structure of $G^a$ is  given by the sequence $H^{2+m}(Z_a)\oplus H^{2+m, \delta_m}(C_a)$ for all $m\in \N_0$. 
The following theorem holds true.

\begin{thm}\label{second-splicing}
\mbox{}
\begin{itemize}
\item[(1)]  If we equip the Hilbert space $ H^2(Z_a)\oplus H^{2, \delta_0}(C_a)$ with the  sc--structure $ H^{2+m}(Z_a)\oplus H^{2+m, \delta_m}(C_a)$, then 
the total hat gluing map 
$$\wh{\boxdot}_a=(\wh{\oplus}_a, \wh{\ominus}_a): F\to \wh{G}^a
$$
is an sc--smooth isomorphism for every $a\in B_{\frac{1}{2}}$.In particular, $E=(\ker \wh{\oplus}_a)\oplus  (\ker \wh{\ominus}_a).$
\item[(2)] 
The map
$$
\wh{r}:B_\frac{1}{2}\oplus F\to  B_\frac{1}{2}\oplus F, \quad (a, u^+,u^- )\mapsto (a,\wh{\pi}_a(u^+,u^-))
$$
is an sc-smooth retraction.
\end{itemize}
\end{thm}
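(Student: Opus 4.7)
The plan is to follow the strategy used for Theorem \ref{propn-1.27} and Theorem \ref{first-splice}, exploiting the fact that in the hat version there are no asymptotic constants and no averaging term, so every formula becomes strictly simpler. For part (1) with $a=0$ the statement is trivial. For $a\neq 0$ I will write the two equations $\wh{\oplus}_a(\eta^+,\eta^-)=u$ and $\wh{\ominus}_a(\eta^+,\eta^-)=v$ in matrix form on the overlap $0\le s\le R$ exactly as before, obtaining the $2\times 2$ system with determinant $\gamma_a(s)=\beta_a(s)^2+(1-\beta_a(s))^2\ge 1/2$. Inverting this system gives explicit formulas for $\eta^+(s,t)$ and $\eta^-(s-R,t-\vartheta)$; outside the overlap either $\beta_a\equiv 1$ or $\beta_a\equiv 0$, so one of the two equations decouples and determines the relevant $\eta^\pm$ directly. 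Since $u\in H^2(Z_a)$ lives on the finite cylinder and $v\in H^{2,\delta_0}(C_a)$ already carries the required weighted decay, reading the explicit formula off shows that $(\eta^+,\eta^-)\in F$ with bounded $F_m$-norm. This proves $\wh{\boxdot}_a$ is a bijection, and the sc-continuity of $\wh{\oplus}_a$ and $\wh{\ominus}_a$ is straightforward, yielding the sc-isomorphism and the direct sum decomposition $F=\ker\wh{\oplus}_a\oplus \ker\wh{\ominus}_a$.

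For part (2), I will define $\wh{\pi}_a(h^+,h^-)$ by plugging $u=\wh{\oplus}_a(h^+,h^-)$ and $v=0$ into the inversion formula from part (1). On the overlap this gives
\begin{equation*}
\eta^+(s,t)=\frac{\beta_a(s)^2}{\gamma_a(s)}h^+(s,t) + \frac{\beta_a(s)(1-\beta_a(s))}{\gamma_a(s)}h^-(s-R,t-\vartheta),
\end{equation*}
with an analogous formula for $\eta^-$ and obvious identifications on the non-overlap region. The relation $\wh{\pi}_a\circ \wh{\pi}_a=\wh{\pi}_a$ is then immediate from the uniqueness of the decomposition in part (1): $\wh{\pi}_a(h^+,h^-)$ is already characterized by the two equations that $\wh{\pi}_a$ applied to it must satisfy.

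The analytic content, and the main obstacle, is proving the map $(a,h^+,h^-)\mapsto(a,\wh{\pi}_a(h^+,h^-))$ is sc-smooth on $B_\frac{1}{2}\oplus F$. Away from $a=0$, every ingredient is classically smooth between scale spaces $\R^2\oplus F_m$, so Proposition \ref{up-prop} gives sc-smoothness there. At $a=0$ the gluing length $R=e^{1/|a|}-e$ blows up, the cutoff $\beta_a(s)=\beta(s-R/2)$ concentrates near $s\sim R/2$, and on that support the weight $e^{2\delta_m|s|}$ is enormous. This is precisely the difficulty handled in Example \ref{funny-example} and Lemma \ref{proof-funy-example}, and again in the proof of Theorem \ref{first-splice}. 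The mechanism I will invoke is identical: any $k$-th order $a$-derivative of the explicit $\wh{\pi}_a$-formula produces at worst a polynomial $P(e^{1/|a|},1/|a|)$ times $(h^+,h^-)$, supported on (a shift of) the support of $\beta_a$, while testing $(h^+,h^-)$ on the higher scale level $m+1$ instead of $m$ gains a super-exponential factor $e^{-2(\delta_{m+1}-\delta_m)e^{1/|a|}}$ from the weights. Since $\delta_{m+1}>\delta_m$ strictly, this gain crushes any polynomial, so the $k$-th tangent map extends continuously by $0$ across $a=0$. An $(S_k)$-type induction literally parallel to the one in Lemma \ref{proof-funy-example} completes the argument; the absence of the $\av(\cdot,\cdot)$ correction term in $\wh{\ominus}_a$ makes the bookkeeping shorter than in the $\oplus_a/\ominus_a$ case, but the analytic core is identical.
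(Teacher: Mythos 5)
Your part (1) mirrors the paper's proof of Theorem \ref{propn-1.27} correctly and is simpler for exactly the reason you identify: $\wh{\ominus}_a$ has no averaging term, so the mean-value/asymptotic-constant bookkeeping drops out and the $2\times 2$ matrix inversion with determinant $\gamma_a\geq\frac12$ does everything. Your reading of the theorem statement with $F$ in place of the (misprinted) $E$ in the direct-sum decomposition is also correct.

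In part (2), however, there is a genuine misstep. You write that away from $a=0$ ``every ingredient is classically smooth between scale spaces $\R^2\oplus F_m$, so Proposition \ref{up-prop} gives sc-smoothness there.'' This is false. The second term of your formula involves the shift $h^-(s-R(a),t-\vartheta(a))$, and the shift map is the paradigm example of a map that is \emph{not} classically $C^1$ on any fixed scale: differentiating $u\mapsto u(\cdot+R)$ in $R$ costs one full derivative, so the induced map $\R^2\oplus F_{m+l}\to F_m$ is only $C^l$, one short of the $C^{l+1}$ that Proposition \ref{up-prop} requires. The paper emphasizes this at Example \ref{shift-example1} (``It is clearly not differentiable in the classical sense'') and devotes all of Section \ref{app1} to establishing the shift map's sc-smoothness directly. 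Sc-smoothness of $\wh{\pi}_a$ for $a\neq 0$ must therefore be obtained from Proposition \ref{sc-sm} and the chain rule, not from Proposition \ref{up-prop}. This is in fact the route the paper takes inside Proposition \ref{qed}. Your handling of $a=0$ via the $(S_k)$-induction and the $e^{-d_{m+k,m}R}$ decay gain from raising the level is the correct mechanism; for completeness you should also note, as Lemmata \ref{lem2.21-nnew} and \ref{lem2-22} do, that passing between half-cylinders and the full cylinder to apply Proposition \ref{qed} requires the $\chi_1,\chi_2$ cutoff trick, since extension by zero across $s=0$ is not level-preserving.
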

The proof is the same as that of Theorem \ref{propn-1.27} and Theorem \ref{first-splice}.
\begin{rem}
The same result is true if we consider the projection
on the space $H^{3,\delta_0}(\R^+\times S^1)\oplus H^{3,\delta_0}(\R^-\times S^1)$ rather than on $F$.
We choose  $F$ here since this particular retraction $\wh{r}$ occurs in the
construction of bundles in SFT accompanying the constructions of base spaces which involve the previous retraction $r$.
\end{rem}
Our constructions in Gromov-Witten theory and SFT  makes use of the next result described in Theorem \ref{tran-pol}  below which is closely related to the previous constructions.
Using Theorem \ref{first-splice}, we have equipped the set
$$
{\mathcal O}:=\left(\{0\}\times E\right)\bigcup \left(\bigcup_{a\in B_\frac{1}{2}\setminus \{0\}} \left(\{a\}\times H^3(Z_a,\R^N)\right)\right)
$$
with the structure of an  M-polyfold. In fact,  it is covered by a single  chart which also defines the topology. We are  interested in sc-smooth maps ${\mathcal O}\to  {\mathcal O}$ which will arise in the construction of the polyfolds of SFT.

In order to describe these maps we  start with the half-cylinders $\R^\pm\times S^1$ and assume
that we are given two smooth families $v\mapsto  j^\pm(v)$ of complex structures parameterized by $v$ which belongs to some open neighborhood $V$ of $0$ in some finite-dimensional vector space.
We  require that  the complex structures $j^\pm(v)$ away from the boundaries agree with  the standard conformal structure. Suppose  there exist  two smooth families
$v\mapsto  p^\pm(v)$  of marked points on  the boundaries $\partial (\R^\pm\times S^1)$. Given a sufficiently small gluing parameter $a$, 
we can construct  the glued cylinder $Z_a$ equipped with the complex structure $j(a,v)$ induced  from $j^\pm(v)$ and smooth families of induced  marked points $p^\pm(a,v)$. We have defined the family
$$
(a,v)\mapsto  (Z_a,j(a,v),p^+(a,v),p^-(a,v))
$$
 of complex cylinders with marked points. We introduce a second family  of finite cylinders as follows. We fix the special marked points $(0,0)$ on the boundaries of  the two standard cylinders. Then,  given a gluing parameter $b$,  we obtain  the finite cylinder cylinder  $Z_b$  equipped with the standard complex structure $i$ and the two marked points $p_b^\pm$. This way we obtain a  second  family 
$$
b\mapsto (Z_b,i,p^+_b,p^-_b)
$$
of complex finite cylinders.
It is a standard fact  from the uniformization theorem that the cylinder 
$(Z_a,j(a,v))$ is biholomorphic to the cylinder $([0,R]\times S^1, i)$  for a uniquely  determined  $R$. This biholomorphic map is unique up to rotation in the image (and reflection). If we require that the marked point $p^+(a,v)$ is mapped onto the point  $p^+_{b}=(0,0)$, we find a uniquely determined complex number $b=b(a,v)$ such that $R=\varphi(|b|)$ and $p^-(a,v)$ is mapped to the marked point $p^-_b$. 

Thus given the pair $(a,v)$,  there is precisely one gluing parameter $b:=b(a,v)$ for which  there exists a biholomorphic map
$$
\Phi_{(a,v)}:(Z_a,j(a,v),p^+(a,v),p^-(a,v))\to  (Z_b,i,p^+_b,p^-_b).
$$
If we use instead of the exponential gluing profile the logarithmic gluing profile $-\frac{1}{2\pi}\ln(r)$
it is well known  that the map $(a,v)\mapsto b(a,v)$ is  smooth. However, the same is true for the exponential gluing profile. 
This can be deduced from the result
about the logarithmic gluing profile by means of a calculus exercise involving results from Section \ref{calc-lemma}. We leave this approach to the reader.
Here we  shall derive  this fact as a corollary from the following more general result.  We consider the map which associates with the element 
$(v, a, w)\in V\oplus {\mathcal O}$ for $(v, a)$ small the element $(b(a,v),w')\in {\mathcal O}$ in which $w'$ is defined by
$$
w'= w\circ \Phi_{(a, v)}^{-1}.
$$
\begin{thm}\label{tran-pol}
Let $(\delta_m)_{m\geq 0}$ be a strictly increasing sequence satisfying 
$$0<\delta _m<2\pi\quad \text{ for all $m\geq 0$.}$$
Then the map 
 $$ 
 V\oplus {\mathcal O}\to  {\mathcal O}, \quad  (v,a,w)\mapsto  (b(a,v),w\circ \Phi_{(a, v)}^{-1})
 $$ 
 defined for $(a,v)$ small is an sc-smooth map for the M-polyfold structure on ${\mathcal O}$.
 \end{thm}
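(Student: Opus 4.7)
The plan is to lift the map to the single M-polyfold chart of $\mathcal{O}$ provided by the retraction $r$ of Theorem \ref{first-splice} and verify sc-smoothness of the lifted map. Writing $w = \oplus_a(u^+,u^-)$ for an element $(a,u^+,u^-) \in r(B_{1/2} \oplus E)$, the map in local coordinates reads
$$(v,a,u^+,u^-) \longmapsto \bigl(b(a,v),\; \boxdot_{b(a,v)}^{-1}\bigl(w\circ \Phi_{(a,v)}^{-1},\;0\bigr)\bigr),$$
where we invert the sc-isomorphism $\boxdot_b = (\oplus_b,\ominus_b)$ of Theorem \ref{propn-1.27}. The task then splits into three pieces: (i) sc-smoothness of $(a,v)\mapsto b(a,v)\in\mathbb{C}$; (ii) sc-smoothness of the composition $(v,a,w)\mapsto w\circ \Phi_{(a,v)}^{-1}$, viewed as taking values in the appropriate weighted Sobolev space on $Z_{b(a,v)}$; and (iii) sc-smooth dependence of $\boxdot_b^{-1}$ on $b$, which follows from the explicit matrix formula preceding Theorem \ref{first-splice} together with the same techniques used in its proof.

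For (i), the analogous statement with the logarithmic gluing profile $-\frac{1}{2\pi}\ln r$ is classical: smoothness of $(a_{\log},v)\mapsto b_{\log}(a_{\log},v)$ is a standard consequence of smooth dependence in the uniformization theorem for annuli. Passing back to the exponential profile amounts to pre- and post-composing with the transition $a_{\exp}\leftrightarrow a_{\log}$, whose modulus satisfies $|a_{\log}| = e^{-2\pi(e^{1/|a_{\exp}|}-e)}$. This map vanishes to infinite order as $a_{\exp}\to 0$ and is \emph{not} classically smooth at the origin; however, it is sc-smooth, which is precisely the phenomenon exhibited in Example \ref{funny-example} and verified by direct estimate in Lemma \ref{proof-funy-example}. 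Composing the classically smooth $b_{\log}$ with the sc-smooth transition, and invoking the sc-chain rule of Theorem \ref{chain-thm1}, yields sc-smoothness of $b(a,v)$.

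For (ii), I would split the composition using the cutoff $\beta_a,\,1-\beta_a$ into contributions on the positive and negative half-cylinders. Away from the neck, $\Phi_{(a,v)}^{-1}:Z_{b(a,v)}\to Z_a$ is, in the conformal coordinates inherited from $\R^\pm\times S^1$, a smooth finite-dimensional family of reparametrizations, so the composition reduces directly to an instance of Theorem \ref{thm-125}. On the neck, one must instead exploit that the weights $e^{\delta_m|s|}$ give exponentially small contributions there and that the ungluing recovers sc-smoothness via Theorem \ref{propn-1.27}. Combined with (i) and (iii) through the General Chain Rule this assembles into the desired conclusion.

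The main obstacle is the uniform control of the weighted Sobolev norms under pullback by $\Phi_{(a,v)}^{-1}$: the gluing lengths $R_a=\varphi(|a|)$ and $R_b=\varphi(|b(a,v)|)$ generally differ, and the weight $e^{\delta_m|s|}$ transforms nontrivially under the biholomorphism. The hypothesis $0<\delta_m<2\pi$ is precisely what tames this transformation — it prevents the exponential weight from outgrowing the conformal factor of $\Phi_{(a,v)}^{-1}$ — and ensures that the resulting composition defines bounded sc-operators at every level and depends continuously on the parameters. Once this level-wise boundedness and continuity are in hand, classical smoothness of the composition $u\mapsto u\circ\phi_v$ between finite Sobolev levels (as in the reduction of Theorem \ref{thm-125}), combined with Proposition \ref{up-prop}, promotes it to sc-smoothness and the argument closes.
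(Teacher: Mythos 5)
Your proposal takes the ``calculus exercise'' route that the paper explicitly mentions after stating the theorem and then declines to pursue, choosing instead to derive Theorem~\ref{tran-pol} as a corollary of the polyfold Fredholm machinery built up in Section~\ref{longcylinders} (Theorems~\ref{fred:thm0} and~\ref{strong-x}, via the sc-implicit function theorem). The alternative route is legitimate in principle, but your execution of step~(i) contains a concrete error that you cannot repair as stated.

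You claim that the profile transition $a_{\exp}\mapsto a_{\log}$, with $|a_{\log}| = e^{-2\pi(e^{1/|a_{\exp}|}-e)}$, is ``not classically smooth at the origin; however, it is sc-smooth,'' invoking Example~\ref{funny-example}. This is wrong on two counts. First, in finite dimensions the only sc-structure is the constant one, so sc-smooth maps between finite-dimensional spaces are exactly the $C^\infty$ maps; a map $\C\to\C$ cannot be sc-smooth without being classically smooth. Example~\ref{funny-example} concerns a map into an infinite-dimensional sc-Hilbert space and is not applicable. Second, you have the smoothness claims reversed: the map $a_{\exp}\mapsto a_{\log}$ \emph{is} classically smooth at $0$ (a double exponential annihilates every polynomial in $1/|a_{\exp}|$ and $e^{1/|a_{\exp}|}$), whereas the inverse direction $a_{\log}\mapsto a_{\exp}$ is \emph{not} even differentiable at $0$. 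This matters because to express $b_{\exp}$ as a function of $(a_{\exp},v)$ by your factoring, you must post-compose the classically smooth $b_{\log}$ with the \emph{inverse} transition $a_{\log}\mapsto a_{\exp}$ in the target variable — exactly the direction that fails to be smooth. So your factorization produces a non-smooth composite. The correct calculus ingredient is not a profile transition at all, but the observation that the moduli satisfy $\varphi(|b|) = \varphi(|a|) + c(a,v)$ with $c$ smooth and bounded (this relation is profile-independent), so that $|b| = B(|a|,c(a,v))$; Lemma~\ref{lem5.2} then gives smoothness of $B$ and Lemma~\ref{thm5.3} upgrades to the complex parameter. Your write-up does not cite these lemmas, which are the crux of the calculus exercise.

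Beyond this, parts (ii) and (iii) are reasonable in outline but not carried out: the weighted-norm control under the biholomorphism, which you correctly identify as the main difficulty and the reason for the hypothesis $0<\delta_m<2\pi$, is asserted rather than proved, and the reduction to Theorem~\ref{thm-125} away from the neck region needs to be supplemented by a quantitative statement about how the neck contributions enter (these are precisely the estimates that occupy Sections~\ref{esttotalgluing} and~\ref{subsub}, and the content of Lemma~\ref{input-xy}). In contrast, the paper avoids these issues by posing the family $\Phi_{(a,v)}$ as the zero set of the Cauchy-Riemann Fredholm section on the M-polyfold $V\oplus\ov{X}$ and invoking the implicit function theorem, which buys an intrinsic and globally coherent formulation of the sc-smooth dependence.
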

The restriction that the sequence $(\delta_m)$ lies in the open interval $(0,2\pi)$
 is related to the behavior of  the map $\Phi_{(a,v)}$ as $a\to  0$. In order to prove Theorem \ref{tran-pol} it is necessary  to understand the smoothness properties of the map
$$
(a,v)\mapsto  \Phi_{(a,v)}.
$$
This will be part of the illustration in the Section \ref{illustration}.

\begin{rem}\label{rem1-nnew}
There is no loss of generality  in assuming that 
$p^\pm(v)=(0,0)$ for all $v\in V$. This can be achieved  by taking different complex structures $j^\pm(v)$.
Indeed,  we can choose a smooth family of diffeomorphisms which are supported near the boundary and map the points $p^\pm(v)$ to the point $(0,0)$.
Then we conjugate the original $j^\pm(v)$ with this family to obtain the new one. By the previous discussion,  this family of diffeomorphisms  acts sc-smoothly. So,  Theorem \ref{tran-pol} will follow once it is proved for the special case  just described. We shall assume in the following that this reduction has been carried out.
\end{rem}

\subsection{Fredholm Theory}\label{Fredholm-section}
In this section we  outline the Fredholm theory in M-polyfolds. We shall only describe  the case of M-polyfolds without boundaries and  refer the reader to \cite{HWZ3,HWZ3.5} for the  general case.

We have already introduced the notion of an  M-polyfold and now introduce the notion of a strong bundle in the case that the underlying base space does not have a boundary. Let  $E$ and $F$ be sc-Banach spaces and let $U$ be  an open subset of $E$. We define the nonsymmetric product $U\triangleleft F$ as follows. As a set the product $U\triangleleft F$ is  equal to $U\times F$. However,  it is equipped with the double filtration 
$$
(U\triangleleft F)_{m,k} = U_m\oplus F_{k}
$$
 defined for  all $m\in \N_0$ and all integers $0\leq k\leq m+1$.  Given $U\triangleleft F$ we have, forgetting part of the structure,
the  underlying direct sum $U\oplus F$. We view $U\triangleleft F\to U$ as a bundle with base space $U$  and fiber $F$, where the double filtration has the interpretation 
that above a point $x\in U$ of  regularity $m$ it makes sense to talk about the fiber regularity  of a point $(x, h)$ up to order $k$ provided $k\leq m+1$. 
The tangent space $T_\triangleleft(U\triangleleft F)$  is defined by
$$
T_\triangleleft(U\triangleleft F)=(TU)\triangleleft (TF).
$$
Observe that there is a difference in the order of the factors
between  $T_\triangleleft(U\triangleleft F)$ and  $T(U\oplus F)$. Indeed, 
$$
T_\triangleleft(U\triangleleft F)=(U_1\oplus E)\triangleleft ( F_1\oplus F)
$$
while (for the underlying $U\oplus F$)
$$T(U\oplus F)=U_1\oplus F_1\oplus  E\oplus F.$$
A map $f:U\triangleleft F\rightarrow V\triangleleft G$ between nonsymmetric products is  an  $\ssc_\triangleleft^0$-map if for all $m\in \N_0$ and all $0\leq k\leq m+1$, 
$$
f(U_m\oplus F_k)\subset V_m\oplus G_{k}
$$
and if  the induced maps 
$$
f:U_m\oplus F_k\to V_m\oplus G_{k}
$$
are  continuous. In addition, we require  that the map $f$ is of  the form
$$
f(u,h)=(f_0(u),\phi(u,h))
$$
where $\phi(u,h)$ is  linear in $h$.

We observe that the map
$f$ induces  $\ssc^0$-maps
\begin{equation}\label{bot}
f:U\oplus F^i\rightarrow V\oplus G^i
\end{equation}
for $i=0,1$.  The map $f$ is $\ssc^1_\triangleleft$ if 
the maps in \eqref{bot}  for $i=0,1$ are both of class $\ssc^1$.  In this case the tangent maps  $Tf:T(U\oplus F^i)\rightarrow T(V\oplus G^i)$ are  defined as usual by
$$
Tf(x,h,y,k)=(f_0(x),\phi(x,h),Df_0(x)y,D\phi(x,h)(y,k)).
$$
After reordering of factors  of the domain and the  target spaces, we obtain the  $\ssc^0$- map
$$
T_\triangleleft f: TU\triangleleft TF\rightarrow TV\triangleleft TG
$$
defined by
$$
(x,h,y,k)\mapsto  (f_0(x),Df_0(x)y,\phi(x,h),D\phi(x,h)(y,k)).
$$
This  reordering is consistent with the chain rule and one verifies that if $f$ and $g$ are $\ssc^1_\triangleleft$-maps which can be composed,  then also the composition $g\circ f$ is of class $\ssc^1_\triangleleft$ and satisfies the chain rule, 
$$
T_\triangleleft(g\circ f)= (T_\triangleleft g)\circ (T_\triangleleft f).
$$
Iteratively one defines  the maps of class $\ssc^k_\triangleleft$ for $k=1, 2, \ldots $ and  $\ssc_\triangleleft$-smooth maps.

\begin{defn}
An  $\ssc_\triangleleft$-smooth retraction is an  $\ssc_\triangleleft$-smooth map 
$$R:U\triangleleft F\rightarrow U\triangleleft F$$ 
satisfying $R\circ R=R$. The image  $R(U\triangleleft F)$ is called an sc-smooth strong bundle retract.
\end{defn}
It is implicitly required that the above retraction $R:U\triangleleft F\rightarrow U\triangleleft F$ is of the from $R(x, h)=(r(x), \rho (x, h))$ where $r:U\to U$ is an sc-smooth retraction and $\rho (x, h)$ is linear in $h$. We denote by $K=R(U\triangleleft F)$ the  sc-smooth strong bundle retract and by $O=r(U)$ the associated sc-smooth retract. Then there is a canonical projection map 
$$
p:K\rightarrow O.
$$
The  set $K$ inherits the  double filtration and $p$ maps points of regularity $(m, k)$ to points in $U$ of regularity $m$. The canonical projection $p:K\to O$  is our local model for spaces which we shall call strong M-polyfold bundles. Given $K$,  we define the sets $K(i)$ for $i=0,1$ as follows, 
$$
K(i)=\{(u,h)\in U\oplus F^i\ \vert \  R(u,h)=(u,h)\}.
$$
Clearly,  the set $K(i)$ is an  sc-smooth retract  and the projection $p:K(i)\rightarrow O$ is sc-smooth for $i=0,1$. 

Now we are in a position to define the notion of a strong bundle. We consider a surjective continuous map $p:W\to X$ between two metrizable spaces, so that for every $x\in X$ the space $p^{-1}(x)=:W_x$ comes with the structure of a Banach space. A strong bundle chart is the tuple  $(\Phi, p^{-1}(U), K, U\triangleleft F)$ where $U\subset E$ is an open  subset of an sc-Banach space, $K= R(U\triangleleft F)$  an sc-smooth strong bundle retract covering the smooth retraction $r:U\to O$. Moreover,  $\Phi:p^{-1}(U)\to K$ is a homeomorphism covering a homeomorphism $\varphi:U\to  O$,   which between  between every fiber is a bounded linear operator of Banach spaces.
Two such charts are  $\ssc_\triangleleft$-smoothly equivalent  if the associated transition maps are  $\ssc_\triangleleft$-smooth diffeomorphisms. We can introduce  the notion of a strong bundle atlas and the notion of an equivalence between two such atlases. The continuous surjection  $p:W\to X$, if equipped with an equivalence class of strong bundle atlases, is called a strong bundle.

Given two such local bundles $K\rightarrow O$ and $K'\rightarrow O'$ we can define the notion of a strong bundle map between them. Then,  following the scheme how we defined M-polyfolds, we can define strong bundle $W\rightarrow X$ over an M-polyfold $X$. 

Next assume that $p:W\rightarrow X$ is  a strong M-polyfold bundle over the M-polyfold $X$. We can distinguish two types of sections of $p$. An sc-smooth section of $p$
is a map $f:X\rightarrow W$ with $p\circ f=\id$ satisfying $f(x)\in W_{m,m}=:W(0)_m$  for $x\in X_m$ so that $f:X\rightarrow W(0)$ is sc-smooth. An $\ssc^+$-section of $p$  is a section which satisfies $f(x)\in W_{m,m+1}=:W(1)_m$ for $x\in X_m$  and  the induced map
$f:X\rightarrow W(1)$ is sc-smooth. We shall denote these two classes of sections by $\Gamma(p)$ and $\Gamma^+(p)$, respectively.

Of  special  interest are the so-called (polyfold-) Fredholm sections. Their definition is  much more general than that of classical Fredholm sections. The first property which we require is that such sections should have the  regularizing property. This property models the outcome of elliptic regularity theory.
\begin{defn}
Let $p:W\rightarrow X$ be  a strong M-polyfold bundle over the M-polyfold $X$.  A section $f\in\Gamma(p)$ is said to be regularizing provided that the following holds. If $x\in X_m$ and $f(x)\in W_{m,m+1}$, then $x\in X_{m+1}$.
\end{defn}
We observe that if $f\in \Gamma(p)$ is regularizing and $s\in\Gamma^+(p)$,  then $f+s$ is  also regularizing.

Consider a strong local bundle $K\rightarrow O$. 
Here $K=R(U\triangleleft F)$ is  the sc-smooth strong bundle retract  associated to the $\ssc_\triangleleft$-smooth retraction $R:U\triangleleft F\rightarrow U\triangleleft F$  which covers the sc-smooth retraction $r:U\to U$ defined on an open neighborhood $U$ of $0$ in the  sc-Banach space $E$. Moreover, $O=r(U)$.  In addition, we assume that  $0\in O= r(U)$. Then $R(u,h)=(r(u),\phi(u)h)$ where  $\phi (u):F\to F$ is  linear. 

We are interested in germs of sections $(f,0)$ of the strong local bundle $K\rightarrow O$ which are defined near $0$.  
We  view $f$ as a germ ${\mathcal O}(O,0)\rightarrow F$ 
identifying  the local section with its principal part.
\begin{defn}\label{filled-def}
We say that the section germ $(f,0)$ has a filled version if there exists an sc-smooth  section germ
$(g,0)$ of the bundle $U\triangleleft F\rightarrow U$, again viewed
 as a germ
 $$
 {\mathcal O}(U,0)\rightarrow F, 
 $$
which extends $f$ and has the following properties:
 \begin{itemize}
\item[(1)] $g(x)=f(x)$ for $x\in O$ close to $0$.
\item[(2)] If $g(y)=\phi(r(y))g(y)$ for  a point $y$ in $U$  near $0$, then $y\in O$.
\item[(3)] The linearisation of the map
$$
y\mapsto [\id-\phi(r(y))]g(y)
$$
at  the point $0$,  restricted to $\ker{Dr(0)}$, defines a topological isomorphism
$$
\ker(Dr(0))\rightarrow \ker(\phi(0)).
$$
\end{itemize}
\end{defn}

In order to describe the significance of the three conditions in the above definition we assume that $y\in U$ is a solution of the filled section $g$ so that $g(y)=0$. Then it follows from (2) that $y\in O$ and from (1) that $f(y)=0$. 

We see that the original section $f$ and the filled section $g$ have the same solution set.

The requirement (3) plays a role if we compare the linearization $Df(0)$  with  the linearization $Dg(0)$. It  follows from the definition of a retract that 
$\phi (r(y))\circ \phi (r(y))=\phi (r(y)).$ Hence, since $y=0\in O$ we have $r(0)=0$ and $\phi(0)\circ \phi(0)=\phi (0)$ so that $\phi (0)$ is a linear sc-projection in $F$ and we obtain the sc-splitting 
$$
F=\phi (0)F\oplus (\id -\phi (0))F.
$$
Similarly, it follows from $r(r(y))=r(y)$ for $y\in U$ that $Dr (0)\circ Dr (0)=Dr(0)$ so that $Dr (0)$ is a linear sc-projection in $E$ which gives rise to the sc-splitting 
$$\alpha \oplus \beta \in  E=Dr (0)E\oplus (\id -Dr (0))E.$$
We keep in mind that $Dr (0)\alpha =\alpha$ and $Dr (0)\beta =0$. The tangent space $T_0O$ is equal to $Dr (0)E$ and 
$$\phi (0)\circ Dg (0)\vert Dr (0)E=Df (0):T_0O\to \phi (0)F.$$
 From the identity $\phi (r(y))g(r(y))=\phi (y)g(y)$ for all $y\in O$ and   the identity  $(\id -\phi (r(y))g(r(y))=0$ for all $y\in E$ we obtain by linearization at $y=0$ that $\phi (0)Dg(0)\beta =0$ and $(\id -\phi (0))Dg (0)\alpha =0$.  Hence the matrix representation of $D g (0):E\to F$ with respect to the splittings looks as follows,
$$
Dg(0)\begin{bmatrix}\alpha \\ \beta
\end{bmatrix}=
\begin{bmatrix}Df (0)&0\\0&(\id -\phi (0))\circ Dg (0)\end{bmatrix}\cdot 
\begin{bmatrix}\alpha\\ \beta
\end{bmatrix}.
$$
In view of property (3), the linear map $\beta \mapsto (\id -\phi (0))Dg (0)$ from the kernel of $Dr (0)$ onto the kernel of $\phi (0)$ is an isomorphism. Therefore, we conclude that 
$$\text{kernel}\ Dg (0)=(\text{kernel}\ Df (0))\oplus \{0\}.$$
Moreover, $Df (0):T_0O\to \phi (0)F$ is a Fredholm operator if and only if $Dg (0):E\to E$  is a Fredholm operator and in this case they have the same indices. Clearly, the linearization $Df(0)$ is surjective  if and only if $Dg(0)$ is surjective. 

\begin{rem}
We see that, given a solution $x$ of $f(x)=0$, the local study of the solution set $f(y)=0$ for $y\in O$ near $x$ of the section $f$, is equivalent to the local study of the solution set $g(y)=0$ for $y\in U$ near $x$ of the filled section $g$.
\end{rem}

\begin{rem} We assume that the section  $(f,0)$ has a filled version $(g,0)$ and that $s$ is an $\ssc^+$-section of $K\rightarrow O$. 
If $t$ is the  $\ssc^+$-section of $U\triangleleft F\rightarrow F$  defined by $t(y):=s(r(y))$, then $(g+t,0)$ is a filled version of $(f+s,0)$. 
Indeed, for $x\in O$ we compute $(g+t)(x)=g(x)+s(r(x))=f(x)+s(x)$, which is property (1). From  $(g+t)(y)=\phi(r(y))(g+t(y))$ we deduce that
$$
g(y)=\phi(r(y))g(y) + \phi(r(y))s(r(y))-s(r(y))=\phi(r(y))g(y),
$$
implying that $y\in O$ so that property (2)  holds. Finally, 
$$
[\id-\phi(r(y))](g(y)+t(y))=[\id-\phi(r(y))](g(y)+s(r(y)))=[\id-\phi(r(y))]g(y),
$$
so that the linearisation of the left-hand side at $0$ restricted to $Dr(0)$ satisfies the property (3) in view of  the assumptions on  $g$.
Hence, if $(f,0)$ has a filled version, so does  the section $(f+s,0)$ for every $\ssc^+$-section $s$ of the strong local bundle $K\rightarrow O$.
\end{rem}

Next,  we introduce a class of so-called basic germs denoted by
$\mathfrak{C}_{basic}$.
\begin{defn}
An element in $\mathfrak{C}_{basic}$ is an sc-smooth germ
$$
f:{\mathcal O}({\mathbb R}^n\oplus W,0)\rightarrow ({\mathbb R}^N\oplus W,0), 
$$
where $W$ is an sc-Banach space, so that  if $P:{\mathbb R}^N\oplus W\rightarrow W$  denotes  the projection, then the  germ $P\circ f:{\mathcal O}({\mathbb R}^n\oplus W,0)\rightarrow (W, 0)$ has the form
$$
P\circ f(a,w)=w-B(a,w)
$$
for $(a,w)$ close to $(0,0)\in {\mathbb R}^n\oplus W_0$. Moreover, for every $\varepsilon>0$ and $m\geq 0$,  we have the estimate
$$
\abs{B(a,w)-B(a,w')}_m\leq \varepsilon\cdot\abs{w-w'}_m
$$
for all $(a,w)$, $(a,w')$ close to $(0,0)$ on level $m$.
\end{defn}

We are in the position to define the notion of a Fredholm germ.
\begin{defn}
Let $p:W\rightarrow X$ be a strong bundle, $x\in X_\infty$, and
$f$ a germ of an sc-smooth section of $p$ around $x$. We call $(f,x)$ a Fredholm germ provided there exists a germ of $\ssc^+$-section $s$ of $p$ near $x$ satisfying 
$s(x)=f(x)$ and such that  in suitable strong bundle coordinates mapping $x$ to $0$, the  push-forward  germ $g=\Phi_\ast(f-s)$ around $0$ has a filled version
$\ov{g}$ so that the germ $(\ov{g},0)$ is equivalent  to a germ belonging to  $\mathfrak{C}_{basic}$.
\end{defn}

Let us observe that tautologically if $(f,x)$ is a Fredholm germ
and $s_0$ a germ of $\ssc^+$-section around $x$, then $(f+s_0,x)$ is a Fredholm germ as well. Indeed,  if $(f-s,0)$ in suitable coordinates has a filled version, then $((f-s_0)-(s-s_0),0)$ has as well.

Finally,  we can introduce the Fredholm sections of  strong M-polyfold bundles.
\begin{defn}
Let $p:W\rightarrow X$ be a strong M-polyfold bundle and $f\in\Gamma(p)$ an sc-smooth section. The section $f$ is called  polyfold Fredholm section provided  it has the following properties:
\begin{itemize}
\item[(1)] $f$ is regularizing.
\item[(2)] At  every smooth point $x\in X$,  the germ $(f,x)$ is a Fredholm germ.
\end{itemize}
\end{defn}
 If $(f,x)$ is a Fredholm germ and $f(x)=0$,  then the  linearisation $f'(x):T_xX\rightarrow W_x$ is a linear sc-Fredholm operator. The proof can be found in \cite{HWZ3}.
If,  in addition,  the linearization $f'(x):T_xX\rightarrow W_x$ is surjective, then our implicit function theorem gives a natural smooth structure on the solution set of $f(y)=0$ near $x$ as the following theorem from \cite{HWZ3} shows.
\begin{thm}
Assume that $p:W\rightarrow X$ is a strong M-polyfold bundle and  let $f$ be a  Fredholm section of the bundle $p$. If the point $x\in X$ solves $f(x)=0$ and if the linearization  at this point $f'(x):T_xX\rightarrow W_x$ is surjective,  there exists an open neighborhood $U$ of $x$ so that the solution set  
$$
S_U:=\{y\in U\vert \ f(y)=0\}
$$
has in a natural way a smooth manifold structure induced from $X$. In addition, $S_U\subset X_\infty$.
\end{thm}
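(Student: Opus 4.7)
The plan is to reduce, via a strong bundle chart around $x$, to a local problem for a basic germ, solve the fiberwise component by a uniform contraction principle on each sc-level, and then apply the classical finite-dimensional implicit function theorem to the residual equation. Since $f(x)=0$, the Fredholm germ condition gives an $\ssc^+$-section $s$ near $x$ with $s(x)=0$ together with a strong bundle chart sending $x$ to $0$ in some local model $K\to O$ with $O=r(U)$, such that the push-forward of $f-s$ admits a filled version $\ov g:\mo(U,0)\to F$ equivalent to a basic germ. Extending $s$ by $\tilde s(y):=s(r(y))$, which is again $\ssc^+$ by the remark following Definition \ref{filled-def}, the section $\ov g+\tilde s$ is a filled version of $f$; by condition (2) of Definition \ref{filled-def}, zeros of $\ov g+\tilde s$ near $0$ in $U$ lie in $O$ and coincide with zeros of $f$.

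In basic form, splitting a point of $\R^n\oplus W$ as $(a,w)$, one has
$$
\ov g(a,w)=\bigl(A(a,w),\, w-B(a,w)\bigr)\in\R^N\oplus W,
$$
where $B$ is a uniform sc-contraction in $w$. Since $\tilde s$ is $\ssc^+$ and therefore gains a level of regularity, its $W$-component can be absorbed into $B$ on every level without destroying the contraction estimate, while its $\R^N$-component modifies only $A$. For each small $a$, the Banach fixed point theorem applied on every level $m$ yields a unique solution $w(a)$ of the fiber equation $w=B(a,w)+\tilde s_W(a,w)$. Level-compatibility of the fixed point, together with the regularizing property of $f$, shows that $w(a)\in W_\infty$ for small $a$, while on each level the parametric fixed point is classically smooth as a map $\R^n\to W_m$ by the finite-dimensional implicit function theorem, because $\id - D_w(B+\tilde s_W)(0,0)$ is an isomorphism at every level. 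Hence $a\mapsto w(a)$ is sc-smooth, and $(\ov g+\tilde s)(a,w)=0$ reduces to the finite-dimensional equation $\Phi(a):=A(a,w(a))+\pi_{\R^N}\tilde s(a,w(a))=0$ for a classically smooth map $\Phi:\mo(\R^n,0)\to\R^N$.

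The surjectivity hypothesis on $f'(x):T_xX\to W_x$ translates, via the block decomposition of $Dg(0)$ worked out immediately before Definition \ref{filled-def}, into surjectivity of $D\ov g(0):E\to F$; the $\ssc^+$-correction $\tilde s$ contributes a compact and smoothing perturbation vanishing at the origin, so it does not alter the surjectivity. Running this through the fiber reduction, the derivative $D\Phi(0):\R^n\to\R^N$ is surjective, so by the classical implicit function theorem $\Phi^{-1}(0)$ is a smooth submanifold of $\R^n$ of dimension $n-N=\ind f'(x)$ near $0$. Pushing forward via $a\mapsto(a,w(a))$ and the chosen chart gives the asserted smooth manifold structure on $S_U$ for a sufficiently small neighbourhood $U$ of $x$. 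The inclusion $S_U\subset X_\infty$ then follows either from $w(a)\in W_\infty$ or directly by induction on the level, using that $f$ is regularizing: if $y\in S_U\cap X_m$, then $f(y)=0\in W_{m,m+1}$ forces $y\in X_{m+1}$.

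The main obstacle I expect is the technical bookkeeping in the second step: one must verify that the extension $\tilde s$ preserves the basic-germ contraction estimate uniformly across all levels, that the parametric fixed point is level-independent so that the smooth point $w(a)\in W_\infty$ is recovered, and that in transferring the kernel and cokernel of $f'(x)$ through the filling and then through the fiber reduction, the surjectivity of $D\Phi(0)$ genuinely captures the surjectivity of the original linearisation.
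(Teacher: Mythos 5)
The paper does not prove this theorem here; it is imported verbatim from \cite{HWZ3}, which devotes a substantial amount of work to an sc-specific germ implicit function theorem. Your overall plan --- pass to a strong bundle chart, fill the section, write it as a basic germ $(a,w)\mapsto (A(a,w),\,w-B(a,w))$, solve the $W$-component by a fiberwise fixed point, and finish with a residual finite-dimensional equation --- is indeed the right skeleton, and your concluding argument for $S_U\subset X_\infty$ via the regularizing property is correct. But there is a genuine gap at the central analytical step.

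You write that ``on each level the parametric fixed point is classically smooth as a map $\R^n\to W_m$ by the finite-dimensional implicit function theorem, because $\id-D_w(B+\tilde s_W)(0,0)$ is an isomorphism at every level.'' Invertibility of the linearized fiber operator is not enough: to apply the Banach-space implicit function theorem and conclude that $a\mapsto w(a)$ is $C^k$ into $W_m$, one would need the map $(a,w)\mapsto w-B(a,w)-\tilde s_W(a,w)$ to be $C^k$ as a map $\R^n\oplus W_m\to W_m$. But sc-smoothness only provides, via Proposition \ref{lo-prop}, that this map is $C^k$ as a map $\R^n\oplus W_{m+k}\to W_m$, i.e.\ with a loss of $k$ levels. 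On any single level the map is merely continuous, so the classical implicit function theorem does not apply. A related issue is lurking in your ``level-compatibility'' claim: the contraction estimate in the definition of $\mathfrak{C}_{basic}$ holds on a neighborhood that is allowed to depend on the level $m$, so getting a single $a$-range for which the fixed point exists on all levels (let alone depends smoothly on $a$) requires an argument. This level bookkeeping, together with proving sc-smoothness of $a\mapsto w(a)$, is exactly the content of the sc-contraction germ implicit function theorem proved in \cite{HWZ3}; the proof there exploits the contraction estimates uniformly across levels and the compactness of the inclusions $W_{m+1}\hookrightarrow W_m$ to manufacture the missing derivatives. None of that is automatic from a levelwise Banach fixed point argument, and as written your proof simply asserts it. Until that step is repaired, the argument does not establish the theorem.
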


\subsection{An Illustration of the Concepts}\label{illustration}
In Section \ref{longcylinders} we shall illustrate  the polyfold concept by setting up a proof
of Theorem \ref{tran-pol} as a polyfold Fredholm problem. It illustrates the  analytical and conceptual difficulties in the study of  maps on conformal cylinders which break apart as the modulus tends to infinity.
It also illustrates the notion of a strong bundle as well as that of a Fredholm section.

We denote by $\Gamma$ the collection of  pairs
$(a,b)$ of complex numbers  satisfying  $ab\neq 0$ and $|a|,|b|<\varepsilon$. The size of $\varepsilon$ will be determined later. We denote by $X$ the set consisting of all tuples $(a,b,w)$ in which 
$(a,b)\in\Gamma$ and the map $w:Z_a\to  Z_b$ is a $C^1$-diffeomorphism of Sobolev class $H^3$ between the two cylinders and satisfying $w(p^\pm_a)=p^\pm_b$.  The points $p_a^\pm$ and $p_b^\pm$ are the points corresponding to the boundary points $(0,0)\in \partial(\R^+\times S^1)$ before `plumbing' the half-cylinders. Define the  filtration $(X_m)_{m\in \N_0}$ on $X$ by declaring  that $(a,b,w)$ belongs to $E_m$ if  $w$ belongs to  $H^{3+m}(Z_a,Z_b)$.

\begin{prop}\label{prop1.32}
The space $X$ carries in a natural way a second countable paracompact topology. For this topology the space is connected. Moreover, $X$  carries
in a natural way the structure of an M-polyfold built  on local models which are open subsets of an sc-Hilbert  space where the level $m$ corresponds to the  Sobolev regularity $m+3$.
\end{prop}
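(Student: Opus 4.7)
The plan is to cover $X$ by charts whose local models are open subsets of a fixed separable sc-Hilbert space, with sc-structure $E_m=H^{3+m}$ so that level $m$ corresponds to Sobolev regularity $3+m$. Because $(a,b)\in\Gamma$ forces $ab\neq 0$, we stay away from the degenerate cylinders, and the resulting structure is essentially that of a Hilbert sc-manifold, with no splicing retract required.

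Fix a reference point $(a_0,b_0,w_0)\in X$. I would first choose smooth families of diffeomorphisms $\phi_a:Z_{a_0}\to Z_a$ and $\psi_b:Z_{b_0}\to Z_b$ defined for $(a,b)$ in a small open neighborhood of $(a_0,b_0)$ in $\Gamma$, normalized so that $\phi_{a_0}=\id$, $\psi_{b_0}=\id$, $\phi_a(p^\pm_{a_0})=p^\pm_a$, and $\psi_b(p^\pm_{b_0})=p^\pm_b$. Such families exist since for $a\neq 0$ the cylinder $Z_a$ is determined by the smooth data $R=\varphi(|a|)$ and $\vartheta=-\arg(a)/2\pi$. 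Any nearby $(a,b,w)\in X$ then corresponds bijectively to the triple $(a,b,\tilde w)$ with
\begin{equation*}
\tilde w=\psi_b^{-1}\circ w\circ \phi_a:Z_{a_0}\to Z_{b_0},\qquad \tilde w(p^\pm_{a_0})=p^\pm_{b_0}.
\end{equation*}
Using the exponential map about $w_0$ together with a local trivialization of the target cylinder, the space of allowed $\tilde w$ on level $m$ becomes an open subset of a closed subspace of $H^{3+m}(Z_{a_0},\R^2)$ cut out by the four real point constraints; combined with the open neighborhood of $(a_0,b_0)$ in $\C^2$, this yields the chart model. The topology on $X$ is then defined by declaring every such chart to be a homeomorphism, and one verifies by standard arguments that this is well-defined and Hausdorff.

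The crucial step is the sc-smoothness of the transition map between two charts centered at $(a_0,b_0,w_0)$ and $(a_0',b_0',w_0')$, which sends $(a,b,\tilde w)\mapsto(a,b,\tilde w')$ with
\begin{equation*}
\tilde w'=(\psi_b')^{-1}\circ \psi_b\circ \tilde w\circ \phi_a^{-1}\circ \phi_a'.
\end{equation*}
Left-composition with the smooth family $b\mapsto (\psi_b')^{-1}\circ\psi_b$ of smooth diffeomorphisms between fixed cylinders preserves $H^{3+m}$-regularity smoothly in $b$ and is sc-smooth by Proposition \ref{up-prop}. Right-composition of $\tilde w$ with the smoothly varying family $a\mapsto \phi_a^{-1}\circ \phi_a'$ of domain diffeomorphisms is the main obstacle: it loses one classical derivative and is not Fr\'echet-differentiable, but it is sc-smooth by Theorem \ref{thm-125} applied on local coordinate patches covering $Z_{a_0'}$.

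Second countability and paracompactness are then inherited from the separable Hilbert model via a countable locally finite refinement of the chart covering. For connectedness, $\Gamma=(B_\varepsilon\setminus\{0\})^2$ is path-connected, and along a continuous path $(a(t),b(t))$ the cylinders $Z_{a(t)},Z_{b(t)}$ vary smoothly; the rotational freedom encoded in $\arg(a)$ and $\arg(b)$ absorbs the Dehn-twist ambiguity in the mapping class group of the cylinder, so that every $(a,b,w)\in X$ can be isotoped through $H^3$-diffeomorphisms with marked-point conditions to a single fixed reference element.
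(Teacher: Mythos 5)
Your construction matches the paper's in all essentials — charts built from smooth families $\phi_a$, $\psi_b$ of cylinder diffeomorphisms, sc-smoothness of transitions reduced to Theorem~\ref{thm-125} for the domain reparametrization, and connectedness via the observation that rotating $\arg(b)$ undoes Dehn twists — but there is one identifiable gap in the description of the local model.

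You assert that the space of allowed $\tilde w$ corresponds to an open subset of a closed subspace of $H^{3+m}(Z_{a_0},\R^2)$ ``cut out by the four real point constraints.'' That subspace is too large. The maps in $X$ are diffeomorphisms of \emph{cylinders with boundary}, so $\tilde w$ must carry each boundary circle of $Z_{a_0}$ to the corresponding boundary circle of $Z_{b_0}$. In the linear (or exponential) chart at $w_0$, this forces the increment $h$ to be tangent to the boundary along all of $\partial Z_{a_0}$, i.e.\ $h([0,t])\in\{0\}\times\R$ and $h([0,t]')\in\{0\}\times\R$ for every $t\in S^1$. These are infinitely many independent conditions, not four; the paper's model space $\wh{H}^3(Z_{a_0},\R^2)$ imposes both the boundary tangency and the two marked-point conditions $h(p^\pm_{a_0})=(0,0)$. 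Without the boundary constraints, $w_0+h$ leaves the set of cylinder diffeomorphisms and the chart map is not a bijection onto a neighborhood in $X$. Once that subspace is corrected, the remainder of your argument — in particular the left-composition being $C^\infty$ on each level and hence sc-smooth by Proposition~\ref{up-prop}, and the right-composition handled by Theorem~\ref{thm-125} — goes through exactly as in the paper.
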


In other words the space $X$ is locally homeomorphic to open subsets of an  sc-Hilbert space so that the transition maps are sc-smooth.

In the next step we shall ``complete'' the space $X$ to the  space $\ov{X}$ by  adding  elements corresponding to the parameter
value $a=b=0$. 
This new space $\ov{X}$ will have as local models sc-smooth retracts.
Moreover,  $\ov{X}$ will be connected and will contain  $X$ as an open dense subset.

We abbreviate  by ${\mathcal D}={\mathcal D}^{3,\delta_0}$ the collection of pairs $(u^+,u^-)$  of $C^1$-diffeomorphisms,
$$u^\pm:\R^\pm\times S^1\to  \R^\pm\times S^1,$$
having the following properties. The maps  $u^\pm$ belong  to the space $H^{3}_{\loc}$ and there are  constants $(d^\pm,\vartheta^\pm)\in \R\times S^1$ and  maps $r^{\pm}\in H^{3,\delta_0}(\R^\pm\times S^1,\R^2)$
so that 
$$
u^{\pm}(s,t)=(s+d^\pm,t+\vartheta^\pm)+r^\pm(s,t).
$$
 Moreover,  we require that 
$$u^\pm(0,0)=(0, 0).$$
The  sc-structure on $\mcd$ is defined by declaring that the $m$-level $\mcd_m$  consists of elements of regularity $(m+3,\delta_m)$.  Then we define the set $\ov{X}$ as the disjoint union 
$$
\ov{X}= X\coprod \left(\{(0,0)\}\times {\mathcal D}\right),
$$
in which $(0,0)$ is the pair $(a=0, b=0)$. 
\begin{thm}\label{thm-comp}\mbox{}\\
\begin{itemize}
\item[(1)]  Fix $\delta_0\in (0,2\pi)$. Then the space $\ov{X}$ possesses
a natural paracompact second countable topology. In this topology the set $X$ is an open subset  of $\ov{X}$ and the induced topology on $X$ coincides with the previously defined topology on $X$.  Moreover,  $\ov{X}$ is connected.
\item[(2)] Fix a strictly increasing sequence $(\delta_m)_{m\in \N_0}$ of real numbers staring at $\delta_0$ and satisfying $0<\delta_m<2\pi$ and fix the exponential gluing profile $\varphi$ given by $\varphi (r)=e^{\frac{1}{r}}-e$. 
Then  there exists a natural M-polyfold
structure on $\ov{X}$ which induces on $X$ the previously defined M-polyfold structure.
\end{itemize}
\end{thm}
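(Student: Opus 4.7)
The plan is to define charts near each broken point $(0,0,u_0^+,u_0^-)\in\{(0,0)\}\times\mathcal{D}$ using the gluing/anti-gluing construction of Theorem \ref{first-splice}, and then to show that transitions between such charts (and with the interior charts from Proposition \ref{prop1.32}) are sc-smooth, using Theorem \ref{tran-pol}. Concretely, fix a broken point $(0,0,u_0^+,u_0^-)$ and let $E$ be the sc-Hilbert space of pairs $(\eta^+,\eta^-)$ with the sequence of weights $(\delta_m)$ as in the statement. Choose an open neighborhood $V$ of $(u_0^+,u_0^-)$ in $\mathcal{D}$ and a small disk $B_\epsilon\subset\mathbb{C}$. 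Define a map $\Xi:B_\epsilon\oplus V\to\ov{X}$ by sending $(0,\eta^+,\eta^-)$ to the broken point $(0,0,\eta^+,\eta^-)$; and for $a\neq 0$, glue the asymptotic data $(d^\pm,\vartheta^\pm)$ of $\eta^\pm$ with $a$ to determine the target gluing parameter $b=b(a,\eta^+,\eta^-)$ and send $(a,\eta^+,\eta^-)$ to the point $(a,b,w)\in X$, where $w:Z_a\to Z_b$ is induced by $\oplus_a(\eta^+,\eta^-)$ and a small reparametrization enforcing $w(p_a^\pm)=p_b^\pm$. Because two parametrizations that differ only in the kernel of $\oplus_a$ produce the same glued map, the map $\Xi$ factors through the sc-smooth retraction $r$ of Theorem \ref{first-splice}, and the induced map $\tilde\Xi:r(B_\epsilon\oplus V)\to\ov{X}$ is the candidate chart on the local M-polyfold model $(\mathcal{O},\mathbb{C}\oplus E,\mathbb{C}\oplus E)$ used in Theorem \ref{first-splice}.

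For the topology, I declare sets of the form $\tilde\Xi(r(B_\epsilon\oplus V))$ to be open, together with the open sets of $X$ from Proposition \ref{prop1.32}. Hausdorffness follows from the fact that different broken points are separated by the distance in $\mathcal{D}$, and a broken point is separated from any $(a,b,w)\in X$ by requiring $|a|$ to be small enough. The density of $X$ in $\ov{X}$ is immediate since for any broken point the curve $a\mapsto \tilde\Xi(a,\eta^+,\eta^-)$ lies in $X$ for $a\neq 0$ and converges to the broken point as $a\to 0$; the same curve simultaneously proves path-connectedness once combined with the connectedness of $X$ asserted in Proposition \ref{prop1.32}. Paracompactness and second countability follow from the separability of each sc-Hilbert space $H^{3,\delta_m}$ and of $\mathcal{D}$, together with metrizability of $\ov{X}$, which can be checked locally from the chart construction. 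In particular, the restriction of $\tilde\Xi$ to the open subset where $a\neq 0$ maps homeomorphically onto an open subset of $X$ (in its previously defined topology), so the induced topology on $X$ is the original one.

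The analytical heart of the proof is the sc-smoothness of transitions, and this is where the hypothesis $\delta_m\in(0,2\pi)$ is essential. Between two overlapping boundary charts $\tilde\Xi_1$ and $\tilde\Xi_2$ based at $(0,0,u_0^\pm)$ and $(0,0,v_0^\pm)$, the transition at $a=0$ is merely a translation in $\mathcal{D}$, which is sc-smooth. For $a\neq 0$, expressing the glued map $w=\oplus_a(\eta^+,\eta^-):Z_a\to Z_b$ in the coordinates of the second chart requires (i) shifting the base pair by a smooth family of half-cylinder diffeomorphisms, which by Example \ref{shift-example1} and Theorem \ref{thm-125} acts sc-smoothly; (ii) uniformizing $Z_a$ with respect to the new marked points, which yields a biholomorphism $\Phi_{(a,v)}$, and composing $w$ with $\Phi_{(a,v)}^{-1}$. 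The second operation is precisely the map of Theorem \ref{tran-pol}, which is sc-smooth exactly under the hypothesis $0<\delta_m<2\pi$. The transition with interior charts of $X$ is a special case in which only this reparametrization step is present. The main obstacle is therefore the careful bookkeeping required to identify each step of the transition as a composition of the primitives controlled by Theorems \ref{first-splice}, \ref{propn-1.27}, \ref{thm-125} and \ref{tran-pol}; once this identification is made, sc-smoothness of the transitions follows from the general chain rule. Finally, compatibility of the new chart with an interior chart of $X$ restricted to $a\neq 0$ is automatic, because both charts are built from the same underlying diffeomorphism between cylinders.
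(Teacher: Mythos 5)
Your overall blueprint — charts at broken points built from gluing, with the topology generated by images of such charts together with the open sets of $X$ — does match the paper's structure. But the execution has two substantive problems.

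\textbf{Circularity.} You invoke Theorem \ref{tran-pol} to handle the chart transitions, arguing that "uniformizing $Z_a$ with respect to the new marked points, \dots composing $w$ with $\Phi_{(a,v)}^{-1}$ \dots is precisely the map of Theorem \ref{tran-pol}." But Theorem \ref{tran-pol} is a downstream consequence of the M-polyfold structure on $\ov{X}$ that Theorem \ref{thm-comp} is trying to establish: its proof rests on the Fredholm theory for the Cauchy--Riemann section on $E\to V\times\ov{X}$ (Theorems \ref{fred:thm0} and \ref{strong-x}), which in turn needs Theorem \ref{thm-comp}. You cannot use it here without a circular argument. Moreover, the invocation is unnecessary: in the paper's construction there is no uniformization step in the transition between two boundary charts. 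Both charts are built from the affine gluing $\boxplus_a(u_i^\pm+h^\pm)$ for two fixed smooth reference pairs $(u_1^+,u_1^-)$ and $(u_2^+,u_2^-)$, and the transition $(a,h^+,h^-)\mapsto(a,k^+,k^-)$ is obtained by solving the linear $2\times2$ system \eqref{eq4-nnew} in $q^\pm$, whose sc-smoothness is controlled by the elementary building blocks of Proposition \ref{qed} and Lemmata \ref{lem2-18}--\ref{lem2.21-nnew}, not by any uniformization result. The hypothesis $0<\delta_m<2\pi$ is still needed, but it enters through the $\rho_a$-retraction and through the asymptotics of the Cauchy--Riemann operator, not through Theorem \ref{tran-pol}.

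\textbf{Wrong linear model.} You take the retraction $r$ on $E$ from Theorem \ref{first-splice}, where $E$ is the space of pairs with a \emph{common} asymptotic limit. But the half-cylinder diffeomorphisms $u^\pm(s,t)=(s+d^\pm,t+\vartheta^\pm)+r^\pm(s,t)$ carry independent asymptotic shifts, and the extra two real degrees of freedom $d^+-d^-$, $\vartheta^+-\vartheta^-$ are exactly what produces the target gluing parameter $b$ differing from $a$. The paper therefore works on the larger space $\wh{E}_0$ with independent asymptotic constants and with the boundary normalizations $h^\pm(0,0)=(0,0)$, $h^\pm(0,t)\in\{0\}\times\mathbb{R}$, and uses the modified retraction $\rho_a$ of Section \ref{anothersplicing} (built from $\pi_a$ but acting separately on the asymptotic constants). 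This boundary normalization is also what makes the glued map automatically hit the marked points $p_b^\pm$, so your vague "small reparametrization enforcing $w(p_a^\pm)=p_b^\pm$" is unnecessary and would have to be shown sc-smooth separately. Similarly, your closing remark that compatibility with interior charts "is automatic" is too quick: it requires solving the system \eqref{newbox-eq} for the unique $(q^+,q^-)$, and the sc-smoothness of $(a,b,w)\mapsto(q^+,q^-)$ is precisely the content of Lemmata \ref{new-lemma2}, \ref{316-lem}, \ref{317-lem}, and Proposition \ref{319-prop}.
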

In our discussion of this theorem later on we shall  describe some metric aspects related to this  topology which gives a precise meaning of the convergence of elements $(a,b,w)$ to $(0,0,(u^+,u^-))$. 
The construction of the polyfold structure on $\ov{X}$  involves a third kind of splicing which will be explained later.  Variations  of this kind of splicings  will be used  in SFT.

For the following we assume that the sequence $(\delta_m)$ and the exponential gluing profile $\varphi$ are fixed so that $\ov{X}$ has a M-polyfold structure.  We consider  a smooth family
$$
v\mapsto  j^\pm(v)
$$
of complex structures on the half-infinite cylinders $\R^\pm\times S^1$  parametrized by $v$ belonging to an open neighborhood of $0$ in some finite-dimensional vector space $V$. Moreover, we assume that 
$j^\pm(v)=i$ outside of a compact neighborhood of the boundaries. It induces  the complex structure $j(a,v)$ on
the finite cylinder $Z_a$  if  $\abs{a}$ is small enough. Clearly,  $V\times \ov{X}$ is an M-polyfold.

With points  $(v,a,b,w)\in V\times \ov{X}$ satisfying  $a\neq 0$, 
we can associate  maps $z\mapsto  \phi(z)$ defined on the cylinder $Z_a$, whose images 
$$
\phi (z):(T_zZ_a,j(v))\to  (T_{w(z)}Z_b,i)
$$
are  complex anti-linear and belong to  the Sobolev space $H^2$.
If $a=0$ (and consequently $b=0$),  then $Z_0$ is the  disjoint union 
$$\R^+\times S^1\coprod  \R^-\times S^1$$ 
and here we consider two maps 
$z\to  \phi^{\pm}(z)$ defined on $\R^{\pm}\times S^1$ whose  complex anti-linear images
$$\phi^\pm (z):(T_z(\R^{\pm}\times S^1), j(v))\to  (T_{u^\pm (z)}(\R^{\pm}\times S^1), i)$$
belong  to $H^{2}_\loc$ on $\R^{\pm}\times S^1$ where $u^\pm:\R^\pm \times S^1\to \R^\pm \times S^1$ are the diffeomorphisms of the half cylinders introduced above. Moreover, the maps 
$$
(s,t)\mapsto   \phi^{\pm}(s,t)\frac{\partial}{\partial s}, \quad z=(s, t),
$$
belong  to $H^{2,\delta_0}(\R^{\pm}\times S^1,\R^2)$ if the tangent space $T_{w(z)}Z_b$ is identified with $\R^2$. The collection $E$ of all $(v,a,b,w,\phi)$ in which  $\phi$ is as just described,  possesses  a projection map
$$
E\to V\times \ov{X},\quad (v,a,b,w,\phi)\mapsto  (v,a,b,w),
$$
whose fiber has in a natural way the structure of a Hilbert space.
We can define a double filtration $E_{m,k}$ of $E$ where $0\leq k\leq m+1$. An element $(v,a,b,w,\phi)$ belongs to $E_{m,k}$
provided $(v,a,b,w)\in V\times  \ov{X}_m$ and $\phi$ is of class $(k+2,\delta_k)$.

\begin{thm}\label{thm1.42-nnew}

Having fixed the exponential gluing profile $\varphi$ and the increasing sequence $(\delta_m)_{m\in \N_0}$ of real numbers satisfying $0<\delta_m<2\pi$,   the space $E$ admits in a natural way the structure of a strong bundle over $V\times \ov{X}$.
\end{thm}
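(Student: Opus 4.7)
The plan is to produce an atlas of strong bundle charts on $p\colon E\to V\times\overline{X}$ that is compatible, chart by chart, with the M-polyfold atlas on $V\times\overline{X}$ supplied by Theorem \ref{thm-comp}. Over any base chart whose image sits in $V\times X$, i.e.\ consists of points with $a\neq 0$, the fiber of $E$ is the standard Sobolev space of complex anti-linear bundle maps along $w$ on a fixed finite cylinder, and a gauge-fixing argument exploiting the sc-smooth dependence of $Z_a$, $j(a,v)$ and $w^\ast T Z_b$ on $(v,a,b,w)$ (using Theorem \ref{thm-125} to handle the composition with $w$) yields a strong bundle chart of the product form $U\triangleleft F_a$ with double filtration $E_{m,k}=U_m\oplus F_{a,k}$. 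The nontrivial task is therefore to produce strong bundle charts on a neighborhood of the added stratum $\{a=b=0\}$ in $\overline{X}$ and to verify $\ssc_\triangleleft$-smooth compatibility with the interior charts.

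Near a point $(v_0,0,0,(u_0^+,u_0^-))$, Theorem \ref{thm-comp} realizes a base chart via an sc-smooth retraction built from the total gluing retraction of Theorem \ref{first-splice} together with a further splicing accommodating the diffeomorphism data $(u^+,u^-)$. The natural companion at the level of sections is the \emph{hat} total gluing of Theorem \ref{second-splicing}, applied to pairs of complex anti-linear sections on the two half-cylinders $\R^\pm\times S^1$: if $F$ denotes the sc-Hilbert space of pairs $(\phi^+,\phi^-)$ of $H^{2,\delta_0}$-sections along the fixed $u_0^\pm$, then the hat-splicing
\[
\widehat r\colon B_{1/2}\oplus F\to B_{1/2}\oplus F,\qquad (a,\phi^+,\phi^-)\mapsto (a,\widehat{\pi}_a(\phi^+,\phi^-))
\]
is sc-smooth. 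Combining this with the base retraction $r$, together with the sc-smooth parameter dependence on $v$ of the complex structure $j(a,v)$ (which enters only through a smooth multiplication of sections), produces an $\ssc_\triangleleft$-smooth strong bundle retraction $R(v,a,x,\phi)=(v,a,r(x),\widehat{\pi}_a(\phi))$ of the form $R(u,h)=(r(u),\varphi(u)h)$ required in the definition, and its image is a strong bundle chart covering the base chart. The double filtration $E_{m,k}$ prescribed in the theorem corresponds to $U_m\oplus F_k$ with $F_k=H^{k+2,\delta_k}$; the admissible range $0\le k\le m+1$ is reflected by the fact that base regularity $H^{m+3}$ dominates the fiber regularity $H^{m+2,\delta_m}$ by the one derivative needed for the composition with $w$.

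The main technical obstacle is the $\ssc_\triangleleft$-smoothness of the transition map between two such charts when one is centered at $a=0$ and the other at a nearby $a\neq 0$: this transition intertwines the hat-gluing presentation on half-cylinders with the finite-cylinder presentation, and at the same time pulls back bundle-valued sections along the sc-smoothly varying family $w$. On the fiber side, all the required estimates reduce to those already underlying Theorem \ref{second-splicing}, since the projections $\widehat\pi_a$ realize the two descriptions as different trivializations of the same strong bundle retract. On the base side, the sc-smoothness of the gluing and reparameterization maps used in Theorem \ref{thm-comp} applies verbatim. The one genuinely new ingredient is that the fiber transition involves composition of sections with the sc-smoothly varying $w$, which is handled by Theorem \ref{thm-125}; the unavoidable loss of one derivative in such composition is precisely absorbed by the asymmetry $k\le m+1$ built into the $\triangleleft$-structure. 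Together these verifications produce an equivalence class of strong bundle atlases and hence the desired strong bundle structure on $p\colon E\to V\times\overline{X}$.
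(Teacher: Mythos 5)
Your overall architecture matches the paper's: interior charts on $V\times X$ via the ansatz $(v,a,b,h,\eta)\mapsto(v,a,b,\psi_b\circ(w_0+h)\circ\phi_a,\xi)$ with $\eta(\phi_a(z))=\xi(z)\cdot\frac{\partial}{\partial s}$, and charts near $a=0$ built from the strong bundle retraction $R(v,a,x,\phi)=(v,a,r(x),\widehat\pi_a(\phi))$ that pairs the base splicing $\rho_a$ with the hat-gluing splicing $\widehat\pi_a$ of Theorem~\ref{second-splicing}. This is exactly the decomposition the paper uses, so on the level of the construction you are on the right track.

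However, your account of the fiber transition maps contains a genuine misunderstanding. You write that ``the fiber transition involves composition of sections with the sc-smoothly varying family $w$'' and that ``the unavoidable loss of one derivative in such composition is precisely absorbed by the asymmetry $k\le m+1$.'' Neither half of this is right, and as stated the argument would fail. In the paper's construction the target tangent spaces $T_{w(z)}Z_b$ are \emph{identified once and for all with $\R^2$}, so after this fixed trivialization a complex anti-linear element $\xi$ is determined by the $\R^2$-valued function $z\mapsto\xi(z)\frac{\partial}{\partial s}$; the fiber coordinate never refers to $w$ again. Inspecting the actual transitions confirms this: between two $\Phi$-charts the fiber map is $\wt\eta=\eta\circ(\phi_a\circ\wt\phi_a^{-1})$, between two $\Gamma$-charts it is the identity, and between a $\Phi$-chart and a $\Gamma$-chart it is $\eta^\pm=\frac{\beta_a}{\gamma_a}\,(\eta\circ\phi_a)$ resp.\ $\frac{1-\beta_a}{\gamma_a}\,(\eta\circ\phi_a)$ shifted by $(R,\vartheta)$. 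These only ever compose the fiber section with the \emph{smooth} families $\phi_a$, $\psi_b$ and the gluing shift, which by Theorem~\ref{thm-125p} and Proposition~\ref{qed} are sc-smooth without any drop in regularity. Had the transition actually required composition with the merely $H^{m+3}$-regular $w$, the admissible fiber range would have to be \emph{tighter} than $k\le m+1$, not looser — composition $\eta\circ w$ with $w\in H^{m+3}\hookrightarrow C^{m+1}$ only preserves $H^{k+2}$-regularity when $k+2\le m+1$, i.e.\ $k\le m-1$. The slack $k\le m+1$ in the $\triangleleft$-filtration exists for a different reason: it makes room for $\ssc^+$-sections and for the fact that the Cauchy--Riemann operator carries $w\in H^{m+3}$ to $\bar\partial w\in H^{m+2}$, so that $\bar\partial$ is a section of $W(0)$, not $W(1)$. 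The upshot is that the trivialization $TZ_b\cong\R^2$ is not a cosmetic convenience but the precise device that makes the fiber transitions live in a regularity class where the $\ssc_\triangleleft$-calculus applies; your explanation omits it and replaces it with a mechanism that does not actually close the gap.
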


Finally, we define the section  $\ov{\partial }$ of the  
bundle $E\to V\times \ov{X}$ by its principal  part 
$$
\ov{\partial}(v,a,b,w):=\frac{1}{2}\left(Tw+i\circ (Tw)\circ j
\left(a,v\right)\right),
$$
and observe  that  
$$\ov{\partial}(v,a,b,w)=0$$
if and only if the map $w:(Z_a,j(a,v),p^+_a,p^-_a)\to  (Z_b,i,p^+_b,p^-_b)$
is biholomorphic. 

We shall prove in section \ref{nsection3.4} that the Cauchy-Riemann section $\care$ is an sc-smooth Fredholm section of the strong M-polyfold bundle $E\to V\oplus \ov{X}$. By the unformization theorem, there exists for 
 every point $(v, a)=(v_0,0)$  a unique  pair $(u_0^+,u_0^-)$ of biholomorphic mappings 
$$
u_0^\pm:(\R^\pm\times S^1,j^\pm(v_0))\to  (\R^\pm\times S^1,i)
$$
preserving the boundary points  $(0,0)\in \partial (\R^\pm \times S^1)$ so that the special point $(v_0, 0, 0, u^+_0,u^-_0)\in V\oplus \ov{X}$ is a solution of 
$\care (v_0, 0, 0,u^+_0, u^-_0)=0$. 

As a consequence of the implicit function theorem for polyfold Fredholm sections in  \cite{HWZ3}, we shall establish near the reference solution biholomorphic mappings  between finite cylinders  for $(v,a)$ close to $(v_0, 0)$ and  $a\neq 0$. More precisely,  we shall  prove the following result.
\begin{thm}\label{fred:thm0}
The Cauchy-Riemann section $\ov{\partial}$  of the strong bundle  $E\to V\times  \ov{X}$ is an  sc-smooth polyfold Fredholm section. Its linearization at the reference solution 
$(v_0, 0, 0, u^+_0, u^+_0)\in V\oplus \ov{X}$ is surjective and there exists  a  uniquely determined sc-smooth map 
\begin{gather*}
\Phi: B_{\varepsilon}(a_0)  \oplus V\to  V\oplus \ov{X}\\
(a, v)\mapsto  (v,a,b(a, v),w(a,v))
\end{gather*}
satisfying $(b( 0, v_0), w(0, v_0))=(0, u^+_0, u^+_0)$,  and solving the Cauchy-Riemann equation
$$
\ov{\partial}\Phi(v,a)=0.
$$
Moreover, these are the only solutions near the reference solution.
\end{thm}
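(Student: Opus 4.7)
The plan is to verify directly that $\ov{\partial}$ is $\ssc$-smooth, regularising, and has a Fredholm germ at every smooth point, to prove surjectivity of the linearisation at the reference point $x_0 = (v_0, 0, 0, u_0^+, u_0^-)$, and to conclude by invoking the polyfold implicit function theorem quoted at the end of Section \ref{Fredholm-section}. Away from the nodal point $a = 0$ all three properties are standard elliptic theory on a compact cylinder, so the analysis concentrates on the node.

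\textbf{Smoothness and regularity.} In strong bundle coordinates near $x_0$ obtained from the splicing retractions of Theorems \ref{first-splice} and \ref{second-splicing}, the principal part of $\ov{\partial}$ reads, after the reduction in Remark \ref{rem1-nnew}, as $(v, a, w^+, w^-) \mapsto \tfrac{1}{2}\bigl(T w_a + i \circ T w_a \circ j(a, v)\bigr)$ with $w_a = \oplus_a(w^+, w^-)$. Its $\ssc$-smoothness follows from the sc-smoothness of $\oplus_a$ (Theorem \ref{first-splice}), of the pullback by the family $\phi_v$ of boundary-supported diffeomorphisms (Theorem \ref{thm-125}), of the shift map (Example \ref{shift-example1}), and of elementary multiplications by the cut-off $\beta_a$. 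The regularising property reduces, via the splicing isomorphisms $\boxdot_a$ and $\wh{\boxdot}_a$, to elliptic regularity for $\ov{\partial}$ on the compact cylinder $Z_a$ and to exponentially weighted elliptic regularity on $\R^\pm \times S^1$. The latter is available because the asymptotic operator $J \partial_t$ on $S^1$ has spectrum $2 \pi \Z$, which misses $\pm \delta_m$ under the hypothesis $\delta_m \in (0, 2\pi)$.

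\textbf{Fredholm germ at $x_0$.} Choose an $\ssc^+$-section $s$ with $s(x_0) = \ov{\partial}(x_0)$ and pass to local coordinates. Using the splittings $E = \ker\oplus_0 \oplus \ker\ominus_0$ (Theorem \ref{propn-1.27}) and $F = \ker\wh{\oplus}_0 \oplus \ker\wh{\ominus}_0$ (Theorem \ref{second-splicing}), construct a filled version of $\ov{\partial} - s$ by extending the retracted section to agree with the weighted Cauchy-Riemann operator on the glued factor and with a fixed sc-isomorphism on the anti-glued complement. Conditions (1)--(3) of Definition \ref{filled-def} then follow from the sc-isomorphism property of $\wh{\boxdot}_a$: the added anti-glued equation forces a solution of the filled section to lie in the image of the splicing projection, that is, in $O$. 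After a further linear change of coordinates absorbing the finite-dimensional index, the germ takes the basic form $(a, w) \mapsto (\,\cdot\,, w - B(a, w))$ with $B$ contractive on every sc-level, the contraction being a quantitative consequence of the exponential damping of $\beta_a'$ in the weighted norms.

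\textbf{Surjectivity and conclusion.} Since $u_0^\pm$ straightens $j^\pm(v_0)$ to $i$, the linearisation of $\ov{\partial}$ at $x_0$ decouples into a pair of weighted $\ov{\partial}$-operators on $\R^\pm \times S^1$ together with a finite-dimensional contribution from variations of $v$, $a$, and $b$. A Fourier decomposition in $t$ shows that each half-cylinder factor has a finite-dimensional cokernel (higher modes invert thanks to the spectral gap forced by $\delta_m < 2\pi$); these finitely many obstructions are killed by the finite-dimensional parameter variations. The polyfold implicit function theorem of \cite{HWZ3} then produces the unique sc-smooth $\Phi$. The main obstacle is the bookkeeping in the Fredholm germ step: verifying quantitatively that $B$ has arbitrarily small Lipschitz constant on every sc-level requires careful control of how the splicing projections $\pi_a$ and $\wh{\pi}_a$ interact with the Cauchy-Riemann operator in the transition region of $\beta_a$, and is precisely where the hypothesis $\delta_m \in (0, 2\pi)$ is used.
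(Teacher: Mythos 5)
The overall strategy — verify sc-smoothness, the regularizing property, and the Fredholm germ condition at every smooth point, then invoke the polyfold implicit function theorem — matches the paper's plan, and the handling of sc-smoothness and regularity is correct in outline. The two central steps, however, are carried out differently from the paper, and in a way that leaves genuine gaps.

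\textbf{The Fredholm germ at the node.} You propose to bring the germ into the form $w \mapsto w - B(a,w)$ and to show $B$ is a small contraction by estimating the contribution of $\beta_a'$ in the weighted norms, singling out the interaction of the splicing projections with $\ov{\partial}$ as the hard point. The paper does something sharper and structurally different. After subtracting the $\ssc^+$-section $g_0(v,a,h^+,h^-) := g(v,a,0,0)$, the filled Cauchy-Riemann section $g - g_0$ is computed explicitly (Lemma~\ref{localexp}) and is found to be \emph{exactly linear} in $(h^+,h^-)$: $(g-g_0)(\lambda, h^+,h^-) = L(\lambda)(h^+,h^-)$ with $L(\lambda) = D_v + \Delta_a$, where $D_v$ is the Cauchy-Riemann operator and $\Delta_a$ a compact perturbation. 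The crucial analytic input is then Proposition~\ref{family}: $L(\lambda)$ is an sc-isomorphism with inverse uniformly bounded on every level as $\lambda \to (v_0,0)$. This is nontrivial precisely because the family $\lambda \mapsto L(\lambda)$ is \emph{not} continuous in the operator norm on any level — one cannot simply perturb from $L(\lambda_0)$. The uniform bound is proved by a compactness argument in the appendix using the gluing estimates of Section~\ref{esttotalgluing}. Once this is in hand, Proposition~\ref{solmap} gives an sc-smooth map $(\lambda,\xi) \mapsto L(\lambda)^{-1}\xi$, and the resulting strong bundle coordinate change trivializes the push-forward germ to $(\lambda, h) \mapsto h$, so that $B \equiv 0$; no contraction estimate is needed at all. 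Your proposal omits this uniform-invertibility step, and the exponential-damping heuristic you offer in its place does not address the failure of operator-norm continuity, which is exactly what makes a direct contraction estimate delicate.

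\textbf{Surjectivity and uniqueness.} You argue via a Fourier decomposition that the half-cylinder factors have finite-dimensional cokernel which is then ``killed by the finite-dimensional parameter variations.'' This would at best establish surjectivity of the full linearization $Df(p_0)$, which is not enough for the uniqueness claim in the theorem: the map $\Phi$ is parametrized by $(a,v)$, so one needs the partial linearization in the $(h^+,h^-)$-direction to be an isomorphism, not merely Fredholm with cokernel annihilated by $V \oplus \mathbb{C}$-variations. The paper establishes this directly (Remark~\ref{rem-x}): $D_v$, viewed on the space $\wh{E}_0$ whose boundary conditions are $h^\pm(0,0)=(0,0)$ and $h^\pm(0,t) \in \{0\}\times\mathbb{R}$, is \emph{bijective}, by the classical model of $\ov{\partial}$ on the disk with real boundary conditions (surjective, one-dimensional kernel) combined with the normalization $h^\pm(0,0)=0$ that kills the kernel. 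Your Fourier-plus-transversality route would produce a solution set, but it does not produce the unique parametrization $(a,v) \mapsto (b(a,v),w(a,v))$ asserted by the theorem without the additional argument that $D_2 f(p_0)$ is an isomorphism — which is what the paper's boundary-condition analysis supplies.
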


Theorem \ref{fred:thm0}  describes,  in particular,  the smoothness
properties of the family of
biholomorphic maps between conformal cylinders which break apart.
Unraveling the M-polyfold structure on $\ov{X}$, 
we shall obtain the following result, where the set ${\mathcal D}^{m,\varepsilon}$ is the space of diffeomorphisms $u^\pm:\R^\pm \times S^1\to \R^\pm \times S^1$ of the half-cylinders of the form 
$$u^\pm (s, t)=(s, t)+(d^\pm, \vartheta^\pm)+r^\pm(s, t)$$
where $r^\pm $ have weak derivatives up to order $m$ which weighted by $e^{\varepsilon \abs{s}}$ belong to $L^2(\R^2\times S^1,\R^2)$.

\begin{thm}\label{strong-x}
Let $(v_0,0)\in V\oplus \C$ be fixed and  let 
$$(v,a)\mapsto  b(v,a)\quad \text{and} \quad  (v,a)\mapsto  w(v,a)$$ be 
 the the germs of maps  guaranteed  by  Theorem \ref{fred:thm0} and  defined on some small open neighborhood  $O(v_0,0)$ and where 
 $$
w(v,a):(Z_a,j(a,v),p^+_a,p^-_a)\to  (Z_{b(v,a)},i,p^+_{b(v,a)},p^-_{b(v,a)}).
$$
is the associated  biholomorphic map between the cylinders if $a\neq 0$.

Then given a constant $\Delta>0$,  the following holds on a  possibly smaller  neighborhood $O(v_0,0)$. There exists a map 
$$(a,v)\mapsto  \wt{w}(v,a)\in \bigcap_{m\geq 3, 0<\varepsilon<2\pi }{\mathcal D}^{m, \varepsilon}$$
having the following properties.  For every $m\geq 3$ and every  $0<\varepsilon<2\pi$ the map 
$$(v,a)\mapsto  \wt{w}(v,a)\in {\mathcal D}^{m, \varepsilon}$$
is smooth and satisfies 
$$
\wt{w}(v,a)(s,t)=w(v,a)(s,t)
$$
 for all $(s,t)\in [0,\frac{1}{2}\varphi(|a|) +\Delta]\times S^1$. In addition,  
$$
\wt{w}(v,a)(s,t)=(s+d_{(v,a)},t+\vartheta_{(v,a)}).
$$
for $s\geq s(a)$, that is, for large $s$ where large depends on the gluing parameter $a$.
\end{thm}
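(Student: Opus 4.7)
The plan is to splice $w(v,a)$ with a pure translation past the threshold $s_a := \tfrac{1}{2}\varphi(|a|) + \Delta$, using rigidity of biholomorphisms of flat cylinders to make the splice automatically smooth in $s$, and then upgrade sc-smoothness to classical smoothness via Proposition \ref{lo-prop} together with the compact support of the deviation from translation.

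First, I would extract the data from Theorem \ref{fred:thm0}: since the parameter space $V \oplus \C$ has trivial scale structure, Proposition \ref{lo-prop} upgrades the sc-smoothness of the solution $(v,a) \mapsto (v,a,b(v,a),w(v,a)) \in V \oplus \ov{X}$ to classical $C^\infty$-smoothness into every scale level $\ov{X}_k$. Unwrapping the M-polyfold chart around $(v_0,0,0,u_0^+,u_0^-)$ supplied by Theorem \ref{thm-comp}, this solution is represented by a pair $(\eta^+(v,a), \eta^-(v,a)) \in \mathcal{D}$ that is classically $C^\infty$ in $(v,a)$ into $\mathcal{D}^{3+k,\delta_k}$ for every $k$, and which at $a = 0$ coincides with the biholomorphic pair $(u^+(v), u^-(v))$ of the limit complex structure. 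By the gluing formula (with $\beta_a \equiv 1$ on $[0, \varphi(|a|)/2 - 1]$), the biholomorphism $w(v,a)$ agrees with $\eta^+(v,a)$ on this initial segment of $Z_a$.

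Next, the standing hypothesis $j^\pm(v) = i$ outside a compact neighborhood of the boundaries implies that, for $|a|$ sufficiently small, the restriction of $w(v,a)$ to the flat annulus $[1, \varphi(|a|) - 1] \times S^1 \subset Z_a$ is a biholomorphism of standard flat annuli and therefore, by rigidity of cylinder automorphisms, coincides with a single pure translation $(s,t) \mapsto (s + d^+(v,a), t + \vartheta^+(v,a))$. These translation parameters can be read off as the asymptotic translation data of $\eta^+(v,a)$ in the $\mathcal{D}$-representation; they depend classically smoothly on $(v,a)$ and extend continuously to $a = 0$. I then set
\begin{equation*}
\tilde{w}^+(v,a)(s,t) := \begin{cases} w(v,a)(s,t), & 0 \leq s \leq s_a, \\ (s + d^+(v,a), t + \vartheta^+(v,a)), & s \geq s_a, \end{cases}
\end{equation*}
with $\tilde{w}^+(v,0) := u^+(v)$, and define $\tilde{w}^-(v,a)$ symmetrically on $\R^- \times S^1$. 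The two branches of the definition agree on the open interval $[1, \varphi(|a|) - 1]$ containing $s_a$, so $\tilde{w}^+$ is smooth in $s$; the two required properties (agreement with $w$ below $s_a$, pure translation above) hold by inspection.

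Finally, to verify classical smoothness of $(v,a) \mapsto \tilde{w}^\pm(v,a) \in \mathcal{D}^{m,\varepsilon}$ for every $m \geq 3$ and every $0 < \varepsilon < 2\pi$, set $\rho(v,a)(s,t) := \tilde{w}^+(v,a)(s,t) - (s + d^+(v,a), t + \vartheta^+(v,a))$. By the translation rigidity above, $\rho(v,a)$ vanishes for $s \geq 1$ and is therefore supported in the fixed compact region $[0,1] \times S^1$, independent of $a$; hence $\|\rho(v,a)\|_{H^{m,\varepsilon}(\R^+\times S^1)}$ is bounded (up to a constant depending only on $\varepsilon$) by the unweighted norm $\|\rho(v,a)\|_{H^m([0,1]\times S^1)}$, which is in turn controlled by the level-$(m-3)$ norm of $\eta^+(v,a)$ in $\mathcal{D}$. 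The desired smoothness now follows from the classical $C^\infty$-smoothness of $\eta^+$ established in Step 1, together with that of the finite-dimensional translation parameters. The main obstacle is the reconciliation carried out in the second step: identifying the rigidity-defined translation parameters $(d^+, \vartheta^+)(v,a)$ with the asymptotic data encoded in the chart representative $\eta^+(v,a)$, so that both branches of the definition of $\tilde{w}^+$ patch together into a single smooth $(v,a)$-family extending continuously at $a = 0$ to the biholomorphism $u^+(v)$ of the limit structure.
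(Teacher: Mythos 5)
The core of your argument rests on a rigidity claim that is false: the restriction of the biholomorphism $w(v,a):(Z_a,j(a,v))\to(Z_b,i)$ to the flat region is \emph{not} a pure translation. Even in the limit $a=0$, the uniformizing maps $u^\pm_0:(\R^\pm\times S^1,j^\pm(v_0))\to(\R^\pm\times S^1,i)$, restricted to the region $|s|\geq s_0$ where $j^\pm(v_0)=i$, are holomorphic embeddings of one half-cylinder into another, and such a map has a Fourier expansion $(s,t)\mapsto(s+d^+,t+\vartheta^+)+\sum_{n\geq 1}c_n e^{-n(s+it)}$: the deviation from translation decays exponentially but vanishes at no finite $s$. (This is exactly what the paper records when it writes $u^\pm_0 = \text{translation} + r^\pm_0$ with $r^\pm_0$ decaying at rate $e^{-\varepsilon|s|}$ for every $\varepsilon<2\pi$.) Consequently your two branches of $\wt{w}^+$ do \emph{not} agree on the overlap interval, and your patched definition is not even continuous, let alone smooth. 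Rigidity of \emph{automorphisms} of a flat cylinder does give translations, but $w$ restricted to the flat region is not an automorphism of anything — it is an injective holomorphic map into a larger cylinder, with no constraint forcing it to be affine.

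The exact vanishing beyond $\tfrac{R}{2}+1$ that you need comes from a completely different source, namely the splicing: the solution $(\wh h^+(v,a),\wh h^-(v,a))$ in chart coordinates satisfies $\rho_a(\wh h^+,\wh h^-)=(\wh h^+,\wh h^-)$, and the explicit formula for $\rho_a$ (where $\beta_a(s)=0$ for $s\geq\tfrac{R}{2}+1$) forces $\wh h^+$ to equal its asymptotic constant exactly for $s\geq\tfrac{R}{2}+1$. But even this is not enough in the chart around $(u^+_0,u^-_0)$ that you use, because the reference map $u^+_0$ itself still carries the non-vanishing tail $r^+_0$. The paper's resolution is to take a chart around a nearby \emph{truncated} smooth reference $(w^+_0,w^-_0)$, where $w^\pm_0$ are exactly pure translations for $|s|\geq s_0$, so that $w(v,a)=\boxplus_a(w^+_0+\wh h^+,w^-_0+\wh h^-)$ is a genuine translation for $s\geq\tfrac{R}{2}+1$, and then $\wt w$ is obtained by multiplying the fluctuating part $\oplus_a(\wh r^+,\wh r^-)$ by the cutoff $\beta(\,\cdot-\tfrac{R}{2}-\Delta-1)$. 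Without both ingredients — the choice of a truncated reference and the $\rho_a$-constraint — the truncation you propose does not close up. Your Step 1 (upgrading to classical smoothness via Proposition~\ref{lo-prop}) and Step 4 (the compact-support estimate controlling all $(m,\varepsilon)$-norms) are sound and parallel the paper; it is Steps 2 and 3 that do not survive.
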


\section{Exploring  Sc-Smoothness}\label{sec2}
In chapter \ref{chapter1}  we have presented  background material and some of the basic results used in the construction of the polyfold structures in the SFT. Their  proofs  in chapter 
\ref{longcylinders} rely on  the technical results which we shall  prove now in chapter  \ref{sec2}.  

\subsection{Basic Results about Abstract Sc-Smoothness}\label{section2.1}
In this subsection we relate sc-smoothness with the more familiar notion of being $C^\infty$ or $C^k$. In particular,  we shall prove Proposition \ref{up-prop} and Proposition \ref{lo-prop}.
The first result relates the $\ssc^1$-notion with that of being $C^1$ and provides an alternative definition of a map between sc-Banach spaces to be of class $\ssc^1$. 
\begin{prop}\label{x1}
Let $E$ and $F$ be $\ssc$-smooth Banach spaces and let $U$ be a relatively open subset of a partial  quadrant $C$ in $E$. 
Then an  $\ssc^0$-map $f:U\to F$ is of class $\text{sc}^1$ if and only if  the following conditions hold true:
\begin{itemize}\label{sc-100}
\item[(1)] For every $m\geq 1$,  the induced map
$$
f:U_m\to  F_{m-1}
$$
is of class $C^1$. In particular,  the derivative  
$$
df: U_m\to  L(E_m,F_{m-1}),\quad x\mapsto df(x)
$$
is a  continuous map
\item[(2)] For every  $m\geq 1$ and every $x\in U_m$,  the bounded  linear operator
$df(x): E_m\to F_{m-1}$ has an extension to a bounded  linear operator $Df(x):E_{m-1}\to F_{m-1}$. In addition, the map
\begin{equation*}
U_m\oplus  E_{m-1}\to  F_{m-1}, \quad  (x,h)\mapsto  Df(x)h
\end{equation*}
is continuous.
\end{itemize}
\end{prop}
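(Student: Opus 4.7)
The plan is to prove both implications by reducing the $\ssc^1$-definition to level-wise joint continuity of $(y,h)\mapsto Df(y)h$ and comparing this directly with the content of (1)--(2). The two conditions are almost identical in shape; the nontrivial gap between them is bridged by the compactness of the sc-inclusions.

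For the forward direction, I would first unwind the identity $(TU)_k=U_{k+1}\oplus E_k$ so that the $\ssc^0$-property of $Tf$ becomes, for every $k\geq 0$, the joint continuity of $(y,h)\mapsto Df(y)h$ as a map $U_{k+1}\oplus E_k\to F_k$. Setting $k=m-1$ for $m\geq 1$ gives the continuity statement in (2) verbatim; fixing $y\in U_m$, linearity in $h$ then shows that $Df(y)\colon E_{m-1}\to F_{m-1}$ is bounded, supplying the extension required in (2). For (1), I would exploit the fundamental theorem of calculus along the segment $x+th$ with $x\in U_m$, $h\in E_m$, $x+th\in C$:
\begin{equation*}
f(x+h)-f(x)-Df(x)h=\int_0^1\bigl(Df(x+th)-Df(x)\bigr)h\,dt,
\end{equation*}
valid in $F_{m-1}$ because the integrand is continuous there.

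The principal obstacle lies in controlling this integral: we need norm continuity of $y\mapsto Df(y)\in L(E_m,F_{m-1})$, whereas (2) only furnishes strong-operator continuity in the larger space $L(E_{m-1},F_{m-1})$. This upgrade is where the compactness of $E_m\hookrightarrow E_{m-1}$ is decisive. Arguing by contradiction, I would assume $y_n\to y$ in $U_m$ yet $\lVert Df(y_n)-Df(y)\rVert_{L(E_m,F_{m-1})}\not\to 0$, choose witnesses $h_n\in E_m$ with $|h_n|_m\leq 1$, and extract a subsequence converging in $E_{m-1}$ to some $h$. A uniform bound on $\{Df(y_n)\}$ in $L(E_{m-1},F_{m-1})$, obtained by Banach--Steinhaus from the pointwise bound provided by (2), combined with the decomposition
\begin{equation*}
Df(y_n)h_n-Df(y)h_n=Df(y_n)(h_n-h)+\bigl(Df(y_n)-Df(y)\bigr)h+Df(y)(h-h_n),
\end{equation*}
sends each term to zero in $F_{m-1}$, a contradiction. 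Integration then yields the Fréchet estimate $|f(x+h)-f(x)-Df(x)h|_{m-1}=o(|h|_m)$, establishing (1).

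For the backward direction, I would define $Df(x)$ to be the extension furnished by (2) at $m=1$. Definition~\ref{sc-def}(1) then becomes exactly the classical Fréchet differentiability of $f\colon U_1\to F_0$ given by (1) at $m=1$, since $Df(x)|_{E_1}=df(x)$. The $\ssc^0$-property of $Tf$ on $U_{m+1}\oplus E_m\to F_{m+1}\oplus F_m$ decomposes into $\ssc^0$-continuity of $f$ on the first factor and joint continuity of $(y,h)\mapsto Df(y)h$ as a map $U_{m+1}\oplus E_m\to F_m$, the latter being precisely hypothesis (2) applied at $m+1$. This proves $f\in\ssc^1$ and closes the argument.
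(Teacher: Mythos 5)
Your proof is correct and uses essentially the same mechanism as the paper's: the compactness of the inclusion $E_m\hookrightarrow E_{m-1}$ upgrades the joint continuity of $(x,h)\mapsto Df(x)h$ (which is the content of $Tf$ being $\ssc^0$) to norm continuity of $x\mapsto df(x)$ by a normalize-and-extract-a-subsequence contradiction, and the integral formula then yields the Fréchet estimate. The Banach--Steinhaus detour can be dropped: since $(y_n,h_n)\to(y,h)$ in $U_m\oplus E_{m-1}$, joint continuity already gives $Df(y_n)h_n\to Df(y)h$, so $Df(y_n)h_n-Df(y)h_n=[Df(y_n)h_n-Df(y)h]+[Df(y)h-Df(y)h_n]\to 0$ without the three-term decomposition, which is how the paper argues.
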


\begin{proof}
It is clear that the conditions (1) and (2) imply that  the map $f$ is $\ssc^1$. The other direction is more involved and uses  the compactness of the inclusions $E_{m+1}\to E_m$  in a crucial way.

Assume that $f:U\to F$ is of class $\ssc^1$.   Then the induced map  $f:U_1\to F$ is differentiable at every point $x$ with the derivative $df(x)=Df(x)\vert E_1\in L(E_1, F)$.  Hence  the extension of $df(x):E_1\to F$ to a continuous linear map $E\to F$ is the postulated map $Df(x)$.
We claim that the derivative $x\mapsto df(x)$ from $U_1$ into $L(E_1, F)$ is continuous. Arguing indirectly,  we find an $\varepsilon>0$ and sequences $x_n\to x$ in $U_1$ and $h_n$ of unit norm in $E_1$ satisfying
\begin{equation}\label{cone}
\abs{df(x_n)h_n-df(x)h_n}_0\geq \varepsilon.
\end{equation}
Taking a subsequence we may assume, in view of the compactness of the embedding $E_1\to E_0$,  that $h_n\to h$ in $E_0$. Hence, by the continuity property, $df(x_n)h_n=Df(x_n)h_n\to Df(x)h$ in $F_0$. Consequently,
$$df(x_n)h_n-df(x)h_n=Df(x_n)h_n-Df(x)h_n\to Df(x)h-Df(x)h=0$$ in $F_0$, in contradiction to \eqref{cone}.

Next we prove that $f:U_{m+1}\to F_m$ is differentiable at $x\in U_{m+1}$ with derivative
$$df(x)=Df(x)\vert E_{m+1}\in L(E_{m+1}, F_m)$$ so that the required extension of $df(x)$ is $Df(x)\in L(E_m, F_m)$.  The map  $f:U_1\to F_0$ is of class $C^1$ and $df(x)=Df(x)|E_1$.  Since, by continuity property (2), the map $(x, h)\mapsto Df(x)h$ from
$U_{m+1}\oplus E_m\to F_m$ is continuous, we can estimate for $x\in U_{m+1}$ and $h\in E_{m+1}$,
\begin{equation*}
\begin{split}
&\frac{1}{\abs{h}}_{m+1}\cdot \abs{f(x+h)-f(x)-Df(x)h}_m\\
&\phantom{==}=\frac{1}{\abs{h}}_{m+1}\cdot \left| \int_0^1\bigl[ Df(x+\tau h)\cdot h-Df(x)\cdot h\bigr] d\tau \right|_m\\
&\phantom{==}\leq  \int_0^1\left|  Df(x+\tau h)\cdot \frac{h}{\abs{h}}_{m+1}-Df(x)\cdot \frac{h}{\abs{h}}_{m+1} \right|_m\, d\tau .
\end{split}
\end{equation*}
Take a sequence $h\to 0$ in $E_{m+1}$. By the compactness of the embedding $E_{m+1}\to E_{m}$,  we may assume that $\frac{h}{\abs{h}}_{m+1}\to h_0$ in $E_m$. By the continuity property we now conclude that the integrand converges uniformly in $\tau$ to $\abs{Df(x)h_0-Df(x)h_0}_m=0$ as $h\to 0$ in $E_{m+1}$. This shows that $f:U_{m+1}\to F_m$ is indeed differentiable at $x$ with derivative $df(x)$ being the bounded linear operator
$$
df(x)=Df(x)|E_{m+1}\in L(E_{m+1}, F_m).
$$
The continuity of $x\mapsto df(x)\in L(E_{m+1}, F_m)$ follows by the argument already used above, so that $f:U_{m+1}\to F_m$ is of class $C^1$. This finishes the proof of the proposition.
\end{proof}
As a   consequence of Proposition \ref{x1} we obtain the  following proposition.
\begin{prop}\label{ias}
If $f:U\to  V$ is an  $\ssc^k$-map,  then
the induced map $f:U^1\to  V^1$ is also $\ssc^k$.
\end{prop}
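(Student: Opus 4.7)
I would proceed by induction on $k$, with the crucial bookkeeping observation that there is a canonical identification of sc-Banach spaces
$$T(U^1)=(TU)^1 \quad\text{and}\quad T(V^1)=(TV)^1,$$
since both $T(U^1)$ and $(TU)^1$ have level $m$ equal to $U_{m+2}\oplus E_{m+1}$, and similarly on the target. Moreover, under this identification, the tangent map of $f$ viewed as $U^1\to V^1$ coincides with $Tf:TU\to TV$ considered between the shifted structures.

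\textbf{Base case} $k=0$: the level-$m$ map of $f:U^1\to V^1$ is $f:U_{m+1}\to V_{m+1}$, which is continuous because $f:U\to V$ is $\ssc^0$.

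\textbf{Inductive step} ($k\geq 1$): Assume the result for $k-1$, and let $f:U\to V$ be $\ssc^k$. I would first verify that $f:U^1\to V^1$ is $\ssc^1$ directly from Definition \ref{sc-def}. For $x\in (U^1)_1=U_2$ the candidate linearization is $Df(x)$; by Proposition \ref{x1}, $Df(x)$ restricts to a bounded operator $E_1\to F_1$, i.e.\ $(U^1)_0\to (V^1)_0$. To verify the limit condition, I would invoke Proposition \ref{lo-prop}: since $f$ is $\ssc^1$, the induced map $f:U_2\to V_1$ is $C^1$ with classical derivative equal to $Df(x)|E_2$, so
$$\lim_{|h|_2\to 0}\frac{1}{|h|_2}\,\bigl|f(x+h)-f(x)-Df(x)h\bigr|_1=0,$$
which is precisely the limit condition for the shifted structure. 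The tangent-map continuity condition $T\!f:T(U^1)\to T(V^1)$ being $\ssc^0$ follows from the identification $T(U^1)=(TU)^1$ and the $\ssc^0$-property of $Tf:TU\to TV$ (applying the base case to the $\ssc^0$ map $Tf$).

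To complete the induction for $k\geq 2$, I would apply the induction hypothesis to $Tf:TU\to TV$, which is $\ssc^{k-1}$ because $f$ is $\ssc^k$. The hypothesis yields $Tf:(TU)^1\to (TV)^1$ is $\ssc^{k-1}$, and via $T(U^1)=(TU)^1$ this is exactly the statement that the tangent map of $f:U^1\to V^1$ is $\ssc^{k-1}$. Combined with the $\ssc^1$-smoothness established above, this means $f:U^1\to V^1$ is $\ssc^k$, completing the induction.

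\textbf{Main obstacle.} The substantive content is not analytic but rather definitional: one must confirm that the tangent construction commutes with the shift operation $(\cdot)^1$ at the level of sc-filtrations and that the tangent map of $f$ in the shifted structures is literally $Tf$ at raised levels. Once this bookkeeping is set up, the analytic content reduces to invoking Propositions \ref{x1} and \ref{lo-prop} to produce the correct linearization and verify the differentiability limit at the higher level.
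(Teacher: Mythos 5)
Your proof takes essentially the same route as the paper's: both rely on the identification $T(U^1)=(TU)^1$ and run the induction by feeding the tangent map $Tf$ (which is $\ssc^{k-1}$) back into the induction hypothesis. The paper's base case is $k=1$ and is dispatched in one line: conditions (1) and (2) of Proposition \ref{x1} are stated for all $m\geq 1$ and are manifestly stable under bumping every level by one, so the ``iff'' of Proposition \ref{x1} immediately gives $f:U^1\to V^1$ is $\ssc^1$. You instead re-derive the $\ssc^1$ property from Definition \ref{sc-def} by hand, which is logically fine but more work than is needed.

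One small hiccup worth noting: you cite Proposition \ref{lo-prop} to conclude that $f:U_2\to V_1$ is $C^1$. In the paper's logical order this is precarious, because the proof of Proposition \ref{lo-prop} (= Proposition \ref{lower} in Section 2.1) itself invokes Proposition \ref{ias} to reduce the general case $f:U_{m+l}\to F_m$ to $f:U_l\to F_0$. You should instead cite Proposition \ref{x1}(1) with $m=2$, which is stated and proved before Proposition \ref{ias}, does not depend on it, and gives you exactly the $C^1$ regularity of $f:U_2\to F_1$ that your argument needs (together with Proposition \ref{x1}(2) for the identification of the derivative with the restriction of $Df(x)$). With that substitution your argument is sound and matches the paper's in substance.
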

\begin{proof}
We prove the assertion by induction with respect to $k$. Assume that $k=1$ and  that $f:U\to V$ is $\ssc^1$. Equivalently,  $f$ satisfies  parts (1) and (2) of 
of Proposition \ref{x1} for all $m\geq 1$. This implies that, after 
replacing $E$ and $F$ by $E^1$ and $F^1$ 
and $U$ by $U^1$,  the map $f:U^1\to  F^1$ also  satisfies
the points (1) and (2) of Proposition \ref{x1}, Applying  Proposition \ref{x1} again, we conclude that $f:U^1\to  F^1$ is $\ssc^1$.

Now assume that the  assertion holds for all $\ssc^k$--maps and  let $f:U\to V$ be  an $\ssc^{k+1}$-map. This means that  the tangent map $Tf:TU\to  TV$ is $\ssc^k$.  Then, by induction hypothesis, the tangent map  $Tf:(TU)^1\to  (TV)^1$ is an $\ssc^k$--map. Since  $T(U^1)=(TU)^1$ and $T(V^1)=(TV)^1$, we have proved that
$$
Tf:T(U^1)\to  T(V^1)
$$
is  an  $\ssc^k$-map. But this precisely means that $f:U^1\to  V^1$ is an $\ssc^{k+1}$--map. The proof of the proposition is complete.  
\end{proof}

Next we study the relationship between the notions of being $C^k$ and $\ssc^k$.

\begin{prop}\label{lower}
Let $U$ and $V$ be relatively open subsets of partial quadrants in sc-Banach spaces $E$ and $F$, respectively. 
If $f:U\to  V$ is $\ssc^k$, then for every $m\geq 0$ the map $f:U_{m+k}\to  V_m$ is of class $C^k$.  Moreover,  $f:U_{m+l}\to  V_m$ is of class $C^l$ for every $0\leq l\leq k$.
\end{prop}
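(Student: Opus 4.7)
The plan is to argue by induction on $k$, bootstrapping off of Proposition \ref{x1} and using the inductive hypothesis applied to the tangent map $Tf$. The base case $k=0$ is just the definition of $\ssc^0$, and the $k=1$ case is immediate from Proposition \ref{x1}: for $j\geq 1$, $f:U_j\to V_{j-1}$ is $C^1$, which is the desired conclusion with $m=j-1$. Note that the ``moreover'' clause follows for free once the principal statement is proved, since $\ssc^k$ implies $\ssc^l$ for $0\leq l\leq k$; so the real content is the main statement $f:U_{m+k}\to V_m$ is $C^k$.

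For the inductive step, suppose the proposition holds for $k$, and let $f:U\to V$ be $\ssc^{k+1}$. Then $Tf:TU\to TV$ is $\ssc^k$, so by the inductive hypothesis applied to $Tf$, the induced map
\[
Tf:(TU)_{m+k}=U_{m+k+1}\oplus E_{m+k}\;\longrightarrow\; V_{m+1}\oplus F_m=(TV)_m
\]
is of class $C^k$ for every $m\geq 0$. Since $Tf(x,h)=(f(x),Df(x)h)$, projecting onto each factor yields (a) $f:U_{m+k+1}\to V_{m+1}$ is $C^k$, and (b) the map $B(x,h):=Df(x)h$ from $U_{m+k+1}\oplus E_{m+k}$ to $F_m$ is $C^k$ and linear in $h$. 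After restricting the second variable along the continuous embedding $E_{m+k+1}\hookrightarrow E_{m+k}$, $B$ is still $C^k$ as a map $U_{m+k+1}\oplus E_{m+k+1}\to F_m$.

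Now I apply Proposition \ref{x1} to $f$ (which is in particular $\ssc^1$): for $j=m+k+1\geq 1$, $f:U_{m+k+1}\to V_{m+k}$ is $C^1$ with derivative $df(x)=Df(x)|E_{m+k+1}$. Post-composing with the sc-continuous inclusion $V_{m+k}\hookrightarrow V_m$, the map $f:U_{m+k+1}\to V_m$ is $C^1$ with the same formula for its derivative, now viewed as an element of $L(E_{m+k+1},F_m)$. To conclude $C^{k+1}$-regularity it suffices to show that $x\mapsto Df(x)|E_{m+k+1}$ is $C^k$ as a map $U_{m+k+1}\to L(E_{m+k+1},F_m)$. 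This will follow from (b) together with the standard fact that a $C^k$ map $B:U\oplus Y\to Z$ that is linear in the second argument induces a $C^k$ map $x\mapsto B(x,\cdot)$ into $L(Y,Z)$; iteratively differentiating $B$ in the first slot and using the linearity in $h$ to push the $Y$-variable through the derivative, one checks that the $j$-th derivative of $x\mapsto B(x,\cdot)$ is realized by the continuous $(j+1)$-linear map $(h_1,\ldots,h_j,y)\mapsto \partial_x^j B(x,y)(h_1,\ldots,h_j)$.

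The main obstacle I anticipate is precisely this last step: promoting joint $C^k$-regularity of $(x,h)\mapsto Df(x)h$ with linearity in $h$ to $C^k$-regularity of $x\mapsto Df(x)$ in the operator-norm topology on $L(E_{m+k+1},F_m)$. For $k=0$ this is the non-trivial continuity statement already handled in the proof of Proposition \ref{x1} via compactness of the embeddings in the scale; for general $k$ one reduces inductively to the $C^0$ case by differentiating and exploiting that derivatives in the $x$-direction of a map linear in $h$ remain linear in $h$. Once this is in hand, the conclusion ``$f:U_{m+k+1}\to V_m$ is $C^1$ with $C^k$ derivative'' yields $C^{k+1}$ and closes the induction.
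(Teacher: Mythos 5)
Your proposal is correct and follows essentially the same route as the paper's proof: induction on $k$, applying the inductive hypothesis to the tangent map $Tf$ on each level to obtain joint $C^k$-regularity of $(x,h)\mapsto Df(x)h$, and then promoting this to operator-norm $C^k$-regularity of $x\mapsto Df(x)$ using the compactness of the embeddings $E_{m+k+1}\hookrightarrow E_{m+k}$ (plus an integral-remainder verification of the candidate derivative, which you sketch and the paper carries out explicitly). The only caveat: the ``standard fact'' you invoke (joint $C^k$-regularity plus linearity in $h$ implies operator-valued $C^k$-regularity) is false in general Banach spaces without the compact scale -- your closing paragraph correctly identifies compactness as the crux, but the main text should not present the promotion as a general Banach-space fact.
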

\begin{proof}
The last statement is a consequence of the former since an $\ssc^k$-map is also $\ssc^l$-map for $0\leq l\leq k$.  Now we prove the main statement. Note that it suffices to 
show that $f:U_k\to  F_0$ is of class $C^k$. Indeed,  by Proposition \ref{ias}, the map $f:U^m\to  V^m$ is of class $\ssc^k$ and so we can repeat  the  argument  of Proposition  \ref{ias} for  the map $f:U^m\to  V^m$  replacing the map  $f:U\to  V$.

We prove the result by induction with respect to $k$. If  $k=0$ the statement is  trivially  true   and if  $k=1$ it  is just the condition (1) of  Proposition \ref{x1}. 
Now we assume that  result holds for all   $\ssc^k$--maps and let $f:U\to V$  be an $\ssc^{k+1}$--map.  
In particular, $f$ is of class $\ssc^k$ which,  by induction hypothesis, implies that $f:U_k\to  F_0$ is of class $C^k$. Also, the tangent map  $Tf:TU\to  TF$ is of class $\ssc^k$ and therefore $Tf:(TU)_k=U_{k+1}\oplus E_k\to  TF$ is of class $C^k$ as well. Denoting by $\pi:TF=F^1\oplus F\to F$  the projection onto the second factor, we consider 
the composition  
$$
\Phi:=\pi\circ Tf: U_{k+1}\oplus E_k\to F, \quad (x, h)\mapsto Df(x)h.
$$
We know that the map  $\Phi$ is of class $C^k$.  Taking $k$ derivatives but only with respect to $x$,  we obtain
a continuous map
$$
U_{k+1}\oplus E_k\to  L^k(E_{k+1},\ldots , E_{k+1};F), \quad (x,h)\mapsto (D^{k}_x\Phi)(x,h).
$$
Observing  that this map is linear in $h\in E_{k}$ and is continuous, we obtain the  map 
$$\Gamma:U_{k+1}\to  L^{k+1}(E_{k+1}, \ldots ,E_{k+1}; F)$$ defined  by 
$$
\Gamma (x):E_{k+1}\oplus\ldots \oplus E_{k+1}\to  F, \quad (h_1,\ldots ,h_k,h)\mapsto (D^k_x\Phi)(x,h)(h_1,\ldots ,h_k).
$$
We claim  that $\Gamma$  is continuous. Indeed, arguing indirectly we find  a point $x$ in $U_{k+1}$, a number $\varepsilon>0$, and sequence of points $(x_n, h_{1,n},\ldots , h_{k, n}, h_n)\in U_{k+1}\oplus E_{k+1}\oplus \ldots \oplus E _{k+1}$  so that 
$x_n\to x$ in $U_{k+1}$, all $h_{m,n}$  and $h_n$  have  length $1$ in the norm $\left| \cdot \right|_{k+1}$,  and
\begin{equation}\label{con-eq}
\abs{(\Gamma(x_n)-\Gamma(x))(h_{1,n},\ldots h_{k,n},h_n)}_0\geq \varepsilon>0.
\end{equation}
Since the inclusion $E_{k+1}\to E_k$ is compact, 
after perhaps taking a subsequence,  we may assume that $h_n\to  h$ in $E_k$.
Then  $(x_n,h_n)\to  (x,h)$ in $U_{k+1}\oplus E_k$ and, by the continuity,  
$$
\abs{D^k_x\Phi)(x_n,h_n)- (D^k_x\Phi)(x,h)}_{L^k(E_{k+1},\ldots ,E_{k+1};F)}\to 0.
$$
This, however,  contradicts  \eqref{con-eq}  since 
\begin{equation*}
\begin{split}
&\abs{(\Gamma (x_n)-\Gamma (x))(h_{1,n}, \ldots , h_{k,n}, h_{n})}_0\\
&=\abs{((D^k_x\Phi)(x_n, h_n)-(D^k_x\Phi)(x, h_n))(h_{1,n}, \ldots , h_{k,n})}_0\\
&\leq 
\abs{((D^k_x\Phi)(x_n, h_n)-(D^k_x\Phi)(x, h_n))}_{L^k(E_{k+1},\ldots ,E_{k+1}; F)}.
\end{split}
\end{equation*}

We shall now  prove that $f:U_{k+1}\to  F$ is of class $C^{k+1}$ by showing that the limit of 
$$
\frac{1}{\abs{\delta x}_{k+1}}[ D^kf (x+\delta x)-D^kf(x)-\Gamma (x)(\cdot , \delta x)]
$$
in $L(E_{k+1},\ldots ,E_{k+1}; F)$ is equal to $0$.  
For $x\in U_{k+1}$, $\delta x\in E_{k+1}$ small, and $t\in [0,1]$, we consider the $C^k$-map 
$$(t,  \delta x)\mapsto Df(x+t\delta x)\delta x.$$
Integrating with respect to $t$,  obtain  the $C^k$-map
$$
(x,\delta x)\mapsto  f(x+\delta x)-f(x).
$$
Differentiating  this map $k$ times with respect to $x$, we find for $h_1,\ldots ,h_k\in E_{k+1}$ that  
\begin{equation*}
\begin{split}
&D^kf(x+\delta x)(h_1,\ldots, h_k)-D^kf(x)(h_1,\ldots, h_k)\\
&\phantom{===}= D^k_x(f(x+\delta x)-f(x))(h_1,\ldots, h_k)\\
&\phantom{===}= D^k_x\left(\int_0^1 (Df(x+t\delta x)\delta x)dt\right)   (h_1,\ldots, h_k)\\
&\phantom{===}=\int_0^1 D_x^k((Df(x+t\delta x)\delta x)    (h_1,\ldots, h_k) dt\\
&\phantom{===}=  \int_0^1 \Gamma(x+t\delta x)  (h_1,\ldots, h_k, \delta x) dt.
\end{split}
\end{equation*}
Hence
\begin{equation*}
\begin{split}
&\frac{1}{\abs{\delta x}}_{k+1}\cdot [(D^kf(x+\delta x)-D^kf(x)))(h_1,\ldots, h_k)-\Gamma(x) (h_1,\ldots, h_k, \delta x) ]\\
&\phantom{====}= \int_0^1\bigl[ \bigl( \Gamma(x+t\delta x) - \Gamma(x) \bigr) (h_1,\ldots, h_k, \frac{\delta x }{\abs{\delta x}}_{k+1}) \bigr]\ dt.
\end{split}
\end{equation*}
Letting $\delta x\to 0$ in $E_{k+1}$ and using continuity of the map  $\Gamma: U_{k+1}\to  L(E_{k+1},\ldots ,E_{k+1};F)$,   we conclude that 
 the left-hand side above converges to $0$ in $L(E_{k+1},\ldots ,E_{k+1}; F)$. Consequently, $f:U_{k+1}\to  F$ is of class $C^{k+1}$ and the proof of Proposition \ref{lower}  is complete. 
\end{proof}

The next result is very useful in proving that a given map between sc-Banach spaces is sc-smooth.

\begin{prop}\label{ABC-x}
Let $E$ and $F$ be sc-Banach spaces and let $U$ be a relatively open subsets of partial quadrants in $E$. 
Assume that for every $m\geq 0$ and $0\leq l\leq k$ the map $f:U\rightarrow V$  induces  a map, 
$$
f:U_{m+l}\rightarrow F_m,
$$
which is of  class $C^{l+1}$. Then $f$ is $\ssc^{k+1}.$
\end{prop}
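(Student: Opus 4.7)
The plan is induction on $k$, using Proposition \ref{x1} for the base case and reducing the inductive step to the same statement applied to the tangent map $Tf$.

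For the base case $k = 0$, the hypothesis gives $f : U_m \to F_m$ of class $C^1$ for every $m \geq 0$, and I would verify the two conditions of Proposition \ref{x1}. Condition (1), $f : U_m \to F_{m-1}$ of class $C^1$ for $m \geq 1$, is immediate from the continuity of the sc-inclusion $F_m \hookrightarrow F_{m-1}$. For condition (2), applying the hypothesis at level $m-1$ gives $f : U_{m-1} \to F_{m-1}$ of class $C^1$; its derivative supplies the required bounded extension $Df(x) \in L(E_{m-1}, F_{m-1})$ for $x \in U_m$, and joint continuity of $(x, h) \mapsto Df(x) h$ on $U_m \oplus E_{m-1}$ is inherited from continuity of $df : U_{m-1} \to L(E_{m-1}, F_{m-1})$.

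For the inductive step, assume the proposition for $k-1 \geq 0$ and let $f$ satisfy the hypothesis for $k$. Since this hypothesis specializes to the one for $k-1$ (smaller range of $l$), $f$ is $\ssc^k$ by the inductive hypothesis, and in particular $Tf : TU \to TF$ is defined and $\ssc^0$. To upgrade $f$ to $\ssc^{k+1}$ it suffices to show $Tf$ is $\ssc^k$, and applying the inductive hypothesis to $Tf$ reduces this to verifying that $Tf : (TU)_{m+l} \to (TF)_m$, namely the map $U_{m+l+1} \oplus E_{m+l} \to F_{m+1} \oplus F_m$, $(x, h) \mapsto (f(x), Df(x) h)$, is of class $C^{l+1}$ for every $m \geq 0$ and $0 \leq l \leq k-1$. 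The first component $f : U_{m+l+1} \to F_{m+1}$ is $C^{l+1}$ directly from the hypothesis for $k$ applied with indices $(m+1, l)$.

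The principal difficulty is the second component $\Phi(x, h) := Df(x) h$, which must be shown to be of class $C^{l+1}$ as a map $U_{m+l+1} \oplus E_{m+l} \to F_m$. The hypothesis at $(m, l+1)$ gives $f : U_{m+l+1} \to F_m$ of class $C^{l+2}$, so by bilinearity of evaluation $\Phi$ is already $C^{l+1}$ on the \emph{smaller} product $U_{m+l+1} \oplus E_{m+l+1}$; the task is to extend the second argument from $E_{m+l+1}$ to $E_{m+l}$ without losing regularity. I would compute the total derivatives of $\Phi$ explicitly using its linearity in $h$: the $j$-th derivative expands into combinations of $D^{j+1}f(x)(y_1,\ldots,y_j, h)$ and $D^j f(x)(y_1,\ldots,\widehat{y_i},\ldots,y_j, z_i)$. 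The lower-order pieces are controlled for $j \leq l+1$ by the hypothesis at $(m, l)$, which supplies $D^{l+1} f : U_{m+l} \to L^{l+1}(E_{m+l}; F_m)$ continuously. The hardest term is $D^{l+2} f(x)(y_1,\ldots, y_{l+1}, h)$ with $h \in E_{m+l} \setminus E_{m+l+1}$: its continuous extension will be obtained by combining the $C^{l+2}$-regularity on $U_{m+l+1}$ with a compactness argument based on the sc-embedding $E_{m+l+1} \hookrightarrow E_{m+l}$, exactly as in the proof of Proposition \ref{x1}. This compactness-based extension of the top slot of the multilinear form is the main analytical step of the proof.
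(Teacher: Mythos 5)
The base case ($k=0$) and the overall strategy of reducing $\ssc^{k+1}$-ness of $f$ to $\ssc^k$-ness of $Tf$ are sound, and the student correctly isolates the crux — the term $D^{l+2}f(x)(y_1,\ldots,y_{l+1},h)$ with $h\in E_{m+l}$ and $y_i\in E_{m+l+1}$. But the claim that this ``continuous extension will be obtained by \ldots a compactness argument based on the sc-embedding \ldots exactly as in the proof of Proposition~\ref{x1}'' is a genuine gap, not a routine step. In the proof of Proposition~\ref{x1}, compactness of $E_1\hookrightarrow E_0$ is used to upgrade an \emph{already given} joint continuity of $(x,h)\mapsto Df(x)h$ on $U_1\oplus E_0$ to continuity of $x\mapsto df(x)$ in operator norm. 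It does not create an extension of a multilinear form to a lower-level slot. Here the hypotheses furnish only $D^{l+2}f:U_{m+l+1}\to L^{l+2}(E_{m+l+1};F_m)$ (all slots at level $m+l+1$) and $D^{l+1}f:U_{m+l}\to L^{l+1}(E_{m+l};F_m)$ (one fewer derivative with slots at level $m+l$); neither of these, nor the two together, gives a jointly continuous object $(x,y_1,\ldots,y_{l+1},h)\mapsto D^{l+2}f(x)(y_1,\ldots,y_{l+1},h)$ on $U_{m+l+1}\oplus E_{m+l+1}^{l+1}\oplus E_{m+l}$. Without such joint continuity as input, the compactness argument has nothing to work with. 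So the proposed inductive step does not close: the hypothesis ``$Tf:(TU)_{m+l}\to (TF)_m$ is $C^{l+1}$'' is not a verified consequence of the hypotheses on $f$.

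The paper sidesteps this by strengthening the induction. Its inductive statement $({\bf S_k})$ asserts not only that $f$ is $\ssc^{k+1}$ but also that every projection $\pi\circ T^{k+1}f$ is a linear combination of explicit ``monomial'' maps $\Gamma(x_1,x_{k_1},\ldots,x_{k_j})=d^jf(x_1)(x_{k_1},\ldots,x_{k_j})$ subject to the index constraints $j+l-1\leq n_i\leq k$. Those constraints guarantee that every slot on which $d^jf(x_1)$ is evaluated sits at a level at least $j+l-1$, so each evaluation and each formal linearization lands exactly inside the $C^{l'+1}$-regularity the hypothesis supplies, and no ``mixed'' extension of a top-order derivative to a lower slot is ever required. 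That structural bookkeeping is precisely the piece your reduction to $Tf$ discards, and it is what makes the induction close. To repair your proof you would need either to establish the mixed joint continuity by some additional argument (not just cite compactness), or to carry along an auxiliary statement about the structure of $T^{k+1}f$ of the kind the paper uses.
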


\begin{proof}
The proof is  by induction with respect to $k$. In order that the induction runs smoothly we have to prove slightly more. Here is  our inductive  assumption:
\begin{induction}
 If a map $f:U\to  F$ induces  maps  $f:U_{m+l}\to  F_m$ of class $C^{l +1}$ for all $m\in N_0$ and $0\leq  l\leq k$,  then $f$ is of class $\ssc^{k+1}$. Moreover,  if $\pi:T^{k+1}F\to  F^l$ denotes the projection onto a factor  $F^l$ of $T^{k+1}F$,  then the composition $\pi\circ T^{k+1} f :T^{k+1}U\to F^l$ is a linear combination of  maps of the following  type,
 
\begin{gather*}
\Gamma: U^{k+1}\oplus E^{n_1}\oplus\ldots \oplus E^{n_j}\to  F^l\\
 (x_1,x_{k_1},\ldots, x_{k_j})\mapsto   d^j(x_1)(x_{k_1},\ldots , x_{k_j}),
\end{gather*}
where $x_1\in U_{k+1}$, $x_{k_i}\in E_{n_i}$ and where  the nonnegative indices $j, k, l$ and $n_i$ satisfy 
$j+l-1\leq n_i\leq k$.
\end{induction}

We start the proof  with $k=0$. By assumption, $f:U_0\to F_0$ is of class $C^1$. For $x_1\in U_1$, we define 
the map $Df(x_1):E_0\to F_0$ by $Df(x_1)x_2=df (x_1)x_2$ where $df (x_1)$ is the linearization of 
$f:U_0\to F_0$ at the point $x_1$. Then $Df(x_1)\in {\mathcal L}(E_0, F_0)$ and as a map from $E_1\to F_0$, the map $Df (x_1)$ is  the derivative of $f:U_1\to F_0$ at the point $x_1$.  The tangent map 
$Tf :TE\to TF$ is, by definition, given by 
$$Tf (x_1, x_2)=(f(x_1), df (x_1)x_2).$$
Since the tangent map is continuous, we have proved that $f$ is $\ssc^1$.  If $\pi$ is a projection onto any factor of $TF$, then the composition 
$\pi\circ Tf$ is either the map $E^1\to F^1$ given by $x_1\mapsto f(x_1)$ or  the map $\pi \circ Tf:E^1\oplus E\to F$ defined by 
$\pi \circ Tf(x_1, x_2)=df(x_1)x_2$. Both maps are of the required form and the indices satisfy the required inequalities.  This finishes the proof of the statement  ${\bf (S_0)}$.

Next we assume that we have established ${\bf (S_k)}$  and  let $f:U\to F$ be a map such that 
$f:U_{m+l}\to F_m$ is of class $C^{l+1}$ for all $m\in \N_0$ and $0\leq l\leq k+1$. 
In particular,  since the map $f$ satisfies  the statement ${\bf (S_k)}$, it is of class 
$\ssc^{k+1}$ and, in addition, the compositions $\pi\circ T^{k+1}f:T^{k+1}E\to F^l$ are linear combinations of maps  of types  described in  ${\bf (S_k)}$.  We have to show that 
$f$ is $\ssc^{k+2}$ and to do this  we show that  the tangent map $T^{k+1}f$ is $\ssc^1$. It suffices to show that any of the terms making up the composition $\pi\circ T^{k+1}f$ is $\ssc^1$. 
Hence we consider the map $\Gamma: U^{k+1}\oplus E^{n_1}\oplus\ldots \oplus E^{n_j}\to  F^l$
given by
\begin{equation}\label{candidate-eq0}
\Gamma (x_1,x_{k_1},\ldots, x_{k_j})=d^j(x_1)(x_{k_1},\ldots , x_{k_j})
\end{equation}
with  $x_1\in U_{k+1}$, $x_{k_i}\in E_{n_i}$ and the indices satisfying $ j+l-1\leq n_i\leq k. $
Given $ (x_1,x_{k_1},\ldots, x_{k_j})\in U^{k+2}\oplus E^{n_1+1}\oplus\ldots \oplus E^{n_j+1}$, the candidate for the linearization 
$$D\Gamma (x_1,x_{k_1},\ldots, x_{k_j}): E^{k+1}\oplus E^{n_1}\oplus\ldots \oplus E^{n_j}\to  F^l$$ is the map 
defined by 
\begin{equation}\label{candidate-eq1}
\begin{split}
d^{j+1}f(x_1)(\delta x_1, x_{k_1},\ldots, x_{k_j})+\sum_{i=1}^jd^{j}f(x_1)(x_{k_1},\ldots, \delta x_{k_i}, \ldots , x_{k_j}).
\end{split}
\end{equation}
The map in   \eqref{candidate-eq1} is well-defined. Indeed, by assumption $j+l-1\leq n_1\leq k$, so that the map $f:U^{j+l-1}\to F^l$ is of class $C^j$. Since 
$E^{k}\subset E^{n_i}\subset E^{j+l-1}$, it follows that the map 
$$E^{n_i}\mapsto F^l, \quad \delta x_{k_i}\mapsto d^j f(x_1)(x_{k_1}, \ldots ,\delta x_{k_i}, \ldots x_{k_j})$$
is a bounded linear map.  Similarly, 
since $j+l-1\leq n_i\leq k$,   we  have $j+l\leq n_i+1\leq k+1$ and that 
$U^{k+2}\oplus E^{k+1}\oplus E^{n_1+1}\oplus\ldots \oplus E^{n_j+1}\subset U^{j+l}\oplus 
E^{j+l}\oplus \ldots \oplus E^{j+l}$. Since by our inductive assumption  the map $f:U^{j+l}\to F^l$ is of class $C^{j+1}$, it follows that  for given  $(x_{k_1},\ldots, x_{k_j})\in  E^{n_1+1}\oplus\ldots \oplus E^{n_j+1}$, the map $U^{k+1}\to F^l$ defined by  $x_1\mapsto d^j f(x_1)(x_{k_1},\ldots, x_{k_j})$ is of class $C^1$ which implies that the first term in \eqref{candidate-eq1} defines a bounded linear map 
$$E^{k+1}\mapsto F^l, \quad \delta x_1\mapsto d^{j+1}f(x_1)(\delta x_1, x_{k_1},\ldots, x_{k_j}).$$
Hence the map $D\Gamma (x_1,x_{k_1},\ldots, x_{k_j}): E^{k+1}\oplus E^{n_1}\oplus\ldots \oplus E^{n_j}\to  F^l$ defines a bounded linear operator and,  as a  map from $E^{k+2}\oplus E^{n_1+1}\oplus\ldots \oplus E^{n_j+1}\to  F^l$,  it  is the derivative of 
$\Gamma: U^{k+2}\oplus E^{n_1+1}\oplus\ldots \oplus E^{n_j+1}\to  F^l$. Moreover, the evaluation map 
\begin{gather*}
D\Gamma:U^{k+2}\oplus E^{n_1+1}\oplus\ldots \oplus E^{n_j+1}\oplus E^{k+1}\oplus E^{n_1}\oplus\ldots \oplus E^{n_j}\to  F^l\\
(x_1,x_{k_1},\ldots, x_{k_j}, \delta x_1,  \delta x_{k_1}, \ldots \delta x_{k_j})\mapsto D\Gamma 
(x_1,x_{k_1},\ldots, x_{k_j}, \delta x_1,  \delta x_{k_1}, \ldots \delta x_{k_j})
\end{gather*}
 is continuous. 
So, the map $\Gamma$ is  $\ssc^1$  and hence the tangent  map 
$T^{k+1}f:T^{k+1}E\to T^{k+1}F$ is of class $\ssc^1$. We have proved that  $f:U\to F$ is  of class $\ssc^{k+2}$. 
The tangent map $T^{k+2}f:T^{k+2}E\to T^{k+2}F$ is of the form
\begin{equation*}
\begin{split}
&T^{k+2}f(x_1, \ldots , x_{2^{k+2}})\\
&\phantom{=}=
(T^{k+1}f(x_1, \ldots , x_{2^{k+1}}), DT^{k+1}f(x_1, \ldots , x_{2^{k+1}})
(x_{2^{k+1}+1}, \ldots , x_{2^{k+2}}).
\end{split}
\end{equation*}
We  consider the composition $\pi\circ T^{k+2}f$.
If $\pi$ is a projection onto the first $2^k$ summands we see the terms guaranteed by the induction hypothesis ${\bf (S_{k})}$ but now  with the index  of the spaces raised  by one. If $\pi$ is a projection onto  any of the last $2^k$ factors,  the previous discussion shows that they are linear combinations of the  maps occurring  in \eqref{candidate-eq1}.  Denote  the new indices by $j', k', l'$ and $n_i'$. Then in case of the map $d^{j+1}f(x_1)(\delta x_1, x_{k_1},\ldots, x_{k_j})$ in \eqref{candidate-eq1}. we have $k'=k+1, j'=j+1, l'=l$ and $n_1'=k+1$ and $n_{i+1}'=n_{i}+1$ for $i=1,\ldots , j$. For the map $d^{j}f(x_1)(x_{k_1},\ldots, \delta x_{k_i}, \ldots , x_{k_j})$, we have 
$k'=k+1, j'=j$, $ l'=l$ and $n_i'=n_i$ and $n'_s=n_s+1$ for $s\neq i$.  One checks that the new indices satisfy the required inequalities. Hence ${\bf (S_{k+1})}$  holds and the proof of Proposition \ref{ABC-x}  is complete. 
\end{proof}

There is a useful  corollary to Proposition \ref{ABC-x}.
\begin{cor}\label{ABC-y}
Let $U\subset C\subset E$ be a relatively open subset of a partial quadrant in the sc-Banach space $E$. Assume that $f:U\to  \R^N$ is a map so that
for some $k$ and all $0\leq l \leq k$ the map $f:U_l \to  \R^N$ belongs to  $C^{l +1}$. Then $f$ is $\ssc^{k+1}$.
\end{cor}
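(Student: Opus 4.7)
The plan is to deduce this corollary directly from Proposition \ref{ABC-x} by observing that the target Banach space $\R^N$ is finite-dimensional and carries only the trivial sc-structure.

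First I would fix the sc-structure on $F=\R^N$ as the constant scale $F_m = \R^N$ for every $m\in\N_0$, which is the only sc-structure available on a finite-dimensional Banach space. With this choice, the conclusion of Proposition \ref{ABC-x} that $f$ is $\ssc^{k+1}$ is exactly the conclusion we want, so the entire task reduces to verifying the hypothesis of Proposition \ref{ABC-x}: namely that for every $m\geq 0$ and every $0\leq l\leq k$, the induced map $f:U_{m+l}\to F_m = \R^N$ is of class $C^{l+1}$.

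Next I would verify this hypothesis. By assumption, the map $f:U_l\to \R^N$ is of class $C^{l+1}$ for every $0\leq l\leq k$. The sc-structure on $E$ yields a continuous (in fact, compact) linear inclusion $\iota:E_{m+l}\hookrightarrow E_l$, which restricts to a continuous inclusion $U_{m+l}\hookrightarrow U_l$ between relatively open subsets of the respective partial quadrants. The induced map $f:U_{m+l}\to \R^N$ is then the composition $f\circ \iota$ of a $C^\infty$ linear map with a $C^{l+1}$ map, hence is $C^{l+1}$ by the classical chain rule.

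Therefore the hypothesis of Proposition \ref{ABC-x} is satisfied, and applying that proposition we conclude that $f$ is $\ssc^{k+1}$. There is essentially no obstacle here; the only point requiring any care is the trivial observation that the constant scale is the correct sc-structure on $\R^N$, so that the $F_m$-valued regularity condition collapses to the single hypothesis given in the corollary.
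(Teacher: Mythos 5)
Your proof is correct and is exactly the argument the paper intends: the corollary is stated without proof as an immediate consequence of Proposition \ref{ABC-x}, and the two observations you supply — that $\R^N$ carries only the constant sc-structure, and that $f:U_{m+l}\to\R^N$ inherits $C^{l+1}$-regularity from $f:U_l\to\R^N$ via the continuous inclusion $U_{m+l}\hookrightarrow U_l$ — are precisely what is needed to reduce the hypothesis of the corollary to that of the proposition.
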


\subsection{Actions by Smooth Maps, Proof of Theorem \ref{thm-125}}\label{actionsmooth-section}
This section is devoted to  the proof of Theorems \ref{thm-125}. We recall the set-up. Denoting by $D$ the closed unit-disk in $\C$ and by $V$ an open neighborhood of  the origin $0$ in $\R^n$, we consider a smooth map 
$$V\times D\to D, \quad (v, x)\mapsto \phi_v (x)$$
satisfying $\phi_0(0)=0$. 
We equip the Hilbert space $E=H^3(D, \R^N)$ with the  sc-structure $E_m=H^{3+m}(D, \R^N)$ and  introduce  the map 
$$\Phi:V\oplus E\to  E,\quad (v,u)\mapsto   u\circ\phi_v.$$
Since $\phi_v:D\to D$ is smooth,    the map $\Phi$ preserves  the  levels.  In the following we shall  refer to the map $\Phi$  as to an ``action by smooth maps''. 
\begin{thm}\label{thm-125p}
The above map $\Phi:V\oplus E\to E$ is sc-smooth.
\end{thm}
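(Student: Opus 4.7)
The plan is to verify sc-smoothness of $\Phi$ directly from the definition, since Proposition \ref{up-prop} cannot be applied: the $(l{+}1)$-fold $v$-derivative of $\Phi\colon V \oplus E_{m+l} \to E_m$ would, by the chain rule, involve the term $(D^{l+1}u) \circ \phi_v$, which lies only in $H^{2+m}$ rather than in $E_m = H^{3+m}$. Thus $\Phi$ is classically $C^l$ but not $C^{l+1}$ on this scale. The argument exploits the fact that Definition \ref{sc-def}(1) measures the Fr\'echet error in $|\cdot|_0$-norm while the perturbation is measured in the finer $|\cdot|_1$-norm, so that the one-level gap absorbs the lost derivative.

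First I would show $\Phi$ is $\ssc^0$: for each $m \geq 0$, the map $\Phi\colon V \oplus E_m \to E_m$ is continuous because composition of an $H^{3+m}(D)$-function with a smoothly varying family of smooth self-maps of $D$ is continuous in the $H^{3+m}$-norm (standard, via density of smooth functions and uniform bounds for the composition operators).

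Next I would prove $\Phi$ is $\ssc^1$. At each $(v, u) \in V \oplus E_1$, define the candidate linearization
\begin{equation*}
D\Phi(v, u)[\delta v, \delta u] = (Du \circ \phi_v)(D_v\phi_v\,\delta v) + \delta u \circ \phi_v,
\end{equation*}
which is a bounded linear map $V \oplus E \to E$ because $u \in E_1 = H^4$ gives $Du \in H^3 = E$. Split the Taylor remainder
\begin{equation*}
\Phi(v + \delta v, u + \delta u) - \Phi(v, u) - D\Phi(v, u)[\delta v, \delta u] = R_1 + R_2,
\end{equation*}
with
\begin{equation*}
R_1 = \Bigl(\int_0^1 \bigl[(Du \circ \phi_{v + t\delta v})(D_v\phi_{v+t\delta v}) - (Du \circ \phi_v)(D_v\phi_v)\bigr]\,dt\Bigr) \cdot \delta v
\end{equation*}
and $R_2 = \delta u \circ \phi_{v+\delta v} - \delta u \circ \phi_v$. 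Continuity of $w \mapsto (Du \circ \phi_w)(D_v\phi_w)$ as a map into $H^3$ yields $|R_1|_0 = o(|\delta v|)$; the integral representation $R_2 = \bigl(\int_0^1 (D\delta u \circ \phi_{v+t\delta v})(D_v\phi_{v+t\delta v})\,dt\bigr) \cdot \delta v$, valid since $\delta u \in E_1$ gives $D\delta u \in E_0$, yields $|R_2|_0 \leq C|\delta v|\,|\delta u|_1$. Dividing by $|h|_1 = |\delta v| + |\delta u|_1$ gives the required limit, while $\ssc^0$-continuity of the tangent map $T\Phi$ follows from the first step applied componentwise.

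Finally I would upgrade to $\ssc^k$ for every $k$ by induction. The key observation is that $T\Phi$ is built from maps of the same ``generalized action'' type,
\begin{equation*}
\Psi(v, w, \eta_1, \ldots, \eta_j)(x) = \sum_\alpha a_\alpha(v, x, \eta_1, \ldots, \eta_j) \cdot (D^{\beta_\alpha} w)(\phi_v(x)),
\end{equation*}
with coefficients $a_\alpha$ smooth in $(v, x)$ and multilinear in the tangent variables $\eta_i$, and with multi-indices $\beta_\alpha$ of bounded order. This class is closed under the tangent operation, since each application of the chain rule raises the order of $w$-derivatives by at most one and introduces only smooth coefficient factors, and each such $\Psi$ is $\ssc^1$ on its appropriate shifted scale by essentially the argument of the second step. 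Induction on $k$ then yields $\ssc^k$-smoothness of $\Phi$ for every $k$. The principal obstacle throughout is the single lost Sobolev derivative; the entire proof is organized so that the level gap in Definition \ref{sc-def}(1) exactly pays this cost at each stage.
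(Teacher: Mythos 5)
Your proposal is correct and follows essentially the same route as the paper: prove $\ssc^0$ by density of smooth functions and uniform bounds on the composition operators, establish $\ssc^1$ with the same candidate linearization (your $(Du\circ\phi_v)(D_v\phi_v\,\delta v)+\delta u\circ\phi_v$ is the paper's formula with $\phi_v=(A_v,B_v)$ unspooled), and close by induction on a structural hypothesis stating that $\pi\circ T^k\Phi$ is a linear combination of ``generalized action'' terms of bounded derivative order. The one place where the tactics genuinely diverge is the estimate for the linear-in-$\delta u$ piece of the Taylor remainder: for $R_2=\delta u\circ\phi_{v+\delta v}-\delta u\circ\phi_v$ you use the integral representation $\int_0^1(D\delta u\circ\phi_{v+t\delta v})(D_v\phi_{v+t\delta v})\,dt\cdot\delta v$, giving $|R_2|_0\leq C|\delta v|\,|\delta u|_1$ directly, whereas the paper normalizes $h/|h|_1$ and invokes compactness of the inclusion $E_1\to E_0$ together with $\ssc^0$-continuity; both close the estimate, yours being slightly more direct here, while the paper's compactness argument is the more robust template used throughout the sc-calculus.
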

\begin{proof} We proceed by induction.  Here is our  inductive hypothesis:

\begin{induction}  The map 
$$\Phi:V\oplus E\to  E,\quad (v,u)\mapsto  u\circ \phi_v$$ is $\ssc^k$ and for every projection $\pi:T^kF\to  E^j$ onto a factor  of $T^kE$, the composition $\pi\circ T^k\Phi$ is a finite linear combination of maps  of the form
\begin{equation*}
V\oplus E^m\oplus(\R^n)^p\to  E^j, \quad (v,h,a_1, \ldots ,a_p)\mapsto  
\Phi(v,D^{\alpha}h)\cdot f(v, a_1, \ldots ,a_p)
\end{equation*}
where $f:V\oplus  (\R^n)^p\times D\to  \R$ is a smooth function which is  linear in every  variable $a_i$.  Moreover,  $\abs{\alpha}\leq m-j$ and $p\leq k$. 
\end{induction}

In the case $k=0$, there is exactly one projection $\pi:T^0E=E\to E$, namely the identity map. So, the composition $\pi\circ \Phi=\Phi$ has the required form 
with  $m=j=0$, $\alpha=(0,0)$, and $f\equiv 1$, where the empty product of $a_i$'s  is defined as being $1$. To prove that $\Phi$ is $\ssc^0$,  we fix a point $(v_0, u_0)\in V\oplus  E_m$, and, for given  $r>0$,  we choose a smooth map $w_0:D\to \R^N $ satisfying $\abs{u_0-w_0}_m\leq r$.
Then we estimate 
\begin{equation*}
\begin{split}
\abs{\Phi(v,u)-\Phi(v_0,u_0)}_m&\leq \abs{\Phi(v,u)-\Phi(v,w_0)}_m +\abs{\Phi(v,w_0)-\Phi(v_0,w_0)}_m\\
&\phantom{\leq}+\abs{\Phi(v_0,w_0)-\Phi(v_0,u_0)}_m\\
&=: I+II+III.
\end{split}
\end{equation*}
For $v$ close enough to $v_0$,  the map $E_m\to E_m$ defined by  $h\mapsto h\circ\phi_{v}$ is a bounded linear operator with uniformly bounded  norm by a constant $C$ which only depends on an arbitrarily fixed small neighborhood of $v_0$. Consequently, we obtain the following estimates for   the terms  $I$ and $III$, 
$$
I\leq C\cdot\abs{u-w_0}_m\leq C\cdot \abs{u-u_0}_m+C\cdot \abs{u_0-w_0}_m\leq C\cdot\abs{u-u_0} +C\cdot r
$$
and
$$
III\leq C\cdot \abs{u_0-w_0}_m\leq C\cdot r.
$$
Since $w_0$ and $(v, x)\mapsto \phi_v (x)$ are smooth, it follows immediately that
$D^{\alpha}(w_0\circ \phi_v)\to  D^{\alpha}(w\circ \phi_{v_0})$  as $v\to v_0$. In particular,  $w_0\circ \phi_v\to  w\circ \phi_{v_0}$ in $E_m$ which implies that  
$$II\to 0, \quad \text{as $v\to v_0$. } $$
The number $r$ can be chosen to be as small as we wish,  so that our estimates  show that $\Phi$ is continuous on every  level $m$, 
that is, the map $\Phi$ is of class $\ssc^0$. So, ${\bf (S_0)}$ holds. 

To simplify the further steps, it turns out to be useful to first  prove ${\bf (S_{1})}$. 
We write $s+it$ for the coordinates on $\C$ and introduce the notation  $\phi_v=(A_v, B_v)$ where the maps 
$V\times D\to \R$, $(v, x)\mapsto A_v(x)$ and $(v, x)\mapsto B_v(x)$ are smooth. The derivatives   of $A_v$ and $B_v$ with respect  to the variable $v$ are denoted by  $DA_v$ and $DB_v$, respectively.

The candidate for the linearization $D\Phi (v,u) : V\oplus E\to E$  at the point $(v, u)\in V\oplus E^1$ is  given by 
\begin{equation}\label{guess}
D\Phi(v,u) (a,h)=\Phi(v,h)+\Phi(v,u_s) \cdot DA_v \cdot a+\Phi(v,u_t)\cdot DB_v\cdot a \
\end{equation}
where  $(a, h)\in V\oplus E$. 

Recalling that $\Phi$ is  $\ssc^0$ and observing that  the maps  $E^1\to  E$ defined by $u\mapsto u_s, u_t$ are sc-operators and  the functions $V\times D\to \R$ defined by $(v, x) \mapsto DA_v(x)\cdot a , DB_v(x)\cdot a$ are smooth,  we see that the map 
\begin{equation}\label{guess1}
T(V\oplus E)\to  TF, \quad (v,u,a,h)\mapsto  (\Phi(v,u),D\Phi(v,u)(a,h)),
\end{equation}
 is $\ssc^0$ where  $D\Phi(v,u)(a,h)$ is defined by \eqref{guess}.

It remains  to show that the right-hand side of \eqref{guess} defines the linearization of $\Phi$. 
With  $(v,u,a,h)\in T(V\oplus E)$, we have 
\begin{equation*}
\begin{split}
& \Phi(v+a,u+h)-\Phi(v,u)-D\Phi(v,u)(a,h)\\
&\phantom{===}=\Phi(v+a,h)-\Phi(v,h) \\
&\phantom{====}+\int_0^1 [\Phi(v+\tau a,u_s)DA_{v+\tau a}\cdot a-\Phi(v,u_s)DA_{v}\cdot a]\ d\tau\\
&\phantom{====}+\int_0^1 [\Phi(v+\tau a,u_t)DB_{v+\tau a}\cdot a-\Phi(v,u_t)DB_{v}\cdot a]\ d\tau\\
&\phantom{===}=I+II+III.
\end{split}
\end{equation*}
We have used the formula 
$$\Phi (v+a, u)-\Phi (a, u)=\int_0^1\frac{d}{d\tau }\Phi (v+\tau \cdot a, u)\ d\tau.$$
We consider the term $I$. Since $\Phi$ is linear with respect to the second variable, we have 
\begin{equation*}
\frac{1}{\abs{a}+\abs{h}_1}\cdot \abs{\Phi(v+a,h)-\Phi(v,h) }_0=\frac{\abs{h}_1}{\abs{a}+\abs{h}_1}\cdot \left| \Phi\left( v+a,\frac{h}{\abs{h}}_1\right)-\Phi\left(v,\frac{h}{\abs{h}}_1\right) \right|_0
\end{equation*}
for $h\neq 0$. The inclusion  $E_1\to  E_0$  is compact and hence we may assume that 
$\frac{h}{\abs{h}_1}\to h_0$ in $E_0$. Since $\Phi$ is $\ssc^0$,  we conclude that 
$$\frac{1}{\abs{a}+\abs{h}_1}\cdot \abs{\Phi(v+a,h)-\Phi(v,h) }_0\to 0$$
as $\abs{a}+\abs{h}_1\to 0$.  Next we consider the second term $II$. We have,   for $a\neq 0$, 
\begin{equation*}
\begin{split}
&\frac{1}{\abs{a}+\abs{h}_1}\left| \int_0^1 [\Phi(v+\tau a,u_s)DA_{v+\tau a}\cdot a-\Phi(v,u_s)DA_{v}\cdot a]\ d\tau\right|_0\\
&\leq \frac{\abs{a}}{\abs{a}+\abs{h}_1}\int_0^1 \left| \Phi(v+\tau a,u_s)DA_{v+\tau a}\cdot \frac{a}{\abs{a}}-\Phi(v,u_s)DA_{v}\cdot  \frac{a}{\abs{a}}\right|_0 \ d\tau
\end{split}
\end{equation*}
Since $\Phi$ is $\ssc^0$ and  $(v, x)\mapsto DA_v (x)$ is smooth,  we conclude that 
the above expression converges to $0$ as $\abs{a}+\abs{h}_1\to 0$. The same holds for the term $III$. 
Thus, 
$$\frac{1}{\abs{a}+\abs{h}_1}\cdot \abs{ \Phi(v+a,u+h)-\Phi(v,u)-D\Phi(v,u)(a,h)  }_0\to 0  $$
as $\abs{a}+\abs{h}_1\to 0$ so that the right-hand side of \eqref{guess} indeed defines  the linearization of $\Phi$ in the sense of Definition \ref{sc-def}. 
Moreover, the tangent map  $T\Phi :T(V\oplus E)\to TE$ given by \eqref{guess1} is $\ssc^0$. Summing up, the map $\Phi$ is $\ssc^1$.  From \eqref{guess},  if follows that the compositions of the tangent map $T\Phi$ with projections $\pi$ onto factors of $TE$  are linear combinations of  maps  of the required form. This completes the proof of  ${\bf (S_{1})}$.

Next we assume that the assertion ${\bf (S_{k})}$ has been proved and claim that  ${\bf (S_{k+1})}$ also  holds.
It suffices to show that the compositions of the  iterated tangent map $T^{k}\Phi:T^k(V\oplus E)\to T^kE$ with the projections $\pi:T^kE\to E^j$ onto the  factors of $T^kE$ 
are $\ssc^1$ and their linearizations have the required form.
By induction hypothesis, 
$\pi\circ T^{k}\Phi$ is the linear combination of maps  having the particular forms, 
and it suffices to show that our claim holds for each of these  maps. Accordingly, we consider the map 
\begin{equation*}
\Psi:V\oplus E^m\oplus (\R^n)^{p}\to  E^j,\quad (v,h,a)\mapsto
\Phi(v,D^{\alpha}h)\cdot f(v,a, \cdot )
\end{equation*}
where we have abbreviated  $a=(a_1, \ldots , a_p)$. The function $f:V\times (\R^n)^p\times D \to \R$ is  smooth and linear in each variable $a_i$. Moreover, $\abs{\alpha}\leq m-j$ and $p\leq k$.
Observe the the map $\Psi$ is the composition of the following maps. The map $E^m\to E^{m-\abs{\alpha}}$ defined by $h\mapsto D^{\alpha}h$ is an sc-operator and hence sc-smooth, it is composed with  the map 
$$\Phi:V\oplus E^m \to  E^j, \quad (v,u)\mapsto \Phi(v,u)$$
which we already know is of class $\ssc^1$. By the chain rule, this composition is at least of class $\ssc^1$.  
So, multiplication of this composition  by a smooth function $V\oplus (\R^n)^p\to \R$ defined by $(v, a)\mapsto f(v, a, \cdot )$ gives  an $\ssc^1$--map.  Having established that $\Psi$ is 
$\ssc^1$, it remains to show that 
the compositions $\pi\circ T\Psi$  of the tangent map $T\Psi:T(V\oplus E^m\oplus (\R^n)^{p})\to T(F^j)$  with the projections onto factors of $T(F^j)$ are linear combinations of maps of the required form.
The tangent map is given by 
$$T\Psi(v, h, a, \delta v, \delta h, \delta a)=(\Psi (v, h, a), D\Psi (v, h, a)( \delta v, \delta h, \delta a))$$
where $(v, h, a)\in V\oplus E^{m+1}\oplus (\R^n)^{p}$ and $(\delta v, \delta h, \delta a)\in \R^n\oplus E^m\oplus (\R^n)^{p}.$ If $\pi:T(E^j)=E^{j+1}\oplus E^j\to E^{j+1}$ is the projection onto  the first factor, then $\pi\circ T\Psi =\Psi$ and this map, in view of inductive hypothesis,  has the  form as  required in ${\bf (S_k)}$  but with the indices $m$ and $j$ raised by $1$. So, we consider the projection onto the second factor and the map $\pi\circ T\Psi=D\Psi$. Using the chain rule and the linearization of $\Phi$ given by \eqref{guess}, 
the linearization $D\Psi$ is a linear combination of the following four types of maps:
\begin{itemize}
\item[(1)] $V\oplus E^{m}\oplus (\R^n)^p \to F$,
$$(v, \delta h, a)\mapsto  \Phi(v,D^{\alpha} (\delta h) )\cdot f(v,a).$$
\item[(2)] $V\oplus  E^{m+1}\oplus (\R^n)^{p+1}\to F$,
$$(v, h,  (\delta v, a))\mapsto  \Phi(v,D^{\alpha +(1,0)} h)\cdot (DA_v\cdot \delta v) f(v,a)$$
and
$$(v, h,  (\delta v, a))\mapsto  \Phi(v,D^{\alpha +(0,1)} h)\cdot (DB_v\cdot \delta v) f(v,a).$$
\item[(3)] $V\oplus  E^{m+1}\oplus (\R^n)^{p+1}\to F$,
$$(v, h, a)\mapsto  \Phi(v,D^{\alpha} h )\cdot D_vf(v,a)\cdot \delta v.$$
\item[(4)] $V\oplus  E^{m+1}\oplus (\R^n)^{p+1}\to F$,
$$(v, h, (a, \delta a_i))\mapsto  \Phi(v,D^{\alpha} h )\cdot  f(v,(a_1,\ldots ,\delta a_i, \ldots , a_p))$$
for every $1\leq i\leq p$.
\end{itemize}
These types are all of the desired form. Having verified the statement ${\bf (S_{k+1})}$, the proof of Theorem \ref{thm-125} is complete.
\end{proof}
We mention a related  result  which has application  in the constructions of SFT. 
We assume that $V$ is an open neighborhood of $0$ in $\R^n$ and let the following data be given:
\begin{itemize}
\item[(1)] Smooth maps $c:V\to \R$ and $d:V\to S^1$.
\item[(2)] A smooth map 
$$V\times (\R^+\times S^1)\to  \R^2, \quad (v,(s,t))\mapsto  r_v(s,t)$$
where the function
$$r_v:V\to H^{m, \varepsilon}(\R^+\times S^1)$$
is smooth  for every $m\geq 3$ and every  $\varepsilon\in (0,2\pi)$.
\item[(3)] If  $v\in V$ and $(s,t)\in \R^+\times S^1$,  then 
$$(s+c(v),t+d(v))+r_v(s,t)\in \R^+\times S^1.$$
\end{itemize}
For every $v\in V$, define the map $\phi_v:\R^+\times S^1\to \R^+\times S^1$ by 
$$\phi_v (s,t)=(s+c(v),t+d(v))+r_v(s,t).$$
Given  a strictly increasing sequence $(\delta_m)_{m\in \N_0}$ of real numbers satisfying  
$0<\delta_0<\delta_m<2\pi$ , we equip the Banach  space 
$$
E=H^{3,\delta_0}(\R^+\times S^1,\R^N)
$$
 with the sc-structure defined by $E=H^{3+m,\delta_m}(\R^+\times S^1,\R^N).$
We also  define  the Banach space $\wh{E}$ by
$$
\wh{E}:=\R^N+H^{3,\delta_0}(\R^+\times S^1, \R^N ).
$$
A map $ u: \R^+\times S^1\to \R^N $ belongs to $\wh{E}$ if it belongs to   $H^{3}_{\text{ loc } } $ and if  there exists a constant $c\in \R^n$   satisfying  
$u- c\in H^{3,\delta_0}(  \R^+\times S^1, \R^N)$ so that  $u$ can be written as $u=c+(u-c)\in \R^N+H^{3,\delta_0}(\R^+\times S^1, \R^N ).$

\begin{thm}\label{thm-126}
Let  $\phi_v$ be as described above. Then the composition 
$$
\Psi:V\oplus E\to  E, \quad (v,u)\mapsto  u\circ \phi_v
$$
is well-defined and $\ssc$-smooth. The same result is true if we replace  $E$  by $\wh{E}$.
\end{thm}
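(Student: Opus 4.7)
The plan is to follow the inductive scheme of the proof of Theorem \ref{thm-125p}, adapted to the non-compact half-cylinder with exponentially weighted Sobolev spaces, and then to deduce the $\wh{E}$ statement from the $E$ statement. For the latter reduction, decompose $u \in \wh{E}$ as $u = c + \tilde{u}$ with $c \in \R^N$ the asymptotic constant and $\tilde{u} \in E$; since $c \circ \phi_v = c$, one has $\wh\Psi(v, u) = c + \Psi(v, \tilde{u})$, and the splitting $\wh{E} \to \R^N \oplus E$ is a linear sc-isomorphism, so sc-smoothness of $\wh{\Psi}$ reduces to that of $\Psi$.

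For the $E$-case, well-definedness on each level follows from the Fa\`a di Bruno expansion of $D^\alpha(u\circ \phi_v)$: each term is a product of some $D^\beta u$ evaluated at $\phi_v$ with smooth products of derivatives of $\phi_v = (s+c(v), t+d(v)) + r_v$. Since $r_v \in H^{3+m,\delta_m}$ and $H^{3+m}(\R^+\times S^1) \hookrightarrow C^{m+1}_b$ by Sobolev embedding, $r_v$ is bounded and the change of variables by $\phi_v$ multiplies the weight $e^{\delta_m s}$ by the bounded factor $e^{\delta_m (c(v) + r_v^1(s,t))}$. Thus $u\circ \phi_v \in E_m$ whenever $u \in E_m$, with norm controlled uniformly in $v$ on compact subsets of $V$.

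The heart of the proof is an induction that mirrors $({\bf S_k})$ in Theorem \ref{thm-125p}. The inductive statement is that $\Psi$ is $\ssc^k$ and each scalar component of $T^k\Psi$ is a finite linear combination of maps
\begin{equation*}
V \oplus E^m \oplus (\R^n)^p \to E^j, \quad (v, h, a_1, \ldots, a_p) \mapsto \Psi(v, D^\alpha h) \cdot g(v, a_1, \ldots, a_p; \cdot),
\end{equation*}
where $|\alpha| \leq m-j$, $p \leq k$, and $g$ is a smooth map from $V \oplus (\R^n)^p$ into the algebra $\R + H^{3+q,\delta_q}(\R^+\times S^1)$ (for the appropriate $q$), linear in each $a_i$. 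The constant summand accommodates the terms from $Dc, Dd$ and their higher derivatives, while the $H^{3+q,\delta_q}$-summand accommodates the contributions from $v$-derivatives of $r_v$. The base case $k=0$ is continuity on each level, proved by approximating $u \in E_m$ by smooth compactly supported functions (density of $E_\infty$) and using uniform boundedness of the composition operator from Step 2. For $k=1$, the candidate linearization at $(v,u)\in V\oplus E^1$ is
\begin{equation*}
D\Psi(v,u)(\delta v,\delta u) = \Psi(v,\delta u) + \Psi(v,\partial_s u)(Dc(v)+\partial_v r_v^1)\delta v + \Psi(v,\partial_t u)(Dd(v)+\partial_v r_v^2)\delta v,
\end{equation*}
which is manifestly of the required form; the verification that this is the actual linearization (and that the tangent map is $\ssc^0$) proceeds as in Theorem \ref{thm-125p}, using the compactness of $E^1\hookrightarrow E^0$ to handle the difference quotients. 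For the step $k\to k+1$, linearizing the generic term produces four types of new terms coming from the $\delta u$-differentiation, the $\delta v$-differentiation of $g$, and the $\delta v$-differentiation of the outer $\Psi$ (which uses the $k=1$ formula), each again in the required form with updated indices.

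The main technical obstacle is controlling the $v$-dependent shift $(c(v),d(v))$ in the weighted norms: translation by a $v$-dependent amount is not continuous in operator norm but is sc-continuous, by a direct analog of the shift-map sc-smoothness of Example \ref{shift-example1} and Proposition \ref{sc-sm} adapted to $H^{m,\delta}(\R^+\times S^1)$. The strict monotonicity $\delta_0 < \delta_1 < \cdots$ is essential here, both for the compact embeddings that make the sc-scale work and for the multiplicative estimate on $\R + H^{3+q,\delta_q}$: by the Sobolev embedding, pointwise multiplication gives bounded maps $H^{3+q,\delta} \times H^{3+q,\delta'} \to H^{3+q,\delta+\delta'}$, and strict monotonicity leaves room to absorb the slightly larger weights that appear after multiplication, keeping products safely inside the sc-scale. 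Once this shift lemma and the multiplication estimate are in place, the inductive step is a direct bookkeeping calculation parallel to (1)--(4) in the proof of Theorem \ref{thm-125p}.
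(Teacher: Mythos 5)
Your proposal is correct and follows exactly the route the paper indicates: reduce $\wh{E}$ to $E$ (the paper only asserts "it is clear," you supply the reason $c\circ\phi_v=c$ together with the sc-iso $\wh{E}\cong\R^N\oplus E$), establish $\ssc^0$ via density of compactly supported smooth functions plus a locally uniform operator-norm bound, then run the same induction as in Theorem \ref{thm-125p} with the inductive invariant adapted so that the multiplier $g$ takes values in the algebra $\R+H^{3+q,\delta_q}$ rather than in $\R$. The paper explicitly leaves these details to the reader; you have filled them in consistently with its hints, and the structural bookkeeping (candidate linearization, the four types of terms appearing in the inductive step, closure of the weighted Sobolev algebra under multiplication) matches what the cited proof of Theorem \ref{thm-125p} requires.
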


The proof is in its structure quite similar to the previous proof. It is clear that the result follows for $\wh{E}$ once it is proved for $E$.
One first shows  that $\Psi$ is $\ssc^0$. Fixing  an $m$ one should  recall that compactly supported smooth maps are dense in $E_m$.  For  a compactly supported map $w_0$   the convergence $\Psi(v,w_0)\to  \Psi(v_0,w_0)$ in $E_m$ as $v\to  v_0$ is obvious. Then one easily verifies that for $v$ in a suitable open neighborhood of $v_0$ there is a uniform bound  of the operator norm of $\Psi(v,\cdot)$. From this point on we  can argue as in a previous proof to obtain continuity. Next one proves that $\Psi$ is $\ssc^1$ and proceeds by  induction. We leave the details to the reader.

\subsection{A Basic Analytical Proposition}\label{Basic-SC-Smoothness}
We continue with our study of sc-smoothnees.  We denote by $\varphi$ the exponential gluing profile 
$$\varphi (r)=e^{\frac{1}{r}}-e,  \quad \text{$r>0$. }$$
With the  nonzero complex number $a$ (gluing parameter)  we associate the gluing angle 
 $\vartheta\in S^1$ and the gluing length $R$ via  the formulae
$$a=\abs{a}e^{-2\pi i \vartheta}\quad \text{and}\quad R=\varphi (\abs{a}).$$
Note that $R\to \infty$ as $\abs{a}\to 0$.

We denote by $L$ the Hilbert sc-space $L^2(\R\times S^1,\R^N)$ equipped  the sc-structure $(L_m)_{m\in \N_0}$ defined by 
$L_m=H^{m,\delta_m}(\R\times S^1,\R^N)$, where $(\delta_m)$ is a strictly increasing sequence starting with  $\delta_0=0$. Let us also introduce
the sc-Hilbert spaces $F=H^{2,\delta_0}({\mathbb R}\times S^1,{\mathbb R}^N)$ with the sc-structure whose  level $m$ corresponds to the Sobolev regularity
$(m+2,\delta_m)$. Here  $\delta_0>0$ and $(\delta_m)$ is a strictly increasing sequence starting with $\delta_0$. Finally we introduce  the sc-Hilbert space $E=H^{3,\delta_0}({\mathbb R}\times S^1,{\mathbb R}^N)$ whose  level $m$ corresponds to  the regularity $(m+3,\delta_m)$ and  the sequence $(\delta_m)$ is as in the $F$-case.

With these data fixed we prove the following proposition. The  proposition has many applications. In particular, it will be used in Section \ref{subsub} in  order to prove that  the transition maps   between local M-polyfolds are sc-smooth.
\begin{prop}\label{qed}
The following four  maps
$$
\Gamma_i:B_\frac{1}{2}\oplus G\to  G,\ \ i=1,\ldots, 4, 
$$
where $G=L$, $G=F$ or $G=E$,  are sc-smooth.
\begin{itemize}
\item[(1)]  Let $f_1:\R\to  \R$ be a smooth  function which is constant outside of a compact interval so that $f_1(+\infty)=0$. Define
$$
\Gamma_1(a,h)(s, t)=f_1\left(s -\frac{R}{2}\right)h (s, t)
$$
if $a\neq 0$ and $\Gamma_1(0,h)=f(-\infty)h$ if $a=0$. \\
\item[(2)]  Let $f_2:\R\to  \R$ be a compactly supported  smooth function. Define  
$$
\Gamma_2(a,h)(s, t)=f_2\left(s -\frac{R}{2}\right)h(s-R,t-\vartheta)
$$
if  $a\neq 0$ and $\Gamma_2(0,h)=0$ if $a=0$.
\item[(3)]  Let $f_3:\R\to  \R$ be a smooth  function which is constant outside of a compact interval and satisfying $f_3(\infty)=0$. Define 
$$
\Gamma_3(a,h)(s', t')=f_3\left(-s' -\frac{R}{2}\right)h(s',t')
$$
if $a\neq 0$ and $\Gamma_3(0,h)=f_3(-\infty)h$
if $a=0$.
\item[(4)]  Let $f_4:\R\to  \R$ be a smooth function of compact support and define  
$$
\Gamma_4(a,h)(s', t')=f_4\left(-s' -\frac{R}{2}\right)h(s'+R,t'+\vartheta)
$$
if $a\neq 0$ and $\Gamma_4(0,h)=0$  if  $a=0$.
\end{itemize}
\end{prop}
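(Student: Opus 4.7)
The plan. We prove each $\Gamma_i$ is sc-smooth in two stages, separated by the singular behavior of the gluing length $R(a)=e^{1/|a|}-e$ at $a=0$. On the open set $\{a\ne 0\}$ the functions $R(a)$ and $\vartheta(a)$ are smooth, and each $\Gamma_i$ factors as a composition of a smooth family of translations in $(s,t)$---the identity for $i\in\{1,3\}$, and by $(\pm R,\pm\vartheta)$ for $i\in\{2,4\}$---with multiplication by the smoothly varying bounded function $f_i(\pm\,\cdot\mp R/2)$. Sc-smoothness of the translations was recorded in Example \ref{shift-example1}, and multiplication by a smoothly varying uniformly controlled smooth function of $s$ is sc-smooth on each of $L,F,E$; the chain rule (Theorem \ref{chain-thm1}) then closes this stage.

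The heart of the argument is sc-smoothness at $a=0$, and it parallels Lemma \ref{proof-funy-example}. The mechanism is the super-exponential growth of $R(a)$: the $k$-th $a$-derivatives $\partial^k R$ and $\partial^k\vartheta$ are each bounded by a polynomial $P_k(e^{1/|a|},1/|a|)$, while the weighted Sobolev structure of $G$ provides a super-exponential gain when one compares levels. For $\Gamma_1$ and $\Gamma_3$ the multiplier $f_i(\pm\,\cdot\mp R/2)-f_i(\mp\infty)$ is supported in the region $\{|s|\geq R/2-A\}$, which together with the weights $e^{2\delta_m|s|}$ gives the key estimate
\begin{equation*}
\|\Gamma_i(a,h)-\Gamma_i(0,h)\|_m\ \leq\ C\,e^{-(\delta_{m+1}-\delta_m)R/2}\,\|h\|_{m+1}
\end{equation*}
for $h\in G_{m+1}$. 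For $\Gamma_2$ and $\Gamma_4$ the substitution $\sigma=s\mp R$ converts the large weight $e^{2\delta_m|s|}$ near the support of $f_i(\,\cdot-R/2)$ into an overall factor $e^{2\delta_m R}$ evaluated on $h$ near $\sigma\approx\mp R/2$, where it is dominated by the level-$(m+1)$ decay of $h$, producing the analogous bound $\|\Gamma_i(a,h)\|_m\leq C\,e^{-(\delta_{m+1}-\delta_m)R/2}\,\|h\|_{m+1}$. Density of $G_{m+1}$ in $G_m$ then upgrades these estimates to sc-continuity at $a=0$. For differentiability the candidate differentials at $a=0$ are
\begin{equation*}
D\Gamma_1(0,h_0)(\delta a,\delta h)=f_1(-\infty)\,\delta h,\qquad D\Gamma_2(0,h_0)(\delta a,\delta h)=0,
\end{equation*}
and the analogs for $i\in\{3,4\}$; they are verified by bounding the remainder with $C\,e^{-cR(\delta a)/2}(\|h_0\|_{m+1}+\|\delta h\|_{m+1})$ and using that $e^{-cR(\delta a)/2}/|\delta a|\to 0$ as $\delta a\to 0$, which is precisely where the super-exponential dominance of the gluing profile over any polynomial factor intervenes.

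For higher smoothness we set up an inductive statement $(\mathbf{S_k})$ in the spirit of Lemma \ref{proof-funy-example}: each $\Gamma_i$ is of class $\ssc^k$, the iterated tangent map $T^k\Gamma_i$ vanishes at $a=0$, and at any point with $a\neq 0$ every coordinate projection $\pi\circ T^k\Gamma_i$ is a finite linear combination of terms of a controlled model form---a polynomial factor in $e^{1/|a|}$ and $1/|a|$ (coming from $\partial_a^{\leq k}R$ and $\partial_a^{\leq k}\vartheta$) times a shifted and differentiated copy of $h$ multiplied by a derivative of $f_i$. The main obstacle is the combinatorial bookkeeping for $\Gamma_2$ and $\Gamma_4$: differentiating $h(s-R(a),t-\vartheta(a))$ with respect to $a$ produces both a new polynomial factor and the sc-operators $\partial_s,\partial_t:G\to G^{-1}$ acting on $h$, and differentiating the multiplier yields still further polynomial factors, so the model form of $(\mathbf{S_k})$ must be wide enough to be closed under all these operations yet compatible with the weighted Sobolev gain $e^{-(\delta_{m+1}-\delta_m)R}$, which must continue to dominate the accumulated polynomial factors at each step. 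Once the model form is chosen as above, the inductive step closes exactly as in Lemma \ref{proof-funy-example}, and we conclude that each $\Gamma_i$ is of class $\ssc^\infty$ on all of $B_{1/2}\oplus G$.
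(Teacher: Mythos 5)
Your proposal follows essentially the same line as the paper's proof: reduce to the base case $G=L$, establish $\ssc^0$ via a uniform operator bound plus density, derive the decay estimates $\|\Gamma_i(a,h)-\Gamma_i(0,h)\|_m\leq C e^{-\frac{1}{2}(\delta_{m+1}-\delta_m)R}\|h\|_{m+1}$ (which are exactly Lemma~\ref{lkk} at $k=1$), control $\partial^\alpha R(a)$ and $\partial^\alpha\vartheta(a)$ by polynomials in $e^{1/|a|}$ and $1/|a|$ (Lemmata~\ref{lkj}, \ref{thetaes-prop}), and close with an inductive structural description of $\pi\circ T^k\Gamma_i$ as a linear combination of model maps of the form (polynomial factor in derivatives of $R,\vartheta$)$\,\times\,f_i^{(p)}(\cdot-R/2)\,\times\,$(shifted/differentiated $h$), on which the super-exponential decay dominates --- precisely the content of Lemmata~\ref{ind-lem} and~\ref{bigg}. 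The only material point you leave implicit is that the density argument for $\ssc^0$ needs, alongside the level-raising decay estimate, the level-preserving uniform bounds $\|\Gamma_i(a,h)\|_m\leq C\|h\|_m$ (the paper's estimates \eqref{maps-eq}, \eqref{maps-eq1}); once those are recorded your outline matches the paper's proof in both structure and substance.
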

Let us first note that we only have to prove the proposition in the case $G=L$, since the other cases are  obtained by taking  the sequence
$(\delta_m)$ for the $L$-case and  raising the index by $2$ in the case $G=F$ and by $3$ in the case $G=E$. The key point in the proof is the following. The gluing length $R$ as well as the gluing angle $\vartheta$ are functions of the gluing parameter $a$. As long as $a\neq 0$ these functions are smooth. However, as $a\to  0$ their derivatives blow-up.To achieve sc-smoothness as stated in Proposition \ref{qed},  it is important that the other terms occurring in the formulae have a sufficient decay behavior. There the assumption on exponential decays, as well as the filtration by levels comes in. 
 We  will only consider the maps $\Gamma_1$ and $\Gamma_2$, the proofs for the maps $\Gamma_3$ and $\Gamma_4$ are quite similar and left to the reader.
The proof will require several steps and takes the rest of this section. 
In the first step  we prove the $\ssc^0$-property.  
\begin{lem}
The maps $\Gamma_1$ and $\Gamma_2$ are $\ssc^0$.
 \end{lem}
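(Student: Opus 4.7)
My plan is to verify the $\ssc^0$ property directly on each level, that is, to show $\Gamma_i:B_{1/2}\oplus L_m\to L_m$ is continuous for every $m\geq 0$. On the complement of a neighborhood of $0\in\C$, the maps $a\mapsto R(a)=\varphi(|a|)$ and $a\mapsto\vartheta(a)$ are smooth, so continuity away from $a=0$ is a routine consequence of the smoothness of $f_i$ and the continuity of translation on $H^{m,\delta_m}$. The entire issue is therefore continuity at points $(0,h_0)$: I must show that whenever $a_n\to 0$ in $B_{1/2}$ and $h_n\to h_0$ in $L_m$, one has $\Gamma_i(a_n,h_n)\to\Gamma_i(0,h_0)$ in $L_m$.

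For $\Gamma_1$ I would use the decomposition
$$\Gamma_1(a_n,h_n)-f_1(-\infty)h_0=f_1(\cdot-R_n/2)[h_n-h_0]+[f_1(\cdot-R_n/2)-f_1(-\infty)]h_0.$$
Since $f_1^{(k)}$ is compactly supported for $k\geq 1$, the family $\{f_1(\cdot-R_n/2)\}$ is uniformly bounded in $C^m(\R)$, so multiplication by it is a uniformly bounded operator on $L_m=H^{m,\delta_m}$; applied to $h_n-h_0$, the first term goes to zero. The second term is supported where $s\geq R_n/2-A$ for some constant $A$ controlling the nonconstant part of $f_1$. Because $h_0\in L_m$ and $R_n\to\infty$, the weighted $L_m$-tail of $h_0$ over $\{s\geq R_n/2-A\}$ tends to zero, and a uniform $C^m$-bound on $f_1(\cdot-R_n/2)-f_1(-\infty)$ finishes the estimate.

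For $\Gamma_2$ the task is to show $\Gamma_2(a_n,h_n)\to 0$ in $L_m$. Leibniz expansion of $D^\alpha[f_2(s-R_n/2)\,h_n(s-R_n,t-\vartheta_n)]$ produces a finite sum of terms of the form $f_2^{(k)}(s-R_n/2)\,D^\beta h_n(s-R_n,t-\vartheta_n)$. After the substitution $u=s-R_n$, $v=t-\vartheta_n$, each contributes
$$\int |f_2^{(k)}(u+R_n/2)|^2\,|D^\beta h_n(u,v)|^2\, e^{2\delta_m|u+R_n|}\,du\,dv.$$
The geometric point is that the integrand is supported in a bounded $u$-window around $-R_n/2$, on which $|u+R_n|-|u|$ is bounded by $2A$; hence $e^{2\delta_m|u+R_n|}\leq C\,e^{2\delta_m|u|}$ there, and the integral is dominated by
$$C\int_{|u+R_n/2|\leq A}|D^\beta h_n(u,v)|^2\,e^{2\delta_m|u|}\,du\,dv,$$
which is the $L_m$-tail of $h_n$ on a window sliding to $-\infty$. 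Convergence $h_n\to h_0$ in $L_m$ gives uniform smallness of these tails, so the expression vanishes as $n\to\infty$.

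The main obstacle I expect is the weight bookkeeping for $\Gamma_2$: one must check that on the support of the translated cutoff $f_2^{(k)}(\cdot+R_n/2)$, the weight $e^{2\delta_m|u+R_n|}\approx e^{\delta_m R_n}$ produced by the translation is exactly compensated by the natural weight $e^{2\delta_m|u|}\approx e^{\delta_m R_n}$ in the $L_m$-norm of $h_n$, so that no net exponential growth survives and the shifting window can absorb the tail. The arguments for $\Gamma_3$ and $\Gamma_4$ are symmetric, with the roles of $+\infty$ and $-\infty$ and of the shifts $(s-R,t-\vartheta)$ and $(s+R,t+\vartheta)$ interchanged; no new phenomenon arises.
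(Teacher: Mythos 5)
Your argument is correct, and it takes a genuinely different route from the paper's. The paper proves $\ssc^0$ continuity via a density argument: it first establishes a uniform bound $\abs{\Gamma_i(a,h)}_m\leq C\abs{h}_m$ (with $C$ independent of $a$ near $0$), then notes that smooth compactly supported functions $u_0$ are dense in $L_m$ and that for such $u_0$ and $\abs{a}$ small one has $\Gamma_1(a,u_0)=u_0$ exactly and $\Gamma_2(a,u_0)=0$ exactly, and finishes by a three-term triangle inequality. Your argument instead works directly with an arbitrary $L_m$-function: you split $\Gamma_1$ as a uniformly bounded multiplier acting on $h_n-h_0$ plus a multiplier supported on a window $\{s\geq R_n/2-A\}$ acting on $h_0$, and for $\Gamma_2$ you do the change of variables explicitly, verify the weight cancellation $|u+R_n|-|u|=2u+R_n\in[-2A,2A]$ on the support of the translated cutoff, and invoke the uniform smallness of sliding tails. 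The paper's approach is shorter because on a compactly supported approximant the operator becomes literally the identity (resp.\ zero) for small $a$, so no tail estimate is needed; your approach makes the mechanism explicit — the tail of $h_0$ absorbs the commutator with the cutoff — and in fact makes visible the same cancellation that underlies the paper's uniform bound \eqref{maps-eq1}. One small point worth spelling out in your $\Gamma_2$ argument: the uniform smallness of the sliding-window tails of $h_n$ is not a single-sequence property but uses both $h_n\to h_0$ in $L_m$ and the absolute continuity of the $L_m$-norm of $h_0$ (split the tail of $h_n$ as tail of $h_0$ plus $\|h_n-h_0\|_m$); you allude to this correctly but it deserves one explicit line.
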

 \begin{proof}
 Since, in view of Proposition \ref{sc},  the shift operator is $\ssc^0$,  the only difficulty can arise at $a=0$. We begin with  the map $\Gamma_1$. We may assume without loss of generality that $f_1(-\infty)=1$ so that   $\Gamma_1(0, h)=h$ for every $h\in L$.
 Fix a level $m$ and observe that 
 \begin{equation}\label{maps-eq}
 \abs{\Gamma_1(a,h)}_m\leq C'\cdot
 \left[\max_{0\leq k\leq m} \sup_{\R} \abs{f^{(k)}}\right]\cdot \abs{h}_m=C \abs{h}_m
\end{equation}
with the  constant $C=C'\cdot \left[\max_{0\leq k\leq m} \sup_{\R} \abs{f^{(k)}}\right]$ independent of $a$ and $h$.

The smooth compactly supported maps are dense in $L_m$ for every $m$.  If  $u_0$ a smooth compactly supported function  and $\abs{a}$ is sufficiently small, then 
$\Gamma_1(a,u_0)=u_0$.  Given $h_0\in L_m$ and $\varepsilon>0$, we choose a smooth compactly supported  function $u_0$  satisfying  $\abs{ u_0-h_0}_m\leq \varepsilon$. 
Then,  recalling  that $\Gamma_1(0, h_0)=h_0$ and using \eqref{maps-eq}, we have,  with $\abs{a}$ sufficiently small,  the following estimate,
 \begin{equation*}
\begin{split}
&\abs{\Gamma_1(a, h)-\Gamma_1(0, h_0)}_m=\abs{\Gamma_1(a, h)- h_0}_m\\
&\phantom{==}=\abs{\Gamma_1(a, h)-\Gamma_1(a,h_0)+\Gamma_1(a,h_0)-\Gamma_1(a,u_0)+u_0-h_0}_m\\
&\phantom{==}\leq \abs{\Gamma_1(a, h)-\Gamma_1(a, h_0}_m+\abs{\Gamma_1(a, h_0)-\Gamma_1(a, u_0)}_m+\abs{u_0-h_0}_m\\
&\phantom{==}\leq C\abs{h-h_0}_m+ (C+1)\abs{u_0-h_0}_m.
\end{split}
\end{equation*}
So, if $\abs{h-h_0}_m<\varepsilon$, then 
$$\abs{\Gamma_1(a, h)-\Gamma_1(0, h_0)}_m< (2C+1)\varepsilon$$
which proves  continuity of  $\Gamma_1$  at $(0, h_0)$ on level $m$.

The $\ssc^0$-property  of the map  $\Gamma_2$ is  more involved. Again,  the difficulty  arises at $a=0$. We fix a level $m$ and  first show that the norm of $\Gamma_2(a, h)$ is uniformly bounded with respect to $a$ close to $0$. By assumption, the support of $f_2$ is contained in the interval $[-A,A]$.  If $a$ is sufficiently small, then 
$[-A-\frac{R}{2},A-\frac{R}{2}]\subset(-\infty,0]$ and $[-A+\frac{R}{2},A+\frac{R}{2}]\subset[0,\infty)$, respectively.
Assuming $h\in L_m$, we estimate the norm $\abs{\Gamma_2(a, h)}_m$. The square of the  norm $\abs{\Gamma_2(a, h)}_m$  is equal to the sum of the integrals 
\begin{equation}\label{int-eq}
I_{\alpha}=\int_{\Sigma_R}\left| D^{\alpha}\left( f_2\left( s-\frac{R}{2}\right) h(s-R, t-\vartheta)\right)\right|^2 e^{2\delta_m \abs{s}}\ ds dt
\end{equation}
with $\abs{\alpha}\leq m$ and where we have abbreviated  $\Sigma_R=[ -A+\frac{R}{2}, A+\frac{R}{2}]\times S^1$. Denoting by $C$ a generic constant  independent of $a$ and $h$,   we estimate the integral $I_{\alpha}$  in \eqref{int-eq}  as follows
\begin{equation*}
\begin{split}
I_{\alpha}&\leq  C\int_{\Sigma_R}\abs{D^{\alpha}h(s-R, t-\vartheta)}^2e^{2\delta_m s }\ ds dt\\
&\leq  C\int_{\Sigma_{-R}}\abs{D^{\alpha}h(s, t)}^2e^{2\delta_m (s+R)}\ ds dt\\
&= C\int_{\Sigma_{-R}}\abs{D^{\alpha}h(s, t)}^2e^{-2\delta_m s}e^{2\delta_m(2s+R)}\ ds dt\\
&\leq  e^{4\delta_m A} C\int_{\Sigma_{-R}}\abs{D^{\alpha}h(s, t)}^2e^{-2\delta_m s}\ ds dt\\
&\leq  e^{4\delta_m A} C\cdot \abs{h}^2_m.
\end{split}
\end{equation*}
Hence
\begin{equation}\label{maps-eq1}
\abs{\Gamma_2(a, h)}_m\leq e^{2\delta_m A}C\cdot \abs{h}_m
\end{equation}
where $C$ is a constant independent of $a$ and $h$.
Now if  $u_0$ is a smooth compactly supported map and   $a$ is sufficiently small, then $\Gamma_2(a,u_0)=0$.  Given $h_0$ and $\varepsilon>0$, we choose a  smooth compactly supported map $u_0$ so that $\abs{h_0-u_0}_m<\varepsilon$.  Using the estimate \eqref{maps-eq1},  we  compute for $\abs{a}$ small and $h\in L_m$ satisfying 
$\abs{h-h_0}_m<\varepsilon$, 
\begin{equation*}
\begin{split}
\abs{\Gamma_2 (a, h)-\Gamma_2 (0, h_0)}_m&=\abs{\Gamma_2 (a, h)}_m\\
&=\abs{\Gamma_2 (a, h)-\Gamma_2 (a, h_0)+\Gamma_2 (a, h_0)-\Gamma_2(a,u_0)}_m\\
&\leq \abs{\Gamma_2 (a, h)-\Gamma_2 (a, h_0)}_m+\abs{\Gamma_2 (a, h_0)-\Gamma_2(a,u_0)}_m\\
&\leq Ce^{\delta_mA}\bigl( \abs{h-h_0}_m+\abs{h_0-u_0}_m \bigr) <2Ce^{\delta_mA} \varepsilon, 
\end{split}
\end{equation*}
showing that $\Gamma_2$ is continuous at $(0, h_0)$ on level $m$. This completes the proof of the lemma.
 \end{proof}
 Next we derive decay estimates.   The constants $d_{m+k, m}$ in the lemma are  defined,   for every pair of nonnegative integers $(m, k)$,  as  the differences
$$d_{m+k,m}:=\frac{1}{2}(\delta_{m+k}-\delta_m).$$
\begin{lem}\label{lkk}
For every pairs $(m, k)$ of nonnegative integers, there exists a constant $C=C(m,k)>0$ independent of $h$ and $a$ so that 
\begin{align*}
\abs{ h-\Gamma_1(a,h)}_m&\leq C\cdot e^{-d_{m+k,m}\cdot R}\cdot\abs{h}_{m+k}\\
\abs{\Gamma_2(a,h)}_m&\leq C\cdot e^{-d_{m+k,m}\cdot R}\cdot\abs{h}_{m+k}
\end{align*}
for all $h\in L_{m+k}$. Here $R=\varphi (\abs{a})$. 
\end{lem}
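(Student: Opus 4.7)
The plan is to exploit the support properties of $f_1,f_2$ together with the strict monotonicity of the weight sequence $(\delta_m)$. The single algebraic identity behind both estimates is
$$
e^{2\delta_m \abs{s}} \;=\; e^{2\delta_{m+k}\abs{s}}\cdot e^{-2(\delta_{m+k}-\delta_m)\abs{s}},
$$
so that on any region where $\abs{s}$ admits a large lower bound, the excess weight automatically produces an exponentially small prefactor. Since $R=\varphi(\abs{a})$, the lower bound we shall get on $\abs{s}$ is of the form $R/2-A$, and the resulting factor $e^{-(\delta_{m+k}-\delta_m)R/2}=e^{-d_{m+k,m}R}$ is precisely the decay claimed.

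For $\Gamma_1$, write $g:=1-f_1$. Because $f_1(-\infty)=1$ and $f_1$ is constant outside a compact interval $[-A,A]$ with $f_1(+\infty)=0$, the function $g$ is smooth, uniformly bounded together with all derivatives, and vanishes on $(-\infty,-A]$. Therefore
$$
\bigl(h-\Gamma_1(a,h)\bigr)(s,t)=g\!\left(s-\tfrac{R}{2}\right)h(s,t)
$$
is supported in $\{s\ge R/2-A\}$, where $\abs{s}=s\ge R/2-A$ for small $\abs{a}$. A Leibniz expansion gives $\abs{D^\alpha[g(s-R/2)h]}^2\le C\sum_{\abs{\beta}\le\abs{\alpha}}\abs{D^\beta h}^2$ with $C$ independent of $a$, and the identity above yields
$$
\int_{s\ge R/2-A}\abs{D^\beta h}^2 e^{2\delta_m\abs{s}}\,ds\,dt\;\le\; C\,e^{-2d_{m+k,m}R}\int\abs{D^\beta h}^2 e^{2\delta_{m+k}\abs{s}}\,ds\,dt.
$$
Summing over $\abs{\alpha}\le m$ gives the first estimate.

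For $\Gamma_2$, since $f_2$ is supported in $[-A,A]$, the function $\Gamma_2(a,h)(s,t)=f_2(s-R/2)h(s-R,t-\vartheta)$ is supported in $s\in[R/2-A,R/2+A]$, where again $\abs{s}=s$ for small $\abs{a}$. After a Leibniz expansion and the change of variables $s\mapsto s+R$, $t\mapsto t+\vartheta$ (a translation on the cylinder), the problem reduces to estimating
$$
\int_{\abs{s+R/2}\le A}\abs{D^\beta h(s,t)}^2\, e^{2\delta_m(s+R)}\,ds\,dt.
$$
On the new support, $s\le -R/2+A$, hence $\abs{s}=-s\ge R/2-A$, and $e^{2\delta_m(s+R)}=e^{2\delta_m R}e^{-2\delta_m\abs{s}}$. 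Using the key identity in the form $e^{-2\delta_m\abs{s}}=e^{2\delta_{m+k}\abs{s}}e^{-2(\delta_m+\delta_{m+k})\abs{s}}$ and the lower bound on $\abs{s}$,
$$
e^{2\delta_m(s+R)}\;\le\; C\,e^{2\delta_m R-(\delta_m+\delta_{m+k})R}\cdot e^{2\delta_{m+k}\abs{s}}\;=\;C\,e^{-2d_{m+k,m}R}\cdot e^{2\delta_{m+k}\abs{s}},
$$
which inserted into the integral gives the second estimate.

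The only real obstacle is a careful bookkeeping in the $\Gamma_2$ case: one must track simultaneously the translation $s\mapsto s-R$ in the argument of $h$ and the weight $e^{2\delta_m\abs{s}}$ (with opposite signs of $s$ before and after the change of variables), so that the surviving exponential is exactly $e^{-(\delta_{m+k}-\delta_m)R}$ rather than a weaker expression that would not beat the blow-up of $R=\varphi(\abs{a})$ as $a\to 0$. This is purely algebraic once the signs on each support are tracked.
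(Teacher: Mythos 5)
Your proof is correct and follows essentially the same route as the paper: in both arguments the support of $g(\cdot-R/2)$ resp. $f_2(\cdot-R/2)$ forces $\abs{s}\gtrsim R/2$ (after translating in the $\Gamma_2$ case), and the strict increase $\delta_{m+k}>\delta_m$ lets one trade the lower-level weight $e^{2\delta_m\abs{s}}$ for $e^{2\delta_{m+k}\abs{s}}$ at the cost of a factor $e^{-2d_{m+k,m}R}$ inside the squared norm, which gives $e^{-d_{m+k,m}R}$ after taking square roots. The only cosmetic difference is that you keep track of signs via $\abs{s}$ throughout, while the paper substitutes the explicit sign of $s$ on each support region; the algebra is identical.
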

\begin{proof}
We begin with the map $\Gamma_1$. Again we assume  that $f_1(-\infty)=1$.  The function $f=1-f_1$ satisfies $f(s)=0$ for $s\ll 0$ and $f(+\infty)=1$ and  we  study  the map $\Gamma(a,h)=h-\Gamma_1(a,h)$ which is defined by
 $$
 \Gamma(a,h)(s,t)=f\left(s-\frac{R}{2}\right)h(s,t).
 $$
 The support of $f(\cdot -\frac{R}{2})$ is contained in $[-A+\frac{R}{2}, \infty )$  and hence contained in $[0,\infty )$ if $a$ is sufficiently small. For such an $a$ and for $h\in L_{m+k}$,  the square of the norm 
 $\abs{\Gamma (a, h)}_m$ is the sum of the integrals 
 $$I_{\alpha}=\int_{\Sigma_R}\left|D^{\alpha}\left( f\left(s-\frac{R}{2}\right)h(s, t)\right)\right|^2e^{2\delta_m s}\ ds dt$$
with $\abs{\alpha}\leq m$. Here  we have abbreviated $\Sigma_R=[-A+\frac{R}{2}, \infty )\times S^1$.
Then,   with $C$  denoting a generic constant  independent of $a$ and $h$,   we  estimate
 \begin{equation*}
 \begin{split}
 I_{\alpha}&\leq C\sum_{\abs{\beta}\leq \abs{\alpha}} \int_{\Sigma_R}\abs{D^{\beta}h(s, t))}^2e^{2\delta_m s}\ ds dt\\
 &=C\sum_{\abs{\beta}\leq  \abs{\alpha}}\int_{\Sigma_R}\abs{D^{\beta}h(s, t))}^2e^{2\delta_{m+k}s-4d_{m+k,m} s}\ ds dt\\
 &\leq Ce^{-2d_{m+k. m} R}\sum_{\abs{\beta}\leq  \abs{\alpha}}\int_{\Sigma_R}\abs{D^{\beta}h(s, t))}^2e^{2\delta_{m+k}s}\ ds dt\\
 &\leq Ce^{-2d_{m+k. m} R}\cdot \abs{h}_{m+k}^2
\end{split}
\end{equation*}
Since $\abs{\Gamma (a, h)}_m^2=\sum_{\abs{\alpha}\leq m}I_{\alpha}$,  we obtain the  required estimate,
$$\abs{\Gamma(a, h)}_{m}\leq Ce^{-d_{m+k. m} R}\cdot \abs{h}_{m+k}$$
for  $a$ sufficiently small and $h\in L_{m+k}$ with  some constant $C$ independent of $a$ and $h$. This is exactly the required estimate.

We turn to the map $\Gamma_2$. The support of $f_2$ is contained in $[-A,A]$ for some $A>0$. 
Hence, the support of $f_2(\cdot -\frac{R}{2})$ is contained in the interval $[-A+\frac{R}{2}, A+\frac{R}{2}]$.  Moreover, if $\abs{a}$ is sufficiently small, then  
$[-A-\frac{R}{2},A-\frac{R}{2}]\subset(-\infty,0]$ and $[-A+\frac{R}{2},A+\frac{R}{2}]\subset[0,\infty)$. 
We estimate the square of the norm $\abs{\Gamma_2(a, h}_m$  for  $h\in L_{m+k}$ and  sufficiently small  $\abs{a}$. The square of the  norm  $\abs{\Gamma_2(a, h}_m$  is equal to the sum of the integral expressions
$$I_{\alpha}=\int_{\Sigma_R}\left| D^{\alpha}\left( f_2\left( s-\frac{R}{2}\right) h(s-R, t-\vartheta)\right)\right|^2e^{2\delta_{m}s}\ ds dt $$
with $\abs{\alpha}\leq m$ and where $\Sigma_R$ denotes the finite cylinder $[-A+\frac{R}{2},A+\frac{R}{2}]\times S^1$. With $C$ denoting a generic constant not depending on $a$ and $h$, the   integral $I_{\alpha}$ can be estimated as follows,
\begin{equation*}
\begin{split}
I_{\alpha}&\leq C\int_{\Sigma_R}\abs{D^{\alpha}h(s-R, t-\vartheta)}^2e^{2\delta_{m}s}\ ds dt \\
&=C\int_{\Sigma_{-R}}\abs{D^{\alpha} h(s, t)}^2e^{2\delta_{m}(s+R)}\ ds dt \\
&=C\int_{\Sigma_{-R}}\abs{D^{\alpha} h(s, t)}^2e^{-2\delta_{m+k}s+ 2(\delta_m+\delta_{m+k}) s+2\delta_mR }\ ds dt \\
&\leq C\int_{\Sigma_{-R}}\abs{D^{\alpha} h(s, t)}^2e^{-2\delta_{m+k}s} e^{2(\delta_m+\delta_{m+k}) (A-\frac{R}{2})+2\delta_m R}\ ds dt .\\
&\leq Ce^{-2d_{m+k, m}R}\int_{\Sigma_{-R}}\abs{D^{\alpha} h(s, t)}^2e^{-2\delta_{m+k}s} \ ds dt \\
&\leq Ce^{-2d_{m+k, m}R}\abs{h}_{m+k}^2.
\end{split}
\end{equation*}
Thus, 
$$\abs{\Gamma_2(a, h)}_m\leq Ce^{-d_{m+k, m}R}\abs{h}_{m+k}$$
for all $a$ sufficiently small and  $h\in L_{m+k}$. The constant $C$ is independent of $a$ and $h$.  The proof of Lemma  \ref{lkk} is complete.
\end{proof}
From the sc-smoothness of the shift-map proved in Proposition \ref{sc-sm} we conclude 
 the following lemma.
\begin{lem}
The maps $\R\oplus L\to  L$,  defined by
\begin{align*}
(R,u )&\mapsto  f_1(\cdot-\frac{R}{2})u \\
(R,v)&\mapsto  f_2(\cdot -\frac{R}{2})v(\cdot -R,\cdot -\vartheta), 
\end{align*}
are sc-smooth.
\end{lem}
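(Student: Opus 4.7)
The plan is to exhibit each of the two maps as a composition of maps already known to be sc-smooth, and then invoke the general chain rule (Theorem~\ref{chain-thm1}). First I would treat the auxiliary multiplication map $M_i\colon\R\oplus L\to L$, $(R,u)\mapsto f_i(\cdot-R/2)\,u$, for $i=1,2$. Because both $f_1$ and $f_2$ are smooth with globally bounded derivatives (being constant, respectively zero, outside a compact interval), the Leibniz rule shows that multiplication by $f_i(\cdot-R/2)$ is a bounded linear operator on $H^{m,\delta_m}(\R\times S^1,\R^N)$ whose operator norm is bounded uniformly in $R$ by a constant depending only on $m$ and on $\max_{k\leq m}\sup|f_i^{(k)}|$. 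Moreover the $R$-derivatives
$$
\partial_R^j\bigl[f_i(\cdot-R/2)\,u\bigr]=(-\tfrac12)^j\,f_i^{(j)}(\cdot-R/2)\,u
$$
are of exactly the same form, with $f_i^{(j)}$ in place of $f_i$. Hence $M_i\colon\R\oplus L_m\to L_m$ is of class $C^\infty$ for every $m\geq 0$, and Proposition~\ref{ABC-x} (equivalently Corollary~\ref{ABC-y}) yields the sc-smoothness of $M_i$.

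The first map of the lemma is exactly $M_1$, so that case is disposed of. For the second map I would factor it as
$$
(R,\vartheta,v)\ \longmapsto\ \bigl(R,\Phi(R,\vartheta,v)\bigr)\ \longmapsto\ M_2\bigl(R,\Phi(R,\vartheta,v)\bigr),
$$
where $\Phi(R,\vartheta,v)(s,t)=v(s-R,t-\vartheta)$ denotes the shift map. The sc-smoothness of $\Phi$ is Proposition~\ref{sc-sm}, the second arrow is $M_2$ (sc-smooth by the preceding paragraph), and padding the first component with the identity preserves sc-smoothness. Theorem~\ref{chain-thm1} then delivers sc-smoothness of the composition.

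The one delicate point—and the reason the argument cannot be reduced to an elementary calculus estimate—is that the shift $v\mapsto v(\cdot-R,\cdot-\vartheta)$ has operator norm on $H^{m,\delta_m}$ that grows like $e^{\delta_m|R|}$ and is not even classically continuous in $R$ with values in $L(H^{m,\delta_m})$ endowed with the operator norm. This is precisely why one must invoke the sc-smoothness of the shift (which makes essential use of the compactness of the embeddings $L_{m+1}\hookrightarrow L_m$) rather than attempt to verify a uniform $C^k$-estimate. Once the shift is imported as a black box, the multiplication step is comparatively routine because $f_i(\cdot-R/2)$ has uniformly bounded derivatives in $R$ and hence does not interact badly with the exponential weights.
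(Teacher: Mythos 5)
Your proof is correct and fills in the argument the paper only gestures at (the paper's entire justification is the remark that the lemma follows ``from the sc-smoothness of the shift-map proved in Proposition~\ref{sc-sm}''): you write each map as a composition of the shift, where it appears, with multiplication by the translated cut-off $f_i(\cdot-R/2)$, observe that the multiplication piece is classically $C^\infty$ on every level because $f_i$ and all its derivatives are globally bounded, and invoke Proposition~\ref{ABC-x} and the chain rule. The one point worth keeping explicit is exactly the one you flag: the shift is not even continuous in operator norm on $H^{m,\delta_m}$, so its sc-smoothness, which rests on the compactness of the inclusions $L_{m+1}\hookrightarrow L_m$, is the genuinely nontrivial input, while the multiplication step is harmless because the translated $f_i$ does not interact with the exponential weights.
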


We will need estimates for the derivative of the functions $a\mapsto R(a)=e^{\frac{1}{|a|}}-e$, where $a=|a|\cdot e^{-2\pi i\vartheta}$. In view of  Lemma \ref{ropet1}  proved in Appendix \ref{calc-lemma}  we have the following estimates.
\begin{lem}\label{lkj}
For every multi-index $\alpha=(\alpha_1, \alpha_2)$,  there exists a constant $C$ such that
$$
\abs{D^{\alpha}R(a)}\leq C\cdot R(a)\cdot [\ln(R(a))]^{2|\alpha|}
$$
for $0<\abs{a}<\frac{1}{2}$.
\end{lem}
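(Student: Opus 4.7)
The plan is to write $R(a) = e^{g(a)} - e$ with $g(a) = 1/|a| = (a_1^2 + a_2^2)^{-1/2}$, compute $D^\alpha R$ via the Faà di Bruno formula, bound the derivatives of the inner function $g$ using its homogeneity, and finally translate the resulting expression in powers of $1/|a|$ into the desired expression in powers of $\ln R(a)$.

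First I would observe that $g$ is homogeneous of degree $-1$, so every derivative $D^\beta g$ is a finite sum of terms homogeneous of degree $-1 - |\beta|$. In particular there is a constant $C_\beta$ such that
\begin{equation*}
|D^\beta g(a)| \leq \frac{C_\beta}{|a|^{1 + |\beta|}}, \qquad a \neq 0.
\end{equation*}
By the Faà di Bruno formula applied to $F(x) = e^x$ and $x = g(a)$,
\begin{equation*}
D^\alpha\bigl(e^{g(a)}\bigr) = e^{g(a)} \sum_{\pi \in \Pi(\alpha)} \prod_{B \in \pi} D^{\alpha_B} g(a),
\end{equation*}
where the sum ranges over partitions of the index set associated with $\alpha$ and $\alpha_B$ denotes the sub-multi-index corresponding to the block $B$. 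Using the bound above, each product of block-derivatives is controlled by $|a|^{-|\alpha|-|\pi|}$, where $|\pi|$ is the number of blocks. Since $1 \leq |\pi| \leq |\alpha|$ and $|a| \leq 1/2$, the worst (largest) factor occurs for the finest partition and is bounded by $|a|^{-2|\alpha|}$. Combining, I obtain
\begin{equation*}
|D^\alpha R(a)| = |D^\alpha e^{g(a)}| \leq C_\alpha \, e^{1/|a|} \, |a|^{-2|\alpha|},
\end{equation*}
valid for $0 < |a| < 1/2$.

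The last step is the dictionary between $|a|$ and $R(a)$. By definition, $R(a) + e = e^{1/|a|}$, so $1/|a| = \ln(R(a) + e)$. I would choose a threshold $a_0 \in (0, 1/2)$ small enough that $|a| \leq a_0$ implies $R(a) \geq e$; then $R(a) + e \leq 2 R(a)$, giving $e^{1/|a|} \leq 2 R(a)$, and $\ln(R(a) + e) \leq \ln(2 R(a)) \leq 2 \ln R(a)$ after possibly shrinking $a_0$ so that $R(a) \geq 2$. Substituting these into the bound above yields, on $0 < |a| \leq a_0$,
\begin{equation*}
|D^\alpha R(a)| \leq C_\alpha \cdot 2 R(a) \cdot (2 \ln R(a))^{2|\alpha|} = C_\alpha' \, R(a) \, (\ln R(a))^{2|\alpha|}.
\end{equation*}
On the compact region $a_0 \leq |a| < 1/2$ both $|D^\alpha R(a)|$ and $R(a)(\ln R(a))^{2|\alpha|}$ are continuous and bounded with the latter bounded below by a positive constant, so the inequality holds there as well after enlarging $C$. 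Gluing the two cases gives a single constant $C$ and completes the proof.

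The only slightly delicate step is the Faà di Bruno bookkeeping, but the homogeneity of $g = 1/|a|$ makes this mechanical — the real content is the identification $1/|a| \sim \ln R(a)$ which converts an estimate that blows up as $|a| \to 0$ into the polynomial-in-$\log$ estimate claimed.
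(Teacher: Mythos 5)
Your proof is correct. The paper obtains the same intermediate bound $|D^\alpha R(a)|\lesssim e^{1/|a|}\,|a|^{-2|\alpha|}$ by a bespoke induction (Lemma \ref{ropet1}) which tracks the explicit form of $D^\alpha R$ as a linear combination of terms $e^{1/|a|}\,a^\beta/|a|^k$, whereas you package the same calculation as Fa\`a di Bruno applied to $\exp\circ\, g$ with $g(a)=|a|^{-1}$, using homogeneity to bound $|D^\beta g|\le C_\beta|a|^{-1-|\beta|}$. Both routes hinge on the observation that each product of block-derivatives costs at most one extra power of $|a|^{-1}$ per block (so $|a|^{-|\pi|-|\alpha|}\le|a|^{-2|\alpha|}$), and the final translation $e^{1/|a|}\sim R$, $1/|a|=\ln(R+e)\sim\ln R$ is shared. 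Your version is arguably tidier because it defers all combinatorics to a standard formula instead of verifying the inductive closure of a class of terms by hand; the paper's version has the merit of exhibiting the precise structure of $D^\alpha R$, which it reuses elsewhere. One cosmetic remark: you ask to shrink $a_0$ so that ``$R(a)\ge 2$'', but the condition $R\ge e$ that you already impose suffices for both $R+e\le 2R$ and $2R\le R^2$, so the extra shrink is redundant; this does not affect the argument.
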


Let us continue with the study of the map
$$
\Gamma_1:B_\frac{1}{2}\oplus L\to  L, \quad \Gamma_1(a,u)=f_1\left(\cdot -\frac{R}{2}\right)\cdot u.
$$
As already done before we assume that $f_1(-\infty)=1$ and  study, 
rather than $\Gamma_1$,  the map $\Gamma(a,u)=u-\Gamma_1(a,u)$ which  has the form 
$$
\Gamma(a,u)=f\left(\cdot -\frac{R}{2} \right)\cdot u
$$
where $f=1-f_1$.

Let us denote by ${\bf R}(a)$, for $a\neq 0$, any product of derivatives of the kind 
$$
{\bf R}(a)(a_1, \ldots ,a_n)=(D^{n_1}R)(a)(a_1,\ldots ,a_{n_1})\cdot\ldots \cdot (D^{n_l}R)(a)(a_{k_{l-1}+1}, \ldots ,a_{n})
$$
where $n=n_1+\ldots +n_l$ and $(a_1, \ldots , a_n)\in C^n$. We call the integer $n$  the order of ${\bf R}(a)$. We define  ${\bf R}(a)$ of order $0$ to be the constant function equal to $1$. 
To  prove the sc-smoothness of $\Gamma$,  we need a structural statement about the form of $T^k\Gamma$ for  $a\neq 0$. 
\begin{lem}\label{ind-lem}
Assume that $\pi:T^kL\to  L^j$ is the projection onto a factor of $T^kL$. Then, for $a\neq 0$,  the composition $\pi\circ T^k\Gamma:T^kL\to L^j$ is a linear combination of maps 
$$A:(B_\frac{1}{2}\setminus\{0\})\oplus {\mathbb C}^n\oplus L^m\to  L^j$$
of the form
\begin{equation}\label{hedgehog}\\
(a,h,w)\mapsto  {\bf R}(a)(h)\cdot f^{(p)}\left(s-\frac{R}{2}\right)\cdot w, 
\end{equation}
$h=(h_1, \ldots ,h_n)\in \C^n$ and ${\bf R}(a)$ has order $n$. Moreover,  the following inequalities hold
$$
p\leq m-j\quad \text{and}\quad n\leq k.
$$
\end{lem}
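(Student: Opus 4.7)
The proof is by induction on $k$. The base case $k = 0$ is immediate: $\Gamma(a, u)(s,t) = f(s - R/2) \cdot u(s,t)$ itself has the form (\ref{hedgehog}) with $n = 0$ (so ${\bf R}(a) \equiv 1$), $p = 0$, $w = u$, and $m = j$; both inequalities $p \leq m - j$ and $n \leq k$ hold trivially.

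For the inductive step, observe that on the open set $\{a \neq 0\}$ the map $\Gamma$ is classically smooth, so iterated tangents may be computed by ordinary multivariable calculus. By linearity of the conclusion, it suffices to show that for any single map $A$ of the form (\ref{hedgehog}) with $p \leq m - j$ and $n \leq k$, the composition of $TA$ with any projection onto a factor of $T(L^j) = L^{j+1} \oplus L^j$ is again a linear combination of maps of the same form, with new indices satisfying $p' \leq m' - j'$ and $n' \leq k+1$. Writing $TA$ at a point $(a, h, w) \in (B_{\frac{1}{2}} \setminus \{0\}) \oplus \C^n \oplus L^{m+1}$ with tangent directions $(\delta a, \delta h, \delta w) \in \C \oplus \C^n \oplus L^m$, the projection onto the $L^{j+1}$-factor of the target simply returns $A$ itself with $w$ reinterpreted in $L^{m+1}$; the form is unchanged and the bound $p \leq m - j = (m+1) - (j+1)$ persists.

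The projection onto the $L^j$-factor is the differential $DA$, which by the product rule splits into three groups of terms. The $\partial_w$-piece is ${\bf R}(a)(h) \cdot f^{(p)}(s - R/2) \cdot \delta w$, of the same form with $w$ replaced by $\delta w \in L^m$ and unchanged indices. Each $\partial_{h_i}$-piece is ${\bf R}(a)(h_1, \ldots, \delta h_i, \ldots, h_n) \cdot f^{(p)}(s - R/2) \cdot w$, again of the same form with one of the $h$-slots relabeled, order and $p$ unchanged, and $w \in L^{m+1}$. The $\partial_a$-piece is the one that genuinely invokes derivatives of $R(a)$: differentiating ${\bf R}(a)(h)$ in $\delta a$ turns some factor $(D^{n_i} R)(a)$ into $(D^{n_i+1} R)(a)(\delta a, \cdot)$, producing a new product of order $n + 1$ paired with the unchanged $f^{(p)}(s - R/2) \cdot w$; differentiating $f^{(p)}(s - R/2)$ in $\delta a$ gives $-\tfrac{1}{2} f^{(p+1)}(s - R/2) \cdot DR(a)(\delta a)$, so this contribution may be written as $\wt{{\bf R}}(a)(h, \delta a) \cdot f^{(p+1)}(s - R/2) \cdot w$ in which $\wt{{\bf R}}$ has order $n + 1$ and $p$ has increased by $1$. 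In all four types of tangent component, $n'$ equals either $n$ or $n+1$, hence is $\leq k + 1$; similarly $p'$ is either $p$ or $p + 1$, hence at most $(m - j) + 1 = (m + 1) - j = m' - j'$ in the worst case.

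I expect the main obstacle to be purely combinatorial book-keeping: tracking how each new tangent direction ($\delta a$, the $\delta h_i$'s, and $\delta w$), together with the lifting of $w$ into $L^{m+1}$, contributes either $+1$ to the order of ${\bf R}$ or $+1$ to the derivative index $p$, and verifying in each of the four cases that both inequalities survive. The estimates of Lemma \ref{lkk} and Lemma \ref{lkj} play no role in this structural identity; they enter only later, when Lemma \ref{ind-lem} is combined with Proposition \ref{ABC-x} to conclude the actual sc-smoothness of $\Gamma$ across the singular locus $a = 0$.
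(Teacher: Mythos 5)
Your proof is correct and follows essentially the same route as the paper's: induction on $k$, a split of $\pi\circ T^{k+1}\Gamma$ according to whether $\pi$ projects onto one of the first or last $2^k$ factors of $T^{k+1}L$, and a four-way case analysis of the derivative $DA$ (with respect to $w$, to the $h_i$'s, to $a$ through ${\bf R}(a)$, and to $a$ through $f^{(p)}(s-R/2)$), with the same index bookkeeping in each case. You also correctly observe that the decay estimates from Lemmas~\ref{lkk} and~\ref{lkj} are not used here but only in the subsequent continuity and approximation arguments.
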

\begin{proof}
We prove the lemma  by induction with respect to $k$ starting with $k=0$.
In this case,  the statement is trivially satisfied since $T^0\Gamma=\Gamma$  so that  ${\bf R}(a)=1$, $n=p=0$, and $j=m=0$. 
We assume that the statement has been proved for  $k$ and verify   that it holds for $k+1$.
If $\pi:T^{k+1}L\to  L^j$ is a projection onto one of the first $2^k$ factors we know that terms of $\pi\circ T^{k+1}\Gamma$ have the required form by induction hypothesis.
The only thing which is different is that the indices $m$ and $j$ are both raised by one (recall the definition of the tangent). If $\pi:T^{k+1}L\to  L^j$  projects onto one of the last $2^k$ factors,  the terms of $\pi\circ T^{k+1}\Gamma$  are the linear combinations of derivatives of maps guaranteed by the induction hypothesis.
Hence  we take a  map of the form (\ref{hedgehog}) and differentiate in the sc-sense.
 For $(a,h,w,\delta a,\delta h,\delta w)\in T((B_\frac{1}{2}\setminus\{0\})\oplus {\mathbb C}^n\oplus L^m)=
(B_\frac{1}{2}\setminus\{0\})\oplus {\mathbb C}^n\oplus L^{m+1}\oplus {\mathbb C}\oplus{\mathbb C}^n\oplus L^m$ we obtain 
a linear combination of maps of the following  four types:
\begin{itemize}
\item[(1)] $B_\frac{1}{2}\oplus {\mathbb C}^{n+1}\oplus L^{m+1}\to  L^j,$
$$
(a,(\delta a,h),w)\mapsto  {\bf R}'(a)(\delta a,h)f^{(p)}\left(s-\frac{R}{2}\right)\cdot w
$$
where all the occurring ${\bf R}'(a)$ have order $n'=n+1$. Here $j'=j$, $m'=m+1$, and $n'=n+1$  so that $p'\leq m'-j'$ and $n'\leq k+1$.
\item[(2)] $B_\frac{1}{2}\oplus {\mathbb C}^n\oplus L^{m+1}\to  L^j$, \
$$(a,(h_1,\ldots ,\delta h_i,\ldots ,h_n),w)\mapsto  A(a,(h_1,\ldots, \delta h_i,\ldots ,h_n),w).$$
Here $j'=j$, $p'=p$, $m'=m+1$ and  $n'=n+1$ so that  $p'\leq m'-j'$ and $n'\leq k+1$.
\item[(3)] $B_\frac{1}{2}\oplus {\mathbb C}^{n+1}\oplus L^{m+1}\to  L^j,$
$$
(a,(\delta a,h),w)\mapsto  {\bf R}_1(a)(\delta a,h)\cdot f^{(p+1)}\left(s-\frac{R}{2}\right)\cdot w
$$
where  $ {\bf R}_1(a)(\delta a,h)=(DR(a)(\delta a )) {\bf R}(a)(h)$ so that its order is equal to $n'=n+1$. Moreover, 
$p'=p+1$,  $m'=m+1$,  and $j'=j$. Again,  $p'\leq m'-j'$ and $n'\leq k+1$.
\item[(4)] $B_\frac{1}{2}\oplus \C^n\oplus L^m\to  L^j,$
$$ 
(a,h,\delta w)\mapsto A(a,h,\delta w).
$$
Here $j'=j$, $m'=m$, $p'=p$,  and $n'=n$ satisfy $p'\leq m'-j'$ and $n'\leq k+1$.
\end{itemize}
This completes the proof of the lemma.
\end{proof}

Let us observe that any map  $A$ in Lemma \ref{ind-lem} has a continuous extension
to points $(0,h,w)$ by defining $A(0,h,w)=0$. Indeed, if $m=j$ so that $p=0$
we have $A(a,w)=\Gamma(a,w)$ as a map $B_\frac{1}{2}\oplus L^m\to  L^m$
and we already know that this is $\ssc^0$.  If, on the other hand,  $m-j>0$, then we combine the estimates in Lemmata \ref{lkk} and \ref{lkj} and obtain  the estimate
\begin{equation}\label{est-eq}
\abs{A(a, h, w)}_j\leq Ce^{-d_{m, j}R}\cdot R^{3k}\cdot \abs{h}^k \cdot \abs{w}_m
\end{equation}
with a constant $C$ depending on $m$, $j$, $p$, and $n$,  but not on $a$.
Recalling that $R=e^{\frac{1}{\abs{a}}}-e$,  the right-hand side converges to $0$ as $\abs{a}\to 0$ keeping $(h, w)$ bounded.

 At this point we have proved the following lemma.
\begin{lem}\label{nhj}
The map $\Gamma:(B_\frac{1}{2}\setminus\{0\})\oplus L\to  L$ is $\ssc$-smooth. Moreover,  its iterated tangent map $T^k\Gamma$ can be extended continuously by $0$ over all  points containing  $a=0$.
\end{lem}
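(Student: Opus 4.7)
The plan is to combine the structural decomposition of $T^k\Gamma$ from Lemma~\ref{ind-lem} with the uniform estimate \eqref{est-eq} already derived from Lemmas~\ref{lkk} and~\ref{lkj}.

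For sc-smoothness of $\Gamma$ on $(B_\frac{1}{2}\setminus\{0\})\oplus L$: on the punctured disk, the gluing length $R(a) = e^{1/|a|} - e$ and the gluing angle $\vartheta(a)$ depend smoothly on $a$ in the classical sense. The preceding lemma of this subsection shows that the auxiliary map $(R, u) \mapsto f(\cdot - R/2) \cdot u$ from $\R\oplus L$ to $L$ is sc-smooth. Composing with the smooth assignment $a\mapsto R(a)$ and applying the chain rule (Theorem~\ref{chain-thm1}) yields sc-smoothness of $\Gamma$ on the punctured set, so each iterated tangent map $T^k\Gamma$ is well-defined and sc-continuous on its natural domain.

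For the continuous extension by $0$ across $a = 0$: by Lemma~\ref{ind-lem}, each projected component $\pi\circ T^k\Gamma$ is a finite linear combination of building blocks
\begin{equation*}
A(a, h, w) = \mathbf{R}(a)(h) \cdot f^{(p)}\bigl(s - R(a)/2\bigr) \cdot w ,
\end{equation*}
with $h\in\C^n$, $w\in L^m$, output in $L^j$, and indices $p \leq m-j$, $n \leq k$. It suffices to verify that each such $A$ extends continuously by $0$ at $a=0$, uniformly on bounded subsets of $(h, w)$. When $m > j$, the estimate \eqref{est-eq} gives
\begin{equation*}
\abs{A(a, h, w)}_j \;\leq\; C\, e^{-d_{m, j}\, R(a)}\, R(a)^{3k}\, |h|^k\, \abs{w}_m ,
\end{equation*}
and since $R(a)\to\infty$ as $|a|\to 0$, the super-exponentially small factor $e^{-d_{m,j} R(a)}$ absorbs the polynomial growth $R(a)^{3k}$ coming from the derivatives of $R$ (Lemma~\ref{lkj}). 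This yields the required uniform decay to $0$.

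The one remaining case is $m = j$. A short inspection of the induction in Lemma~\ref{ind-lem} shows that the quantity $(m-j) - n$ is non-decreasing under all four derivative steps and under the first-factor projection, so starting from the base case $(n, p, m, j) = (0, 0, 0, 0)$ one always has $n \leq m - j$. In particular $m = j$ forces $n = 0$ and $p = 0$, so $A$ reduces to $\Gamma$ itself; by the opening lemma of this subsection, $\Gamma$ is $\ssc^0$ and satisfies $\Gamma(0, w) = (1 - f_1(-\infty)) w = 0$ by the normalization $f = 1 - f_1$ with $f_1(-\infty) = 1$. Hence every building block extends continuously by $0$, and so does $T^k\Gamma$. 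The main subtlety is the invariant bookkeeping together with the verification that the polynomial-in-$R$ blow-up of the derivatives of $a \mapsto R(a)$ is uniformly dominated by the exponential weight gap $e^{-d_{m,j} R}$; both rely crucially on the exponential gluing profile $\varphi(r) = e^{1/r} - e$ and on the strictly increasing sc-weights $(\delta_m)$.
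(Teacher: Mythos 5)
Your argument is correct and follows the same route as the paper: you invoke the structural Lemma~\ref{ind-lem}, the decay estimate \eqref{est-eq} obtained from Lemmata~\ref{lkk} and~\ref{lkj} for the case $m>j$, and the $\ssc^0$-continuity of $\Gamma$ at $a=0$ for the base case $m=j$, exactly as the text preceding Lemma~\ref{nhj} does.

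The one place where you genuinely sharpen the paper's exposition is in handling the case $m=j$. The paper asserts ``if $m=j$ so that $p=0$ we have $A(a,w)=\Gamma(a,w)$,'' but Lemma~\ref{ind-lem} as stated only records $p\leq m-j$ and $n\leq k$; it does not record that $m=j$ forces the order of $\mathbf{R}(a)$ to vanish. If $m=j$ with $n>0$ were possible, the block $A(a,h,w)=\mathbf{R}(a)(h)\cdot\Gamma(a,w)$ would involve a factor blowing up polynomially in $R(a)$ multiplied by a term that tends to $0$ at no specified rate on level $m=j$, and the extension by $0$ would not be automatic. Your observation that $(m-j)-n$ is non-decreasing along every branch of the induction -- it stays fixed under the first-factor projection and under differentiation of $\mathbf{R}$ or of $f^{(p)}(\cdot-R/2)$ in $a$, and increases under differentiation in $h$ or $w$ -- and therefore $n\leq m-j$ always, is precisely what the paper needs and uses implicitly. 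With that invariant in hand, $m=j$ indeed forces $n=p=0$, the block reduces to $\Gamma$ itself, and the base continuity $\Gamma(0,w)=0$ closes the argument. So your proposal is not a different proof, but it does supply a small verification the paper leaves tacit.
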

It remains to show the approximation property at points $(a, H)\in T^k( B_\frac{1}{2}\oplus L) $ where $a=0\in B_\frac{1}{2}$. Of course, the candidate is  the $0$ map.
We have to show that,  given $(0, H)\in T^k( B_\frac{1}{2}\oplus L )$, 
$$
\frac{1}{ \norm{(\delta a,\delta H) }_1} \cdot \norm{T^k\Gamma(\delta a, H+\delta H) }_0\to 0\quad \text{as $ \norm{(\delta a,\delta H) }_1\to 0$},
$$
where $\norm{\cdot }_0$ ($\norm{\cdot}_1$)  is the norm on the level $0$ ($1$) of the iterated tangent  $T^k( B_\frac{1}{2}\oplus L)$ and is equal to the sum of the norms on each of the factors of $(T^k( B_\frac{1}{2}\oplus L))_0$ ($(T^k( B_\frac{1}{2}\oplus L))_1$).

We know from  Lemma \ref{ind-lem}  that the maps $\pi\circ T^k\Gamma$  on the factors of $T^k( B_\frac{1}{2}\oplus L)$ are of particular forms. Hence it suffices to consider the maps defined in Lemma \ref{ind-lem}.  More precisely, given a map 
$$A: B_\frac{1}{2}\oplus \C^n\oplus L^{m}\to L^j, \quad (a, h, w)\mapsto (a,h, w)\mapsto  {\bf R}(a)(h)\cdot f^{(p)}\left(s-\frac{R}{2}\right)\cdot w
$$
and  $(0, h, w)\in B_\frac{1}{2}\oplus \C^n\oplus L_{m+1}$ (recall that in order to linearize we have to raise the index $m$ by $1$), we have to show that 
\begin{equation}\label{limit-eq}
\frac{1}{\abs{\delta a}+\abs{\delta h}+\abs{\delta w}_{m+1}}\abs{A(\delta a,h+\delta h,w+\delta w)}_j \to 0
\end{equation}
as $\abs{\delta a}+\abs{\delta h}+\abs{\delta w}_{m+1}\to 0$.
If $\delta a=0$, then $A(\delta a , \cdot , \cdot )=0$ and so we may assume that $\delta a\neq 0$. In view of the estimate \eqref{est-eq} in which  $R=\varphi (\abs{\delta a})$ for the exponential gluing profile $\varphi$, the left-hand side of \eqref{limit-eq} is less or equal to 
\begin{equation*}
\begin{split}
&\frac{C}{\abs{\delta a}+\abs{\delta h}+\abs{\delta w}_{m+1}}e^{-d_{m, j}R}\cdot R^{3k}\cdot \abs{h+\delta h}^k \cdot \abs{w+\delta w}_m\\
&\phantom{===}=\frac{C\abs{\delta a}}{\abs{\delta a}+\abs{\delta h}+\abs{\delta w}_{m+1}}\cdot {\frac{e^{-d_{m, j}R}\cdot R^{3k}}{\abs{\delta a}}}\cdot \abs{h+\delta h}^k \cdot \abs{w+\delta w}_m\
\end{split}
\end{equation*}
which converges to $0$ as $\abs{\delta a}+\abs{\delta h}+\abs{\delta w}_{m+1}\to 0$.
This proves the approximation property and completes  the proof that $\Gamma$, and hence $\Gamma_1$,  is sc-smooth.

Next we consider the map $\Gamma_2$. Again we start with the prove of structural result about $\pi\circ T^k\Gamma_2$.  Before we do that,  we state the  estimate which we shall subsequently use. It follows immediately from Lemma \ref{polj}.
\begin{lem}\label{thetaes-prop}
For all multi-indices $\alpha$,  there exists a constant $C>0$ so that 
$$
\abs{D^{\alpha}\vartheta(a)}\leq C\cdot \abs{  \ln R(a)    }^{|\alpha|}
$$
if $a\neq 0$. 
\end{lem}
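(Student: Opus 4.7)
The plan is to relate $\vartheta(a)$ directly to $\arg(a)$, derive polynomial-in-$1/|a|$ bounds for the derivatives of the argument, and then exploit the asymptotic relation $\ln R(a)\sim 1/|a|$ as $|a|\to 0$. Throughout, the estimates are needed only on $0<|a|<1/2$, on which $R(a)=e^{1/|a|}-e>1$, so $\ln R(a)>0$ and the absolute value signs are harmless.

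First I would observe that, locally on any simply connected subset of the punctured disk, $\vartheta(a)=-\frac{1}{2\pi}\arg(a)\pmod{\mathbb Z}$, and derivatives do not see the branch ambiguity. Writing $a=a_1+\mi a_2$, the partials $\partial_{a_1}\arg(a)=-a_2/|a|^2$ and $\partial_{a_2}\arg(a)=a_1/|a|^2$ are homogeneous of degree $-1$, and an easy induction on $|\alpha|$ shows that $D^\alpha\arg(a)$ is a sum of terms of the form $P_\alpha(a_1,a_2)/|a|^{2|\alpha|}$ with $P_\alpha$ a homogeneous polynomial of degree $|\alpha|$. Consequently, there is a constant $C_\alpha>0$ with
\[
|D^\alpha\vartheta(a)|\leq C_\alpha \,|a|^{-|\alpha|}
\qquad\text{for }0<|a|<\tfrac{1}{2}.
\]

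Next I would convert $|a|^{-1}$ into $\ln R(a)$. Since $R(a)=e^{1/|a|}-e$, we have $\ln R(a)=\tfrac{1}{|a|}+\ln\bigl(1-e^{1-1/|a|}\bigr)$, so $\ln R(a)/(1/|a|)\to 1$ as $|a|\to 0$. In particular, there exists $\rho\in(0,1/2)$ with $1/|a|\leq 2\ln R(a)$ for all $0<|a|<\rho$. On the compact annulus $\rho\leq|a|\leq 1/2$ the quantities $1/|a|$ and $\ln R(a)$ are each bounded above and below away from zero, so a uniform inequality $1/|a|\leq C\ln R(a)$ holds there by compactness. Combining the two regimes yields
\[
\tfrac{1}{|a|}\leq C'\,\ln R(a)\qquad\text{for all }0<|a|<\tfrac{1}{2}.
\]

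Raising this to the $|\alpha|$-th power and multiplying by the estimate from the first step gives
\[
|D^\alpha\vartheta(a)|\leq C_\alpha\,|a|^{-|\alpha|}\leq C_\alpha(C')^{|\alpha|}\bigl(\ln R(a)\bigr)^{|\alpha|},
\]
which is the asserted bound. There is no real obstacle here: the estimate for $D^\alpha\arg$ is elementary, and the comparison $|a|^{-1}\asymp\ln R(a)$ is built into the exponential gluing profile. The only thing to be careful about is uniformity on the transition region between the asymptotic regime $|a|\to 0$ and the upper cutoff $|a|<1/2$, handled by the compactness argument above. This is in line with the remark preceding the statement that the lemma follows immediately from the calculus estimates of Lemma \ref{polj}.
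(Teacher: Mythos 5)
Your proof is correct, and it fills in the details that the paper leaves implicit (the paper simply remarks before the lemma that it ``follows immediately from Lemma~\ref{polj}''). There is one point worth noting about the route you take. The paper's Lemma~\ref{polj} establishes that $D^\alpha\vartheta(a)$ is a linear combination of terms $a^\beta/\abs{a}^k$ with $k\leq 2\abs{\alpha}$ and $\abs{\beta}\leq\abs{\alpha}$. Taking those stated bounds at face value, each term is only controlled by $\abs{a}^{\abs{\beta}-k}\leq\abs{a}^{-2\abs{\alpha}}$, which after the $1/\abs{a}\lesssim\ln R$ conversion would yield the weaker bound $\abs{\ln R}^{2\abs{\alpha}}$. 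To get the stated exponent $\abs{\alpha}$ from Lemma~\ref{polj} one must look inside its inductive proof and observe that $k-\abs{\beta}$ actually increases by exactly one at each step, so $k-\abs{\beta}=\abs{\alpha}$. Your homogeneity argument for $D^\alpha\arg$ gives the sharp power $\abs{a}^{-\abs{\alpha}}$ directly and transparently, bypassing the need to extract that extra structure from the induction; in that sense it is a cleaner route to the same estimate. The conversion $1/\abs{a}\leq C\ln R(a)$ on $0<\abs{a}<1/2$, handled by splitting into a small-$\abs{a}$ regime and a compactness argument on the transition annulus, is exactly what is needed and is carried out correctly.
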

Similar to ${\bf R}(a)$ we introduce for $a\neq 0$ the expression ${\bf {\Theta}}(a)$ of products of derivatives of the form
$$
{\bf {\Theta}}(a) (h_1,\ldots ,h_k)=D^{k_1}\vartheta(a)(h_1,\ldots , h_{k_1} )\cdot \ldots \cdot D^{k_l} \vartheta (h_{k_{l-1}+1},\ldots  ,h_k),
$$
where $k=k_1+\ldots +k_l$ is the order of $\Theta (a)$. By Lemma  \ref{thetaes-prop}, 
$$
\abs{ {\bf {\Theta}} (a)(h_1,\ldots ,h_k)}\leq C\cdot ( \ln R )^k\cdot \abs{h}^k
$$
for sufficiently small  $a\neq 0$ with  a constant $C>0$ independent of $a$.

Here is the necessary structural statement  for the map $\Gamma_2$. 

\begin{lem}\label{bigg}
Assume that $\pi:T^kL\to  L^j$ is the projection onto a factor of $T^kL$. Then, for $a\neq 0$,  the composition $\pi\circ T^k\Gamma_2:T^kL\to L^j$ is a linear combination of maps 
$$A:(B_\frac{1}{2}\setminus\{0\})\oplus {\mathbb C}^{p+\alpha_1}\oplus {\mathbb C}^{\alpha_2}\oplus L^m\to  L^j$$
of the form 
\begin{equation*}
(a,h,e,w) \mapsto  {\bf R}(a)(h)\cdot {\bf {\Theta}}(a)(e)\cdot f^{(p)}\left(\cdot -\frac{R}{2}\right)(D^{\alpha}w)(\cdot -R,\cdot -\vartheta)]
\end{equation*}
where $p+|\alpha|\leq m-j$ and $|p|+|\alpha|\leq k$. Moreover,  the sum of the orders of ${\bf R}(a)$ and ${\bf {\Theta}}(a)$ does not exceed $k$.
\end{lem}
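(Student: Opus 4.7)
The proof proceeds by induction on $k$, mirroring the proof of Lemma \ref{ind-lem} but with additional bookkeeping for the derivatives of the gluing angle $\vartheta(a)$, which are captured by the ${\bf \Theta}(a)$ factors. For the base case $k=0$ we have $T^0\Gamma_2 = \Gamma_2$ with ${\bf R}(a) = {\bf \Theta}(a) = 1$ (each of order zero), $p = 0$, $\alpha = (0,0)$, and $m = j = 0$, so the structural form and all the inequalities are trivially satisfied.

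For the inductive step, assume the claim holds for $k$. A projection of $T^{k+1}\Gamma_2 = T(T^k\Gamma_2)$ onto a factor of $T^{k+1}L$ either lands in one of the first $2^k$ factors of $T(T^kL)$, in which case it recovers a term already covered by the induction hypothesis (with $m$ and $j$ each raised by one and all other indices unchanged), or it lands in one of the last $2^k$ factors, corresponding to the linearization of a map of the stated form. In the latter case we must linearize
\begin{equation*}
A(a,h,e,w) = {\bf R}(a)(h) \cdot {\bf \Theta}(a)(e) \cdot f^{(p)}\!\left(s - \tfrac{R}{2}\right) \cdot (D^\alpha w)(s - R,\, t - \vartheta)
\end{equation*}
jointly in all arguments against a tangent vector $(\delta a, \delta h, \delta e, \delta w)$. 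Linearization in $h$, $e$, or $w$ simply replaces one $h_i$ by $\delta h_i$, one $e_i$ by $\delta e_i$, or $w$ by $\delta w$, leaving $p$, $|\alpha|$ and the orders of ${\bf R}$ and ${\bf \Theta}$ unchanged. Linearization in $a$ splits into four types of terms: differentiating ${\bf R}(a)$ enlarges its order by one; differentiating ${\bf \Theta}(a)$ enlarges its order by one; differentiating $f^{(p)}(s-R/2)$ produces $-\tfrac{1}{2}\,f^{(p+1)}(s-R/2)\cdot DR(a)(\delta a)$, which raises $p$ by one and appends an order-one factor to ${\bf R}$; and differentiating $(D^\alpha w)(s-R, t-\vartheta)$ produces $-D^{\alpha+(1,0)}w \cdot DR(a)(\delta a) - D^{\alpha+(0,1)}w \cdot D\vartheta(a)(\delta a)$, which raises $|\alpha|$ by one and appends an order-one factor to ${\bf R}$ or ${\bf \Theta}$.

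In every case, each $a$-linearization adds exactly one to the sum of orders of ${\bf R}$ and ${\bf \Theta}$ and at most one to $p + |\alpha|$, so both inequalities (sum of orders $\leq k+1$ and $p + |\alpha| \leq k+1$) survive. For the inequality $p + |\alpha| \leq m - j$, one tracks how $m$ and $j$ shift: projecting onto the first half of a tangent factor raises $m$ and $j$ together, while projecting onto the second half raises $m$ by one on domain $w$-factors but raises $p+|\alpha|$ by at most one, with $j$ unchanged; thus $m - j$ grows at least as fast as $p + |\alpha|$. The only real obstacle is the pedestrian but unavoidable bookkeeping of tracking $(m, j, p, |\alpha|)$ and the orders of ${\bf R}$ and ${\bf \Theta}$ through every differentiation and verifying that the inequalities are preserved; no step is conceptually subtle, and the argument is essentially the same as for $\Gamma_1$ in Lemma \ref{ind-lem}, supplemented by the ${\bf \Theta}$ bookkeeping enabled by Lemma \ref{thetaes-prop}.
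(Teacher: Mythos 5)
Your proposal is correct and follows essentially the same approach as the paper: induction on $k$, the trivial base case, the split between projections onto the first versus the last $2^k$ factors of $T^{k+1}L$, and a case analysis of the linearization of $A$ according to which factor (${\bf R}$, ${\bf \Theta}$, $f^{(p)}$, the shifted $D^\alpha w$, or the slots $h$, $e$, $w$) is differentiated, with the same bookkeeping of $(m,j,p,|\alpha|)$ and the orders. (Incidentally, you correctly keep $p$ unchanged when differentiating $(D^\alpha w)(\cdot - R, \cdot - \vartheta)$ in $a$; the paper's type (7) writes $f^{(p+1)}$ there, which appears to be a typo.)
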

\begin{proof}
Clearly the statement is true for $k=0$. Assume it has been proved for
$k$. Consider the composition  $\pi\circ T^{k+1}\Gamma_2$ where $\pi$ is the projection  onto one of the first $2^k$ factors,. Then the result follows from the induction hypothesis raising the indices in the domain and  the target by $1$. If $\pi$ is a projection  onto one of the last $2^k$ factors, then the composition $\pi\circ T^{k+1}\Gamma_2$ is the linear combination of derivatives of terms guaranteed by the induction hypothesis. More precisely,
assume that we consider a map  $A$ of the above form.  If we differentiate $A$ at a point where  $a\neq 0$, then the linearization is a linear combination of the following types of maps:\\[0.7ex]

\noindent  {\bf ($1$) }\quad 
$(B_\frac{1}{2}\setminus\{0\})\oplus  {\mathbb C}^{n+\alpha_1+1}\oplus{\mathbb C}^{\alpha_2}\oplus L^{m+1}\to  L^j,$
$$
(a,(\delta a,h),e,w)\mapsto  {\bf R}'(a)(\delta a,h)\cdot {\bf {\Theta}}(a)(e)\cdot f^{(p)}\left(\cdot -\frac{R}{2}\right)(D^{\alpha}w)(\cdot -R,\cdot -\vartheta)
$$
where ${\bf R}'(a)$  is a linear combination of terms of the form ${\bf R}(a)$ of order $n+\alpha_1$. 
\mbox{}\\
\noindent  {\bf ($2$) }\quad This  map is obtained by only differentiating ${\bf R}(a)(h)$ with respect to $h$. This gives the   map  
$B_\frac{1}{2}\oplus {\mathbb C}^{n+\alpha_1}\oplus{\mathbb C}^{\alpha_2}\oplus L^{m+1}\to  L^j,$
$$(a,(h_1,\ldots ,\delta h_i,\ldots ,h_n),e, w)\mapsto  A(a,(h_1,\ldots, \delta h_i,\ldots ,h_n),e, w).$$
\mbox{}\\
\noindent  {\bf ($3$) }\quad $(B_\frac{1}{2}\setminus\{0\})\oplus  {\mathbb C}^{n+\alpha_1}\oplus{\mathbb C}^{\alpha_2+1}\oplus L^{m+1}\to  L^j,$
$$
(a,h,(\delta a, e),w)\mapsto  {\bf R}(a)(h)\cdot {\bf {\Theta}}'(a)(\delta a, e)\cdot f^{(p)}\left(\cdot -\frac{R}{2}\right)(D^{\alpha}w)(\cdot -R,\cdot -\vartheta)
$$
where ${\bf \Theta}'(a)$  is a linear combination of terms of the form ${\bf \Theta}(a)$ each of order $\alpha_1$ so that ${\bf \Theta}'(a)$ is of order $\alpha_2'=\alpha_2+1$.
\mbox{}\\

\noindent  {\bf ($4$) }\quad $B_\frac{1}{2}\oplus {\mathbb C}^{n+\alpha_1}\oplus {\mathbb C}^{\alpha_2}\oplus L^{m+1}\to  L^j,$
$$(a,h,(e_1, \ldots ,\delta E_m, \ldots ,e_{\alpha_2}), w)\mapsto  A(a,h,(e_1, \ldots ,\delta E_m, \ldots ,e_{\alpha_2}), w).$$
\mbox{}\\
\noindent  {\bf ($5$) }\quad 
$(B_\frac{1}{2}\setminus\{0\})\oplus  {\mathbb C}^{1+n+\alpha_1}\oplus{\mathbb C}^{\alpha_2}\oplus L^{m+1}\to  L^j,$
$$
(a,(\delta a,h),e,w)\mapsto  {\bf R}_1(a)(\delta a,h)\cdot {\bf {\Theta}}(a)(e)\cdot f^{(p+1)}\left(\cdot-\frac{R}{2}\right)(D^{\alpha}w)(\cdot -R,\cdot -\vartheta)
$$
where  ${\bf R}_1(a)(\delta a,h)=(DR(a)\delta a)){\bf R}_1(a)(h)$.
\mbox{}\\

\noindent  {\bf ($6$) }\quad 
If we differentiate with respect to $w$ we have to replace $w$ 
by $\delta w$ but with $\delta w\in L_m$. The gives the map
$B_\frac{1}{2}\oplus {\mathbb C}^{n+\alpha_1}\oplus{\mathbb C}^{\alpha_2}\oplus L^{m}\to  L^j,$
$$(a,h,e, \delta w)\mapsto  A(a,h,e, \delta w).$$

\noindent  {\bf ($7$) }\quad Lastly, the map obtain by differentiating  $(D^{\alpha}w)(s-R,t-\vartheta)$ with respect to $a$. This leads to the map 
which is a linear combination  of the two following maps
\begin{gather*}
(B_\frac{1}{2}\setminus\{0\})\oplus  {\mathbb C}^{n+\alpha_1+1}\oplus{\mathbb C}^{\alpha_2}\oplus L^{m+1}\to  L^j,\\
(a,(\delta a,h),e,w)\mapsto  {\bf R}_1(a)(\delta a,h)\cdot {\bf {\Theta}}(a)(e)\cdot f^{(p+1)}\left(\cdot -\frac{R}{2}\right)(D^{\alpha'}w)(\cdot -R,\cdot -\vartheta)
\end{gather*}
where $ {\bf R}_1(a)(\delta a,h)=(DR(a)\delta a)){\bf R}_1(a)(h)$ and $\alpha'=\alpha +(1, 0)$, and 
\begin{gather*}
(B_\frac{1}{2}\setminus\{0\})\oplus  {\mathbb C}^{n+\alpha_1}\oplus{\mathbb C}^{\alpha_2+1}\oplus L^{m+1}\to  L^j,\\
(a,(\delta a,h),e,w)\mapsto  {\bf R}_1(a)(\delta a,h)\cdot {\bf {\Theta}}_1(a)(e)\cdot f^{(p+1)}\left(\cdot -\frac{R}{2}\right)(D^{\alpha''}w)(\cdot -R,\cdot -\vartheta)
\end{gather*}
where  $ {\bf \Theta}_1(a)(\delta a,h)=(D\Theta (a)\delta a)) {\bf \Theta} (a)(e)$ and $\alpha''=\alpha +(0, 1)$.
\mbox{}\\

Hence this  derivative is a finite linear combination of terms of the required  form.
Using the previous result and the previously derived exponential decay estimates,  we see that every $T^k\Gamma_2$ can be extended in a $\ssc^0$-continuous way by $0$ over points containing $a=0$. Arguing as before (just after Lemma \ref{nhj}) we can also verify  the approximation property. Hence the map $\Gamma_2$ is sc-smooth. This completes the proof of Proposition \ref{qed}.
\end{proof}

\subsection{Gluing, Anti-Gluing and Splicings, Proof of  Theorem \ref{first-splice}}\label{helmut}
In this section we present the proof of  Theorem \ref{first-splice}. In order to do this  we recall the  formula for the projection  map $\pi_a:E\to  E$.
If $a=0$, this projection is the identity since $\ominus_a$ is  the zero map. So,   we assume that  $0<|a|<\frac{1}{2}$ and 
set 
$$\pi_a(\xi^+,\xi^-):=(\eta^+,\eta^-).$$   The pair $(\eta^+,\eta^-) $ is  found  by solving the following
 system of equations 
 \begin{align*}
&\oplus_a(\eta^+,\eta^-)=\oplus_a(\xi^+,\xi^-)&\\
& \ominus_a(\eta^+,\eta^-)=0.&
 \end{align*}
 Recalling that $\beta_a (s)=\beta_R(s)=\beta (s-\frac{R}{2})$ and setting $\gamma_a =\beta^2_a +(1-\beta_a)^2$,  we have derived in Section \ref{arisingx} the formula
 \begin{equation*}
 \begin{split}
 \eta^+(s,t)&=
\left(1-\frac{\beta_a}{\gamma_a}(s) \right)\cdot
\frac{1}{2}\cdot ([\xi^+]_R+[\xi^-]_{-R})\\
 &\phantom{=}+\frac{\beta_a^2}{\gamma_a}(s)\xi^+(s,t)  +\frac{\beta_a(1-\beta_a)}{\gamma_a}(s) \xi^-(s-R,t-\vartheta),
 \end{split}
 \end{equation*}
where 
  $$
 [\xi^+]_R=\int_{S^1} \xi^+\left(\frac{R}{2},t \right)\ dt\quad \text{and}\quad 
 [\xi^-]_{-R}=\int_{S^1} \xi^-\left(-\frac{R}{2},t \right)\ dt.
 $$
 A similar formula holds for $\eta^-$. In order to study the sc-smoothness we  consider the map
  $$
 (a,\xi^+,\xi^-)\mapsto  \eta^+,
 $$
 the sc-smoothness of the map $ (a,\xi^+,\xi^-)\mapsto  \eta^-$ is verified the same  way.  If we write $\xi^\pm =c+r^\pm$,  where $c$ is the common asymptotic constant, then  the formula for $\eta^+$ takes the form
 \begin{equation}\label{pof}
 \begin{split}
 \eta^+(s,t)
 &=c+\frac{1}{2}\left(1-\frac{\beta_a}{\gamma_a}(s)\right)
 \cdot( [r^+]_R+[r^-]_{-R})\\
 &
 \phantom{=}+\frac{\beta_a^2}{\gamma_a}(s)\cdot r^+(s,t)+
 \frac{\beta_a(1-\beta_a)}{\gamma_a}(s)\cdot r^-(s-R,t-\vartheta)\\
\end{split}
\end{equation}

We shall study  the following five mappings:
\begin{itemize}
\item[{\bf M1.}]  The map 
$$H^{3,\delta_0}_c(\R^+\times S^1,\R^{N})\to  \R^{N}, \quad \xi^+\mapsto c$$
 which associates with  $\xi^+$  its  asymptotic constant $c$.
 \item[{\bf M2.}]  The map 
 $$B_\frac{1}{2}\times H^{3,\delta_0}(\R^+\times S^1,\R^{N})\to  \R^{N},\quad (a,r^+)\mapsto  [r^+]_R.$$ 
\item[{\bf M3.}]   The map 
$$B_\frac{1}{2}\times H^{3,\delta_0}(\R^+\times S^1,\R^{N})\to  H^{3,\delta_0}(\R^+\times S^1,\R^{N}),  \quad 
(a,r^+)\mapsto  \frac{\beta_a}{\gamma_a}(\cdot )[r^+]_{R}.$$
\item[{\bf M4.}]  The map $B_\frac{1}{2}\oplus H^{3,\delta_0}(\R^+\times S^1,\R^{N})\to  H^{3,\delta_0}(\R^+\times S^1,\R^{N}),$
$$(a,r^+)\mapsto \frac{\beta_a^2}{\gamma_a}\cdot r^+.$$
 \item[{\bf M5.}]  The map $B_\frac{1}{2}\oplus H^{3,\delta_0}(\R^-\times S^1,\R^{N})\to  H^{3,\delta_0}(\R^+\times S^1,\R^{N})$,
$$ (a,r^-)\mapsto   \frac{\beta_a(1-\beta_a)}{\gamma_a} r^-(\cdot -R,\cdot -\vartheta).$$
\end{itemize}
 In view of the formula for the projection map $\pi_a$  the  sc-smoothness of the map $(a,(\xi^+,\xi^-))\mapsto  \pi_a(\eta^+,\eta^-)$  in Theorem \ref{first-splice},
 is a consequence of the following proposition.
 \begin{prop}\label{klopx}
 The maps {\bf M1}-{\bf M5} listed above (and suitably defined at the parameter value $a=0$) are sc-smooth in a neighborhood of $a=0$.
 \end{prop}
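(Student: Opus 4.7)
The plan is to dispatch the five maps by reducing them to building blocks that have already been set up. M1 is immediate: the projection $\xi^+ \mapsto c$ onto the asymptotic constant is a bounded linear operator in the topological splitting $H^{3+m,\delta_m}_c(\R^+\times S^1) = \R^N \oplus H^{3+m,\delta_m}(\R^+\times S^1)$, so it is an sc-operator and hence sc-smooth.

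For M3, M4, M5 the strategy is to recognize them as instances (or easy compositions) of the maps $\Gamma_i$ treated in Proposition~\ref{qed}. The smooth functions
\[
\psi_1(s) = \frac{\beta(s)}{\gamma(s)}, \qquad \psi_2(s) = \frac{\beta(s)^2}{\gamma(s)}, \qquad \psi_3(s) = \frac{\beta(s)(1-\beta(s))}{\gamma(s)}
\]
are, in view of the defining properties \eqref{cutoff} of $\beta$ and the non-vanishing of $\gamma$, such that $\psi_1$ and $\psi_2$ are constant outside a compact interval with $\psi_i(+\infty) = 0$, while $\psi_3$ has compact support; the weights appearing in M3--M5 are exactly the shifted functions $\psi_i(\,\cdot\, - R/2)$. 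Thus M4 falls directly under Proposition~\ref{qed}(1) after extending $r^+$ by zero to a function on $\R\times S^1$ (which is an sc-operator onto the appropriate closed subspace, and we restrict back afterwards). M5 falls under Proposition~\ref{qed}(2) applied to $r^-$ extended by zero, since its right-hand side is exactly $\psi_3(s - R/2)\, r^-(s - R, t - \vartheta)$. Finally, M3 is the composition of M2 (producing the scalar $[r^+]_R\in\R^N$) with the map $(a,c) \mapsto \psi_1(\,\cdot\, - R/2)\cdot c$, and this latter map is Proposition~\ref{qed}(1) evaluated on a constant function, hence sc-smooth.

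The substantial content lies in M2, which we define to vanish at $a=0$. Because M2 takes values in the finite-dimensional space $\R^N$, by Corollary~\ref{ABC-y} it is enough to show that for every $l \geq 0$ the induced map
\[
B_{1/2} \oplus H^{3+l,\delta_l}(\R^+\times S^1,\R^N) \longrightarrow \R^N, \qquad (a,r^+) \longmapsto [r^+]_R,
\]
is of class $C^{l+1}$. For $a \ne 0$ this is clear: $a \mapsto (R(a),\vartheta(a))$ is smooth on the punctured disk, and the trace $r^+ \mapsto \int_{S^1} r^+(R/2,t)\, dt$ depends smoothly on $R$ via the weighted Sobolev trace theorem. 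The crucial point is the behaviour at $a = 0$. A differentiation of order $k \leq l+1$ in $a$ produces, by the chain rule, a linear combination of terms of the schematic form
\[
{\bf R}(a)(\delta a_1,\dots,\delta a_p)\cdot {\bf \Theta}(a)(\delta a_{p+1},\dots,\delta a_k)\cdot \int_{S^1}(\partial_s^i r^+)\bigl(R/2,t\bigr)\,dt,
\]
with $i \leq k$; Lemmas~\ref{lkj} and \ref{thetaes-prop} bound the $a$-dependent factor by a polynomial in $R$ and $\ln R$, while the weighted Sobolev trace estimate
\[
\Bigl|\int_{S^1}(\partial_s^i r^+)(R/2,t)\,dt\Bigr| \;\leq\; C\, e^{-\delta_l R/2}\cdot \|r^+\|_{H^{3+l,\delta_l}}
\]
supplies an exponential decay factor in $R$. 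Since $R = \varphi(|a|) = e^{1/|a|} - e$ grows doubly exponentially as $|a| \to 0$, the decay $e^{-\delta_l R/2}$ dominates any polynomial in $R$ and any negative power of $|a|$ arising from differencing. This shows that every derivative of order $\leq l+1$ extends continuously to $a = 0$ with value $0$, and in particular verifies the approximation property needed for $C^{l+1}$-regularity.

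The main obstacle is the M2 estimate just sketched: one must package the weighted Sobolev trace bound uniformly in the level, and verify that when it is multiplied by products of derivatives of $R(a)$ and $\vartheta(a)$ it still produces a $C^{l+1}$-map at $a=0$. This is the same scheme as the sc-smoothness proofs in Proposition~\ref{qed} and Example~\ref{funny-example}; once M2 is settled, M3 follows by composition and Proposition~\ref{qed}(1), and M4, M5 follow directly from Proposition~\ref{qed}. Combined, these establish Proposition~\ref{klopx}, and hence Theorem~\ref{first-splice}.
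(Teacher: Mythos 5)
Your strategy for M1 and M2 is sound. M1 is indeed an sc-projection. For M2 your route through Corollary~\ref{ABC-y} is a genuine alternative to the paper's proof, which proceeds by a direct induction on the structure of $T^k\Phi$ as in Lemma~\ref{prop-xx}: you outsource the inductive bookkeeping to Proposition~\ref{ABC-x} and only have to verify the $C^{l+1}$-statement at each level; since the target is $\R^N$, the exponential trace estimate $\bigl|[\partial_s^j r^+]_R\bigr|\leq Ce^{-\delta_l R/2}\abs{r^+}_{H^{3+l,\delta_l}}$ together with Lemma~\ref{lkj} indeed suffices. This is a legitimate shortcut.

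The M3 reduction, however, has a real gap. You factor M3 as $(a,r^+)\mapsto(a,[r^+]_R)\mapsto\psi_1(\cdot-\tfrac{R}{2})\cdot[r^+]_R$ and claim the second map is sc-smooth by ``evaluating Proposition~\ref{qed}(1) on a constant function.'' This cannot work: the function $\psi_1(\cdot-\tfrac{R}{2})$ equals $1$ on the growing interval $[0,\tfrac{R}{2}-1]$, so its $H^{3+m,\delta_m}(\R^+\times S^1)$-norm blows up like $e^{\delta_m R/2}$ as $a\to 0$. The map $B_{1/2}\oplus\R^N\to H^{3,\delta_0}(\R^+\times S^1)$, $(a,c)\mapsto\psi_1(\cdot-\tfrac{R}{2})c$, is therefore not even $\ssc^0$ at $a=0$ (and constants are not in $L^2(\R\times S^1)$, so Proposition~\ref{qed}(1) does not apply). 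The sc-smoothness of M3 is precisely a cancellation phenomenon between the exponentially small scalar $[r^+]_R$ and the exponentially large multiplier $\psi_1(\cdot-\tfrac{R}{2})$, and the two cannot be decoupled through the finite-dimensional intermediate space $\R^N$. The paper handles this by a single induction that keeps both factors coupled, with the structural hypothesis controlling the combined term; any proof of M3 must do something equivalent.

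There is also a smaller gap in M4 and M5. Extension by zero from $H^{3+m,\delta_m}(\R^\pm\times S^1)$ to $H^{3+m,\delta_m}(\R\times S^1)$ is not a bounded operator (the extended function is generically discontinuous across $\{0\}\times S^1$), so the parenthetical claim ``which is an sc-operator onto the appropriate closed subspace'' is false. The paper fixes this by first splitting with cutoffs $\chi_1+\chi_2=1$ where $\chi_1$ is supported near the boundary: for small $a$ the weight $\tfrac{\beta_a^2}{\gamma_a}$ equals $1$ on the support of $\chi_1$, so $\tfrac{\beta_a^2}{\gamma_a}\chi_1 r^+=\chi_1 r^+$ is $a$-independent, while $\chi_2 r^+$ vanishes near the boundary and its extension by zero is an sc-operator, after which Proposition~\ref{qed} applies. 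You need this cutoff step; as written your M4 and M5 reductions do not go through.
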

The proof  of the proposition follows  from a sequence of lemmata.

\begin{lem}\label{lem2-18}
The map $H^{3,\delta_0}_c(\R^+\times S^1,\R^{N})\to  \R^{N}, \quad \xi^+\mapsto c$, 
 which associates with  $\xi^+$  its  asymptotic constant $c$ is $\ssc$-smooth.
 \end{lem}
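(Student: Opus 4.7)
The map $L(\xi^+) := c$ is \emph{linear} in $\xi^+$, so the strategy is to reduce the $\ssc$-smoothness claim to level-wise boundedness. Any linear map between sc-Banach spaces which is bounded on every level is automatically $\ssc^{\infty}$: it equals its own derivative at every point, and its $k$-th iterated tangent map is just $2^k$ parallel copies of $L$, each of which inherits the level-wise boundedness required by Definition \ref{sc-def}. Hence the task reduces to showing that $L$ restricts to a bounded linear map $H^{3+m,\delta_m}_c(\R^+\times S^1,\R^{N})\to \R^N$ for every $m\in \N_0$.

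First I would verify that the decomposition $\xi^+ = c + r^+$ with $c\in \R^N$ and $r^+\in H^{3,\delta_0}(\R^+\times S^1,\R^N)$ is unique. If $\xi^+ = c_1 + r_1^+ = c_2 + r_2^+$ were two such decompositions, then the constant function $c_1 - c_2 = r_2^+ - r_1^+$ would itself lie in $H^{3,\delta_0}(\R^+\times S^1,\R^N)$. Since $\delta_0 > 0$, we have
\[
\int_{0}^{\infty}\int_{S^1} \abs{c_1-c_2}^2 e^{2\delta_0 s}\, dt\, ds = \infty
\]
unless $c_1 = c_2$, so the asymptotic constant is unambiguously determined by $\xi^+$. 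Consequently $L$ is a well-defined linear map.

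For level-wise boundedness, the natural norm on $H^{3+m,\delta_m}_c(\R^+\times S^1,\R^N)$ is
\[
\abs{\xi^+}_{H^{3+m,\delta_m}_c}^2 = \abs{c}^2 + \abs{r^+}^2_{H^{3+m,\delta_m}},
\]
and hence $\abs{L(\xi^+)} = \abs{c} \leq \abs{\xi^+}_{H^{3+m,\delta_m}_c}$. Thus $L$ has operator norm at most one on every level, is an sc-operator, and by the preceding observation is $\ssc$-smooth.

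I do not expect any real obstacle here; this lemma is essentially a formal check that isolates the ``trivial'' coordinate in the decomposition $\xi^+ = c + r^+$. The genuine work of Proposition \ref{klopx} lies in the subsequent items \textbf{M2}--\textbf{M5}, where the cutoff $\beta_a$, the gluing length $R = \varphi(\abs{a})$ and the angle $\vartheta$ interact non-trivially with the exponential weights and require the decay machinery already established in Proposition \ref{qed}.
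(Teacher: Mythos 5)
Your proof is correct and takes essentially the same approach as the paper, which simply observes that $\xi^+\mapsto c$ is an sc-projection and hence $\ssc$-smooth. You have spelled out the two ingredients the paper leaves implicit — uniqueness of the decomposition (so the map is well-defined) and level-wise boundedness (so a linear sc-operator is automatically $\ssc^\infty$) — but the underlying argument is the same.
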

\begin{proof}
The map $\xi^+\mapsto  c$ is an sc-projection and therefore $\ssc$-smooth.
\end{proof}

\begin{lem} \label{prop-xx}
The map $\Phi:B_\frac{1}{2}\oplus H^{3, \delta_0}(\R^+ \times S^1, \R^N)\to  \R^{N}$ , defined by $\Phi (0, h)=0$  for $a=0$ and 
$$\Phi (a,h)=
[h]_R=\int_{S^1} h\left(\frac{R}{2},t\right) \ dt 
$$
for $a\neq 0$,  is $\ssc$-smooth.
\end{lem}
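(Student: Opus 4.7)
The plan is to exploit the fact that the target $\R^N$ is finite-dimensional and apply Corollary \ref{ABC-y}: it suffices to show that on each level $l$ the induced map
$$
\Phi:(B_\frac{1}{2}\oplus H^{3,\delta_0}(\R^+\times S^1,\R^N))_l=B_\frac{1}{2}\oplus H^{3+l,\delta_l}(\R^+\times S^1,\R^N)\to \R^N
$$
is of class $C^{l+1}$. Since $\R^N$ carries the constant sc-structure, this will imply that $\Phi$ is $\ssc^{l+1}$ for every $l$, hence $\ssc^\infty$.

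For $a\neq 0$ in $B_\frac{1}{2}$, the gluing length $R=R(a)=e^{1/|a|}-e$ depends smoothly on $a$, and $\Phi(a,h)=\int_{S^1}h(R/2,t)\,dt$ is the composition of a smooth parameter-dependent evaluation with integration over $S^1$; hence $\Phi$ is classically smooth on $(B_\frac{1}{2}\setminus\{0\})\oplus H^{3+l,\delta_l}$. Differentiating $k$ times with respect to $(a,h)$, and using that $\Phi$ is linear in $h$, one sees that $D^k\Phi(a,h)$ is a linear combination of terms of the form
$$
{\bf R}(a)(a_1,\ldots,a_n)\cdot \int_{S^1}\partial_s^{p}h(R/2,t)\,dt\qquad\text{or}\qquad{\bf R}(a)(a_1,\ldots,a_n)\cdot \int_{S^1}\partial_s^{p}\delta h(R/2,t)\,dt,
$$
where ${\bf R}(a)$ is a product of partial derivatives of $R$ of total order $n\leq k$ and $p\leq k$. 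This structural statement is proved by a straightforward induction on $k$ analogous to the one in Lemma \ref{proof-funy-example} and Lemma \ref{ind-lem}.

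The core of the argument is the following trace estimate: for $h\in H^{3+l,\delta_l}(\R^+\times S^1,\R^N)$, weighted Sobolev embedding (in two variables, using two $s$- and one $t$-derivative of $e^{\delta_l s/2}h$) gives, for every $0\leq p\leq l+1$,
$$
\sup_{t\in S^1}\bigl|\partial_s^{p}h(R/2,t)\bigr|\leq C\,e^{-\delta_l R/2}\,|h|_{H^{3+l,\delta_l}}.
$$
Combined with Lemma \ref{lkj}, which yields $|{\bf R}(a)|\leq C\,R(a)^n[\ln R(a)]^{2n}$, each of the summands making up the candidate derivative $D^k\Phi(a,h)$, for $k\leq l+1$, is bounded by
$$
C\cdot R^{n}(\ln R)^{2n}\,e^{-\delta_l R/2}\,|h|_{H^{3+l,\delta_l}},
$$
which tends to $0$ super-exponentially as $a\to 0$, since $R=\varphi(|a|)\to\infty$. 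Thus every such candidate derivative extends continuously to $a=0$ by the value $0$, and the same estimate shows that the remainder in the Taylor expansion of $\Phi$ in the variable $a$ at $a=0$ is of the required order. This verifies the $C^{l+1}$-property on level $l$, completing the proof.

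The main technical obstacle is the trace inequality above at scale $R/2$: one has to keep careful track of which weighted Sobolev regularity survives after taking $p$ partial derivatives in $s$, and confirm that the weight $\delta_l$ still provides the exponential gain $e^{-\delta_l R/2}$. Once this estimate is in hand, the inductive structure of the derivatives and the exponential-beats-polynomial mechanism identical to that used for the shift map and for the retraction in Example \ref{funny-example} deliver the conclusion.
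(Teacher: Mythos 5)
Your proof is correct, but it takes a genuinely different route than the paper's. Both proofs hinge on the same two estimates — the trace bound $\sup_t|\partial_s^p h(R/2,t)|\leq Ce^{-\delta_l R/2}|h|_{H^{3+l,\delta_l}}$ for $p\leq l+1$, and the growth bound $|D^\alpha R(a)|\lesssim R(\ln R)^{2|\alpha|}$ from Lemma \ref{lkj} — but assemble them differently. The paper uses Corollary \ref{ABC-y} only on the regular region $(0,\infty)\times F$, establishes $\ssc^0$-continuity at $a=0$ by hand, and then runs a dedicated sc-induction $({\bf S_k})$ verifying the approximation property of the iterated sc-tangent maps $T^k\Phi$ near $a=0$. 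You propose instead to apply Corollary \ref{ABC-y} globally, reducing the whole statement to the assertion that $\Phi$ is classically $C^{l+1}$ on every level $l$ — including at $a=0$, where the super-exponential decay $e^{-\delta_l R/2}$ with $R=e^{1/|a|}-e$ forces all derivatives to vanish, much as for $x\mapsto e^{-1/x^2}$. Your route is cleaner and avoids the sc-specific machinery entirely near $a=0$; the trade-off is that it relies crucially on the target being the finite-dimensional $\R^N$ (so Corollary \ref{ABC-y} applies), whereas the paper's sc-induction template transfers verbatim to the subsequent Lemmata (\ref{lem2.21-nnew}, \ref{lem2-22}) with infinite-dimensional Sobolev targets, which is likely why the authors chose the uniform style. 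Two small points you should tighten: the phrase ``two $s$- and one $t$-derivative of $e^{\delta_l s/2}h$'' is imprecise — the $e^{-\delta_l R/2}$ comes from localizing the $e^{\delta_l s}$-weighted Sobolev norm to the fixed-width collar $[\tfrac{R}{2}-1,\tfrac{R}{2}+1]\times S^1$ and applying unweighted embedding $H^{3+l-p}\hookrightarrow C^0$ there (needing $p\leq l+1$); and the Taylor-remainder argument at $a=0$, which you wave at, should be spelled out as the usual difference-quotient check with the candidate derivatives all set to zero — it works because $e^{-\delta_l R/2}/|\delta a|\to 0$, but it deserves a line.
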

\begin{proof} We abbreviate in the proof $F=H^{3, \delta_0}(\R^+ \times S^1, \R^N)$.
Using the Sobolev embedding theorem for bounded domains into continuously differentiable functions we see that the map
$$
(0,\infty)\times F_m\to  C^0(S^1,\R^{N}), \quad (R,h)\mapsto   h\left(\frac{R}{2},\cdot \right)
$$
is of class $C^{m+1}$ for every $m\geq 0$. In view of Corollary \ref{ABC-y} this implies that the map
$$
\wh{\Phi}:(0,\infty)\times F\to  \R^{N},  \quad (R,h)\mapsto [h]_R
$$
is $\ssc$-smooth. Since the map $a\mapsto  R(a):=\varphi(|a|)$ is obviously smooth  if  $a\neq 0$, we conclude, using the chain rule for sc-smooth maps,  that the  map
$$
\Phi:(B_\frac{1}{2}\setminus\{0\})\oplus F \to  \R^{N}, \quad (a,h)\mapsto 
[h]_R
$$
is $\ssc$-smooth and we claim that the map $\Phi$ is $\ssc^0$ at every  point $(0, h)\in B_{\frac{1}{2}}\oplus F$. Indeed,   assume that $(a_k, h_k)\in (B_{\frac{1}{2}}\setminus \{0\})\oplus F_m$ is a sequence converging to $(0, h)$.  Abbreviating $\Sigma_k=(\frac{R_k}{2}-1, \frac{R_k}{2}+1)\times S^1$ where $R_k=\varphi (\abs{a_k})$, we show that $\abs{\Phi (a_k, h_k)}=\abs{[h_k]_{R_k}}\to 0$.  By the Sobolev embedding theorem on bounded domains and using the bound  $\abs{h_k}_m\leq C'$, we estimate
$$\abs{e^{\delta_m \cdot }\cdot h_k}_{C^0(\Sigma_k)}\leq C\abs{e^{\delta_m \cdot }\cdot h_k}_{H^{m+3}(\Sigma_k)}\leq C''.$$
This  implies 
\begin{equation}\label{sob-eq}
\abs{[h_k]_{R_k}}\leq C''\cdot e^{-\delta_m R_k/2}
\end{equation}
and the claim follows.

At this point we know that the map 
$$\Phi:B_\frac{1}{2}\oplus F\to  \R^{N}$$
is $\ssc^0$ and,  when restricted to $(B_\frac{1}{2}\setminus \{0\})\oplus F$ it is $\ssc^{\infty}$. 
We shall denote points in $T^k(B_\frac{1}{2}\oplus F)$ by $(a,H)$ where $a\in B_\frac{1}{2}$. 
We shall prove inductively the following statements:

\begin{induction}
The map $\Phi:B_\frac{1}{2}\oplus F\to  \R^{N}$ is of class $\ssc^k$ and  $T^k\Phi(0,H)=0$ for every $(0,H)\in T^k(B_\frac{1}{2}\oplus F)$.  Moreover, if $\pi:T^k(\R^{N})\to  \R^{N}$
is  the projection onto a  factor of $T^k\R^N$,  then  the composition $\pi\circ T^k\Phi$ is a  linear combination of maps  $\Gamma$ of the  of the following types,
$$
\Gamma:B_\frac{1}{2}\oplus {\mathbb C}^n\oplus F_m\to  \R^{N},\quad (a,b,v)\to  {\bf R}(a)(b_1,\ldots ,b_n)\cdot [\partial^j_sv]_R
$$
for $a\neq 0$ and $\Gamma(0, b, v)=0$. Here $j\leq m, n\leq k$,  and ${\bf R}(a)$ is the product of derivatives of the function $R(a)=e^{\frac{1}{\abs{a} }}-e$ of  the form
$$
{\bf R}(a)(b_1,\ldots ,b_n)=D^{n_1}R(a)(b_1,\ldots ,b_{n_1})\cdot\ldots \cdot D^{n_l}R(a)(b_{n_1+\ldots +n_{l-1}+1},\ldots ,b_n),
$$
where the integer $n=n_1+\ldots +n_l$ is called the order of ${\bf R}(a)$. We set  ${\bf R}(a)=1$ if $n=0$.
\end{induction}

We begin by verifying  that $({\bf S_0})$ holds. In this case,  the projection $\pi:T^0\R^N=\R^N\to \R^N$ is the identity map, the indices $j, k, m$ and $n$ are equal to $0$, and the composition $\pi\circ T^0\Phi$ is just the map $\Phi:B_\frac{1}{2}\oplus F\to  \R^{N}$ given by 
$$
(a,v)\to  [v]_R.
$$
The map has the required form with ${\bf R}(a)=1$ of order $0$.  With $\Phi (0, v)=0$, we already know that $\Phi$ is $\ssc^0$. So, the assertion $({\bf S_0})$  holds.

Assuming  that $({\bf S_k})$ holds, we show that $({\bf S_{k+1}})$ also holds.  By induction hypothesis, 
the map $\Phi$ is $\ssc^k$, so that  $T^k\Phi$ is $\ssc^0$.  Moreover, $T^k\Phi(0,H)=0$,  $T^{k+1}\Phi$ is $\ssc$-smooth at points $(a,H)$ with $a\neq 0$, and $\pi\circ T^k\Phi$ can be written as a linear combination of maps  of a certain form.

Setting $DT^k\Phi (0, H)=0$, we prove  the approximation property of $T^k\Phi$ at the points $(0,H)\in (T^k(B_\frac{1}{2}\oplus F)^1$. That is, recalling that  $T^k\Phi (0, H)=0$, we show that 
\begin{equation}\label{maps-sp}
\frac{1}{\norm{(\delta a,\delta H)}_1}\abs{T^k\Phi(\delta a,H+\delta H)}_0\to 0\quad 
\text{as $\norm{(\delta a,\delta H)}_1\to  0$}.
\end{equation}
where   the subscripts $0$ and $1$ refer to the levels of the iterated tangents.  By the inductive assumption $({\bf S_{k}})$, we know that  the compositions $\pi \circ T^k\Phi$ with projections $\pi$ on different factors of $T^k\R^N$ are linear combinations of maps $A$ described in $({\bf S_{k}})$.  Hence to prove \eqref{maps-sp} amounts to showing that  at the point $(0, h, v)\in B_\frac{1}{2}\oplus {\mathbb C}^j\oplus F^{m+1}$ we have 
\begin{equation}\label{qer}
\frac{1}{\abs{\delta a}+\abs{\delta h}+\abs{\delta v}_{m+1}}\abs{A(\delta a,b+\delta b, v+\delta v)}\to 0 
\end{equation}
as $\abs{\delta a}+\abs{\delta b}+\abs{\delta v}_{m+1}\to  0$ for the maps 
$$
A:B_\frac{1}{2}\oplus {\mathbb C}^n\oplus F^m\to  \R^{N},\quad (a,b,v)\mapsto   {\bf R}(a)(b_1,\ldots ,b_n)\cdot [\partial^j_sv]_R
$$ 
defined in $({\bf S_{k}})$.

Using as above the Sobolev estimate on the bounded domain $\Sigma_R=\left(\frac{R}{2}-1, \frac{R}{2}+1\right)\times S^1$, we obtain
$$\abs{ e^{\delta_{m+1}\cdot } \partial_s^j (v+\delta v)}_{C^0(\Sigma_R)} \leq C
\abs{e^{\delta_{m+1}\cdot} \partial_s^j (v+\delta v) }_{H^{m+3}(\Sigma_R)},
$$
where $j\leq m$, and estimate
$$[\partial_s^j (v+\delta v)]_R\leq Ce^{-\delta_{m+1}\frac{R}{2}}\abs{v+\delta v}_{m+1}.$$
Therefore, in view of the estimate of $R(a)$ in Lemma \ref{lkj},
\begin{equation*}
\begin{split}
\abs{A(\delta a,b+\delta b, v+\delta v)}
\leq Ce^{-\delta_{m+1}\frac{R}{2}}\abs{R}^{3n}\cdot \abs{b+\delta b}^n\cdot \abs{v+\delta v}_{m+1}
\end{split}
\end{equation*}
where $R=\varphi (\abs{\delta a} )$ and  $\delta a\neq 0$. 
Consequently,
\begin{equation}\label{ger-eq}
\frac{\abs{A(\delta a,b+\delta b, v+\delta v)}}{\abs{\delta a}+\abs{\delta h}+\abs{\delta v}_{m+1} }
 \leq C\frac{ e^{-\delta_{m+1}\frac{R}{2}} \abs{R}^{3n}}{\abs{\delta a}} \cdot \abs{b+\delta b}^n\cdot \abs{v+\delta v}_{m+1}.
\end{equation} 

If $R=\varphi (\abs{\delta a})$ is large (or $\abs{\delta a}$ is small), then 
$2R\geq 2\ln R\geq \frac{1}{\abs{\delta a}}$ so 
that the left hand-side of \eqref{ger-eq} is smaller than 
$$
Ce^{-\delta_{m+1}\frac{R}{2}}\abs{R}^{4n}\cdot \abs{b+\delta b}^n\cdot \abs{v+\delta v}_{m+1}
$$ 
which converges to $0$ as $(\delta a, \delta b, \delta v)\to (0, 0, 0)$ in $\C\oplus \C^n\oplus F^{m+1}$. 
Summing up our discussion so far,   we have proved the approximation property  for the map $T^k\Phi$  and  
$$
D(T^k\Phi)(0,H)=0
$$
for all $(0,H)\in {(T^k(B_\frac{1}{2}\oplus F))}_1$.  To complete the proof, it remains to show that 
$T^{k+1}\Phi$ is of class $\ssc^0$ (which will imply that $\Phi$ is of class $\ssc^k$)  and to show that the compositions 
$\pi\circ T^{k+1}\Phi$ have the required form. 

We consider $\pi\circ T^{k+1}\Phi (a,H)$ where $a\neq 0$.
If $\pi$ is the projection onto one of the first $2^k$ factors, then $\pi\circ T^{k+1}\Phi $ has the form of the map $A$ in  $({\bf S_{k}})$.
The only thing is that the indices are raised by $1$. Denoting  the new indices by $j'$, $m'$, and $n'$, we have $j'=j$, $m'=m+1$,  and $n'=n$ which obviously satisfy 
$j'\leq m'\leq k+1$,  and $n'\leq k+1$. If $\pi$ is the projection onto one of the remaining $2^k$ factors, then 
$\pi\circ T^{k+1}\Phi $ is equal to  the sum  of derivatives of maps in the induction hypothesis $({\bf S_{k}})$. 
So,   if the map $A:B_\frac{1}{2}\oplus {\mathbb C}^n\oplus F^m\to  \R^{N},$ given by 
$$
(a,b,v)\mapsto  {\bf R}(a)(b_1,\ldots ,b_j)\cdot [\partial^j_sv]_R
$$
for $a\neq 0$ and $A(0, b, v)=0$,  is one of the maps from  $({\bf S_{k}})$ and  if we take the sc-derivative of $A$ (which we already know exists at every point), we obtain 
 a linear combination of maps  of the following types:
\begin{itemize}
\item[(1)]  $B_\frac{1}{2}\oplus {\mathbb C}^n\oplus F^{m+1}\to  \R^{N}$ defined by 
$$
(a,b_1,\ldots \delta b_i,\ldots, b_n,b)\mapsto {\bf R}(a)(b_1,\ldots ,\delta b_i\ldots, b_j)\cdot [\partial^j_sv]_R$$
for every $1\leq i\leq n$.
\item[(2)]  $B_\frac{1}{2}\oplus {\mathbb C}^{n+1}\oplus F^{m+1}\to  \R^{N}$ defined by 
$$
(a,(\delta a, b),v)\mapsto {\bf R}'(a)(\delta a, b)\cdot [\partial^j_s v]_R
$$
and  obtained by differentiation of  ${\bf R}(a)$ with respect to $a$. 
\item[(3)]  $B_\frac{1}{2}\oplus {\mathbb C}^n\oplus F^m\to  \R^{N}$ defined by 
$$
(a,b,\delta v)\to  {\bf R}(a)(b)\cdot [\partial^j_s\delta v]_R
$$
and obtained by differentiating with respect to $v$.
\item[(4)]$ B_\frac{1}{2}\oplus {\mathbb C}^{n+1}\oplus F^{m+1}\to  \R^{N}$ defined by 
$$
(a,(\delta a,b),v)\to  {\bf R}_1(a)(\delta a, b)\cdot [\partial^{j+1}_sv]_R
$$
which is obtained by differentiating $R (a)$ in the term $ [\partial^{j}_sv]_R$ with respect to $a$. Hence 
${\bf R}_1(a)(\delta a, b)=(DR (a)\delta a)\cdot {\bf R}(a)(b)$.
\end{itemize}
Note that in all of the above cases the new indices $j', m'$ and $n'$  stay the same or are raised by $1$ so that we have 
$j'\leq m'\leq k+1$ and $n'\leq k+1$. We have verified that the statement $({\bf S_{k+1}})$ holds true.  This completes the proof of Lemma \ref{prop-xx}.
\end{proof}

\begin{lem}
The map
$\Psi:B_\frac{1}{2}\oplus (H^{3, \delta_0}(\R^+ \times S^1, \R^N)\times H^{3, \delta_0}(\R^- \times S^1, \R^N))\to  H_c^{3, \delta_0}(\R^+ \times S^1, \R^N)$,  defined by $\Phi (0, r^+, r^-)=0$ at $a=0$ and 
$$
\Psi(a,r^+,r^-)= \left(1-\frac{\beta_a}{\gamma_a}\right) \cdot \bigl( [r^+]_{R}+[r^-]_{-R} \bigr), 
$$
for $a\neq 0$, is $\ssc$-smooth.
\end{lem}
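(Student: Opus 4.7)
My plan is to split $\Psi$ into its asymptotic constant at $+\infty$ and a decaying remainder, and to reduce the sc-smoothness of each piece to the results already established in this section. Setting $c(a,r^+,r^-):=[r^+]_R+[r^-]_{-R}\in\R^N$, write
$$
\Psi(a,r^+,r^-)\;=\;c(a,r^+,r^-)\;-\;\frac{\beta_a}{\gamma_a}(s)\,c(a,r^+,r^-),
$$
so that the first summand is the asymptotic value of $\Psi$ at $+\infty$ (identified with a constant in $H_c^{3,\delta_0}$) and the second summand is supported in $\{0\leq s\leq R/2+1\}$ and lies in $H^{3,\delta_0}(\R^+\times S^1)$. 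Lemma \ref{prop-xx} applied to $r^+$ and its reflected analog (via $s\mapsto-s$) applied to $r^-$ give sc-smoothness of $(a,r^+,r^-)\mapsto c(a,r^+,r^-)$ into $\R^N$, which through the canonical inclusion $\R^N\hookrightarrow H_c^{3,\delta_0}$ disposes of the constant piece.

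The real work is the decaying piece $v(a,r^+,r^-):=-(\beta_a/\gamma_a)(s)\,c(a,r^+,r^-)$, viewed as a map into $H^{3,\delta_0}(\R^+\times S^1)$. A crucial subtlety is that $v$ is \emph{not} the composition of two sc-smooth maps in the obvious way: the auxiliary $(a,c)\mapsto-(\beta_a/\gamma_a)c$ from $B_\frac{1}{2}\oplus\R^N$ fails to be continuous at $a=0$ when $c\ne 0$, because a direct computation yields $|\beta_a/\gamma_a|_{H^{3+m,\delta_m}(\R^+\times S^1)}^2\sim\frac{1}{2\delta_m}e^{\delta_m R}$ as $R\to\infty$. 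Sc-smoothness survives only because $c(a,r^+,r^-)$ carries extra decay absorbing this growth. I would refine the Sobolev trace estimate used in Lemma \ref{prop-xx} by bounding $|[r^\pm]_{\pm R}|$ in terms of the tail norm $\tau_m(R):=|r^+|_{H^{3+m,\delta_m}(\{s\geq R/2-1\})}+|r^-|_{H^{3+m,\delta_m}(\{s\leq-R/2+1\})}$ rather than the full norm of $r^\pm$, obtaining $|c(a,r^+,r^-)|\leq C\,e^{-\delta_m R/2}\tau_m(R)$. Combined with $|\beta_a/\gamma_a|_{H^{3+m,\delta_m}}\leq C'e^{\delta_m R/2}$ this gives $|v|_{H^{3+m,\delta_m}}\leq C''\tau_m(R)$, and dominated convergence guarantees $\tau_m(R_n)\to 0$ along any sequence $(a_n,r_n^\pm)\to(0,r_0^\pm)$ with $R_n=\varphi(|a_n|)\to\infty$, proving $\ssc^0$ at points with $a=0$.

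For $\ssc^k$-smoothness with $k\geq 1$, I would run an induction merging the templates of Lemma \ref{prop-xx} (handling the trace $c$) and Lemma \ref{ind-lem} (handling the cutoff factor $\beta_a/\gamma_a$). The inductive statement would assert that for $a\ne 0$ every scalar component $\pi\circ T^k\Psi$ is a finite linear combination of schematic terms of the form
$$
{\bf R}(a)(h_1,\ldots,h_n)\cdot f^{(p)}\!\left(s-\tfrac{R}{2}\right)\cdot\bigl[\partial_s^{j}r^{\pm}\bigr]_{\pm R},
$$
together with analogous terms in which $[\delta r^\pm]_{\pm R}$ replaces $[\partial_s^j r^\pm]_{\pm R}$ whenever a linear differentiation in $r^\pm$ occurs, where $f$ ranges over a finite family of smooth profiles coming from $\beta/\gamma$, the product ${\bf R}(a)$ consists of derivatives of $R(a)=\varphi(|a|)$ estimated by Lemma \ref{lkj}, and the bookkeeping indices satisfy bounds of the type $p+j\leq m-(\text{target level})$ and $\operatorname{ord}({\bf R})+j\leq k$ familiar from Lemma \ref{ind-lem}. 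Extending each such term by $0$ across $a=0$ and verifying the approximation property is exactly the argument used around \eqref{est-eq} and \eqref{limit-eq}: passing to a slightly higher regularity level buys an exponential weight gap $e^{-d_{m+k,m}R}$ that defeats any polynomial-in-$R$ growth from ${\bf R}(a)$, and the tail-norm refinement above contributes the $\tau_m(R)\to 0$ needed for the approximation at $a=0$.

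The main obstacle is therefore the combinatorial verification that each of the differentiation operations acting on the above schematic form—differentiating ${\bf R}(a)$ in $a$, differentiating $f(\cdot-R/2)$ in $a$ via the chain rule in $R$, differentiating $[\,\cdot\,]_{\pm R}$ in $a$ via the chain rule in $R$, and the linear differentiation in $r^\pm$—produces only terms of the same schematic form with indices shifted admissibly. This is the precise analog of the case analysis carried out for $\Gamma_1$ in Lemma \ref{ind-lem} (and its refinement in Lemma \ref{bigg}, which already incorporates a similar shift in the second argument), now enlarged to include the trace functional $[\,\cdot\,]_{\pm R}$ as an additional building block; all the analytic estimates needed for that new building block are already supplied by the proof of Lemma \ref{prop-xx}, so no new analytic input is required beyond careful bookkeeping.
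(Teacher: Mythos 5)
Your proposal is correct and follows essentially the same route as the paper: the paper also isolates the constant part $[r^+]_R + [r^-]_{-R}$ (invoking Lemma~\ref{prop-xx}), then reduces to the map $(a,r)\mapsto\tfrac{\beta_a}{\gamma_a}[r]_R$ into $F=H^{3,\delta_0}(\R^+\times S^1,\R^N)$, proves $\ssc^0$ at $a=0$ by exactly the cancellation you describe (Sobolev trace bound $\lesssim \varepsilon_k e^{-\delta_m R_k/2}$ against $|\sigma_a|_{H^{3+m,\delta_m}}\lesssim e^{\delta_m R_k/2}$), and runs the induction with schematic terms ${\bf R}(a)(h)\cdot\sigma_a^{(p)}\cdot[\partial_s^q v]_R$ paralleling Lemmas~\ref{ind-lem} and~\ref{prop-xx}. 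The one thing you make more explicit than the paper is the observation that the factorization through $(a,c)\mapsto(\beta_a/\gamma_a)\,c$ cannot be sc-smooth on its own, which is indeed the crux of why the direct estimate is needed; this is implicit in the paper but worth stating.
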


\begin{proof} We shall abbreviate $G=H^{3, \delta_0}(\R^+ \times S^1, \R^N)\times H^{3, \delta_0}(\R^- \times S^1, \R^N)$ and $F= H^{3, \delta_0}(\R^+ \times S^1, \R^N)$. 
We already know that the maps
$$
B_\frac{1}{2}\oplus G\to  \R^{N}, \quad 
(a,r^+,r^-)\mapsto  [r^+]_{R},\  [r^-]_{-R}
$$
are $\ssc$-smooth. It suffices  to consider
only the map
$$
\Phi:B_\frac{1}{2}\oplus F\to  F, \quad (a,r)\mapsto  \frac{\beta_a}{\gamma_a}\cdot [r]_R
$$
if $a\neq 0$ and $\Psi(0, r)=0$ at $a=0$.  
The similar map for  $(a,r^-)$ can be dealt with  the same way.
Clearly the map $\Phi$ is $\ssc$-smooth on the set $(B_\frac{1}{2}\setminus\{0\})\oplus F$ and we shall prove the sc-smoothness at the  points $(0, r)\in B_\frac{1}{2}\oplus F$. We set 
$\sigma :=\frac{\beta }{\gamma }$ and $\sigma_a=\sigma_R=\sigma (\cdot -\frac{R}{2})$ where $R=\varphi (\abs{a})$ for the exponential gluing profile $\varphi$. We shall prove the following statements $({\bf S_{k}})$ by induction:

\begin{induction}
The map $\Phi$ is of class $\ssc^k$ and $T^k\Phi (0, H)=0$.  
Moreover, if $\pi:T^k(B_\frac{1}{2}\oplus F)\to  \R^N$ is a projection onto a  factor of $T^kF$,  then the composition $\pi\circ T^k\Phi$ is the linear combination of maps  of the following type,
\begin{gather*}
A:B_\frac{1}{2}\oplus \C^n\oplus F^m\to  F^j\\
(a,h,v)\to  {\bf R}(a)(h)\cdot \sigma_a^{(p)}\cdot [\partial_s^q v]_R 
\end{gather*}
for $a\neq 0$ and $A(0, h,v)=0$. In addition, the indices satisfy $p+q=n$, $m\leq k$ and $l\leq m-j$.
\end{induction}

We start with  $({\bf S_0})$.  In this case there is only one projection $\pi:T^0F=F\to F$, namely,  $\pi =\id$. Clearly $\pi\circ\Phi=\Phi$ has the required form with ${\bf R}(a)=1$ of order $0$ and all indices $j,l, p$, and $q$ equal to $0$.
Hence we only need to show that the map $\Phi$ has $\ssc^0$-property. This is  clearly  true  at  points 
$(a, v)\in B_{ \frac{1}{2}}\oplus F^m$ where  $a\neq 0$. We carry out the proof of the $\ssc^0$-property for  the map $\Phi$ at $(0, v)$.  We take a sequence $(a_k, v_k)$  converging to $(0, v)$ in $B_{\frac{1}{2}}\oplus F_m$ and we claim that $\Phi (a_k, v_k)\to 0$ in $F_m$. Since $\sigma_R$ vanishes on $[\frac{R}{2}+1,\infty )$, we can estimate  
\begin{equation*}
\begin{split}
\abs{\Phi (a_k, v_k)}_m=\abs{\sigma_{R_k} \cdot [v_k]_{R_k}}_m^2&=\sum_{\abs{\alpha}\leq m}\abs{ [v_k]_{R_k}}^2\int_{\R^+\times S^1}\abs{D^{\alpha}\sigma_{R_k} (s)}^2e^{2\delta_m s}\ ds dt\\
&\leq \sum_{\abs{\alpha}\leq m}C_{\alpha}\abs{ [v_k]_{R_k}}^2e^{2\delta_m (\frac{R_k}{2}+1)}
\end{split}
\end{equation*}
with constants $C_{\alpha}$ depending only on $\sigma$, the multi-index $\alpha$, and $m$. So,  to  prove them  claim we have to show that $[v_k]_{R_k}\to 0$ in $\R^N$. 
Abbreviate $\Sigma_k=[\frac{R_k}{2}-1, \frac{R_k}{2}+1]\times S^1$.  By  the Sobolev embedding theorem on bounded domains, 
$$\abs{e^{\delta_m \cdot }v}_{C^0(\Sigma_k)}\leq C\abs{e^{\delta_m \cdot }v}_{H^m(\Sigma_k)}=:\varepsilon_k$$
with the constant $C$ independent of $v$ and $k$. This  shows that 
\begin{equation}\label{sob-eq1}
\abs{[v]_{R_k}}\leq \varepsilon_k\cdot e^{-\delta_m R_k}.
\end{equation}
Also note that since $v$ belongs to $E_m$, the sequence $\varepsilon_k$ converges to $0$.
Similarly, we have 
$$
\abs{e^{\delta_m \cdot } (v_k-v)}_{C^0( \Sigma_k) }\leq C\abs{e^{\delta_m \cdot }(v_k-v)}_{H^m(\Sigma_k)}\leq C\abs{v_k-v}_m=:\varepsilon_k'
$$
which implies that 
$$\abs{[v_k]_{R_k}-[v]_{R_k}}= \abs{[v_k-v]_{R_k}}\leq \varepsilon_k'\cdot e^{-\delta_m R_k}.$$
By assumption, $\abs{v-v_k}_m=\varepsilon_k\to 0$.  Consequently, 

$$
\abs{[v_k]_{R_k} }e^{\delta_mR_k}\leq \abs{ [v_k]_{R_k}-[v]_{R_k} }\cdot e^{\delta_m R_k}+ \abs{[v]_{R_k} }\cdot e^{\delta_m R_k}\leq 
 \varepsilon_k'+ \varepsilon_k\to 0
$$
which proves our claim. At this point we have proved the assertion $({\bf S_0})$.
 
Now we assume that $({\bf S_k})$ has been established and  prove  that $({\bf S_{k+1}})$ holds.  
By induction hypothesis,  the map $\Phi$ is of class $\ssc^k$, so that  $T^k\Phi$ is $\ssc^0$, and  $T^k\Phi(0,H)=0$.   Moreover, 
$\pi\circ T^k\Phi$ can be written as a linear combination of maps  of a certain form. We also know that $T^{k+1}\Phi$ is $\ssc$-smooth at points $(a,H)$ with $a\neq 0$.
 
We begin by verifying the approximation property  at points $(0,H)$. As before it suffices to do this for  the maps $A$ described in $({\bf S_k})$ as follows,
\begin{gather*}
A:B_\frac{1}{2}\oplus \C^n\oplus F^m\to  \R^N\\
(a,h,v)\to  {\bf R}(a)(h)\cdot \sigma_a^{(p)}\cdot [\partial_s^q v]_R 
\end{gather*}
for $a\neq 0$ and $A(0, h,v)=0$. More precisely, we show that  if $(0, h, v)\in B_{\frac{1}{2}}\oplus \C^n\oplus E^{m+1}$, then  
$$
\frac{1}{\abs{\delta a}+\abs{\delta h}+\abs{\delta v}_{m+1}}\abs{A(\delta a, h+\delta h, v+\delta v)}_j\to 0$$
as $\abs{\delta a}+\abs{\delta h}+\abs{\delta v}_{m+1}\to 0$ which will prove  that  $A$ has the approximation property at $(0, h, v)$ with respect to the linearized map $DA(0, h, v)=0$. 

Proceeding as in the proof of Lemma \ref{prop-xx}, one obtains the estimate 
\begin{equation*}
\begin{split}
\frac{\abs{A(\delta a, h+\delta h, v+\delta v)} }{ \abs{\delta a}+\abs{\delta h}+\abs{\delta v}_{m+1} }\leq C\frac{e^{-\delta_{m+1}\frac{R}{2} }R^{3n}}{\abs{\delta a}}\abs{h+\delta h}^n\abs{v+\delta v}_{m+1}
\end{split}
\end{equation*}
which converges to $0$ as $\abs{\delta a}+\abs{\delta h}+\abs{\delta v}_{m+1}$ converges to $0$.

Finally,  we need to show that $\pi\circ T^{k+1}\Phi$ is a linear combination of the maps of the required form and which have the required continuity properties at points with vanishing $a$. The terms making up $\pi\circ T^{k+1}\Phi$ are the terms guaranteed by
$({\bf S_k}) $ provided $\pi$ is the  projection  onto one of the first $2^k$ factors. In this case the indices $m$ and $j$ are raised by one. If $\pi$ is the projection
onto one of the last $2^k$ factors, then $\pi\circ T^{k+1}\Phi$  is a linear combination of derivatives of maps guaranteed by $({\bf S_k})$. This leads to a case by case study quite similarly to that of the previous lemma and is left to the reader.

\end{proof}

\begin{lem} \label{lem2.21-nnew}
The map $\Phi:B_\frac{1}{2}\oplus H^{3,\delta_0}(\R^+\times S^1,\R^{N})\to  H^{3,\delta_0}(\R^+\times S^1,\R^{N})$, defined by $\Phi (0, r)=r$ if $a=0$, and by 
$$\Phi(a,r)= \frac{\beta_a^2}{\gamma_a}\cdot r$$
if $a\neq 0$, is sc-smooth.
\end{lem}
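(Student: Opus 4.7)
The plan is to reduce the statement to the sc-smoothness of the map $\Gamma_1$ in Proposition \ref{qed}. The key algebraic observation is that $\beta^2 + (1-\beta)^2 = \gamma$, so the smooth function $f(s) := \beta(s)^2/\gamma(s)$ satisfies $f(s) = 1$ for $s \leq -1$ and $f(s) = 0$ for $s \geq 1$; in particular $f$ is constant outside a compact interval with $f(+\infty) = 0$ and $f(-\infty) = 1$. For $a \neq 0$ one has $(\beta_a^2/\gamma_a)(s) = f(s - R/2)$, and with the convention $\Phi(0, r) = r = f(-\infty) \cdot r$, the map $\Phi$ takes exactly the form of the operator $\Gamma_1$ treated in Proposition \ref{qed}, the sole difference being that the base space is $H^{3, \delta_0}(\R^+ \times S^1, \R^N)$ in place of $H^{3, \delta_0}(\R \times S^1, \R^N)$.

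Since $\beta(\cdot - R/2)$ is supported in $[R/2 - 1, \infty)$, for $|a|$ sufficiently small this support lies entirely inside $\R^+\times S^1$, so the multiplication operator acts on functions over the half-cylinder exactly as it does over the full cylinder. I would therefore run the identical argument used for $\Gamma_1$ in Proposition \ref{qed}. First, verify the $\ssc^0$-property by approximating $r \in H^{3, \delta_0}(\R^+ \times S^1)_m$ by compactly supported smooth functions (on which $\Phi(a, \cdot)$ is eventually the identity), together with a uniform operator-norm bound for multiplication by $f(\cdot - R/2)$. Second, derive the decay estimate
$$
\abs{\, r - \Phi(a, r)\, }_m \leq C\, e^{-d_{m+k, m} R} \, \abs{r}_{m+k},
$$
valid because $1 - f(\cdot - R/2)$ is supported in $[R/2 - 1, \infty)$, so one may trade the weight $e^{2\delta_m s}$ for $e^{2\delta_{m+k} s} e^{-(\delta_{m+k} - \delta_m) R}$ on that half-line exactly as in Lemma \ref{lkk}.

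With these two ingredients in hand, I would prove sc-smoothness by induction, establishing a statement $(S_k)$ asserting that $\Phi$ is $\ssc^k$, that $T^k\Phi$ vanishes at all points $(0,H)$, and that the composition of $T^k\Phi$ with any coordinate projection is a linear combination of maps of the form
$$
(a, h, v) \mapsto {\bf R}(a)(h) \cdot f^{(p)}(\cdot - R/2) \cdot v
$$
where ${\bf R}(a)$ is a product of derivatives of $R(a) = e^{1/|a|} - e$ of total order $n \leq k$ and $p \leq m - j$, in direct analogy with Lemma \ref{ind-lem}. The inductive step proceeds by differentiating such a term, producing a linear combination of four structural types (differentiate ${\bf R}$, differentiate one of the $h_i$'s, differentiate $f^{(p)}(\cdot - R/2)$ in $a$, or differentiate $v$), all of the same shape with indices appropriately raised. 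The approximation property at $(0, H)$ then follows from the decay estimate combined with Lemma \ref{lkj}: the factor $e^{-d_{m+k,m} R}$ with $R = \varphi(|\delta a|) = e^{1/|\delta a|} - e$, once divided by $|\delta a|$, still dominates the polynomial growth $R^{3n}$ coming from Lemma \ref{lkj} and the $(n{+}1)$-fold product of derivatives of $R$.

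The proof is in essence a transcription of the $\Gamma_1$ half of the proof of Proposition \ref{qed} to the half-cylinder setting, so the main obstacle is not conceptual but purely the bookkeeping of the case analysis in the inductive step; however all four cases are handled identically to those in Lemma \ref{ind-lem}. Alternatively one could shortcut the argument by composing with a fixed sc-smooth Sobolev extension operator $H^{3,\delta_0}(\R^+\times S^1)\to H^{3,\delta_0}(\R\times S^1)$ and applying Proposition \ref{qed} verbatim, but the self-contained inductive argument above is of the same length and makes the exponential-decay mechanism transparent.
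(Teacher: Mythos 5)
Your proof is correct, but it takes a different route from the paper. The paper's proof of this lemma is a short composition argument: it chooses a cutoff $\chi_1:\R^+\to[0,1]$ with $\chi_1=1$ on $[0,1]$ and $\chi_1=0$ for $s\geq 2$, sets $\chi_2=1-\chi_1$, and splits $\Phi(a,r)$ into two pieces. The piece $\frac{\beta_a^2}{\gamma_a}\,\chi_1 r$ equals $\chi_1 r$ for $\abs{a}$ small and is hence trivially sc-smooth. The piece $\frac{\beta_a^2}{\gamma_a}\,\chi_2 r$ is written as a composition of three sc-smooth maps: zero-extension $(a,r)\mapsto(a,\chi_2 r)$ from the half-cylinder into the full cylinder (possible because $\chi_2$ kills the boundary), the full-cylinder map $\Gamma_1$ from Proposition \ref{qed}, and the restriction back to $\R^+\times S^1$. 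The chain rule then gives the result with almost no computation. Your main argument instead re-runs the entire $\Gamma_1$ induction ($\ssc^0$-property, exponential decay estimate, structural statement for $T^k\Phi$, approximation property at $a=0$) from scratch on the half-cylinder; this is correct — the support of $\beta_a(\cdot)$ sits deep inside $\R^+\times S^1$ for small $\abs{a}$, so every step of Lemmata \ref{lkk} and \ref{ind-lem} transposes verbatim — but it is considerably more labour than needed given that $\Gamma_1$ has already been established.

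Your ``shortcut'' paragraph is close in spirit to what the paper actually does, but not identical: you propose composing with a \emph{general} sc-smooth Sobolev extension operator $H^{3,\delta_0}(\R^+\times S^1)\to H^{3,\delta_0}(\R\times S^1)$, whereas the paper avoids invoking any such operator by first cutting off with $\chi_2$ so that the extension is just extension-by-zero. Using a genuine extension operator would work (Stein's extension is bounded on all Sobolev levels, and it only modifies the function in a bounded collar of $\{s=0\}$ so the exponential weights do not interfere), but the cutoff trick is more elementary and self-contained, since one never has to argue that a fixed extension operator respects the entire sc-filtration. In short: your argument is correct, but the paper buys the result much more cheaply by reducing to the already-proved full-cylinder statement via a cutoff decomposition rather than by repeating the induction.
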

\begin{proof} We  only have to prove the sc-smoothness at points  $(0,r)\in B_\frac{1}{2}\oplus H^{3,\delta_0}(\R^+\times S^1)$.
Hence we may assume that $a$ is small. We choose a  smooth function
$\chi_1:\R^+\to [0,1]$ satisfying $\chi_1(s)=1$ for $s\in [0,1]$ and $\chi_1(s)=0$ for $s\geq 2$, and  set 
$\chi_2=1-\chi_1$. Then the map
$$
(a,r)\mapsto   \frac{\beta_a^2}{\gamma_a}\cdot \chi_1\cdot r
$$
is obviously sc-smooth since for $|a|$ small it is equal to  the map 
$$
(a,r)\to  \chi_1\cdot r
$$
which is independent of $a$. It remains to  deal with the map
$$
(a,r)\mapsto  \frac{\beta_a^2}{\gamma_a}\cdot \chi_2\cdot r.
$$
This map can be factored as follows. First,  we apply the map
$$
B_\frac{1}{2}\oplus H^{3,\delta_0}(\R^+\times S^1,\R^{N})\to   B_\frac{1}{2}\oplus H^{3,\delta_0}(\R\times S^1,\R^{N}), 
\quad (a,r)\mapsto  (a,\chi_2r)
$$
which obviously is an sc-operator and  hence sc-smooth. Then we compose this map with the map
$$
B_\frac{1}{2}\oplus  H^{3,\delta_0}(\R\times S^1,\R^{N})\to  H^{3,\delta_0}(\R\times S^1,\R^{N}), \quad (a,u)\to  \frac{\beta_a^2}{\gamma_a}\cdot  u
$$
which is sc-smooth by Proposition \ref{qed}. Finally,  we  take the restriction map
$$
H^{3,\delta_0}(\R\times S^1,\R^{N})\to 
H^{3,\delta_0}(\R^+\times S^1,\R^{N}).
$$
 which as an sc-operator is also  sc-smooth. Since the composition of sc-smooth maps is an sc-smooth map, the proof is complete.
\end{proof}

\begin{lem}\label{lem2-22}
 The map $\Phi:(B_\frac{1}{2}\oplus H^{3,\delta_0}(\R^-\times S^1,\R^{N})\to  H^{3,\delta_0}(\R^+\times S^1,\R^{N}),$ defined by $\Phi (0, r)=0$ if $a=0$,  and by 
 $$\quad (a,r)\mapsto   \frac{\beta_a(1-\beta_a)}{\gamma_a}\cdot r(\cdot -R,\cdot -\vartheta)$$
if $a\neq 0$,  is sc-smooth.
 \end{lem}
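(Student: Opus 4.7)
The plan is to reduce the problem to the $\ssc$-smoothness of $\Gamma_2$ from Proposition~\ref{qed} by extending $r$ from the negative half-cylinder to the full infinite cylinder. Observe first that
$$
\tfrac{\beta_a(1-\beta_a)}{\gamma_a}(s)=f\bigl(s-\tfrac{R}{2}\bigr), \qquad f:=\tfrac{\beta(1-\beta)}{\beta^2+(1-\beta)^2},
$$
where $f\in C^\infty(\R)$ is supported in $[-1,1]$ because $\beta\equiv 1$ for $s\leq -1$ and $\beta\equiv 0$ for $s\geq 1$ by \eqref{cutoff}. Thus $\Phi(a,r)(s,t)=f(s-R/2)\,r(s-R,t-\vartheta)$, which is almost in the form of $\Gamma_2$ except that the second argument is defined only on the negative half-cylinder.

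First, I would introduce an extension operator. Choose $\chi\in C^\infty(\R,[0,1])$ with $\chi\equiv 1$ on $(-\infty,-1]$ and $\chi\equiv 0$ on $[-1/2,\infty)$, and set
$$
E(r)(s,t):=\begin{cases}\chi(s)\,r(s,t)&\text{if }s\leq 0,\\ 0 &\text{if }s>0.\end{cases}
$$
This yields a bounded linear map $E:H^{3,\delta_0}(\R^-\times S^1,\R^N)\to H^{3,\delta_0}(\R\times S^1,\R^N)$ on every level, hence an $\ssc$-operator and in particular $\ssc$-smooth. The restriction $\rho:H^{3,\delta_0}(\R\times S^1,\R^N)\to H^{3,\delta_0}(\R^+\times S^1,\R^N)$ is likewise an $\ssc$-operator.

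The crux is a support-matching argument. For $a\in B_{1/2}$ one has $R=\varphi(|a|)>\varphi(1/2)=e^{2}-e>4$, so whenever $s\in[R/2-1,R/2+1]$ (the support of $s\mapsto f(s-R/2)$) we have $s-R\in[-R/2-1,-R/2+1]\subset(-\infty,-1]$, on which $\chi\equiv 1$. Therefore $E(r)(s-R,t-\vartheta)=r(s-R,t-\vartheta)$ on the relevant support, so $\Phi(a,r)=\rho\bigl(\Gamma_2(a,E(r))\bigr)$ for every $a\in B_{1/2}\setminus\{0\}$; at $a=0$ both sides vanish by definition.

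The proof then concludes by the chain rule, Theorem~\ref{chain-thm1}: the identity $\Phi=\rho\circ\Gamma_2\circ(\id_{B_{1/2}},E)$ exhibits $\Phi$ as a composition of $\ssc$-smooth maps, since $\Gamma_2$ is $\ssc$-smooth by Proposition~\ref{qed} and $E,\rho$ are $\ssc$-operators. The main (and rather mild) obstacle is the support-matching bookkeeping needed to secure the inequality $R>4$ uniformly over $B_{1/2}$; this forces the cutoff $\chi$ to descend to zero before $s=-1/2$ rather than after, but once the cutoff is chosen narrowly in this way, everything else is mechanical.
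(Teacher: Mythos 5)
Your proof is correct and follows essentially the same route as the paper's: extend $r$ from the negative half-cylinder to the full cylinder via the cutoff $\chi$, apply $\Gamma_2$ from Proposition~\ref{qed}, and restrict back to the positive half-cylinder. The only thing you add is the explicit uniform bound $R>\varphi(1/2)=e^2-e>4$ on all of $B_{1/2}$ to justify the support matching (the paper simply says ``for $a$ small''), which is a clean way to see that the cutoff's transition zone stays clear of $[-R/2-1,-R/2+1]$.
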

\begin{proof}
Again we only have to study  the map for $a$  small. We choose  a smooth map
 $ \chi:\R^-\to  [0,1]$
 satisfying $\chi(s)=1$ for $s\leq -1$ and $\chi(s)=0$ for $s\in [-\frac{1}{2},0]$.  If  $\abs{a}$ is small, the map $\Phi$ is the composition of the following three maps. The first map is defined by
 $$
 B_\frac{1}{2}\oplus H^{3,\delta_0}(\R^-\times S^1,\R^{N})\to  B_\frac{1}{2}\oplus H^{3,\delta_0}(\R\times S^1,\R^{N}),  \quad (a,u)\mapsto (a, \chi\cdot  u).
 $$
It  is an sc-operator and hence sc-smooth.
The second map is  defined by 
\begin{gather*}
 B_\frac{1}{2}\oplus H^{3,\delta_0}(\R\times S^1,\R^{N})\to  H^{3,\delta_0}(\R\times S^1,\R^{N})\\
 (a,u)\mapsto  \frac{\beta(1-\beta_a)}{\gamma_a}\cdot u(\cdot-R,\cdot-\vartheta).
\end{gather*}
 By  Proposition \ref{qed},  this map is sc-smooth. The last map is the restriction map $ H^{3,\delta_0}(\R\times S^1,\R^{N})\to  H^{3,\delta_0}(\R^+\times S^1,\R^{N})$
 which is an sc-operator. This completes the proof of Lemma \ref{lem2-22}.
 \end{proof}
 
 In view of the above lemmata \ref{lem2-18}--\ref{lem2-22}, the proof of Proposition \ref{klopx} is finished. Hence the proof of Theorem \ref{first-splice} is complete.

\subsection{Estimates for the  Total Gluing Map}\label{esttotalgluing}

In section \ref{arisingx}, we have introduced the space $E$ consisting  of pairs $(\eta^+,\eta^-)$ of 
maps $\eta^\pm:\R^\pm \times S^1\to \R^N$ the Sobolev class  $(3,\delta_0)$ having  common asymptotic limits. Taking  a strictly increasing sequence $(\delta_m)_{m\in \N_0}$ starting with $\delta_0>0$,  we equip the Hilbert space $E$ with the sc--structure $(E_m)_{m\in \N_0}$  where $E_m$  consists of those  pairs  $(\eta^+, \eta^-)$ in $E$ of Sobolev  class $(3+m, \delta_m)$.  
We shall later impose boundary conditions, but this is not important for the moment.

If  $(\eta^+,\eta^-)$ is a pair in  $E$, then $\eta^\pm=c+r^\pm$ where $c$ is the  common asymptotic limit and $r^\pm\in H^{3, \delta_0}(\R^\pm\times S^1, \R^N)$. The $E_m$-norm of the pair  $(\eta^+,\eta^-)$ is defined as
$$
\abs{(\eta^+,\eta^-)}_{E_m}^2 =\abs{c}^2+\abs{r^+}^2_{H^{3+m, \delta_m}}+\abs{r^-}^2_{H^{3+m, \delta_m}}.
$$

For every gluing parameter $a\in B_{\frac{1}{2}}$, we introduce the space $G^a$ as follows.  If $a=0$, we set 
$$G^0=E\oplus \{0\},$$
and  if  $a\neq 0$, we  define  
$$
G^a =Q^a\oplus P^a= H^3(Z_a)\oplus H^{3,\delta_0}_c(C_a).
$$
The sc-structure of $G^a$ is  given by the sequence $H^{3+m}(Z_a)\oplus H_c^{3+m, \delta_m}(C_a)$ for all $m\in \N_0$. 

The total gluing map $\boxdot_a=(\oplus_a, \ominus_a):E\to G^a$ is an sc-linear isomorphism for every $a\in B_{\frac{1}{2}}$ in view of Theorem \ref{propn-1.27}. 

For a pair $(k,\delta)$ in which $k$ is a non-negative integer and $\delta$ and a map 
$q:Z_a\rightarrow {\mathbb R}^N$ defined on the finite cylinder $Z_a$, we introduce  the norm  $\nr q\nr_{k,\delta}$ by 
$$
\nr q\nr_{k,\delta}^2:=\sum_{\abs{\alpha}\leq k} \int_{[0,R]\times S^1} \abs{D^{\alpha}q(s,t)}^2\cdot e^{2\delta |s-\frac{R}{2}|} \ ds dt
$$
where $q(s,t):=q([s,t])$.

We recall that the average of  the map $q:Z_a\rightarrow {\mathbb R}^N$, denoted  by $[q]_a$ or $[q]_R$, is defined as   the  integral over the middle loop, 
$$
[q]_a=\int_{S^1} q\left(\frac{R}{2},t\right)\  dt.
$$

If $p:C_a\to \R^N$ is a map with vanishing asymptotic constants we set,  with $p(s,t):=p([s,t])$, 
$$
\nr p\nr_{m,\delta}^2= \sum_{|\alpha|\leq m} \int_{\R\times S^1} \abs{(D^{\alpha}p)(s,t)}^2 e^{2\delta_m |s-\frac{R}{2}|} \ dsdt
$$ 

Observe that the center loop is located at $s=\frac{R}{2}$, which explains the occurrence of the $\frac{R}{2}$ in the exponential weight.

Now we define a norm for the pairs $(q,p)\in G^a$ as follows. With a map $p\in H^{3, \delta_0}_c(C_a)$ which has the antipodal constants  $p_{\infty}=\lim_{s\to \infty}p(s, t)$ and $p_{-\infty}=-p_{\infty}$,  we associate the  map $\wh{p}:C_a\to \R^N$  defined by 
$$
\wh{p}([s,t]) = p([s,t]) - (1-2\cdot\beta_a(s))p_\infty. 
$$
Then $\lim_{s\to \pm \infty}\wh{p}(s, t)=0$ and $\wh{p}$ belongs to $H^{3,\delta_0}(C_a,\R^N)$.  Now  we introduce a norm on the level $m$ of $G^a=H^{3}(Z_a)\oplus H^{3,\delta_0}_c(C_a)$ by 
$$
| (q,p)|^2_{G^a_m} =\abs{[q]_a -p_\infty}^2 +e^{\delta_m R}\cdot \left(\nr q-[q]_a+p_\infty\nr^2_{3+m,-\delta_m}+\nr \wh{p}\nr^2_{m+3,\delta_m} \right).
$$
To get a better understanding of the norm $\abs{(q,p)}^2_{G^a_m}$ we take the  unique pair $(\eta^+, \eta^-)\in E$ satisfying $q=\oplus_a(\eta^+, \eta^-)$ and $p=\ominus_a(\eta^+, \eta^-)$ by using Theorem \ref{propn-1.27}. We write 
$\eta^\pm=c+r^\pm$ where $c$ is the common asymptotic limit and $r^\pm \in H^{3,\delta_0}(\R^\pm\times S^1, \R^N)$.
Then 
\begin{align*}
q&=\oplus_a(\eta^+, \eta^-)=c+\wh{\oplus}_a(r^+, r^-)\\
p&=\ominus_a(\eta^+, \eta^-)=\wh{\ominus}_a(r^+, r^-)+(1-2\beta_a)\av (r^+, r^-).
\end{align*}
The mean value $[q]_a$ of $q$ is equal to $c+\av (r^+, r^-)$ and the  positive asymptotic constant $p_{\infty}$ of  $p$  is equal to $p_{\infty}=\av (r^+, r^-)$. Hence $[q]_a -p_{\infty}=c$ and  $q-[q]_a+p_{\infty}=\wh{\oplus}_a(r^+, r^-)$, and $\wh{p}=p-(1-2\beta_a)\cdot p_{\infty}=\wh{\ominus}_a(r^+, r^-)$.
Consequently, the $G^a_m$-norm of $(q, p)$  becomes,
\begin{equation*}
\begin{split}
| (q,p)|^2_{G^a_m}&=\abs{[q]_a -p_\infty}^2 +e^{\delta_m R}\cdot \left( \nr q-[q]_a+p_\infty\nr^2_{3+m,-\delta_m}+\nr \wh{p}\nr^2_{m+3,\delta_m} \right) \\
&=\abs{c}^2+e^{\delta_m R}
\left( 
\nr\wh{\oplus}_a(r^+, r^-)\nr^2_{3+m,-\delta_m}+\nr\wh{\ominus}_a(r^+, r^-)\nr^2_{m+3,\delta_m} \right).
\end{split}
\end{equation*}

\begin{thm}\label{boxdot-est1}
For every level $m$ there exists a constant $C_m>0$ independent of $|a|<\frac{1}{2}$ so that the total gluing map $ \boxdot_a :E\rightarrow G^a$, defined by 
$$
 (\eta^+,\eta^-)\mapsto \boxdot_a (\eta^+,\eta^-):=(\oplus_a(\eta^+,\eta^-),\ominus_a(\eta^-,\eta^-)), 
$$
is an sc-isomorphism and satisfies the estimate
$$
C^{-1}_m\cdot \abs{(\eta^+,\eta^-)}_{E_m}\leq | \boxdot_a (\eta^+, \eta^-) |_{G^a_m}\leq C_m\cdot 
 \abs{(\eta^+,\eta^-)}_{E_m}.
$$
\end{thm}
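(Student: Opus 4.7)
The plan is to exploit the explicit form of $\boxdot_a$ together with the fact that the weights defining the $G^a_m$-norm are engineered precisely so that, after suitable shifting, they reproduce the weights in the $E_m$-norm of $r^\pm$. Invertibility and sc-linearity of $\boxdot_a$ are already provided by Theorem \ref{propn-1.27}, so the only new content is uniformity of the constants in $a\in B_{1/2}$.

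First I would perform an algebraic reduction. Writing $\eta^\pm = c + r^\pm$ with common asymptotic limit $c\in\R^N$, one has $q = c + \wh{\oplus}_a(r^+,r^-)$ and $p = \wh{\ominus}_a(r^+,r^-) + (1-2\beta_a)\av(r^+,r^-)$, hence $[q]_a - p_\infty = c$ and $\wh p = \wh{\ominus}_a(r^+,r^-)$. The $\abs{c}^2$-term in $\abs{\boxdot_a(\eta^+,\eta^-)}^2_{G^a_m}$ therefore matches $\abs{c}^2$ in $\abs{(\eta^+,\eta^-)}^2_{E_m}$ exactly, and it remains to prove, uniformly in $a$,
$$ \abs{r^+}^2_{3+m,\delta_m} + \abs{r^-}^2_{3+m,\delta_m} \;\asymp\; e^{\delta_m R}\bigl(\nr\wh{\oplus}_a(r^+,r^-)\nr^2_{3+m,-\delta_m} + \nr\wh{\ominus}_a(r^+,r^-)\nr^2_{3+m,\delta_m}\bigr). $$

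Next I would split the integration into the exterior of the gluing strip $S:=\{|s-R/2|\leq 1\}$ and the strip itself. On $\{s\leq R/2-1\}$ one has $\beta_a\equiv 1$, so $\wh{\oplus}_a(s,t)=r^+(s,t)$ and $\wh{\ominus}_a(s,t)=r^-(s-R,t-\vartheta)$; on $\{s\geq R/2+1\}$ one has $\beta_a\equiv 0$, so $\wh{\oplus}_a(s,t)=r^-(s-R,t-\vartheta)$ (for $s\leq R$) and $\wh{\ominus}_a(s,t)=-r^+(s,t)$. A direct computation then shows that the prefactor $e^{\delta_m R}$ is exactly tuned so that
$$ e^{\delta_m R}\cdot e^{-2\delta_m(R/2-s)} = e^{2\delta_m s} \ \text{on } \{s\leq R/2-1\}, \qquad e^{\delta_m R}\cdot e^{2\delta_m(R/2-s)} = e^{2\delta_m|u|} \ \text{with } u=s-R, $$
and analogously on $\{s\geq R/2+1\}$. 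Hence the four exterior integrals recombine \emph{exactly}, without any constant loss, into the pieces of $\abs{r^+}^2_{3+m,\delta_m}$ and $\abs{r^-}^2_{3+m,\delta_m}$ outside the corresponding shifted strips. On the fixed-length strip $S$, I would instead invoke the inversion formula from the proof of Theorem \ref{propn-1.27},
$$ \begin{bmatrix} r^+(s,t) \\ r^-(s-R, t-\vartheta)\end{bmatrix} = \frac{1}{\gamma_a}\begin{bmatrix}\beta_a & -(1-\beta_a)\\ 1-\beta_a & \beta_a\end{bmatrix}\begin{bmatrix}\wh{\oplus}_a(r^+,r^-) \\ \wh{\ominus}_a(r^+,r^-)\end{bmatrix}, $$
whose coefficients are translates of fixed smooth functions on $[-1,1]$ (hence $C^{3+m}$-bounded uniformly in $a$) and satisfy $\gamma_a\geq 1/2$ uniformly. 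Differentiating via Leibniz's rule, integrating over $S$, and observing that on $S$ all the relevant weights $e^{2\delta_m|s|}$, $e^{2\delta_m|s-R|}$, and $e^{\delta_m R \pm 2\delta_m|s-R/2|}$ are mutually equivalent to $e^{\delta_m R}$ with $a$-independent ratio, yields a two-sided bound for the strip contribution. Combining this with the exterior matching gives both estimates.

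The main obstacle I anticipate lies in the bookkeeping of the first step: because $p$ contains the non-decaying term $(1-2\beta_a)p_\infty$, one must first extract the correct asymptotic data $c = [q]_a - p_\infty$ before any weighted estimate can proceed — otherwise $(1-2\beta_a)p_\infty$ would be mis-scaled by the factor $e^{\delta_m R}$ and inflate the right-hand side spuriously. Once this algebraic reduction is cleanly executed and the split into exterior plus strip is in place, the remainder is a uniform calculus of compactly supported, uniformly $C^\infty$-bounded coefficient functions on a strip of fixed width, where all exponential weights are $a$-uniformly comparable.
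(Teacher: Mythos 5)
Your proposal is correct and its skeleton matches the paper's exactly: the paper first reduces, via $\eta^\pm = c + r^\pm$ and the identities $[q]_a - p_\infty = c$, $\wh p = \wh{\ominus}_a(r^+,r^-)$, to a two-sided estimate between $\abs{r^+}^2_{3+m,\delta_m}+\abs{r^-}^2_{3+m,\delta_m}$ and $e^{\delta_m R}\bigl(\nr\wh{\oplus}_a(r^+,r^-)\nr^2_{3+m,-\delta_m}+\nr\wh{\ominus}_a(r^+,r^-)\nr^2_{3+m,\delta_m}\bigr)$, and then invokes a uniform comparison result (Lemma \ref{input-xy}). You carry out the same reduction and supply your own proof of the comparison.

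Where you differ is only in how that comparison lemma is established. The paper's Lemma \ref{input-xy} writes $U = \wh{\oplus}_a(u,v)$, $V = \wh{\ominus}_a(u,v)$, expands the norms using the supports of $\beta_a$ and $1-\beta_a$, and bounds each resulting integral $I_1,\dots,I_4$ (and $J_1,\dots,J_4$ for the reverse inequality via the inverse matrix) by a single global exponential inequality such as $-2\delta|s-\tfrac{R}{2}|-2\delta s\le -\delta R$; the sharpness of these inequalities away from the transition region is left implicit. You instead split explicitly into the strip $S=\{|s-R/2|\le1\}$ and its complement, observe that on the complement $\beta_a\in\{0,1\}$ makes the weight identities \emph{exact} (so those pieces of the two norms coincide with no constant loss), and then estimate the strip contribution using the inverse-matrix formula, $\gamma_a\ge 1/2$, and the $a$-uniform comparability of all relevant weights to $e^{\delta_m R}$ on a fixed-width strip. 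Both routes yield $a$-independent constants; yours has the slight pedagogical advantage of isolating precisely where the inequality (as opposed to equality) actually occurs, while the paper's has the advantage of handling all four regions by a single uniform string of exponential estimates without a case split. There is no gap in your argument.
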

\begin{proof}
We take a pair $(\eta^+, \eta^-)$ belonging to the space $E$ and represent it by  $\eta^\pm=c+r^\pm$ where $c$ is the common asymptotic limit and $r^\pm\in H^{3,\delta_0}(\R^\pm \times S^1, \R^N)$. Then we introduce $(q, p)$ by $(q,p)=(\oplus_a(\eta^+, \eta^-), \ominus_a(\eta^+, \eta^-)$ and compute the $G^a_m$-norm,  
\begin{equation*}
| (q,p)|^2_{G^a_m}=\abs{c}^2+e^{\delta_m R}
\left( 
\nr \wh{\oplus}_a(r^+, r^-)\nr^2_{3+m,-\delta_m}+\nr \wh{\ominus}_a(r^+, r^-)\nr^2_{m+3,\delta_m} \right).
\end{equation*}
In view of Lemma \ref{input-xy} below applied to $r^+=u$, $r^-=v$ and $\wh{\oplus}_a(r^+, r^-)=U$ and $\wh{\ominus}_a(r^+, r^-)=V$,  there exists a constant $C$ depending only on $m$ and $\delta_m$ so that
\begin{equation*}
\begin{split}
&\frac{1}{C}\cdot [ \nr\wh{\oplus}_a(r^+, r^-)\nr^2_{3+m,-\delta_m}+\nr\wh{\ominus}_a(r^+, r^-)\nr^2_{m+3,\delta_m}] \\
&\phantom{====} \leq e^{-\delta_m R}[\abs{r^+}^2_{H^{3+m,\delta_m}}+\abs{r^-}^2_{H^{3+m,\delta_m}}],
\end{split}
\end{equation*}
and
\begin{equation*}
\begin{split}
&e^{-\delta_m R}[\abs{r^+}^2_{H^{3+m,\delta_m}}+\abs{r^-}^2_{H^{3+m,\delta_m}}]\\
&\phantom{====} \leq C\cdot [ \nr\wh{\oplus}_a(r^+, r^-)\nr^2_{3+m,-\delta_m}+\nr \wh{\ominus}_a(r^+, r^-)\nr^2_{m+3,\delta_m}] .
\end{split}
\end{equation*}
We have denoted by $\abs{\cdot}_{H^{3+m,\delta_m}}$ our standard weighted Sobolev norms. Consequently, our  desired estimate  follows since the  $E_m$-norm of the pair  $(\eta^+, \eta^-)$  is defined by 
\begin{equation*}\label{norm0}
\abs{(\eta^+, \eta^-)}_{E_m}^2=\abs{c}^2+\abs{r^+}^2_{H^{3+m,\delta_m}}+
\abs{r^-}^2_{H^{3+m,\delta_m}}.
\end{equation*}
The proof of the proposition is complete.

\end{proof}

We introduce the sc-Hilbert space $\wh{E}$ consisting of pairs $(h^+,h^-)$ where $h^\pm =h^\pm_{\infty}+r^\pm$ with $r^\pm \in H^{3,\delta_0}(\R^\pm\times S^1, \R^N)$. We do not require that the asymptotic constants $h^\pm_{\infty}$ are equal. In addition, we also impose the following  boundary conditions,
$$h^\pm(0,0)=(0,0)\quad \text{and}\quad h^\pm(0,t)\in \{0\}\times{\mathbb R}.$$
Abbreviating the maps
$$\wt{h}^\pm =h^\pm\mp\frac{1}{2}(h^+_{\infty}-h^-_{\infty}),$$
we note that 
$$\wt{h}^\pm=h^\pm_{\infty}\mp\frac{1}{2}(h^+_{\infty}-h^-_{\infty})+r^\pm=\frac{1}{2}(h^+_{\infty}+h^-_{\infty})+r^\pm,$$
so that  the maps $\wt{h}^\pm$ have the same asymptotic limits 
equal to $\frac{1}{2}(h^+_{\infty}+h^-_{\infty})$. Consequently, 
the pair $(\wt{h}^+,\wt{h}^-)$ belongs to the previously defined sc-Hilbert space $E$.

The $\wh{E}_m$-norm of the pair $(h^+, h^-)$ is defined   by 
\begin{equation}\label{normwh}
\abs{(h^+,h^-)}_{\wh{E}_m}^2:=\abs{h^+_{\infty}}^2+\abs{h^-_{\infty}}^2+\abs{r^+}^2_{H^{3+m,\delta_m}}+
\abs{r^-}^2_{H^{3+m,\delta_m}}.
\end{equation}

Considering $\R^N\oplus E_m$ with the product norm,  the norm of 
$(h^+_{\infty}-h^-_{\infty}, (\wt{h}^+,\wt{h}^-))$ is equal to 
\begin{equation}\label{normwh1}
\begin{split}
\abs{(h^+_{\infty}-h^-_{\infty},& (\wt{h}^+,\wt{h}^-)}_{\R^N\oplus E_m}^2=\abs{h^+_{\infty}-h^-_{\infty}}^2+\abs{ (\wt{h}^+,\wt{h}^-)}_{E_m}^2\\
&=\abs{h^+_{\infty}-h^-_{\infty}}^2+\frac{1}{4}\abs{h^+_{\infty}+h^-_{\infty}}^2+\abs{r^+}^2_{H^{3+m,\delta_m}}+\abs{r^-}^2_{H^{3+m,\delta_m}}.
\end{split}
\end{equation}
It follows from \eqref{normwh} and \eqref{normwh1} that there exists a universal constant  $C$ 
so that 
\begin{equation}\label{normwh2}
\frac{1}{C}\cdot \abs{(h^+,h^-)}_{\wh{E}_m}^2\leq  \abs{(h^+_{\infty}-h^-_{\infty}, (\wt{h}^+,\wt{h}^-))}_{\R^N\oplus E_m}^2\leq C\cdot \abs{(h^+,h^-)}_{\wh{E}_m}^2.
\end{equation}

Now as a consequence of Theorem \ref{boxdot-est1} we obtain the following corollary.

\begin{cor}\label{est-cor1}
For every level $m$ there exists a constant $C_m>0$ independent of  the gluing parameter $|a|<\frac{1}{2}$, so that  for   $(h^+,h^-)\in\wh{E}_m$, the following estimate holds, 
$$
C_m^{-1}\cdot \abs{(h^+,h^-)}^2_{\wh{E}_m}\leq \left[ |h^+_\infty-h^-_\infty|^2+|\boxdot_a(\wt{h}^+,\wt{h}^-)|^2_{G^a_m}\right]\leq
C_m\cdot \abs{(h^+,h^-)}^2_{\wh{E}_m}
$$
where 
$\boxdot_a(\wt{h}^+,\wt{h}^-)=(\oplus_a(\wt{h}^+,\wt{h}^-), \ominus_a(\wt{h}^+, \wt{h}^-))$.

\end{cor}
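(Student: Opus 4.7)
The plan is essentially to combine Theorem \ref{boxdot-est1} applied to the pair $(\wt h^+,\wt h^-)\in E$ with the norm equivalence \eqref{normwh2} already established in the excerpt. The whole proof reduces to chaining three bilateral estimates, so there should be no genuine obstacle; the only care needed is in bookkeeping how the constants depend on $m$ but not on $a$.

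First I would note that by construction $(\wt h^+,\wt h^-)\in E_m$ whenever $(h^+,h^-)\in\wh E_m$, since the modification merely shifts the two asymptotic constants by $\mp\tfrac12(h^+_\infty-h^-_\infty)$ without altering the $H^{3+m,\delta_m}$ remainders $r^\pm$, so the pair $(\wt h^+,\wt h^-)$ has a single common asymptotic limit $\tfrac12(h^+_\infty+h^-_\infty)$ with the same remainders $r^\pm$ as $(h^+,h^-)$. Applying Theorem \ref{boxdot-est1} to this pair gives, with the constant $C_m$ from that theorem independent of $a$,
\begin{equation*}
C_m^{-2}\cdot |(\wt h^+,\wt h^-)|^2_{E_m}\ \leq\ |\boxdot_a(\wt h^+,\wt h^-)|^2_{G^a_m}\ \leq\ C_m^{2}\cdot |(\wt h^+,\wt h^-)|^2_{E_m}.
\end{equation*}

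Next I would add $|h^+_\infty-h^-_\infty|^2$ to all three members of the above inequality; the middle expression then becomes exactly the quantity we wish to bound in the corollary, while the outer expressions are precisely $|h^+_\infty-h^-_\infty|^2+|(\wt h^+,\wt h^-)|^2_{E_m}$ up to the universal multiplicative constants $C_m^{\pm 2}$. At this point I would invoke the equivalence \eqref{normwh2}, which gives a universal constant $C>0$ such that
\begin{equation*}
\tfrac{1}{C}\cdot \abs{(h^+,h^-)}_{\wh E_m}^{2}\ \leq\ \abs{h^+_\infty-h^-_\infty}^2+\abs{(\wt h^+,\wt h^-)}_{E_m}^{2}\ \leq\ C\cdot \abs{(h^+,h^-)}_{\wh E_m}^{2}.
\end{equation*}

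Finally I would chain the two pairs of inequalities. Replacing the constant $C_m$ in the statement by the product of the constants above (still independent of $a$ since both inputs are) yields the desired two-sided estimate. The only verification worth carrying out explicitly is the claim that $(\wt h^+,\wt h^-)$ really lies in $E$, which amounts to checking that $\tfrac12(h^+_\infty+h^-_\infty)$ is a common asymptotic limit and that the boundary conditions inherited from $h^\pm$ are respected by the constant shift; both are immediate from the definitions. No compactness or gluing-specific analysis is required beyond Theorem \ref{boxdot-est1}, which is where all the work has already been done.
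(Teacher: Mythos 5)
Your proposal is correct and is essentially identical to the paper's own proof: both apply Theorem \ref{boxdot-est1} to the modified pair $(\wt{h}^+,\wt{h}^-)\in E$, add $|h^+_\infty-h^-_\infty|^2$, and chain with the norm equivalence \eqref{normwh2} to pass to the $\wh{E}_m$-norm. No gaps; the bookkeeping of constants ($m$-dependent, $a$-independent) is handled the same way in both.
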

\begin{proof}
Using \eqref{normwh},  \eqref{normwh1}, \eqref{normwh2}, and Theorem \ref{boxdot-est1}, one obtains for a generic constant $c_m$ depending on $m$ and not on $\abs{a}<\frac{1}{2}$,
\begin{equation*}
\begin{split}
\abs{(h^+,h^-)}^2_{\wh{E}_m}&\leq c_m\cdot \abs{(h^+_{\infty}-h^-_{\infty}, (\wt{h}^+,\wt{h}^-)}_{\R^N\oplus E_m}^2\\
&=c_m\cdot \left[\abs{h^+_{\infty}-h^-_{\infty}}^2+\abs{ (\wt{h}^+,\wt{h}^-)}_{E_m}^2\right]\\
&\leq c_m\cdot \left[ |h^+_\infty-h^-_\infty|^2+|\boxdot_a(\wt{h}^+,\wt{h}^-)|^2_{G^a_m}\right]\\
&\leq c_m\cdot  \left[\abs{h^+_{\infty}-h^-_{\infty}}^2+\abs{ (\wt{h}^+,\wt{h}^-)}_{E_m}^2\right]\\
&\leq c_m\cdot \abs{(h^+,h^-)}^2_{\wh{E}_m}
\end{split}
\end{equation*}
as claimed.
\end{proof}
\begin{rem}
Later on we deal with the case  $N=2$  where  the pair $(h^+,h^-)\in\wh{E}$ satisfies the boundary condition $h^\pm(0,0)=(0,0)$ and
$h^\pm(0,t)\in \{0\}\times {\mathbb R}$. Then the map $q=\oplus_a(\wh{h}^+,\wh{h}^-):Z_a\to \R^2$ will satisfy the following  boundary conditions, 
$$
q([0,0])=-\frac{1}{2}(h^+_\infty-h^-_\infty),\quad q([0,0]')=\frac{1}{2}(h^+_\infty-h^-_\infty)
$$
and, in addition, 
$$
q([0,t])\in -\frac{1}{2}(h^+_\infty-h^-_\infty) + (\{0\}\times {\mathbb R})\, \quad q([0,t]')\in \frac{1}{2}(h^+_\infty-h^-_\infty) + (\{0\}\times {\mathbb R}).
$$
\end{rem}

Later on we will need the following variant  variant of Theorem \ref{boxdot-est1} with respect to the hat gluing and hat anti-gluing.

We denote by  $F$ the sc-Hilbert space $F=H^{2, \delta_0}(\R^+\times S^1, \R^N)\oplus H^{2, \delta_0}(\R^-\times S^1, \R^N)$ whose  sc-structure is given by the sequence 
$F_m=H^{2+m, \delta_m}(\R^+\times S^1, \R^N)\oplus H^{2+m, \delta_m}(\R^-\times S^1, \R^N)$. 
The $F_m$-norm of the pair $(\xi^+, \xi^-)\in F$ is given by
\begin{equation*}
\abs{(\xi^+,\xi^-)}_{F_m}^2:=\abs{\xi^+}^2_{H^{m+2,\delta_m}}+\abs{\xi^-}^2_{H^{m+2,\delta_m}}.
\end{equation*}

We introduce the space $\wh{G}_a$ as follows.
If $a=0$, set  we $\wh{G}_a=F\oplus\{0\}$ and if $0<\abs{a}<\frac{1}{2}$,  then we define 
$$
\wh{G}_a=\wh{Q}^a\oplus  \wh{P}^a= H^{2}(Z_a,{\mathbb R}^{N})\oplus H^{2,\delta_0}(C_a,{\mathbb R}^{N}).
$$

The sc-structure of $\wh{G}^a$ is  given by the sequence $H^{2+m}(Z_a)\oplus H^{2+m, \delta_m}(C_a)$ for all $m\in \N_0$.  The total gluing map 
$\wh{\boxdot}_a=(\wh{\oplus}_a, \wh{\ominus}_a):F\to \wh{G}^a$ is an sc-linear isomorphism for every $a\in B_{\frac{1}{2}}$ in view of Theorem \ref{second-splicing}.

As in the case of  the total gluing $\boxdot$ it is useful to introduce families of norms. 
We introduce the $\wh{G}^a_m$-norm of the pair $(q, p)\in \wh{G}_a$ 
by setting 
$$
| (q,p)|_{\wh{G}^a_m}^2:=e^{\delta_m R}\cdot \left[\nr q\nr_{m+2,-\delta_m}^2+\nr p\nr_{m+2,\delta_m}^2\right]
$$
where these norms are defined above. 
Recall that if  
$(q,p)=\wh{\boxdot}_a(\xi^+,\xi^-)= (\wh{\oplus}_a(\xi^+, \xi^-),
\wh{\ominus}_a(\xi^+, \xi^-))$, then $(q, p)$ and $(\xi^+, \xi^-)$ are related as follows,
$$
\begin{bmatrix}q(s, t)\\p(s, t)\end{bmatrix}=
\begin{bmatrix}
\phantom{-}\beta_a&1-\beta_a\\
-(1-\beta_a)&\beta_a
\end{bmatrix}\cdot
\begin{bmatrix}
\xi^+(s, t)\\ \xi^-(s-R, t-\vartheta)
\end{bmatrix}
$$
where,  as usual,  $\beta_a=\beta_a(s)$. 
Then, in view of the definition of the  norm on $\wh{G}^a_m$, one derives from the estimates of  Lemma \ref{input-xy} the following  theorem.
\begin{thm}\label{poker1}
Given the level $m$ there exists a constant $C_m$ not depending on $a\in B_\frac{1}{2}$ so that
the following estimate holds, 
$$
C_m^{-1}\cdot\abs{(\xi^+,\xi^-)}_{F_m}\leq | \wh{\boxdot}_a(\xi^+,\xi^-)|_{\wh{G}^a_m}\leq C_m\cdot \abs{(\xi^+,\xi^-)}_{F_m}.
$$
\end{thm}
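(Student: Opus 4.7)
The plan is to follow the same strategy as in the proof of Theorem \ref{boxdot-est1}, which is in fact simpler in the hat case because there is no common asymptotic limit to isolate. The core idea is to invoke Lemma \ref{input-xy} directly, but with the Sobolev index $(m+2,\delta_m)$ in place of $(m+3,\delta_m)$, applied to the pair $(\xi^+,\xi^-)$ itself rather than to the ``remainder'' part $(r^+,r^-)$ of a pair in $E$.

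First I would unpack the norms. By definition,
$$|(\xi^+,\xi^-)|_{F_m}^2 = |\xi^+|^2_{H^{m+2,\delta_m}(\R^+\times S^1)} + |\xi^-|^2_{H^{m+2,\delta_m}(\R^-\times S^1)}$$
and, setting $(q,p):=\wh{\boxdot}_a(\xi^+,\xi^-)$,
$$|\wh{\boxdot}_a(\xi^+,\xi^-)|_{\wh{G}^a_m}^2 = e^{\delta_m R}\bigl[\nr \wh{\oplus}_a(\xi^+,\xi^-)\nr_{m+2,-\delta_m}^2 + \nr \wh{\ominus}_a(\xi^+,\xi^-)\nr_{m+2,\delta_m}^2\bigr].$$
For $a=0$ the claim is immediate since $\wh{\oplus}_0=\mathrm{id}$, $\wh{\ominus}_0=0$, and $R=0$. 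For $0<|a|<\tfrac{1}{2}$ the pair $(q,p)$ is given pointwise by the matrix formula displayed just before the theorem statement; the matrix has determinant $\gamma_a(s)=\beta_a(s)^2+(1-\beta_a(s))^2\in[\tfrac12,1]$, so it is invertible with uniformly bounded inverse, independent of $a$ and $s$.

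The key step is now to apply Lemma \ref{input-xy} in exactly the same form as in the proof of Theorem \ref{boxdot-est1}, taking $u=\xi^+$, $v=\xi^-$, $U=\wh{\oplus}_a(\xi^+,\xi^-)$, $V=\wh{\ominus}_a(\xi^+,\xi^-)$, and with regularity $(m+2,\delta_m)$ in place of $(m+3,\delta_m)$. This yields a constant $C_m$ independent of $a$ such that
$$C_m^{-1}\,e^{-\delta_m R}\bigl[|\xi^+|^2_{H^{m+2,\delta_m}}+|\xi^-|^2_{H^{m+2,\delta_m}}\bigr] \leq \nr \wh{\oplus}_a(\xi^+,\xi^-)\nr_{m+2,-\delta_m}^2+\nr \wh{\ominus}_a(\xi^+,\xi^-)\nr_{m+2,\delta_m}^2$$
together with the reverse inequality obtained by swapping $C_m$ and $C_m^{-1}$. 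Multiplying through by $e^{\delta_m R}$ gives exactly the two-sided bound of Theorem \ref{poker1}.

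The main obstacle is of course verifying that Lemma \ref{input-xy} applies for the regularity $(m+2,\delta_m)$. However, this is purely structural: the estimates in that lemma rest on the uniform boundedness in $a$ of $\beta_a$, all its derivatives, and $\gamma_a^{-1}$; on the fact that multiplication by these symbols preserves Sobolev spaces with equivalent norms; and on matching the exponential weight $e^{2\delta_m s}$ on $\R^\pm\times S^1$ with the weight $e^{2\delta_m|s-R/2|}$ on the glued cylinders, where the translation $(s,t)\mapsto(s-R,t-\vartheta)$ produces precisely the compensating factor $e^{\delta_m R}$. None of these ingredients depends on the particular Sobolev index, so the proof of Lemma \ref{input-xy} carries over verbatim, and Theorem \ref{poker1} follows.
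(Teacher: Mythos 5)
Your proposal is correct and follows exactly the same route as the paper: identify $u=\xi^+$, $v=\xi^-$, $U=\wh{\oplus}_a(\xi^+,\xi^-)$, $V=\wh{\ominus}_a(\xi^+,\xi^-)$, apply Lemma \ref{input-xy} with index $m+2$ and weight $\delta_m$, and multiply by $e^{\delta_m R}$. The only remark worth making is that the "main obstacle" you flag is not really there --- Lemma \ref{input-xy} is already stated for an arbitrary index $m$ and weight $\delta>0$, so it applies directly at regularity $(m+2,\delta_m)$ without re-running its proof.
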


\begin{rem} The study of pairs $(\eta^+,\eta^-)$ in $F$ on level $m$ with respect to the norm $\abs{\cdot}_{F_m}$ is for every $a\in B_\frac{1}{2}$ completely equivalent
(up to a multiplicative constant independent of $a$) to  the study of the  associated pairs $(q,p)\in \wh{G}^a$ on  the level $m$ with respect to the norm 
$|\cdot |_{\wh{G}^a_m}$.
\end{rem}

The two theorems can be deduced from the following lemma.
\begin{lem}\label{input-xy}
There exists a constant $C$ depending on $m$ and $\delta>0$ so that
 for maps $(U,V)\in H^m(Z_a,{\mathbb R}^{N})\oplus H^{m,\delta}(C_a,{\mathbb R}^{N})$ and $(u,v)\in H^{m,\delta}({\mathbb R}^+\times S^1,{\mathbb R}^{N})\oplus H^{m,\delta}({\mathbb R}^-\times S^1,{\mathbb R}^{N})$ satisfying 
  \begin{equation}\label{newrel0}
 \begin{bmatrix}U(s, t)\\V(s, t)\end{bmatrix}=\begin{bmatrix}\phantom{-}\beta_a&1-\beta_a\\-(1-\beta_a)&\beta_a\end{bmatrix}\cdot \begin{bmatrix}u(s, t)\\ v(s-R, t-\vartheta)\end{bmatrix}
 =\begin{bmatrix}\wh{\oplus}_a(U, V)\\ \wh{\ominus}_a(U, V)\end{bmatrix},
 \end{equation}
the following estimate holds true,
 \begin{equation}\label{newest1}
\frac{1}{C}\left[ \nr U \nr_{m,-\delta}^2+\nr V \nr_{m,\delta}^2\right]\leq e^{-\delta R}\left[\abs{u}_{H^{m,\delta}}^2+\abs{v}_{H^{m,\delta}}^2\right]\leq
 C\left[\nr U \nr_{m,-\delta}^2 +\nr V\nr_{m,\delta}^2\right].
 \end{equation}
\end{lem}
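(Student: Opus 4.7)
The plan is to exploit the support properties of the cut-off $\beta_a(s) = \beta(s-R/2)$. By the conditions (\ref{cutoff}), $\beta_a \equiv 1$ on $(-\infty, R/2-1]$ and $\beta_a \equiv 0$ on $[R/2+1, \infty)$. I therefore split the integration domain for each of the four norms into three pieces: the left region $s \leq R/2-1$, the middle strip $|s-R/2|\leq 1$, and the right region $s \geq R/2+1$, and handle each separately.

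On the left region the relation (\ref{newrel0}) collapses to $U = u$ and $V = v(\cdot - R, \cdot - \vartheta)$, while on the right region $U = v(\cdot - R, \cdot - \vartheta)$ and $V = -u$. After substituting $s' = s - R$ in every $v$-integral, a direct computation shows that all four outer contributions differ from the corresponding pieces of $|u|_{H^{m,\delta}}^2 + |v|_{H^{m,\delta}}^2$ by an exact factor of $e^{-\delta R}$; for example,
\[
\int_{0}^{R/2-1}\!\!\int_{S^1}|D^\alpha U|^2 e^{-2\delta(R/2-s)}\,ds\,dt = e^{-\delta R}\int_{0}^{R/2-1}\!\!\int_{S^1}|D^\alpha u|^2 e^{2\delta s}\,ds\,dt.
\]
Summing the four outer contributions yields $e^{-\delta R}\bigl[|u|_{H^{m,\delta}}^2 + |v|_{H^{m,\delta}}^2\bigr]$ minus the middle pieces of $u$ and $v$ taken with their native weights.

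The middle strip is handled by inverting the matrix in (\ref{newrel0}): its determinant $\gamma_a = \beta_a^2 + (1-\beta_a)^2 \geq 1/2$, so
\[
u = \frac{\beta_a U - (1-\beta_a)V}{\gamma_a}, \qquad v(\cdot-R,\cdot-\vartheta) = \frac{(1-\beta_a)U + \beta_a V}{\gamma_a},
\]
and the coefficients, together with their derivatives up to order $m$, are bounded uniformly in $a$ since they are translates of fixed smooth functions. Leibniz's rule then gives a pointwise two-sided bound
\[
|D^\alpha u|^2 + |D^\alpha v(\cdot - R, \cdot - \vartheta)|^2 \ \sim \ \sum_{|\beta|\leq m}\bigl(|D^\beta U|^2 + |D^\beta V|^2\bigr)
\]
on the middle strip, with constants depending only on $m$. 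Since on this strip the weights $e^{\pm 2\delta|s-R/2|}$ are pinched between $e^{-2\delta}$ and $e^{2\delta}$ while $e^{2\delta s} = e^{\delta R} e^{2\delta(s - R/2)}$ agrees with $e^{\delta R}$ up to a bounded factor, the middle contribution to $e^{-\delta R}[|u|^2 + |v|^2]$ is equivalent, in both directions, to the middle contribution to $\nr U\nr_{m,-\delta}^2 + \nr V\nr_{m,\delta}^2$.

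Combining the outer and middle estimates yields both inequalities of (\ref{newest1}). The only real obstacle is the careful bookkeeping of the weight factors $e^{\pm 2\delta|s-R/2|}$ versus $e^{\pm 2\delta s}$ together with the substitution $s \mapsto s - R$ used to transfer $v$-integrals from $\R^-$ onto the appropriate subsets of $C_a$ and $Z_a$; once the three-region decomposition is written down, the remaining work reduces to elementary weighted integration using nothing beyond the uniform lower bound $\gamma_a \geq 1/2$ and the fact that $\beta_a$ is a shift of a fixed smooth function.
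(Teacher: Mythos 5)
Your proof is correct, but it is organized quite differently from the paper's argument. The paper never explicitly isolates the middle strip: after expanding the norms of $U$ and $V$ and using the supports of $\beta_a$ and $1-\beta_a$ to localize each term, it bounds each resulting integral in one stroke by algebraic manipulation of the weights, using pointwise inequalities such as $-2\delta\,|s-\frac{R}{2}|-2\delta s\le -\delta R$ and $2\delta\,|s-\frac{R}{2}|+2\delta s\le \delta(4+R)$ valid on the relevant half-lines; the matrix is inverted over the whole domain when proving the second inequality, and all $\gamma_a^{-1}$-factors are simply absorbed into the generic constant. Your three-region decomposition instead observes that on the two outer regions the relation \eqref{newrel0} becomes a literal change of variables and the weighted integrals match up to the \emph{exact} factor $e^{-\delta R}$, so that the cutoff and the matrix inversion only intervene on the middle strip $[\frac{R}{2}-1,\frac{R}{2}+1]$, where a Leibniz/compactness argument using $\gamma_a\ge\frac{1}{2}$ and the fact that $\beta_a$ is a translate of a fixed function gives a two-sided equivalence. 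Both arguments reduce to the same elementary weighted-integration bookkeeping and give the same conclusion; the paper's is somewhat more compact, while yours makes the origin of the factor $e^{-\delta R}$ transparent and localizes the nontrivial estimate to a fixed-size strip.
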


\begin{proof}
In view of \eqref{newrel0}  and recalling  $\beta_a(s)=0$ for $s\geq \frac{R}{2}+1$ and $1-\beta_a(s)=0$  for $s\leq  \frac{R}{2}-1$, it follows that $\abs{U}_{m,-\delta}^2+\abs{V}_{m,\delta}^2$   is  bounded above by a constant $C$ times the sum of integrals of the following type:
\begin{itemize}
\item[$\bullet$]\quad $I_1= \int\limits_{[0, \frac{R}{2}+1]\times S^1}\abs{D^{\alpha}u(s, t)}^2e^{-2\delta \abs{s-\frac{R}{2} }},$
\item[$\bullet$]\quad $I_2=\int\limits_{[\frac{R}{2}-1, R]\times S^1}\abs{D^{\alpha}v(s-R, t-\vartheta)}^2e^{-2\delta \abs{s-\frac{R}{2}}},$
\item[$\bullet$]\quad $I_3=\int\limits_{[\frac{R}{2}-1, \infty)\times S^1}\abs{D^{\alpha}u(s, t)}^2e^{-2\delta \abs{s-\frac{R}{2} }},$
\item[$\bullet$]\quad $I_4=\int\limits_{(-\infty, \frac{R}{2}+1]\times S^1}\abs{D^{\alpha}v(s-R, t-\vartheta)}^2e^{-2\delta \abs{s-\frac{R}{2}}},$
\end{itemize}
where the multi-indices $\alpha$ satisfy $\abs{\alpha}\leq m$. The constant $C$ depends on $m$ and the function $\beta$.  We estimate each of the above integrals.  To estimate the integrals $I_1$ and $I_3$ we use the fact that $-2\delta \abs{s-\frac{R}{2}}-2\delta s\leq -\delta R$ for all $s\in \R$. Then 
\begin{equation*}
\begin{split}
I_1&= \int\limits_{[0, \frac{R}{2}+1]\times S^1}\abs{D^{\alpha}u(s, t)}^2e^{-2\delta \abs{s-\frac{R}{2} }}=\int\limits_{[0, \frac{R}{2}+1]\times S^1}\abs{D^{\alpha}u(s, t)}^2e^{2\delta s}\cdot e^{-2\delta \abs{s-\frac{R}{2} }-2\delta s}\\
&\leq e^{-\delta R}\int\limits_{[0, \frac{R}{2}+1]\times S^1}\abs{D^{\alpha}u(s, t)}^2e^{2\delta s} \leq 
e^{-\delta R}\cdot \abs{u}^2_{H^{m, \delta}}
\end{split}
\end{equation*}
and 
\begin{equation*}
\begin{split}
I_3&=\int\limits_{[\frac{R}{2}-1, \infty)\times S^1}\abs{D^{\alpha}u(s, t)}^2e^{-2\delta \abs{s-\frac{R}{2} }}=\int\limits_{[\frac{R}{2}-1, \infty)\times S^1}\abs{D^{\alpha}u(s, t)}^2e^{2\delta s}\cdot e^{-2\delta \abs{s-\frac{R}{2} }-2\delta s}\\
&\leq e^{-\delta R}\int\limits_{[\frac{R}{2}-1, \infty)\times S^1}\abs{D^{\alpha}u(s, t)}^2e^{2\delta s}\leq  e^{-\delta R}\cdot \abs{u}^2_{H^{m, \delta}}.
\end{split}
\end{equation*}

To estimate the integrals $I_2$ and $I_4$ we use $-2\delta \abs{s+\frac{R}{2}}+2\delta s\leq -\delta R$ for all $s$. Then abbreviating $\Sigma_R=\left[\frac{R}{2}-1, R\right]\times S^1$ we obtain for for the integral $I_2$,
\begin{equation*}
\begin{split}
I_2&=\int\limits_{\Sigma_R}\abs{D^{\alpha}v(s-R, t-\vartheta)}^2e^{-2\delta \abs{s-\frac{R}{2}}}=\int\limits_{\Sigma_{-R}}\abs{D^{\alpha}v(s, t)}^2e^{-2\delta \abs{s+\frac{R}{2}}}\\
&=\int\limits_{\Sigma_{-R}}\abs{D^{\alpha}v(s, t)}^2e^{-2\delta s}e^{-2\delta \abs{s+\frac{R}{2}}+2\delta s}\leq e^{-\delta R}\int\limits_{\Sigma_{-R}}\abs{D^{\alpha}v(s, t)}^2e^{-2\delta s}\\
&\leq e^{-\delta R}\cdot \abs{v}^2_{H^{m, \delta}},
\end{split}
\end{equation*}
and abbreviating $\Sigma_R=(-\infty, \frac{R}{2}+1]\times S^1$,  the integral $I_4$ can be estimated as 
\begin{equation*}
\begin{split}
I_4&=\int\limits_{\Sigma_R}\abs{D^{\alpha}v(s-R, t-\vartheta)}^2e^{-2\delta \abs{s-\frac{R}{2}}}=\int\limits_{\Sigma_{-R}}\abs{D^{\alpha}v(s, t)}^2e^{-2\delta \abs{s+\frac{R}{2}}}\\
&=\int\limits_{\Sigma_{-R}}\abs{D^{\alpha}v(s, t)}^2e^{-2\delta s}\cdot e^{-2\delta \abs{s+\frac{R}{2}}+2\delta s}\\
&\leq e^{-\delta R}\int\limits_{\Sigma_{-R}}\abs{D^{\alpha}v(s, t)}^2e^{-2\delta s}
\leq  e^{ -\delta R}\cdot \abs{v}^2_{H^{m, \delta}}.
\end{split}
\end{equation*}
Summing up the  estimates for the integrals $I_1,\ldots ,I_4$ for all multi-indices $\alpha$  satisfying  $\abs{\alpha}\leq m$, we get 
$$\nr U\nr_{m,-\delta}^2+\nr V\nr_{m,\delta}^2\leq Ce^{-\delta R}\cdot \left[\abs{u}^2_{H^{m,\delta}}+\abs{v}^2_{H^{m,\delta}}\right] $$
as desired.

In order to estimate $\abs{u}^2_{m,\delta}+\abs{v}_{m,\delta}^2$ we  multiplying both  sides of \eqref{newrel0} by the the inverse of the matrix and obtain 
\begin{equation}\label{newrel1}
 \begin{bmatrix}u(s, t)\\ v(s-R, t-\vartheta)\end{bmatrix}
 =
\frac{1}{\gamma_a} \begin{bmatrix}\phantom{-}\beta_a&-(1-\beta_a)\\1-\beta_a& \phantom{-}\beta_a\end{bmatrix}\cdot 
\begin{bmatrix}U(s, t)\\V(s, t)\end{bmatrix}
\end{equation}
where $\gamma_a=\gamma_a(s)$ is the the determinant of the matrix. 
Hence
$$v(s', t')=\frac{1-\beta_a(s'+R)}{\gamma'(s'+R)}U(s'+R, t'+\vartheta)+\frac{\beta_a(s'+R)}{\gamma_a(s'+R)}V(s'+R, t'+\vartheta).$$
Using the above relations and  $\beta_a(s'+R)=0$ for $s\geq -\frac{R}{2}+1$ and $1-\beta_a(s'+R)=0$  for $s\leq - \frac{R}{2}-1$, it follows that $\abs{u}_{m,\delta}^2+\abs{v}_{m,\delta}^2$   is  bounded above by a constant $C$ (depending only on $m$ and the function $\beta$)  times the sum of integrals of the following type:
\begin{itemize}
\item[$\bullet$]\quad $J_1= \int\limits_{[0, \frac{R}{2}+1]\times S^1}\abs{D^{\alpha}U(s, t)}^2e^{2\delta s},$
\item[$\bullet$]\quad $J_2=\int\limits_{[\frac{R}{2}-1, \infty)\times S^1}\abs{D^{\alpha}V(s,t)}^2e^{2\delta s},$
\item[$\bullet$]\quad $J_3=\int\limits_{[-\frac{R}{2}-1,0]\times S^1}\abs{D^{\alpha}U(s+R, t+\vartheta)}^2e^{-2\delta s},$
\item[$\bullet$]\quad $J_4=\int\limits_{(-\infty, -\frac{R}{2}+1]\times S^1}\abs{D^{\alpha}V(s+R, t+\vartheta)}^2e^{-2\delta s},$
\end{itemize}
where the multi-indices $\alpha$ satisfy $\abs{\alpha}\leq m$. 

The integral $J_1$ is estimated as follows, 
\begin{equation*}
\begin{split}
J_1&= \int\limits_{[0, \frac{R}{2}+1]\times S^1}\abs{D^{\alpha}U(s, t)}^2e^{2\delta s}\\
&=
 \int\limits_{[0, \frac{R}{2}+1]\times S^1}\abs{D^{\alpha}U(s, t)}^2e^{-2\delta \abs{s-\frac{R}{2}}}\cdot e^{2\delta \abs{s-\frac{R}{2}}+2\delta s}\\
&\leq e^{\delta(4+R)} \int\limits_{[0, \frac{R}{2}+1]\times S^1}\abs{D^{\alpha}U(s, t)}^2e^{-2\delta \abs{s-\frac{R}{2}}}\leq e^{4\delta +\delta R} \cdot \nr U\nr_{m,-\delta}^2,
\end{split}
\end{equation*}
using  $2\delta \abs{s-\frac{R}{2}}+2\delta s\leq \delta(4+R)$ for all $s\leq \frac{R}{2}+1$.

For the integral $J_2$ we obtain
\begin{equation*}
\begin{split}
J_2&=\int\limits_{[\frac{R}{2}-1, \infty)\times S^1}\abs{D^{\alpha}V(s,t)}^2e^{2\delta s}\\
&=
\int\limits_{[\frac{R}{2}-1, \infty)\times S^1}\abs{D^{\alpha}V(s,t)}^2e^{2\delta \abs{s-\frac{R}{2}}} \cdot e^{-2\delta \abs{s-\frac{R}{2}}+2\delta s}\\
&\leq e^{\delta R}\int\limits_{[\frac{R}{2}-1, \infty)\times S^1}\abs{D^{\alpha}V(s,t)}^2e^{2\delta \abs{s-\frac{R}{2}}} \leq e^{\delta R}\cdot \nr V\nr^2_{m,\delta},
\end{split}
\end{equation*}
using  $-2\delta \abs{s-\frac{R}{2}}+2\delta s\leq \delta R$ for all $s\in \R$.

For the integral $J_3$ we find
\begin{equation*}
\begin{split}
J_3&=\int\limits_{[-\frac{R}{2}-1,0]\times S^1}\abs{D^{\alpha}U(s+R, t+\vartheta)}^2e^{-2\delta s}\\
&=
\int\limits_{[\frac{R}{2}-1,R]\times S^1}\abs{D^{\alpha}U(s, t)}^2e^{-2\delta ( s-R)}\\
&=\int\limits_{[\frac{R}{2}-1,R]\times S^1}\abs{D^{\alpha}U(s, t)}^2e^{-2\delta \abs{s-\frac{R}{2}}}\cdot e^{2\delta \abs{s-\frac{R}{2}}-2\delta ( s-R)}\\
&\leq e^{\delta (4+R)}\int\limits_{[\frac{R}{2}-1,R]\times S^1}\abs{D^{\alpha}U(s, t)}^2e^{-2\delta \abs{s-\frac{R}{2}}}\leq e^{\delta (4+R)}\cdot \nr U\nr^2_{m,-\delta},
\end{split}
\end{equation*}
using  $2\delta \abs{s-\frac{R}{2}}-2\delta ( s-R)\leq \delta (4+R)$ for all $s\geq \frac{R}{2}-1$.

The last integral can be estimated as follows
\begin{equation*}
\begin{split}
J_4&=\int\limits_{(-\infty, -\frac{R}{2}+1]\times S^1}\abs{D^{\alpha}V(s+R, t+\vartheta)}^2e^{-2\delta s}\\
&=\int\limits_{(-\infty, \frac{R}{2}+1]\times S^1}\abs{D^{\alpha}V(s, t)}^2e^{-2\delta (s-R)}\\
&=\int\limits_{(-\infty, \frac{R}{2}+1]\times S^1}\abs{D^{\alpha}V(s, t)}^2e^{2\delta \abs{s-\frac{R}{2}}}\cdot e^{-2\delta \abs{s-\frac{R}{2}}-2\delta (s-R)}\\
&\leq e^{\delta R}\int\limits_{(-\infty, \frac{R}{2}+1]\times S^1}\abs{D^{\alpha}V(s, t)}^2e^{2\delta \abs{s-\frac{R}{2}}}\leq e^{\delta R}\cdot \nr V\nr^2_{m,\delta}, 
\end{split}
\end{equation*}
in view of  $-2\delta \abs{s-\frac{R}{2}}-2\delta (s-R)\leq \delta R$ for all $s\in \R$. 

Summing up the estimates for the integrals $J_1,\ldots ,J_4$  for all the multi-indices $\alpha$ satisfying $\abs{\alpha}\leq m$, we obtain 
$$\abs{u}_{H^{m,\delta}}^2+\abs{v}_{H^{m,\delta}}^2\leq Ce^{\delta R}\cdot \left[ \nr U\nr_{m,-\delta}^2+\nr V\nr_{m,\delta}^2\right], $$
as claimed. The proof of Lemma \ref{input-xy} is complete.
\end{proof}

\section{Long Cylinders and Gluing}\label{longcylinders}

In chapter \ref{longcylinders} we are going to construct an M-polyfold structure on the set $\ov{X}$ of diffeomorphisms between conformal cylinders which break apart. Using the technical results of chapter \ref{sec2} we shall first prove Proposition \ref{prop3.15-nnew} and Theorem \ref{thm-comp}. Then we shall illustrate the Fredholm theory outlined in chapter \ref{chapter1} by constructing a strong bundle over the M-polyfold $\ov{X}$ proving Theorem \ref{thm1.42-nnew}. We shall show that this strong bundle admits the Cauchy-Riemann operator as an sc-Fredholm section. Finally, the sc-implicit function theorem for polyfold Fredholm sections provides  the proofs of  the Theorems \ref{fred:thm0} and \ref{strong-x}. 

We start with the introduction of another retraction used later on.

\subsection{Another Retraction}\label{anothersplicing}
We choose a strictly  increasing sequence $(\delta_m)_{m\in \N_0}$ satisfying $0<\delta_m< 2\pi$. Recall that the Banach space $\wh{E}$ consists of pairs
$(h^+,h^-)$ of maps 
$$
h^\pm\in H^{3,\delta_0}_c(\R^\pm\times S^1,\R^N)
$$
for which there exist asymptotic constants  $c^\pm \in \R^N$ so that $h^\pm-c^\pm\in H^{3,\delta_0}(\R^\pm\times S^1,\R^N)$. The sc-structure $(\wh{E}_m)$  is defined by the sequence  
$$
\wh{E}_m=H^{3+m,\delta_m}_c(\R^+\times S^1,\R^N)\times H^{3+m,\delta_m}_c(\R^-\times S^1,\R^N).
$$
Given a pair $(h^+, h^-)\in \wh{E}$,  we denote the asymptotic constants of $h^{\pm}$  by $h^\pm_\infty$
and write 
$$
h^\pm=h^\pm_\infty+r^\pm
$$
so that $r^\pm \in H^{3,\delta_0}(\R^\pm\times S^1,\R^N).$ 

We point out that in contrast to Section \ref{arisingx} we do not require that the asymptotic constants $h^{+}_{\infty}$ and $h^{-}_{\infty}$ coincide.

We define for $a\in \C$ satisfying $0\leq |a|<\frac{1}{2}$ 
the mapping 
$$
\rho_a:\wh{E}\to  \wh{E}
$$
by
$$
\rho_a(h^+,h^-)=(h^+_\infty,h^-_\infty)+\pi_a(r^+,r^-),
$$
where $\pi_a$ is the projection defined by the formulae in Section \ref{arisingx}. Since the projection $\pi_0$ is equal to the identity map, also $\rho_0(h^+, h^-)=(h^+, h^-)$ for any pair $(h^+, h^-)\in \wh{E}$.  From $\pi_a\circ \pi_a=\pi_a$ we derive the following lemma.
\begin{lem}
The map $\rho_a:\wh{E}\to  \wh{E}$ is a projection.
\end{lem}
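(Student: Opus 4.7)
The plan is to verify $\rho_a \circ \rho_a = \rho_a$ directly by carefully tracking the decomposition of a pair in $\wh{E}$ into its asymptotic constants and its $H^{3,\delta_0}$-remainder, and exploiting the fact, already proved in Section \ref{arisingx}, that $\pi_a$ is a genuine projection on the space $E$ of pairs with common asymptotic limits.

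First I would fix $(h^+,h^-)\in \wh{E}$ with decomposition $h^\pm = h^\pm_\infty + r^\pm$ and set $(\eta^+,\eta^-) := \pi_a(r^+,r^-)$. The pair $(r^+,r^-)$ lies in $E$ since its asymptotic limits both vanish, so $\pi_a$ applies. The key observation, extracted from the explicit formulae for $\pi_a$ in Section \ref{arisingx}, is that the common asymptotic limit of $\eta^+$ and $\eta^-$ equals $c:=\av(r^+,r^-)$. Consequently the unique decomposition of $\rho_a(h^+,h^-) = (h^+_\infty + \eta^+,\, h^-_\infty + \eta^-)$ in $\wh{E}$ has asymptotic constants $(h^+_\infty + c,\, h^-_\infty + c)$ and remainder $(\eta^+ - c,\, \eta^- - c)\in H^{3,\delta_0}(\R^+\times S^1)\oplus H^{3,\delta_0}(\R^-\times S^1)$.

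Applying $\rho_a$ a second time therefore produces
\[
\rho_a\bigl(\rho_a(h^+,h^-)\bigr) \;=\; (h^+_\infty + c,\,h^-_\infty + c) \;+\; \pi_a\bigl(\eta^+ - c,\, \eta^- - c\bigr).
\]
The main (and only) step that requires a small computation is identifying the second summand. I would argue by linearity of $\pi_a$ on $E$: since $(\eta^+,\eta^-) = \pi_a(r^+,r^-)$ already lies in the range of the projection $\pi_a$, we have $\pi_a(\eta^+,\eta^-) = (\eta^+,\eta^-)$; and the constant pair $(c,c)$ lies in $\ker\ominus_a$ (a direct check using the explicit formulae, since $\ominus_a(c,c)= -(1-\beta_a)(c-c)+\beta_a(c-c)=0$), hence $\pi_a(c,c)=(c,c)$. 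Subtracting gives $\pi_a(\eta^+-c,\eta^--c) = (\eta^+-c,\eta^--c)$.

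Substituting back yields $\rho_a\circ\rho_a(h^+,h^-) = (h^+_\infty + \eta^+,\, h^-_\infty + \eta^-) = \rho_a(h^+,h^-)$, which is the desired identity. The one conceptual subtlety to keep in mind, and the only place the argument could go wrong, is that the decomposition $h^\pm = h^\pm_\infty + r^\pm$ must be unique so that the second application of $\rho_a$ is unambiguous; this is immediate because $\delta_0 > 0$ forces elements of $H^{3,\delta_0}(\R^\pm\times S^1,\R^N)$ to decay at infinity, pinning down the asymptotic constants. Everything else reduces to bookkeeping with the already-established projection identity $\pi_a\circ\pi_a=\pi_a$ on $E$, so no new analytic input is required.
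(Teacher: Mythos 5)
Your proof is correct and follows essentially the same route as the paper's: decompose $h^\pm = h^\pm_\infty + r^\pm$, observe that $\pi_a(r^+,r^-)$ has common asymptotic constant $c=\av(r^+,r^-)$, re-express $\rho_a(h^+,h^-)$ with the shifted constants $(h^+_\infty+c,\,h^-_\infty+c)$, and conclude from the idempotence of $\pi_a$ on $E$. The only cosmetic difference is that you make explicit the subsidiary fact $\pi_a(c,c)=(c,c)$, which the paper uses implicitly when it passes from $\pi_a(\eta^+,\eta^-)$ to $(c,c)+\pi_a(\xi^+,\xi^-)$.
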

\begin{proof}
Given a $(h^+, h^-)\in \wh{E}$, we have to show that 
$$
\rho_a\circ \rho_a(h^+,h^-)=\rho_a(h^+,h^-).
$$
We write $h^\pm =h^\pm_{\infty}+r^\pm$ and set $\pi_a (r^+, r^-)=(\eta^+, \eta^-)$. 
Then 
\begin{equation}\label{proj-eq0}
\rho_a(h^+, h^-)=(h^+_\infty,h^-_\infty)+\pi_a(r^+,r^-)=(h^+_\infty,h^-_\infty)+(\eta^+, \eta^-).
\end{equation}
From the formula for the projection $\pi_a$ in Section \ref{arisingx}, we know that 
$$(\eta^+, \eta^-)=(c, c)+(\xi^+, \xi^-)$$
where $c$ is the common asymptotic constant of  the maps $\eta^+$ and $\eta^-$ which   is equal to $\text{av}_R(r^+, r^-)$. 
Thus, continuing  with \eqref{proj-eq0}, $$\rho_a(h^+, h^-)=(h^+_\infty+c,h^-_\infty+c)+(\xi^+, \xi^-).$$
Applying $\rho_a$ to both sides, we obtain 
\begin{equation}\label{proj-eq1}
\rho_a\circ \rho_a(h^+, h^-)=(h^+_\infty+c,h^-_\infty+c)+\pi_a(\xi^+, \xi^-).
\end{equation}
Using $\pi_a\circ \pi_a=\pi_a$ and $\pi_a(r^+, r^-)=(\eta^+, \eta^-)$, we obtain 
$\pi_a(r^+, r^-)=\pi_a(\eta^+, \eta^-)=(c, c)+\pi_a(\xi^+, \xi^-)$. Therefore, the  right-hand side of \eqref{proj-eq1} is equal to
$$
(h^+_\infty+c,h^-_\infty+c)+\pi_a(\xi^+, \xi^-)=(h^+_{\infty}, h^-_{\infty})+\pi_a(r^+, r^-)=\rho_a(h^+,h^-).$$
This finishes the proof of the lemma. 
\end{proof}
 From the already established sc-smoothness properties of  $\pi_a$  in Section \ref{arisingx} we deduce the sc-smoothness of the projection  $\rho_a$.
\begin{thm}
The map
$$
B_\frac{1}{2}\oplus \wh{E}\to  \wh{E},  \quad (a,(h^+,h^-))\mapsto  \rho_a(h^+,h^-)
$$
is sc-smooth.
\end{thm}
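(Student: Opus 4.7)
The plan is to realize $\rho_a$ as a sum of compositions of maps whose sc-smoothness has already been established, so that the chain rule (Theorem \ref{chain-thm1}) together with the fact that sums of sc-smooth maps are sc-smooth does all the work.

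First I would decompose the assignment $(h^+,h^-)\mapsto(h^+_\infty+r^+,h^-_\infty+r^-)$ into its asymptotic part and its remainder part. Let
\begin{equation*}
P:\wh{E}\to\R^N\oplus\R^N,\quad (h^+,h^-)\mapsto (h^+_\infty,h^-_\infty),
\end{equation*}
and
\begin{equation*}
Q:\wh{E}\to E_0,\quad (h^+,h^-)\mapsto (r^+,r^-)=(h^+-h^+_\infty,h^--h^-_\infty),
\end{equation*}
where $E_0=H^{3,\delta_0}(\R^+\times S^1,\R^N)\oplus H^{3,\delta_0}(\R^-\times S^1,\R^N)$ is equipped with the sc-structure whose level $m$ is $H^{3+m,\delta_m}\oplus H^{3+m,\delta_m}$. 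Applied componentwise, Lemma \ref{lem2-18} (together with the obvious observation that extracting the asymptotic constant is an sc-operator on $H^{3,\delta_0}_c(\R^\pm\times S^1,\R^N)$) shows that $P$ is an sc-operator, and consequently $Q=\id-J\circ P$ is an sc-operator as well, where $J$ is the sc-inclusion of constants into $\wh{E}$. Both $P$ and $Q$ are therefore sc-smooth.

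Next I would observe that the retraction $r:B_\frac{1}{2}\oplus E_0\to B_\frac{1}{2}\oplus E_0$ of Theorem \ref{first-splice}, given by $r(a,(r^+,r^-))=(a,\pi_a(r^+,r^-))$, is sc-smooth. Composing with the projection $B_\frac{1}{2}\oplus E_0\to E_0$ on the second factor (an sc-operator) yields that
\begin{equation*}
\Pi:B_\frac{1}{2}\oplus E_0\to E_0,\quad (a,(r^+,r^-))\mapsto \pi_a(r^+,r^-)
\end{equation*}
is sc-smooth. Consequently the map
\begin{equation*}
B_\frac{1}{2}\oplus\wh{E}\to\wh{E},\quad (a,(h^+,h^-))\mapsto \Pi(a,Q(h^+,h^-))
\end{equation*}
is sc-smooth by the chain rule, once one notes that the natural inclusion $E_0\hookrightarrow\wh{E}$ obtained by viewing a pair $(\eta^+,\eta^-)$ with common asymptotic limit $c$ as an element of $\wh{E}$ (with $h^\pm_\infty=c$ and $r^\pm=\eta^\pm-c$) is an sc-operator.

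Finally I would write
\begin{equation*}
\rho_a(h^+,h^-)=J\circ P(h^+,h^-)+\Pi(a,Q(h^+,h^-)),
\end{equation*}
which exhibits the map $(a,(h^+,h^-))\mapsto\rho_a(h^+,h^-)$ as a sum of two sc-smooth maps, hence sc-smooth. I do not expect any genuine obstacle here: the whole content of the proof is bookkeeping and the chain rule, because every nontrivial analytic estimate (continuity, compactness, the delicate behavior as $a\to 0$) has already been absorbed into Theorem \ref{first-splice} for $\pi_a$. The only point requiring a brief check is that the asymptotic-constant splitting $\wh{E}\cong(\R^N\oplus\R^N)\oplus E_0$ respects the sc-filtration, which is immediate from the definition of $\wh{E}_m$.
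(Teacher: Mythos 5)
Your proof is essentially the paper's: both split $(h^+,h^-)$ into the asymptotic-constant part and the decaying remainder, feed the remainder into $\pi_a$ via Theorem \ref{first-splice}, include the result back in $\wh{E}$, and add the constant part; sc-smoothness then follows from the chain rule and the fact that sums of sc-smooth maps are sc-smooth. One notational slip: the map $\Pi:(a,(r^+,r^-))\mapsto\pi_a(r^+,r^-)$ does not land in your $E_0$ (maps decaying to zero) but rather in $E$ (maps with a common asymptotic constant, here equal to $\text{av}_a(r^+,r^-)$), and the retraction of Theorem \ref{first-splice} is likewise defined on $B_{\frac{1}{2}}\oplus E$, not $B_{\frac{1}{2}}\oplus E_0$; since you later describe the inclusion correctly as carrying a pair with common asymptotic $c$ into $\wh{E}$, the argument still goes through once the target of $\Pi$ is fixed to $E$.
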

\begin{proof} If $h^\pm=h^\pm_{\infty}+r^\pm$, then 
the map
$$
(a,(h^+,h^-))\mapsto   (a, (r^+,r^-))
$$
as well as the map
$$
(a,(h^+,h^-))\mapsto   (h^+_\infty,h^-_\infty)
$$
are sc--smooth. 
By composing  the first map with the sc--smooth map $\pi: B_\frac{1}{2}\oplus E\to E$, $(a, (h^+, h^-))\mapsto \pi_a (h^+, h^-)$ and then  adding to it the second map we obtain an sc--smooth map.
\end{proof}

The geometric interpretation of  the projection $\rho_a$ is the following.
\begin{lem}\label{kernel-lem1}
The projection $\rho_a$ is the projection onto the kernel of the map 
$
P_a^-:\wh{E}\to  H^{3,\delta_a}_c(C_a),$ defined by 
$$
 (h^+,h^-)\mapsto   \ominus_a(h^+-h^+_\infty,h^--h^-_\infty), 
$$
along the kernel of the map
$
P_a^+:\wh{E}\to  \R^N\oplus H^3(Z_a),$ defined by 
$$
(h^+,h^-)\mapsto   (h^+_\infty-h^-_\infty,\oplus_a(h^+,h^-+h^+_\infty-h^-_\infty)).
$$
\end{lem}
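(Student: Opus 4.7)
The plan is to verify directly the two set equalities $\operatorname{image}(\rho_a)=\ker P_a^-$ and $\ker\rho_a=\ker P_a^+$; since $\rho_a$ is already known to be idempotent, giving the direct sum decomposition $\wh{E}=\operatorname{image}(\rho_a)\oplus\ker\rho_a$, these two equalities identify $\rho_a$ as the asserted projection. The entire argument reduces to linear algebra inside $E$ once the asymptotic constants are stripped off, and is controlled by the already established properties of $\pi_a$: that $\pi_a$ projects $E$ onto $\ker\ominus_a$ along $\ker\oplus_a$ and preserves the central average $\av$.

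For $(h^+,h^-)\in\wh{E}$, write $h^\pm=h^\pm_\infty+r^\pm$ so that $(r^+,r^-)\in E$ has common asymptotic constant $0$, and set $(\eta^+,\eta^-):=\pi_a(r^+,r^-)$, with common asymptotic constant $c=\av(r^+,r^-)$. The inclusion $\operatorname{image}(\rho_a)\subseteq\ker P_a^-$ reduces to the identity $\ominus_a(\eta^+-c,\eta^--c)=\ominus_a(\eta^+,\eta^-)=0$: the first equality uses the translation invariance of $\ominus_a$ under adding a common constant to both entries (the constant cancels against $\av_a$ in the defining formula), and the second is the defining property of $\pi_a$. The reverse inclusion is immediate: if $P_a^-(h^+,h^-)=\ominus_a(r^+,r^-)=0$ then $\pi_a$ fixes $(r^+,r^-)$, so $\rho_a(h^+,h^-)=(h^+,h^-)$.

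For the kernel, the equation $\rho_a(h^+,h^-)=0$ separates into a condition on asymptotic constants and a condition on oscillating parts, namely $h^+_\infty=h^-_\infty=-c$ together with $\pi_a(r^+,r^-)=(c,c)$; the second is equivalent to $(r^+-c,r^--c)\in\ker\oplus_a$, which translates to $\wh{\oplus}_a(r^+,r^-)\equiv c$ on $Z_a$. Unravelling $P_a^+(h^+,h^-)=0$ gives instead $h^+_\infty=h^-_\infty=:d$ together with $\wh{\oplus}_a(r^+,r^-)\equiv -d$; restricting this to the central loop $s=R/2$ and integrating over $S^1$ yields $-d=\av(r^+,r^-)=c$, so the two sets of conditions coincide, giving $\ker\rho_a=\ker P_a^+$.

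The main (and essentially only) obstacle is the careful translation between the two natural decompositions of $\wh{E}$ used here: the ``asymptotic-constant-plus-$r$-part'' decomposition on each half-cylinder used to define $\rho_a$, versus the mean-value-at-$R/2$ and total-gluing data appearing in $P_a^\pm$. Once the constant-shift behaviour of $\oplus_a$ and $\ominus_a$ and the average-preserving property of $\pi_a$ are in hand, no further analytic input is needed.
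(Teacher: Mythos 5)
Your proof is correct, and it takes a genuinely different (and somewhat leaner) route than the paper's.

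The paper first establishes the direct sum $\wh{E}=\ker P_a^+\oplus\ker P_a^-$ \emph{independently}: it packages $P_a^+$ and $P_a^-$ into a single linear map $P:\wh{E}\to\R^N\oplus H^3(Z_a)\oplus H^{3,\delta_0}_c(C_a)$ and proves $P$ is a bijection (reducing to the sc-isomorphism property of the total gluing map $\boxdot_a$ from Theorem \ref{propn-1.27}); injectivity gives trivial intersection and surjectivity gives that the two kernels span. Only then does it split a given $(h^+,h^-)$ as $\rho_a(h^+,h^-)+(\id-\pi_a)(r^+,r^-)$ and check each summand lands in the right kernel. You instead take for granted the direct sum $\wh{E}=\operatorname{image}\rho_a\oplus\ker\rho_a$ coming for free from idempotency (established in the lemma immediately preceding), and reduce the whole statement to the two set identities $\operatorname{image}\rho_a=\ker P_a^-$ and $\ker\rho_a=\ker P_a^+$. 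Your verification of those two identities is sound: the constant-shift invariance $\ominus_a(u^++A,u^-+A)=\ominus_a(u^+,u^-)$ handles the image inclusion, the defining property of $\pi_a$ gives the reverse inclusion, and the kernel equality goes through after identifying the respective conditions on asymptotic constants and oscillating parts, with the bridge being the mean-value computation at $s=R/2$ showing $c=\av(r^+,r^-)=-d$. What your approach buys is that you never need to prove bijectivity of $P$; what it costs (nothing here, but worth noting) is that you obtain $\wh{E}=\ker P_a^+\oplus\ker P_a^-$ only as a corollary rather than as an explicit structural statement with an explicit inverse, which the paper later leans on when it sets $F_a=\ker P_a^-$ and discusses the restricted map $P_a^+\vert F_a$.

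One small precision worth flagging: in the step $\pi_a(r^+,r^-)=(c,c)\Leftrightarrow(r^+-c,r^--c)\in\ker\oplus_a$, you are implicitly also using that $(c,c)\in\ker\ominus_a$, which holds because $\av_a(c,c)=c$ so both terms in the defining formula for $\ominus_a(c,c)$ vanish. That observation is needed to invoke uniqueness of the $\ker\ominus_a\oplus\ker\oplus_a$ decomposition in the converse direction, and it would be good to say it explicitly.
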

\begin{proof} We first show that $\wh{E}=\ker P^+_a \oplus \ker P^-_a.$ To do this we consider the linear map 
$$
P:\wh{E}\to  \R^N\oplus H^3(Z_a)\oplus H^{3,\delta_0}_c(C_a)
$$
defined  by
$$
(h^+,h^-)\mapsto   (h^+_\infty-h^-_\infty,\oplus_a(h^+,h^-+h^+_\infty-h^-_\infty),\ominus_a(h^+-h^+_\infty, h^- -h^-_\infty)).
$$
Clearly, $P$ is linear and we claim that  it  is a bijection. Then the  injectivity of $P$ will show that $\ker  P^+_a\cap  \ker P^-_a=\{(0,0)\}$ while  the surjectity of 
$P$  will show that $\ker  P^+_a+\ker  P^-_a=\wh{E}$.

Now, if  $(h^+, h^-)$ belongs  to the kernel of the map $P$, then $h^+_{\infty}=h^-_{\infty}$ and consequently $\oplus_a (h^+, h^-)=0$ and $\ominus_a (h^+, h^-)=0$, and hence by Proposition  \ref{first-splice},  $h^+=h^-=0$.
In order to prove  that $P$ is a surjection we take $(\xi, \wh{u}, \wh{v})\in  \R^N\oplus H^3(Z_a)\oplus H^{3,\delta_0}_c(C_a).$  
By Proposition \ref{first-splice} again,  there is  a unique pair $(u^+, u^-)\in E$ solving the equations 
$$\oplus_a (u^+, u^-)=\wh{u}\quad \text{and}\quad \ominus_a (u^+, u^-)=\wh{v}$$
where we have denoted by $E$ the subspace of $\wh{E}$ consisting of pairs having  matching asymptotic limits.  If $c$ is the  common asymptotic constant of the maps $u^\pm$, then $u^\pm =c+r^\pm$ and 
$\ominus_a (u^+, u^-)=\ominus_a (r^+, r^-)=\wh{v}$. Given $\xi$ we introduce the maps 
$$h^+=u^+\quad \text{and}\quad h^-=-\xi+u^-$$
whose   asymptotic constants are equal to $h^+_{\infty}=c$ and $h^-_{\infty}=-\xi+c$,  so that $h^+_{\infty}-h^-_{\infty}=\xi$, $\oplus_ a(h^+, h^-+h^+_{\infty}-h^-_{\infty})=\oplus_a(u^+, u^-)=\wh{u}$, and 
$\ominus_a (h^+-h^+_{\infty}, h^- -h^-_{\infty})=\ominus_a(r^+, r^-)=\wh{v}$. We have proved that $P$ is surjective. This finishes the proof of our claim and hence   $\wh{E}=\ker  P^+_a\oplus \ker P^-_a.$ 

We split a given pair  $(h^+,h^-)\in\wh{E}$ where $h^\pm=h^\pm_{\infty}+r^\pm$ into the sum 
$$
(h^+,h^-)=(w^+,w^-)+(v^+,v^-)
$$
in which 
\begin{align*}
(w^+,w^-):&=(h^+_\infty,h^-_\infty)+ \pi_a(r^+,r^-)\\
\intertext{and}
(v^+,v^-):&= (\id -\pi_a) (r^+,r^-).
\end{align*}
We claim that $(w^+,w^-)\in \ker  \ominus_a$ and $(v^+, v^-)\in \ker  \oplus_a$.  
Since $\id -\pi_a$ is the projection onto $\ker  \oplus_a$ we conclude   that $\oplus_a(v^+,v^-)=0$. Moreover,  by the formula for the projection $\pi_a$ in Section \ref{arisingx},  the pair $(v^+,v^-)$ has a common asymptotic limit equal to $c=-\text{av}_a (r^+, r^-).$ This means that $(v^+,v^-)\in E$ and 
\begin{equation*}
\begin{split}
P_a^+(v^+, v^-)&=(v^+_{\infty}-v^-_{\infty}, \oplus_a (v^+, v^-+v^+_{\infty}-v^-_{\infty})\\
&=(0,  \oplus_a (v^+, v^-))=(0,0).
\end{split}
\end{equation*}
Hence  $(v^+, v^-)\in  \ker P_a^+$. Considering the term $(w^+,w^-)$, we note that  $w^\pm_{\infty}=h^\pm_\infty+c$. Using the formula for the projection map $\pi_a$ and abbreviating  $\pi_a (r^+, r^-)=(\eta^+, \eta^-)$,  we have $\pi_a (r^+-c, r^--c)=(\eta^+-c, \eta^--c)$.  Hence 
\begin{equation*}
\begin{split}
P^-_a(w^+, w^-)&=\ominus_a(w^+-w^+_\infty,w^--w^-_\infty)=\ominus_a(w^+-h^+_\infty-c,w^--h^-_\infty-c)\\
&=\ominus_a(\eta^+-c,\eta^--c) =\ominus_a\circ \pi_a (r^+-c,r^--c) =0, 
\end{split}
\end{equation*}
since $\pi_a$ is the projection onto the kernel of the map $\ominus_a$. Consequently, $(w^+, w^-)\in \ker P^-_a$. This completes the proof of Lemma \ref{kernel-lem1}.
\end{proof}

If $a\neq 0$ we abbreviate by $F_a$ the kernel of the map $P_a^-$.   Lemma \ref{kernel-lem1} implies that 
the  restriction of the  map $P_a^+$ to the kernel, 
$$P^+_a :F_a\to \R^N\oplus H^3(Z_a,\R^N), \quad (h^+,h^-)\to  (h^+_\infty-h^-_\infty,\oplus_a(h^+,h^-+h^+_\infty-h^-_\infty))$$
is a bijection.
If $a=0$, we put $\rho_0=\id$. In this  case, we set  $F_0=\wh{E}$ and define the map 
$P^+_0:\wh{E}\to  \R^N\oplus E$ by 
$$
(h^+,h^-)\mapsto   (h^+_\infty-h^-_\infty,h^+,h^-+h^+_\infty-h^-_\infty), 
$$
where  $E$ consists of those pairs in $\wh {E}$  which have matching asymptotic limits. This map is also a bijection.

Next  we consider the case of dimension $N=2$ and we restrict ourselves to the subspace $\wh{E}_0$
of $\wh{E}$ consisting of all  pairs $(h^+,h^-)$ of maps in $H^{3, \delta_0}_c(\R^\pm\times S^1, \R^2)$ satisfying 
$$h^\pm(0,0)=(0,0)\quad  \text{and} \quad h^\pm(0,t)\in \{0\}\times \R\subset \R^2$$
for all $t\in S^1$.  We abbreviate the image of $\wh{E}_0$ under the projection $\rho_a$ by 
$$
G^a =\rho_a(\wh{E}_0)=\{ (h^+, h^-)\in \wh{E}_0\vert \ominus_a(h^+-h^+_{\infty}, h^--h^-_{\infty})=0\}
$$
where we have used Lemma \ref{kernel-lem1}. 
Observe that $G^a$ is a  subspace of $\wh{E}_0.$

The map $\Delta:Z_a\to  \R^2$,  defined by 
$$
\Delta=\oplus_a(h^+,h^-+h^+_\infty-h^-_\infty),
$$ 
has the following  properties:
\begin{itemize}
\item[(1)]  $\Delta(p^+_a)=\Delta([0,0])=(0,0)\quad \text{and}\quad  \Delta(p_a^-)=\Delta([0,0]')=h^+_\infty-h^-_\infty$.
\item[(2)] $\Delta([0,t])\in \{0\}\times \R \quad \text{and}\quad   \Delta([R,t])\in (h^+_\infty-h^-_\infty)+(\{0\}\times \R)$.
 \end{itemize}
  
If  $0<\abs{a}<\frac{1}{2}$, we denote by  $H_a$  the  sc-subspace of $H^3(Z_a)$ consisting of those maps $u:Z_a\to \R^2$ which satisfy $u(p^+_a)=(0, 0)$ and 
 $$
 u([0,t])\in \{0\}\times \R\subset  \R^2\quad \text{and}\quad u([0,t]')\in u(p^-_a)+(\{0\}\times\R).
 $$
 If  $a=0$,  we  denote by $H_0$  the space  consisting  of all 
 pairs $(h^+,h^-)\in \wh{E}_0$ for which  there exists a constant $c\in \R^2$  satisfying   $(h^+,h^-+c)\in E$.

\begin{lem}
For $|a|<\frac{1}{2}$,   the map $ \Phi_a: G^a\to  H_a$,  defined by 
\begin{equation*}
 \Phi_a(h^+,h^-)=
 \begin{cases} 
 \oplus_a(h^+,h^-+h^+_\infty-h^-_\infty)&\quad \text{if $a\neq 0$,}\\
\phantom{a} (h^+,h^-+h^+_\infty-h^-_\infty)&\quad \text{if $a=0$,}
 \end{cases}
 \end{equation*}
 is a linear sc-isomorphism.
  \end{lem}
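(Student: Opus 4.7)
The plan is to build $\Phi_a^{-1}$ explicitly from the inverse of the total gluing map $\boxdot_a$ (which exists and is sc-smooth by Theorem \ref{propn-1.27}), together with the trace evaluation $u \mapsto u(p_a^-)$, and to verify that the boundary conditions defining $H_a$ pull back precisely to the boundary conditions defining $\wh{E}_0$ (and hence $G^a$).

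Linearity of $\Phi_a$ is immediate, and well-definedness reduces to an inspection at the two boundary circles of $Z_a$. Since $|a|<\frac{1}{2}$, the support of $\beta_a$ gives $\beta_a(0)=1$ and $\beta_a(R)=0$, so for $(h^+,h^-)\in G^a$ the map $u=\Phi_a(h^+,h^-)$ satisfies $u([0,t])=h^+(0,t)$ and $u([0,t]')=h^-(0,t)+h^+_\infty-h^-_\infty$. The boundary conditions imposed in the definition of $\wh{E}_0$ then translate exactly into the boundary conditions defining $H_a$ (with $u(p_a^-)=h^+_\infty-h^-_\infty$).

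For bijectivity, the key observation is that any $(h^+,h^-)\in G^a$ lies in the kernel $F_a$ of $P_a^-$, on which, by Lemma \ref{kernel-lem1}, the map $P_a^+$ is a linear bijection onto $\R^N\oplus H^3(Z_a)$. Since the first component $h^+_\infty-h^-_\infty$ can be recovered from the second component via the evaluation $u\mapsto u(p_a^-)$, the map $\Phi_a$ captures exactly the information of $P_a^+|_{F_a}$. Concretely, given $u\in H_a$, I would set $d=u(p_a^-)$, apply Theorem \ref{propn-1.27} to the pair $(u-d/2,0)\in H^3(Z_a)\oplus H^{3,\delta_0}_c(C_a)$ to obtain a unique $(\wt h^+,\wt h^-)\in E$ with $\oplus_a(\wt h^+,\wt h^-)=u-d/2$ and $\ominus_a(\wt h^+,\wt h^-)=0$, and then define $h^\pm=\wt h^\pm\pm d/2$. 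A direct computation using the identity $\oplus_a(u^++c,u^-+c)=\oplus_a(u^+,u^-)+c$ and the translation-invariance $\ominus_a(u^++c,u^-+c)=\ominus_a(u^+,u^-)$ shows that $\Phi_a(h^+,h^-)=u$ and $(h^+,h^-)\in F_a$; the pull-back of boundary conditions done in the previous paragraph shows that $(h^+,h^-)\in\wh{E}_0$, so $(h^+,h^-)\in G^a$. Injectivity follows by the same reasoning: if $\Phi_a(h^+,h^-)=0$ then $d=h^+_\infty-h^-_\infty=0$, so $\boxdot_a(h^+,h^-)=0$ and Theorem \ref{propn-1.27} forces $(h^+,h^-)=0$.

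Finally, sc-continuity of $\Phi_a$ follows from the sc-smoothness of the gluing map (Theorem \ref{first-splice}) and of the asymptotic-constant projection. sc-continuity of $\Phi_a^{-1}$ follows from the continuity of the trace $u\mapsto u(p_a^-)$ on each Sobolev level together with the sc-continuity of $\boxdot_a^{-1}$. Alternatively, the norm equivalences in Corollary \ref{est-cor1} and Theorem \ref{boxdot-est1} give quantitative level-by-level bounds, uniform in $a$, which is stronger than required here. The case $a=0$ is handled separately and is essentially trivial: $\Phi_0$ is a linear shift between explicitly described sc-Banach spaces and its inverse is $(w^+,w^-)\mapsto (w^+,w^--(w^+_\infty-w^-_\infty))$. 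The main technical care needed is the boundary-condition bookkeeping in the surjectivity step; once that is correctly aligned, the remaining content is an application of Theorem \ref{propn-1.27}.
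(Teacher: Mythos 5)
Your proof for $a\neq 0$ is correct and takes essentially the same route as the paper: both reduce the bijectivity of $\Phi_a$ to Theorem \ref{propn-1.27}, and both track the boundary conditions by evaluating $\oplus_a(h^+,h^-+h^+_\infty-h^-_\infty)$ at $s=0$ and $s=R$. The paper writes out the resulting formulas $h^+ = [u]+\tfrac{\beta_a}{\gamma_a}(u-[u])$ and $h^-(s',t') = [u]-c+\tfrac{1-\beta_a}{\gamma_a}(u-[u])$ concretely, whereas you get the same pair by applying $\boxdot_a^{-1}$ to $(u-d/2,\,0)$ and shifting by $\pm d/2$; substituting the paper's formula for $\boxdot_a^{-1}$ shows the two computations agree. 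Your observation that $P_a^+|_{F_a}$ and $\Phi_a$ carry the same information, since $h^+_\infty-h^-_\infty=u(p_a^-)$, is a clean way to phrase the injectivity argument.

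One small error in the final paragraph: the inverse you write for $a=0$, namely $(w^+,w^-)\mapsto(w^+,\,w^--(w^+_\infty-w^-_\infty))$, is not correct. On the image of $\Phi_0$ one always has $w^-_\infty = h^-_\infty + (h^+_\infty-h^-_\infty) = h^+_\infty = w^+_\infty$, so $w^+_\infty - w^-_\infty \equiv 0$ and your formula is the identity, which does not invert the nontrivial shift. The quantity $d = h^+_\infty - h^-_\infty$ cannot be read off the asymptotic constants of $(w^+,w^-)$; it is recovered from the boundary value $w^-(0,0) = h^-(0,0) + d = d$ (using $h^-(0,0)=(0,0)$ from $\wh{E}_0$), exactly the $a=0$ analogue of the $u(p_a^-)$-evaluation you correctly use for $a\neq 0$. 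The correct inverse is therefore $(w^+,w^-)\mapsto(w^+,\,w^- - w^-(0,0))$. (The paper's own proof only treats $a\neq 0$, so this is a side issue rather than a gap in the main argument.)
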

  \begin{proof}
We only consider the case $a\neq 0$ and begin by showing  the  injectivity of the map $\Phi$. We assume that 
 $ \oplus_a(h^+,h^-+h^+_\infty-h^-_\infty)=0$ for  a pair $(h^+, h^-)\in G^a$. Hence  the pair $(h^+, h^-)\in \wh{E}_0$  solves the system of two equations
 \begin{equation*}
 \begin{aligned}
 \oplus_a(h^+,h^-+h^+_\infty-h^-_\infty)&=0\\
\ominus_a(h^+,h^-+h^+_\infty-h^-_\infty)&=0
\end{aligned}
\end{equation*}
The system has a unique solution $h^+=0$ and $h^-+h^+_{\infty}-h^-_{\infty}=0$.  Since $h^\pm=h^\pm_{\infty}+r^\pm$ where $r^\pm \in H^{3, \delta_0}(\R^\pm \times S^1)$, we conclude that 
$h^+_{\infty}=0$ and $r^\pm=0$. Hence $h^-=h^-_{\infty}$. From $h^-(0, 0)=0$, we obtain that also the constant $h^-_{\infty}=0$ vanishes. Consequently, $(h^+, h^-)=(0, 0)$ as claimed. 

In order to  prove the surjectivity, we choose 
a map $u:Z_a\to  \R^2$ which belongs to $H_a$ and abbreviate  $c=u(p_a^-)=u(R, \vartheta)$. 
We have to solve the system of two equations
 \begin{equation}\label{both}
 \begin{aligned}
 \oplus_a(h^+,h^-+h^+_\infty-h^-_\infty)&=u\\
\ominus_a(h^+,h^-+h^+_\infty-h^-_\infty)&=0
\end{aligned}
\end{equation}
Integrating the first  equation at $s=\frac{R}{2}$ over the circle $S^1$ one obtains $\av (h^+,h^-+h^+_\infty-h^-_\infty)=[u]$. These averages are defined in Section \ref{arisingx}. 
Thus  the solutions of the above system are given by the formulae 
\begin{equation}\label{equationhp0}
h^+(s, t)=[u]+\frac{\beta_a}{\gamma_a}\cdot (u-[u])
\end{equation}
and
\begin{equation}\label{equationhm0}
h^-(s-R, t-\vartheta)+h^+_\infty-h^-_\infty=[u]+\frac{1-\beta_a}{\gamma_a}\cdot (u-[u])
\end{equation}
where, as usual, $\beta_a=\beta_a(s)$ and $\gamma_a=\beta_a^2+(1-\beta_a)^2$. 
Evaluating both sides of the first equation in \eqref{both} at the  point $(R, \vartheta)$, we  find 
$$h^+_{\infty}-h^-_{\infty}=u(R, \vartheta)=:c$$
so that  $h^-$ in \eqref{equationhm0} becomes 
\begin{equation*}
h^-(s-R, t-\vartheta)=[u]-c+\frac{1-\beta_a}{\gamma_a}\cdot (u-[u]).
\end{equation*}
In view of the properties of the function $\beta_a$,  the asymptotic constants $h^\pm_{\infty}$ are equal to $h^+_{\infty}=[u]$ and $h^-_{\infty}=[u]-c.$

It remains to show that $(h^+, h^-)\in \wh{E}_0$.  Using \eqref{equationhp0} and  the properties of the map $u$,  we find  $h^+(0,0)=u(0, 0)=(0, 0)$ and $h^+(0, t)=u(0, t)\in \{0\}\times \R$. 
For the map $h^-$ we use \eqref{equationhm0} and  find that
$h^-(0, 0)=u(R, \vartheta)=c$ and $h^-(0, t)=u(R, \vartheta +t)\in c+\{0\}\times \R$. Hence $(h^+, h^-)\in \wh{E}_0$ and the proof of the lemma is  finished.
 \end{proof}

\begin{rem}
We note that if $h^+$ and $h^-$ have matching asymptotic limits, then  the glued map $\Delta$ takes the boundary values in
 $\{0\}\times \R$. The difference of asymptotic values makes $\Delta$ take boundary values in an affine
 subspace on the right-hand boundary of $Z_a$.
 \end{rem}
In order to understand the upcoming context of the above construction, we consider   pairs 
$(u^+_0,u^-_0)$ of diffeomorphisms 
$$
u^\pm_0:\R^\pm\times S^1\to  
\R^\pm\times S^1
$$
of the half-cylinders, which on the covering spaces  are of the form $u^\pm=\id +h^\pm$ with $h^\pm \in \wh{E}$ specified below. 
The diffeomorphisms leave the boundaries 
$$\partial (\R^\pm \times S^1)=\{0\}\times S^1$$
invariant and we require that they keep  the distinguished points $(0, 0)\in \R^\pm \times S^1$ fixed, so that 
$$
u^\pm_0(0,0)=(0,0).
$$
On the covering spaces  we shall represent  the maps $u^\pm_0$ in the following form, 
\begin{equation}\label{write1}
u^+_0(s,t)=(s,t)+(d_0^+,\vartheta^+_0)+r^+_0(s,t), \quad s\geq 0
\end{equation}
and
\begin{equation}\label{write2}
u^-_0(s',t')=(s',t')+(d_0^-,\vartheta^-_0)+r^-_0(s',t'), \quad s'\leq 0.
\end{equation}
where  $r^\pm_0:\R^\pm\times \R\to  \R^2$ belong to $H^{3,\delta_0}(\R^\pm \times S^1, \R^2)$  and satisfy at the boundaries (where $s=0$)

$$(d_0^\pm, \vartheta^\pm_0)+r^\pm_0(0, t)\in \{0\}\times \R.$$

Our aim is to define   for a gluing parameter $a$ of sufficiently small modulus,  a glued map
$$
\boxplus_a(u^+_0,u^-_0):
Z_a\to  Z_b,
$$
 which maps the glued finite cylinder $Z_a$ introduced in Section \ref{arisingx}  diffeomorphically  onto the glued finite cylinder $Z_b$ belonging to $b=b(a, u^+_0,u^-_0)$.   In order to define the glued map we associate with  $a\neq 0$, the pair $(R, \vartheta)$ defined by 
 $$R=\varphi (\abs{a})\quad \text{and}\quad a=\abs{a}e^{-2\pi i \vartheta},$$
 where, as before, $\varphi$ is the exponential gluing profile $\varphi (r)=e^{\frac{1}{r}}-e$. In view of the above representations of the diffeomorphisms $u^\pm_0$ we define the pair $(R', \varphi')$ by 
 $$R'=R+d_0^+-d^-_0\quad \text{and}\quad \vartheta'=\vartheta+\vartheta_0^+-\vartheta^-_0.$$
 The pair $(R', \vartheta')$ is associated with the gluing parameter $b=b(a, u^+_0,u^-_0)$. 
 \begin{defn}\label{box-map}
 If  $\abs{a}$ is sufficiently small we define the map 
 $\boxplus_a(u^+_0,u^-_0):Z_a\to  Z_b$ as follows. 
 If $a=0$, we set 
$$\boxplus_0(u^+_0,u^-_0)=(u^+_0,u^-_0),$$
and if $a\neq 0$, we define
\begin{equation*}
\begin{split}
&\boxplus_a(u^+_0,u^-_0) ([s,t])\\
&\phantom{==}=[(s, t)+(d_0^+,\vartheta_0^+)+\oplus_a(r^+_0, r^-_0)([s,t])]\\
&\phantom{==}=[(s,t)+(d_0^+,\vartheta_0^+)+\beta_a(s) \cdot r^+_0(s,t)
+(1-\beta_a(s))\cdot r^-_0(s-R,t-\vartheta)].
\end{split}
\end{equation*}
\end{defn}

The coordinates in the domain of  definition of $\boxplus_a(u^+,u^-)$ are $[s,t]$ where $s\in [0,R]$ 
and  the coordinates in the target  cylinder $Z_b$ are $[S,T]$ where  $S\in [0,R']$  and where $R'=R+d^+_0-d^-_0$ as defined above.

Note that  since $\beta_a (s)=1$ if $s\leq \frac{R}{2}-1$, we have $\boxplus_0(u^+_0,u^-_0)([0, t])=[0, T(t)]$ and since $\beta_a (s)=0$ if $s\geq \frac{R}{2}+1$, we have   $\boxplus_0(u^+_0,u^-_0)([R, t])=[R+d^+_0-d^-_0, t+\vartheta^+_0-\vartheta_0^++\wh{T}(t)]$.

The map $\boxplus_a(u^+,u^-)$ involves  the asymptotic data of the two maps $u^\pm$ and incorporates them into the twist
of the target cylinder. The choices involved in the construction
are subject to further constraints later on.

If the gluing parameter $0<|a|$ is  sufficiently small (hence $R$ is sufficiently large),   the map $\boxplus_a(u^+,u^-)$ defines  a diffeomorphism between finite cylinders $Z_a$ and $Z_b$ mapping the marked points  $p^\pm_a$ onto the marked points  $p^\pm_b$. These are the points on $Z_a$ and $Z_b$ corresponding to the original boundary points $(0,0)$.

Now recall that the subspace $\wh{E}_0\subset \wh{E}$ consists of pairs $(h^+, h^-)$ of mappings in $H_c^{3, \delta_0}(\R^\pm\times S^1, \R^2)$ satisfying $h^\pm (0, 0)=(0, 0)$ and $h^\pm (0, t)\in \{0\}\times \R\subset \R^2$ for all $t\in S^1$.  If 
$$h^\pm =h^\pm_{\infty}+r^\pm, $$
with the asymptotic constants $h^\pm_{\infty}$ of  $h^\pm$,  we obtain the mappings $u_0^\pm+h^\pm:\R^\pm\times S^1\to \R^\pm\times S^1$
represented by 
$$(u_0^++h^+)(s, t)=(s, t)+(d^+_0, \vartheta^+_0)+h_{\infty}^++r^+_0(s, t) +r^+(s, t),\quad s\geq 0$$
and 
$$(u_0^-+h^-)(s', t')=(s', t')+(d^-_0, \vartheta^-_0)+h_{\infty}^-+r^-_0(s', t') +r^-(s', t'),\quad s'\leq 0.$$

If  the norms of $r^\pm$ are sufficiently small,  the maps  $u^\pm_0+h^\pm$  are  still diffeomorphisms of the cylinders $\R^\pm \times S^1$ leaving the boundaries 
$\partial (\R^\pm \times S^1)=\{0\}\times S^1$ invariant and fixing the points $(0, 0)$. Therefore, if $\abs{a}$ is sufficiently small, the glued map 
$$\boxplus_a(u^+_0+h^+,u^-_0+h^-):Z_a\to  Z_b$$
for the gluing parameter $b=b(a, u^+_0+h^+,u^-_0+h^-)$,  is a diffeomorphism between finite cylinders preserving the distinguished points. The gluing parameter 
$b(a, u^+_0+h^+,u^-_0+h^-)$ is associated with the pair $(R', \vartheta')$ and is given by 
$$
(R',\vartheta')=(R,\vartheta) + (d_0^+-d_0^-,\vartheta^+_0-\vartheta^-_0)+h^+_\infty-h^-_\infty.
$$
\begin{lem}\label{abmap}
The map $B_{\frac{1}{2}}\oplus H_c^{3,\delta_0}(\R^+\times S^1, \R^2)\oplus H_c^{3,\delta_0}(\R^-\times S^1, \R^2)\to \C$, defined by 
$$(a, h^+, h^-)\mapsto b(a, u^+_0+h^+, u^-_0+h^-),$$
is smooth on every level. In particular, it is sc-smooth.
\end{lem}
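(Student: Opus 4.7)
The plan is to reduce to a finite-dimensional smoothness question and exploit the flatness of the exponential gluing profile at $|a|=0$. On each level $m$ the map
$$\Pi_m: H_c^{3+m,\delta_m}(\R^+\times S^1,\R^2)\oplus H_c^{3+m,\delta_m}(\R^-\times S^1,\R^2)\to \R^2\oplus\R^2,\quad (h^+,h^-)\mapsto (h^+_\infty,h^-_\infty),$$
is a bounded linear projection, hence classically $C^\infty$. By the formula displayed just before the lemma, $b(a,u_0^++h^+,u_0^-+h^-)$ depends on $(h^+,h^-)$ only through $(h^+_\infty,h^-_\infty)$, so it suffices to prove that the induced finite-dimensional map $F:B_\frac{1}{2}\oplus \R^2\oplus\R^2\to\C$, $(a,c^+,c^-)\mapsto b$, is of class $C^\infty$.

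Writing $\Delta_R:=d_0^+-d_0^-+(c^+)_1-(c^-)_1$ and $\Delta_\vartheta:=\vartheta_0^+-\vartheta_0^-+(c^+)_2-(c^-)_2$, the defining relations $\varphi(|b|)=\varphi(|a|)+\Delta_R$ and $\arg(b)=\arg(a)-2\pi\Delta_\vartheta$, together with the exponential profile $\varphi(r)=e^{1/r}-e$, give $1/|b|=1/|a|+\ln(1+\Delta_R e^{-1/|a|})$, and hence, for $a\neq 0$ small, the explicit formula
$$b\;=\;\frac{e^{-2\pi i\Delta_\vartheta}\cdot a}{1+|a|\ln(1+\Delta_R\, e^{-1/|a|})}.$$
Away from $a=0$ this is visibly smooth in all variables; setting $b=0$ at $a=0$ is the unique continuous extension. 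The main step is joint $C^\infty$-smoothness at points with $a=0$, which reduces to studying $\mu(a,\Delta_R):=|a|\ln(1+\Delta_R\, e^{-1/|a|})$ for $a\neq 0$, extended by $\mu(0,\Delta_R):=0$.

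The claim is that $\mu\in C^\infty(B_\frac{1}{2}\oplus\R)$ and is flat along $\{a=0\}$, meaning all partial derivatives vanish there. The key input is that $t\mapsto e^{-1/t}$ and each of its derivatives decay faster than any polynomial in $t$ as $t\to 0^+$. Combined with the chain rule applied to $|a|=\sqrt{a_1^2+a_2^2}$ (whose partials are bounded by powers of $1/|a|$) and repeated differentiation of $\ln(1+\Delta_R e^{-1/|a|})$, one checks by induction on the derivative order that any multi-indexed partial derivative of $\mu$ on the punctured neighborhood is bounded by $P(1/|a|)\cdot e^{-1/|a|}$ for some polynomial $P$, uniformly in $\Delta_R$ on compact sets; hence every derivative extends continuously by $0$ to $a=0$. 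Since $1+\mu$ is then nonvanishing for $|a|$ small, $F(a,c^+,c^-)=e^{-2\pi i\Delta_\vartheta}\cdot a\cdot(1+\mu(a,\Delta_R))^{-1}$ is smooth on $B_\frac{1}{2}\oplus\R^2\oplus\R^2$, and composing with $\Pi_m$ concludes the proof. The principal obstacle is precisely the derivative bookkeeping for $\mu$: differentiating $|a|$ produces $1/|a|$-type singularities and differentiating $e^{-1/|a|}$ produces $e^{1/|a|}$-type blowups, and one must show the two effects balance so that only polynomial-in-$1/|a|$ factors multiply the exponentially small prefactor. This is parallel in spirit to the estimates on $R(a)$ in Lemma \ref{lkj} and to the derivative analysis carried out in the proof of Lemma \ref{proof-funy-example}.
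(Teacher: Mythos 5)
Your argument is correct and the underlying calculus is the same as the paper's; the difference is in how the work is organized. The paper's own proof is extremely short: it observes that $(R',\vartheta')=(R,\vartheta)+(d^+_0-d^-_0,\vartheta^+_0-\vartheta^-_0)+h^+_\infty-h^-_\infty$, notes that $h^\pm\mapsto h^\pm_\infty$ are bounded linear projections, and then invokes Lemma~\ref{lem5.2} on the function $B(r,c)=\varphi^{-1}(\varphi(r)+c)$. The radial-to-complex passage is supplied by Lemma~\ref{thm5.3}, which shows that if $B(r,c)$ is smooth with all $r$-derivatives of order $\geq 2$ vanishing at $r=0$, then $f(z,c)=B(|z|,c)\,z/|z|$ extends to a smooth function of $(z,c)$; since $b=e^{-2\pi i\Delta_\vartheta}\,B(|a|,\Delta_R)\,a/|a|$, the conclusion follows. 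You instead derive the closed-form expression $b=e^{-2\pi i\Delta_\vartheta}a\,\bigl(1+|a|\ln(1+\Delta_R e^{-1/|a|})\bigr)^{-1}$ and argue flatness of $\mu(a,\Delta_R)=|a|\ln(1+\Delta_R e^{-1/|a|})$ at $a=0$ by a direct derivative count. These are the same estimates in disguise: inside the proof of Lemma~\ref{lem5.2} the paper factors $B(x,c)=x/(1+g(x,c))$ with $g(x,c)=x\ln(1+ce^{-1/x})$, which is exactly your $\mu$ in the radial variable $x=|a|$; you then merge that with the content of Lemma~\ref{thm5.3} by doing the derivative bookkeeping directly in $a\in\C$ rather than first in $r=|a|$ and then lifting. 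The paper's route buys reusability (the two appendix lemmas are quoted elsewhere, e.g.\ the related Lemmata~\ref{lkj} and~\ref{polj}), while your route buys concreteness and avoids having to state and prove the auxiliary lifting lemma. One small point worth making explicit in a polished write-up: passing from "all partial derivatives of $\mu$ on the punctured disc tend to $0$ as $a\to 0$" to "$\mu$ is $C^\infty$ at $a=0$ with all derivatives zero" requires the standard inductive mean-value-theorem argument (as in the proof that $e^{-1/|x|^2}$ is smooth), which your proposal uses tacitly.
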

\begin{proof}
 If $a=\abs{a}e^{-2\pi i \vartheta}$ and $b=\abs{b}e^{-2\pi i \vartheta'}$, we have defined $R=\varphi (\abs{a})$ and $R'=\varphi (\abs{b})$, with the gluing profile $\varphi (r)=e^{\frac{1}{r}}-e$ for $0<r\leq 1$.  By construction 
$(R', \vartheta')=(R, \vartheta)+(d^+-d^-, \vartheta^+-\vartheta^-)+(h^+_{\infty}- h^-_{\infty})$. The maps $h^\pm \mapsto h^\pm_{\infty}$ are  linear projection and hence smooth on every level. 

On the other hand, the function $B$ defined by $B(r, c):=\varphi^{-1}(\varphi (r)+c)$ if $r>0$ and $B(0, c)=0$ is smooth by Lemma \ref{lem5.2} of the appendix. It follows that the map 
$(a, h^+, h^-)\mapsto b(a, u_1^++h^+, u_1^-+h^-)$ is indeed smooth on every level and, in particular, sc-smooth  as desired.
\end{proof}

Explicitly, the glued map is computed to be 
\begin{equation*}
\begin{split}
&\boxplus_a(u^+_0+ h^+,u^-_0+h^-)([s,t])\\
&=\left[ (s,t)+(d^+_0,\vartheta^+_0)+h^+_\infty+\oplus_a(r^+_0+r^+, r^-_0+r^-)([s, t]\right] \\
&=\left[\boxplus_a(u^+_0,u^-_0)([s,t]) + h^+_\infty +\oplus_a(r^+,r^-)([s,t])\right]\\
&=\left[\boxplus_a(u^+_0,u^-_0)([s,t])+\oplus_a(h^+,h^-+h^+_\infty-h^-_\infty)([s,t])\right]
\end{split}
\end{equation*}
where  $[\ , \ ]$  denotes  the equivalence class of coordinates in $Z_a$ as well as in and $Z_b$.

\begin{rem} The following remark will be made precise later on. If $w_{a_0}:Z_{a_0}\to  Z_{b_0}$
is the  diffeomorphism constructed by means of  $w_{a_0}=\boxplus_{a_0}(u^+_0,u^-_0)$ and $b_0=b(a_0,u^+_0,u^-_0)$, then
given $(a,b,w)$ close to $(a_0,b_0,w_{a_0})$ and abbreviating $w_a=\boxplus_{a}(u^+_0,u^-_0)$
we shall solve later on the equation
$$
w_a +\oplus_a(h^+,h^-+h^+_\infty-h^-_\infty) =w
$$
for $(h^+,h^-)\in \wh{E}_0$ small and satisfying, in addition, $\rho_a(h^+,h^-)=(h^+,h^-)$. As we shall see  this problem has a
unique solution which allows  to construct  M-polyfold charts  for suitable spaces  later on.
\end{rem}

\subsection{An M-Polyfold Construction, Proof of Theorem \ref{thm-comp}}\label{subsub}
The subsection \ref{subsub} is devoted to the proofs of Proposition \ref{prop1.32}  and Theorem \ref{thm-comp} about the  M-polyfold structures on the set $X$ and on its ``completion'' $\ov{X}$.

In order to recall the definition of the set $X$ we denote by 
$\Gamma$  the set of all pairs $(a,b)$ of complex numbers satisfying $a\cdot b\neq 0$ and $\abs{a}, \abs{b}<\varepsilon$.  The size of $\varepsilon$ will be adjusted during the proof. 

\begin{defn}\label{Xdef}
The set $X$ consists of  all triples $(a,b,w)$ in which $a, b\in \Gamma$ and $w:Z_a\rightarrow Z_b$ is a $C^1$-diffeomorphism  between the two finite glued cylinders  belonging  to  the Sobolev class $H^3$ and satisfying  $w(p^\pm_a)=p^\pm_b$. 
\end{defn}

In order to define a topology on $X$  we fix a point $(a_0,b_0,w_0)\in X$ and choose a family $a\mapsto  \phi_a$ of diffeomorphisms 
$$
\phi_a:Z_{a}\rightarrow Z_{a_0}
$$
defined for  gluing parameters $a$ close to $a_0$, mapping  $p^{\pm}_{a}$ to $p^{\pm}_{a_0}$, and satisfying $\phi_{a_0}=\id$.  We assume that the family $a\mapsto \phi_a$ is smooth in the following sense. 
On the finite cylinder $Z_{a_0}$ we have the global coordinates $Z_{a_0}\ni [s,t]\rightarrow (s,t)\in [0,R_0]\times S^1$ and $[s',t']'\rightarrow (s',t')\in [-R_0,0]\times S^1$. Similarly, we have  two  global coordinates on $Z_a$. If $a$ is close to $a_0$, we can express the map $
\phi_a:Z_{a}\rightarrow Z_{a_0}$  with respect to four choices
of coordinate systems, two in the domains and two in the target,  namely
\begin{itemize}
\item $(s,t)\mapsto  [s,t]\xrightarrow{\phi_a} [S,T]\mapsto  (S,T)$
\item $(s,t)\mapsto  [s,t]\xrightarrow{\phi_a} [S',T']'\mapsto  (S',T')$
\item $(s',t')\mapsto  [s',t']'\xrightarrow{\phi_a}[S,T]\mapsto  (S,T)$
\item $(s',t')\mapsto  [s',t']'\xrightarrow{\phi_a}[S',T']\mapsto  (S',T').$
    \end{itemize}
The family $a\mapsto \phi_a$ is called smooth if  all these coordinate expressions are smooth as maps of $(a,s,t)$, respectively of  $(a,s',t')$, for $a$ close to $a_0$. Similarly, 
we can choose the second smooth family $b\mapsto \psi_b$ of diffeomorphisms 
$$
\psi_b:Z_{b_0}\rightarrow Z_b
$$
defined for  gluing parameters $b$ close to $b_0$, mapping  $p^{\pm}_{b_0}$ to $p^{\pm}_{b}$, and satisfying $\psi_{b_0}=\id$.  Given an open neighborhood  $U(w_0)$ of the diffeomorphism $w_0$  in the space
of diffeomorphisms $Z_{a_0}\rightarrow Z_{b_0}$ of class $H^3$,  we introduce  the set 
$$
{\mathcal U}(a_0,b_0,w_0,U(w_0),\delta_0)
$$
consisting of triples $(a, b, u)$ satisfying 
$$\text{$\abs{a-a_0}<\delta_0,\quad  \abs{b-b_0}<\delta_0,\quad u=\psi_b\circ w\circ\phi_a$ and  $w\in U(w_0)$}.$$
\begin{equation*}
 \begin{CD}
Z_{a_0}@>w>> Z_{b_0}\\
           @A\varphi_aAA       @VV\psi_bV    \\
 Z_a@>>\psi_b\circ w\circ \varphi_a > Z_b.
  \end{CD}
 \end{equation*}
 \mbox{}\\[1ex]
Clearly,  the chosen  triple $(a_0, b_0, w_0)$ belongs  to the set ${\mathcal U}(a_0,b_0,w_0,U(w_0),\delta_0)$ so that the collection of the sets  ${\mathcal U}(a_0,b_0,w_0,U(w_0),\delta_0)$  covers the  set $X$. 

\begin{lem}\label{lem-top1}
We abbreviate 
$${\mathcal U}_0= {\mathcal U}(a_0, b_0,w_0, U(w_0),\delta_0 )\quad \text{and}\quad {\mathcal U}_1={\mathcal U}(a_1,b_1,w_1,U(w_1),\delta_1).$$
If $(a_2, b_2, w_2)\in  {\mathcal U}_0\cap {\mathcal U}_1$, then 
there exists a set ${\mathcal U}_2={\mathcal U}(a_2,b_2,w_2,U(w_2),\delta_2)$ satisfying 
$${\mathcal U}_2\subset {\mathcal U}_0\cap {\mathcal U}_1.$$
\end{lem}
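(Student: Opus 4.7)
The plan is to show that the sets $\mathcal{U}(a,b,w,U(w),\delta)$ form a basis of the sought topology, i.e.\ that we can re-center neighborhoods. Denote the families defining $\mathcal{U}_0$ and $\mathcal{U}_1$ by $\phi^{(0)}_a,\psi^{(0)}_b$ and $\phi^{(1)}_a,\psi^{(1)}_b$ respectively. Since $(a_2,b_2,w_2)\in\mathcal{U}_0$ we may write $w_2=\psi^{(0)}_{b_2}\circ\widetilde{w}_0\circ\phi^{(0)}_{a_2}$ with $\widetilde w_0\in U(w_0)$, and analogously $w_2=\psi^{(1)}_{b_2}\circ\widetilde w_1\circ\phi^{(1)}_{a_2}$ with $\widetilde w_1\in U(w_1)$. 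By symmetry it suffices to arrange $\mathcal{U}_2\subset\mathcal{U}_0$ and then take the intersection of the two resulting conditions. Choose any smooth families $\phi^{(2)}_a$ and $\psi^{(2)}_b$ (of diffeomorphisms $Z_a\to Z_{a_2}$ and $Z_{b_2}\to Z_b$) which are the identity at $a=a_2$ and $b=b_2$; the candidate neighborhood is $\mathcal{U}_2=\mathcal{U}(a_2,b_2,w_2,U(w_2),\delta_2)$ for parameters $\delta_2>0$ and $U(w_2)$ to be determined.

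Given $(a,b,u)\in\mathcal{U}_2$ with $u=\psi^{(2)}_b\circ w\circ\phi^{(2)}_a$, $w\in U(w_2)$, we are forced to define
\[
w'\;=\;(\psi^{(0)}_b)^{-1}\circ u\circ(\phi^{(0)}_a)^{-1}
\;=\;\Psi_b\circ w\circ\Phi_a,
\]
where $\Phi_a:=\phi^{(2)}_a\circ(\phi^{(0)}_a)^{-1}:Z_{a_0}\to Z_{a_2}$ and $\Psi_b:=(\psi^{(0)}_b)^{-1}\circ\psi^{(2)}_b:Z_{b_2}\to Z_{b_0}$. Then $(a,b,u)\in\mathcal{U}_0$ is equivalent to $|a-a_0|<\delta_0$, $|b-b_0|<\delta_0$, and $w'\in U(w_0)$. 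The first two inequalities follow by imposing $\delta_2<\delta_0-\max\{|a_2-a_0|,|b_2-b_0|\}$. Moreover, at the base point one has $\Phi_{a_2}=(\phi^{(0)}_{a_2})^{-1}$ and $\Psi_{b_2}=(\psi^{(0)}_{b_2})^{-1}$, so $w'|_{(a,b,w)=(a_2,b_2,w_2)}=\widetilde w_0\in U(w_0)$.

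The main step is therefore to prove that the assignment
\[
(a,b,w)\;\longmapsto\;\Psi_b\circ w\circ\Phi_a
\]
is continuous into $H^3(Z_{a_0},Z_{b_0})$ in a neighborhood of $(a_2,b_2,w_2)$. The families $a\mapsto\Phi_a$ and $b\mapsto\Psi_b$ are smooth in the sense spelled out in the definition of $X$, because they are compositions of the postulated smooth families with their inverses. Left composition $v\mapsto\Psi_b\circ v$ is a bounded linear operation on $H^3$ whose norm depends continuously on $b$, so poses no difficulty. The essential obstacle is continuity in $(a,w)$ of the right-composition $w\mapsto w\circ\Phi_a$, which loses one classical derivative in $a$ but is nevertheless sc-smooth, and hence continuous on every level, by Theorem~\ref{thm-125} applied in local coordinate patches covering the compact cylinders $Z_{a_0}$ and $Z_{a_2}$ (the target $Z_{a_2}$ being identified with a subset of its covering space and the condition $w(p^\pm_a)=p^\pm_b$ being an open condition in $H^3$). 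Shrinking $\delta_2$ and $U(w_2)$ then forces $w'\in U(w_0)$, which completes the inclusion $\mathcal{U}_2\subset\mathcal{U}_0$; repeating the argument with $\mathcal{U}_1$ and intersecting the required smallness conditions yields the lemma.
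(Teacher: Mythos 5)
Your proof is correct and follows essentially the paper's route: reduce to the continuity at $(a_2,b_2,w_2)$ of the conjugation map $(a,b,w)\mapsto\Psi_b\circ w\circ\Phi_a$, which both you and the paper obtain from Theorem \ref{thm-125}, and then shrink $\delta_2$ and $U(w_2)$. Two minor remarks: left composition by the smooth diffeomorphism $\Psi_b$ is \emph{not} a bounded linear operation on $H^3$ (it is a nonlinear superposition, smooth by the chain rule), and the paper streamlines matters by taking $\wt{\varphi}_a=(\varphi^0_{a_2})^{-1}\circ\varphi^0_a$ and $\wt{\psi}_b=\psi^0_b\circ(\psi^0_{b_2})^{-1}$ together with $U(w_2)=\psi^0_{b_2}\circ V(v_0)\circ\varphi^0_{a_2}$, which makes $\mathcal{U}_2\subset\mathcal{U}_0$ hold identically and leaves only the inclusion into $\mathcal{U}_1$ to a continuity estimate, whereas your symmetric choice requires the continuity argument twice.
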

\begin{proof}
We consider the set of diffeomorphisms  ${\mathcal U}_0= {\mathcal U}(a_0, b_0,w_0, U(w_0),\delta_0 )$ consisting  of points $(a, b, u)$ satisfying $\abs{a-a_0}<\delta_0, \abs{b-b_0}<\delta_0$ and $u=\psi^0_b\circ w\circ \varphi^0_a$ for a diffeomorphism $w\in  U(w_0)$. Here $\psi^0_b:Z_{b_0}\to Z_b$ and 
 $\varphi^0_a:Z_{b}\to Z_{a_0}$ are the associated smooth families of diffeomorphisms. Moreover, 
 $U(w_0)$ is an $H^3$-open neighborhood of $C^1$-diffeomorphisms of $w_0:Z_{a_0}\to Z_{b_0}$. Similarly, we consider the second set 
${\mathcal U}_1= {\mathcal U}(a_1, b_1,w_1,U(w_1),\delta_1 )$ of points 
$(a, b, u)$ satisfying 
$\abs{a-a_1}<\delta_1$, $ \abs{b-b_1}<\delta_1$ and $u=\psi^1_b\circ w\circ \varphi^1_a$ for  $w\in  U(w_1)$, 
where $w_1:Z_{a_1}\to Z_{b_1}$ is a $C^1$-diffeomorphism in $H^3$. If 
$$(a_2, b_2, w_2)\in  {\mathcal U}_0\cap {\mathcal U}_1$$
then 
$$w_2=\psi^0_{b_2}\circ v_0\circ \varphi^0_{a_2}=\psi^1_{b_2}\circ v_1\circ \varphi^1_{a_2}$$
for two diffeomorphisms $v_0\in U(w_0)$ and $v_1\in U(w_1)$. In view of the remarks (Theorem \ref{thm-125p}) about the action by diffeomorphisms we find an open neighborhoods $V(v_0)\subset U(w_0)$ and $V(v_1)\subset U(w_1)$ and a number $\delta_2>0$ sufficiently small so that 
\begin{equation}\label{bases1}
\psi^0_{b}\circ  V(v_0)\circ \varphi^0_{a}\subset \psi^1_{b}\circ  V(v_1)\circ \varphi^1_{a}
\end{equation}
for all $\abs{a-a_2}<\delta_2$ and $\abs{b-b_2}<\delta_2$. The smooth families of diffeomorphisms, defined by 
\begin{equation*}
\begin{aligned}
\wt{\varphi}_a&=(\varphi^0_{a_2})^{-1}\circ \varphi^0_{a}:Z_a\to Z_{a_2}\\
\wt{\psi}_a&=\psi^0_b\circ (\psi^0_{b_2})^{-1}:Z_{b_2}\to Z_{b}
\end{aligned}
\end{equation*}
for  $\abs{a-a_2}<\delta_2$ and $\abs{b-b_2}<\delta_2$ satisfy $\wt{\varphi}_{a_2}=\id$ on $Z_{a_2}$ and $\wt{\psi}_{b_2}=\id$ on $Z_{b_2}$ and map the points $p^\pm_{a}$ onto $p^\pm_{a_2}$, respectively $p^\pm_{b_2}$ onto $p^\pm_{b}$. Next we define the open neighborhood in $H^3$ of the $C^1$-diffeomorphism $w_2:Z_{a_2}\to Z_{b_2}$ by 
$$U(w_2)=\psi^0_{b_2}\circ V(v_0)\circ \varphi^0_{a_2},$$
and introduce the set 
\begin{equation*}
\begin{gathered}
{\mathcal U}_2= {\mathcal U}(a_2, b_2,w_2, U(w_2),\delta_2 )\\
=\{(a, b, w)\vert \, \text{$\abs{a-a_2}<\delta_2, \abs{b-b_2}<\delta_2$
and $w=\wt{\psi}^0_b\circ u\circ \wt{\varphi}^0_a$ for some  $u\in  U(w_2)$}\}
\end{gathered}
\end{equation*}
If $(a, b, w)\in {\mathcal U}_2$, then $w=\wt{\psi}^0_b\circ u\circ \wt{\varphi}^0_a$  and   $u\in  U(w_2)$ and hence there exists $v\in V(v_0)$ satisfying 
$$w=\wt{\psi}^0_b\circ\psi_{b_2}^0\circ  v\circ\varphi^0_{a_2}\circ  \wt{\varphi}^0_a=
\psi_{b}^0\circ  v \circ\varphi^0_{a}.$$
Since $v\in V(v_0)\subset U(w_0)$, we conclude that $(a, b, w)\in {\mathcal U}_0$. In view of \eqref{bases1}, we also conclude that 
$w=\psi^1_b\circ\wt{ v}\circ\varphi^1_a$ for some $\wt{v}\in V(v_1)\subset U(v_1)$ and hence $(a, b, w)\in {\mathcal U}_1$. Consequently, ${\mathcal U}_2\subset {\mathcal U}_0\cap {\mathcal U}_1$ as claimed in Lemma \ref{lem-top1}.
\end{proof}

 Lemma \ref{lem-top1} shows  that  the collection $\{{\mathcal U}(a,b,w,U(w),\delta)\}$  defines the basis for the topology ${\mathcal T}$ on $X$. This topology is second countable and parcompact,  and hence metrizable.

The construction used in the definition of  the topology of $X$ allows  to define M-polyfold charts on $X$ as follows. 

We choose a point  $(a_0,b_0,w_0)\in X$ where 
$$
w_0:Z_{a_0}\rightarrow Z_{b_0}
$$
is a $C^1$-diffeomorphism belonging to  $H^3$ and mapping $p^\pm_{a_0}$ to $p^\pm_{b_0}$. There exists an $\varepsilon_0>0$
so that for given $h\in H^3(Z_{a_0},{\mathbb R}^2)$ satisfying
$h(p^\pm_{a_0})=(0, 0)$ and $h([0,t])\in \{0\}\times{\mathbb R}$, $h([0,t]')\in \{0\}\times {\mathbb R}$, and
$|D^{\alpha}h([s,t])|<\varepsilon_0$ for $\abs{\alpha}\leq 1$,  the map
$$
[s,t]\rightarrow w_0([s,t])+h([s,t])
$$
is still  a $C^1$-diffeomorphism $Z_{a_0}\rightarrow Z_{b_0}$ between the  finite glued cylinders.
Let us denote by $\wh{H}^3(Z_a,{\mathbb R}^2)$ the closed subspace of $H^3(Z_a,{\mathbb R}^2)$ consisting of maps $h$ satisfying $h(p^{\pm}_a)=(0,0)$, 
$h([0,t])\in\{0\}\times {\mathbb R}$ and $h([0,t]')\in \{0\}\times {\mathbb R}$.
Moreover, every  diffeomorphism $Z_{a_0}\to Z_{b_0}$ in $H^3$  sufficiently close to
$w_0$ in the $C^1$-norm can be written in such a way for a uniquely determined $h$. 
\begin{rem}\label{remark2-nnew}
Let us note the following important fact.
Take $\varepsilon_1\in (0,\varepsilon_0)$. Then we find a 
diffeomorphism $w_1:Z_{a_0}\rightarrow Z_{b_0}$ which is smooth and close to $w_0$ so that
the previous discussion is valid for $w_1$ with $\varepsilon_0$ replaced by $\varepsilon_1$. In addition, 
$w_0=w_1+h_0$ for a suitable $h_0$ which is controlled by $\varepsilon_1$. Hence we may assume without loss of generality
that in the triple $(a_0,b_0,w_0)$ chosen above,  the map  $w_0:Z_{a_0}\to Z_{b_0}$ is a smooth diffeomorphism
and the collection of triples $(a_0,b_0,w_0+h)$ where $h$ satisfies the  conditions described above contains an a-priori given triple in $X$.
\end{rem}

Now, assuming that  $w_0:Z_{a_0}\to Z_{b_0}$ is  a  smooth  diffeomorphism, we choose 
 two  smooth (in the sense explained after the Definition \ref{Xdef}) families $a\mapsto \phi_a$ and $b\mapsto \psi_b$ of diffeomorphisms 
$$\phi_a:Z_a\to Z_{a_0}\quad \text{and}\quad \psi_b:Z_{b_0}\to Z_b$$ 
defined for $a$ close to $a_0$ and $b$ close to $b_0$  
and introduce the map
$$
(a,b,h)\mapsto  (a,b,\psi_b\circ (w_0+h)\circ \phi_a)\in X
$$
into the space $X$, defined  for  triples $(a,b,h)$ in which  $(a,b)$ is close to $(a_0,b_0)$ and $h$  is varying in  the subspace $\wh{H}^3(Z_{a_0}, \R^2)$ of $H^3(Z_{a_0}, \R^2)$  as described above. The domain of definition of the map is a set in a neighborhood of the point $(a_0, b_0, 0)$ in the sc-Hilbert space
$$\C\oplus \C\oplus \wh{H}^{3}(Z_{a_0}, \R^2).$$
By definition of the topology, 
this map is continuous and obviously a homeomorphism onto some open neighborhood of  the point $(a_0,b_0,w_0)\in X$. Hence we may view it as the inverse of a chart.

We consider two such charts around the points  $(a_0,b_0,w_0)$ and $(\wt{a}_0,\wt{b}_0,\wt{w}_0)\in X$. 
The inverse of the second chart is the map 
$$(\wt{a}, \wt{b}, \wt{h})\mapsto  (\wt{a}, \wt{b}, \wt{\psi}_{\wt{b}}\circ (\wt{w}_0+\wt{h})\circ \wt{\varphi}_{\wt{a}})
$$
with $\wt{h}\in \wh{H}^3(Z_{\wt{a}_0}, \R^2)$. At the points  where the two charts intersect we have $\wt{a}=a$ and $\wt{b}=b$ and hence 
$\psi_b\circ (w_0+h)\circ \varphi_a=\wt{\psi}_b\circ (\wt{w}_0+\wt{h})\circ \wt{\varphi}_a$. Therefore, the transition map of the charts is of the form 
$$
(a,b,h)\mapsto (a, b, \wt{h})
$$
where $\wt{h}(a, b, h)\in H^3(Z_{\wt{a}_0}, \R^2)$ is defined by 
$$
 \wt{h}(a, b, h):=(\wt{\psi}_b^{-1}\circ\psi_b\circ (w_0+h)\circ \phi_a\circ\wt{\phi}_a^{-1})-\wt{w}_0.
$$
The map $a\mapsto \varphi_a\circ  \wt{\varphi}^{-1}_{a}$ is a smooth family of diffeomorphisms $Z_{\wt{a}_0}\to Z_{a_0}$,  and $b\mapsto \wt{\psi}^{-1}_{b}\circ \psi_b$ is a smooth family of diffeomorphisms $Z_{b_0}\to Z_{\wt{b}_0}$. Recalling the result about diffeomorphism actions we conclude from Theorem \ref{thm-125} together with the chain rule that the map 
$(a, b, h)\mapsto \wt{h}(a, b, h)$ is an sc-smooth map.

Having proved that the transition maps between the M-polyfold charts are sc-smooth, we have equipped the set $X$ with the structure of an M-polyfold.  
\begin{rem}\label{rexremark}
If we equip the (classical) Hilbert manifold of diffeomorphisms
$Z_{a_0}\rightarrow Z_{b_0}$ of class $H^3$ preserving the distinguished points,  
with the filtration for which the  level $m$ corresponds to the Sobolev regularity $m+3$, 
we obtain an sc-manifold taking  as charts the ones
coming from exponential maps. We refer to  \cite{El} for the classical set-up. For this sc-manifold,  the map
$$
(a,b,u)\mapsto  (a,b,\psi_b\circ u\circ \phi_a)
$$
establishes  an sc-diffeomorphism between an open neighborhood
of the triple $(a_0,b_0,u_0)$ in which $u_0$ is viewed as an element in the latter defined space,  and an open neighborhood of the same triple in the former M-polyfold $X$.
\end{rem}

Let us consider two different points $(a_1,b_1,w_1)$ and $(a_2,b_2,w_2)$ in $X$. We can connect $(a_1,b_1,w_1)$ by a continuous path to an element of the form $(a_2,b_2,w_3)$. We have to keep in mind that
the space of end-point preserving diffeomorphisms is disconnected
(think about Dehn twists). However keeping $a_2$ fixed we can vary $b_2$ and $w_3$ and connect it with $(a_2,b_2,w_2)$. Therefore, the topological space $X$ is connected. Since it carries a second countable paracompact topology, the proof of Theorem \ref{prop1.32} is complete.  \hfill $\blacksquare$
\mbox{}\\[0.5ex]

Compared to the general setting, the M-polyfold structure for $X$ is quite special insofar as the local models are open subsets of sc-Hilbert spaces. This will change in the next step where we 
 ``complete''  the space $X$ to the  space
$\ov{X}$ by adding the elements  corresponding to the gluing  parameter
values  $a=0$ and $b=0$.

We fix a number $\delta_0\in (0,2\pi)$ and denote by ${\mathcal D}={\mathcal D}^{3,\delta_0}$  the space consisting of pairs
$(u^+,u^-)$ in which the maps  $u^\pm:{\mathbb R}^\pm\times S^1\rightarrow {\mathbb R}^\pm\times S^1$ are $C^1$-diffeomorphisms belonging  to $H^{3}_{\loc}$ and satisfying 
$$
u^\pm(0,0)=(0,0).
$$
Moreover, we assume that   there exist asymptotic constants $(d^\pm,\vartheta^\pm)\in {\mathbb R}\times S^1$,
so that 
$$
u^{\pm}(s,t)=(s+d^\pm,t+\vartheta^\pm)+r^\pm(s,t),
$$
where the maps  $r^\pm$ belong to $H^{3,\delta_0}({\mathbb R}^\pm\times S^1,{\mathbb R}^2)$. We define an sc-structure on ${\mathcal D}$ by declaring the  level $m$ to consist of elements of regularity $(m+3,\delta_m)$ where 
$(\delta_m)$ is a strictly increasing sequence of real numbers contained in $(0,2\pi)$ and starting with the previously chosen $\delta_0$.

We recall from Section \ref{illustration} that  the set  $\ov{X}$ is defined  as the disjoint union 
$$
\ov{X}= X\coprod \bigl(\{(0,0)\}\times {\mathcal D}\bigr).
$$
We shall  construct charts around points of the form $(0,0,u^+,u^-)$, which are compatible with the M-polyfold structure already defined for $X$ and, of course, are compatible among themselves. Before we do that
we define the topology on $\ov{X}$.

We consider a sufficiently small open neighborhood
$U(u^+_0,u^-_0)$ in ${\mathcal D}$ for the $H^{3,\delta_0}$-topology on ${\mathcal D}$.
Then there exists  $\sigma_0>0$ so that for
$0<|a|<\sigma_0$ the glued maps  $\boxplus_a(u^+,u^-)$ are  diffeomorphisms  $Z_a\rightarrow Z_b$ between finite cylinders, where $(u^+,u^-)\in U(u^+_0,u^-_0)$ and 
 $b=b(a,u^+,u^-)$ and $|a|,|b|<\varepsilon$. If $a=0$,  we have,  by definition,  
$\boxplus_0(u^+,u^-)=(u^+,u^-)$ and $b(0, u^+, u^-)=0$.  We  introduce  for $\sigma\in(0,\sigma_0)$ the set  ${\mathcal U}_\sigma$ of triples 
\begin{equation}\label{topx}
{\mathcal U}_\sigma=\{(a,b(a,u^+,u^-),\boxplus_a(u^+,u^-))\ \vert \, 0\leq \abs{a}<\sigma,\ (u^+,u^-)\in U(u^+_0,u^-_0)\}.
\end{equation}
We  have already equipped the set $X$ with a metrizable topology.
The following lemma shows that the collection of subsets of type ${\mathcal U}_{\sigma}$ in $\ov{X}$ as 
defined in \eqref{topx}, is compatible with the topology on $X$.
\begin{prop}\label{topologyc}
Let ${\mathcal U}={\mathcal U}_\sigma$ be as defined in (\ref{topx}). Then the set $X\cap {\mathcal U}$ is open in $X$. Given two sets ${\mathcal U}$ and ${\mathcal V}$ as defined in (\ref{topx}) and a point
$(0,0,u^+_0,u^-_0)\in\bar{X}$ which belongs to ${\mathcal U}\cap{\mathcal V}$, there exists a third set ${\mathcal W}$ constructed according to the recipe (\ref{topx}) centered at $(0,0,u^+_0,u^-_0)$ so that
$$
(0,0,u^+_0,u^-_0)\in {\mathcal W}\subset {\mathcal U}\cap {\mathcal V}.
$$
Hence there exists a unique topology on $\bar{X}$ for which the open sets in $X$ and the new sets just introduced form  a basis. Moreover, this topology is second countable and metrizable. Further,  $\ov{X}$ equipped with this topology is connected and $X$ is open and dense in $\bar{X}$.
\end{prop}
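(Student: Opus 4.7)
I would split the proposition into three tasks mirroring its three claims: (i) openness of $X\cap \mathcal U_\sigma$ in the $X$-topology already defined in Subsection \ref{subsub}, (ii) the basis property at each point $(0,0,u_0^+,u_0^-)\in \mathcal U\cap \mathcal V$, and (iii) the topological properties of the resulting topology on $\bar X$. The analytical content is concentrated in (i); (ii) reduces to an easy set-theoretic check once the recipe \eqref{topx} is interpreted correctly; and (iii) combines standard point-set arguments with the approximation $(a_n,b(a_n,u^+,u^-),\boxplus_{a_n}(u^+,u^-))\to (0,0,u^+,u^-)$ as $a_n\to 0$, which exhibits every boundary point as a limit of $X$-points.

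For (i), I would fix $(a_1,b_1,w_1)\in X\cap \mathcal U_\sigma$, so that $a_1\neq 0$, $w_1=\boxplus_{a_1}(u_1^+,u_1^-)$, and $b_1=b(a_1,u_1^+,u_1^-)$ for some $(u_1^+,u_1^-)\in U(u_0^+,u_0^-)$. By Remark \ref{remark2-nnew} I may assume $w_1$ is smooth, so an $X$-neighborhood of $(a_1,b_1,w_1)$ is described by triples $(a,b,w)$ with $w=\psi_b\circ(w_1+h)\circ \phi_a$, $a$ near $a_1$, $b$ near $b_1$, and $h\in \wh H^3(Z_{a_1},\R^2)$ small. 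I would then look for an alternative representation $w=\boxplus_a(u_1^++h^+,u_1^-+h^-)$ with $(h^+,h^-)\in \wh E_0$ subject to the normalization $\rho_a(h^+,h^-)=(h^+,h^-)$. Using the identity
\[
\boxplus_a(u_1^++h^+,u_1^-+h^-)\;=\;\boxplus_a(u_1^+,u_1^-)\;+\;\oplus_a\bigl(h^+,h^-+h^+_\infty-h^-_\infty\bigr)
\]
from Subsection \ref{anothersplicing}, the equation reduces to inverting the linear sc-isomorphism $\Phi_a\colon G^a\to H_a$ of the same subsection, together with matching $h^+_\infty-h^-_\infty$ to the prescribed change in the gluing parameter (Lemma \ref{abmap}). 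Since $\Phi_a$ is an sc-isomorphism depending continuously on $a$ in a neighborhood of $a_1$, one obtains a unique $(h^+,h^-)$ depending continuously on $(a,b,w)$ and tending to $0$ as the perturbation shrinks; after possibly further restricting the neighborhood, $(u_1^++h^+,u_1^-+h^-)$ still lies in $U(u_0^+,u_0^-)$ and hence $(a,b,w)\in \mathcal U_\sigma$.

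Task (ii) is direct: write $\mathcal U$ and $\mathcal V$ as basic sets built from data $(U_1,\sigma_1)$ around $(0,0,u_1^+,u_1^-)$ and $(U_2,\sigma_2)$ around $(0,0,u_2^+,u_2^-)$. Membership of $(0,0,u_0^+,u_0^-)$ in $\mathcal U\cap \mathcal V$ forces $(u_0^+,u_0^-)\in U_1\cap U_2$; pick an $H^{3,\delta_0}$-open neighborhood $U\subset U_1\cap U_2$ of $(u_0^+,u_0^-)$ and $\sigma\in(0,\min(\sigma_1,\sigma_2))$ small enough that $\boxplus_a$ is defined on $U$ for all $|a|<\sigma$. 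Because the recipe \eqref{topx} depends only on $(a,u^+,u^-)$ and not on the basepoint, the resulting $\mathcal W_\sigma$ lies inside $\mathcal U\cap \mathcal V$, and the standard basis criterion then produces a unique topology on $\bar X$. For (iii), second countability comes from the separability of $X$ and $\mathcal D$ by taking countable cofinal sub-families of $U$'s and rational $\sigma$'s; normality (which, combined with second countability, yields metrizability via Urysohn's theorem) follows from the sc-Hilbert local models underlying each basic set; openness of $X$ is automatic; and density plus connectedness both follow from the approximation noted in the first paragraph together with the connectedness of $X$ from Proposition \ref{prop1.32}. The decisive step is the local inversion in (i), which is exactly where the anti-gluing/retraction machinery of Subsection \ref{anothersplicing} becomes indispensable.
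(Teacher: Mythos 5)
Your proposal follows essentially the same route as the paper's proof. For (i) you reduce, exactly as the paper does, to solving $\oplus_a(h^+,h^-+h^+_\infty-h^-_\infty)=\psi_b\circ u\circ\phi_a-w_a$ under the normalization $\ominus_a=0$ (equivalently $\rho_a(h^+,h^-)=(h^+,h^-)$), matching the asymptotic shift to the prescribed $b$; phrasing this as inverting $\Phi_a:G^a\to H_a$ is just a repackaging of the same $\oplus_a/\ominus_a$ linear system the paper solves by hand via the explicit matrix formula. Part (ii) is identical to the paper's $a_0=b_0=0$ case, and part (iii) is the same standard point-set reasoning the paper asserts without detail. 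The one thing the paper does that you elide is the small verification (its inner lemma) that the resulting pair $(h^+,h^-)$ actually lies in $\wh{E}_0$, i.e. that the right-hand side $g$ satisfies the boundary conditions making it land in $H_a$; this is routine but should be checked when invoking $\Phi_a^{-1}$.
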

\begin{proof}
We first consider the case that $(a_0, b_0, w_0)\in X\cap {\mathcal U}_{\sigma}$, so that 
$$(a_0, b_0, w_0)=(a_0, b(a_0, u^+_0, u^-_0), \boxplus_{a_0}(u^+_0, u^-_0))\in 
 {\mathcal U}_{\sigma}$$
 where $0<\abs{a_0}<\sigma$ and $w_0= \boxplus_{a_0}(u^+_0, u^-_0)$  is a diffeomorphism
$Z_{a_0}\to Z_{b_0}$ for $b_0=b(a_0,  u^+_0, u^-_0)$. We  have to show that a small open neighborhood ${\mathcal O}$ of $(a_0, b_0, w_0)$ in $X$ is contained in $ {\mathcal U}_{\sigma}$. We  recall that 
an open neighborhood  ${\mathcal O}$ of the point $(a_0, b_0, w_0)\in X$ may be assumed  of the form
$${\mathcal O}=
\{(a, b, \psi_b\circ u\circ \phi_a)\ \vert \,\text{$\abs{a-a_0}<\varepsilon, \abs{b-b_0}<\varepsilon$ and $u\in U(w_0)$}\}$$
where $\varepsilon>0$ and where $U(w_0)$ is  a small $H^3$-neighborhood of $w_0$ in the set of diffeomorphisms $Z_{a_0}\to Z_{b_0}$ of Sobolev class $H^3$ and fixing  the distinguished points.

If $a=a_0$ and $b=b_0$, then $u=w_0$ so that 
$$\psi_{b_0}\circ w_0\circ \phi_{a_0}=w_0.$$
We have used that  $\phi_{a_0}=\id$ on $Z_{a_0}$ and $\psi_{b_0}=\id$ on $Z_{b_0}$.

We recall that  the pair $(h^+, h^-)\in \wh{E}_0$ consists of maps $h^\pm:\R^\pm \times S^1\to \R^2$ of the form 
$$h^\pm=h^\pm_{\infty}+r^\pm$$
where $r^\pm\in H^{3, \delta_{0}}(\R^\pm\times S^1, \R^2)$ and where $h^\pm (0, 0)=(0, 0)$ and 
$h^\pm (0, t)\in \{0\}\times \R$ for all $t\in \R$.

 If $\varepsilon>0$ in the  definition of ${\mathcal O}$ is sufficiently small and $(a, b, \psi_{b}\circ u\circ \phi_a)\in {\mathcal O}$ we look for a solution $(h^+, h^-)\in \wh{E}_0$ of the following equations
\begin{equation}\label{nbox-eq}
\boxplus_a(u_0^++h^+, u_0^-+h^-)=\psi_b\circ u\circ \phi_a
\end{equation}
and 
\begin{equation}\label{b-eq}
b(a, u^+_0+h^+, u^-_0+h^-)=b
\end{equation}
so that 
$(a, b(a, u^+_0+h^+, u^-_0+h^-), \boxplus_a(u_0^++h^+, u^-+h^-))\in {\mathcal U}_{\sigma}$. We recall that $u^\pm_0:\R^\pm \times S^1\to \R^\pm \times S^1$ are diffeomorphisms of the form 
\begin{align*}
u^+_0(s, t)&=(s, t)+(d^+_0, \vartheta^+_0)+r_0^+(s, t),\quad s\geq 0\\
u^-_0(s', t')&=(s', t')+(d^-_0, \vartheta^-_0)+r_0^-(s', t'),\quad s'\leq 0
\end{align*}
and $r^\pm_0\in H^{3, \delta_0}(\R^\pm\times S^1, \R^2)$. From Section \ref{anothersplicing} we know  that 
\begin{equation*}
\begin{split}
\boxplus_a&(u_0^++h^+, u^-_0+h^-)([s,t])\\
&=\left[ \boxplus_a(u_0^+, u^-_0)([s,t])+\oplus_a(h^+, h^-+h^+_{\infty}-h^-_{\infty})([s,t])\right].
\end{split}
\end{equation*}
Hence,  abbreviating
$$ \boxplus_a(u_0^+, u^-_0)=w_a,$$
we have to solve the equation
\begin{equation}\label{eq35}
\oplus_a(h^+, h^-+h^+_{\infty}-h^-_{\infty})= \psi_b\circ u\circ \phi_a-w_a
\end{equation}
for the unknown maps $(h^+, h^-)\in \wh{E}_0$. The right hand side vanishes if $a=a_{a_0}$, $b=b_{0}$, and $u=w_0$ since $w_{a_0}=w_0$.
It actually suffices to solve for  $(q^+,q^-)\in E$  the equation 
\begin{equation}\label{klm1}
\oplus_a(q^+,q^-)= \psi_b\circ u\circ \phi_a-w_a.
\end{equation}
because the solution $(h^+, h^-)\in \wh{E}_0$ of \eqref{eq35} is then given by the formula
$$
(h^+, h^-)=(q^+,q^-+(h^-_{\infty}-h^+_{\infty})).
$$
Recall that $h^-_{\infty}-h^+_{\infty}$ can be computed from $a$ and $b$, so that $(h^+,h^-)$ is uniquely determined.
The equation \eqref{klm1} becomes uniquely solvable once we impose, in addition, the equation $\ominus_a(q^+, q^-)=0$. Abbreviating the right hand side  \eqref{klm1}  by
$$g=\psi_b\circ u\circ \phi_a-w_a,$$
the two equations 
\begin{equation}\label{newbox-eq}
\begin{aligned}
\oplus_a(q^+,q^-)=g\\
\ominus_a(q^+,q^-)=0
\end{aligned}
\end{equation}
have the following  unique solution.  Explicitly, the equations \eqref{newbox-eq} are represented by 
\begin{equation*}
\beta_a(s)\cdot q^+(s, t)+(1-\beta_a(s))\cdot q^-(s-R, t-\vartheta)=g(s, t)
\end{equation*}
\begin{equation*}
-(1-\beta_a(s))\cdot q^+(s, t)+\beta_a(s)\cdot  q^-(s-R, t-\vartheta)=(2\beta_a(s)-1)\av (q^+, q^-).
\end{equation*}
Integrating the first equation at $s=\frac{R}{2}$ over the circle $S^1$, we find in view of $\beta_a\left(\frac{R}{2}\right)=\frac{1}{2}$ that the average
$$
\av (q^+, q^-)=\int_{S^1}g\left(\frac{R}{2}, t\right)\ dt=[g]
$$
agrees with the mean value of the  function $g$. In matrix form the above two equations are now written as
\begin{equation*}
\begin{bmatrix}
\phantom{-}\beta_a(s)&(1-\beta_a(s))\\
-(1-\beta_a(s)&\beta_a(s)
\end{bmatrix}
\cdot 
\begin{bmatrix}
q^+(s, t)\\
 q^-(s-R, t-\vartheta)
\end{bmatrix}=
\begin{bmatrix}
g\\
(2\beta_a(s)-1)[g]
\end{bmatrix}.
\end{equation*}
Denoting by $\gamma_a(s)=\beta_a(s)^2+(1-\beta_a(s))^2$ the determinant of the matrix, one arrives at the following formulae for the solution,
\begin{equation}\label{eqqp}
q^+(s, t)=[g]+\frac{\beta_a(s)}{\gamma_a(s)}\cdot (g(s, t)-[g])
\end{equation}
and
\begin{equation}\label{eqqm}
q^{-}(s-R, t-\vartheta)=[g]+\frac{1-\beta_a(s)}{\gamma_a(s)}\cdot (g(s, t)-[g]).
\end{equation}
The asymptotic constants of $q^\pm$ are both equal to $[g]$. In particular, we find for the asymptotic constant of $h^+=q^+$ the value
\begin{equation}\label{asymptconst-new}
h^+_{\infty}=[g].
\end{equation}
In order to to solve the equation \eqref{b-eq} we associate with the gluing parameter $b$ the pair 
$(R', \vartheta')$ consisting of the gluing length and the gluing angle 
as the solution of the equation
\begin{equation}\label{rprime}
(R', \vartheta')=(R, \vartheta )+(d^+_0-d^-_0, \vartheta^+_0-\vartheta^-_0)+h^+_{\infty}-h^-_{\infty},
\end{equation}
then $b=b(a, u^+_0+h^+, u_0^-+h^-_{\infty})$ as desired  and we have proved that  the equations \eqref{nbox-eq} together with  \eqref{b-eq}  and $\ominus_a(h^+, h^-+h^+_{\infty}-h^-_{\infty})=0$ have a unique solution $(h^+, h^-)\in \wh{E}$.  Actually, $(h^+, h^-)\in \wh{E}_0$ as the next lemma shows.
\begin{lem}
The pair $(h^+, h^-)$ belongs to $\wh{E}_0$.
\end{lem}
 \begin{proof}
 We first calculate the values of $g=\psi_b\circ u\circ \phi_a-w_a$ at the points $(0, 0)$ and $(R, \vartheta)$ where $(R, \vartheta)$ are the gluing length and the gluing angle associated with the parameter $a$.  We recall that $\phi_a:Z_a\to Z_{a_0}$ is a  
 diffeomorphism mapping the distinguished points of the cylinder $Z_{a_0}$ onto the distinguished points of $Z_{a_0}$. The same holds true for the diffeomorphism  $\psi_b:Z_{b_0}\to Z_b$ and the diffeomorphism $u:Z_{a_0}\to Z_{b_0}$ fixes the distinguished points. Hence, 
 $\psi_b\circ u\circ \phi_a(0, 0)=(0, 0)$ and $\psi_b\circ u\circ \phi_a(R, \vartheta)=(R', \vartheta')$ where $(R', \vartheta')$ are the parameters associated with $b$. 
 In addition, since the  boundaries of $Z_a$ are mapped onto the corresponding boundaries of $Z_b$, we have $\psi_b\circ u\circ \phi_a(0, t)\in \{0\}\times \R$ and $\psi_b\circ u\circ \phi_a(R, \vartheta +t)\in \{(R', \vartheta')\}+(\{0\}\times \R)$ for all $t\in \R$. 
 
 To evaluate the  diffeomorphism $w_a=\boxplus_a (u^+_0, u^-_0)$ at the points $(0, 0)$ and $(R, \vartheta)$, we recall that  the diffeomorphisms $u^\pm_0:\R^\pm \times S^1\to \R^\pm \times S^1$ are of the form $u^\pm_0(s, t)=(s, t)+(d^\pm_0, \vartheta^\pm_0)+r^\pm_0(s, t)$ and satisfy $u^\pm_0(0, 0)=(0, 0)$.
 Using that  $\boxplus_a (u^+_0, u^-_0)(s, t)=(s, t)+(d^+_0, \vartheta^+_0)+\beta_ar_0^+(s, t)+(1-\beta_a)r_0^-(s-R, t-\vartheta)$, we find $w_a(0, 0)=(0, 0)$ and $w_a(0, t)\in \{0\}\times \R$. 
 Evaluating at $(R, \vartheta)$ we obtain,
 $w_a(R, \vartheta)=(R, \vartheta)+(d^+_0, \vartheta^+_0)+r_0^-(0, 0)=(R, \vartheta)+(d^+_0-d^-_0, \vartheta^+_0-\vartheta^-_0)+u^-_0(0, 0)=(R, \vartheta)+(d^+_0-d^-_0, \vartheta^+_0-\vartheta^-_0)$ and similarly 
 $$w_a(R, \vartheta+t)\in 
( (R, \vartheta)+(d^+_0-d^-_0, \vartheta^+_0-\vartheta^-_0))+(\{0\}\times \R).$$
Consequently,
$g(0,0)=(0, 0)$ and $g(R, \vartheta)=(R', \vartheta')-( (R, \vartheta)+(d^+_0-d^-_0, \vartheta^+_0-\vartheta^-_0))$. In addition,
$g(0, t)\in \{0\}\times \R$ and 
$$g(R, \vartheta+t)\in ((R', \vartheta')- (R, \vartheta)-(d^+_0-d^-_0, \vartheta^+_0-\vartheta^-_0))+
(\{0\}\times \R).$$ 

Now, since $h^+=q^+$, it follows from \eqref{eqqp} that $h^+(0, 0)=q^+(0,0)=g(0, 0)=(0, 0)$ and 
$h^+(0, t)=g(0, t)\in \{0\}\times \R$ for all $t\in \R$. Using \eqref{rprime} and \eqref{eqqm} and $h^-=q^-+h^-_{\infty}-h^+_{\infty}$,  we find that 
$h^-(0, 0)=q^-(0, 0)+h^-_{\infty}-h^+_{\infty}=g(R, \vartheta)+h^-_{\infty}-h^+_{\infty}=
(R', \vartheta')-( (R, \vartheta)+(d^+_0-d^-_0, \vartheta^+_0-\vartheta^-_0))+h^-_{\infty}-h^+_{\infty}=(0,0)$ and $h^-(0, t)=g(R, \vartheta)+h^-_{\infty}-h^+_{\infty}\in \{0\}\times \R$. 
Consequently,  the pair  $(h^+, h^-)$ belongs to $\wh{E}_0$ and the  proof of the lemma is complete.

 \end{proof}

 We recall that if 
$a=a_0$, $b=b_0$ and $u=w_0$, then $g=0$ and hence $h^+=0$ and $h^-=0$ and we conclude from the above formulae that if $\varepsilon>0$ is sufficiently small, then indeed ${\mathcal O}\subset U_{\sigma}$.

It remains  to consider the case in which $a_0=b_0=0$. We claim that 
given two sets $U_{\sigma_1}$  and $U_{\sigma_2}$  as defined in (33) and  a point $(0, 0, u^+_0, u^-_0)\in \ov{X}$ which belongs to $U_{\sigma_1}\cap U_{\sigma_2}$,  then there exists  a third set $U_{\sigma_0}$ such that 
$$(0, 0, u^+_0, u^-_0)\in U_{\sigma_0}\subset U_{\sigma_1}\cap U_{\sigma_2}.$$
Indeed, the sets $U_{\sigma_1}$ and $U_{\sigma_2}$ have the  form
\begin{align*}
U_{\sigma_1}&:=\{(a, b(a, u^+, u^-), \boxplus_a(u^+, u^-))\vert \, 0\leq \abs{a}<\sigma_1,\; (u^+, u^-)\in U(u_1^+, u^-_1)\}\\
U_{\sigma_2}&:=\{(a, b(a, u^+, u^-), \boxplus_a(u^+, u^-))\vert \, 0\leq \abs{a}<\sigma_2,\; (u^+, u^-)\in U(u_2^+, u^-_2)\}
\end{align*}
where $U(u^+_1, u^-_1)$ is an open neighborhood of the pair $(u^+_1, u^-_1)$ in ${\mathcal  D}$ for the $H^{3, \delta_0}$-topology on ${\mathcal D}$,  and similarly for $U(u_2^+, u^-_2)$. 
Since $(0, 0, u^+_0, u^-_0)\in U_{\sigma_1}\cap U_{\sigma_2}$,  
$$(u^+_0, u^-_0)\in U(u_1^+, u^-_1)\cap U(u_2^+, u^-_2).$$
We choose an open neighborhood $U(u^+_0, u^-_0)$ of the pair $(u^+_0, u^-_0)$ in ${\mathcal  D}$ for the $H^{3, \delta_0}$-topology  satisfying 
$$(u^+_0, u^-_0)\in U(u^+_0, u^-_0)\subset U(u_1^+, u^-_1)\cap U(u_2^+, u^-_2),$$
and set  $\delta_0=\min \{\delta_1, \delta_2\}$. Then   the set 
$U_{\sigma_0}$ , defined by 
$$U_{\sigma_0}:=\{(a, b(a, u^+, u^-), \boxplus_a(u^+, u^-))\vert \, \abs{a}<\sigma_0,\quad (u^+, u^-)\in U(u_0^+, u^-_0)\}, $$
satisfies
$$(0, 0, u^+_0, u^-_0)\in U_{\sigma_0}\subset U_{\sigma_1}\cap U_{\sigma_2}$$
as claimed. The proof of Proposition \ref{topologyc} is complete.

\end{proof}

\begin{lem}\label{new-lemma1}
 If the pair $(q^+, q^-)\in E$ is the unique solution of \eqref{newbox-eq}
 \begin{align*}
\oplus_a(q^+, q^-)&=g\\
\ominus_a(q^+, q^-)&=0,
\end{align*}
then the pair $(h^+, h^-):=(q^+, q^-+h^-_{\infty}-h^+_{\infty})\in \wh{E}_0$ satisfies 
$$\rho_a(h^+, h^-)=(h^+, h^-).$$
\end{lem}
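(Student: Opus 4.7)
The approach is to unwind the definition of $\rho_a$ and exploit the translation invariance of the anti-gluing map $\ominus_a$ under subtraction of a common constant from both arguments. Recall from the subsection on ``Another Retraction'' that $\rho_a(h^+,h^-) = (h^+_\infty, h^-_\infty) + \pi_a(r^+, r^-)$, where $h^\pm = h^\pm_\infty + r^\pm$ with $r^\pm \in H^{3,\delta_0}(\R^\pm\times S^1,\R^2)$ and $\pi_a$ is the projection of $E$ onto $\ker\ominus_a$ along $\ker\oplus_a$. Thus $\rho_a(h^+,h^-) = (h^+,h^-)$ if and only if $\pi_a(r^+,r^-) = (r^+,r^-)$, equivalently $(r^+,r^-) \in \ker\ominus_a$.

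First, I would identify the asymptotic constants. Since $(q^+,q^-) \in E$, the two components share a common asymptotic limit, call it $c$. By construction $h^+ = q^+$, so its asymptotic limit is $c$; that is, $h^+_\infty = c$. On the other hand $h^- = q^- + h^-_\infty - h^+_\infty$ has asymptotic limit $c + h^-_\infty - h^+_\infty = h^-_\infty$, which is consistent with the notation. From these identifications one reads off
\[
r^+ = h^+ - h^+_\infty = q^+ - c, \qquad r^- = h^- - h^-_\infty = q^- - c,
\]
so $(r^+, r^-) \in E$ with common asymptotic limit $0$.

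The main step is to verify $\ominus_a(q^+ - c, q^- - c) = 0$. The point is that $\ominus_a$ only depends on its input through differences with the midpoint average: by the explicit formula
\[
\ominus_a(u^+,u^-)([s,t]) = -(1-\beta_a(s))\bigl[u^+(s,t) - \av(u^+,u^-)\bigr] + \beta_a(s)\bigl[u^-(s-R,t-\vartheta) - \av(u^+,u^-)\bigr],
\]
and since $\av(q^+ - c, q^- - c) = \av(q^+,q^-) - c$ by linearity, the two occurrences of $-c$ in each bracket cancel. Hence $\ominus_a(q^+ - c, q^- - c) = \ominus_a(q^+, q^-)$, which is $0$ by hypothesis. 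I do not expect this computation to be a real obstacle --- it is a one-line cancellation once written out.

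To conclude, since $(r^+,r^-) \in \ker\ominus_a$, the projection $\pi_a$ fixes it, giving $\pi_a(r^+,r^-) = (r^+,r^-)$. Therefore
\[
\rho_a(h^+,h^-) = (h^+_\infty,h^-_\infty) + (r^+,r^-) = (h^+_\infty + r^+, h^-_\infty + r^-) = (h^+,h^-),
\]
as claimed. Equivalently, using Lemma \ref{kernel-lem1}, one can phrase the verification as $P_a^-(h^+,h^-) = \ominus_a(h^+ - h^+_\infty, h^- - h^-_\infty) = \ominus_a(q^+ - c, q^- - c) = 0$, so $(h^+,h^-) \in \ker P_a^-$, and $\rho_a$ acts as the identity there.
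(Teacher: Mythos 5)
Your proof is correct and follows essentially the same route as the paper's. The paper's (one-sentence) argument is: since $q^- = h^- + h^+_\infty - h^-_\infty$, the hypothesis $\ominus_a(q^+,q^-)=0$ reads $\ominus_a(h^+, h^- + h^+_\infty - h^-_\infty)=0$, which by the translation invariance $\ominus_a(v^+ + A, v^- + A)=\ominus_a(v^+,v^-)$ (subtract $A=h^+_\infty$ from both slots) gives $\ominus_a(h^+ - h^+_\infty, h^- - h^-_\infty)=0$, i.e.\ $(h^+,h^-)\in\ker P_a^-$, which by Lemma~\ref{kernel-lem1} is exactly $\rho_a(h^+,h^-)=(h^+,h^-)$. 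You do exactly this, just verifying the translation invariance explicitly from the formula for $\ominus_a$ (the cancellation through $\av$) rather than citing it as a known observation, and noting the equivalent formulation $\pi_a(r^+,r^-)=(r^+,r^-)$. No gap.
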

\begin{proof}
The condition $\ominus_a( (q^+, q^-)=\ominus_a(h^+, h^-+h^+_{\infty}-h^-_{\infty})=0$ is the same as the condition $\ominus_a(h^+-h^+_{\infty}, h^- -h^-_{\infty})=0$ which is equivalent to the relation $\rho_a(h^+, h^-)=(h^+, h^-)$,  in view of Lemma \ref{kernel-lem1}.
\end{proof}

\begin{lem}\label{new-lemma2}
We assume that  $(u^+_0, u^-_0)\in {\mathcal D}^{\infty}$ are smooth diffeomorphisms of  the half-cylinders  $\R^\pm \times S^1$ and fix the smooth point $(a_0, b_0, w_0)\in X$ in which $w_0=\boxplus_{a_0}(u^+_0, u^-_0)$. Setting $g=\psi_b\circ u\circ \phi_a-w_a=w-w_a$, the mappings $X\to E$,
$$(a, b, w)\mapsto (q^+(a, b, w),q^-(a,b,w))\in E,$$
defined on a neighborhood of $(a_0, b_0, w_0)$ in $X$ as the unique solution of 
\eqref{newbox-eq}, are sc-smooth.
\end{lem}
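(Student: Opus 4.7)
The plan is to reduce the lemma to the previously established sc-smoothness of the building blocks \textbf{M1}--\textbf{M5} in Proposition \ref{klopx} and the action-by-diffeomorphisms result in Theorem \ref{thm-125}, and then assemble everything using the chain rule. I proceed in three steps.

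First, I work in the M-polyfold chart around $(a_0,b_0,w_0)$ constructed in the proof of Theorem \ref{prop1.32} together with Remark \ref{remark2-nnew}, so that nearby points are parameterised by a triple $(a,b,h)\in{\mathbb C}\oplus{\mathbb C}\oplus\wh{H}^3(Z_{a_0},{\mathbb R}^2)$ via $w=\psi_b\circ(w_0+h)\circ\phi_a$, where $w_0$ is smooth and $|a-a_0|$ is so small that $a\neq 0$ throughout. Since we stay in $X$, all cylinders are genuine finite cylinders and no boundary behaviour near $a=0$ needs to be dealt with.

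Second, I show that the map $(a,b,h)\mapsto g=\psi_b\circ(w_0+h)\circ\phi_a-w_a$, read as a map from a neighbourhood of $(a_0,b_0,0)$ into a suitable sc-Banach space of functions (for instance $H^3$ on $Z_a$ expressed in the two natural global coordinate systems on $Z_a$, and extended to $\R^\pm\times S^1$ by the asymptotic constants in the tails), is sc-smooth. The $(b,h)$-dependence is controlled by the sc-smoothness of composition with the smooth families $a\mapsto\phi_a$ and $b\mapsto\psi_b$, which is exactly Theorem \ref{thm-125}. The $a$-dependence of $w_a=\boxplus_a(u^+_0,u^-_0)$ is itself sc-smooth in the appropriate representation because $(u^+_0,u^-_0)\in{\mathcal D}^\infty$ is smooth and $a$ stays away from $0$; here one uses Lemma \ref{abmap} for the dependence of $b=b(a,u^+,u^-)$ on $a$, and Lemma \ref{lkj} only to the extent that no blow-up occurs since $a\neq 0$.

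Third, once the sc-smoothness of $(a,b,h)\mapsto g$ is in hand, I use the explicit formulas \eqref{eqqp}--\eqref{eqqm}, namely
\begin{equation*}
q^+(s,t)=[g]_R+\frac{\beta_a(s)}{\gamma_a(s)}\bigl(g(s,t)-[g]_R\bigr),\qquad q^-(s-R,t-\vartheta)=[g]_R+\frac{1-\beta_a(s)}{\gamma_a(s)}\bigl(g(s,t)-[g]_R\bigr),
\end{equation*}
extended by the asymptotic constant $[g]_R$ on the tails where $\beta_a$ vanishes, resp.\ where $1-\beta_a$ vanishes. Every operation on the right-hand side --- the mean-value map $g\mapsto[g]_R$, multiplication by $\frac{\beta_a}{\gamma_a}$ or $\frac{\beta_a(1-\beta_a)}{\gamma_a}$, and the shift by $(-R,-\vartheta)$ --- is one of the five model maps \textbf{M1}--\textbf{M5} in Proposition \ref{klopx}, each of which is known to be sc-smooth. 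Composition via the chain rule of Theorem \ref{chain-thm1} then yields sc-smoothness of $(a,g)\mapsto(q^+,q^-)$, and composing once more with step two gives the desired sc-smoothness of $(a,b,w)\mapsto(q^+,q^-)$.

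The main obstacle is step two: setting up a sc-Banach space in which $g$ (a function on the varying cylinder $Z_a$) lives coherently as $a$ varies and for which Theorem \ref{thm-125} applies directly. This is handled by using the two natural coordinate patches on $Z_a$ coming from $\R^\pm\times S^1$, which is precisely how $\oplus_a$ and $\ominus_a$ were set up in Section \ref{arisingx}, so that the remainder of the argument matches the formalism of Proposition \ref{klopx}.
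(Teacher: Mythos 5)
Your overall architecture matches the paper's: same local chart $(a,b,h)\mapsto(a,b,\psi_b\circ(w_0+h)\circ\phi_a)$, same explicit formulas \eqref{eqqp}--\eqref{eqqm} for $q^\pm$, and the same building blocks (diffeomorphism action, multiplier/mean-value maps, chain rule). However, there is a genuine gap precisely at what you yourself flag as ``the main obstacle,'' and your proposed resolution does not actually close it.

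You write that the issue of making $g$ a coherent element of an sc-Banach space as the cylinder $Z_a$ varies ``is handled by using the two natural coordinate patches on $Z_a$ coming from $\R^\pm\times S^1$, \ldots\ so that the remainder of the argument matches the formalism of Proposition \ref{klopx}.'' This is hand-waving: $g$ is a function on the finite cylinder $[0,R]\times S^1$ with no asymptotic constants, so ``extended to $\R^\pm\times S^1$ by the asymptotic constants in the tails'' is not well-defined; and Proposition \ref{klopx} concerns the projection $\pi_a$ acting on the fixed space of pairs $(r^+,r^-)$ on the half-cylinders, not functions $g$ on the varying $Z_a$, so its formalism does not transfer directly. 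The paper instead makes the simple but essential observation that you omit: because $\beta_a(s)=0$ for $s\geq\frac{R}{2}+1$, the map $q^+$ (and likewise $q^-$) depends only on the restriction of $g$ to a \emph{fixed} finite cylinder $[0,\frac{R_0}{2}+2]\times S^1$ when $a$ ranges over a small neighbourhood of $a_0\neq 0$. On that fixed domain $w_a$ depends classically smoothly, hence sc-smoothly, on $a$, and Theorem \ref{thm-125} gives the sc-smooth dependence of $\psi_b\circ(w_0+h)\circ\phi_a$ on $(a,b,h)$. This observation dissolves the varying-domain problem entirely, after which Proposition \ref{qed} (note: the $\Gamma_i$ maps, not \textbf{M1}--\textbf{M5}) and the chain rule finish the argument. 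Incidentally, your citations of Lemma \ref{abmap} and Lemma \ref{lkj} in step two are not needed here, since $b$ is an independent chart coordinate and all the relevant functions of $a$ near $a_0\neq 0$ are classically smooth.
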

\begin{proof}
We use the local sc-coordinates $(a, b, h)\mapsto (a, b, \psi_b\circ (w_0+h)\circ \phi_a)$ of $X$ near $(a_0, b_0, w_0)\in X$, where $h$ varies in the sc-Hilbert space $\wh{H}^{3, \delta_0}(Z_{a_0}, \R^2)$ of functions on the fixed cylinder $Z_{a_0}$. Then $g= \psi_b\circ (w_0+h)\circ \phi_a-w_a$ and the solutions $q^\pm$ of \eqref{newbox-eq} are in these coordinates mappings 
$(a, b, h)\mapsto q^\pm (a, b, h)$. Recalling the proof of Proposition \ref{topologyc}, the map $q^+=d+r^+$ is defined by $d=[g]$ and 
$$r^+(s, t)=\frac{\beta_a(s)}{\gamma_a(s)}\left(g-[g]\right)$$
for $s\geq 0$. Since $\beta_a(s)=0$ for $s\geq \frac{R}{2}+1$, the map $q^+$ is obtained from  the sole knowledge of $g$ on the finite piece  $[0,\frac{R}{2}+1]\times S^1$. Clearly, $w_a$ as a function of $a$  near $a_0$ is sc-smoothly depending on $a$ on the fixed cylinder $[0,\frac{R_0}{2}+2]\times S^1$. In view of Theorem \ref{thm-125} about diffeomorphisms actions, the function $g$ depends sc-smoothly on $(a, b, h)$ in the neighborhood of $(a_0, b_0, 0)$. The sc-smoothness of the map 
$(a, b, h)\mapsto q^+(a, b, h)$ now follows from the above formula for $r^+(s, t)$ using Proposition 
\ref{qed} and the chain rule. Similar arguments show that also $(a, b, h)\mapsto q^-(a, b, h)$ is an sc-smooth function.
\end{proof}

From Lemma \ref{new-lemma2} we deduce immediately the following result.
\begin{lem}\label{316-lem}
Fix the smooth point $(a_0, b_0, w_0)\in X$. Then there exists an open neighborhood $U\subset X$ of the point and an sc-smooth map 
$$\Gamma:U\to \wh{E}_0,$$
defined by $\Gamma (a, b, w)=(h^+, h^-)$ where $(h^+, h^-)=(q^+,q^-+h^+_{\infty}-h^+_{\infty})\in \wh{E}_0$ is the unique solution  of \eqref{b-eq} and \eqref{newbox-eq}.
\end{lem}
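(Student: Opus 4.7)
The plan is to assemble the map $\Gamma$ out of ingredients that have already been shown to be sc-smooth, so the lemma will follow essentially formally from Lemma \ref{new-lemma2} together with the explicit formulae derived in the proof of Proposition \ref{topologyc}. Since the reference point $(a_0,b_0,w_0)\in X$ is smooth and $a_0\neq 0$, $b_0\neq 0$, we may work on an open neighborhood $U\subset X$ of $(a_0,b_0,w_0)$ on which $a$ and $b$ stay bounded away from zero; in particular the gluing length/angle pairs $(R,\vartheta)$ associated with $a$ and $(R',\vartheta')$ associated with $b$ are smooth functions of the parameters.

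First I would apply Lemma \ref{new-lemma2} to obtain the sc-smooth map $U\to E$, $(a,b,w)\mapsto (q^+(a,b,w),q^-(a,b,w))$, where $(q^+,q^-)$ is the unique solution of the pair \eqref{newbox-eq} associated to $g=w-w_a$. Setting $h^+:=q^+$, this already identifies $h^+_\infty=[g]$ as an sc-smooth function of $(a,b,w)$ by composition of $(a,b,w)\mapsto(q^+,q^-)$ with the asymptotic-constant projection (an sc-operator). Next, the defining relation \eqref{rprime} reads
\[
(R',\vartheta')=(R,\vartheta)+(d_0^+-d_0^-,\vartheta_0^+-\vartheta_0^-)+h^+_\infty-h^-_\infty,
\]
so that $h^-_\infty$ is obtained from $h^+_\infty$ by adding a smooth function of $(a,b)$; this is exactly condition \eqref{b-eq} in disguise, and it shows in particular that $h^-_\infty-h^+_\infty$ depends sc-smoothly on $(a,b)$.

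Now I would define
\[
\Gamma(a,b,w):=\bigl(h^+,\,h^-\bigr)=\bigl(q^+(a,b,w),\,q^-(a,b,w)+h^-_\infty(a,b,w)-h^+_\infty(a,b,w)\bigr).
\]
Sc-smoothness of $\Gamma$ as a map into $\wh{E}$ follows by combining the sc-smoothness of $(q^+,q^-)$, the sc-smoothness of the asymptotic-constant maps, and the smoothness of $(a,b)\mapsto (R,\vartheta,R',\vartheta')$, using that addition of a constant (in the fiber direction) is an sc-operator. That $\Gamma$ takes values in $\wh{E}_0$ and that it satisfies the projection identity $\rho_a\circ\Gamma=\Gamma$ was already verified in the proof of Proposition \ref{topologyc} (boundary conditions) and in Lemma \ref{new-lemma1}, so no new work is required there. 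Uniqueness of $(h^+,h^-)$ comes from the unique solvability of \eqref{newbox-eq} together with the prescribed difference of asymptotic constants. I do not expect a serious obstacle: the only point worth a moment of care is checking that, on a sufficiently small neighborhood $U$, the parameter $b$ indeed equals $b(a,u_0^++h^+,u_0^-+h^-)$ so that condition \eqref{b-eq} holds and $\Gamma$ is well defined, but this is exactly how the formula for $h^-_\infty-h^+_\infty$ was derived.
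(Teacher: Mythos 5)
Your proposal is correct and follows essentially the same route as the paper: factor through the sc-smooth map $(a,b,w)\mapsto (q^+,q^-)$ of Lemma \ref{new-lemma2} and then shift the second component by the constant $h^-_\infty - h^+_\infty$. The only (harmless) divergence is in which auxiliary lemmas you invoke at the last step. For $h^+_\infty=[g]$ you extract the asymptotic constant as an sc-projection applied to $(q^+,q^-)\in E$ (i.e.\ Lemma \ref{lem2-18}), where the paper refers instead to Lemma \ref{prop-xx}; both are valid here, and yours is arguably the cleaner observation since $(q^+,q^-)$ has already been produced. For $h^-_\infty$ you solve the relation \eqref{rprime} explicitly to exhibit $h^-_\infty-h^+_\infty$ as a smooth function of $(a,b)$ alone (smooth because $a_0,b_0\neq 0$, so $(R,\vartheta)$ and $(R',\vartheta')$ are smooth in $a$, $b$), whereas the paper cites Lemma \ref{abmap}, which is the forward statement about $b(a,u_0^++h^+,u_0^-+h^-)$; your inversion makes the dependence more transparent in this local, away-from-zero situation, and does not need the gluing-profile function $B$. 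Everything else — boundary conditions from the sub-lemma in the proof of Proposition \ref{topologyc}, the identity $\rho_a\circ\Gamma=\Gamma$ from Lemma \ref{new-lemma1}, and uniqueness from unique solvability of \eqref{newbox-eq} plus the prescribed asymptotic difference — matches the paper.
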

\begin{proof}
This follows immediately from Lemma \ref{new-lemma2} together with Lemma \ref{prop-xx} applied to $h^+_{\infty}=[g]$ and Lemma \ref{abmap} applied to $h^-_{\infty}.$
\end{proof}

\begin{lem}\label{317-lem}
We assume that $(u^+_0, u^-_0)\in {\mathcal D}^\infty$ are smooth diffeomorphisms of $\R^\pm \times S^1$. Then the map 
$$U(a_0, 0, 0)\subset \C\times \wh{E}_0\to X,$$
defined by
$$
(a, h^+, h^-)\mapsto (a, b(a, u^+_0+h^+, u^-_0+h^-), \boxplus_a(u^+_0+h^+,u^-_0+h^-))\in X$$
in a small open neighborhood of $(a_0, h^+, h^-)=(a_0, 0,0)$ where $a_0\neq 0$, is an sc-smooth map.
\end{lem}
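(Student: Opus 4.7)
The key simplification is that $a_{0} \neq 0$: on a small neighborhood $U(a_{0},0,0)$ of $(a_{0},0,0)$ the gluing length $R(a)=\varphi(|a|)$ and the gluing angle $\vartheta(a)$ depend \emph{classically} smoothly on $a$, and the finite cylinder $Z_{a}$ stays in a bounded family. So I expect no analytical difficulty of the sort that arises near $a=0$; the task is essentially to translate the map into a chart on $X$ and assemble results already proved.

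The plan is as follows. First, I would fix the chart on $X$ around $(a_{0},b_{0},w_{0})$ with $w_{0}=\boxplus_{a_{0}}(u_{0}^{+},u_{0}^{-})$ and $b_{0}=b(a_{0},u_{0}^{+},u_{0}^{-})$ constructed in Section \ref{subsub}. After invoking Remark \ref{remark2-nnew} I may assume $w_{0}$ is smooth. In that chart a triple $(a,b,w)\in X$ near $(a_{0},b_{0},w_{0})$ corresponds to $(a,b,h)$ with $h\in \widehat{H}^{3}(Z_{a_{0}},\R^{2})$ via $w=\psi_{b}\circ (w_{0}+h)\circ \phi_{a}$ for the fixed smooth families $a\mapsto \phi_{a}$ and $b\mapsto \psi_{b}$. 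The map to be studied therefore has the coordinate expression
\begin{equation*}
(a,h^{+},h^{-})\mapsto \bigl(a,\;b(a,u_{0}^{+}+h^{+},u_{0}^{-}+h^{-}),\;h(a,h^{+},h^{-})\bigr),
\end{equation*}
where
\begin{equation*}
h(a,h^{+},h^{-})=\psi_{b(a,\cdot,\cdot)}^{-1}\circ \boxplus_{a}(u_{0}^{+}+h^{+},u_{0}^{-}+h^{-})\circ \phi_{a}^{-1}\;-\;w_{0}.
\end{equation*}
The first component is trivially sc-smooth, and the second is sc-smooth by Lemma \ref{abmap}, so the whole matter reduces to sc-smoothness of the third component.

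Next, using the explicit formula from Definition \ref{box-map},
\begin{equation*}
\boxplus_{a}(u_{0}^{+}+h^{+},u_{0}^{-}+h^{-})([s,t]) = \boxplus_{a}(u_{0}^{+},u_{0}^{-})([s,t]) + \oplus_{a}(h^{+},h^{-}+h_{\infty}^{+}-h_{\infty}^{-})([s,t]),
\end{equation*}
I would split $h(a,h^{+},h^{-})$ into two pieces: the reference piece $\psi_{b}^{-1}\circ \boxplus_{a}(u_{0}^{+},u_{0}^{-})\circ \phi_{a}^{-1}-w_{0}$, and the variable piece $\psi_{b}^{-1}_{*}\bigl[\oplus_{a}(h^{+},h^{-}+h_{\infty}^{+}-h_{\infty}^{-})\circ \phi_{a}^{-1}\bigr]$, where by $\psi_{b\,*}^{-1}$ I mean pre-composition with the derivative data appropriate to having added rather than composed. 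For the reference piece, everything in sight is a smooth family of smooth maps on a bounded family of finite cylinders (because $a$ stays in a compact set away from $0$), so it is smooth on every level and hence sc-smooth by Proposition \ref{up-prop}. For the variable piece I would use that, for $a\neq 0$, the gluing $\oplus_{a}$ is a linear combination of cutoff multiplications and shifts on a bounded cylinder $[0,R(a)]\times S^{1}$; together with the fact that $h_{\infty}^{\pm}$ are sc-operators, this gives sc-smooth dependence on $(a,h^{+},h^{-})$ into $H^{3}(Z_{a_{0}})$ after pulling back by $\phi_{a}^{-1}$ and pushing forward by $\psi_{b}^{-1}$. The pull-back/push-forward steps are exactly the ``action by smooth maps'' covered by Theorem \ref{thm-125} (applied with the smooth parameters $(a,b)\in V$), and the composition is sc-smooth by the chain rule.

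The only step that might look subtle is the fact that $\phi_{a}$ and $\psi_{b}$ are families of diffeomorphisms between \emph{different} cylinders $Z_{a},Z_{a_{0}}$ and $Z_{b_{0}},Z_{b}$, whereas Theorem \ref{thm-125} is formulated for maps on a fixed disk. I would handle this by transporting everything to the fixed cylinder $Z_{a_{0}}$ using the smooth families themselves: this reduces each composition to a parametrized family of diffeomorphism actions on a fixed finite cylinder, to which Theorem \ref{thm-125} applies directly (together with the observation after Theorem \ref{thm-126} that the exponential decay weights play no role once $a$ stays away from $0$, because $Z_{a}$ is compact). I expect this bookkeeping about domains and targets, rather than any hard analysis, to be the main obstacle; once it is set up, the sc-smoothness of $(a,h^{+},h^{-})\mapsto h(a,h^{+},h^{-})$ follows by combining Lemmas \ref{abmap} and \ref{prop-xx}, Proposition \ref{qed}, and Theorem \ref{thm-125} via the chain rule, which completes the proof.
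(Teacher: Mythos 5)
Your chart identification, the formula
\begin{equation*}
h(a,h^{+},h^{-})=\psi_{b}^{-1}\circ \boxplus_{a}(u_{0}^{+}+h^{+},u_{0}^{-}+h^{-})\circ \phi_{a}^{-1}-w_{0},
\end{equation*}
the appeal to Lemma \ref{abmap} for the $b$-component, and the final list of tools (Theorem \ref{thm-125}, Proposition \ref{qed}, chain rule, plus the crucial observation that for $a$ near $a_{0}\neq 0$ the cylinders $Z_{a}$ stay in a bounded family) are exactly what the paper uses; the paper's proof simply defers to the argument of Lemma \ref{new-lemma2}, and your fill-in is in the same spirit.

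The one step that does not hold as stated is the proposed split of $h(a,h^{+},h^{-})$ into a ``reference piece'' $\psi_{b}^{-1}\circ \boxplus_{a}(u_{0}^{+},u_{0}^{-})\circ\phi_{a}^{-1}-w_{0}$ plus a ``variable piece'' $\psi_{b}^{-1}\circ\bigl[\oplus_{a}(h^{+},h^{-}+h^{+}_{\infty}-h^{-}_{\infty})\bigr]\circ\phi_{a}^{-1}$. The identity $\boxplus_{a}(u_{0}^{+}+h^{+},u_{0}^{-}+h^{-}) = \boxplus_{a}(u_{0}^{+},u_{0}^{-}) + \oplus_{a}(h^{+},h^{-}+h^{+}_{\infty}-h^{-}_{\infty})$ is a pointwise sum of $\R^{2}$-valued maps, and post-composition by the (nonlinear) diffeomorphism $\psi_{b}^{-1}$ does not commute with this sum; your $\psi^{-1}_{b*}$ notation and the remark about ``derivative data appropriate to having added rather than composed'' flag the issue but do not resolve it, since a Taylor remainder term is left unaccounted for. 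Fortunately the decomposition is not needed: Theorem \ref{thm-125} (as used throughout Section \ref{subsub} for the transition maps of $X$, in particular in the proof of Lemma \ref{new-lemma2}) applies directly to the full composition $\psi_{b}^{-1}\circ\boxplus_{a}(u_{0}^{+}+h^{+},u_{0}^{-}+h^{-})\circ\phi_{a}^{-1}$, once one notes that $(a,h^{+},h^{-})\mapsto \boxplus_{a}(u_{0}^{+}+h^{+},u_{0}^{-}+h^{-})$ is sc-smooth into $H^{3}(Z_{a_{0}})$ on the bounded cylinders (here is where $a_{0}\neq 0$ enters, just as you observe), and $b$ depends sc-smoothly on $(a,h^{\pm}_{\infty})$ by Lemma \ref{abmap}. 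Dropping the split and applying the chain rule directly to this composition gives the clean version of your argument, and matches the paper's.
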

\begin{proof}
In the local sc-chart around the point $(a_0, b_0, w_0)\in X$ which is defined by 
$(a, b, h)\mapsto (a, b, \psi_b\circ (w_0+h)\circ \phi_a)$, our map is represented by the formulae 
$b=b(a, u^+_0+h^+, u^-_0+h^-)$ and 
$$h=\psi_b\circ \boxplus_a(u^+_0+h^+, u^-_0+h^-)\circ \phi_a^{-1}-w_0.$$
Arguing now as in the proof of Lemma \ref{new-lemma2} one verifies that $(a, h^+, h^-)\mapsto h$ is an sc-smooth map.
\end{proof}

For the following it is important to observe that if $(a_0, b_0, \wt{w}_0 )\in X$ is a point in which
$\wt{w}_0=\boxplus_{a_0}(\wt{u}^+_0,\wt{u}^-_0 )$ for a pair $(\wt{u}^+_0, \wt{u}^-_0)$ of diffeomorphisms of $\R^\pm \times S^1$ belonging to the space ${\mathcal D}^{3, \delta_0}$, then there exists a smooth point $(a_0, b_0, w_0)\in X$ is which $w_0=\boxplus_{a_0}(u^+_0,u^-_0 )$  for a pair $(u^+_0, u^-_0)$ of smooth diffeomorphisms of $\R^\pm \times S^1$ belonging to the space ${\mathcal D}^{\infty}$, and an open neighborhood $U\subset X$ of $(a_0, b_0, w_0)$, which contains the original point $(a_0, b_0, \wt{w}_0)$.

In view of the above discussion, we have establish the following result. 
\begin{prop}
If $(u^+_0, u^-_0)\in {\mathcal D}^{\infty}$ is a pair of smooth diffeomorphisms of $\R^\pm \times S^1$ and if   $(a_0,b_0,w_0)$ is the smooth point in which
$
 w_0=\boxplus_{a_0}(u_0^{+},u_0^{-}),$
 then there exists an open neighborhood $U\subset X$ of this point and an sc-smooth map
 $$
\Gamma: U\rightarrow \wh{E}_0,
$$
defined by $\Gamma (a, b, w)=(h^+, h^-)$ and having the following properties.
\begin{itemize}
\item[$\bullet$] $\Gamma (a_0, b_0, w_0)=(0, 0)$
\item[$\bullet$]  $\boxplus_a[ (u^+_0,u^-_0)+\Gamma(a,b,w)]=w$.
\item[$\bullet$] $\rho_a (h^+, h^-)=(h^+, h^-)$.
\end{itemize}
\end{prop}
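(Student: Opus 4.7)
The plan is to observe that the statement essentially packages together the constructions already carried out in the proof of Proposition \ref{topologyc} and in Lemmata \ref{new-lemma1}, \ref{new-lemma2}, and \ref{316-lem}, so the strategy is to read off the three required properties from those results rather than reprove anything substantive. First, I would invoke Lemma \ref{316-lem} directly at the smooth point $(a_0,b_0,w_0)$ to obtain an open neighborhood $U\subset X$ of this point together with an sc-smooth map $\Gamma:U\to\wh{E}_0$, defined by $\Gamma(a,b,w)=(h^+,h^-)$ with $(h^+,h^-)=(q^+,q^-+h^-_\infty-h^+_\infty)$, where $(q^+,q^-)\in E$ is the unique solution of the system \eqref{newbox-eq} with right-hand side $g:=w-w_a$ (and $w_a=\boxplus_a(u^+_0,u^-_0)$), $h^+_\infty=[g]$ is the mean value at $s=R/2$, and $h^-_\infty$ is determined from the relation \eqref{rprime} together with the gluing parameter $b$. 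This immediately gives the existence, the sc-smoothness, and the target space of $\Gamma$.

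Next I would verify the three enumerated properties in turn. At the reference point $(a_0,b_0,w_0)$ one has $w_0=w_{a_0}$ and hence $g=0$, which forces $q^\pm=0$ and $h^+_\infty=[g]=0$; the identity $b_0=b(a_0,u^+_0,u^-_0)$ combined with \eqref{rprime} then yields $h^-_\infty-h^+_\infty=0$, so $h^-_\infty=0$ as well, and therefore $\Gamma(a_0,b_0,w_0)=(0,0)$. The second identity $\boxplus_a[(u^+_0,u^-_0)+\Gamma(a,b,w)]=w$ is precisely what one gets from the expansion
\[
\boxplus_a(u^+_0+h^+,u^-_0+h^-)=w_a+\oplus_a(h^+,h^-+h^+_\infty-h^-_\infty)
\]
recorded in Section \ref{anothersplicing}: substituting $q^+=h^+$ and $q^-=h^-+h^+_\infty-h^-_\infty$ turns the identity into $\oplus_a(q^+,q^-)=g$, which is the defining equation in \eqref{newbox-eq}. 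Finally, the projection identity $\rho_a(h^+,h^-)=(h^+,h^-)$ is exactly the content of Lemma \ref{new-lemma1}, because $(q^+,q^-)$ also satisfies $\ominus_a(q^+,q^-)=0$ by construction, and via Lemma \ref{kernel-lem1} this is equivalent to $(h^+,h^-)$ lying in the fixed locus of $\rho_a$.

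There is no genuine analytical obstacle at this stage: the hard technical input—the sc-smoothness in the gluing parameter $a$ of the various cut-off and shift operations entering the formula for $(q^+,q^-)$—has already been settled in Proposition \ref{qed}, the sc-smoothness of the mean-value map in Lemma \ref{prop-xx}, the sc-smoothness of $(a,h^+,h^-)\mapsto b(a,u^+_0+h^+,u^-_0+h^-)$ in Lemma \ref{abmap}, and the sc-smoothness of the solution map $(a,b,w)\mapsto(q^+,q^-)$ in Lemma \ref{new-lemma2}. The only minor bookkeeping point is to keep the two asymptotic constants $h^+_\infty$ and $h^-_\infty$ separate throughout and to use the $\wh{E}_0$-boundary condition verification already performed in the proof of Proposition \ref{topologyc}. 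The clean packaging provided by the present proposition is what will subsequently be used to build an sc-smooth chart at points of $\ov{X}$ having $a_0\neq 0$, with $\Gamma$ playing the role of the inverse chart map.
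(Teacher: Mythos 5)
Your proposal is correct and follows essentially the same route as the paper, which does not give a separate proof for this proposition but states it as a direct consequence of the preceding discussion: Lemma \ref{316-lem} supplies the sc-smooth map $\Gamma$ and the neighborhood $U$, Lemma \ref{new-lemma1} gives the projection identity $\rho_a(h^+,h^-)=(h^+,h^-)$, and the expansion $\boxplus_a(u^+_0+h^+,u^-_0+h^-)=w_a+\oplus_a(h^+,h^-+h^+_\infty-h^-_\infty)$ from Section \ref{anothersplicing} yields the gluing identity. Your verification of $\Gamma(a_0,b_0,w_0)=(0,0)$ — using $g=0$ at the reference point, $h^+_\infty=[g]=0$, and \eqref{rprime} to kill $h^-_\infty$ — is the correct bookkeeping, and you even used the correct formula $h^-=q^-+h^-_\infty-h^+_\infty$ (the paper's Lemma \ref{316-lem} contains an obvious typo here).
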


In view of our observation above, it is sufficient to introduce chart maps of $\ov{X}$ centered at smooth points $(0, 0, u^+_0, u^-_0)\in \ov{X}\setminus X$ where 
$(u^+_0,u^-_0)\in {\mathcal D}^\infty$ are smooth diffeomorphisms of $\R^\pm \times S^1$.
\begin{defn}\label{charts-new-def}
Given $(u^+_0,u^-_0)\in {\mathcal D}^\infty$ we define the map
$$\varphi:\{(a,h^+,h^-)\ \vert \,  \rho_a(h^+,h^-)=(h^+,h^-),|a|<\sigma_0, (h^+,h^-)\in U\}\rightarrow \ov{X}, $$
in which $U\subset \wh{E}_0$ is a small open neighborhood of $(0, 0)\in \wh{E}_0$, by 
$$\varphi(a,h^+,h^-)=(a,b(a,u^+_0+h^+,u^-_0+h^-),\boxplus_a(u^+_0+h^+,u^-_0+h^-))$$
if $a\neq 0$, and by the formula,
$$\varphi(0,h^+,h^-)=(0, 0, u_0^++h^+, u_0^-+h^-)$$
 in the case $a=0$.
\end{defn}
\begin{rem}
We would like to emphasize that due to the assumption $\rho_a(h^+, h^-)=(h^+, h^-)$ the map $\varphi$ is invertible. Indeed, since this assumption is equivalent to the requirement 
$\ominus_a(h^+-h^+_{\infty}, h^--h^-_{\infty})=0$, in view of Lemma \ref{kernel-lem1}, the injectivity of $\varphi$ follows by the arguments already used in the proof of Proposition \ref{topologyc}. Namely, if 
$$\varphi (a, h^+, h^-)=\varphi (\wt{a}, k^+, k^-),$$
then $a=\wt{a}$ and so $b(a, u^+_0+h^+, u^-_0+h^-)=b(a, u^+_0+k^+, u^-_0+k^-)$ and 
$\boxplus_a (u^+_0+h^+, u^-_0+h^-)=\boxplus_a (u^+_0+k^+, u^-_0+k^-)$ and the additional equations $\ominus_a(h^+-h^+_{\infty}, h^--h^-_{\infty})=0$ and $\ominus_a(k^+-k^+_{\infty}, k^--k^-_{\infty})=0$ imply that $h^+=k^+$ and $h^-=k^-$.

\end{rem}
From the previous results  we deduce the following result. 

\begin{prop}\label{319-prop}
The map $\varphi$ in Definition \ref{charts-new-def} restricted to triples $(a, h^+, h^-)$ satisfying 
$a\neq 0$ is an sc-diffeomorphism onto an open subset of the M-polyfold $X$.

\end{prop}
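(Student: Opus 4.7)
The strategy is to combine Lemma \ref{317-lem} (sc-smoothness of the forward map) with Lemma \ref{316-lem} (construction of an sc-smooth inverse via the map $\Gamma$), exploiting the fact that the degenerate behavior at $a=0$ is excluded. First I would make the domain precise: the set
$$
{\mathcal R}=\{(a,h^+,h^-)\ \vert\ 0<\abs{a}<\sigma_0,\ (h^+,h^-)\in U,\ \rho_a(h^+,h^-)=(h^+,h^-)\}
$$
is the image of the sc-smooth retraction $(a,\eta)\mapsto (a,\rho_a(\eta))$ restricted to the open set $\{a\neq 0\}$, hence is an sc-smooth retract in $(B_\frac{1}{2}\setminus\{0\})\oplus \wh{E}_0$. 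Equivalently, via the sc-linear isomorphism $\Phi_a:G^a\to H_a$, it is locally an open subset of a Hilbert-manifold-type space, and thus carries a natural M-polyfold structure away from $a=0$.

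Next I would verify that $\varphi$ is sc-smooth on $\mathcal R$. The map
$$
(a,h^+,h^-)\mapsto\bigl(a,b(a,u^+_0+h^+,u^-_0+h^-),\boxplus_a(u^+_0+h^+,u^-_0+h^-)\bigr)
$$
extends sc-smoothly to an open neighborhood of $(a_0,0,0)$ in the ambient space $(\C\setminus\{0\})\oplus\wh{E}_0$ by Lemma \ref{abmap} (for the $b$-component) and Lemma \ref{317-lem} (for the $\boxplus$-component), so its restriction to the retract $\mathcal R$ is sc-smooth. Injectivity of $\varphi$ was noted in the remark preceding the proposition: if two triples in $\mathcal R$ have the same image, then the differences $h^+-k^+$ and $h^--k^-$ lie in $\ker\oplus_a\cap\ker\ominus_a=\{(0,0)\}$ by Theorem \ref{propn-1.27}, and the asymptotic constants are pinned down by $b$.

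For the inverse, I would apply Lemma \ref{316-lem} at each smooth point of the image to produce an open neighborhood $U'\subset X$ of that point together with an sc-smooth map $\Gamma:U'\to \wh{E}_0$ satisfying $\rho_a\Gamma(a,b,w)=\Gamma(a,b,w)$ and $\boxplus_a((u^+_0,u^-_0)+\Gamma(a,b,w))=w$, with $\Gamma$ vanishing at the reference point. Setting $\psi(a,b,w):=(a,\Gamma(a,b,w))$ then defines an sc-smooth map $U'\to\mathcal R$. To close the argument I would verify $\psi\circ\varphi=\id$ and $\varphi\circ\psi=\id$: the first identity follows by uniqueness of the solution to the system \eqref{newbox-eq} together with \eqref{b-eq}, exactly as established in the proof of Proposition \ref{topologyc}; the second is a direct rewriting of the defining properties of $\Gamma$. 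Openness of the image in $X$ follows because the image equals the union of the neighborhoods $U'$ on which $\Gamma$ is defined.

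The main obstacle is purely organizational rather than analytical: one must carefully match the M-polyfold structure on $\mathcal R$ (either as the image of the retraction $(a,\eta)\mapsto (a,\rho_a(\eta))$ or as an open set in the Hilbert-like space via the sc-isomorphisms $\Phi_a$) with the sc-smoothness statements for $\varphi$ and its inverse, and then apply the chain rule for sc-smooth maps between retracts. All analytic inputs are available from the previously established sc-smoothness of $(a,\eta)\mapsto\rho_a(\eta)$, the sc-smoothness of $\Gamma$, and the sc-smoothness of the gluing and parameter maps.
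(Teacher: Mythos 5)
Your proposal is correct and follows essentially the same route as the paper. The paper's own proof consists of the single sentence "From the previous results we deduce the following result," and the "previous results" are exactly the ingredients you assemble: Lemma \ref{317-lem} and Lemma \ref{abmap} for the sc-smoothness of $\varphi$ on the retract away from $a=0$, Lemma \ref{316-lem} for the sc-smooth local inverse, injectivity from the remark preceding the proposition (via uniqueness of the solution of \eqref{newbox-eq} and \eqref{b-eq}), and compatibility with the previously constructed M-polyfold structure on $X$. You have simply made explicit the bookkeeping that the authors leave implicit.
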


The above maps of Definition \ref{charts-new-def} cover the set $\ov{X}\setminus X$ and are compatible with the sc-msooth structure of the  M-polyfold $X$. Hence in order to establish an sc-smooth structure on $\ov{X}$  it remains to show that the chart transformations are sc-smooth in a neighborhood of $a=0$. We shall make use of the sc-smoothness results in chapter \ref{sec2}, in particular of Proposition \ref{qed} and of Lemma \ref{lem2-18}-\ref{lem2.21-nnew}.

\begin{prop}\label{prop3.14-new}
If  $\phi_1$ and $\phi_2$ are two chart maps of the topological space $\ov{X}$ as introduced in Definition  \ref{charts-new-def}, then the chart transformation 
 $$\phi_2^{-1}\circ\phi_1$$
  is an sc-smooth map.
\end{prop}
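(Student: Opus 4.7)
The plan is to split the verification according to whether the common gluing parameter is nonzero or zero. On the open set $\{a\neq 0\}$ in the domain of $\phi_1$, Proposition \ref{319-prop} asserts that both $\phi_1$ and $\phi_2$ restrict to sc-diffeomorphisms onto open subsets of the already-established M-polyfold $X$, so the general chain rule yields sc-smoothness of $\phi_2^{-1}\circ\phi_1$ there automatically. All substantive analysis therefore concerns a base point of the form $(0,h^+_1,h^-_1)$ in the fiber over $a=0$.

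Fix such a base point and write the unknown transition as $(\tilde{h}^+,\tilde{h}^-)=(\phi_2^{-1}\circ\phi_1)(a,h^+,h^-)$. By inspection of Definition \ref{charts-new-def}, the pair $(\tilde{h}^+,\tilde{h}^-)$ is determined by three conditions: the gluing identity $\boxplus_a(\tilde{u}_0^++\tilde{h}^+,\tilde{u}_0^-+\tilde{h}^-)=\boxplus_a(u_0^++h^+,u_0^-+h^-)$, the matching $b(a,\tilde{u}_0^\pm+\tilde{h}^\pm)=b(a,u_0^\pm+h^\pm)$ of the second gluing parameter, and the idempotence constraint $\rho_a(\tilde{h}^+,\tilde{h}^-)=(\tilde{h}^+,\tilde{h}^-)$. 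By Lemma \ref{kernel-lem1} the last is equivalent to $\ominus_a(\tilde{h}^+-\tilde{h}^+_\infty,\tilde{h}^--\tilde{h}^-_\infty)=0$. Setting $(q^+,q^-):=(\tilde{h}^+,\tilde{h}^-+\tilde{h}^+_\infty-\tilde{h}^-_\infty)$ and using the explicit formula for $\boxplus_a$ from Section \ref{anothersplicing}, the first condition rewrites as $\oplus_a(q^+,q^-)=g(a,h^+,h^-)$, where
$$g(a,h^+,h^-)=\boxplus_a(u_0^+,u_0^-)-\boxplus_a(\tilde{u}_0^+,\tilde{u}_0^-)+\oplus_a(h^+,h^-+h^+_\infty-h^-_\infty).$$
Combining with $\ominus_a(q^+,q^-)=0$ inverts the total gluing $\boxdot_a$ exactly as in the proof of Proposition \ref{topologyc}, yielding $q^+=[g]+\tfrac{\beta_a}{\gamma_a}(g-[g])$ and $q^-(\cdot-R,\cdot-\vartheta)=[g]+\tfrac{1-\beta_a}{\gamma_a}(g-[g])$. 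The asymptotic difference $\tilde{h}^+_\infty-\tilde{h}^-_\infty$ is determined from the second condition by Lemma \ref{abmap}, and $(\tilde{h}^+,\tilde{h}^-)$ is then reconstructed from $(q^+,q^-)$. Evaluating at $a=0$ and using $\boxplus_0=\id$, $\oplus_0=\id$, one checks that the formula collapses to the affine identification $\tilde{h}_1^\pm=(u_0^\pm-\tilde{u}_0^\pm)+h_1^\pm$, so the two regimes match consistently.

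Sc-smoothness of the transition at the base point follows because each building block is sc-smooth in a neighborhood of $a=0$: the diffeomorphism/shift actions appearing in $g$ are sc-smooth by Theorem \ref{thm-125p}; the asymptotic-constant extractors are sc-projections (Lemma \ref{lem2-18}); the mean-value $[g]$ depends sc-smoothly on $(a,h^+,h^-)$ by Lemma \ref{prop-xx}; the multiplication by $\beta_a/\gamma_a$ and $(1-\beta_a)/\gamma_a$ together with the shift $(\cdot-R,\cdot-\vartheta)$ are sc-smooth operators by Proposition \ref{qed} and Lemmata \ref{lem2.21-nnew}--\ref{lem2-22}; and $b(a,u_0^\pm+h^\pm)$ is sc-smooth by Lemma \ref{abmap}. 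Iterating with the chain rule for sc-smooth maps assembles the full transition map and delivers the claim.

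The main obstacle is purely analytical: the $a$-derivatives of factors such as $\beta_a/\gamma_a$ and of the shift by $(R(a),\vartheta(a))$ blow up as $a\to 0$ at rates controlled by Lemmata \ref{lkj} and \ref{thetaes-prop}, so sc-smoothness at $a=0$ is not formal. It relies on the exponential decay estimates of Lemma \ref{lkk} exactly counterbalancing these blow-ups, a cancellation engineered into the inductive schemes $({\bf S_k})$ used to prove Proposition \ref{qed} and Lemma \ref{prop-xx}. The same inductive framework, together with the structural description of iterated tangent maps in Lemma \ref{bigg}, must be reinvoked for each composite term occurring in the formula for $(\tilde{h}^+,\tilde{h}^-)$, ensuring that every $T^k(\phi_2^{-1}\circ\phi_1)$ extends continuously by the affine formula at $a=0$ and satisfies the sc-approximation property there.
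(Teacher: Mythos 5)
Your proposal is correct and follows essentially the same route as the paper's proof: solve the two-by-two gluing system for $(q^+,q^-)$ with the $\ominus_a$-constraint supplied by Lemma \ref{kernel-lem1}, express the transition map through the resulting explicit formulas for $q^\pm$ and the asymptotic constants, check the collapse to the affine formula at $a=0$, and then cite Proposition \ref{qed} and Lemmata \ref{lem2-18}--\ref{lem2.21-nnew} term by term. The only cosmetic difference is that you invoke Proposition \ref{319-prop} separately for $a\neq 0$, whereas the paper derives one uniform formula valid near $a=0$ and verifies sc-smoothness directly from it.
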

\begin{proof}
We consider two chart maps $\phi_1$ and $\phi_2$ into $\ov{X}$ according to 
Definition  \ref{charts-new-def} which are defined in a neighborhood of $a=0$. The first one is the map $\phi_1$ 
$$\{(a,h^+,h^-)\ \vert \,  \rho_a(h^+,h^-)=(h^+,h^-),0\leq 0\leq  |a|<\sigma_1, (h^+,h^-)\in U_1\}\rightarrow \ov{X},$$
defined by
$$
\phi_1(a,h^+,h^-):=(a,b(a,u^+_1+h^+,u^-_1+h^-),\boxplus_a(u^+_1+h^+,u^-_1+h^-))
$$ 
if $a\neq 0$, and by
$$\phi_1(a,h^+,h^-):=(0, 0, u^+_1+h^+,u^-_1+h^-)
$$ 
if $a=0$. Here $u_1^\pm:\R^\pm \times S^1\to\R^\pm \times S^1$ are two smooth diffeomorphisms of the positive resp. negative cylinder (in the covering spaces) of the form 
\begin{equation}\label{eq0-nnew}
\begin{split}
&\, u_1^+(s, t)\, =(s, t)+(d^+_1, \vartheta_1^+)+r_1^+(s, t),\quad s\geq 0\\
&u_1^-(s', t')=(s', t')+(d^-_1, \vartheta_1^-)+r_1^-(s', t'),\quad s'\leq 0
\end{split}
\end{equation}
where the pair $(r^+_1,  r^-_1)\in {\mathcal D}^{\infty}$ is a smooth point in the sc-Banach space $H^{3, \delta_0}(\R^+\times S^1, \R^2)\times H^{3, \delta_0}(\R^-\times S^1, \R^2)$. In addition, $U_1\subset \wh{E}_0$ is an open neighborhood of $(0, 0)\in \wh{E}_0$ consisting of pairs $(h^+, h^-)$ of maps $h^\pm :\R^\pm \times S^1\to  \R^2$ of the form 
$h^\pm (s, t)=h_{\infty}^\pm +r^\pm (s, t)$ and $r^\pm \in H^{3, \delta_0}(\R^+\times S^1, \R^2).$
Similarly, the second chart map $\varphi_2$, 
$$\{(a',k^+,k^-)\ \vert \,  \rho_{a'}(k^+,k^-)=(k^+,k^-),0\leq |a'|<\sigma_2, (k^+,h^-)\in U_2\}\rightarrow \ov{X}$$
is defined by 
$$
\phi_2(a',k^+,k^-):=(a',b'(a',u^+_2+k^+,u^-_2+k^-),\boxplus_{a'}(u^+_2+k^+,u^-_2+k^-))
$$
if $a' \neq 0$,  and by 
$$
\phi_2(a',k^+,k^-):=(0, 0, u^+_2+k^+,u^-_2+k^-)
$$
if $a'=0$.
The regions where the charts overlap are defined by 
\begin{equation}\label{eq1-nnew}
\phi_1(a,h^+,h^-)=\phi_2(a', k^+, k^-).
\end{equation}
If $a=0$, then it follows that also $a'=0$, and hence  $b=0$ and $b'=0$, so that 
\begin{equation*}
\begin{split}
\phi_1(0,h^+,h^-)&=(0, 0, u^+_1+h^+,u^-_1+h^-)\\
&=(0, 0, u^+_2+k^+,u^-_2+k^-)=\phi_2(0,k^+,k^-).
\end{split}
\end{equation*}
The map 
$\phi^{-1}_2\circ \phi_1(0, h^+, h^-)=(0, k^+, k^-)$ is therefore given by 
\begin{equation}\label{eq0-new}
\begin{aligned}
k^+&=h^++(u^+_1-u^+_2)\\
k^-&=h^-+(u^-_1-u^-_2).
\end{aligned}
\end{equation}
If $a\neq 0$, then $a'=a$ and hence 
$b(a,u^+_1+h^+,u^-_1+h^-)=b'(a,u^+_2+k^+,u^-_2+k^-)$ and 
\begin{equation}\label{eq1-new}
\boxplus_a(u^+_1+h^+,u^-_1+h^-)=\boxplus_{a}(u^+_2+k^+,u^-_2+k^-).
\end{equation}
In particular, the gluing lengths and gluing angles $(R_1, \vartheta_1)$ and $(R_2, \vartheta_2)$ associated with the gluing parameters $a$ and $a'$ are  equal. Also  the gluing lengths and gluing angles  $(R_1', \vartheta_1')$ and $(R_2', \vartheta_2')$ associated with the gluing parameters $b(a,u^+_1+h^+,u^-_1+h^-)$ and $b'(a,u^+_2+k^+,u^-_2+k^-)$ are the same. Hence, in view of $(R_1', \vartheta_1')=(R_1, \vartheta_1)+(d^+_1-d^-_1, \vartheta^+_1-\vartheta^-_1)+h^+_{\infty}-h^-_{\infty}$ and  $(R_2', \vartheta_2')=(R_2, \vartheta_2)+(d^+_2-d^-_2, \vartheta^+_2-\vartheta^-_2)+k^+_{\infty}-k^-_{\infty}$ , we obtain the relation 
\begin{equation}\label{eq1a-new}
k^+_{\infty}-k^-_{\infty}=(h^+_{\infty}-h^-_{\infty})+(d^+_1-d^-_1-d^+_2+d^-_2, \vartheta_1^+-\vartheta_1^--\vartheta_2^++\vartheta_2^-).
\end{equation}
Abbreviating 
\begin{equation*}\label{eq2-nnew}
w_a=\boxplus_a(u_1^+, u_1^-)-\boxplus_a(u_2^+, u_2^-),
\end{equation*}
the equation \eqref{eq1-new} becomes 
\begin{equation}\label{eq2-new}
\oplus_a(k^+, k^-+k^+_{\infty}-k^-_{\infty})=\oplus_a(h^+, h^-+h^+_{\infty}-h^-_{\infty})+w_a.
\end{equation}
Recall that  the pair $(k^+, k^-)$ satisfies $\rho_{a}(k^+,k^-)=(k^+,k^-)$ which implies by  Lemma \ref{kernel-lem1} that 
$\ominus_a(k^+-k^+_{\infty}, k^--k^-_{\infty})=0$.    Observing that  the anti-gluing operation
 $\ominus_a$ satisfies
 $$\ominus_a(v^++A, v^-+A)=\ominus_a(v^+, v^-)$$
 for every constant $A$, we conclude that  $\ominus_a(k^+, k^-+k^+_{\infty}-k^-_{\infty})=0.$
Setting $q^+=k^+$ and $q^-=k^-+k^+_{\infty}-k^-_{\infty}$ and abbreviating 
\begin{equation}\label{eq2a-new}
g:=\oplus_a(h^+, h^-+h^+_{\infty}-h^-_{\infty})+w_a,
\end{equation}
we have therefore to solve the following system of equations  for $(q^+, q^-)$
\begin{equation}
\begin{aligned}\label{eq4-nnew}
\oplus_a (q^+, q^-)&=g\\
\ominus_a(q^+, q^-)& =0.
\end{aligned}
\end{equation}
Integrating the first equation at $s=\frac{R}{2}$ over the circle $S^1$ and recalling that $\beta_a(\frac{R}{2})=\frac{1}{2}$, we find 
\begin{equation}\label{eq3-new}
\text{av}_a (q^+, q^-)=[g].
\end{equation}
In matrix form the system \eqref{eq4-nnew} is expressed  as follows,
$$\begin{bmatrix}
\phantom{-}\beta_a&1-\beta_a\\
-(1-\beta_a)&\beta
\end{bmatrix}\cdot
\begin{bmatrix}q^+\\q^-\end{bmatrix}=
\begin{bmatrix}g\\
(2\beta_a-1)\text{av}_a (q^+, q^-)
\end{bmatrix}= 
\begin{bmatrix}g\\
(2\beta_a-1)[g]
\end{bmatrix} .
$$
Denoting by $\gamma_a=\beta+a^2+(1-\beta_a)^2$ the determinant of the matrix and abbreviating $\gamma_a=\gamma_a(s)$ and $\beta_a=\beta_a(s)$, the unique solution of \eqref{eq4-nnew} is given by 
\begin{equation}\label{eq4-new}
q^+(s,t)=\left(1-\frac{\beta_a}{\gamma_a}\right)\cdot [g]+ \frac{\beta_a}{\gamma_a}\cdot g
\end{equation}
for $s\geq 0$, and 
\begin{equation}\label{eq5-new}
q^-(s-R, t-\vartheta)=\left(1-\frac{1-\beta_a}{\gamma_a}\right)\cdot [g]+\frac{1-\beta_a}{\gamma_a}\cdot g
\end{equation}
for all $s\leq R$. 

In order to analyze the behavior at $a=0$ we write down the solutions in detail.  To do this we  represent $h^\pm=h^\pm_{\infty}+r^\pm_3$ where $r^\pm_3 \in H^{3, \delta_0}(\R^\pm \times S^1, \R^2)$ and use the explicit  representations  of 
\begin{equation*}
\begin{split}
g(s, t)&=\oplus_a(h^+, h^-+h^+_{\infty}-h^-_{\infty})(s, t)+w_a(s, t)\\
&=\oplus_a (h^+_{\infty}+r^+_3, h^+_{\infty}+r^-_3)(s, t)+w_a(s, t)\\
&=h^+_{\infty}+\beta_a \cdot r^+_3(s, t)+(1-\beta_a)\cdot r^-_3(s-R, t-\vartheta)+w_a(s, t)
\end{split}
\end{equation*}
and of 
\begin{equation*}
\begin{split}
w_a(s, t)&=(s, t)+(d_1^+, \vartheta^+_1)+\beta_a\cdot  r^+_1(s, t)+(1-\beta_a)\cdot r_1^-(s-R, t-\vartheta)\\
&\phantom{=}-(s, t)-(d_2^+, \vartheta^+_2)-\beta_a\cdot  r^+_2(s, t)-(1-\beta_a)\cdot r_2^-(s-R, t-\vartheta)\\
&=(d_1^+-d_2^+, \vartheta^+_1-\vartheta^+_2)\\
&\phantom{=}+\beta_a\cdot (r^+_1-r^+_2)(s, t)+(1-\beta_a)\cdot (r^-_1-r^-_2)(s-R, t-\vartheta),
\end{split}
\end{equation*}
so that 
\begin{equation}\label{eq6-nnew}
\begin{split}
g(s,t)&=h^+_{\infty}+(d_1^+-d_2^+, \vartheta^+_1-\vartheta^+_2)\\
&\phantom{=}+\beta_a\cdot r^+(s,t)+(1-\beta_a)\cdot r^-(s-R, t-\vartheta)
\end{split}
\end{equation}
where we have abbreviated 
\begin{equation}\label{eqr-nnew}
r^\pm=r^\pm_3+r^\pm_1-r^\pm_2.
\end{equation}
The mean value 
$[g]=\int_{S^1}g\left(\frac{R}{2},  t\right)\ dt$ is computed to be
\begin{equation}\label{eq7-nnew}
[g]=h^+_{\infty}+(d_1^+-d_2^+, \vartheta^+_1-\vartheta^+_2)+\av (r^+, r^-).
\end{equation}

With \eqref{eq6-nnew} and \eqref{eq7-nnew} the solution $q^+(s, t)$ for all $s\geq 0$ is equal to 
\begin{equation}\label{eq8-nnew}
\begin{split}
q^+(s, t)&=h^+_{\infty}+(d_1^+-d_2^+, \vartheta^+_1-\vartheta^+_2)+\left(1-\frac{\beta_a}{\gamma_a}\right)\cdot \av (r^+, r^-)\\
&\phantom{=}+\frac{\beta_a^2}{\gamma_a}\cdot r^+(s, t)+\frac{\beta_a(1-\beta_a)}{\gamma_a}\cdot r^-(s -R,t-\vartheta), 
\end{split}
\end{equation}
where, as usual,  we have abbreviated $\beta_a=\beta_a(s)$.
Since $q^+=k^+$ we read off the  solution $q^+(s, t)$ the asymptotic constant 
\begin{equation}\label{eqq1-nnew}
k^+_{\infty}=\lim_{s\to \infty}q^+(s, t)=h^+_{\infty}+(d_1^+-d_2^+, \vartheta^+_1-\vartheta^+_2)+
\av (r^+, r^-)
\end{equation}
using that  $\beta_a(s)=0$ for $s\geq \frac{R}{2}+1$.

In order to represent the solution $q^-(s-R, t-\vartheta)$ for all $s\leq R$ we introduce the variables $s'=s-R$ and $t'=t-\vartheta$. From $\beta (s)=1-\beta (-s)$ one deduces $\beta_a (-s')=\beta (-s'-\frac{R}{2})=1-\beta(s'+\frac{R}{2})=1-\beta_a (s'+R)$ and $\gamma_a (-s')=\gamma_a(s'+R)$. Using this, the solution $q^-(s', t')$ is represented by
\begin{equation*}
\begin{split}
q^-(s', t')&=k^-(s', t')+(k^+_{\infty}-k^-_{\infty})\\
&=h^+_{\infty}+(d_1^+-d_2^+, \vartheta^+_1-\vartheta^+_2)+\left(1-\frac{\beta_a(-s')}{\gamma_a(-s')}\right)\cdot \av (r^+, r^-)\\
&\phantom{=}+\frac{(1-\beta_a(-s'))\beta_a(-s')}{\gamma_a(-s')}\cdot r^+(s'+R, t'+\vartheta)+
\frac{\beta_a(-s')^2}{\gamma_a(-s')}\cdot r^-(s', t')
\end{split}
\end{equation*}
for all $s'\leq 0$.  In view of the relation \eqref{eq1a-new}, the solution $k^-(s', t')$ has the following representation,
\begin{equation}\label{eq10-nnew}
\begin{split}
k^-(s', t')&=h^-_{\infty}+(d^-_1-d^-_2\, \vartheta^-_1-\vartheta^-_2)+\left(1-\frac{\beta_a(-s')}{\gamma_a(-s')}\right)\cdot \av (r^+, r^-)\\
&\phantom{=}\frac{\beta_a(-s')}{\gamma_a(-s')}\cdot r^+(s'+R, t'+\vartheta)+
\frac{\beta_a(-s')^2}{\gamma_a(-s')}\cdot r^-(s', t').
\end{split}
\end{equation}
The asymptotic constant $k^-_{\infty}=\lim_{s'\to -\infty}k^-(s', t')$  of the solution $k^-$ is equal to 
\begin{equation}\label{eqq2-nnew}
k^-_{\infty}=h^-_{\infty}+(d^-_1-d^-_2\, \vartheta^-_1-\vartheta^-_2)+\av (r^+, r^-).
\end{equation}

From the expressions   \eqref{eqq1-nnew} and \eqref{eqq2-nnew} one deduces using the Lemmata   \ref{lem2-18} and \ref{prop-xx},    that the asymptotic constants $k^\pm_{\infty}$ depend sc-smoothly on $(a, h^+, h^-)$.

 To sum up our computations, we represent the chart transformation 
$\Phi=\phi^{-1}_2\circ \phi_1$ by the formula
$$
\Phi (a, h^+, h^-)=\begin{cases}(0, h^++(u_1^+-u_2^+), h^-+(u_1^--u_2^-))\quad &\text{if $a=0$}\\
(0, k^+ (a, h^+, h^-),  k^-(a, h^+, h^-))\quad &\text{if $a\neq 0$}
\end{cases}
$$
where,  
for $a\neq 0$, the maps $k^+ (a, h^+, h^-)=q^+$ and 
$k^-(a, h^+, h^-)=q^-+k^-_{\infty}-k^+_{\infty}$ are defined by 
\eqref{eq8-nnew} and \eqref{eq10-nnew}. We define the maps $k^\pm$ at $a=0$ by 
\begin{align*}
k^+ (0,  h^+, h^-)&:=h^++(u_1^+-u_2^+)\\
k^- (0,  h^+, h^-)&:=h^-+(u_1^--u_2^-), 
\end{align*}
and  observe that  
\begin{equation*}
\begin{split}
h^++(u_1^+-u_2^+)&=h^+_{\infty}+r^+_3+(d_1^+-d_2^+, \vartheta^+_1-\vartheta^+_2)+(r^+_1-r^+_2)\\
&=h^+_{\infty}+(d_1^+-d_2^+, \vartheta^+_1-\vartheta^+_2)+r^+
\end{split}
\end{equation*}
and 
$$h^-+(u_1^--u_2^-)=h^-_{\infty}+(d^-_1-d^-_2\, \vartheta^-_1-\vartheta^-_2)+r^-$$
where as before $r^\pm=r^\pm_3+r_1^\pm-r^\pm_2$. 

Checking every term in \eqref{eq8-nnew} and in \eqref{eq10-nnew}, applying Proposition \ref{qed} and the Lemmata 
 \ref{lem2-18}--\ref{lem2.21-nnew}, one sees that the maps 
$(a, h^+, h^-)\mapsto k^\pm (a, h^+, h^-)$ are sc-smooth in a neighborhood of $a=0$.

Finally, in view of Lemma \ref{abmap}, also the map $(a,h^+, h^-)\mapsto b(a, u^+_0+h^+, u^-_0+h^-)$ is sc-smooth.

The proof of the sc-smoothness of the chart transformations is complete.

\end{proof}

As a   consequence we obtain the following result. 
\begin{prop}\label{prop3.15-nnew}
The topological space $\ov{X}$ has in a natural way the structure of an  M-polyfold which induces on $X$ the previously defined M-polyfold structure.
\end{prop}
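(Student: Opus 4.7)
The plan is to assemble the pieces already established in the preceding propositions, verifying only the residual conditions required by Definition \ref{m-pl} for a local M-polyfold model. First I would identify the local models. Around any point $(0,0,u_0^+,u_0^-)\in\ov{X}\setminus X$ with $(u_0^+,u_0^-)\in{\mathcal D}^\infty$ smooth, the triple $(O,C,E)$ is given by $E=\C\oplus\wh{E}_0$, $C=E$ (no partial quadrant, since the gluing parameter lies in the open disk $B_{1/2}$ and no other boundary is present), and $O=r(U)$ with $U=B_{\sigma_0}\oplus V\subset E$ (where $V$ is a small open neighborhood of $0$ in $\wh{E}_0$) and the retraction
\begin{equation*}
r:U\to U,\qquad r(a,h^+,h^-)=(a,\rho_a(h^+,h^-)).
\end{equation*}
That $r$ is sc-smooth follows from the sc-smoothness of the family of projections $a\mapsto\rho_a$ established in Section \ref{anothersplicing}, and $r\circ r=r$ follows from $\rho_a\circ\rho_a=\rho_a$. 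The chart maps $\varphi$ of Definition \ref{charts-new-def} are precisely the inverses of the induced coordinate maps on the retract $O$, and they are homeomorphisms onto open subsets of $\ov{X}$ by Proposition \ref{topologyc} and the injectivity remark following Definition \ref{charts-new-def}.

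Second, I would verify conditions (1)--(3) of Definition \ref{m-pl}. Condition (1) is just the sc-smoothness of $r$, already noted. Conditions (2) and (3) concern the partial quadrant structure: since $C=E$, the degeneracy index $d_C$ is identically $0$, so condition (3) is trivial (any smooth approximating sequence works, using density of $E_\infty$ in $E$), and condition (2) reduces to the sc-splitting $E=\ker(\id-Dr(x))\oplus\ker Dr(x)$, which holds for any sc-smooth retraction (the sc-complement is automatically contained in $C=E$).

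Third, I would invoke the compatibility results. Between two charts of the form in Definition \ref{charts-new-def} centered at smooth points with $a=0$, sc-smoothness of the transition map is Proposition \ref{prop3.14-new}. Between a chart at $a=0$ and a chart of the previously constructed M-polyfold structure on $X$, compatibility follows from Proposition \ref{319-prop}, which shows that the restriction of $\varphi$ to $a\neq 0$ is an sc-diffeomorphism onto an open subset of $X$ for the structure defined in the first part of this subsection. Finally, by Remark \ref{remark2-nnew} we may always center charts at smooth reference points, so the collection of all such charts (together with the charts already constructed on $X$) covers $\ov{X}$ and forms an sc-smooth atlas. Since the atlas restricts on $X$ to the original one, the induced M-polyfold structure extends the one of $X$, completing the proof.

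There is no truly hard step remaining; the entire difficulty has been absorbed into the prior results (especially Proposition \ref{qed}, the sc-smoothness of the family $\rho_a$, and Proposition \ref{prop3.14-new}). The only subtle point is the bookkeeping verification that the retract $O$, parameterized by $(a,h^+,h^-)$ with $\rho_a(h^+,h^-)=(h^+,h^-)$, is the \emph{same} set as the image of $\varphi$---this is simply the fact that $\rho_a$ is the projection onto $\ker(\ominus_a)$ characterized in Lemma \ref{kernel-lem1}, so the parametrization is genuinely injective on the retract.
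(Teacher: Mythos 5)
Your proposal is correct and follows the same route as the paper, which presents Proposition \ref{prop3.15-nnew} as an immediate consequence of Proposition \ref{topologyc}, Definition \ref{charts-new-def}, Proposition \ref{319-prop}, and Proposition \ref{prop3.14-new}; the only cosmetic slip is in your verification of condition (3) of Definition \ref{m-pl}, where the approximating sequence must lie in $O_\infty$ rather than just $E_\infty$, but this is repaired by applying the retraction $r$ to any smooth sequence converging to the point, and in any case the paper already observes that conditions (2) and (3) are automatic in the boundary-free case $C=E$.
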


With Proposition \ref{prop3.15-nnew},  the proof of Theorem \ref{thm-comp} is complete.

\subsection{A Strong Bundle, Proof of Theorem \ref{thm1.42-nnew}}\label{subsect3.3}

Continuing with the illustration of the polyfold theory, we are going to construct a strong bundle over the M-polyfold $V\times \ov{X}$.

 In order to define the Cauchy-Riemann operator as an sc-smooth Fredholm section we shall first equip the cylinders $Z_a$ with complex structures.

We assume that $\ov{X}$ has the M-polyfold structure defined above, using the gluing profile $\varphi (r)=e^{\frac{1}{r}}-e$ and the sequence $(\delta_m)$ of weights in the open  interval $(0, 2\pi)$. We choose a smooth  family 
$$v\mapsto j^\pm (v)$$
 of complex structures on the half cylinders $\R^\pm \times S^1$ parametrized by $v$ belonging to an open neighborhood $V$ of $0$ in some finite dimensional vector space. We assume that 
$j^\pm(v)=i$ is the standard complex structure outside of a compact neighborhood of the boundaries $\partial (\R^\pm \times S^1)$.  In order to arrange that the gluing of the half-cyliners $\R^\pm\times S^1$ takes place in a region where the complex structures $j^\pm (v)$ are the standard structures we shall not glue the half-cylinders along the pieces $[0,R]\times S^1$ resp. $[-R, 0]\times S^1$ as we did so far but along much shorter pieces and obtain the new finite cylinders $Z_a$ and the new infinite cylinders  $C_a$, which we denote by the same letters because they are biholomorphically equivalent to the old cylinders we have considered  so far.

We assume that $j^+(v)=i$ for $s\geq \frac{1}{2}s_0$ and $j^-(v)=i$ for $s\leq -\frac{1}{2}s_0$ and choose the gluing parameter $a$ so small that $R=\varphi (\abs{a})$ satisfies $R-s_0>s_0$. We then identify the points $(s, t)\in [s_0, R-s_0]\times S^1$ of the cylinder $\R^+\times S^1$ with the points $(s', t')\in [-R+s_0, -s_0]\times S^1$ of the negative cylinder $\R^-\times S^1$ if 
\begin{align*}
s'&=s-R\\
t'&=t-\vartheta, 
\end{align*}
as illustrated in  Figure 5. We have to keep in mind that $Z_a$ possesses the distinguished points $p^\pm_a$.

\begin{figure}[htbp]
\mbox{}\\[2ex]
\centerline{\relabelbox \epsfxsize4truein \epsfbox{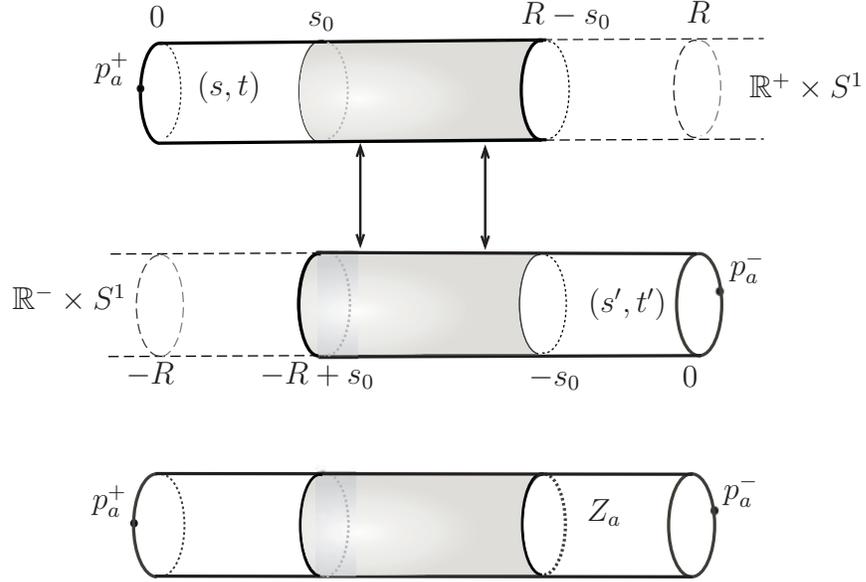}
\relabel {a1}{$(s', t')$}
\relabel {a2}{$(s,t)$}
\relabel {a}{$\R^-\times S^1$}
\relabel {b}{$\R^+\times S^1$}
\relabel {c}{$Z_a$}
\relabel {mr1}{$-R$}
\relabel {mrs}{$-R+s_0$}
\relabel {ms0}{$-s_0$}
\relabel {01}{$0$}
\relabel {02}{$0$}
\relabel {s0}{$s_0$}
\relabel {rms}{$R-s_0$}
\relabel {r1}{$R$}
\relabel {p1}{$p_a^+$}
\relabel {p2}{$p_a^-$}
\relabel {p3}{$p_a^+$}
\relabel {p4}{$p_a^-$}
\endrelabelbox}
\caption{Glued finite cylinders $Z_a$}\label{Fig5}
\end{figure}

Using  the same identification we also redefine the infinite cylinders  $C_a$ as illustrated in Figure 6.

\begin{figure}[htbp]
\mbox{}\\[2ex]
\centerline{\relabelbox \epsfxsize 4truein \epsfbox{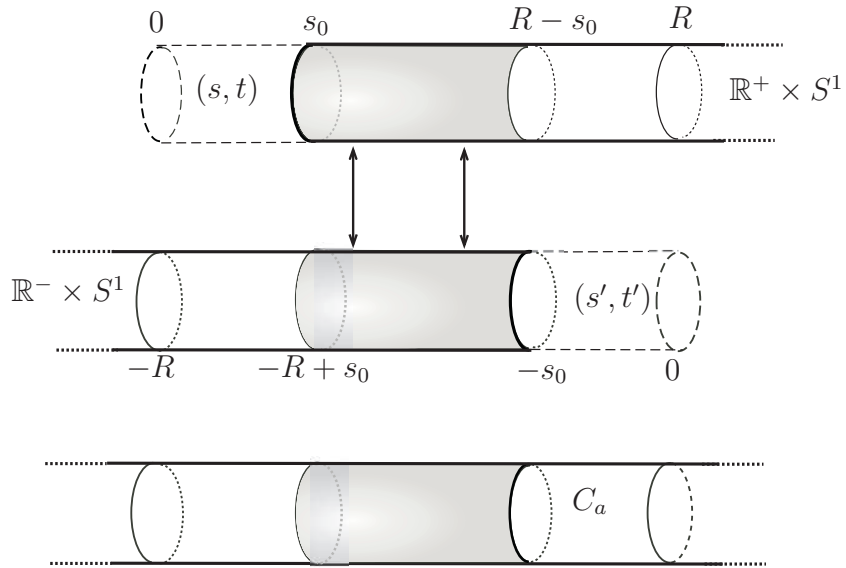}
\relabel {a1}{$(s', t')$}
\relabel {a2}{$(s,t)$}
\relabel {a}{$\R^-\times S^1$}
\relabel {b}{$\R^+\times S^1$}
\relabel {c}{$C_a$}
\relabel {mr1}{$-R$}
\relabel {mrs}{$-R+s_0$}
\relabel {ms0}{$-s_0$}
\relabel {01}{$0$}
\relabel {02}{$0$}
\relabel {s0}{$s_0$}
\relabel {rms}{$R-s_0$}
\relabel {r1}{$R$}
\endrelabelbox}
\caption{Glued  infinite cylinders $C_a$}\label{Fig6}
\end{figure}

The complex structures $j^\pm (v)$ induce the complex structures $j(a, v)$ on the glued finite cylinders $Z_a$ for sufficiently small gluing parameters $a$.  We equip the  glued 
 infinite  cylinder $C_a$ with the standard complex structure denoted by $i$. It is clearly biholomorphic to the standard complex cylinder $\R\times S^1$.
 
 With these new cylinders $Z_a$ and $C_a$, the gluing formula $\oplus_a$ and the anti-gluing formula $\ominus_a$ for the maps on the cylinders remain unchanged and all the definitions and corresponding results proved so far hold true also for  the new cylinders with the identical proofs.

Clearly with $\ov{X}$,  also $V\times \ov{X}$ is an M-polyfold.  If  the point $(v, a, b, w)\in V\times \ov{X}$ satisfies  $a\neq 0$ and $b\neq 0$, then $w:Z_a\to Z_b$ is a diffeomorphism and associated with this point we consider mappings $\xi$ which are defined on the cylinder $Z_a$ and whose images
$$\xi (z):(T_z Z_a, j(a, v))\to (T_{w(z)}Z_b, i)$$
are complex anti-linear mappings belonging to the Sobolev space $H^2$. The complex anti-linearity requires that 
$$\xi (z)\circ j(a, v)=-i\circ \xi (z)$$
for all $z\in Z_a$. In the following we identify the tangent spaces of the cylinders $Z_a$ and $\R^\pm \times S^1$ with $\R^2$.

If $a=0$ (and consequently $b=0$), then $Z_0$ is the disjoint union
$$(\R^+\times S^1)\coprod  (\R^-\times S^1)$$
and recalling the two diffeomorphisms $u^\pm :\R^\pm\times S^1\to \R^\pm\times S^1$, we are 
familiar with from the previous section, we associate with $Z_0$ two maps $z\mapsto \xi^\pm (z)$ defined on the half cylinders $\R^\pm \times S^1$  whose images 
$$\xi^\pm (z):(T_z\R^\pm \times S^1), j^\pm (v))\to (T_{u^\pm(z)}\R^\pm \times S^1),i)$$
are complex anti-linear and belong to $H^{2, \delta_0}(\R^\pm \times S^1, \R^4)$.

The collection $E$ of all multiplets $(v, a, b, w, \xi)$ in which $(v, a, b, w)\in V\times \ov{X}$ and 
$\xi$ is the associated complex anti-linear map,  possesses the projection map
$$E\to V\times \ov{X},\quad (v, a, b, w, \xi)\mapsto (v, a, b, w)$$
where fibers (containing $\xi$) have in a natural way the structure of a Hilbert  space.

On  $E$ we introduce the double-filtration $(E_{m,k})$ whose   indices run over $m\geq 0$ and 
$0\leq k\leq m+1$, as explained in Section \ref{Fredholm-section}. If $ab\neq 0$, then an  element $(v,a,b,w,\xi)$ belongs to $E_{m, k}$ if  $(v,a,b,w)\in V\times \ov{X}_m$, where $w$ belongs to the level $m$ if it  has the Sobolev regularity $(m+3,\delta_m)$, and if $\xi$ belongs to the class  $(k+2,\delta_k)$. 

This subsection \ref{subsect3.3} is devoted to the proof of the following theorem announced in the introduction.
\begin{thm}\label{thm42.-nnnew}
Having fixed the exponential gluing profile $\varphi$ and the increasing sequence $(\delta_m)_{m\in \N_0}$ of real numbers satisfying $0<\delta_m<2\pi$,   the  set  $E$ admits in a natural way the structure of a strong bundle over the M-polyfold $V\times  \ov{X}$.
\end{thm}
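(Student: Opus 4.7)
The plan is to construct strong bundle charts around each point of $V\times\ov{X}$, with the nontrivial case being points of the form $(v_0,0,0,u^+_0,u^-_0)\in V\times(\ov{X}\setminus X)$. As in Remark \ref{remark2-nnew} and Section \ref{subsub}, I may assume $(u^+_0,u^-_0)\in\mathcal{D}^\infty$. Since $j^\pm(v)=i$ outside a compact neighborhood of the boundaries $\partial(\R^\pm\times S^1)$ and the new cylinders $Z_a$, $C_a$ are glued in the region where $j^\pm(v)=i$, the fiberwise complex anti-linearity condition passes through the hat-gluing $\wh{\oplus}_a$ and hat-antigluing $\wh{\ominus}_a$ without interference. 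Identifying the $2$-dimensional real space of complex anti-linear maps $(\R^2,i)\to(\R^2,i)$ with $\R^2$ via a fixed basis, I view fiber elements as $\R^2$-valued sections so that Theorem \ref{second-splicing} applies. Set $F:=H^{2,\delta_0}(\R^+\times S^1,\R^2)\oplus H^{2,\delta_0}(\R^-\times S^1,\R^2)$ with its sc-structure.

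Let $U\subset V\oplus\C\oplus\wh{E}_0$ be a small open neighborhood of $(v_0,0,0,0)$ on which the base chart $\varphi$ of Definition \ref{charts-new-def} (trivially extended in $v$) is defined. I define the map
\begin{equation*}
R:U\triangleleft F\to U\triangleleft F,\qquad (v,a,h^+,h^-,\xi^+,\xi^-)\mapsto (v,a,\rho_a(h^+,h^-),\wh{\pi}_a(\xi^+,\xi^-)).
\end{equation*}
This has the product form required of a strong bundle retraction: the base part is sc-smooth by the construction of $\rho_a$ in Section \ref{anothersplicing}, and the fiber part is sc-smooth and linear in $(\xi^+,\xi^-)$ by Theorem \ref{second-splicing}. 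Hence $R$ is an $\ssc_\triangleleft^\infty$-retraction, and $K:=R(U\triangleleft F)$ is a strong bundle retract. The chart map $\Phi:p^{-1}(\varphi(U\cap\{\rho_a(h)=h\}))\to K$ is defined on the base by $\varphi$ and on fibers by sending a complex anti-linear section $\xi$ over $Z_a$ (respectively, a pair over $\R^\pm\times S^1$ when $a=0$) to the unique preimage under the sc-isomorphism $\wh{\boxdot}_a=(\wh{\oplus}_a,\wh{\ominus}_a)$ whose anti-glued component vanishes, this being the fiber element $(\xi^+,\xi^-)\in\ker\wh{\ominus}_a$ with $\wh{\oplus}_a(\xi^+,\xi^-)$ matching $\xi$ (pulled back via the diffeomorphism $\boxplus_a(u^+_0+h^+,u^-_0+h^-)$ and corrected for the differential of this diffeomorphism and the identification of target tangent spaces with $\R^2$). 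This $\Phi$ is a fiberwise linear homeomorphism covering $\varphi^{-1}$.

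The remaining work is verifying that transition maps between two such charts, built from smooth data $(u^\pm_1)$ and $(u^\pm_2)$, are $\ssc_\triangleleft^\infty$-smooth near $a=0$. On the base this is Proposition \ref{prop3.14-new}. On the fiber, the transition is obtained by an argument strictly analogous to that proof: if $(\xi^+_1,\xi^-_1)$ and $(\xi^+_2,\xi^-_2)$ represent the same bundle element, then the coincidence of the glued fibers, combined with $\wh{\ominus}_a(\xi^\pm_i-\xi^\pm_{i,\infty})=0$, yields a system analogous to \eqref{eq4-nnew} whose explicit solution is of the same shape as \eqref{eq8-nnew} and \eqref{eq10-nnew}, with the role of $r^\pm=r^\pm_3+r^\pm_1-r^\pm_2$ now played by appropriately pushed-forward fiber data. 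The pushforward under the smooth family of diffeomorphisms $\boxplus_a(u^\pm_i+h^\pm_i)$ together with the pointwise action of its differential and the change-of-complex-structure cocycle is sc-smooth by Theorem \ref{thm-125}, and the remaining gluing/antigluing expressions are sc-smooth by Proposition \ref{qed} applied with $G=F$ (explicitly indicated after its statement) together with Lemmata \ref{lem2-18}--\ref{lem2-22}.

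The main obstacle will be the bookkeeping required to show that the fiber transition genuinely lands in the retract $\wh{\pi}_a$ of the new chart and respects the double filtration $E_{m,k}$ with $0\le k\le m+1$; the first follows from the uniqueness clause in Theorem \ref{second-splicing}, while the second is built into the definition of $F$ (whose level $m$ corresponds to Sobolev regularity $(m+2,\delta_m)$) and into Theorem \ref{poker1}. Once these compatibilities are established, the collection of these charts near $a=0$ together with the standard Hilbert-bundle charts at points where $ab\neq 0$ (obtained as in Remark \ref{rexremark}) forms an $\ssc_\triangleleft^\infty$-compatible strong bundle atlas on $E\to V\times\ov{X}$, completing the proof of Theorem \ref{thm42.-nnnew}.
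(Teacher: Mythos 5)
Your approach is the same as the paper's in outline: the local model is given by the product retraction $R(v,a,(h^+,h^-),(\eta^+,\eta^-))=(v,a,\rho_a(h^+,h^-),\wh\pi_a(\eta^+,\eta^-))$ covering $r$, and charts near $a=0$ are built from the base chart $\Psi$ together with the hat-gluing isomorphism, with sc-smoothness of the $\Gamma$-to-$\Gamma$ transitions controlled by the same lemmas from Chapter 2. There are, however, two points you should tighten.

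First, your description of the fiber trivialization --- a complex anti-linear section $\xi$ ``pulled back via the diffeomorphism $\boxplus_a(u^+_0+h^+,u^-_0+h^-)$ and corrected for the differential of this diffeomorphism'' --- does not match the paper's construction and the extra factors are unnecessary. The element $\xi$ is already a section over $Z_a$ of anti-linear maps $(T_zZ_a,j(a,v))\to(T_{w(z)}Z_b,i)$; its conversion to an $\R^2$-valued function is simply $z\mapsto\xi(z)\frac{\partial}{\partial s}$, using the product coordinates of the cylinder to identify $T_{w(z)}Z_b\cong\R^2$ (no composition with $\boxplus_a$ and no factor of $Tw$). The pair $(\eta^+,\eta^-)$ is then the unique element of $\ker\wh\ominus_a$ with $\wh\oplus_a(\eta^+,\eta^-)(z)=\xi(z)\frac{\partial}{\partial s}$. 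If you insist on the extra pushforward/differential factors you will get a different trivialization, and then the transition maps must be re-derived --- the smoothness claims you quote are for the paper's cleaner chart, not for one carrying an extra $T(\boxplus_a)$ factor.

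Second, you assert but do not check compatibility between the two types of charts: those built near $a=0$ from the retract $K$, and those over $V\times X$ built from the fixed cylinder $Z_{a_0}$ (with fiber element $\eta$ related to $\xi$ by $\eta(\phi_a(z))=\xi(z)\frac{\partial}{\partial s}$). The paper computes $\Gamma^{-1}\circ\Phi$ and $\Phi^{-1}\circ\Gamma$ explicitly; the fiber part of the transition is $\eta(z)=\wh\oplus_a(\eta^+,\eta^-)(\phi_a^{-1}(z))$, and sc-smoothness is established via Theorem \ref{thm-125p} and Proposition \ref{qed}. Your closing sentence jumps from ``once these compatibilities are established'' (where ``these'' refers only to same-type transitions and the double filtration) to ``forms a strong bundle atlas'', leaving the cross-type transition unverified. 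Without it, you do not yet have an atlas.
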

\begin{proof}
In order to prove the theorem we have to define strong bundle charts. We first construct charts on $E\vert V\oplus X$. To do so, 
we fix a  smooth  point $ (v_0,a_0,b_0,w_0)\in V\times X$ so that $w_0:Z_{a_0}\to Z_{b_0}$ is a smooth diffeomorphism preserving the marked points. We recall that a chart of $X$ around 
$(v_0,a_0,b_0,w_0)\in  V\times X$ has been previously constructed by the map
$$(a, b, h)\mapsto (a, b, \psi_b \circ (w_0+h)\circ \phi_a)$$
for  $(a, b)$  close to $(a_0, b_0)$ and $h\in H^{3}(Z_{a_0}, \R^2)$ satisfying $h(p^\pm_{a_0})=(0, 0)$ and $h([0,t])\in \{0\}\times \R$ and $h([0,t]')\in \{0\}\times \R$. Moreover, the derivative 
$\abs{Dh([s, t])}<\varepsilon_0$ is so small that $w_0+h$ is still a diffeomorphism $Z_{a_0}\to Z_{b_0}$. Then $\psi_b \circ (w_0+h)\circ \phi_a$ is a diffeomorphism $Z_a\to Z_b$.

Given now a point $(v, a, b, \psi_b \circ (w_0+h)\circ \phi_a)\in V\times X$, there is a one-to-one correspondence between complex anti-linear maps 
$$\xi (z):(T_z Z_a, j(a, v))\to (T_{\psi_b \circ (w_0+h)\circ \phi_a(z)}Z_b, i)$$
for $z\in Z_a$, and elements of the Hilbert space $H^2(Z_{a_0}, \R^2)$ on the fixed cylinder $Z_{a_0}$,  defined by the relation 
\begin{equation}\label{eeq1-nnew}
\eta (\phi_a(z))=\xi(z)\cdot \frac{\partial}{\partial s}.
\end{equation}
This follows from the complex anti-linearity of $\xi (z)$. Recall that $\phi_a$ maps the cylinder $Z_a$ diffeomorphically  onto the cylinder $Z_{a_0}$. The chart  of $E\vert V\oplus X$ around the point $(v_0, a_0, b_0, w_0)$ is now defined by the map
$$\Phi:(v, a, b, h,\eta )\mapsto (v, a, b, \psi_b \circ (w_0+h)\circ \phi_a, \xi)\in E\vert V\oplus X$$
where $\eta$ and $\xi$ are related by \eqref{eeq1-nnew}.
If 
$$
\wt{\Phi}:(\wt{v}, \wt{ a},\wt{ b}, \wt{h},\wt{\eta} )\mapsto(\wt{v}, \wt{ a},\wt{ b}, \wt{\psi}_{\wt{b}} \circ (\wt{w}_0+\wt{h})\circ  \wt{\phi}_{\wt{a}}, \wt{ \xi})
$$
is a second such chart, then we have in the overlapping region  $\wt{v}=v$, $\wt{a}=a$, and 
$\wt{b}=b$, and hence 
$ \wt{\psi}_b \circ (\wt{w}_0+\wt{h})\circ  \wt{\phi}_{a}= \psi_{b} \circ (w_0+h)\circ  \phi_a$ and 
$\wt{\xi}=\xi$ so that the formula for the chart transformation is as follows,
$$\wt{\Phi}^{-1}\circ \Phi (v, a, b, h,\eta)=(v, a, b, \wt{h},\wt{\eta})$$
where
$$
\wt{h}=\wt{\psi}_{b}^{-1} \circ \psi_b\circ (\wt{w}_0+\wt{h})\circ\phi_a\circ   \wt{\phi}^{-1}_{a}-\wt{w}_0
$$
and
$$
\wt{\eta}(z)=\eta (\phi_a\circ \wt{\phi}^{-1}_a(z))
$$
for all $z\in Z_{a_0}$.

Recalling from Section \ref{Fredholm-section} the definition of an $\ssc_{\triangleleft}$-smooth bundle map, we see from the above formulae that the chart transformation is an  $\ssc_{\triangleleft}$-smooth bundle map, in view of our results  about  the  action by diffeomorphisms  (Theorem \ref{thm-125p}).

We need to define strong bundle charts also for $E\to V\times \ov{X}$. To do so we first construct the local models for the bundle charts.

We take the collection $K'$ of all tuples $(v, a, h^+, h^-, \eta^+, \eta^-)$ in which $v\in V$ and $\abs{a}<\varepsilon$, moreover, $(h^+, h^-)\in \wh{E}_0$ and $\eta^\pm\in H^{2, \delta_0}(\R^\pm \times S^1, \R^2)=F$. They satisfy the relation
$$\rho_a(h^+, h^-)=(h^+, h^-)\quad \text{and}\quad \wh{\pi}_a(\eta^+, \eta^-)=(\eta^+, \eta^-).$$
The projection $\rho_a:\wh{E}_0\to \wh{E}_0$ has been introduced in Section \ref{anothersplicing} and the projection $\wh{\pi}_a:F\to F$ in Section \ref{arisingx}. 
 The Hilbert space $F$ is equipped with the sc-smooth structure $(F_m)$ defined by $F_m=H^{2+m, \delta_m}(\R^\pm \times S^1, \R^2)$.
 
 By $O'$ we denote the collection of all tuples $(v, a, h^+, h^-)$ satisfying $v\in V$, $\abs{a}<\varepsilon$, and $\rho_a(h^+, h^-)=(h^+, h^-)$. The natural projection
 $$K'\to O', \quad (v, a, h^+, h^-, \eta^+, \eta^-)\mapsto   (v, a, h^+, h^-)$$
 defines a local model for a strong bundle. Indeed, if we define the map 
 \begin{gather*}
 R:(V\oplus B_\varepsilon\oplus \wh{E}_0)\triangleleft F\to (V\oplus B_\varepsilon\oplus \wh{E}_0)\triangleleft F\\
 \intertext{as}
 R(v, a, (h^+, h^-), ( \eta^+, \eta^-))=(v, a, \rho_a (h^+, h^-), \wh{\pi}_a (\eta^+, \eta^-)),
 \end{gather*}
 then  the map $R$ is an $\ssc_{\triangleleft}$-smooth strong bundle retraction satisfying 
 $$K'=R((V\oplus B_\varepsilon\oplus \wh{E}_0)\triangleleft F)$$
 in view of Theorem \ref{propn-1.27} and Theorem \ref{second-splicing}. Moreover, the map $R$ covers the retraction map 
 $r:V\oplus B_\varepsilon\oplus \wh{E}_0\to V\oplus B_\varepsilon\oplus \wh{E}_0$ defined by  
  $r(v, a, (h^+, h^-))=(v, a, \rho_a (h^+, h^-))$ whose image is the set 
 $$O'=r(V\oplus B_\varepsilon\oplus \wh{E}_0).$$
 We next use these local models to define an atlas of charts of the bundle $E\to V\times \ov{X}$. 
 
 Recall that  $E$ is the collection of tuples $(v, a, b, w, \xi)$ in which for $ab\neq 0$ the map $w:Z_a\to Z_b$ is a diffeomorphism and $\xi (z): (T_zZ_a, j(a, v))\to (T_{w(z)}Z_b, i)$  a complex anti-linear map defined for every $z\in Z_a$. If $a=0$ and hence $b=0$, the point in $E$ is defined as 
 $$(v, 0, 0, (u^+, u^-), (\xi^+, \xi^-))$$
 where $u^\pm:\R^\pm\times S^1\to \R^\pm \times S^1$ are  two diffeomorphisms of the half cylinders  satisfying
 $u^\pm(0,0)=(0,0)$. Moreover, 
 $\xi^\pm  (z): (T_z(\R^\pm\times S^1), j^\pm (v))\to (T_{u^\pm (z)}( \R^\pm \times S^1), i)$  are complex anti-linear maps defined  for  every $z\in \R^\pm \times S^1$.
 
 In order to define strong bundle charts for $E\to V\times \ov{X}$ we start with the chart maps $\Psi:O\to V\oplus \ov{X}$ defined by 
 $$
 \Psi(v, a, h^+, h^-)=(v, a, b(a, u^+_0+h^+, u^-_0+h^-), \boxplus_a (u^+_0+h^+, u^-_0+h^-)),
 $$
 around a base pair $(u^+_0,u^-_0)$, if $a\neq 0$, and by 
 $$\Psi(v, 0, h^+, h^-)=(v, 0, 0,  u^+_0+h^+, u^-_0+h^-)$$
 if $a=0$.  The domain of definition of the map $\Psi$ is the set
\begin{gather*}
O=\\
 \{(v, a, h^+, h^-)\vert \, \text{$\rho_a(h^+, h^-)=(h^+, h^-)$, $v\in V$, $\abs{a}<\sigma_0$, and $(h^+, h^-)\in U$}\}
\end{gather*}
 where $U\subset \wh{E}_0$ is an open neighborhood of $(0, 0)\in \wh{E}_0$. Now  we observe that 
 $O\subset O'$ is an open subset and define $K\subset K'$ as the subset of $K'$ lying above $O$. We shall show that $K\to O$ is a local model for the bundle $E\to V\oplus \ov{X}$. 
 
 We define the bundle chart $\Gamma:K\to E$ which covers the chart $\Psi$ of $V\oplus \ov{X}$ by the following map. If $a\neq 0$,  we set
 \begin{equation*}
 \begin{split}
 \Gamma (v, a, h^+, h^-, \eta^+, \eta^-)&=(\Psi (v, a, h^+, h^-), \xi)\\
 &=(v, a, b(a, u^+_0+h^+, u^-_0+h^-), \boxplus_a(u^+_0+h^+, u^-_0+h^-), \xi)
 \end{split}
 \end{equation*}
where $(h^+, h^-)\in \wh{E}_0$ satisfies $\rho_a(h^+, h^-)=(h^+, h^-)$ and $(\eta^+, \eta^-)\in F$ satisfies $\wh{\pi}_a(\eta^+, \eta^-)=(\eta^+, \eta^-)$. Moreover, abbreviating the diffeomorphism $w_a=\boxplus_a(u^+_0+h^+, u^-_0+h^-):Z_a\to Z_b$, the fiber part $\xi$ is the complex anti-linear map 
 $$\xi (z):(T_zZ_a, j(v, a))\to (T_{w(z)}, i)$$
 defined by 
 $$\xi (z)\frac{\partial }{\partial s}=\wh{\oplus}_a(\eta^+, \eta^-)(z), \quad z\in Z_a.$$
 We recall that $\wh{\oplus}_a(\eta^+, \eta^-)(s, t)=\beta_a(s)\cdot \eta^+(s, t)+(1-\beta_a(s))\cdot \eta^-(s-R, t-\vartheta)$ for $(s, t)\in [0,R]\times S^1$, and $\wh{\ominus}_a(\eta^+, \eta^-)(s, t)=-(1-\beta_a(s))\cdot \eta^+(s, t)+\beta_a(s)\cdot \eta^-(s-R, t-\vartheta)$  for $(s, t)\in \R\times S^1$.
 
 If $a=0$, the map $\Gamma:K\to E$ is defined as 
  \begin{equation*}
 \begin{split}
 \Gamma (v,0, h^+, h^-, \eta^+, \eta^-)&=(\Psi (v, 0, u_0^++h^+, u^-_0+h^-),  (\xi^+, \xi^-))\\
 &=(v, 0, 0, u^+_0+h^+, u^-_0+h^-;  (\xi^+, \xi^-))
 \end{split}
 \end{equation*}
 where the complex anti-linear maps
 $$
 \xi^\pm (z):(T_z(\R^\pm \times S^1), j^\pm (v))\to (T_{(u^\pm_0+h^\pm)(z)}(\R^\pm\times S^1), i)$$ are defined as 
 $$\xi^\pm (z)\frac{\partial }{\partial s}=\eta^\pm (z)$$
for all $z=(s, t)\in \R^\pm \times S^1$. It is easy to see that these charts are $\ssc_{\triangleleft}$-smoothly equivalent. Namely,  abbreviating two chart maps by 
 $ \Gamma (v, a, h^+, h^-, \eta^+, \eta^-)=(\Psi (v, a, h^+, h^-), \xi)$ and $\wt{ \Gamma} (\wt{v}, \wt{a}, \wt{h}^+, \wt{h}^-, \wt{\eta}^+, \wt{\eta}^-)=(\wt{\Psi} (\wt{v}, \wt{a}, \wt{h}^+,\wt{h}^-), \wt{\xi})$
 the chart transformation is given by 
 \begin{gather*}
 \wt{\Gamma}^{-1}\circ \Gamma (v, a, h^+, h^-, \eta^+, \eta^-)=(\wt{\Psi}^{-1}\circ \Psi (v, a, h^+, h^-), \eta^+, \eta^-),\\
 \intertext{if $a\neq 0$, and by}
 \wt{\Gamma}^{-1}\circ \Gamma (v, 0, h^+, h^-, \eta^+, \eta^-)= (v, 0, (u_0^+-\wt{u}_0^+) +h^+,  (u_0^--\wt{u}_0^-) +h^-, \eta^+, \eta^-),
 \end{gather*}
 if $a=0$. From the previous considerations we know that $\wt{\Psi}^{-1}\circ \Psi$ is an sc-smooth diffeomorphism. In the fibers the transformation is the identity. Indeed, at a point of intersection 
 we have $\xi=\wt{\xi}$. It follows that 
 $\wh{\oplus}_a(\eta^+, \eta^-)(z)=\wh{\oplus}_a(\wt{\eta}^+,\wt{ \eta}^-)(z)$ for all $z\in Z_a$. Since, by definition, $\wh{\ominus}_a(\eta^+, \eta^-)=0$ and $\wh{\ominus}_a(\wt{\eta}^+, \wt{\eta}^-)=0$, we conclude from the uniqueness of the solutions of the system of  two equations that  $\eta^+=\wt{\eta}^+$ and $\eta^-=\wt{\eta}^-$. 
 
 We have proved that $ \wt{\Gamma}^{-1}\circ \Gamma $ is an $\ssc_{\triangleleft}$-bundle isomorphism between the local models of a strong bundle. 
 
It remains to prove that the chart maps $\Gamma$ and $\Phi$ are  $\ssc_{\triangle}$-smoothly equivalent. We assume that we are given two chart maps into $E$. The first  one is 
$$\Phi(v, a, b, h, \eta)=(v, a, b, \psi_b \circ (w_0+h)\circ \phi_a, \xi),$$
defined for $(v, a, b)$ close to $(v_0, a_0, b_0)$ and   $\eta$ and $\xi$ are related via  $\xi (z)\frac{\partial }{\partial s}=\eta (\varphi_a (z))$. The second chart map is 
$$\Gamma(\wt{v}, \wt{a}, h^+, h^-, \eta^+, \eta^-)=(\wt{v},\wt{ a}, b(\wt{a}, u^+_0+h^+, u^-_0+h^-), \boxplus_{\wt{a}} (u^+_0+h^+, u^-_0+h^-),\wt{ \xi})$$
defined  for $(\wt{v},\wt{a}) $ close to $(\wt{v}_0, \wt{a}_0)$ and where $\wt{\xi}(z):T_zZ_{\wt{a}}\to T_{w(z)}Z_{b'}$ is a complex anti-linear map uniquely defined by 
$$\wt{\xi }(z)\frac{\partial }{\partial s}=\wh{\oplus}_{\wt{a}}(\eta^+, \eta^-)(z).$$
Here $w=\boxplus_{\wt{a}} (u^+_0+h^+, u^-_0+h^-)$, $b'= b(\wt{a}, u^+_0+h^+, u^-_0+h^-)$,  and 
$z=(s, t)$.
From $\Phi (v, a, b, h, \eta)=\Gamma (\wt{v}, \wt{a}, h^+, h^-, \eta^+, \eta^-)$
one concludes that
$a=\wt{a}$ and  $b= b(\wt{a}, u^+_0+h^+, u^-_0+h^-)$ and, moreover,  
\begin{equation}\label{eeq1}
\psi_b \circ (w_0+h)\circ \phi_a=\boxplus_{a} (u^+_0+h^+, u^-_0+h^-)
\end{equation}
and
\begin{equation}\label{eeq2}
 \xi=\wt{\xi}.
 \end{equation}
The first equation can be solved for $h$ as map of $(a, h^+, h^-)$ resulting in 
$$h=\psi_b^{-1}\circ \boxplus_{a} (u^+_0+h^+, u^-_0+h^-)\circ \phi_a^{-1} -w_0$$
where 
$b= b(a, u^+_0+h^+, u^-_0+h^-).$
From the second equation $ \xi=\wt{\xi}$ we conclude that 
$\eta (\phi_a(z))=\wh{\oplus}_a (\eta^+, \eta^-)(z)$
where $z=[s,t]\in Z_a$. 
Consequently, the transition map $\Phi^{-1}\circ \Gamma$ has the following form
$$\Phi^{-1}\circ \Gamma (v, a, h^+, h^-, \eta^+, \eta^-)=(v, a, b, h, \eta)$$
where
\begin{gather*}
b=b(a, u^+_0+h^+, u^-_0+h^-)\\
h=\psi_b^{-1}\circ \boxplus_{a} (u^+_0+h^+, u^-_0+h^-)\circ \phi_a^{-1} -w_0\\
\eta (z)=\wh{\oplus}_a (\eta^+, \eta^-)(\phi^{-1}_a(z)), \quad z\in Z_{a_0}.
\end{gather*}
For the  transition map $\Gamma\circ \Phi^{-1}$ we obtain  
$$\Gamma^{-1}\circ \Phi (v, a, b, h, \eta) =(v, a, h^+, h^-, \eta^+, \eta^-)$$
where 
the pair $(h^+, h^- )$ is obtained by solving \eqref{eeq1} for $(h^+, h^-)$ in terms of $(a, h)$. 
The formulae for $h^\pm$ are given in the proof of Proposition 3.10. Namely,
$$h^+=q^+\quad \text{and}\quad h^-=q^-+(h^-_{\infty}-h^+_{\infty})$$
where 
\begin{align*}
q^+(s, t)&=[g]+\frac{\beta_a(s)}{\gamma_a(s)}\cdot (g(s, t)-[g])\\
q^{-}(s-R, t-\vartheta)&=[g]+\frac{1-\beta_a(s)}{\gamma_a(s)}\cdot (g(s, t)-[g])
\end{align*}
and 
$$g=\psi_b\circ (w_0+h)\circ \phi_a-w_a,\quad w_a=\boxplus_a(u^+_0, u^-_0).$$
The pair $(\eta^+, \eta^-)$ is obtained by solving the system 
\begin{align*}
\wh{\oplus}_a (\eta^+, \eta^-)&=\eta \circ \phi_a\\
\wh{\ominus}_a(\eta^+, \eta^-)&=0.
\end{align*}
We have used that $\wh{\pi}_a(\eta^+, \eta^-)=(\eta^+, \eta^-)$ is equivalent to $\wh{\ominus}_a(\eta^+, \eta^-)=0$. Solving the above system leads to the formulae
\begin{align*}
\eta^+(s, t)&=\frac{ \beta_a (s)}{\gamma_a(s) }\cdot \eta \circ \phi_a (s, t),\quad s\geq 0\\
\eta^-(s-R, t-\vartheta)&=\frac{1-\beta_a(s)}{\gamma_a (s)}\cdot \eta \circ \phi_a(s-R, t-\vartheta), \quad s\leq R.
\end{align*}

From the formulae for $\Phi^{-1}\circ \Gamma$ and $\Gamma^{-1}\circ \Phi $ we conclude, using Theorem \ref{thm-125p}, Proposition \ref{qed}, and the Lemmta \ref{lem2-18}--\ref{lem2.21-nnew}, arguing as in the proof of Proposition \ref{prop3.14-new}, that the chart transformations are $\ssc_{\triangleleft}$-smooth bundle isomorphisms. 

Consequently, the bundle $E\to V\oplus\ov{X}$ admits the structure of a strong bundle over the M-polyfold $V\oplus \ov{X}$ and the proof of Theorem \ref{thm1.42-nnew} is complete.

 \end{proof}

\subsection{The Cauchy-Riemann Operator}\label{nsection3.4}

We define the Cauchy-Riemann section $\ov{\partial}$ of the M-polyfold bundle
$E\rightarrow V\times\ov{X}$
by
$$\ov{\partial}(v,a,b,w)=(v,a,b,w;\ov{\partial}_{i, j(a, v)}w)$$
if $ab\neq 0$. Here $w:Z_a\to Z_b$ is a diffeomorphism, and the complex anti-linear map 
$\ov{\partial}_{i, j(a, v)}w(z):(T_zZ_a, j(a, v))\to (T_{w(z)}Z_b, i)$ is defined by 
$$
\ov{\partial}_{i, j(a, v)}w(z):=\frac{1}{2}[Tw+i\circ(Tw)\circ j(a,v)](z),\quad z\in Z_a.
$$
If $a=0$ and hence $b=0$, the section $\ov{\partial }$ is defined by 
$$
\ov{\partial}(v,0,0,(u^+,u^-))=(v,0,0,(u^+,u^-); (\ov{\partial }_{i, j^+(v)}u^+, \ov{\partial }_{i, j^-(v)}u^-)).
$$
Our aim is to prove that the section $\care$  is a polyfold Fredholm section.
\begin{prop}\label{prop3.19-nnew}
The Cauchy-Riemann section $\ov{\partial}$ is sc-smooth and regularizing.
\end{prop}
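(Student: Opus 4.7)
The plan is to verify sc-smoothness and the regularizing property separately in the two types of strong bundle charts introduced in the proof of Theorem \ref{thm1.42-nnew}: the classical charts $\Phi$ around smooth points $(v_0, a_0, b_0, w_0)$ with $a_0 b_0 \neq 0$, and the splicing charts $\Gamma$ around points $(v_0, 0, 0, u_0^+, u_0^-)$. In each case I will show that the local representative of $\care$ is $\ssc$-smooth, and then deduce the regularizing property from elliptic regularity applied to a first-order Cauchy-Riemann type equation.

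For the first type of chart, the section is represented by the map $(v, a, b, h) \mapsto \care_{i, j(a,v)}(\psi_b \circ (w_0 + h)\circ \phi_a)$, transported into the fiber by the identification $\eta(\phi_a(z)) = \xi(z)\tfrac{\partial}{\partial s}$. The composition $(a, b, h) \mapsto \psi_b \circ (w_0 + h)\circ \phi_a$ is $\ssc$-smooth as a map into $H^3$-diffeomorphisms by Theorem \ref{thm-125} applied to the smooth families $\phi_a$, $\psi_b$ and to addition of $h$. Taking $Tw$ loses exactly one derivative (fitting the $\triangleleft$-structure of the strong bundle), the antilinearization with $j(a,v)$ and the identification with an $\mathbb{R}^2$-valued function $\eta$ on $Z_{a_0}$ involve only multiplication by smooth families of matrices and pullback along $\phi_a$, both $\ssc$-smooth by Theorem \ref{thm-125} and the chain rule. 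Regularity is inherited termwise.

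The delicate case is the splicing chart around $a_0 = b_0 = 0$. For $a\neq 0$, on $[s_0, R-s_0]\times S^1$ the structure $j(a,v) = i$ is standard, and the glued map $w = \boxplus_a(u_0^+ + h^+, u_0^- + h^-)$ satisfies
\begin{equation*}
Tw = (d_0^+, \vartheta_0^+) + T\bigl[\oplus_a(r^+, r^- + h_\infty^+ - h_\infty^-)\bigr], \qquad r^\pm = r_0^\pm + h^\pm - h_\infty^\pm,
\end{equation*}
which upon antilinearization gives
\begin{equation*}
\care_{i,i} w = \wh{\oplus}_a\bigl(\care(u_0^+ + h^+),\; \care(u_0^- + h^-)\bigr) + \tfrac{1}{2}\beta_a'(s)\bigl[(u_0^+ + h^+ - h_\infty^+) - (u_0^- + h^- - h_\infty^-)(\cdot - R, \cdot - \vartheta)\bigr]
\end{equation*}
on the gluing region; near $s = 0$ resp.\ $s = R$ one has analogous expressions involving only $j^+(v)$ resp.\ $j^-(v)$ applied to the half-cylinder maps. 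To write this in the bundle coordinates $(\eta^+, \eta^-) \in \ker \wh{\ominus}_a$, one applies the projection $\wh{\pi}_a$, which is $\ssc$-smooth by Theorem \ref{second-splicing}. The sc-smoothness of the resulting expression as a function of $(v, a, h^+, h^-)$ follows by assembling three inputs: the sc-smoothness of $v\mapsto j^\pm(v)$ and the half-cylinder CR operators as maps on $\wh E_0$ (standard, since $j^\pm(v)$ is the smooth family and composition with diffeomorphism actions is $\ssc$-smooth by Theorems \ref{thm-125} and \ref{thm-126}); the $\ssc$-smoothness of the correction term involving $\beta_a'$, which has compact support in $s - \tfrac{R}{2}$, handled by Proposition \ref{qed} (the terms $\Gamma_1, \Gamma_2, \Gamma_3, \Gamma_4$ absorb exactly the $\beta_a'(\cdot)$-type factors multiplying shifted functions with exponential decay); and the $\ssc$-smoothness of $\wh{\pi}_a$ and of multiplication by $\beta_a / \gamma_a$ from the $\wh{\boxdot}_a$ inversion, again from Proposition \ref{qed}. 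The continuous extension to $a = 0$ and the vanishing of the correction term there match the definition $\care(v, 0, 0, u^+, u^-) = (\care u^+, \care u^-)$, and the exponential decay estimates of Section \ref{Basic-SC-Smoothness} give the required approximation property at $a = 0$ for every iterated tangent, exactly as in the proofs of Theorem \ref{first-splice} and Proposition \ref{prop3.14-new}.

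The main obstacle is precisely this last step: checking that when $a \to 0$, the mixed term $\beta_a'\cdot[\cdots]$ and the components of $\wh{\pi}_a$ produce maps whose iterated sc-derivatives extend continuously by the expected values. This is controlled by the estimates $|\beta_a'(s)|\lesssim 1$ on $|s - R/2|\le 1$ combined with the exponential factors $e^{-\delta_m R/2}$ coming from the weighted Sobolev norms and the polynomial-in-$R$ blow-up of derivatives of $R(a), \vartheta(a)$ in $a$, exactly the setup of Lemmata \ref{lkk}, \ref{lkj}, \ref{thetaes-prop} applied within the inductive structural statements $(\mathbf{S}_k)$ developed in Section \ref{Basic-SC-Smoothness}. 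Finally, the regularizing property: if $(v, a, b, w)\in (V\times \ov X)_m$ and the fiber element of $\care(v, a, b, w)$ lies in $E_{m, m+1}$, then $\care_{i, j(a,v)} w \in H^{m+3, \delta_{m+1}}$; standard elliptic regularity for the first-order elliptic operator $\care_{i, j(a,v)}$ on the finite cylinder $Z_a$ (and on each half-cylinder with the weight $e^{\delta_{m+1}|s|}$ when $a = 0$, using $0 < \delta_m < 2\pi$ to stay below the first nontrivial indicial root) upgrades $w$ from $H^{m+3}$ to $H^{m+4}$ (respectively $(m+4, \delta_{m+1})$), placing $(v, a, b, w)$ on level $m+1$ and completing the proof.
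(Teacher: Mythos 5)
Your proposal is correct and follows essentially the same route as the paper: the regularizing property is obtained from elliptic regularity for the first-order Cauchy--Riemann operator together with the asymptotic analysis at the cylindrical ends (where the restriction $0<\delta_m<2\pi$ keeps the weight below the first nonzero indicial root of $A=-i\,d/dt$, exactly the Lockhardt--McOwen input the paper cites), while the sc-smoothness is read off from the coordinate representation of $\care$ in the splicing chart, whose individual terms — the $\beta_a,\gamma_a$ factors, the shift by $(R,\vartheta)$, and the $\beta_a'$ correction term supported near $s=\frac{R}{2}$ — are handled by Proposition~\ref{qed}, Theorem~\ref{thm-125}, and the chain rule. The only cosmetic difference is that you write out the intermediate identity for $\care_{i,i}\boxplus_a(\cdot)$ before inverting $\wh{\boxdot}_a$, whereas the paper records the formula for $\eta^+$ directly after the inversion; both ways make exactly the same appeal to the exponential-decay estimates of Lemmata~\ref{lkk}, \ref{lkj}, \ref{thetaes-prop} to get the continuous extension and approximation property at $a=0$.
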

\begin{proof}
In order to verify the regularizing property we assume that 
$$
\ov{\partial}(v,a,b,w)\in E_{m,m+1}.
$$
 It then follows from elliptic regularity theory that $w\in H^{m+4}_{loc}$. Hence, if $a\neq 0$ we conclude that $(v,a,b,w)\in X_{m+1}$. Considering now the case  $a=0$ in which case also  $b=0$, 
we have  $u=(u^+,u^-)$ with the two diffeomorphisms $u^\pm$ of $\R^+\times S^1$ of the form $u^\pm (s, t)=(s, t)+(d^\pm_0, \vartheta^\pm_0)+r^\pm (s, t)$, where, by the assumption $r^\pm\in H^{3+m, \delta_m}(\R^\pm \times S^1, \R^2)$.  On $\R^+\times S^1$, the complex structure is equal to $j(a, v)(s, t)=i$ if $s\geq s_0$ and hence we conclude from the assumption,  that 
$$\partial_sr^+ +i\partial_tr^+\in H^{3+m,\delta_{m+1}}([s_0, \infty )\times S^1, \R^2).$$

Now we recall that the weights $\delta_m$ have been chosen in the open interval $(0, 2\pi)$. Since the asymptotic structure of the differential  equation is $\frac{\partial }{\partial s}u=Au$, with the self-adjoint operator $A=-i\frac{d}{dt}$ in the Hilbert space $L^2(S^1)$ having the spectrum $2\pi\Z$, we therefore conclude by the standard arguments  going back to Lockhardt and  McOwen  \cite{Lockhardt} that $r^+\in H^{4+m,\delta_{m+1}}([s_0, \infty )\times S^1, \R^2)$.

By the same arguments,   $r^-\in H^{4+m,\delta_{m+1}}((-\infty, -s_0]\times S^1, \R^2)$ and hence 
$(u^+, u^-)\in X_{m+1}$.

In order to verify that the section $\ov{\partial}$ is sc-smooth, we have to study its coordinate representation. We recall the strong bundle chart $\Gamma:K\to E$,
$$
\Gamma(v, a,h^+,h^-, \eta^+, \eta^-)=(v, a,b(a,u^+_0+h^+,u^-_0+h^-),\boxplus_a(u^+_0+h^+,u^-_0+h^-) ; \xi)
$$
where, abbreviating the diffeomorphism $w_a=\boxplus_a(u^+_0+h^+,u^-_0+h^-) :Z_a\to Z_b$, the complex anti-linear map $\xi (z):(T_zZ_a, j(a, v))\to (T_{w_a(z)}Z_b, i)$  is defined by $\xi (z)=\wh{\oplus}_a(\eta^+, \eta^-)(z)$ for $z\in Z_a$.

Consequently, the Cauchy-Riemann section $\ov{\partial }$ is,   in these local coordinates,   represented by the map
$$(v, a, h^+, h^-)\mapsto (v, a, h^+, h^-; \eta^+, \eta^-),$$
where $(\eta^+, \eta^-)$ is the unique solution of the system 
\begin{align*}
\wh{\oplus}_a(\eta^+,\eta^-)&=\left( \ov{\partial}_{i,j(a,v)}w_a\right)\frac{\partial }{\partial s}\\
\wh{\ominus}_a(\eta^+,\eta^-)&=0.
\end{align*}
The solution $\eta^+$ is equal to 
$$
\eta^+(s,t)=\frac{\beta_a}{\gamma_a}\left(\ov{\partial}_{i,j(a,v)}w\right)\left(\frac{\partial}{\partial s}\right),
$$
where,  as usual,  we abbreviate 
$\beta_a=\beta_a(s)=\beta\left(s-\frac{R}{2}\right)$ and $\gamma_a=\beta_a^2+(1-\beta_a)^2$
Recalling that $u^\pm_0(s, t)=(s, t)+(d_0^\pm, \vartheta_0^\pm)+r^\pm_0$and representing $h^\pm=h^\pm_{\infty}+r^\pm_1$ where  $r^\pm_0$ and $r^\pm_1$ belong to  $H^{3, \delta_0}(\R^\pm \times S^1, \R^2)$, the map $w_a= \boxplus_a(u^+_0+h^+,u^-_0+h^-) :Z_a\to Z_b$ is equal to
\begin{equation*}
\begin{split}
w_a(s, t)&=(s, t)+(d_0^+, \vartheta^+_0)+h^+_{\infty}\\
&\phantom{=}+\beta_a\cdot r^+(s, t)+(1-\beta_a)\cdot r^-(s-R, t-\vartheta).
\end{split}
\end{equation*}
for $0\leq s\leq R$, where $r^\pm:=r^\pm_0+r_1^\pm$.  
Observing  that on $Z_a$, in the coordiates $s\geq 0$, the complex structure is equal to $j(a, v)(s, t)=j^+(v)$ while in the coordinates $s'\leq 0$ we have  $j(a, v)(s', t')=j^-(v)$, one obtains for the solution $\eta^+(s, t)$ for $(s, t)\in \R^+\times S^1$, 
\begin{equation*}
\begin{split}
\eta^+(s,t)&=\frac{\beta_a}{\gamma_a}\left(\ov{\partial}_{i,j^+(v)}\id \right)\left(\frac{\partial}{\partial s}\right)(s,t)\\
&\phantom{=}+ \frac{\beta_a^2}{\gamma_a}
\left[ {\partial}_{i,j^+(v)}r^+(s,t)\right] \left(\frac{\partial}{\partial s}\right)\\
&\phantom{=}+\frac{\beta_a(1-\beta_a)}{\gamma_a}\left[ \ov{\partial}_{i,j^-(v)}r^-(s-R,t-\vartheta)\right]\left(\frac{\partial}{\partial s}\right)\\
&\phantom{=}+\frac{\beta_a\beta_a'}{\gamma_a}[r^+(s,t)-r^-(s-R,t-\vartheta) ].
\end{split}
\end{equation*}

A similar formula holds for $\eta^-(s', t')$ on $\R^-\times S^1$.   
Recalling that the complex structure $j^+(v)$ is equal to the standard complex structure $i$ for $s\geq s_0$, we see that 
$(\ov{\partial}_{i,j^+(v)}\id ) \left(\frac{\partial}{\partial s}\right)(s,t)=0$ for $s\geq s_0$. 
Recall that $\frac{\beta_a(s)}{\gamma_a(s)}=1$ if $s\leq \frac{R}{2}-1$.  If $\abs{a}$ is so small that $s_0\leq \frac{R}{2}-1$, then for all $(s, t)\in \R^+\times S^1$,  the function
$$\frac{\beta_a}{\gamma_a}\left(\ov{\partial}_{i,j^+(v)}\id \right)\left(\frac{\partial}{\partial s}\right)(s,t)
=\left(\ov{\partial}_{i,j^+(v)}\id \right)\left(\frac{\partial}{\partial s}\right)(s,t)$$
has a compact support, hence belongs to $H^{2,\delta_0}(\R^+\times S^1, \R^2)$ and is independent of $a$. Hence as a function of $a$ into $H^{2,\delta_0}(\R^+\times S^1, \R^2)$, the map is constant. 
Applying the chain rule, Proposition \ref{qed}, and the fact that the operators $\partial_s, \partial_t:H^{3,\delta_0}\to H^{2,\delta_0}$ are sc-linear, one concludes  that the  remaining terms  in the formula for $\eta^+$ define maps which depend sc-smoothly on $(v, a, h^+, h^-)$. Consequently, 
the maps $(v, a, h^+, h^-)\mapsto \eta^\pm (v, a, h^+, h^-)$ are sc-smooth and we have proved that the section $\ov{\partial }$ is sc-smooth.
\end{proof}

In the next step we shall prove that $\ov{\partial}$
is a polyfold Fredholm section. By definition this means that
around every smooth point $(v,(a,b,w))\in V\oplus \ov{X}$ the Cauchy-Riemann operator has a germ which can be brought into a `nice' form by a suitable coordinate change.   

We consider two cases. The easy case  concerns points in $X$ where  an open neighborhood is sc-diffeomorphic to an open subset of an sc-Hilbert space. The interesting case concerns points in 
$\ov{X}\setminus X$ whose local description is that of a nontrivial retract. Here the  concept of a filler  will play a decisive role. 

The following lemma  takes care of the easy case.

\begin{lem}\label{lem3.19-nnew}
Around a smooth point $(v, x)\in V\times X$ the germ $(\ov{\partial}, (v, x))$ of the Cauchy-Riemann section is an sc-Fredholm germ (in the sense of Section \ref{Fredholm-section}).
\end{lem}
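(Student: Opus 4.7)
The plan is to work in a smooth chart of the M-polyfold $V\times X$ around $(v_0,x_0)=(v_0,a_0,b_0,w_0)$, where by Remark \ref{rexremark} the base is an open subset of an sc-Hilbert space (no nontrivial retraction is needed, so the filler condition in the definition of a Fredholm germ is automatic with $\ov g = g$). Concretely, the chart will be $(a,b,h)\mapsto (a,b,\psi_b\circ(w_0+h)\circ\phi_a)$ with $h$ in the closed sc-Hilbert subspace $\wh H^3(Z_{a_0},\R^2)$ carrying the boundary conditions, and the strong bundle $E$ will be trivialized by pulling anti-linear sections back to $Z_{a_0}$ via $\phi_a$, giving fibers inside $H^2(Z_{a_0},\R^2)$. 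Using Theorem \ref{thm-125p} and Proposition \ref{prop3.19-nnew}, the Cauchy–Riemann section is represented in these coordinates by an sc-smooth map
\[
f:\mathcal O(V\oplus \C^2\oplus \wh H^3(Z_{a_0},\R^2),(v_0,a_0,b_0,0))\to H^2(Z_{a_0},\R^2).
\]

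The first step is to build an $\ssc^+$-section $s$ with $s(v_0,x_0)=f(v_0,x_0)=\ov\partial_{i,j(a_0,v_0)}w_0$. Since $w_0$ is smooth and $j(a_0,v_0)$ agrees with $i$ outside a compact neighborhood of the boundary, the value $f(v_0,x_0)$ is a smooth element of the fiber; extending it to a local $\ssc^+$-section (e.g.\ the constant section in the chosen trivialization multiplied by a smooth cutoff supported near $(v_0,x_0)$) is routine. Replacing $f$ by $f-s$, I may assume $f(v_0,x_0)=0$.

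Next I would identify the linearization $L:=Df(v_0,x_0)$. Its restriction to the infinite-dimensional factor $\wh H^3\to H^2$ is (a conjugation by diffeomorphisms of) the linearized Cauchy–Riemann operator at $w_0$, an elliptic first-order operator on the compact surface-with-boundary $Z_{a_0}$ with totally real boundary conditions and fixed marked points; by standard elliptic theory this is Fredholm from $\wh H^3$ to $H^2$, and elliptic regularity shows $\ker L\subset C^\infty$. The additional finite-dimensional factor $V\oplus \C^2$ only adds a finite-dimensional perturbation, so $L$ is Fredholm on the full sc-Hilbert space $E:=V\oplus\C^2\oplus\wh H^3$.

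Finally I would put $f$ into basic form. Choose an sc-splitting $E=\R^n\oplus W$ with $\R^n\supset\ker L$ finite-dimensional (taking $\R^n\subset E_\infty$, possible because $\ker L$ is smooth), and a finite-dimensional $\R^N\subset H^2_\infty$ complementing $\operatorname{Im} L$; then $L|_W:W\to \operatorname{Im} L$ is an sc-linear isomorphism, and I use it to identify $W$ (as a factor of the target) with $\operatorname{Im} L$. In the resulting coordinates $f$ takes the form
\[
f(a,w)=\bigl(A(a,w),\,w-B(a,w)\bigr),\qquad B(a,w):=w-P_W f(a,w),
\]
with $DB(0,0)=0$. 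The contraction estimate for $B$ on every level $m$ follows from sc-smoothness: by Proposition \ref{x1} the derivative $DB$ induces a continuous map $U_m\oplus E_{m-1}\to W_{m-1}$, so $\|DB(a,w)\|_{L(W_m,W_m)}$ is small for $(a,w)$ close to $(0,0)$ on level $m$ (for this one uses the compact embedding to upgrade level-$m$ continuity of $DB$ restricted to the $w$-direction), and integrating gives the required Lipschitz bound. Hence $(f-s,(v_0,x_0))$ lies in $\mathfrak C_{\mathrm{basic}}$, proving that $(\ov\partial,(v_0,x_0))$ is a Fredholm germ. The main technical point is Step 3—verifying that the splittings can be chosen sc-linearly and that the Lipschitz estimate for $B$ holds on every level with arbitrarily small constant—which is where the compactness built into the sc-structure is essential.
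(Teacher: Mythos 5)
Your overall outline matches the paper's: work in the chart around a smooth point of $V\times X$ (where the local model is an open set in an sc-Hilbert space, so the filler is trivially $\ov g=g$), subtract an $\ssc^+$-section to kill the value at the base point, and then use the kernel/cokernel splitting of the linearization to put the section into basic germ form. However, there is a genuine gap in your final step, and it is precisely the point where the specifics of the paper's argument matter.

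The definition of $\mathfrak{C}_{basic}$ requires, for every $\varepsilon>0$ and every $m\geq 0$, the Lipschitz estimate $|B(a,w)-B(a,w')|_m\leq\varepsilon\,|w-w'|_m$ on level $m$, with $(a,w),(a,w')$ close to $0$ \emph{on level $m$}. You claim this follows from $\ssc$-smoothness of $f-s$ together with $DB(0,0)=0$, invoking Proposition \ref{x1} plus ``the compact embedding to upgrade level-$m$ continuity of $DB$ restricted to the $w$-direction.'' This upgrade does not work. Proposition \ref{x1} gives continuity of $x\mapsto df(x)$ only as a map $U_m\to L(E_m,F_{m-1})$ (a loss of one level), and the paper explicitly warns, right after Definition \ref{sc-def}, that $x\mapsto Df(x)$ is \emph{not} in general continuous into $L(E_0,F_0)$ in operator norm. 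Compactness of the inclusions lets you upgrade pointwise convergence $Df(x_n)h\to Df(x)h$ to uniform convergence on bounded sets of $h$ in a \emph{lower} level, but not to operator-norm smallness of $Df(x)-Df(0)$ on level $m$ itself. So a generic $\ssc^1$-map with vanishing $w$-derivative at the origin need not satisfy the level-$m$ contraction estimate, and your choice of $s$ as ``the constant section times a cutoff'' gives you no reason to expect it.

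The paper avoids this by making a very specific choice of $s$: in the chart where the conjugated Cauchy--Riemann operator reads $\tfrac12[T(w_0+h)+k(b)\circ T(w_0+h)\circ\wh{j}(a,v)]\cdot(\partial/\partial s)$, one takes $s(v,a,b,h)=\tfrac12[Tw_0+k(b)\circ Tw_0\circ\wh{j}(a,v)]\cdot(\partial/\partial s)$. The remainder $f-s$ is then \emph{linear in $h$} at each parameter value, i.e.\ a family of bounded operators $T_\lambda:E\to F$ indexed by the finite-dimensional parameter $\lambda=(v,a,b)$. Because the coefficients of $T_\lambda$ are smooth bundle endomorphisms over a compact cylinder, $\lambda\mapsto T_\lambda$ is continuous (indeed smooth) in the operator norm $L(E_m,F_m)$ for every $m$. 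After the block decomposition and the push-forward by the sc-isomorphism $(PT_0+\phi)^{-1}$, the $B$-term inherits exactly the operator-norm smallness needed: $B(\lambda,k,y)-B(\lambda,k,y')$ is controlled by $\|T_\lambda-T_0\|_{L(Y_m,F_m)}\cdot|y-y'|_m$, which is uniformly small for $\lambda$ near $\lambda_0$ independently of $(k,y)$. This operator-norm regularity in the finite-dimensional parameter, not abstract sc-smoothness, is what makes the contraction germ property hold on every level with uniformly small constant, and it is lost if you subtract an arbitrary $\ssc^+$-section with matching value. To fix your argument you would need to pick $s$ so that $f-s$ is (affinely) linear in the infinite-dimensional variable with parameter-dependence that is operator-norm continuous on each level — which is exactly the paper's choice.
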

 \begin{proof}
 We choose the smooth point $(v_0, a_0, b_0, w_0)\in V\times X$. Then $w_0:Z_{a_0}\to Z_{b_0}$ is a diffeomorphism  preserving the distinguished points at  the boundaries.
Around this point we choose a chart of $V\times X$ as in Theorem \ref{thm42.-nnnew} by means of the map 
$$
\Phi( v, a, b, h)=(v, a, b, \psi_{b}\circ (w_0+ h)\circ \phi_{a})
$$
where $( v,  a,  b)$ belongs to an open neighborhood of the origin in $V\times \C\times \C$ and where $ h$ belongs to the subspace $\wh{H}^3(Z_{a_0}, \R^2)$ of 
$H^3(Z_{a_0}, \R^2)$, which consists  of all $h$ satisfying $h(p^{\pm}_a)=(0,0)$, $h([0,t])\in \{0\}\times {\mathbb R}$, and $h([0,t]')\in\{0\}\times {\mathbb R}$. Recalling  the diffeomorphisms
\begin{align*}
&\psi_{b}:Z_{b_0}\to Z_{b}\\
&\phi_{a}:Z_{a}\to Z_{a_0},
\end{align*}
the Cauchy-Riemann operator takes  the form
$$
( v, a, b,  h)\mapsto \ov{\partial}_{i,j(a,v)}(\psi_{b}\circ (w_0+h)\circ\phi_{a}).
$$
On  the cylinder  $Z_{b_0}$ we introduce the parameter depending complex structure $k( b)$,  defined by 
$$
k( b):= (T\psi_{ b})^{-1}\circ i \circ ( T\psi_{ b}),
$$
and on  the cylinder $Z_{a_0}$ the complex structure  
$$
\wh{j}( a, v) :=( T\phi_{ a})\circ j(a,v)\circ (T\phi_{a})^{-1}, 
$$
so that the Cauchy-Riemann operator can be written as 
$$
( v, a, b,  h)\mapsto T\psi_{ b} \circ \left(\frac{1}{2}\left[ T(w_0+ h)+k( b)\circ T(w_0+ h)\circ \wh{j}( a, v)\right]\right)
\circ T\phi_{ a}.
$$
Now we can introduce the obvious strong bundle coordinates in which  the local expression of the Cauchy-Riemann  section  is as follows,
\begin{equation}\label{coo}
( v, a, b, h)\mapsto 
\frac{1}{2}[ T(w_0+ h)+k( b)\circ T(w_0+ h)\circ \wh{j}( a, v)]\cdot \left(\frac{\partial}{\partial s}\right).
\end{equation}
As usual we identify the tangent spaces at points of the cylinder $Z_{b_0}$,  as real vector spaces, with $\R^2$. Hence the above section associates with $( v, a, b, h)$
a  function  in $H^2( Z_{a_0}, \R^2)$.

In order to verify the Fredholm property of the Cauchy-Riemann section one needs to show 
 near the smooth point $(v_0,a_0,b_0,w_0)$ the contraction germ property.  This  requires to study the map \eqref{coo} for   small data $(a, b, h)$.
 
Observing that for $ h=0$  the right hand side of \eqref{coo} is an sc-smooth map, we define the $\ssc^+$-section $s$ of the strong local bundle by its principal part 
$$
s( v, a, b, h)=\frac{1}{2}[Tw_0+k( b)\circ Tw_0\circ\wh{ j}( a, v)]\cdot \left(\frac{\partial}{\partial s} \right).
$$
Here again  we identify the tangent fibers of  of the tangent bundle $TZ_{b_0}$ of the cylinder with ${\mathbb R}^2$. Denoting the Cauchy-Riemann section  by $f$ we now study the section 
$f-s$  whose principal part is given by 
\begin{equation}\label{coo1}
(f-s)( v, a, b, h)=
\frac{1}{2}[ T( h)+k( b)\circ T( h)\circ \wh{j}( a, v)]\cdot \left(\frac{\partial}{\partial s}\right).
\end{equation}
We shall abbreviate the parameters by $\lambda=( v, a,  b)$. They  vary in the finite dimensional vector space $\Lambda :=V\times  \C\times \C$ near its origin. 
Moreover, we abbreviate  the sc-spaces $E=\wh{H}^3(Z_{a_0}, \R^2)$ and $F=H^{2}(Z_{a_0}, \R^2)$ with the sc-structures $E_m=\wh{H}^{3+m, \delta_m}(Z_{a_0}, \R^2)$ and  $F_m=H^{2+m}(Z_{a_0}, \R^2)$ and denote the right hand side of \eqref{coo1} by $T_{\lambda}( h)$. 
Then $T_{\lambda}:E\to F$ is a family of bounded linear operators which are Fredholm operators 
$E_m\to F_m$ for all $m$ and depend in their operator norms smoothly on the parameters for all levels $m$. Setting $\lambda=\lambda_0=(v_0, a_0, b_0)$, the sc-Fredholm operator $T_{\lambda_0}\equiv T_0:E\to F$ defines the sc-splitings $E=Y\oplus K$ where $K$ is the kernel of $T_0$ and $F=R(T_0)\oplus C$ with the range  $R(T_0)$ and the cokernel  $C$ of $T_0$. Let 
$$P:F=R(T_0)\oplus C\to R(T_0)$$
be the projection map  and choose  a linear isomorphism $\phi:\R^N\to C$ onto the cokernel of $T_0$. Then the restriction $T_0\vert Y:Y\to R(T_0)$ is an sc-isomorphism. Moreover, the linear map 
$$PT_0+\phi:Y\oplus \R^N\to F,$$
defined by 
$$(PT_0+\phi )(y+r)=PT_0(y)+\phi (r)$$
is an sc-isomorphism. Introducing the sc-bundle isomorphism
$$\Phi:(\Lambda\oplus E)\triangleleft F\to  (\Lambda\oplus K\oplus Y)\triangleleft (Y\oplus \R^N),$$
defined by 
$\Phi (\lambda, k+y, f)=(\lambda, k, y; (PT_0+\phi )^{-1}f)$ (for small $(\lambda, k+y)$)
and denoting by 
$g(\lambda, \delta h)=T_{\lambda}(\delta h)$ the principal part of the Cauchy-Riemann section, we obtain for the push-forward  section 
$$\Phi_{\ast}(g):  \Lambda\oplus K\oplus Y\to  Y\oplus \R^N$$
the expression $\Phi_{\ast}(g)(\lambda, k, y)= (PT_0+\phi )^{-1}T_{\lambda}(k+y).$ With the projection 
$$Q:Y\oplus \R^N\to Y,$$  the sc-smooth germ 
$(\lambda, k, y)\mapsto Q\Phi_{\ast}(g)(\lambda, k, y)$  satisfies, by construction 
$Q\Phi_{\ast}(g)(0,0, y)=y$ and hence is of the form 
$$Q\Phi_{\ast}(g)(\lambda, k, y)=y-B(\lambda, k, y)$$
where $B$ satisfies $B(0, 0, y)=0$ and $\abs{B(\lambda, k, y)-B(\lambda, k, y')}_m\leq \varepsilon\cdot \abs{y-y'}_m$ for every $m\geq 0$ and every $\varepsilon>0$ if only $(\lambda, k)$ is sufficiently small (depending on $\varepsilon$ and $m$).

Having found coordinates in which the  Cauchy-Riemann section near smooth points possesses the contraction germ property, the proof of Lemma \ref{lem3.19-nnew} is complete.

\end{proof}

Next we turn to  the more interesting case and  fix in $V\times (\ov{X}\setminus X)$ the smooth point
$(v_0,0,0,(u^+_0,u^-_0))$.  We are going to construct a filler for the Cauchy-Riemann section $\ov{\partial }$ near this point. The construction is based on the Cauchy-Riemann operator
$$\ov{\partial }_0:H^{3, \delta_0}_c(C_a, \R^2)\to H^{2, \delta_0}(C_a, \R^2)$$
defined by
$$(\ov{\partial }_0\xi )(s, t)=\frac{1}{2}\left( \frac{\partial }{\partial s}\xi +i\frac{\partial }{\partial t}\xi \right)(s, t)=\frac{1}{2}[ T\xi +i\circ T\xi\circ i](s, t)\left(\frac{\partial}{\partial s}\right)$$
where we have used the coordinates $(s, t)\in \R\times S^1$ for the glued infinite cylinder $C_a$, which is equipped with the standard complex structure $i$. We recall that the Hilbert space $H^{3, \delta_0}_c(C_a, \R^2)$ consists of all maps $u$ in $H^3_{\text{loc}}(C_a, \R^2)$ for which there exists a constant $c\in \R^2$ having the property that $u(s, t)-c$ has  weak partial derivatives up to order $3$ which, if weighted by $e^{\delta_0\abs{s}}$ belong to the space $L^2([0, \infty )\times S^1)$  and $u(s, t)+c$ has the same properties with respect to $L^2((-\infty , 0]\times S^1)$. Consequently, $u$ converges at $\pm \infty$ exponentially fast to the antipodal points $c$ and $-c$. We equip $H^{3, \delta_0}_c(C_a, \R^2)$ with the sc-structure for which the level $m$ corresponds to the Sobolev regularity $(m+3, \delta_m)$ and the  sc-Hilbert space  $H^{2,\delta_0}(C_a, \R^2)$ with the sc-structure for which the level $m$  corresponds to the  regularity $(2+m, \delta_m)$.
The norms are defined in Section \ref{esttotalgluing}.

\begin{lem}\label{isomo-lem}
The operator $\ov{\partial}_0$ is an sc-isomorphism.
\end{lem}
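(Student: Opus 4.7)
The glued infinite cylinder $C_a$ is biholomorphic to the standard cylinder $(\R\times S^1,i)$ via the conformal coordinates, and under this identification $\ov{\partial}_0$ becomes the constant-coefficient operator $\frac{1}{2}(\partial_s+i\partial_t)$ acting on $\R^2\cong \C$-valued functions. Since this is an sc-linear issue which only depends on the asymptotic behaviour, I would henceforth work on $\R\times S^1$ and reduce the claim to proving that, for every level $m\ge 0$,
\[
\ov{\partial}_0:H^{3+m,\delta_m}_c(\R\times S^1,\C)\longrightarrow H^{2+m,\delta_m}(\R\times S^1,\C)
\]
is a bounded linear bijection. Boundedness is routine. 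The bijectivity will then follow from an inverse built by Fourier decomposition $u(s,t)=\sum_{k\in\Z}\hat u_k(s)\,e^{2\pi i k t}$, $f(s,t)=\sum_k\hat f_k(s)\,e^{2\pi i k t}$, under which the equation $\ov{\partial}_0 u=f$ becomes the family of scalar ODEs $(\partial_s-2\pi k)\hat u_k=2\hat f_k$.

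The plan for surjectivity and injectivity is to solve each mode separately, exploiting the hypothesis $0<\delta_m<2\pi$, which places $\pm\delta_m$ strictly inside the first spectral gap of the asymptotic operator $-i\partial_t$ on $L^2(S^1)$ (whose spectrum is $2\pi\Z$). For $k\neq 0$ the homogeneous solution $e^{2\pi k s}$ lies outside the weighted space $H^{2+m,\delta_m}(\R)$, so there is a unique particular solution decaying at both ends given by
\[
\hat u_k(s)=\begin{cases}-\displaystyle\int_s^{\infty}e^{2\pi k(s-\tau)}\,2\hat f_k(\tau)\,d\tau,& k>0,\\[1ex]\phantom{-}\displaystyle\int_{-\infty}^{s}e^{2\pi k(s-\tau)}\,2\hat f_k(\tau)\,d\tau,& k<0,\end{cases}
\]
and the standard weighted estimate $\norm{\hat u_k}_{H^{3+m,\delta_m}}\le C_{m}|k|^{-1}\norm{\hat f_k}_{H^{2+m,\delta_m}}$ (with $C_m$ depending only on the gap $2\pi-\delta_m$) shows that the sum over $k\neq 0$ yields a function in $H^{3+m,\delta_m}(\R\times S^1)$ with vanishing asymptotic constants. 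For $k=0$ the equation $\partial_s\hat u_0=2\hat f_0$ is solved by $\hat u_0(s)=-c+\int_{-\infty}^s 2\hat f_0(\tau)\,d\tau$, and imposing the antipodal-limits constraint of $H^{3+m,\delta_m}_c$ forces the asymptotic constant to be the explicit bounded linear functional
\[
c=c(f):=\int_{\R}\hat f_0(\tau)\,d\tau.
\]
A short calculation (writing $\hat u_0-c$ as $-\int_s^\infty 2\hat f_0$ on $[0,\infty)$ and as $\int_{-\infty}^s 2\hat f_0$ on $(-\infty,0]$) then shows $\hat u_0$ lies in the required space with a bound $\le C_m'\norm{\hat f_0}_{H^{2+m,\delta_m}}$.

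For injectivity one runs the same Fourier argument backwards: if $\ov{\partial}_0 u=0$ then every $\hat u_k$ is a homogeneous solution; for $k\neq 0$ the growth rate $2\pi|k|>\delta_m$ excludes the nonzero solutions from the weighted space, and for $k=0$ the constant solution only lies in $H^{3+m,\delta_m}_c$ when the antipodal constants $c$ and $-c$ coincide, forcing $c=0$. Collecting these one-mode estimates and using Parseval in $t$ together with compactness of $S^1$, the operator-valued bound assembles into $\norm{u}_{H^{3+m,\delta_m}_c}\le C_m\norm{f}_{H^{2+m,\delta_m}}$ for every $m$, so the inverse is an sc-operator; this is precisely the sc-isomorphism statement.

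The genuine technical step, and the one I would be most careful with, is the $k=0$ mode: the target space requires antipodal asymptotic limits with the \emph{specific} cut-off $\zeta_a$ used to define $H^{3,\delta_0}_c(C_a)$, so one must verify that $c(f)$ depends sc-smoothly (in fact sc-linearly and continuously) on $f$ at all levels and that $\hat u_0-c\cdot(1-2\beta_a)$, or rather $\hat u_0-c\zeta_a$, lies in $H^{3+m,\delta_m}(\R\times S^1)$ with a bound uniform in the weights. All remaining modes are controlled by the classical Lockhart--McOwen weighted estimate on a cylinder, whose proof is identical across levels because of the hypothesis $\delta_m\in(0,2\pi)$; hence one obtains an inverse which maps the $m$-level to the $m$-level boundedly for every $m$, proving that $\ov{\partial}_0$ is an sc-isomorphism as claimed.
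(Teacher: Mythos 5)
Your argument is correct, but it takes a genuinely different route from the paper. The paper's own proof of this lemma is essentially a two-sentence reference: it identifies the infinite cylinder $C_a$ with the Riemann sphere minus two antipodal points, invokes the known theory of the Cauchy--Riemann operator on the sphere as in \cite{HZ}, and appeals to "the asymptotic study of the operator near the ends'' (i.e.\ the Lockhart--McOwen elliptic theory on cylindrical ends cited earlier in the paper). You instead give a self-contained Fourier-mode computation on $\R\times S^1$: solve each scalar ODE $(\partial_s-2\pi k)\hat u_k=2\hat f_k$ explicitly, observe that for $k\neq 0$ the exponential growth rate $2\pi|k|$ of the homogeneous solution lies strictly outside the weight window $\delta_m\in(0,2\pi)$ (giving both uniqueness and the weighted decay of the particular solution), and handle the $k=0$ mode by matching the antipodal asymptotic limits encoded in $H^{3,\delta_0}_c$, which pins down the constant $c=\int_\R\hat f_0$. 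Your approach is more elementary and makes the role of the spectral-gap hypothesis $0<\delta_m<2\pi$ completely transparent at every level $m$, at the cost of requiring the Schur-type convolution estimates that assemble the mode-by-mode bounds into a uniform operator bound; the paper's approach is shorter but opaque, leaving the dependence on the weight window buried inside the cited results. Both are valid, and your version would serve a reader wanting to see exactly where each hypothesis is used.
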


Identifying the cylinder with the Riemann sphere with two antipodal points removed, the lemma follows from the results about the Cauchy-Riemann operator acting on  maps on the Riemann sphere  and discussed for example in \cite{HZ}, and from the  asymptotic  study of the operator near the ends.

In order to introduce a filler  for  the Cauchy-Riemann section $\ov{\partial }$ we first recall the local model for the strong M-polyfold bundle and start with the bundle 
$$(V\oplus B_{\varepsilon}\oplus \wh{E}_0)\triangleleft F\to V\oplus B_{\varepsilon}\oplus \wh{E}_0$$
where $F$ is the sc-Hilbert space consisting of pairs $(\eta^+, \eta^-)$ of functions in 
$H^{2, \delta_0}(\R^+\times S^1, \R^2)\oplus H^{2, \delta_0}(\R^-\times S^1, \R^2)$ equipped, as usual, with the sc-structure $F_m=H^{2+m, \delta_m}(\R^+\times S^1, \R^2)\oplus H^{2+m, \delta_m}(\R^-\times S^1, \R^2)$.

The retraction $R:(V\oplus B_{\varepsilon}\oplus \wh{E}_0)\triangleleft F\to (V\oplus B_{\varepsilon}\oplus \wh{E}_0)\triangleleft F$ is defined by 
$$R(v, a, (h^+, h^-), (\eta^+, \eta^-))=(v, a, \rho_a(h^+, h^-), \wh{\pi}_a(\eta^+, \eta^-))$$
and covers the retraction 
$r:V\oplus B_{\varepsilon}\oplus \wh{E}_0\to V\oplus B_{\varepsilon}\oplus \wh{E}_0$, defined by 
$$r(v, a, (h^+, h^-))=(v, a, \rho_a(h^+, h^-)).$$
With the retracts $K:=R((V\oplus B_{\varepsilon}\oplus \wh{E}_0)\triangleleft F)$ and $O=r(V\oplus B_{\varepsilon}\oplus \wh{E}_0)$ we obtain  the local model 
$$K\to O$$
of the strong M-polyfold bundle. In these local coordinates the Cauchy-Riemann section $\ov{\partial }:O\to K$ near the point $(v, a, h^+, h^-)=(v_0, 0, 0, 0)$  becomes
$$
\ov{\partial }(v,  a,  h^+,  h^-)=(v,  a,  h^+,  h^-; \eta^+, \eta^-)
$$
where $(\eta^+, \eta^-)$ is a solution of 
\begin{align*}
\wh{\oplus}_{ a}(\eta^+, \eta^-)&=\left(\ov{\partial}_{i, j(a, v)}(w)\right)\cdot \left(\frac{\partial }{\partial s}\right)\\
\wh{\ominus}_{ a}(\eta^+, \eta^-)&=0
\end{align*}
in the case $ a\neq 0$, where $w=\boxplus_{ a}(u_0^++ h^+, u_0^-+ h^-):Z_{ a}\to Z_b$ is a diffeomorphism and $b=b( a, u_0^++ h^+, u_0^-+ h^-).$ If $ a=0$, then 
\begin{equation*}
\begin{split}
&\ov{\partial }(v, 0,  h^+,  h^-)=\\
&(v,0,  h^+,  h^-;  \ov{\partial  }_{i, j^+(v)}(u^+_0+ h^+), 
\ov{\partial  }_{i, j^-(v)}(u^-_0+h^-).
\end{split}
\end{equation*}

A filler  for $\ov{\partial}$ is an extension of the local section $\ov{\partial }:O\to K$ to a section
$$g:V\oplus B_{\varepsilon}\oplus \wh{E}_0\to (V\oplus B_{\varepsilon}\oplus \wh{E}_0)\triangleleft F$$
of the original bundle which is a section defined on  an open set in an sc-Hilbert space. 
It is defined as follows. If $ a=0$, we set $g( v, 0,  h^+,  h^-)=
\ov{\partial }(v, 0,  h^+,  h^-).$  If $ a\neq 0$, we define the section 
$$g(v,  a,  h^+,  h^-)=(v,  a,  h^+,  h^-; \eta^+, \eta^-)$$
the following way. We take $( h^+,  h^-)\in \wh{E}_0$ and construct the associated 
diffeomorphism $\boxplus_{ a}(u_0^++ h^+, u_0^-+ h^-):Z_{ a}\to Z_b$  between the glued cylinders where $b=b( a, u_0^++ h^+, u_0^-+ h^-).$ In addition, we define the function 
$\xi\in H_c^{2, \delta_0}(C_{ a}, \R^2)$ by 
$$\xi=\ominus_{ a}( h^+-h^+_{\infty},  h^--h^-_{\infty}).$$
In view of Theorem \ref{second-splicing}, we have the sc-isomorphism
$$(\wh{\oplus}_{ a}, \wh{\ominus}_{ a}):F\to H^{2}(Z_{ a}, \R^2)\oplus H_c^{2, \delta_0}(C_{ a}, \R^2)$$
and define the principal part of the section $g$ as the unique solution of the two equations
\begin{align*}
\wh{\oplus}_{ a}(\eta^+, \eta^-)&=\left(\ov{\partial}_{i, j( a, v)}(w)\right)\cdot 
\left(\frac{\partial }{\partial s}\right)\\
\wh{\ominus}_{ a}(\eta^+, \eta^-)&=\ov{\partial }_0\xi.
\end{align*}
\begin{lem}\label{llem3.22-nnew}
The above section $g$ whose  principal part  is given by
$$
(v, 0,  h^+,  h^-)\mapsto  (\eta^+, \eta^-), 
$$
and which is  defined on  an open neighborhood of $(v_0, 0, (0, 0))$ in $V\oplus \C\oplus \wh{E}_0$ and has  its  image in $F$,  is a filled version of the local Cauchy-Riemann section $\ov{\partial }:O\to K$ near the point  $(v_0, 0, 0,u^+_0, u^-_0)\in V\times (\ov{X}\setminus X).$ 
\end{lem}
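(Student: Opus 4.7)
The plan is to verify each of the three defining properties of a filled version from Definition \ref{filled-def}. First I would show that $g$ is a well-defined sc-smooth section of $(V\oplus B_\varepsilon\oplus \wh{E}_0) \triangleleft F \to V\oplus B_\varepsilon\oplus \wh{E}_0$. By Theorem \ref{second-splicing}, the total hat gluing map $\wh{\boxdot}_a = (\wh{\oplus}_a, \wh{\ominus}_a) : F \to \wh{G}^a$ is an sc-isomorphism depending sc-smoothly on $a$, so its inverse is sc-smooth, and the defining equations for $g$ admit a unique solution $(\eta^+, \eta^-) \in F$. Sc-smoothness of the two right-hand sides can then be established by combining Proposition \ref{prop3.19-nnew} (for the Cauchy-Riemann term $(\ov{\partial}_{i, j(a,v)} w) \cdot \partial_s$) with the sc-smoothness of $\ominus_a$ proved in Section \ref{helmut} and the observation that the linear operator $\ov{\partial}_0 : H_c^{3, \delta_0}(C_a) \to H^{2, \delta_0}(C_a)$ is sc (indeed an sc-isomorphism by Lemma \ref{isomo-lem}).

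To verify property (1) I would argue that a point $y = (v, a, h^+, h^-)$ lies in $O$ if and only if $\rho_a(h^+, h^-) = (h^+, h^-)$, which by Lemma \ref{kernel-lem1} is equivalent to the auxiliary element $\xi = \ominus_a(h^+ - h^+_\infty, h^- - h^-_\infty)$ vanishing. On $O$ the second defining equation for $g$ therefore collapses to $\wh{\ominus}_a(\eta^+, \eta^-) = \ov{\partial}_0 (0) = 0$, so the full system coincides with the coordinate representation of $\ov{\partial}$ derived in the proof of Proposition \ref{prop3.19-nnew}. Hence $g = \ov{\partial}$ on a neighbourhood of $y_0$ in $O$.

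For property (2), suppose that $g(y) = \phi(r(y)) g(y) = \wh{\pi}_a g(y)$, which means $(\eta^+, \eta^-) \in \ker \wh{\ominus}_a$. The second defining equation then gives $\ov{\partial}_0 \xi = \wh{\ominus}_a(\eta^+, \eta^-) = 0$. By the injectivity part of Lemma \ref{isomo-lem}, $\xi = 0$, and Lemma \ref{kernel-lem1} then yields $\rho_a(h^+, h^-) = (h^+, h^-)$, so $y \in O$.

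Property (3) is essentially automatic in the present situation. At $y_0 = (v_0, 0, 0, 0)$ one has $\rho_0 = \id$ (since $\pi_0 = \id$) and $\rho_a(0, 0) = 0$ for every $a$; a direct differentiation shows that $Dr(y_0) = \id$ on $V \oplus \C \oplus \wh{E}_0$, so $\ker Dr(y_0) = 0$. Likewise $\phi(y_0) = \wh{\pi}_0 = \id_F$ because $\wh{\ominus}_0 = 0$, so $\ker \phi(y_0) = 0$, and the restricted linearisation is the trivial isomorphism $0 \to 0$. The hardest part of the plan, in my view, is neither property on its own but the underlying sc-smoothness check for $g$ across the parameter value $a = 0$, which is what forces the restrictions $\delta_m \in (0, 2\pi)$ and relies crucially on the decay estimates for the splicing-type maps from Section \ref{helmut}.
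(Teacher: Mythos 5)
Your proof is correct and follows essentially the same route as the paper: properties (1) and (2) are reduced via Lemma \ref{kernel-lem1} and the injectivity of $\ov{\partial}_0$ (Lemma \ref{isomo-lem}) to the statement that $\rho_a$ and $\wh{\pi}_a$ have the same kernel condition encoded by $\ominus_a$ resp. $\wh{\ominus}_a$, and property (3) is observed to be trivially satisfied because $Dr(y_0)=\id$ and $\wh{\pi}_0=\id$ at the base point $y_0=(v_0,0,0,0)$. The paper is even terser than you on the sc-smoothness of $g$ (it simply refers back to the argument in Proposition \ref{prop3.19-nnew}), so your additional discussion is a reasonable expansion rather than a deviation.
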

\begin{proof}
As in Proposition \ref{prop3.19-nnew} one sees that the section is sc-smooth. Recalling Definition \ref{filled-def} of a filler in Section \ref{Fredholm-section}, we have to verify the three defining properties. 

If we  restrict
 $g$ to the sc-smooth retract $O$, then,  in view of Lemma  \ref{kernel-lem1},   it follows from $\rho_{ a}( h^+,  h^-)=( h^+,  h^-)$ that
 $$
 \xi=\ominus_{ a}( h^+-h^+_{\infty},  h^--h^-_{\infty})=\ominus_{ a}( h^+,  h^-+ h^+_{\infty}- h^-_{\infty})=0, 
 $$
 and therefore 
$\wh{\ominus}_{ a}(\eta^+, \eta^-)=0$ so that $(v,  a,  h^+,  h^-; \eta^+, \eta^-)\in K$. Consequently, $g\vert O$ is a section of the bundle $K\to O$. By construction, it is the local coordinate representation  of the Cauchy-Riemann section $\ov{\partial }:O\to K$ near the originally given point. This proves the property (1) of the requirements to be a filled version.

In order to verify property (2) of a filler, we assume that 
$$g(y)=\phi(r(y))\cdot g(y)$$
for a point $y=(v,  a,  h^+,  h^-)\in V\oplus B_{\varepsilon}\oplus \wh{E}_0$ close to $(v_0,0, 0, 0)$. We conclude that 
$$(\eta^+, \eta^-)=\wh{\pi}_{ a}(\eta^+, \eta^-)$$
and hence $\wh{\ominus}_{ a}(\eta^+, \eta^-)=0$. Since $\ov{\partial}_0$ is an isomorphism in view of Lemma \ref{isomo-lem}, we obtain 
$\ominus_{ a} ( h^+ - h^+_{\infty},  h^-  - h^-_{\infty})=\ominus_{ a} ( h^+,  h^- + h^+_{\infty}- h^-_{\infty})=0$ so that  by Lemma \ref{kernel-lem1}
again, 
$\rho_{ a}( h^+,  h^-)=( h^+,  h^-)$ and consequently, 
$(v,  a,  h^+,  h^-)\in O$. We have verified that  the section $g$ satisfies property (2) of a filler. 

Finally, the third property of a  filler  is easily verified. At the point 
$y_0=(v_0, 0, (0, 0))\in V\oplus \C\oplus \wh{E}_0$, the derivative of the retraction $Dr (y_0)=\id$ is the identity map so that $\text{kernel}\ Dr (y_0)=\{0\}$. Since $\phi(y_0)=\wh{\pi}_0=\id$ we also have that  $\text{kernel}\   \phi(y_0)=\{0\}$. Consequently, the linearization of the map 
$y\mapsto [\id -\phi(r(y))]\cdot g(y)$ at the point $y_0$ restricted to $\ker Dr(y_0)$ defines trivially an isomorphism $\ker Dr(y_0)\to \ker \phi(y_0)=\{0\}$. 
The proof of Lemma \ref{llem3.22-nnew} is complete.
\end{proof}

We are going to verify the Fredholm property of the section $g$ near the smooth point $(v_0, 0, 0, u^+_0, u^-_0)\in V\times \ov{X}_{\infty}$, where 
$$u^\pm_0(s, t)=(s, t)+(d^\pm_0, \vartheta^\pm_0)+r^\pm_0(s, t)$$
are diffeomorphisms  of the half-cylinders $\R^+ \times S^1$ and $\R^- \times S^1$ which  satisfy
$r^\pm_0\in \bigcap_{m\geq 0}H^{3+m, \delta_m}(\R^\pm \times S^1, \R^2).$

For this purpose we introduce, in our local coordinates near the above distinguished point, the 
$\ssc^+$-section 
$$g_0(v, a, h^+, h^-)=g(v, a, 0,0)\in F_{\infty}$$
for  $(v, a, h^+, h^-)$ near $(v_0, 0, 0, 0)$. 

\begin{lem}\label{localexp}
There exists  $\tau\in (0,\frac{1}{2})$ which depends only  on the size of the support of $j^\pm(v)$ defined by the parameter $s_0>0$  for which 
$j^+(v)=i$ on $[s_0,\infty)\times S^1$ for $v\in V$,  and $j^-(v)=i$ on $(-\infty, -s_0]\times S^1$, so that the following holds.
If $0<|a|<\tau$, then the  principal part  $(v, a, h^+, h^-)\mapsto (g-g_0)(v, a, h^+, h^-)\in F$ of the sc-smooth section $g-g_0$ is given by the formula,
\begin{equation*}
\begin{split}
(g-g_0)(v, a, h^+, h^-)&=
\begin{bmatrix}
\frac{1}{2}[ Th^+ +i\circ Th^+\circ j^+(v)] \left( \frac{\partial }{\partial s} \right) \\[1.5ex]
\frac{1}{2}[ Th^- +i\circ Th^+\circ j^-(v)] \left( \frac{\partial }{\partial s} \right)
\end{bmatrix}\\
&\phantom{=}+\frac{\beta_a'}{\gamma_a}\cdot 
\begin{bmatrix}
\beta_a&\beta_a-1\\
1-\beta_a&\beta_a
\end{bmatrix}
\cdot
\begin{bmatrix}
h^+ -h^- -h^+_{\infty}+h^-_{\infty}\\
h^+ +h^- -2 \av  (h^+, h^-)
\end{bmatrix}
\end{split}
\end{equation*}
where, as usual, $\beta_a=\beta_a(s)=\beta (s-\frac{R}{2})$ and $h^+=h^+(s, t)$ and $h^-=h^-(s-R, t-\vartheta)$ and $0\leq s\leq R$. Moreover, $\beta_a'$ stands for the derivative of the function $\beta_a$.

If $a=0$,  then the principal part $(v, 0, h^+, h^-)\mapsto (g-g_0)(v, 0, h^+, h^-)\in F$ of the sc-smooth section $g-g_0$ is given by the formula,
\begin{equation*}
\begin{split}
(g-g_0)(v, 0, h^+, h^-)&=
\begin{bmatrix}
\frac{1}{2}[ Th^+ +i\circ Th^+\circ j^+(v)] \left( \frac{\partial }{\partial s} \right) \\[1.5ex]
\frac{1}{2}[ Th^- +i\circ Th^+\circ j^-(v)] \left( \frac{\partial }{\partial s} \right)
\end{bmatrix}.
\end{split}
\end{equation*}

\end{lem}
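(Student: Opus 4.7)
The strategy is to exploit the defining equations of the filler $g$ together with the sc-isomorphism $\wh{\boxdot}_a=(\wh{\oplus}_a,\wh{\ominus}_a)\colon F\to\wh{G}^a$ supplied by Theorem \ref{second-splicing}, and then do a single application of the product rule to identify the ``unperturbed'' piece and the $\beta_a'$-supported correction.

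First I handle $a=0$. At this parameter value $w=(u^+_0+h^+,u^-_0+h^-)$ and $\xi$ does not enter the construction; by linearity of $\ov{\partial}_{i,j^\pm(v)}$ in the target one obtains
\[
g(v,0,h^+,h^-)-g_0(v,0,h^+,h^-)=\bigl(\ov{\partial}_{i,j^+(v)}h^+\cdot\partial_s,\;\ov{\partial}_{i,j^-(v)}h^-\cdot\partial_s\bigr),
\]
which is the formula in the lemma (there is no $\beta_a'$ contribution).

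For $a\neq 0$, recall from Section \ref{anothersplicing} that
\[
w-w_0=\oplus_a(h^+,h^-+h^+_\infty-h^-_\infty),
\]
while $\xi=\ominus_a(h^+-h^+_\infty,h^--h^-_\infty)$ and $\xi|_{(h^+,h^-)=(0,0)}=0$. Because $\wh{\boxdot}_a$ is an sc-isomorphism, the element $(\eta^+,\eta^-)-(\eta^+_0,\eta^-_0)$ representing $(g-g_0)$ is the unique solution in $F$ of
\begin{align*}
\wh{\oplus}_a\bigl((\eta^+,\eta^-)-(\eta^+_0,\eta^-_0)\bigr)&=\ov{\partial}_{i,j(a,v)}\bigl[\oplus_a(h^+,h^-+h^+_\infty-h^-_\infty)\bigr]\cdot\partial_s,\\
\wh{\ominus}_a\bigl((\eta^+,\eta^-)-(\eta^+_0,\eta^-_0)\bigr)&=\ov{\partial}_0\xi.
\end{align*}
Choosing $\tau\in(0,\tfrac12)$ so that $R=\varphi(|a|)>2(s_0+1)$ for $|a|<\tau$ guarantees that $\operatorname{supp}\beta_a'\subset[R/2-1,R/2+1]\subset[s_0,R-s_0]$ and, after the shift $s\mapsto s-R$, also lies where $j^-(v)=i$; hence in the support of $\beta_a'$ both $j(a,v)=i$ and the relevant values of $j^\pm(v)$ equal $i$.

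Next I apply the product rule to both right-hand sides. A direct computation (using $\operatorname{av}_a(h^+-h^+_\infty,h^--h^-_\infty)=\operatorname{av}_a(h^+,h^-)-\tfrac12(h^+_\infty+h^-_\infty)$ for the anti-gluing) yields the decompositions
\begin{align*}
\ov{\partial}_{i,j(a,v)}\bigl[\oplus_a(h^+,h^-+h^+_\infty-h^-_\infty)\bigr]\cdot\partial_s
&=\wh{\oplus}_a(\tilde\eta^+,\tilde\eta^-)+c\,\beta_a'(s)\,v_1(s,t),\\
\ov{\partial}_0\xi
&=\wh{\ominus}_a(\tilde\eta^+,\tilde\eta^-)+c\,\beta_a'(s)\,v_2(s,t),
\end{align*}
where $\tilde\eta^\pm:=\ov{\partial}_{i,j^\pm(v)}h^\pm\cdot\partial_s$, the column vector entries are
\[
v_1=h^+(s,t)-h^-(s-R,t-\vartheta)-h^+_\infty+h^-_\infty,\qquad v_2=h^+(s,t)+h^-(s-R,t-\vartheta)-2\operatorname{av}_a(h^+,h^-),
\]
and $c$ is the universal scalar coming from the prefactor of $\ov{\partial}_0$ (a bookkeeping check to reconcile conventions with the stated $\beta_a'/\gamma_a$ coefficient). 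Crucially, outside $\operatorname{supp}\beta_a'$ the correction vanishes, while inside the support $j(a,v)=i$ so every application of $\ov{\partial}$ reduces to $\tfrac12(\partial_s+i\partial_t)$ and the shifted derivatives of $h^-$ combine to $\ov{\partial}_{i,j^-(v)}h^-$ at the shifted point, giving the $\wh{\oplus}_a(\tilde\eta^+,\tilde\eta^-)$ and $\wh{\ominus}_a(\tilde\eta^+,\tilde\eta^-)$ pieces.

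Finally I invert. Writing the system
\[
\begin{bmatrix}\phantom{-}\beta_a & 1-\beta_a\\ -(1-\beta_a) & \beta_a\end{bmatrix}\begin{bmatrix}(\eta^+-\eta^+_0-\tilde\eta^+)(s,t)\\ (\eta^--\eta^-_0-\tilde\eta^-)(s-R,t-\vartheta)\end{bmatrix}=c\,\beta_a'(s)\begin{bmatrix}v_1\\ v_2\end{bmatrix}
\]
on $[0,R]\times S^1$, the determinant is $\gamma_a$ and the inverse matrix is $\frac{1}{\gamma_a}\begin{bmatrix}\beta_a & -(1-\beta_a)\\ 1-\beta_a & \phantom{-}\beta_a\end{bmatrix}$. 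Multiplying both sides produces exactly the $\frac{\beta_a'}{\gamma_a}$ times $2\times 2$ matrix times column vector structure of the lemma. The solution $(\eta^+-\eta^+_0-\tilde\eta^+,\eta^--\eta^-_0-\tilde\eta^-)$ is supported in $\operatorname{supp}\beta_a'$ and hence lies in $F$; uniqueness for $\wh{\boxdot}_a$ finishes the identification. The main obstacle is the careful coordinate bookkeeping -- translating between the parametrizations $(s,t)\in[0,R]\times S^1$ and $(s',t')\in[-R,0]\times S^1$ of $Z_a$, matching $j(a,v)$ with $j^\pm(v)$ on the overlap, and extending $\xi$ consistently to the infinite cylinder $C_a$ so that $\ov{\partial}_0\xi$ is computed correctly outside $[0,R]$.
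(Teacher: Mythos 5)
Your proof is essentially the paper's argument: set up the two systems $(\wh{\oplus}_a,\wh{\ominus}_a)(\eta^\pm)=(\care w,\carem\xi)$ and $(\wh{\oplus}_a,\wh{\ominus}_a)(\eta_0^\pm)=(\care w_0,0)$, subtract, use $w-w_0=\oplus_a(h^+,h^-+h^+_\infty-h^-_\infty)$, apply the product rule to peel off the $\beta_a'$-supported correction, use that $j^\pm(v)=i$ on the relevant supports so that $\carem h^\pm$ may be replaced by $\ov{\partial}_{i,j^\pm(v)}h^\pm$, and invert the $2\times2$ gluing matrix with determinant $\gamma_a$. Two remarks. First, your appeal to $j(a,v)=i$ on $\operatorname{supp}\beta_a'$ is not quite the fact you need for the main (non-correction) terms: to identify $\wh{\ominus}_a(\carem h^+,\carem h^-)$ with $\wh{\ominus}_a(\ov{\partial}_{i,j^+(v)}h^+,\ov{\partial}_{i,j^-(v)}h^-)$ one must observe that $1-\beta_a$ is supported in $\{s\geq\tfrac{R}{2}-1\}\subset\{s\geq s_0\}$, and that $\beta_a(s)$ multiplying $h^-(s-R,\cdot)$ is supported where $s-R\leq -\tfrac{R}{2}+1\leq -s_0$; the same choice of $\tau$ gives both inclusions, but the relevant supports are those of $\beta_a$ and $1-\beta_a$, not merely of $\beta_a'$. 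The paper states this cleanly as the identity $\wh{\ominus}_a(\carem h^+,\carem h^-)=\wh{\ominus}_a(\ov{\partial}_v h^+,\ov{\partial}_v h^-)$. Second, your unresolved constant $c$ is correctly suspect: working it out from $\ov{\partial}_0=\tfrac12(\partial_s+i\partial_t)$ gives $c=\tfrac12$, so the coefficient should read $\tfrac{\beta_a'}{2\gamma_a}$. The paper's own proof and the lemma statement both drop this $\tfrac12$; the discrepancy is harmless for the qualitative conclusions (sc-smoothness, compactness of $\Delta_a$, Fredholm property) drawn from the lemma, but your bookkeeping instinct was right.
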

\begin{proof}
In the following proof we use the  abbreviated notations 
\begin{align*}
\ov{\partial}_vw\equiv=\care w:&=\frac{1}{2}[ Tw+i\circ Tw\circ j(a, v)]\left(\frac{\partial }{\partial s}\right)\\
\carem w&:=\frac{1}{2}[ Tw+i\circ Tw\circ i]\left(\frac{\partial }{\partial s}\right)
\end{align*}
for the Cauchy-Riemann operators. Recalling that  the section $g$ has the principal part $(\eta^+, \eta^-)\in F$ defined by the equations
\begin{align*}
\wh{\oplus}_{a}(\eta^+, \eta^-)&=\care  (w)\\
\wh{\ominus}_{a}(\eta^+, \eta^-)&=\carem ( \ominus_a(h^+-h^+_{\infty}, h^--h^-_{\infty}))
\end{align*}
$$w=\boxplus_a(u_0^+ +h^+, u_0^-+h^-):Z_a\to Z_b,$$
the principal part of the $\ssc^+$-section $g_0(a, v, h^+, h^-)=(\eta^+_0, \eta^-_0)$ is determined by 
the equations
\begin{align*}
\wh{\oplus}_{a}(\eta_0^+, \eta_0^-)&=\bar{\partial} (w_0)\\
\wh{\ominus}_{a}(\eta_0^+, \eta_0^-)&=0
\end{align*}
$$w_0=\boxplus_a(u_0^+, u_0^-).$$
Note that $(\eta_0^+,\eta^-_0)$ is a function of $(a,v)$ which we suppress in the notation. Also $\bar{\partial}$ is depending on $(a,v)$.
Observing that $w-w_0=\boxplus_{a}(u^+_0+h^+, u^-_0+h^-)- \boxplus_{a}(u^+_0, u^-_0)=\oplus_a(h^+, h^-+h^+_{\infty}-h^-_{\infty})$ and recalling the definitions of $\ominus_a$ and $\wh{\ominus}_a$ and $\wh{\oplus}_a=\oplus_a$, we see that the principal part $(\eta^+-\eta^+_0, \eta^--\eta^-_0)$ of the section $g-g_0$ is characterized by the equations
\begin{equation*}
\begin{split}
\begin{bmatrix}
\wh{\oplus}_{a}(\eta^+-\eta^+_0, \eta^--\eta^-_0)\\ 
\wh{\ominus}_{a}(\eta^+-\eta^+_0, \eta^--\eta^-_0)
\end{bmatrix}
&=
\begin{bmatrix}
\wh{\oplus}_{a}(\care h^+, \care h^-)\\
\wh{\ominus}_{a}(\carem h^+, \carem h^-)
\end{bmatrix}\\
&\phantom{=\ }
+\beta_a' 
\cdot
\begin{bmatrix}
h^+ -h^- -h^+_{\infty}+h^-_{\infty}\\
h^+ +h^- -2 \av  (h^+, h^-)
\end{bmatrix}.
\end{split}
\end{equation*}
In view of the definition of $\beta_a$ and $j(a, v)$ we conclude that 
$$\wh{\ominus}_a(\carem h^+, \carem h^-)=\wh{\ominus}_a(\care h^+, \care h^-)$$
and hence obtain
\begin{equation}\label{cr-formula}
\begin{split}
\begin{bmatrix}
\eta^+ -\eta^+_0\\
\eta^-_0 -\eta^-_0
\end{bmatrix}
&=
\begin{bmatrix}
\care h^+\\
\care h^-
\end{bmatrix}\\
&+\frac{\beta_a'}{\gamma_a}\cdot 
\begin{bmatrix}
\beta_a&\beta_a-1\\
1-\beta_a&\beta_a
\end{bmatrix}
\cdot
\begin{bmatrix}
h^+ -h^- -h^+_{\infty}+h^-_{\infty}\\
h^+ +h^- -2 \av  (h^+, h^-)
\end{bmatrix}.
\end{split}
\end{equation}
Since $j(a, v)=j^\pm(v)$ on $\R^\pm \times S^1$, the lemma is proved.

\end{proof}
Let us define $L(a,v)$ by \eqref{cr-formula} via
\begin{equation}\label{eq73}
(a,v,h^+,h^-)\rightarrow L(a,v)(h^+,h^-):=(g-g_0)(a,v,h^+,h^-).
\end{equation}
In view of the above lemma,  the section $g-g_0:V\oplus B_{\varepsilon}\oplus \wh{E}_0\to F$ is given by a family $L(\lambda):\wh{E}_0\to F$ of linear operators (not continuous as a family of operators on any level) parametrized by $\lambda =(v, a)$ and of the form 
$$
L(\lambda )=D_v+\Delta_a,
$$
where $D_v:\wh{E}_0\to F$ is the Cauchy-Riemann operator. From the formulae in the above lemma, we see that 
at $a=0$ we have $ \Delta_0 (h^+, h^-)=0$ and that the map $(a, h^+, h^-)\mapsto \Delta_a (h^+, h^-)$ is the sc-smooth 
in view of  the Lemmata \ref{lem2-18}-\ref{lem2-22}.
Hence we have to study for data $(\lambda,h^+,h^-)$ near $(\lambda_0,0,0)$ the  sc-smooth map
\begin{equation}\label{eq74}
(\lambda,h^+,h^-)\rightarrow L(\lambda)(h^+,h^-).
\end{equation}
For the later discussion we  note the following formula
\begin{equation}\label{CRUCIALLL}
\begin{split}
(\wh{\oplus}_a,\wh{\ominus}_a)&(L(a,v)(h^+,h^-))\\
& = (\ov{\partial}_v(\oplus_a(h^+-h^+_\infty)),\bar{\partial}_0(\ominus_a(h^+-h^+_\infty,h^--h^-_\infty)))
\end{split}
\end{equation}
which follows immediately from the proof of  the previous lemma.
\begin{rem}\label{rem-x}The Cauchy-Riemann operator $D_{v}$ is a classical Fredholm operator between $(\wh{E}_0)_m$ and $F_m$ for every $m\geq 0$. One  verifies readily  that $D_v$ is bijective. Indeed , this follows from the classical fact that the standard Cauchy-Riemann operator
acting on ${\mathbb C}$-valued functions on the disk with real boundary conditions  is surjective with a 1-dimensional kernel.
Then in view of the boundary conditions of the functions in the domain $\wh{E}_0$, the kernel of $D_v$ is equal to $\{0\}$ and hence $D_v$ is an isomorphism. The linear operator $\Delta_a$ is compact and hence  the operators $L(\lambda)$ are all Fredholm operators  of index $0$.
\end{rem}
\begin{prop}\label{family} We consider the sc-smooth map 
$(\lambda, h^+, h^-)\mapsto L(\lambda)(h^+, h^-)$ defined in \eqref{eq74} in a neighborhood of the parameter value $\lambda_0=(v_0, a_0)=(v_0, 0)$.
There exists a constant $\sigma>0$  so that the following holds. 
\begin{itemize}
\item[(1)] If  $\lambda =(v, a)$ satisfies $\abs{\lambda-\lambda_0}<\sigma$, then the linear operator $L(\lambda):\wh{E}_0\to F$  is an sc-isomorphism.
\item[(2)]   For every $m\geq 0$ there exists a constant $C_m$ independent of $\lambda$ so that the  norm of the inverse operator
$$L(\lambda)^{-1}:F_m\to (\wh{E}_0)_m$$
is  bounded by $C_m$ for every   $\abs{\lambda-\lambda_0}<\sigma$.
\end{itemize}
\end{prop}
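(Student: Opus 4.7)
The plan is to conjugate $L(\lambda)$ by the total gluing and anti-gluing isomorphisms so that it becomes block-diagonal with blocks that can be analyzed independently, and then to pass uniform bounds through these conjugations by means of the equivalent norms proved in Section \ref{esttotalgluing}. First I would introduce, on the domain side, the linear map
$$
\Psi_a: \wh{E}_0 \longrightarrow \R^2 \oplus H^3(Z_a) \oplus H^{3,\delta_0}_c(C_a),
$$
sending $(h^+,h^-)$ to $(h^+_\infty-h^-_\infty,\ \oplus_a(h^+, h^-+h^+_\infty-h^-_\infty),\ \ominus_a(h^+-h^+_\infty, h^--h^-_\infty))$. Lemma \ref{kernel-lem1} shows this is a linear bijection, and Corollary \ref{est-cor1} promotes it to an sc-isomorphism whose operator norms in each direction are bounded uniformly in $a$, relative to the weighted norms of Section \ref{esttotalgluing}. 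On the target side the analogous uniform sc-isomorphism is $(\wh{\oplus}_a, \wh{\ominus}_a): F \to H^2(Z_a) \oplus H^{2,\delta_0}(C_a)$ from Theorem \ref{poker1}.

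By the crucial identity \eqref{CRUCIALLL}, under these isomorphisms the operator $L(\lambda)$ takes the uncoupled form
$$
(c, u, w) \longmapsto (\bar{\partial}_v u,\ \bar{\partial}_0 w),
$$
where $u$ ranges over the affine subspace of $H^3(Z_a)$ determined by the totally-real boundary conditions $u([0,t]) \in \{0\}\times\R$, $u([0,t]') \in c + \{0\}\times\R$ together with the marked-point conditions $u(p^+_a)=0$, $u(p^-_a)=c$. Hence proving (1) and (2) reduces to verifying, with bounds uniform in $(v, a)$ near $(v_0, 0)$, that the two blocks $\bar{\partial}_0$ and the affine problem $(c, u) \mapsto \bar{\partial}_v u$ are sc-isomorphisms.

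The block $\bar{\partial}_0: H^{3+m,\delta_m}_c(C_a) \to H^{2+m,\delta_m}(C_a)$ is handled by Lemma \ref{isomo-lem}; the bounds are uniform in $a$ because $C_a$ is, via translation, the standard cylinder $\R \times S^1$ on which all weighted norms are translation invariant. For the block involving $\bar{\partial}_v$ I would subtract a smooth affine section $u_c$ depending linearly on $c$ so as to reduce to homogeneous boundary conditions, and then observe that at the reference parameter $\lambda_0 = (v_0, 0)$ the problem decouples, via $\oplus_0 = \id$, into two independent Cauchy-Riemann boundary value problems on $\R^\pm \times S^1$, each of which is an isomorphism by Remark \ref{rem-x}. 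A uniform a-priori estimate plus the fact that $L(\lambda) = D_v + \Delta_a$ with $\Delta_a$ compact (Remark \ref{rem-x}) then reduces the isomorphism assertion to uniform injectivity.

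The main obstacle will be to propagate this $\lambda_0$-isomorphism to small $a$ with uniform bounds, because as $a \to 0$ the modulus $R = \varphi(|a|)$ tends to infinity and naive elliptic estimates on $Z_a$ degenerate. The exponential weights $e^{\pm \delta_m |s - R/2|}$ introduced in Section \ref{esttotalgluing} are designed precisely to absorb this degeneration, and in the neck region $[s_0, R - s_0] \times S^1$ the complex structure $j(a, v)$ coincides with the standard $i$, so $\bar{\partial}_v$ is honestly translation invariant there. Combining translation-invariant a-priori estimates in the neck with fixed-domain elliptic estimates near the two ends and patching by a standard cutoff argument, one obtains the desired uniform invertibility, completing the proof.
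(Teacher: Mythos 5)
Your block-diagonal conjugation via $\Psi_a$ and $(\wh{\oplus}_a,\wh{\ominus}_a)$, driven by the identity \eqref{CRUCIALLL}, is precisely the right structural reduction, and it is in effect what the paper does — the paper just works with the sequences and their (anti-)glued images directly instead of first writing down the conjugated operator. Your treatment of the $\ov{\partial}_0$-block on $C_a$ via Lemma \ref{isomo-lem}, with uniformity coming from translation invariance of the weighted cylinder, is correct.

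The genuine gap is in the last paragraph, where the proof is supposed to close. A ``standard cutoff argument'' with translation-invariant neck estimates and fixed-domain end estimates does \emph{not} directly produce the uniform injectivity estimate $\abs{L(\lambda)h}_{F_m}\geq C_m\abs{h}_{\wh{E}_m}$. What patching gives you, after collecting the commutator terms from the cutoffs, is an a priori estimate of the shape ``$\abs{L(\lambda)h}_{F_m}+ \text{(lower-order / compactly supported terms)}\geq c\abs{h}_{\wh{E}_m}$''. Turning this into honest uniform injectivity requires a compactness step; the subtlety (which the paper emphasizes and you did not address) is that the family $\lambda\mapsto L(\lambda)$ is \emph{not} continuous in operator norm, so you cannot just perturb off the invertibility of $L(\lambda_0)$ or absorb the error by openness. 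The paper handles this by reformulating the estimate as the compactness statement $(\ast\ast)$: every sequence $(h_k^+,h_k^-)$ normalized in $\wh{E}_m$ with $\lambda_k\to\lambda$ and $L(\lambda_k)(h_k^+,h_k^-)\to 0$ has a convergent subsequence. This is then proved by exactly the two-region split you describe ($\alpha_\pm$ near the ends, $\gamma_\pm$ in the neck), but the analysis of the $\gamma_\pm$-piece is delicate: one must invoke the auxiliary Cauchy--Riemann operator on the extended cylinder $Z_a^\ast$ (Lemma \ref{cr-mean}) to gain one full level of weighted regularity (the estimate \eqref{poker102}, which is what makes the compact embedding bite), and simultaneously control the cutoff-derivative error $\wh{\oplus}_{a_k}(\gamma'_+r^+_k,\gamma'_-r^-_k)$ using the exponential gain $e^{-\delta_{m+1}R_k/2}$ in estimate \eqref{gammaeq1}. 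Your proposal gestures at the right ingredients (the weighted norms of Section \ref{esttotalgluing}, the estimates of Theorems \ref{boxdot-est1} and \ref{poker1}) but stops short of the regularity-gain argument that actually closes the compactness step, and it omits the issue of non-continuity in $\lambda$ altogether. Until those two points are filled in, the proof of uniform injectivity — which is the entire content of the proposition — is not complete.
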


We postpone the proof of this nontrivial proposition  to the appendix and use it in order to verify that the section $g-g_0$ is a Fredholm germ.  Let $B_{\sigma}(\lambda_0)$ be  the open ball in $V\oplus \C$ centered at $\lambda_0=(v_0, 0)$  of radius $\sigma$.   In view of Proposition \ref{family}, the map 
$B_{\sigma}(\lambda_0) \oplus \wh{E}_0\to  (V\oplus \C)\oplus F$, defined by,  
$$ (\lambda, (h^+, h^-))\mapsto (\lambda, L(\lambda)(h^+, h^-)),$$
satisfies the assumptions of  Proposition \ref{solmap} in the appendix, from which we conclude that the inverse map 
$B_{\sigma}(\lambda_0)\oplus F\to B_{\sigma}(\lambda_0)\oplus \wh{E}_0$, 
$$(\lambda ,(\xi^+,\xi^-))\rightarrow L(\lambda)^{-1}(\xi^+,\xi^-),$$
is sc-smooth.  This allows to introduce the local strong bundle  coordinate change 

$$
\Phi:(B_{\sigma}(\lambda_0) \oplus U)\triangleleft F\rightarrow (B_{\sigma}(\lambda_0) \oplus U)\triangleleft \wh{E}_0, 
$$
defined by 
$$
(\lambda,(h^+,h^-),(\xi^+,\xi^-))\mapsto  (\lambda,( h^+,h^-),L(\lambda)^{-1}(\xi^+,\xi^-)).
$$
Since the map $(\lambda, \xi^+, \xi^-)\mapsto L(\lambda)^{-1}(\xi^+, \xi^-)$  between 
$B_{\sigma}(\lambda_0) \oplus F$ and $\wh{E}_0$ is sc-smooth, 
it is also sc-smooth as a map between $B_{\sigma}(\lambda_0) \oplus F^1$ and $\wh{E}^1_0$, in view of Proposition \ref{ias}. Therefore,  the map $\Phi$ is  $\ssc_\triangleleft$-smooth. 

Now we consider the push-forward  section $\Phi_\ast(g-g_0)$.  By construction,  its principal part is the map   $\Phi_\ast (g-g_0): (B_{\sigma}\oplus U)\to \wh{E}_0$, given by
$$
(\lambda,(h^+,h^-))\mapsto  ( h^+,h^-).
$$
Obviously, it  is an $\ssc^0$-contraction germ where the contraction term 
$$B(\lambda, h^+, h^-)
$$
 vanishes identically.  

To sum up, we have studied the Cauchy-Riemann section 
$$
f:(v, a, h^+, h^-)\mapsto (v, a, h^+, h^-, \eta^-, \eta^-)
$$
 in  local coordinates near the smooth point $(v_0, 0, 0, 0)$ and have constructed a filled section $g$ of $f$. Moreover, we have established an $\ssc^+$-section $g_0$ satisfying $(g-g_0)(v, 0, 0, 0)=0$ and found a bundle isomorphism $\Phi$ so that the push-forward $\Phi_\ast (g-g_0)$ is a germ belonging to $\mathfrak{C}_{basic}$. This shows that $f$ is a Fredholm germ at the point $(v_0, 0, 0, 0)$. 

The proof that the Cauchy-Riemann section $\care$ of the strong polyfold bundle $E\to V\oplus \ov{X}$ is a polyfold Fredholm section is complete.

\subsection{Application of the Sc-Implicit Function Theorem}

We are going to prove  Theorems \ref{fred:thm0} and Theorem \ref{strong-x}. 
The previous sections demonstrated that the Cauchy-Riemann operator 
$$\care :V\oplus \ov{X}\to E$$
is an sc- smooth Fredholm section of the strong M-polyfold bundle $E\to V\oplus \ov{X}$. Therefore, the sc-implicit function theorem from \cite{HWZ3} can be applied to our situation, and it follows that if the smooth point $x_0=(v_0, a_0, b_0, w_0)\in V\oplus \ov{X}$ is a solution of 
$$\care (v_0, a_0, b_0, w_0)=0$$
and if the linearized map at this point is surjective, then the solution set $\care (x)=0$ nearby is a smooth manifold of the dimension of the kernel of the linearized map at the reference solution $x_0$. 

We now consider the distinguished solution
$$\care (v_0, 0, 0, u^+_0, u^-_0)=0$$
at $a=0$ where 
$$u^\pm_0:(\R^\pm \times S^1, j^\pm (v_0))\to (\R^\pm\times S^1, i)$$
are the unique biholomorphic mappings of the half-cylinders which fix the boundary points 
$(0, 0)\in \partial (\R^\pm \times S^1)$. These  biholomorphic mappings are guaranteed by the uniformization theorem. Indeed, the maps $z=e^{2\pi (s+it)}$ and $z=e^{-2\pi (s+it)}$ are diffeomorphisms  between the  half-cylinders $\R^\pm \times S^1$ and the closed  unit disc $D\setminus \{0\}$ with the origin removed. Now there is a unique biholomorphic map $(D, j)\to (D, i)$ leaving a boundary point and the  origin fixed. By assumption on the complex structures  $j^\pm(v_0)$, the induced complex structure $j$ in $D$ agrees in an open neighborhood of the origin with the standard structure $i$. Therefore, since $h(0)=0$ and hence $h(z)=az+\cdots $ near the origin, the corresponding maps of the half-cylinders are of the form 
$$u^\pm_0(s, t)=(s, t)+(d^\pm_0, \vartheta^\pm_0)+r^\pm_0(s, t)$$
where the maps $r^\pm_0:\R^\pm \times S^1\to \R^2$ decay with all their  derivatives to $0$ at every rate bounded by $Ce^{-\varepsilon\abs{s}}$ for every $0\leq \varepsilon<2\pi$. 

Around the distinguished point $p_0=(v_0, 0,0, u^+_0, u^-_0)\in V\oplus \ov{X}$ at $a=0$ we take 
our chart  $\Psi$ of the M-polyfold, 
$$\Psi(v, a, h^+, h^-)=(v, a, b(a, u^+_0+h^+, u^-_0+h^-), \boxplus_a (u^+_0+h^+, u^-_0+h^-))$$
where $\rho_a(h^+, h^-)=(h^+, h^-)$, and take the associated bundle chart. In these local coordinates, the principal part of the Cauchy-Riemann section $\care$ is expressed by the map 
$$f(v, a, h^+, h^-)=(\eta^+, \eta^-)\in F$$
where 
\begin{align*}
\wh{\oplus}_a(\eta^+, \eta^-)&=\care_{i, j(a, v)}(\boxplus_a(u^+_0+h^+, u_0^-+h^-))\\
\wh{\ominus}_a(\eta^+,\eta^-)&=0.
\end{align*}
In Lemma \ref{localexp} we have computed the linearization $D(g-g_0)(p_0)$ where $g$ is the filled section of $f$.  It is a surjective Fredholm operator. The partial linearizations with respect to the variables $(h^+, h^-)$, denoted by $D_2(g-g_0)(p_0)$, is an sc-isomorphims from $E$ onto $F$. The same holds true for the linearization $Dg(p_0)$. In view of the definition of the filler in section \ref{Fredholm-section} one concludes that the linearization $Df(p_0)$ of the section $f$ is a surjective Fredholm operator and $D_2f(p_0)$ an sc-isomorphism.

From the sc-implicit function theorem, Theorem 4.6 in \cite{HWZ3}, we therefore conclude that there exists a unique sc-smooth map defined near $(v_0, 0)$,
$$
\sigma (v, a)=(v, a, h^+(v, a), h^-(v, a))
$$
satisfying $\sigma (v_0, 0)=(v_0, 0, 0, 0)$
and 
$$f(v, a, h^+(v, a), h^-(v, a))=0.$$
This means  for the section $\care$ on the M-polyfold $V \oplus \ov{X}$, that 
$$\care (\Phi \circ \sigma (v, a))=0$$
for the solutions near the reference solution.  Moreover, these are all the  solutions of the Cauchy-Riemann equations near $(v_0, 0, 0, u^+_0, u^-_0)\in V\oplus \ov{X}$. This completes the proof of Theorem \ref{fred:thm0}. \hfill $\blacksquare$

\noindent Rather than taking a chart around the reference solution $p_0=(v_0, 0, 0, u^+_0, u^-_0)$ we can take a chart  around the smooth point $(v_0, 0, 0, w^+_0, w^-_0)$ nearby so that 
the  chart contains the solution $p_0$ and satisfies, moreover,  
$$
w^\pm_0(s,t) = (s+d^\pm_0,t+\vartheta_0^\pm)
$$
for all large $\abs{s}\geq s_0$. 
Applying the sc-implicit function theorem to the reference solution $p_0$ we obtain in the new coordinates the sc-smooth map
$$\wh{\sigma}(v, a)=(v, a, \wh{h}^+(v, a), \wh{h}^-(v, a))$$
near  $(v_0, 0)$ and the associated biholomorphic maps between the finite cylinders
$$
(Z_a,j(a,v),p^+_a,p^-_a)\rightarrow (Z_{b(a,v)},i,p^+_{b(a,v)},p^-_{b(a,v)})
$$
given by 
$$
w(v, a)=\boxplus_a(w^+_0+\wh{h}^+(v,a), w^-_0+\wh{h}^-(v,a)).
$$
In view of the uniqueness of the solutions near $(v_0, 0)$ we know, in particular, that at $a=0$
$$
u^\pm_0 =w^\pm_0 + \wh{h}^\pm(0,v_0).
$$
It follows  for $(v, a)$ close to $(v_0, 0)$ that the mappings 
$$
w^\pm_0+\wh{h}^\pm(a,v):{\mathbb R}^\pm\times S^1\rightarrow {\mathbb R}^\pm\times S^1
$$
are diffeomorphisms. By  definition of the chart,  
$$
\rho_a(\wh{h}^+(a,v),\wh{h}^-(a,v))=(\wh{h}^+(a,v),\wh{h}^-(a,v))
$$
and hence we derive from the representation formula for $\rho_a$ the following asymptotics
\begin{align*}
\wh{h}^+(a,v)(s,t) &= \wh{h}^+(a,v)_\infty\quad \text{if $s\geq \frac{R}{2}+1$}\\
\wh{h}^-(a,v)(s',t') &= \wh{h}^-(a,v)_\infty\quad \text{if $s'\leq -\frac{R}{2}-1$},
\end{align*}
from which  Theorem \ref{strong-x} follows  in the case $\Delta\leq 1$.
 
In order to obtain the general case we proceed as follows. By construction, 
$$
w(a,v) =\boxplus_a(w^+_0,w^-_0) + \wh{h}^+(a,v) +\oplus_a(\wh{r}^{ +}(a,v),\wh{r}^{ -}(a,v)).
$$
We define for $|a|$ small enough the diffeomorphism
$$
\wt{w}(a,v):{\mathbb R}^+\times S^1\rightarrow {\mathbb R}^+\times S^1
$$
by
\begin{equation}\label{bnm}
\wt{w}(a,v)(s,t)=w^+_0(s,t) + \wh{h}^+_\infty(a,v) + \beta\left(s-\frac{R}{2}-\Delta-1\right)\cdot\oplus_a(\wh{r}^{+}(a,v),\wh{r}^{-}(a,v)).
\end{equation}
Observe
that for $s\in [0,\frac{R}{2}+\Delta]$ if  $|a|$ is small enough,  
\begin{equation*}
w^+_0(s,t) = \boxplus_a(w^+_0,w^-_0)([s,t]).
\end{equation*}
Therefore, 
$$
\wt{w}(a,v)(s,t)=w(a,v)([s,t])
$$
for all   $s\in [0,\frac{R}{2}+\Delta]$. Finally,  we note that
$$
(a,v)\mapsto  \wt{w}(a,v)
$$
defines a smooth map into every ${\mathcal D}^{m,\varepsilon}$ for all $m\geq 2$ and $\varepsilon\in (0,2\pi)$. Since in our set-up for the implicit function theorem we can take any sequence $(\delta_m)$ as long as it is strictly increasing and stays below $2\pi$, we see that the maps
$$
(a,v)\mapsto  \wh{ h}^\pm(a,v)
$$
as maps from an open neighborhood of $(0,v_0)$ into $H^{m,\varepsilon}_c({\mathbb R}^\pm\times S^1,{\mathbb R}^2)$ are smooth.
The same is true,  for $\wh{r}^\pm(a,v)$. Now using Proposition \ref{qed} we see that the map
$$
(a,v)\mapsto  \beta\left(\cdot-\frac{R}{2}-\Delta-1\right)\cdot\oplus_a(\wh{r}^{+}(a,v),\wh{r}^{-}(a,v))([s,t]).
$$
is sc-smooth for every  choice of  the sequence of $(\delta_m)$ as described above.
Choosing $(\delta_m)\in (\varepsilon,2\pi)$ we deduce that the map
$$
(a,v)\mapsto   \wt{w}(a,v), 
$$
which is defined near $(0,v_0)$,  is smooth into every space ${\mathcal D}^{m,\varepsilon}$.
This completes the proof of Theorem \ref{strong-x}. \hfill $\blacksquare$

\section{Appendix}
In the appendix we shall prove the sc-smoothness of the shift-map. Moreover, we collect informations about the gluing profile and provide proofs of several technical results about families of sc-isomorphisms and estimates for the Cauchy-Riemann section  which are  used in our constructions.

\subsection{The Shift-Map}\label{app1}
Fixing  a strictly increasing sequence $(\delta_m)_{m\geq 0}$  of real numbers starting with $\delta_0=0$,  
we consider the Hilbert space $E=L^2(\R\times S^1)$ equipped    with the sc-structure defined by the sequence $E_m=H^{m,\delta_m}$ for all $m\geq 0$.
The shift-map $\Phi:\R^2\oplus E\to  E$ is defined as  
$$\Phi: (R,\vartheta , u)\to  (R,\vartheta)\ast u:= u(s+R,t+\vartheta).$$
Our first result concerns the $\ssc^0$-property of $\Phi$.
\begin{prop}\label{sc}
The shift-map $\Phi$ is $\ssc^0$-continuous.
\end{prop}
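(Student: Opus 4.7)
The plan is to establish level preservation and joint continuity of $\Phi$ on each level $E_m$ separately. The central observation is that although the weight $e^{\delta_m|s|}$ is not translation invariant, it satisfies the submultiplicative estimate $e^{2\delta_m|s'-R|}\leq e^{2\delta_m|R|}\cdot e^{2\delta_m|s'|}$. Changing variables $s'=s+R$, $t'=t+\vartheta$ in the weighted integral of $D^\alpha(u(\cdot+R,\cdot+\vartheta))$ therefore yields
\begin{equation*}
\norm{\Phi(R,\vartheta,u)}_{E_m}\leq e^{\delta_m|R|}\cdot \norm{u}_{E_m}
\end{equation*}
for every $u\in E_m$ and every $|\alpha|\leq m$. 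This shows at once that $\Phi(\R^2\oplus E_m)\subset E_m$ and that, for $R$ in any bounded set, the maps $u\mapsto \Phi(R,\vartheta,u)$ form a uniformly bounded family of linear operators on $E_m$.

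To prove joint continuity at a point $(R_0,\vartheta_0,u_0)\in \R^2\oplus E_m$, I would split
\begin{equation*}
\Phi(R,\vartheta,u)-\Phi(R_0,\vartheta_0,u_0)=\Phi(R,\vartheta,u-u_0)+\bigl[\Phi(R,\vartheta,u_0)-\Phi(R_0,\vartheta_0,u_0)\bigr].
\end{equation*}
The first term is controlled by the operator-norm estimate above, which bounds it by $e^{\delta_m|R|}\norm{u-u_0}_{E_m}$; for $(R,\vartheta)$ in a small neighborhood of $(R_0,\vartheta_0)$ the prefactor is uniformly bounded, so this term tends to zero as $u\to u_0$ in $E_m$.

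For the second term I would use the density of $C^\infty_c(\R\times S^1)$ in $E_m$, which is immediate from the definition of the weighted Sobolev spaces. Given $\varepsilon>0$, choose $\varphi\in C^\infty_c(\R\times S^1)$ with $\norm{u_0-\varphi}_{E_m}<\varepsilon$ and insert this $\varphi$ via the triangle inequality,
\begin{equation*}
\norm{\Phi(R,\vartheta,u_0)-\Phi(R_0,\vartheta_0,u_0)}_{E_m}\leq \bigl(e^{\delta_m|R|}+e^{\delta_m|R_0|}\bigr)\varepsilon+\norm{\Phi(R,\vartheta,\varphi)-\Phi(R_0,\vartheta_0,\varphi)}_{E_m}.
\end{equation*}
For the compactly supported smooth function $\varphi$, the convergence $\Phi(R,\vartheta,\varphi)\to \Phi(R_0,\vartheta_0,\varphi)$ in every $E_m$ as $(R,\vartheta)\to(R_0,\vartheta_0)$ is straightforward: all derivatives $D^\alpha(\varphi(\cdot+R,\cdot+\vartheta))$ are uniformly supported in a fixed compact set for $(R,\vartheta)$ nearby, they converge uniformly to $D^\alpha(\varphi(\cdot+R_0,\cdot+\vartheta_0))$, and the weight $e^{2\delta_m|s|}$ is bounded on this compact set, so dominated convergence applies. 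Combining the two estimates and letting first $(R,\vartheta,u)\to(R_0,\vartheta_0,u_0)$ and then $\varepsilon\to 0$ yields the continuity of $\Phi:\R^2\oplus E_m\to E_m$, completing the verification of the $\ssc^0$-property. No step appears to be a serious obstacle; the only point requiring care is the submultiplicative weight estimate, which is exactly what forces the level-preservation to come with an $R$-dependent norm blow-up and makes the density/truncation argument necessary rather than giving continuity directly from a uniform operator bound.
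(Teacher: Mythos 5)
Your proof is correct and uses the same ingredients as the paper's: the weight estimate $\abs{\Phi(R,\vartheta,u)}_m\le e^{\delta_m\abs{R}}\abs{u}_m$, density of compactly supported smooth functions in $E_m$, and convergence of shifts of such a function. The only arrangement difference is that you treat a general base point $(R_0,\vartheta_0,u_0)$ directly, whereas the paper first proves continuity at $(0,0,u_0)$ and then passes to the general case via the identity $(R,\vartheta)\ast u-(R_0,\vartheta_0)\ast u_0=(R-R_0,\vartheta-\vartheta_0)\ast\bigl((R_0,\vartheta_0)\ast u\bigr)-(R_0,\vartheta_0)\ast u_0$; both come to the same thing.
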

\begin{proof}
Fix a level $m$ and take $u\in E_m$.
We estimate the norm $\abs{ (R,\vartheta)\ast u}_m$ as follows,
\begin{equation*}
\begin{split}
\abs{ (R,\vartheta)\ast u}^2_m &=\sum_{|\alpha|\leq m} \int |(D^{\alpha}u)(s+R,t+\vartheta ) |^2 e^{2\delta_m|s|}\  ds dt\\
&\leq \sum_{|\alpha|\leq m} \int |(D^{\alpha}u)(s+R,t+\vartheta ) |^2 e^{2\delta_m|s+R|} e^{2\delta_m \abs{R}}\  ds dt\\
&=e^{2\delta_m \abs{R}} \abs{u}_m^2.
\end{split}
\end{equation*}
Thus, 
$$
\abs{ (R,\vartheta)\ast u}_m \leq e^{\delta_m \abs{R}} \abs{u}_m.
$$
 Since the set of  smooth compactly supported functions $\R\times S^1\to \R$ is  dense in $H^m(\R\times S^1)$,  it is also dense in $E_m$.   We claim that $\Phi$ is continuous at the point $((0, 0), u_0)\in \R^2\times E_m$.  To see this, first note that if 
$v$ is smooth and compactly supported,  then $\Phi(R,\vartheta, v)\to v$ in $C^\infty$
as $(R,\vartheta)\to  0$, which immediately  implies the convergence in the $m$-norm, 
$$
\lim_{(R,\vartheta )\to  (0,0)} \abs{\Phi(R,\vartheta , v)-v}_m=0.
$$
Now, if $u\in E_m$ and  if $v$ is a compactly supported smooth function, we estimate,  
\begin{equation*}
\begin{split}
\abs{(R, \vartheta)\ast u-u_0}_m&=\abs{ (R, \vartheta)\ast (u-v)  +  ((R, \vartheta)\ast v -v)  + (v-u_0)}_m\\
&\leq e^{\delta_m|c|}\cdot \abs{u-v}_m + \abs{(R, \vartheta)\ast v-v}_m+\abs{v-u_0}_m
\end{split}
\end{equation*}
Given $\varepsilon>0$,  we chose $v$ so that $\abs{v-u_0}_m<\varepsilon$. Then for all $u\in E_m$ satisfying $\abs{u-u_0}_m<\varepsilon$ and $(R, \vartheta)$ close to $(0, 0)$, we have   
$$\abs{u-v}_m<2\varepsilon, \quad  \abs{(R, \vartheta)\ast v-v}_m<\varepsilon,\quad e^{\delta_mR}<9,$$
so that the above estimate gives
$$
\abs{(R, \vartheta)\ast u-u_0}_m< 10\varepsilon,
$$
proving the continuity at the point $(0,0, u_0)\in \R^2\oplus E_m$. 
Since  for fixed $(R_0,\vartheta_0)$  the map $E\to E$ defined by   $u\mapsto  (R_0,\vartheta_0)\ast u$ is clearly  an $\ssc^0$-operator and since 
$$
(R, \vartheta)\ast u -(R_0, \vartheta_0)\ast u_0= (R-R_0,\vartheta-\vartheta_0)\ast ( (R_0,\vartheta_0)\ast u) -(R_0,\vartheta_0)\ast u_0,
$$
the previous discussion  shows that $\Phi$ is continuous at $(R_0,\vartheta_0)\ast u_0\in \R^2\oplus E_m$. Consequently, $\Phi$ is $\ssc^0$ as claimed.
\end{proof}

Having proved that the shift-map is of class $\ssc^0$, we show that it is sc-smooth.
\begin{prop}\label{sc-sm}
If  $E=L^2(\R\times S^1)$ is equipped with the sc-structure $(E_m)_{m\in \N_0}$ as described before,  
the shift-map $\Phi:\R^2\oplus E\to E$  is $\ssc$-smooth.
\end{prop}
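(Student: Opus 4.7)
The plan rests on two ingredients: Proposition \ref{sc} provides the $\ssc^0$-continuity of $\Phi$, and the partial differentiations $\partial_s,\partial_t:E^1\to E$ are sc-operators. The latter holds because $u\in E_{m+1}=H^{m+1,\delta_{m+1}}$ implies $\partial_s u,\partial_t u\in H^{m,\delta_{m+1}}\subset H^{m,\delta_m}=E_m$ by the monotonicity $\delta_m\leq\delta_{m+1}$ of the weights. I will argue by induction on $k$ that $\Phi$ is of class $\ssc^k$.

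For the base case $k=1$, I introduce the candidate linearization at $(R,\vartheta,u)\in\R^2\oplus E_1$
$$
D\Phi(R,\vartheta,u)(R',\vartheta',u'):=(R,\vartheta)\ast u'+R'\cdot(R,\vartheta)\ast\partial_s u+\vartheta'\cdot(R,\vartheta)\ast\partial_t u,
$$
which is a bounded linear map from $\R^2\oplus E_0$ to $E_0$. The approximation property of Definition \ref{sc-def} is verified via the fundamental theorem of calculus formula
$$
(R+\delta R,\vartheta+\delta\vartheta)\ast u-(R,\vartheta)\ast u=\int_0^1\bigl[\delta R\cdot(R+\tau\delta R,\vartheta+\tau\delta\vartheta)\ast\partial_s u+\delta\vartheta\cdot(R+\tau\delta R,\vartheta+\tau\delta\vartheta)\ast\partial_t u\bigr]d\tau,
$$
valid for $u\in E_1$. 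Subtracting $D\Phi(R,\vartheta,u)(\delta R,\delta\vartheta,\delta u)$ and estimating in $E_0$, the remainder is bounded by $(\abs{\delta R}+\abs{\delta\vartheta})$ multiplied by $\int_0^1\abs{(R+\tau\delta R,\vartheta+\tau\delta\vartheta)\ast\partial_s u-(R,\vartheta)\ast\partial_s u}_0\,d\tau$ plus the analogous $\partial_t$-term, both of which tend to zero by the $\ssc^0$-continuity of the shift on $E_0$ applied to the fixed vectors $\partial_s u,\partial_t u\in E_0$. Dividing by $\abs{(\delta R,\delta\vartheta,\delta u)}_1\geq\abs{\delta R}+\abs{\delta\vartheta}$ yields the required limit. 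The tangent map
$$
T\Phi(R,\vartheta,u;R',\vartheta',u')=\bigl(\Phi(R,\vartheta,u),\,D\Phi(R,\vartheta,u)(R',\vartheta',u')\bigr)
$$
is $\ssc^0$ because each of its components is assembled from $\Phi$ and the sc-operators $\partial_s,\partial_t$.

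For the inductive step, assume $\Phi$ is $\ssc^k$. The above explicit formula expresses $T\Phi$ as a finite linear combination of compositions of $\Phi$ with sc-linear maps, namely the projections onto the tangent factors, the sc-operators $\partial_s,\partial_t:E^1\to E$, and multiplication by the scalar components $R'$ and $\vartheta'$. By the chain rule (Theorem \ref{chain-thm1}) and Proposition \ref{ias} each such composition is $\ssc^k$, so $T\Phi$ is $\ssc^k$, which by definition says that $\Phi$ is $\ssc^{k+1}$.

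The main difficulty is the base case $k=1$. A classical Fréchet derivative of $\Phi:\R^2\oplus E_m\to E_{m-1}$ would require control of second derivatives of $u$ in the weight $\delta_{m-1}$, which are simply unavailable on $E_m$. The sc-framework avoids this obstruction by measuring the increment in the stronger norm $\abs{\cdot}_1$ on $E_1$, so that the first-order Taylor formula above can be closed using only $\ssc^0$-continuity of the shift applied to the already-available vectors $\partial_s u,\partial_t u\in E_0$. This is precisely the juncture at which the compactness built into the sc-structure becomes essential.
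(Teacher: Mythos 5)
Your base case $k=1$ has a genuine gap. When you expand
$$
\Phi(R+\delta R,\vartheta+\delta\vartheta,u+\delta u)-\Phi(R,\vartheta,u)-D\Phi(R,\vartheta,u)(\delta R,\delta\vartheta,\delta u)
$$
using the linearity of the shift in its second argument, you get \emph{two} groups of terms. The first group,
$$
(R+\delta R,\vartheta+\delta\vartheta)\ast u-(R,\vartheta)\ast u-\delta R\cdot(R,\vartheta)\ast\partial_s u-\delta\vartheta\cdot(R,\vartheta)\ast\partial_t u,
$$
is what your fundamental-theorem-of-calculus estimate handles, and that part is fine: the numerator carries an explicit factor $\abs{\delta R}+\abs{\delta\vartheta}$, and the integrand is controlled by $\ssc^0$-continuity applied to the \emph{fixed} vectors $\partial_s u,\partial_t u\in E_0$. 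But the second group,
$$
(R+\delta R,\vartheta+\delta\vartheta)\ast\delta u-(R,\vartheta)\ast\delta u,
$$
does not appear in your estimate at all, and it does not go away. At level $0$ (where $\delta_0=0$) the shift is an isometry, so this term is only $O(\abs{\delta u}_0)$, and the quotient by $\abs{(\delta R,\delta\vartheta,\delta u)}_1$ is merely bounded, not small. There is no way to close this term by the $\ssc^0$-continuity of $\Phi$ applied to a fixed vector, because $\delta u$ is varying.

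This is exactly the point at which the compactness of $E_1\hookrightarrow E_0$ must be used, and your closing paragraph invokes it only as a slogan. The argument one needs is: suppose for contradiction the quotient does not tend to zero along some sequence $(\delta R_n,\delta\vartheta_n,\delta u_n)\to 0$ in $\R^2\oplus E_1$; the normalized vectors $w_n:=\delta u_n/\abs{(\delta R_n,\delta\vartheta_n,\delta u_n)}_1$ are bounded in $E_1$, so by compactness of the inclusion $E_1\to E_0$ a subsequence converges in $E_0$ to some $w$; by the already-proved $\ssc^0$-continuity, $(R+\delta R_n,\vartheta+\delta\vartheta_n)\ast w_n-(R,\vartheta)\ast w_n\to(R,\vartheta)\ast w-(R,\vartheta)\ast w=0$ in $E_0$, a contradiction. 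Without this step the proof of $\ssc^1$ is incomplete. The inductive step you sketch (writing $T\Phi$ as a composition of $\Phi$ with sc-operators and scalar multiplications, then invoking the chain rule and Proposition \ref{ias}) is structurally reasonable, though it should note that scalar multiplication $\R\oplus E\to E$ is $\ssc^\infty$ by Proposition \ref{up-prop}; the paper instead proceeds via an explicit structural statement about the factors of $\pi\circ T^k\Phi$, which is more systematic, but your route for the inductive step could in principle be made to work.
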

\begin{proof}
By Proposition \ref{sc}, the map $\Phi$ is $\ssc^0$ and we  first show that  it  is $\ssc^1$.  Take  a point $(R, \vartheta, u)\in \R^2\oplus E_1$.  We want to find a linear bounded operator 
$$D\Phi (R, \vartheta , u): \R^2\oplus E_0\to E_0$$ 
satisfying points (1) and (2) of Definition \ref{sc-def}. Our  candidate for the linearization $D\Phi (R, \vartheta, u)$ of $\Phi$ at the point $(R, \vartheta, u)$  is the formal derivative  
\begin{equation*}\label{cand-eq1}
\Psi: (\R^2\oplus E)^1\oplus (\R^2\oplus E)\to  E
\end{equation*}
defined as the map 
\begin{equation*}
\begin{split}
\Psi  (R, \vartheta , u)(R_1, \vartheta_1, v)&=(R, \vartheta)\ast (R_1\cdot u_s+\vartheta_1\cdot u_t+v )\\
&=\Phi (R, \vartheta , R_1\cdot u_s+\vartheta_1\cdot u_t+v).
\end{split}
\end{equation*}
We note that  the map $\Psi$ is $\ssc^0$. Indeed, we already know that the shift-map $\Phi$ 
is $\ssc^0$ and the two maps $E^1\to  E$ defined by  $u\mapsto u_s, u_t$ are $\ssc^0$-operators.  Also,  a scalar multiplication $\R\oplus E\to  E$ 
$\ssc^0$. Hence, the map $\Psi$ can be written as the composition of  $\ssc^0$-maps and, therefore,  is $\ssc^0$. It remains to prove the approximation property.
To do so we have to show that
$$
\frac{ \abs{\Phi(R+R_1,\vartheta+\vartheta_1, u+v)-\Phi(R, \vartheta, u) -\Phi (R, \vartheta , R_1\cdot u_s+\vartheta_1\cdot u_t+v)}_0}
{\abs{(R_1, \vartheta_1, v)}_1}.
$$
converges to $0$ as $\abs{(R_1, \vartheta_1, v)}_1\to 0$. Once this is proved, we will know by our earlier discussion that $\Phi$ is $\ssc^1$ and its linerization at the point 
$(R, \vartheta, u)\in R^2\oplus  E$ is given by 
$$
D\Phi (R, \vartheta, u)(R_1, \vartheta_1, v) =\Phi (R, \vartheta, R_1\cdot u_s+\vartheta_1\cdot u_t+v).
$$
We compute, 
\begin{equation*}
\begin{split}
&\Phi(R+R_1,\vartheta+\vartheta_1, u+v)-\Phi(R, \vartheta, u) -\Phi (R, \vartheta, R_1\cdot u_s+\vartheta_1\cdot u_t+v)\\
&\phantom{===}=\Phi(R+R_1,\vartheta+\vartheta_1, u)-\Phi(R, \vartheta , u) -\Phi (R, \vartheta , R_1\cdot u_s+\vartheta_1\cdot u_t)\\
&\phantom{====} + \Phi( R+R_1,\vartheta+\vartheta_1 , v)-\Phi( R,\vartheta , v).
\end{split}
\end{equation*}
We first show that 
$$
\frac{ \abs{\Phi(R+R_1,\vartheta+\vartheta_1, u)-\Phi(R, \vartheta, u) -\Phi (R, \vartheta, R_1\cdot u_s+\vartheta_1\cdot u_t)}_0}{\abs{(R_1, \vartheta_1, v)}_1}
$$
converges to $0$ as $\abs{(R_1, \vartheta_1, v)}_1\to 0$.  Since the map $\Phi (R,\vartheta, \cdot ):E\to E$ is an  isometry on level $0$, it suffices to show that 
$$
\lim_{\abs{(R_1, \vartheta_1)}\to 0} \frac{ \abs{\Phi(R_1,\vartheta_1, u)-u - R_1\cdot u_s+\vartheta_1\cdot u_t}_0}{\abs{(R_1, \vartheta_1)} }=0
$$
If $u$ is compactly supported smooth function, then we have 
\begin{equation*}
\begin{split}
\Phi(R_1,\vartheta_1, u)-&u - R_1\cdot u_s+\vartheta_1\cdot u_t\\
&=\int_0^1 [ \Phi(\tau R_1,\tau \vartheta_1,  R_1\cdot u_s+\vartheta_1\cdot u_t )- R_1\cdot u_s+\vartheta_1\cdot u_t]\ d\tau.
\end{split}
\end{equation*}
Now if $u\in E_1$, then we find a sequence $(u_n)$ of compactly supported smooth functions so that $\abs{u_n-u}_1\to 0$. The above equality holds for each of the functions $u_n$
and letting $n\to \infty$ we find that it also holds for $u$.  Moreover, note that 
\begin{equation*}
\begin{split}
\abs{\Phi(\tau R_1,\tau \vartheta_1 , &R_1\cdot u_s+\vartheta_1\cdot u_t)- R_1\cdot u_s+\vartheta_1\cdot u_t}_0\\
&\leq \abs{R_1}\abs{\Phi( \tau R_1,\tau \vartheta_1)  u_s)-u_s}_0
 +\abs{\vartheta_1}\abs{\Phi( \tau R_1,\tau \vartheta_1 , u_t)-u_t}_0
 \end{split}
 \end{equation*}
and that each summand of the right hand side divided by $\abs{(R_1, \vartheta_1)}$ converges uniformly  for $\tau\in [0,1]$ to $0$ as  $\abs{(R_1, \vartheta_1)}\to 0$. 
Consequently,
\begin{equation*}
\begin{split}
&\frac{ \abs{ \Phi(R_1,\vartheta_1, u)-u - R_1\cdot u_s-\vartheta_1\cdot u_t }_0    }{ \abs{ (R_1, \vartheta_1) } } \\
&\phantom{===}=\left| \int_0^1\frac{   \Phi( \tau R_1,\tau \vartheta_1,  R_1\cdot u_s+\vartheta_1\cdot u_t  )- R_1\cdot u_s+\vartheta_1\cdot u_t  }{\abs{(R_1, \vartheta_1)} }  \  d\tau   \right|_0 \\
& \phantom{===}\leq \int_0^1\frac{  \abs{   \Phi(\tau R_1,\tau \vartheta_1,  R_1\cdot u_s+\vartheta_1\cdot u_t )- R_1\cdot u_s+\vartheta_1\cdot u_t }_0}{\abs{(R_1, \vartheta_1)} }\ d\tau 
\end{split}
\end{equation*}
and the right hand side converges to $0$ as $\abs{(R_1, \vartheta_1)} \to 0$.
Next we show that 
$$\frac{ \abs{\Phi(R+R_1,\vartheta+\vartheta_1, v)-\Phi(R,\vartheta, v)}_0}{\abs{(R_1, \vartheta_1, v)}_1}\to 0$$
as $\abs{(R_1, \vartheta_1, v)}_1\to 0$.  This is more tricky and relies on the compactnees of  the inclusion $E_1\to  E_0$.

Arguing indirectly
we find  an $\varepsilon>0$ and a sequence $(R_1^n,\vartheta_1^n,v_n)\to  (0,0,0)$ in $\R^2\oplus E_1$  satisfying 
\begin{equation}\label{eq7}
\frac{\abs{ \Phi( R+R_1^n,\vartheta+\vartheta^n_1,v^n)-\Phi( R,\vartheta,v^n) }_0 }{\abs{(\R_1^n,\vartheta_1^n,v_n)}_1}\geq \varepsilon.
\end{equation}
The sequence 
$$w^n=\frac{ v^n }{ \abs{ (R^n_1, \vartheta^n_1, v^n) }_1 }$$
is bounded in $E_1$ and we may assume, using that the inclusion $E_1\to E_0$ is compact, that   $w^n\to  w$ in $E_0$. By the already established $\ssc^0$-continuity of $\Phi$
we conclude that
\begin{equation*}
\begin{split}
&\frac{ \Phi( R+R_1^n,\vartheta+\vartheta^n_1,v^n)-\Phi ( R,\vartheta,v^n) }{\abs{(R_1^n,\vartheta_1^n,v_n)}_1 }\\
&\phantom{=====}=  \Phi ( R+R_1^n,\vartheta+\vartheta^n_1, w^n)-\Phi( R,\vartheta,w^n ) \\
&\phantom{=====} \to \Phi( R,\vartheta,w)-\Phi( R,\vartheta,w)=0,
\end{split}
\end{equation*}
contradicting \eqref{eq7}.  

At this point we have proved  that $\Phi$ is of class $\ssc^1$ and its tangent map  
$$T\Phi : T(\R^2\oplus E)=\R^2\oplus E^1\oplus R^2\oplus E\to TE=E^1\oplus E$$  is given by
\begin{equation}\label{shift-eq1}
T\Phi(R, \vartheta, u, R_1, \vartheta_1, v)=(\Phi (R, \vartheta, u), \Phi (R, \vartheta , R_1u_s+\vartheta_1u_t +v)).
\end{equation}
This will allow us to give an a inductive argument to show that $\Phi$ is of class $\ssc^k$. We prove  the following statements  by induction:\\

\begin{induction}
The map $\Phi$ is of class $\ssc^k$ and for every projection $\pi:T^kE\to  E^j$ onto one of the factors of $T^kE$, the composition  $\pi\circ T^k\Phi$
is a  linear combination of maps  of the form
$$
A: \R^2\oplus E^m\oplus \R^{|\alpha|}\to  E^j, \quad (R,\vartheta,u,h)\to  \Phi(R,\vartheta, h_1\cdot \ldots \cdot h_{|\alpha|}\cdot D^{\alpha}u)
$$
with a multi-index $\alpha$ satisfying  $|\alpha|\leq m-j$.
\end{induction}

We already know that $\Phi$ is $\ssc^1$. If $\pi: TE=E^1\oplus E\to E^1$ is the projection onto  the first factor, then, by \eqref{shift-eq1},  the map
$\pi\circ T\Phi$  is given by  
$$\R^2\oplus E^1\to  E^1, \quad (R, \vartheta), v) \mapsto \Phi ((R, \vartheta), v).$$
If $\pi: TE=E^1\oplus E^0\to E^0$ is the projection onto the second factor, then  the composition 
$\pi\circ T\Phi$ is given by 
\begin{gather*}
 \R^2\oplus E^1\oplus \R^2\oplus E^0\to E^0\\
 (R, \vartheta, u, R_1, \vartheta_1, v)\mapsto  \Phi ((R, \vartheta) , R_1u_s+\vartheta_1u_t +v)
 \end{gather*}
which can be written as a sum of the  maps of the following types, 
\begin{align*}
&\R^2\oplus E\to  E, \qquad (R,\vartheta, v)\mapsto  \Phi(R,\vartheta, v)\\
\intertext{and}
&\R^2\oplus E^1\oplus \R\to  E, \qquad  (R,\vartheta, u, h) \mapsto  \Phi( (R,\vartheta,hD^{\alpha}u),
\end{align*}
where   $\alpha=(1,0)$ and  $\alpha=(0,1)$.
Hence  the assertion $({\bf S_1})$ holds.

Now assume that we have proved  $({\bf S_k})$. We shall prove $({\bf S_{k+1}})$.  We first  show that the map $T^k\Phi:T^k(\R^2\oplus E)\to T^kE$ is $\ssc^1$.   It suffices to show that maps described in $({\bf S_k})$
are all of class $\ssc^1$. So, we consider the map 
$$
A:\R^2\oplus E^m\oplus \R^{|\alpha|}\to  E^j, \qquad (R,\vartheta,u,h)\mapsto 
\Phi(R,\vartheta, h_1\cdot\ldots \cdot h_{|\alpha|}\cdot D^{\alpha}u),
$$
where  $|\alpha|\leq m-j$.

We observe that this map is a composition of the following maps. The first one,   defined by 
$$
\R^2\oplus E^m\oplus\R^{|\alpha|}\to  \R^2\oplus E^j,\quad 
 (R,\vartheta,u,h)\mapsto  (R,\vartheta, h_1\cdot \ldots \cdot h_{|\alpha|}D^\alpha u)
$$
is  clearly  $\ssc^\infty$. The second map is our shift-map 
$$
\Phi:\R^2\oplus E^j\to  E^j.
$$
By induction hypothesis the shift-map $\Phi:\R^2\oplus E\to E$ is $\ssc^k$ so that  by Proposition \ref{ias} the map 
$\Phi:\R^2\oplus E^j\to E^j$ is, in particular,  $\ssc^1$. Applying the chain rule, we conclude that the map $A$ is $\ssc^1$.
At this point we know that $\Phi$ is $\ssc^{k+1}$ and it remains to show that the iterated tangent map $T^{k+1}\Phi:T^{k+1}(\R^2\oplus E)\to T^{k+1}E$ satisfies the remaining statements of  $({\bf S_{k+1}})$. 
 
We consider the composition $\pi\circ T^{k+1}\Phi$  where $\pi$ is the projection onto one of the factors of $T^{k+1}E$. If $\pi$ is a projection onto one of the first $2^k$ factors, then $\pi\circ T^{k+1}\Phi$ is  a sum of maps  obtained from the maps $A$ described in $({\bf S_k})$ by raising the index of the domain by $1$.   If $\pi$ is a projection onto one of the last  $2^k$ factors, then $\pi\circ T^{k+1}\Phi$ 
is a linear combination of derivatives of maps $A$ in  $({\bf S_k})$.
Hence we have to consider the map 
$$
A:\R^2\oplus E^m\oplus \R^{|\alpha|}:\to  E^j, \quad (R,\vartheta,u,h)\mapsto   \Phi(R,\vartheta,h_1\cdot \ldots \cdot h_{|\alpha|}\cdot D^{\alpha}u).
$$
Its derivative after raising the index of the domain is the  map
$$
\R^2\oplus E^{m+1}\oplus \R^{|\alpha|}\oplus \R^2\oplus E^{m}\oplus \R^{|\alpha|}\to  E^j,
$$
defined by
\begin{equation*}
\begin{split}
&  (R,\vartheta,u,h),\delta R,\delta \vartheta,\delta u,\delta h)\mapsto  \Phi(R,\vartheta,u, h_1\cdot \ldots \cdot h_{|\alpha|}\cdot D^{\alpha}\delta u)\\
&\phantom{====}+\sum_{l=1}^{|\alpha|}\Phi(R,\vartheta,u,h_1\cdot \ldots \cdot \delta h_l\cdot\ldots \cdot h_{|\alpha|}\cdot D^{\alpha}u)\\
&\phantom{====}+\Phi(R,\vartheta,u,h^1\cdot\ldots \cdot h^{|\alpha|} (\delta R \cdot \partial_s D^{\alpha}u +\delta\vartheta \cdot  \partial_t D^{\alpha}u ).
\end{split}
\end{equation*}
This is the sum of maps of the form:
$$
\R^2\oplus E^{m}\oplus \R^{|\alpha|}\to  E^j, \quad (c,d,v,h)\to  \Phi(c,d,h_1\cdot\ldots \cdot h_{|\alpha|}\cdot D^{\alpha}v)
$$
with $|\alpha|\leq m-j$, and
\begin{gather*}
\R^2\oplus E^{m+1}\oplus \R^{|\alpha|}\to  E^j\\
(c,d,u,(h_1,\ldots ,\delta h_l,\ldots h_{|\alpha|}))\to  \Phi(c,d,h_1\cdot\ldots \cdot \delta h_l\cdot\ldots \cdot h_{|\alpha|}D^{\alpha}u)
\end{gather*}
with $|\alpha|\leq m-j\leq m+1-j$, and finally
$$
\R^2\oplus E^{m+1}\oplus \R^{|\alpha|+1}\to  E^j,\quad (c,d,u,(h,\gamma))\to  \Phi(c,d,h_1\cdots \ldots \cdot h_{|\alpha|}\gamma D^\beta u),
$$
where $\beta=\alpha+(1,0)$ or $\beta=\alpha+(0,1)$. Then $|\beta|=|\alpha|+1\leq m+1-j$ and  we have verified  that  the map $\Phi$ satisfies $({\bf S_{k+1}})$. The proof of Proposition \ref{sc-sm} is complete.
\end{proof}

\subsection{The Gluing Profile}\label{calc-lemma}
\begin{defn}
A gluing profile is a smooth diffeomorphism
$$
\varphi:(0,1]\to [0,\infty).
$$
\end{defn}
See Figure 7 for the graph of a gluing profile.
In order to construct a polyfold structure on our moduli spaces, the gluing profile has to satisfy additional properties, which hold true for the special profile $\varphi (x)=e^{\frac{1}{x}}-e$ we have chosen. 
\begin{lem}\label{lem5.2}
We consider the gluing profile
$$
\varphi(x)=e^{1/x}-e
$$
and define the  function $B:[0,r)\times [-R,R]\rightarrow {\mathbb R}$
for sufficiently small $0<r<1$ by
\begin{equation*}
B(x,c)=\begin{cases}
 \varphi^{-1}[\varphi(x)+c]&
\text{if  $ x\in (0,r)$}\\
0&\text{if $x=0$.}
\end{cases}
\end{equation*}
Then $B$ is smooth and satisfies
\begin{equation*}
D_xB(0,c)=1\quad \text{and}\quad D^{\alpha}B (0, c)=0
\end{equation*}
for all multi-indices $\alpha=(\alpha_1, \alpha_2)$ with $\alpha_1\geq 2$ and $\alpha_2\geq 0$.
\end{lem}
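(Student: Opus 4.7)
The plan is to write $B$ in a form that makes the claimed vanishing transparent, by factoring out a clean leading term $x$ and absorbing everything else into a factor that is smooth and flat at $x=0$.

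First, I would compute the inverse of the gluing profile: from $y=e^{1/x}-e$ one gets $\varphi^{-1}(y)=1/\ln(y+e)$. Substituting,
\begin{equation*}
B(x,c)=\frac{1}{\ln(e^{1/x}+c)}=\frac{1}{\tfrac{1}{x}+\ln\!\bigl(1+ce^{-1/x}\bigr)}=\frac{x}{1+f(x,c)},
\end{equation*}
where I set $f(x,c):=x\ln\!\bigl(1+ce^{-1/x}\bigr)$ for $x>0$, and $f(0,c):=0$. The algebra is valid as long as $1+ce^{-1/x}>0$, which holds on $[0,r)\times[-R,R]$ once $r$ is small enough. Shrinking $r$ further we also guarantee $|f(x,c)|<\tfrac{1}{2}$, so that $1+f$ stays away from zero.

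Next I would invoke the classical fact that the function $\psi(x):=e^{-1/x}$ ($x>0$), extended by $\psi(0)=0$, lies in $C^\infty([0,\infty))$ with $\psi^{(k)}(0)=0$ for every $k\ge 0$; more precisely, $\partial_x^k(e^{-1/x})=P_k(1/x)\,e^{-1/x}$ for a polynomial $P_k$, and polynomial factors are swamped by the exponential as $x\to 0^+$. It follows immediately that $(x,c)\mapsto ce^{-1/x}$ is smooth on $[0,r)\times[-R,R]$ and that every mixed partial $\partial_c^{\alpha_2}\partial_x^{\alpha_1}(ce^{-1/x})$ vanishes on the line $\{x=0\}$. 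Composing with the smooth function $u\mapsto \ln(1+u)$ (well-defined near $u=0$) and using the chain/product rule termwise, the same flatness property is inherited by $(x,c)\mapsto \ln(1+ce^{-1/x})$, and hence by $f(x,c)$ itself: $f$ is smooth on $[0,r)\times[-R,R]$ and
\begin{equation*}
D^\alpha f(0,c)=0\qquad\text{for every multi-index }\alpha.
\end{equation*}

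Now set $K(x,c):=1/(1+f(x,c))$. Since $f$ is smooth with $f(0,c)=0$ and $1+f$ bounded away from zero, $K$ is smooth with $K(0,c)=1$, and writing derivatives of $1/(1+f)$ via the Fa\`a di Bruno formula shows that $D^\alpha K(0,c)=0$ for every $\alpha\neq(0,\alpha_2)$ with $\alpha_1\ge 1$; in fact $K(\cdot,c)\equiv 1$ to infinite order at $x=0$ for each fixed $c$, so $\partial_c^{\alpha_2}\partial_x^{\alpha_1}K(0,c)=0$ whenever $\alpha_1\ge 1$.

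The conclusion follows by Leibniz applied to $B(x,c)=x\,K(x,c)$: $B$ is smooth on $[0,r)\times[-R,R]$ and
\begin{equation*}
\partial_c^{\alpha_2}\partial_x^{\alpha_1}B(x,c)=\alpha_1\,\partial_c^{\alpha_2}\partial_x^{\alpha_1-1}K(x,c)+x\,\partial_c^{\alpha_2}\partial_x^{\alpha_1}K(x,c).
\end{equation*}
Evaluating at $x=0$ gives $\partial_x B(0,c)=K(0,c)=1$, and for $\alpha_1\ge 2$ the first term vanishes because $\partial_x^{\alpha_1-1}K$ has at least one $x$-derivative (so it vanishes at $x=0$ by the previous paragraph), while the second term vanishes because of the factor $x$. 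The main (though standard) obstacle is step two, the bookkeeping needed to upgrade flatness of $e^{-1/x}$ through a composition with $\ln(1+\cdot)$ and a multiplication by $x$; this is routine once one adopts the factored form $B=x/(1+f)$, which is the key observation that makes the derivative estimates drop out without any delicate cancellation.
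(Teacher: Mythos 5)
Your proof is correct and follows essentially the same route as the paper's: both factor $B(x,c)=x\cdot K(x,c)$ with $K=1/(1+g)$ and $g(x,c)=x\ln\bigl(1+ce^{-1/x}\bigr)$, reduce to the $C^\infty$-flatness of $e^{-1/x}$ at $x=0^+$, and conclude by Leibniz. You spell out the domain restrictions ($1+ce^{-1/x}>0$, $1+f$ bounded away from $0$) and the final Leibniz bookkeeping a bit more explicitly than the paper does, but the decomposition and the key input are identical.
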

\begin{proof}
The inverse of  the  function
$
\varphi (x)=e^{1/x}-e,
$
defined on the domain $(0,1]$, 
is the function 
\begin{equation*}
\varphi^{-1}(y)=\dfrac{1}{\ln [e+y]}.
\end{equation*}
Hence   the function
$B$ satisfies
\begin{equation*}
B(x, c)=\dfrac{1}{\ln [e^{1/x}+c]}
\end{equation*}
 for $x>0$.
To prove our claim we have to show that
\begin{equation}\label{xeq1}
B(x,c)\to 0,\\
 D_xB(x,c)\to 1\quad \text{ and}\quad  D^{n,m}B(x,c)\to
0
\end{equation}
as $x\to 0$ uniformly in $c$,
for all $n\geq 2$. Writing
$
\ln [e^{1/x}+c]=
\ln \bigl[ e^{1/x}\cdot \bigl(1 + c\cdot e^{-1/x}\bigr)\bigr]= \dfrac{1}{x}+\ln \bigl[1 +c\cdot e^{-1/x}\bigr]$,
we  represent $B(x,c)$ as
\begin{equation*}
B(x, c)=x\cdot \dfrac{1}{ 1+x\ln \bigl[1 +c\cdot e^{-1/x}\bigr]}=
x\cdot f(x,c)\end{equation*}
 where
$$f(x,c)=\dfrac{1}{ 1+x\ln \bigl[1 +c\cdot e^{-1/x}\bigr]}.$$
Clearly,  $f(x,c)\rightarrow 1$ as $x\rightarrow 0$,
uniformly in $c$. Defining  the function $g$ by
$$
f(x,c)=\frac{1}{1+g(x,c)}, 
$$
it suffices to show that $D^{\alpha}g(x,c)\rightarrow
0$
 for $|\alpha|\geq 1$ uniformly in $c$ as $x\rightarrow 0$.
Since  $g(x,c)=x\ln[1+ce^{-1/x}]$, this follows from the fact  that  the function $h$,  defined by
 $
 h(x,c)=ce^{-1/x}
 $
 satisfies $D^{\alpha}h(x,c)\rightarrow 0$,  uniformly in $c$,  as
 $x\rightarrow 0$. In order to prove the second assertion for $B$,  we observe that a derivative
 of order $n$ of $e^{1/x}$ is a product of
 $e^{1/x}$ with a polynomial in the variable $1/x$ from which the desired assertion follows.
\end{proof}
\begin{center}
\begin{figure}[htbp]
\mbox{}\\[2pt]
\centerline{\relabelbox \epsfxsize 2.5truein \epsfbox{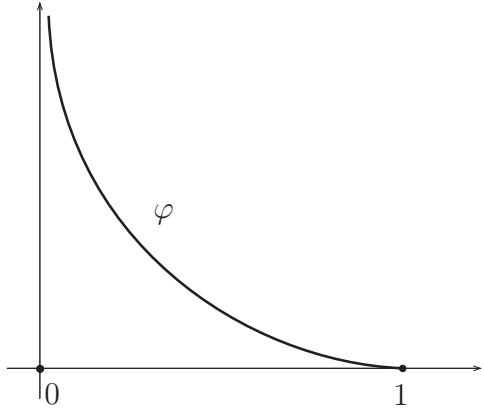}
\relabel {0}{$0$} 
\relabel {1}{$1$} 
\relabel {v}{$\varphi$}
\endrelabelbox}
\caption{A graph of a gluing profile $\varphi$}
\mbox{}\\[2pt]
\end{figure}
\end{center}
The function $B(x, c)$ defined in  Lemma  \ref{lem5.2} meets the assumptions of the next statement.

\begin{lem}\label{thm5.3}
Let $B:[0,1)\times {\mathbb R}\to {\mathbb R}$ be a smooth function satisfying
\begin{equation*}
B(0,c)=0,\quad D_1B(0,c)=1\: \: \text{and}\: \: D^n_1B(0,c)=0\:\text{for
$n\geq 2$}.
\end{equation*}
Then the function $f:D \setminus \{0\}\times {\mathbb R}\to {\mathbb C}$, defined by   
\begin{equation*}
f(z,c)=B(\abs{z},c)\dfrac{z}{\abs{z}},
\end{equation*}
is smooth and satisfies as $z\to 0$, uniformly in $c$ belonging to a bounded subset of ${\mathbb R}$, 
\begin{equation}\label{xeqq1}
\begin{aligned}
&f(z,c)\to 0\\\
&D_1f(z,c)\to \text{Id}\\
&D^n_1f(z,c)\to 0\:\:\: \text{for $n\geq 2$}.
\end{aligned}
\end{equation}
In particular, the function $f$ extends smoothly over $\{z=0\}\times {\mathbb R}$.
\end{lem}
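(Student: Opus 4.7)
The plan is to subtract off the leading term $z$ from $f$ and reduce the remaining piece to a problem about a function that vanishes to infinite order at $x=0$. Setting $r(x,c) := B(x,c) - x$, the hypotheses give $r(0,c)=0$, $D_1 r(0,c)=0$ and $D_1^n r(0,c)=0$ for $n\geq 2$. Since each of these identities is valid for every $c\in \R$, differentiation in $c$ shows that all mixed derivatives $D_1^n D_2^k r(0,c)$ also vanish; thus $r$ vanishes to infinite order along $\{x=0\}$. Correspondingly
\begin{equation*}
f(z,c) = z + g(z,c), \qquad g(z,c) := r(|z|,c)\cdot \frac{z}{|z|},
\end{equation*}
on $(D\setminus \{0\})\times \R$, and I define $g(0,c):=0$. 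The identities in \eqref{xeqq1} are precisely the statements that $g$ and the derivatives $D_1 g$, $D_1^n g$ ($n\geq 2$) vanish at $z=0$; the ``in particular'' part of the lemma requires that $g$ extend smoothly to all of $D\times \R$. Everything therefore reduces to proving that $g$ is $C^\infty$ on $D\times \R$ with all partial derivatives equal to zero on $\{z=0\}\times \R$.

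To prove this I would pass to polar coordinates $z = \rho e^{i\theta}$, in which $g(\rho e^{i\theta}, c) = r(\rho,c)\,e^{i\theta}$, and use
\begin{equation*}
\partial_x = \cos\theta\,\partial_\rho - \frac{\sin\theta}{\rho}\,\partial_\theta,\qquad
\partial_y = \sin\theta\,\partial_\rho + \frac{\cos\theta}{\rho}\,\partial_\theta.
\end{equation*}
A straightforward induction on the order $k$ shows that any Cartesian partial derivative $\partial_x^{\alpha}\partial_y^{\beta}\partial_c^{\gamma}g$ with $\alpha+\beta+|\gamma|=k$ is, on the punctured disk, a finite sum of expressions of the form
\begin{equation*}
\rho^{-N}\,P(\cos\theta,\sin\theta)\,(\partial_\rho^m \partial_c^{\ell}r)(\rho,c)\,e^{i\theta},
\end{equation*}
with $P$ a trigonometric polynomial, nonnegative integers $N,m$ bounded by $k$, and $\ell=|\gamma|$. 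The infinite-order vanishing of $r$ along $\rho=0$ together with iterated Taylor-remainder estimates yields, for every $M\geq 0$, a bound $|(\partial_\rho^m\partial_c^{\ell}r)(\rho,c)|\leq C_{M,m,\ell}\,\rho^M$, uniformly in $c$ on bounded subsets. Choosing $M>k$ absorbs the $\rho^{-N}$ singularity, so each such term tends to $0$ as $\rho\to 0$ uniformly in $\theta$ and in $c$ on compacts.

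A simple induction on $k$ now converts these pointwise limits into smoothness. Suppose $g$ has already been shown to be $C^k$ on $D\times \R$ with all $k$-th order derivatives vanishing on $\{z=0\}\times \R$; applying the polar-derivative expansion above to those derivatives produces Cartesian derivatives of order $k+1$ that extend continuously by $0$ across $\{z=0\}\times \R$, so $g\in C^{k+1}$ with the same vanishing. Hence $g$ is smooth on $D\times \R$ with all derivatives vanishing on $\{z=0\}\times \R$, and $f=z+g$ then yields \eqref{xeqq1} together with the smooth extension. The main technical obstacle is the polar-coordinate bookkeeping leading to the explicit form of $\partial_x^{\alpha}\partial_y^{\beta}\partial_c^{\gamma}g$; once that is in hand, the infinite-order vanishing of $r$ handles the analysis with no further subtlety.
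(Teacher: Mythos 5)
Your proposal is correct and works, but it takes a genuinely different route from the paper's proof. You subtract off the identity, writing $f = z + g$ with $g(z,c) = r(\lvert z\rvert,c)\,z/\lvert z\rvert$ and $r = B - x$, and then use the infinite-order vanishing of $r$ along $\{x=0\}$ together with polar-coordinate bookkeeping and Taylor-remainder estimates of the form $\lvert \partial_\rho^m\partial_c^\ell r(\rho,c)\rvert\leq C\rho^M$ to show that all Cartesian derivatives of $g$ tend to $0$. The paper instead keeps $f(z,c)=C_0(\lvert z\rvert,c)\,z$, where $C_0(s,c) = B(s,c)/s$, and builds an iterated sequence $C_n(s,c) := C_{n-1}'(s,c)/s$ of ``divided'' functions, then writes out $D_1^n f$ as an explicit linear combination of multilinear expressions in $z$ with coefficients $C_j(\lvert z\rvert,c)$; the desired limits follow from $C_0(0,c)=1$ and $C_n(0,c)=0$ for $n\geq 1$. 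Both arguments ultimately rest on the same Hadamard-type division lemma (a smooth function of $s\geq 0$ with all $s$-derivatives zero at $s=0$ still has this property after division by $s$), but the paper converts this into structural formulas for $D_1^n f$ while you convert it into estimates; your subtraction of the identity isolates the singular part cleanly, while the paper's explicit $C_n$-formulas make the leading term $D_1 f\to\mathrm{Id}$ appear automatically rather than by a separate observation. Two small points you leave implicit that should be made explicit in a final write-up: (i) the passage from ``$(k+1)$-th derivatives on the punctured disk tend to $0$'' to ``$g\in C^{k+1}$ with those derivatives equal to $0$ at $z=0$'' requires the standard mean-value-theorem argument at each step of the induction; and (ii) the uniformity in $c$ of the Taylor bound uses that $c$ ranges over a bounded (hence, on the relevant domain, relatively compact) set, so that the sup of $\partial_\rho^{M+m}\partial_c^\ell r$ is finite.
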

\begin{proof}
If $g=g(s,c)$ we shall write $g^{(n)}$ for the $n-th$ derivative with respect to $s$.
We begin with the following simple calculus observation. Assuming that $g(s,c)$ is a smooth function on $[0,1]\times {\mathbb R}$ 
whose derivatives with respect to $s$ at $s=0$  all vanish so  that $g^{(n)}(0,c)=0$ for all $n\geq 0$,   the function $h(s,c)$,  defined by $h(s,c)=g(s,c)/s$ for $s>0$ and $h(0,c)=0$,  is smooth and  all its derivatives at $s=0$ vanish so that $h^{(n)}(0,c)=0$ for all $n\geq 0$. Indeed,  we may represent $g(s)$ in the form 
$$
g(s,c)=sR(s,c)\quad \text{with}\quad R(s,c)=\int_0^1g'(\tau s,c)d\tau .
$$
Observing  that $R(s,c)$ is smooth for $s\geq 0$  and satisfies $R^{(n)}(0,c)=0$ for all $n\geq 0$, we conclude that 
$h(s,c)=R(s,c)$  is smooth and  satisfies  $h^{(n)}(0,c)=0$ for all $n\geq 0$.

 We use this observation to obtain the following conclusions. Set $C_0(s,c)=B(s,c)/s$  for $s>0$ and $C_0(0,c)=1$,  and define a sequence of functions  $C_{n}(s,c)=C_{n-1}'(s,c)/s$  for $n\geq 1$ and $s>0$  where the prime stands for the first derivative. We claim that every  function $C_n(s,c)$ for $n\geq 1$  smoothly extends over $s=0$ and the derivatives $C_n^{(k)}(0,c)=0$ vanish for all $k\geq 0$. To see this we represent $B(s,c)$
using the assumptions  as 
$$B(s,c)=s+s^3\what{R}(s,c)\quad \text{with}\quad \what{R}(s)=\dfrac{1}{2}\int_0^1(1-\tau )^2B^{(3)}(\tau s,c)d\tau .$$
Then
$C_0(s,c)=1+s^2\what{R}(s,c)$ and its first derivative is given by $C_0'(s,c)=2s\what{R}(s,c)+s^2\what{R}'(s,c)$. Hence 
$C_1(s,c)=2\what{R}(s,c)+s\what{R}'(s,c)$ showing that $C_1(s,c)$ is smooth for $s\geq 0$ and $C_1^{(k)}(0,c)=0$ for all $k\geq 0$. 
Our  claim follows by 
applying  this procedure successively to all the functions $C_n(s,c)$ for $n\geq 2$.     

With the above definition of $C_0(s,c)$,   the function $f(z,c)$ is defined as 
$$f(z,c)=B(\abs{z},c)\dfrac{z}{\abs{z}}=C_0(\abs{z},c)z. $$
Differentiating this expression  at $z\neq 0$ we obtain 
\begin{equation*}\label{1smooth}
D_1f(z,c)h_1=\dfrac{C'_0(\abs{z},c)}{\abs{z}}\langle z, h_1\rangle+C_0(\abs{z},c)h_1
=C_1(\abs{z},c)\langle z, h_1\rangle+C_0(\abs{z},c)h_1.
\end{equation*}
Using the properties of $C_0(s,c)$ and $C_1(s,c)$  we conclude that 
$D_1f(z,c)\to \text{Id}$ as $z\to 0$. In general, the higher derivatives of $f(z,c)$ are of  the form
\begin{equation*}
\begin{split}
D^{(2n)}_1f(z,c)(h_1,\ldots ,h_{2n})&=C_{2n}(\abs{z},c)\langle z,h_1\rangle \cdots \langle z, h_{2n}\rangle z\\
&\phantom{=}+\sum_{j=0}^{n-1}C_{n+j}(\abs{z},c)M_{2n, 2j+1}(z,c, h_1,\ldots ,h_{2n})
\end{split}
\end{equation*}
\begin{equation*}
\begin{split}
D_1^{(2n+1)}f(z,c)(h_1,\ldots ,h_{2n+1})&=C_{2n+1}(\abs{z},c)\langle z,h_1\rangle \cdots \langle z, h_{2n+1}\rangle z\\
&\phantom{=}+\sum_{j=0}^n C_{n+j}(\abs{z},c)M_{2n+1, 2j}(z,c, h_1,\ldots ,h_{2n+1}),
\end{split}
\end{equation*}
where $M_{2n, 2j+1}(z,c, \cdots )$ and $M_{2n+1, 2j}(z,c, \cdots )$ are $2n$-  and $(2n+1)$-linear maps
which are smoothly depending on $(z,c$ and satisfying  
$$
\abs{M_{2n, 2j+1}(z,c, h_1,\ldots,h_{2n})}\leq \abs{z}^{2j+1}\abs{h_1}\cdots \abs{h_{2n}}
$$
 and 
$$
\abs{M_{2n+1, 2j}(z,c,h_1,\ldots,h_{2n})}\leq \abs{z}^{2j}\abs{h_1}\cdots \abs{h_{2n+1}}.
$$ 
Consequently, in view of the properties of the functions $C_n(s,c)$,  the derivatives  $D_1^{(n)}f(z,c)$  tend to $0$ for all $n\geq 2$ as $z\to 0$,  and the proof of the lemma is complete.
\end{proof}
\mbox{}\\

In our constructions we have to study the gluing length $R$ and
the gluing angle $\vartheta$ associated to a nonzero gluing
parameter $a\in B\setminus\{0\}$ via the gluing profile $\varphi$
defined by $\varphi(x)=e^{\frac{1}{x}}-e$. The pair $(R,\vartheta)$
is defined by
$$
R=\varphi(|a|)\quad \text{and}\quad 
a= |a|\cdot e^{-2\pi i\vartheta}.
$$
It is important to have  estimates for the map $a\mapsto 
(R(a),\vartheta (a))$. In order to derive the appropriate estimates, we use the identification $a=x+iy=(x,y)$.
If $\beta=(\beta_1,\beta_2)$ is a multi-index,  we write
$$
a^\beta=x^{\beta_1}\cdot y^{\beta_2}.
$$
We shall prove the following statement.
\begin{lem}\label{ropet1}
If  $0<|a|<1$,  the partial derivative $(D^\alpha R)(a)$ of order  $|\alpha|\geq 1$ is a linear combination
of terms of the form
$$
e^{\frac{1}{|a|}} \cdot \frac{1}{|a|^k} a^\beta
$$
with the integers $k$ and the multi-indices $\beta$ satisfying $k\leq 3\cdot |\alpha| $ and $|\beta|\leq |\alpha|$.
\end{lem}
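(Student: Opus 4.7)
The plan is to proceed by induction on $|\alpha|$, using that the only effect of differentiation is to multiply the exponential factor, the negative power of $|a|$, or the monomial $a^\beta$ by one of three controlled factors.

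For the base case $|\alpha|=1$, I would simply compute: since $R(a)=e^{1/|a|}-e$ and $\partial_x |a|=x/|a|$, $\partial_y|a|=y/|a|$, differentiating gives
\[
\partial_x R(a)=-e^{1/|a|}\cdot\frac{1}{|a|^3}\cdot x,\qquad \partial_y R(a)=-e^{1/|a|}\cdot\frac{1}{|a|^3}\cdot y,
\]
which are of the required form with $k=3=3|\alpha|$ and $|\beta|=1=|\alpha|$.

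For the inductive step, assume every $(D^{\alpha}R)(a)$ with $|\alpha|=n\ge 1$ is a linear combination of elementary terms $T_{k,\beta}(a):=e^{1/|a|}\,|a|^{-k}\,a^\beta$ with $k\le 3n$ and $|\beta|\le n$. It suffices to show that $\partial_x T_{k,\beta}$ and $\partial_y T_{k,\beta}$ are again finite linear combinations of such terms with $k'\le 3(n+1)$ and $|\beta'|\le n+1$. By the product rule,
\[
\partial_x T_{k,\beta}=-T_{k+3,\,\beta+(1,0)}(a)-k\,T_{k+2,\,\beta+(1,0)}(a)+\beta_1\,T_{k,\,\beta-(1,0)}(a),
\]
(with the last term understood as zero when $\beta_1=0$), and the analogous formula holds for $\partial_y$. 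Each summand raises $k$ by at most $3$ and raises $|\beta|$ by at most $1$, hence satisfies $k'\le k+3\le 3(n+1)$ and $|\beta'|\le |\beta|+1\le n+1$. This closes the induction.

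There is no real obstacle here; the only point that requires attention is bookkeeping the three types of contributions (from $\partial e^{1/|a|}$, from $\partial |a|^{-k}$, and from $\partial a^\beta$) and checking that each of them individually respects the inequalities $k\le 3|\alpha|$ and $|\beta|\le|\alpha|$. The factor $3$ in $k\le 3|\alpha|$ is exactly the cost of one differentiation of $e^{1/|a|}$, which is the dominant contribution, and the inequality $|\beta|\le |\alpha|$ is automatic since each differentiation increases $|\beta|$ by at most one.
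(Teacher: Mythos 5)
Your proof is correct and matches the paper's argument essentially verbatim: same base case computation, same three-term product rule expansion in the inductive step, and the same bookkeeping showing $k$ increases by at most $3$ and $|\beta|$ by at most $1$ per differentiation.
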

\begin{proof}
In the case $\abs{\alpha}=1$, we may take without loss of generality $\alpha=(1, 0)$. Then, in view of  $a=x+iy\equiv (x,y)$,
$$
\frac{\partial}{\partial x}R(a) = -e^{\frac{1}{|a|}}\cdot \frac{1}{|a|^2}\cdot\frac{x}{|a|}= -e^{\frac{1}{|a|}}\cdot \frac{x}{|a|^3}.
$$
Hence  $\beta=(1,0)$ and $3\leq 3\abs{(1, 0)}$ and $\abs{\beta}\leq \abs{\alpha}$. The same holds for the partial derivative with respect to $y$. 
Assuming  the result to hold  for all multi-indices $\alpha$ with $|\alpha |\leq l$, we  consider the partial derivative $D^{\alpha}R$ for a  multi-index $\alpha$  of order $|\alpha|=l+1$. Without loss of generality we assume  that  $\alpha=\alpha_0+(1,0)$. We know that  $D^{\alpha_0}R$ is the  linear combination of terms of the form
$$
e^{\frac{1}{|a|}} \frac{1}{|a|^k} a^\beta
$$
where  $|\beta|\leq l$ and $k\leq 3l$. Applying $\frac{\partial }{\partial x}$, we obtain 
\begin{equation*}
\frac{\partial}{\partial x}\left( e^{\frac{1}{|a|}} \frac{1}{|a|^k} a^\beta
\right)= e^{\frac{1}{|a|}}\cdot\left(  \frac{-1}{\phantom{-}|a|^2}\cdot \frac{x}{|a|}\cdot \frac{1}{|a|^k} a^\beta
- \frac{k}{\phantom{-}|a|^{k+1}} \cdot \frac{x}{|a|} a^\beta
+\beta_1\cdot  \frac{1}{|a|^k} a^{\beta'}\right).
\end{equation*}
Here $a^{\beta'}=0$ if $\beta_1=0$ and otherwise $\beta'=\beta-(1,0)$.
This derivative is a linear combination of at most three terms, namely
\begin{equation*}
-e^{\frac{1}{|a|}}\cdot \frac{1}{|a|^{k+3}} \cdot a^{\beta+(1,0)},\quad  
-ke^{\frac{1}{|a|}}\cdot \frac{1}{|a|^{k+2}}\cdot a^{\beta+(1,0)},\quad \mbox{and}\quad
e^{\frac{1}{|a|}} \cdot \frac{1}{|a|^k} a^{\beta'}.
\end{equation*}
We see that  $\beta$ is increased by at most one order and $k$ is increased by at most $3$ so that our statement is proved.
\end{proof}
Next we consider for  $a\neq 0$,  the function  $\vartheta (a)$ defined  by 
$$\frac{a}{\abs{a}}=e^{-2\pi i\vartheta (a)}.$$
\begin{lem}\label{polj}
For every multi-index $\alpha$ satisfying  $|\alpha|\geq 1$,  the partial derivative $D^\alpha\vartheta(a)$ at a point $a\neq 0$,  is a linear combination of terms of the form
$\frac{a^\beta}{|a|^k}$
 with $k\leq 2\abs{\alpha}$ and $\abs{\beta}\leq \abs{\alpha}$. 
 \end{lem}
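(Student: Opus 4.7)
The plan is to argue by induction on $|\alpha|$, following the pattern of the proof of Lemma~\ref{ropet1}. Since $\vartheta(a)$ is defined by $a/|a|=e^{-2\pi i\vartheta(a)}$, it is (locally) given by $\vartheta(a)=-\frac{1}{2\pi}\arg(a)$, and although $\arg$ is only defined modulo integers, its partial derivatives are well-defined and real-analytic on $\C\setminus\{0\}$. Writing $a=x+iy$, a direct computation gives
\[
\frac{\partial\vartheta}{\partial x}(a)=\frac{1}{2\pi}\cdot\frac{y}{|a|^2},\qquad \frac{\partial\vartheta}{\partial y}(a)=-\frac{1}{2\pi}\cdot\frac{x}{|a|^2},
\]
which establishes the base case $|\alpha|=1$: both first partials have the required form $a^\beta/|a|^k$ with $|\beta|=1$ and $k=2=2|\alpha|$.

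For the inductive step, I would show that the class of linear combinations of terms of the form $a^\beta/|a|^k$ with $|\beta|\le n$ and $k\le 2n$ is closed under a further partial differentiation (with the bounds on $n$ increased by one). By symmetry it suffices to differentiate with respect to $x$; applying the product and chain rules to $x^{\beta_1}y^{\beta_2}\cdot |a|^{-k}$ produces at most two terms, namely
\[
\beta_1\cdot\frac{a^{\beta-(1,0)}}{|a|^k}\qquad\text{and}\qquad -k\cdot\frac{a^{\beta+(1,0)}}{|a|^{k+2}},
\]
using $\partial_x|a|^{-k}=-kx|a|^{-k-2}$ (and the first term is simply absent when $\beta_1=0$). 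In the first term the exponent of $a$ decreases by one and $k$ is unchanged, giving $|\beta'|\le n\le n+1$ and $k'=k\le 2n\le 2(n+1)$. In the second, $|\beta'|=|\beta|+1\le n+1$ and $k'=k+2\le 2n+2=2(n+1)$. Thus both resulting terms satisfy the stronger bounds appropriate for derivatives of order $n+1$, closing the induction.

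There is no real obstacle here; the argument is essentially a routine bookkeeping exercise and is strictly simpler than the proof of Lemma~\ref{ropet1} because no exponential factor $e^{1/|a|}$ appears. The only mild subtlety is the multi-valued nature of $\arg$, which is handled by working locally on simply connected neighborhoods of each $a\ne 0$ where a smooth branch of $\vartheta$ exists; the resulting formulas for the derivatives are branch-independent and extend globally to $\C\setminus\{0\}$.
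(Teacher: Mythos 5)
Your proof is correct and follows essentially the same route as the paper's: compute the first-order partials explicitly, then induct by differentiating a single term $a^\beta/|a|^k$ via the product and chain rules and checking the resulting bounds $|\beta|\le|\alpha|$ and $k\le 2|\alpha|$. (Minor note: you correctly derive the first partials from $a=|a|e^{-2\pi i\vartheta}$, i.e.\ $\vartheta=-\tfrac{1}{2\pi}\arg a$; the paper writes $2\pi\vartheta=\arctan(y/x)$, which carries the opposite sign, and the paper also has the typo $\beta'=\beta-(0,1)$ where $\beta-(1,0)$ is meant — neither affects the conclusion, and your version has both details right.)
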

\begin{proof}
First we assume that  $\alpha$ has order one. As long as $a=x+iy$ satisfies  $x\neq 0$ we have $2\pi\cdot \vartheta(a)=\arctan(\frac{y}{x})$ so that 
$$
2\pi\cdot \frac{\partial\vartheta}{\partial x}(a)=-\frac{y}{|a|^2}\quad \text{and}\quad 
2\pi \cdot\frac{\partial\vartheta}{\partial y}(a)=\frac{x}{|a|^2}.
$$
This has the required form with $k=2$ and $\beta=(0,1)$ or $\beta(1,0)$.
We assume  that the assertion has been  proved for all   $\alpha$ of order $l$ and compute the derivatives of  order $l+1$. Without loss of generality we may assume  that  $\alpha=\alpha_0+(1,0)$.  The derivative
$$\frac{\partial }{\partial x} \left(\frac{a^\beta}{|a|^k}\right)$$
is equal to 
$$
\frac{\partial}{\partial x}\left(\frac{a^\beta}{|a|^k}\right).
=\beta_1\cdot\frac{a^{\beta'}}{|a|^{k}}-k\cdot\frac{a^\beta}{|a|^{k+2}}x=
\beta_1\cdot\frac{a^{\beta'}}{|a|^{k}}
-k\frac{a^{\beta+(1,0)}}{|a|^{k+2}}.
$$
where we put  $a^{\beta'}=0$ if $\beta_1=0$ and otherwise $\beta'=\beta-(0,1)$. 
This shows that $\beta$ increased at most by order one  and $k$ in the denominator by $2$.
This proves the assertion.
\end{proof}

\subsection{Families of Sc-Isomorphisms}

We assume that  $V$ is an open subset of a finite-dimensional vector space $H$ and $E$ and $F$ are sc-Banach spaces and  consider a family  $v\mapsto L(v)$ of linear operators parametrized by $v\in V$ 
such that  $L(v):E\rightarrow F$ are  sc-isomorphisms having the following properties.
\begin{itemize}\label{property-99}
\item[(1)] The map $\wh{L}:V\oplus E\rightarrow F$,  defined by 
$$
\wh{L}(v, h):=L(v)h,
$$
is sc-smooth.
\item[(2)] There exists for every $m$ a constant $C_m$ such that
$$
\abs{L(v)h}_m\geq C_m\cdot \abs{h}_m
$$
for all $v\in V$  and all $h\in E_m$.
\end{itemize}
Let us note that $ L(v)$ is not assumed to be continuously depending on $v$ as an operator.
Since the map $L(v):E\to F$ is an sc-isomorphism, the  equation
$$
L(v)h=k
$$
has  for every pair $(v,k)\in V\oplus F$  a unique solution $h$ denoted by 
$$h=L(v)^{-1}k=:f(v, k),$$
 so that $\wh{L}(v, f(v, k))=k$. 
\begin{prop}\label{solmap}
The map $f:V\oplus F\rightarrow E$ defined above is sc-smooth.
\end{prop}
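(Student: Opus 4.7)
The plan is to establish sc-smoothness of $f$ by a bootstrap: first show $f$ is sc$^0$, then derive the candidate derivative by implicit differentiation, verify the sc$^1$-property, and finally run induction to all orders via the chain rule (Theorem \ref{chain-thm1}).

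For the sc$^0$ step, the uniform lower bound $|L(v)h|_m \geq C_m|h|_m$ immediately yields $|f(v,k)|_m \leq C_m^{-1}|k|_m$, so $f$ preserves levels. To show continuity on level $m$ at $(v_0,k_0)$, I set $h_0 = f(v_0,k_0)$ and $h = f(v,k)$ and write
\[ L(v)(h - h_0) = (k - k_0) + \bigl[\wh L(v_0, h_0) - \wh L(v, h_0)\bigr], \]
which together with the lower bound applied to $L(v)$ yields
\[ |h - h_0|_m \leq C_m^{-1}\bigl[|k - k_0|_m + |\wh L(v,h_0) - \wh L(v_0,h_0)|_m\bigr]. \]
The essential trick is to keep $h_0$ (which is fixed) rather than $h$ (varying) in the $\wh L$-difference; both terms then tend to $0$ by hypothesis, using the sc$^0$-continuity of $\wh L$ at the single fixed point $(v_0,h_0)$, and one never needs any kind of uniform continuity of $\wh L$ in its $E$-argument.

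Formal differentiation of the identity $\wh L(v, f(v,k)) = k$ now yields the candidate derivative
\[ Df(v,k)(\delta v,\delta k) = L(v)^{-1}\bigl[\delta k - D\wh L(v, f(v,k))(\delta v, 0)\bigr], \]
which is a bounded linear operator $V\oplus F \to E$ at every smooth point. To verify that this is actually the sc-derivative at $(v_0,k_0) \in (V\oplus F)_1$ in the sense of Definition \ref{sc-def}, I subtract and obtain $h - h_0 - Df(v_0,k_0)(\delta v,\delta k) = L(v_0+\delta v)^{-1} r_1 + [L(v_0+\delta v)^{-1} - L(v_0)^{-1}]\xi$, where $r_1$ is the sc$^1$-Taylor remainder of $\wh L(\cdot, h_0)$ (which is $o(|\delta v|_1)$ in $F_0$) and $\xi = \delta k - D\wh L(v_0,h_0)(\delta v,0) = O(|(\delta v,\delta k)|_1)$ in $F_0$. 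The first summand is bounded by $C_0^{-1}|r_1|_0 = o(|(\delta v,\delta k)|_1)$; for the second, I rewrite $L(v_0+\delta v)^{-1}\xi - L(v_0)^{-1}\xi = f(v_0+\delta v, \xi) - f(v_0,\xi)$ and apply the already-established sc$^0$-continuity of $f$ at the point $(v_0, 0)$, where $f(v_0,0)=0$, combined with the fact that $\xi$ itself goes to zero with $(\delta v,\delta k)$. The tangent map $Tf(v,k;\delta v,\delta k) = (f(v,k), Df(v,k)(\delta v,\delta k))$ is then a composition of sc$^0$ maps (namely $(v,k)\mapsto (v,f(v,k))$, the sc-smooth map $D\wh L$, and a second application of $f$ via $(v,\xi)\mapsto L(v)^{-1}\xi$), hence sc$^0$, so $f$ is sc$^1$.

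The induction step is formally painless: if $f$ is sc$^k$, then the formula for $Df$ exhibits it as a composition of three sc$^k$ maps — the map $(v,k)\mapsto (v,f(v,k))$, the sc-smooth map $D\wh L$ supplied by the sc-smoothness of $\wh L$, and a final application of $f$ itself — so the chain rule (Theorem \ref{chain-thm1}) gives that $Tf$ is sc$^k$ and hence $f$ is sc$^{k+1}$. The main obstacle is precisely the sc$^1$ step, and within it the estimate on $[L(v_0+\delta v)^{-1} - L(v_0)^{-1}]\xi$: one must combine the sc$^0$ statement just proved with the fact that $\xi$ itself is small of order $|(\delta v,\delta k)|_1$, using linearity of $\wh L(v,\cdot)$ so that $\wh L$-differences involving such small $\xi$ decay with an extra factor, to obtain an $o$-bound rather than only an $O$-bound. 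Once this is settled, everything above $k=1$ reduces to purely formal chain-rule bookkeeping.
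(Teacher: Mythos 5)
Your overall architecture is correct and matches the paper's through the $\ssc^0$ step and the formula for the candidate derivative, and your decomposition of the error term into an $L(v_0+\delta v)^{-1}r_1$ piece and an $[L(v_0+\delta v)^{-1}-L(v_0)^{-1}]\xi$ piece is exactly the right one. The treatment of the first piece via the uniform lower bound on $L(v)$ and the $C^1$-remainder estimate for $\wh L$ is fine. Your chain-rule route for the higher-order bootstrap is a genuine alternative to the paper's construction of the lifted family $L^1(v,\delta v)$ and, granted the $\ssc^1$ step, should go through; the paper's version has the advantage that it only ever needs to apply the $\ssc^1$-conclusion to a new family satisfying the same axioms, while yours avoids re-verifying the uniform lower bound on tangent levels at the cost of tracking compositions more carefully.

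The genuine gap is in the second error term. You rewrite $[L(v_0+\delta v)^{-1}-L(v_0)^{-1}]\xi = f(v_0+\delta v,\xi)-f(v_0,\xi)$ and say that linearity of $\wh L(v,\cdot)$ plus $\ssc^0$-continuity of $f$ upgrade the bound from $O$ to $o$. This does not work. Rescaling by linearity gives
\[
f(v_0+\delta v,\xi)-f(v_0,\xi) \;=\; \bigl(|\delta v|+|\delta k|_1\bigr)\,\bigl[f(v_0+\delta v,\hat\xi)-f(v_0,\hat\xi)\bigr],
\qquad \hat\xi := \frac{\xi}{|\delta v|+|\delta k|_1},
\]
and one then needs $f(v_0+\delta v,\hat\xi)-f(v_0,\hat\xi)\to 0$ in $F_0$ with $\hat\xi$ ranging over a \emph{bounded but not convergent} set in $F_1$. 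Pointwise $\ssc^0$-continuity at $(v_0,0)$ says nothing about this, and there is no norm-continuity of $v\mapsto L(v)^{-1}$ that would make linearity do the work. The ingredient you are missing is precisely what the paper uses at this spot: the compactness of the inclusion $F_1\hookrightarrow F_0$ (together with finite dimension for the $\delta v$-part), which lets one pass to a subsequence with $\hat\xi\to\hat\xi_0$ in $F_0$ and then conclude from $\ssc^0$-continuity of $f$ at the two points $(v_0,\hat\xi_0)$ and $(v_0+\delta v,\hat\xi_0)$. Without invoking compactness your argument only proves the difference quotient is $O(1)$, not $o(1)$, and the $\ssc^1$ step — which you yourself identify as the crux — does not close.
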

\begin{proof}
We start by proving that $f$ is sc-continuous.
We fix a level $m$ and recall  that $\abs{L(v)h}_m\geq C_m\cdot \abs{h}_m$ for all $v\in V$ and $h\in E_m$.
We take a point $(v_0,k_0)\in V\oplus F_m$ and a sequence $(v_n,k_n)\in V\oplus F_m$ converging to $(v_0,k_0)$.  Setting $f(v_n,k_n)=h_n$ and $f(v_0,k_0)=h_0$ we compute,  
\begin{equation*}
\begin{split}
L(v_n)(h_n-h_0)&= k_n -L(v_n)h_0\\
&=k_n-L(v_0)h_0+L(v_0)h_0-L(v_0)h_0\\
&=k_n-k_0 + L(v_0)h_0-L(v_n)h_0=:\delta_n.
\end{split}
\end{equation*}         
Since the map $(v, h)\mapsto L(v)h$ is sc-smooth, it follows that  $\delta_n\rightarrow 0$ in $F_m$,  and using property (2) above,
\begin{equation*}
\begin{split}
\abs{f(v_n, k_n)-f(v_0, k_0)}_m&=\abs{h_n-h_0}_m\\
&\leq \frac{1}{C_m}\cdot \abs{L(v_n)(h_n-h_0)}_m= \frac{1}{C_m}\cdot \abs{\delta_n}_m\to 0.
\end{split}
\end{equation*}
Consequently, $f$ is sc-continuous.

Next we shall show that $f$ is  a map of class  $\ssc^1$. In order to define the candidate for the linearization  $Df(v,k): H\oplus F\to E$ of  the map $f$ at the point $(v, k)\in V\oplus F_1$, we formally differentiate the equation 
$$
\wh{L}(v, f(v, k))=k
$$
and obtain,
\begin{equation*}
\begin{split}
\delta k&=D_1\wh{L}(v, f(v, k))\cdot \delta v+D_2\wh{L}(v, f(v, k))\cdot Df(v, k)\cdot[\delta v, \delta k]\\
&=D_1\wh{L}(v, f(v, k))\cdot \delta v+\wh{L}(v, Df(v, k)\cdot[\delta v, \delta k])\\
&=D_1\wh{L}(v, f(v, k))\cdot \delta v+L(v)Df(v, k)\cdot[\delta v, \delta k]
\end{split}
\end{equation*}
where $[\delta v, \delta k]\in H\oplus F_1$. 
Hence
\begin{equation*}
\begin{split}
Df(v, k)\cdot[\delta v, \delta k]&=L(v)^{-1}(\delta k-D_1\wh{L}(v, f(v, k))\cdot \delta v)\\
&=f(v, \delta k-D_1\wh{L}(v, f(v, k))\cdot \delta v)
\end{split}
\end{equation*}
for all $[\delta v, \delta k]$ in $H\oplus E_1$. Observe that  for fixed $(v, k)\in V\oplus F_1$, the map  $(\delta v, \delta k)\mapsto  \delta k-D_1\wh{L}(v, f(v, k))$ defines  a bounded linear operator  between 
$H\oplus F_0$ and $F_0$. Then,  since $L(v):E_0\to F_0$ is a linear isomorphism,  the right-hand side in the first line  of the identity above defines  a bounded linear operator between $H\oplus F_0$  and $E_0$. In addition,  since $f$ is an $\ssc^0$-map, it follows that the map $(v,k,\delta v,\delta k)\in V\oplus F_{m+1}\oplus H\oplus F_m\to E_m$,  given by 
$$(v, k, \delta v, \delta k)\mapsto Df(v, k)\cdot[\delta v, \delta k], $$
is continuous so that  $Df$ is    $\ssc^0$.

It remains to verify the  approximation property.  We take $(v, k)\in V\oplus F_1$ and  $(\delta v,\delta k)\in H\oplus F_1$ and abbreviate 
$$\delta h=f(v+\delta v,k+\delta k)-f(v,k)-f(v,\delta k-D_1\wh{L}(v, f(v, k))\cdot \delta v)\in E_0.$$
Since  $C_0\abs{\delta h}_0\leq \abs{L(v+\delta v)\delta h}_0$, it suffices to show that 
$$\frac{1}{\abs{\delta v}+\abs{\delta k}_1}\abs{L(v+\delta v)\delta h}_0\to 0.$$

In order to prove  this,  we  compute at the point $(v, k)\in V\oplus F_1$ for $(\delta v,\delta k)\in H\oplus F_1$, 
\begin{equation*}
\begin{split}
&L(v+\delta v)\bigl[ f(v+\delta v,k+\delta k)-f(v,k)-f(v,\delta k-D_1\wh{L}(v, f(v, k))\cdot \delta v)\bigr]\\
&=k+\delta k - L(v+\delta v)\cdot f(v,k)-L(v+\delta v)\cdot f(v,\delta k-D_1\wh{L}(v, f(v, k))\cdot \delta v)\\
&=\delta k-D_1\wh{L}(v, f(v, k))\cdot \delta v\\
&-[L(v+\delta v)\cdot f(v,k)-L(v)\cdot f(v,k)-D_1\wh{L}(v, f(v, k))\cdot \delta v]\\
&-L(v+\delta v)\cdot f(v,\delta k-D_1\wh{L}(v, f(v, k))\cdot \delta v)\\
&=-[L(v+\delta v)\cdot f(v,k)-L(v)\cdot f(v,k)-D_1\wh{L}(v, f(v, k))\cdot \delta v]\\
&\phantom{=}-[L(v+\delta v)-L(v)]\cdot f(v, \delta k-D_1\wh{L}(v, f(v, k))\cdot \delta v)\\
\end{split}
\end{equation*}
using 
$ \delta k-D_1\wh{L}(v, f(v, k))\cdot \delta v=L(v)\cdot f(v,  \delta k-D_1\wh{L}(v, f(v, k))\cdot \delta v).$

The sc-smoothnes of the map 
$(v,h)\mapsto \wh{L}(v, h)=L(v)h$ implies for $k\in F_1$ and $h:=f(v,k)\in E_1$, 
\begin{equation*}
\frac{1}{\abs{\delta v}+\abs{\delta k}_1}\cdot\abs{(L(v+\delta v)\cdot f(v, k)-L(v)\cdot f(v,k)-D_1\wh{L}(v, f(v, k))\cdot \delta v }_0\\
\rightarrow 0
\end{equation*}
as $\abs{\delta v}+\abs{\delta k}_1\to 0$.
We next consider the second term. The map $f(v, k)$ is linear  with respect to the  variable $k$ so that  for fixed $(v,k)$ 
\begin{equation*}
\begin{split}
&\frac{1}{\abs{\delta v}+\abs{\delta k}_1}
\cdot \left| [ L(v+\delta v)-L(v)]\cdot  f(v,\delta k-D_1\wh{L}(v, f(v, k))\cdot \delta v )\right|_0\\
&\phantom{=}\leq  \biggl| [ L(v+\delta v)-L(v)]\cdot f\left(v,\frac{\delta k}{\abs{\delta v}+\abs{\delta k}_1}\right)\biggr|_0\\
&\phantom{=}+ \biggl| [ L(v+\delta v)-L(v)]\cdot  f\left(v,D_1\wh{L}(v, f(v, k))\frac{\delta v}{\abs{\delta v}+\abs{\delta k}_1}\right)\biggr|_0\\
&\phantom{=}=I_1+I_2
\end{split}
\end{equation*}
Since the   embedding $E_1\rightarrow E_0$ is compact  and $\delta k\in F_1$, we may assume without loss of generality that 
$\frac{\delta k}{\abs{\delta v}+\abs{\delta k}_1}$  converges in $F_0$ and since the maps $f$ is $\ssc^0$ and $(v, h)\mapsto \wh{L}(v, h)$ is sc-smooth, we conclude that $I_1\to 0$ as $\abs{\delta v}+\abs{\delta k}_1\to 0$.

Since  $\delta v$ is an element of the finite-dimensional space $H$, we may assume that 
$\frac{\delta v}{\abs{\delta v}+\abs{\delta k}_1}$  converges in $F_0$. Using again that  $f$ is $\ssc^0$ and $(v, h)\mapsto \wh{L}(v, h)$ is sc-smooth, we conclude that also the second term $I_2$ converges to $0$ as $\abs{\delta v}+\abs{\delta k}_1\to 0.$  Summing up, we have proved that 
$$\frac{1}{\abs{\delta v}+\abs{\delta k}_1}\abs{L(v+\delta v)\delta h}_0\to 0. $$
Consequently,  the map  $f:V\oplus F\to E$ is of class $\ssc^1$.

We have also proved that the tangent map $Tf:V\oplus E_1\oplus H\oplus E_0\to F_1\oplus F_0$ has the form
$$
Tf(v,k,\delta v,\delta k)=(f(v,k),f(v,\delta k-D_1\wh{L}(v, f(v, k))\cdot \delta v))
$$
which  is an $\ssc^0$-map.

To prove that $f$ is of class $\ssc^2$, it suffices to show that $f$  is of class $\ssc^2$ on $V'\oplus F$ for every subset $V'\subset V$ having  compact closure in $ V$.
We 
introduce the family $(v, \delta v)\mapsto L^1(v,\delta v)$,  parametrized by $(v,\delta v)\in TV$,  of sc-operators  $L^1(v, \delta v):TE\rightarrow TF$  defined by
\begin{equation*}
\begin{split}
(h,\delta h)\mapsto  &(L(v)h, D\wh{L}(v, h)\cdot [\delta v, \delta h])\\
&=(L(v)h, L(v)\delta h+ D_1\wh{L}(v, h)\cdot \delta v).
\end{split}
\end{equation*}
It  follows from the properties of  the family $v\mapsto L(v)$ that the map  $L^1(v, \delta v)$ is an sc-isomorphism and the map 
$$
TV\oplus TE\rightarrow TF, \quad (v,\delta v,h,\delta h)\mapsto  L^1(v,\delta v)(h,\delta h)
$$
is sc-smooth. 

We fix an open subset $V'\subset V$  having compact closure in $V$.  Since by our assumption the map $\wh{L}:V'\oplus E\to F$ is sc-smooth, it follows that also the map $$V'\oplus H\oplus E \to F,\quad (v, \delta v, h)\mapsto D_1\wh{L}(v, h)\cdot \delta v$$
is sc-smooth. We conclude, by a compactness argument, that for every level $m$ there exists a positive constant $d_m$   so that
$$
\abs{D_1\wh{L}(v,h)\delta v}_m\leq d_m\cdot \abs{h}_{m+1}\cdot \abs{\delta v}
$$
for $v\in V'$, $\delta v\in H$,  and $h\in F_{m+1}$.  This implies that given level $m$ there exists a constant $C'_m$ such that 
\begin{equation}\label{est-again0}
\begin{split}
\abs{  L^1(v,\delta v)(h,\delta h) }_m\geq C_m'\cdot \abs{(h,\delta h)}_m
\end{split}
\end{equation}
for all $v\in V'$ and all $\delta v\in H$  satisfying  $|\delta v|<1$. 
Indeed, if $0<\varepsilon\leq C_m$, where $C_m$ is the constant required in property (2) at the beginning of this section,  then 
\begin{equation*}
\begin{split}
\abs{  L^1(v,\delta v)(h,\delta h) }_m&= |L(v) h|_{m+1} + |L(v)\delta h +D_1\wh{L}(v,h)\delta v)|_m\\
&\geq C_{m+1} \cdot |h|_{m+1} + C_m\cdot |\delta h +L(v)^{-1}D_1\wh{L}(v,h)\delta v)|_m\\
&\geq C_{m+1} \cdot |h|_{m+1} +\varepsilon\cdot  |\delta h +L(v)^{-1}D_1\wh{L}(v,h)\delta v)|_m\\
&\geq  C_{m+1} \cdot |h|_{m+1} + \varepsilon\cdot |\delta h|_m - \varepsilon\cdot |L(v)^{-1}D_1\wh{L}(v,h)\delta v)|_m\\
&\geq C_{m+1} \cdot |h|_{m+1} + \varepsilon\cdot |\delta h|_m - \varepsilon\cdot C_m\cdot d_m \cdot |h|_{m+1}
\end{split}
\end{equation*}
for all $v\in V'$ and all $\delta v\in H$  satisfying  $|\delta v|<1$. 
Hence, taking $\varepsilon>0$ small enough we obtain the desired estimate \eqref{est-again0}.
Now our previous discussion applied to $L^1$ shows that the map 
$$
f^1:V'\oplus \{\delta v\in H\vert \ |\delta v|<1\}\oplus TF\rightarrow TE,
$$
 defined  as the solution of 
$$
L^1(v,\delta v)f^1(v,\delta v,k,\delta k)=(k,\delta k), 
$$
is of class  $\ssc^1$. Now we observe that
$$
f^1(v,\delta v,k,\delta k)= (f(v,k),f(v,\delta k-D_1\wh{L}(v, f(v, k))\cdot \delta v)).
$$
This  shows that the tangent map $Tf$ is of class $\ssc^1$ implying that $f$ is of class $\ssc^2$.
The result now  follows by induction.
\end{proof}
The above result remains true if $V$ is relatively open in
the partial quadrant of a finite-dimensional vector space.

\subsection{Cauchy-Riemann Operators}
The crucial point in the discussion of the Fredholm property of our Cauchy-Riemann operator
is the behavior of  the operator under  the  gluing of  the half-cylinders.
We start with  the linear Cauchy-Riemann operator and first recall some standard facts.  As usual we use the  symbol $Z_a$ to denote the finite glued cylinders  and $C_a$  to denote the infinite glued cylinders.

For the first result we work on the sc- Hilbert space $H^{3,\delta_0}_c(C_a,{\mathbb R}^2)$ which consists of all maps $u:C_a\to \R^2$ in  $H^3_{loc} $ so that there exists a constant $c\in {\mathbb R}^2$ for which  the map $u-c$ has weak  partial derivatives up to order 3 which weighted by $e^{\delta_0|s|}$ belong to  $L^2([0,\infty)\times S^1)$  while the weak partial derivatives up to order $3$ of the map $u+c$ belong to $L^2((-\infty,0]\times S^1)$. 
The level $m$ of  the sc-Hilbert space $H^{3,\delta_m}_c(C_a,{\mathbb R}^2)$  corresponds to the Sobolev regularity $(m+3,\delta_m)$ and the level $m$ of $H^{2,\delta_0}_c(C_a,{\mathbb R}^2)$  corresponds to  the regularity  $(2+m,\delta_m)$ where $(\delta_m)_{m\geq 0}$ is a strictly increasing sequence satisfying $0<\delta_m<2\pi$. The norms of these Hilbert spaces are defined in Section \ref{esttotalgluing}.
\begin{lem}\label{cr-asym}
The Cauchy-Riemann operator 
$$
\ov{\partial}_0:H^{3,\delta_0}_c(C_a,{\mathbb R}^2)\rightarrow H^{2,\delta_0}(C_a,{\mathbb R}^2), \quad \xi\mapsto \ov{\partial}_0\xi
$$
is an sc-isomorphism. In particular, for every $m\geq 0$ there exists a constant $C_m$ such that 
$$
\frac{1}{C_m}\cdot \abs{\xi}_{H^{3+m,\delta_m}_c}\leq \abs{\ov{\partial}_0\xi}_{ H^{2+m,\delta_m}}\leq C_m\cdot \abs{\xi}_{H^{3+m,\delta_m}_c}.
$$
\end{lem}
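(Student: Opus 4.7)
The plan is to reduce to the standard cylinder and use Fourier analysis in the circle variable, exploiting the fact that the weights $\delta_m$ stay strictly below $2\pi$, which is the gap in the spectrum of the asymptotic operator $-i\tfrac{d}{dt}$ on $L^2(S^1)$. First, since $C_a$ is biholomorphic to $\mathbb{R}\times S^1$ (identify the two sets of coordinates via $(s,t)\sim(s-R,t-\vartheta)$), we may replace $C_a$ by $\mathbb{R}\times S^1$ with its standard complex structure. After this identification an element $\xi\in H^{3,\delta_0}_c(C_a,\mathbb{R}^2)$ becomes a map $\xi:\mathbb{R}\times S^1\to\mathbb{C}$ of class $H^3_\loc$ for which there exists $c\in\mathbb{C}$ so that $\xi-c\in H^{3,\delta_0}$ over $[0,\infty)\times S^1$ and $\xi+c\in H^{3,\delta_0}$ over $(-\infty,0]\times S^1$. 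The operator we must invert is $\ov{\partial}_0\xi=\tfrac12(\partial_s\xi+i\partial_t\xi)$, and it must be shown to be a topological isomorphism $H^{3+m,\delta_m}_c\to H^{2+m,\delta_m}$ for every $m\ge 0$, with norms of the inverse bounded independently of $m$'s level within each fixed $m$.

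Next, I expand in Fourier series on $S^1$: write $\xi(s,t)=\sum_{k\in\mathbb{Z}}\xi_k(s)e^{2\pi ikt}$ and similarly $\eta(s,t)=\sum\eta_k(s)e^{2\pi ikt}$ for a prescribed right-hand side $\eta\in H^{2+m,\delta_m}$. The equation $\ov{\partial}_0\xi=\eta$ decouples mode by mode into the first-order ODEs
\begin{equation*}
\xi_k'(s)-2\pi k\,\xi_k(s)=2\eta_k(s),\qquad k\in\mathbb{Z}.
\end{equation*}
For $k\neq 0$ the characteristic exponent $2\pi k$ satisfies $|2\pi k|\ge 2\pi>\delta_m$, so by variation of parameters
\begin{equation*}
\xi_k(s)=-2\int_s^{\pm\infty}e^{2\pi k(s-\tau)}\eta_k(\tau)\,d\tau
\end{equation*}
with the $+\infty$ choice for $k<0$ and the $-\infty$ choice for $k>0$, giving the unique $H^{3+m,\delta_m}$-solution together with the two-sided bound $\|\xi_k\|_{H^{3+m,\delta_m}(\mathbb{R})}\le C_m\|\eta_k\|_{H^{2+m,\delta_m}(\mathbb{R})}$, where $C_m$ is controlled by $(2\pi-\delta_m)^{-1}$. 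For $k=0$ the mode equation is $\xi_0'=2\eta_0$; since $\eta_0\in H^{2+m,\delta_m}(\mathbb{R})$ decays exponentially at both ends, the number
\begin{equation*}
c:=\int_{-\infty}^{\infty}\eta_0(\tau)\,d\tau
\end{equation*}
is well defined, and $\xi_0(s):=-c+2\int_{-\infty}^{s}\eta_0(\tau)\,d\tau$ is the unique solution with the required antipodal limits $\pm c$ at $\pm\infty$, with $r_0:=\xi_0\mp c\in H^{3+m,\delta_m}(\mathbb{R}^\pm)$ estimated again in terms of $\|\eta_0\|_{H^{2+m,\delta_m}}$.

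Finally, I reassemble the Fourier modes. Summing the mode-wise estimates via Parseval and observing that all derivatives $\partial_s^i\partial_t^j$ act diagonally on Fourier modes with multipliers of size at most $|k|^j$ times an $s$-derivative of $\xi_k$, one obtains the two-sided estimate
\begin{equation*}
\tfrac{1}{C_m}\,\|\xi\|_{H^{3+m,\delta_m}_c}\le\|\ov{\partial}_0\xi\|_{H^{2+m,\delta_m}}\le C_m\,\|\xi\|_{H^{3+m,\delta_m}_c}
\end{equation*}
with a constant $C_m$ depending only on $m$ and $\delta_m<2\pi$. This bound shows $\ov{\partial}_0$ is an isomorphism on every level and, together with the trivial fact that $\ov{\partial}_0$ is sc-linear, that it is an sc-isomorphism.

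The main obstacle I foresee is the careful bookkeeping for the zero Fourier mode: unlike the higher modes, it carries the asymptotic constant $c$ and is not controlled by a spectral gap, so one must verify by hand both that the antipodal limit condition forces $c=\int\eta_0$ and that the resulting antiderivative belongs to the weighted Sobolev space with norm controlled by $\|\eta_0\|_{H^{2+m,\delta_m}}$. Once this is settled, the higher-mode analysis is standard Lockhart--McOwen theory in a form simple enough to carry out explicitly because the limiting operator is constant-coefficient.
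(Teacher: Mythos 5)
Your overall strategy is sound and in fact more self-contained than what the paper offers: the paper gives no detailed proof, only a remark (after Lemma~\ref{isomo-lem}) that identifies $C_a$ with the Riemann sphere minus two antipodal points and appeals to \cite{HZ} for the Cauchy--Riemann operator on the sphere together with the asymptotic (Lockhart--McOwen) analysis near the ends. Your Fourier-in-$t$ decomposition makes the spectral-gap mechanism ($\delta_m<2\pi=\operatorname{dist}(0,\operatorname{spec}(-i\tfrac{d}{dt})\setminus\{0\})$) explicit, and your treatment of the zero mode correctly identifies the antipodal constant $c=\int_{\mathbb{R}}\eta_0$ and shows the kernel is trivial (a constant solution would need $c=-c$). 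Both routes rest on the same two facts — index $0$ in the gap range of weights and trivial kernel once the antipodal constraint is imposed — so conceptually you and the paper agree; you simply carry out by hand what the paper delegates to references.

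There is, however, a concrete sign slip in your integral formula for the nonzero modes. You write
\[
\xi_k(s)=-2\int_s^{\pm\infty}e^{2\pi k(s-\tau)}\eta_k(\tau)\,d\tau,
\]
``with the $+\infty$ choice for $k<0$ and the $-\infty$ choice for $k>0$.'' This is backwards. For $k>0$ the homogeneous solution $e^{2\pi k s}$ blows up as $s\to+\infty$, so the decaying solution comes from integrating from $+\infty$, namely $\xi_k(s)=-2\int_s^{+\infty}e^{2\pi k(s-\tau)}\eta_k(\tau)\,d\tau$; for $k<0$ the integral must run from $-\infty$. As written, the integrand $e^{2\pi k(s-\tau)}\eta_k(\tau)$ for $k<0$ grows like $e^{(2\pi|k|-\delta_m)\tau}$ as $\tau\to+\infty$ and the improper integral diverges. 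Once the two cases are swapped the rest of the argument — the uniform mode-wise bound controlled by $(2\pi-\delta_m)^{-1}$, the Parseval reassembly with the $|k|^j$ multipliers absorbed by the ODE $\xi_k'-2\pi k\xi_k=2\eta_k$, and the two-sided estimate on every level $m$ — goes through.
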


We also need the Cauchy Riemann operator in a different set-up. 
Assume that we have two copies of the cylinder ${\mathbb R}\times S^1$, which we denote by $\Sigma^\pm$. Then viewing
${\mathbb R}^+\times S^1\subset \Sigma^+$ and ${\mathbb R}^-\times S^1\subset \Sigma^-$ the original gluing construction carried out for the half-cylinders 
results as before in $C_a$, besides $Z_a$  we also have an infinite cylinder denoted by $Z_a^\ast$,  as illustrated in Figure \ref{Fig8}.  It is important that  $Z_a^\ast$ and $C_a$ both contain 
$Z_a$ in a natural way and the two different holomorphic coordinates extend to $Z_a^\ast$ as well as to $C_a$. 
\begin{center}
\begin{figure}[htbp]
\mbox{}\\[2pt]
\centerline{\relabelbox \epsfxsize 3.2truein \epsfbox{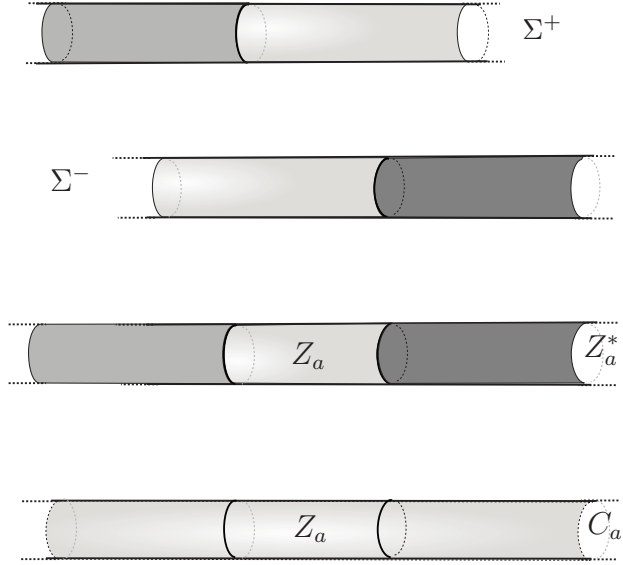}
\relabel {a}{\small{$\Sigma^+$}} 
\relabel {b}{$\Sigma^-$} 
\relabel {c}{$Z^\ast_a$}
\relabel {d}{$C_a$}
\relabel {e}{$Z_a$}
\relabel {f}{$Z_a$}
\endrelabelbox}
\mbox{}\\[2pt]
\caption{The extended cylinder $Z_a^\ast$.}\label{Fig8}
\end{figure}
\end{center}

The Hilbert spaces  $H^{3+m,-\delta_m}(Z_a^\ast,{\mathbb R}^2)$ for $m\geq 0$ consist of maps $u:Z_a^\ast\rightarrow {\mathbb R}^2$ for which the associated maps $v:{\mathbb R}\times S^1\rightarrow {\mathbb R}^2$, defined by $ (s,t)\rightarrow u([s,t])$ have partial derivatives up to order $3+m$ which, if  weighted by $e^{-\delta_m|s-\frac{R}{2}|}$ belong to $L^2(\R\times S^1)$. The spaces $H^{2+m,-\delta_m}(Z_a^\ast,{\mathbb R}^2)$  are defined analogously. The norms are denoted by $\nr u\nr_{3+m,-\delta_m}^{\ast}$, respectively $\nr u\nr_{2+m,-\delta_m}^{\ast}$.
The spaces $H^{3+m,-\delta_m}(Z_a,{\mathbb R}^2)$ and  $H^{2+m,-\delta_m}(Z_a,{\mathbb R}^2)$ are defined the same way. Their norms are denoted by $\nr u\nr_{3+m,-\delta_m}$, respectively by $\nr u\nr_{2+m,-\delta_m}$. We would like to point out that there is  no sc-structures on $H^{3,-\delta_0}(Z^*_a, \R^2) $, respectively $H^{2,-\delta_0}(Z^*_a, \R^2)$,  where the level $m$ corresponds  to the regularity $(3+m,-\delta_m)$, respectively $(2+m, -\delta_m)$.  
We denote  by
$[u]_a$ the average of a map $u:{\mathbb  R}\times S^1\rightarrow {\mathbb R}^2$ over the  circle at $[\frac{R}{2},t]$,   defined by 
$$
[u]_a:=\int_{S^1} u\left(\left[\frac{R}{2},t\right]\right) dt.
$$
\begin{lem}\label{cr-mean}
The Cauchy-Riemann operator
$$
\ov{\partial}_0:H^{3+m,-\delta_m}(Z_a^{\ast},{\mathbb R}^2)\rightarrow H^{2+m,-\delta_m}(Z_a^{\ast},{\mathbb R}^2),\quad \quad \xi\mapsto \carem u
$$
is a surjective Fredholm operator of real Fredholm index equal to $2$ for all $m\geq 0$. The kernel consists of the constant functions. Moreover,  there exists a constant $C_m>0$ such that
$$
\frac{1}{C_m}\cdot\nr u-[u]_a\nr_{m+3,-\delta_m}^{\ast}\leq \nr \bar{\partial}_0( u-[u]_a)\nr_{m+2,-\delta_m}^{\ast}\leq C_m\cdot\nr u-[u]_a\nr_{m+3,-\delta_m}^{\ast}.
$$
\end{lem}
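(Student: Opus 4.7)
The plan is first to reduce to a fixed model cylinder, then to perform a direct Fourier analysis on each mode. Using the global coordinate $(s,t)\in\R\times S^1$ extending from $\Sigma^+$ over $Z^\ast_a$ and translating by $R/2$, I would identify $H^{k+m,-\delta_m}(Z^\ast_a,\R^2)$ with the fixed weighted space on $\R\times S^1$ carrying the symmetric weight $e^{-\delta_m|s|}$; the Cauchy-Riemann operator is translation invariant and the loop-average $[u]_a$ transforms into $\int_{S^1}v(0,t)\,dt$, so every constant produced below will be automatically independent of $a$.

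On this fixed cylinder I would expand $v(s,t)=\sum_{k\in\Z}v_k(s)e^{2\pi ikt}$, identifying $\R^2\cong\C$ by the complex structure, to reduce $\ov{\partial}_0 v=w$ to the decoupled family of ODEs $v_k'-2\pi k\,v_k=w_k$. The homogeneous solutions $c_ke^{2\pi ks}$ lie in $L^{2,-\delta_m}(\R)$ iff $2\pi|k|<\delta_m$, so the hypothesis $\delta_m<2\pi$ forces $k=0$ and identifies the kernel of $\ov{\partial}_0$ with the two-dimensional space of $\R^2$-valued constants. For surjectivity I would write down the explicit particular solution
\begin{equation*}
v_k(s)=\begin{cases}-\int_0^\infty e^{-2\pi k\tau}w_k(s+\tau)\,d\tau & \text{if } k\ge 1,\\[0.5ex] \phantom{-}\int_0^s w_0(\sigma)\,d\sigma & \text{if } k=0,\\[0.5ex] \phantom{-}\int_0^\infty e^{2\pi k\tau}w_k(s-\tau)\,d\tau & \text{if } k\le -1,\end{cases}
\end{equation*}
and bound $|v_k|\,e^{-\delta_m|\cdot|}$ pointwise by the convolution of $|w_k|\,e^{-\delta_m|\cdot|}$ with the one-sided kernel $e^{-(2\pi|k|-\delta_m)\tau}\mathbf{1}_{\tau\ge 0}$ (respectively $e^{-\delta_m\tau}\mathbf{1}_{\tau\ge 0}$ for $k=0$), using the triangle inequality $|s|\le|s+\tau|+\tau$ inside the exponent. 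Young's inequality then yields $\|v_k\|_{L^{2,-\delta_m}}\le(2\pi|k|-\delta_m)^{-1}\|w_k\|_{L^{2,-\delta_m}}$ for $k\ne 0$ and the analogous $\delta_m^{-1}$-bound for $k=0$; the bound for $|k|\ge 1$ is uniform, and combined with the ODE identity $v_k'=2\pi k\,v_k+w_k$ it gains one factor of $k$. Iterating on $s$- and $t$-derivatives and summing via Parseval produces the elliptic estimate $\|v\|_{H^{m+3,-\delta_m}}\le C_m\|w\|_{H^{m+2,-\delta_m}}$, which establishes surjectivity and hence Fredholm index equal to $2$.

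The right-hand inequality in the Lemma is then just continuity of $\ov{\partial}_0$. For the left-hand inequality, the solution $v_0$ above satisfies $v_0(0)=0$, which in the untranslated picture reads $[v]_a=0$; since $u\mapsto[u]_a$ is an isomorphism from the two-dimensional kernel of $\ov{\partial}_0$ onto $\R^2$, the subspace $\{v:[v]_a=0\}$ is a topological complement of the kernel, the restriction of $\ov{\partial}_0$ to this complement is a bijection onto the target, and the explicit right inverse constructed above is precisely its inverse. The quantitative Young-Parseval bounds then supply the norm estimate with a constant $C_m$ depending only on $m$.

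The main obstacle is the surjectivity step: one must check carefully that the one-sided convolution formulas produce solutions lying in the correct weighted Sobolev space with estimates that are simultaneously uniform in the Fourier index $k$ and strong enough to gain one derivative for elliptic regularity. The spectral gap $2\pi-\delta_m>0$ provided by the hypothesis $\delta_m<2\pi$ is essential throughout, both for the statement that $\{v:[v]_a=0\}$ is a complement of the constants and for the uniform bound $(2\pi|k|-\delta_m)^{-1}\le(2\pi-\delta_m)^{-1}$ valid for every $|k|\ge 1$.
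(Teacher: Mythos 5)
The paper does not actually supply a proof of Lemma~\ref{cr-mean}; it is stated as a standard fact, with the parallel Lemma~\ref{cr-asym} only receiving a one-line pointer to the asymptotic theory of the operator near the ends (Lockhart--McOwen type arguments). Your proposal fills that gap with a concrete, self-contained Fourier-mode computation, and the argument is sound. You correctly normalise by the shift $s\mapsto s+\tfrac{R}{2}$ so that the weight becomes the fixed symmetric $e^{-\delta_m|s|}$, the operator is translation invariant, and $[u]_a$ becomes $\int_{S^1}v(0,t)\,dt$, which is what makes the constant $C_m$ independent of $a$ — the feature the paper actually needs when invoking this lemma in the proof of Proposition~\ref{family}. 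The mode-wise solvability, the identification of the kernel with constants via the spectral gap $0<\delta_m<2\pi$ (both inequalities used), the one-sided kernels whose $L^1$ norms are uniformly bounded by $(2\pi-\delta_m)^{-1}$ for $|k|\ge 1$ and by $\delta_m^{-1}$ for $k=0$, and the observation that the explicit right inverse lands in $\{[u]_a=0\}$ (so that the left inequality is the boundedness of the inverse on the natural complement of the kernel) are all correct. Two small cosmetic points: the ODE should read $v_k'-2\pi k v_k=2w_k$ because $\ov{\partial}_0=\tfrac12(\partial_s+i\partial_t)$, and the sentence ``iterating on $s$- and $t$-derivatives and summing via Parseval'' compresses the slightly delicate bookkeeping of transferring $s$-derivatives into powers of $k$ via $v_k^{(j)}=(2\pi k)v_k^{(j-1)}+2w_k^{(j-1)}$ while keeping the extra factor of $(1+|k|)^{-1}$ from the zeroth-order estimate; neither affects the validity of the argument.
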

\subsection{Proof of Proposition \ref{family}}
Recalling  Remark \ref{rem-x} and the definition of the family $\lambda\rightarrow L(\lambda)$ from Section \ref{nsection3.4}, we shall prove the Proposition \ref{family}.

We recall that  the space $\wh{E}$ consists of pairs $(h^+,h^-)$ with $h^{\pm}\in H^{3,\delta_0}_c({\mathbb R}^{\pm}\times S^1,{\mathbb R}^2)$ equipped with the obvious sc-structure and $\wh{E}_0$ is the closed subspace of $\wh{E}$ consisting of those pairs$(h^+, h^-)$  which satisfy $h^\pm(0,0)=(0,0)$ and $h^\pm(0,t)\in \{0\}\times {\mathbb R}$. 
Using the decomposition $h^\pm=h^\pm_{\infty}+r^\pm$ in which $h^\pm_{\infty}$ are the  asymptotic constants and  $r^\pm\in H^{3+m, \delta_m}(\R^\pm \times S^1)$, the $\wh{E}_m$-norm of $(h^+, h^-)$ is  defined as 
\begin{equation}\label{normeq1}
\abs{(h^+, h^-)}^2_{\wh{E}_m}=\abs{h^+_{\infty} }^2+ \abs{h^-_{\infty} }^2+\abs{r^+}^2_{H^{3+m, \delta_m}}+\abs{r^-}^2_{H^{3+m, \delta_m}}.
\end{equation}
We also recall the space $F$  consists of pairs $(\eta^+,\eta^-)\in H^{2, \delta_0}(\R^+\times S^1, \R^2)\oplus H^{2, \delta_0}(\R^-\times S^1, \R^2)$  equipped with the sc-structure $H^{2+m, \delta_m}(\R^+\times S^1, \R^2)\oplus H^{2, \delta_0}(\R^-\times S^1, \R^2)$. The $F_m$-norm of the pair $(\eta^+, \eta^-)\in F$ is given by
\begin{equation*}
\abs{(\eta^+, \eta^-)}_{F_m}^2=\abs{\eta^+}^2_{H^{m+2,\delta_m}}+\abs{\eta^-}^2_{H^{m+2,\delta_m}}.
\end{equation*}

\noindent{\bf Proposition 3.32}
{\em 
If $(\lambda, h^+, h^-)\to L(\lambda)(h^+, h^-)$ and $\lambda=(v, a)$ is the sc-smooth map defined  in \eqref{eq74} in a neighborhood of $\lambda_0=(v_0, 0)$, then there exists a constant $\sigma>0$  so that the following holds. 
\begin{itemize}
\item[(1)] If  $\lambda =(v, a)$ satisfies $\abs{\lambda-\lambda_0}<\sigma$, then the linear operator $L(\lambda):\wh{E}_0\to F$  is an sc-isomorphism.
\item[(2)]   For every $m\geq 0$ there exists a constant $C_m$ independent of $\lambda$ so that the  norm of the inverse operators
$$L(\lambda)^{-1}:F_m\to (\wh{E}_0)_m$$
is  bounded by $C_m$ for every   $\abs{\lambda-\lambda_0}<\sigma$.
\end{itemize}
}

\begin{proof} We already know  from Remark \ref{rem-x} that for $\lambda_0=(v_0, 0)$ the operator 
$L(\lambda_0):(\wh{E}_0)\to F$ is an sc-isomorphism. We also know that $\lambda\mapsto L(\lambda)$ is a  family of Fredholm operators of index $0$, 
$$
\text{ind}\ L(\lambda )=\text{ind}\ L(\lambda_0)=0.
$$
The family is not continuous in the operator norm.
Therefore, our  main task is  to prove the following  injectivity estimate.\\[1ex]
\noindent $(\ast)$ \:  There exists a constant $\sigma>0$ independent of $m$,
so that for every level $m$ there exists a constant $C_m$  
such that 
\begin{equation}\label{est-newm1}
\abs{L(\lambda )(h^+, h^-)}_{F_m}\geq C_m\abs{(h^+, h^-)}_{\wh{E}_m}
\end{equation}
for all $(h^+, h^-)\in (\wh{E_0})_m$ and all $\lambda$ satisfying $\abs{\lambda-\lambda_0}<\sigma$. \\

\noindent We claim that $(\ast)$ is a consequence of the following statement $(\ast \ast)$.\\[1ex]
\noindent $(\ast\ast )$ \:  
There exists an open neighborhood $U$ of  the point $\lambda_0=(v_0,0)$ in $H\times {\mathbb C}$ such that the following holds: 
Given a level $m$,  a point $\lambda\in U$, and two  sequence $\lambda_k\rightarrow \lambda$, and $(h^+_k,h^-_k)\in {(\wh{E}_0)}_m$ satisfying  $|(h^+_k,h^-_k)|_{\wh{E}_m}=1$
and 
$$
|{L(\lambda_k)(h^+_k,h^-_k)}|_{\wh{E}_m}\rightarrow 0, 
$$
the sequence $((h^+_k,h^-_k))$ has a convergent subsequence in $(\wh{E}_0)_m$.\\

Let us show that the statement $(\ast \ast)$ implies the statement $(\ast)$. Assuming that $(\ast\ast)$ holds, we  first consider the  level $0$ and claim that there are positive constants $\sigma'$ and $C_0$ such that 
\begin{equation}\label{lest0}
\abs{L(\lambda )(h^+, h^-)}_{F_0}\geq C_0\abs{(h^+, h^-)}_{\wh{E}_0}
\end{equation}
for all $\abs{\lambda-\lambda}<\sigma'$ and all $(h^+, h^-)\in  (\wh{E}_0)_0$.   Indeed, otherwise,  we find sequences $\lambda_k\to \lambda_0$ and $(h^+_k,h^-_k)\in {(\wh{E}_0)}_0$ satisfying $\abs{(h^+_k,h^-_k)}_{\wh{E}_0}=1$ and 
$L(\lambda_k)(h_k^+, h_k^-)\to  0$ in $F_0$. Applying $(\ast \ast)$,  we may assume that  the sequence  $(h^+_k, h^-_k)$ converges in $\wh{E}_0$ to  a pair $ (h^+, h^-)\in (\wh{E}_0)_0$ whose $\wh{E}_0$-norm is equal to $1$.
Consequently, 
$$L(\lambda_k)(h^+_k, h^-_k)\to L(\lambda_0)(h^+, h^-)=(0, 0).$$
This is impossible since $L(\lambda_0)$ is an isomorphism and so our claimed is proved. 
Since $\text{ind}\ L(\lambda )=\text{ind}\ L(\lambda_0)=0$, it follows  from  \eqref{lest0}  that the linear operators  $L(\lambda):\wh{E}_0\to F$  are sc-isomorphism for all $\abs{\lambda-\lambda_0}<\sigma'$.  This proves the statement (1) of  Proposition \ref{family}.

Next we fix a level $m$ and a parameter $\lambda$ satisfying $\abs{\lambda-\lambda_0}<\sigma'$.  
Arguing as above and using that $L(\lambda):(\wh{E}_0)_m\to F_m$ is an isomorphism for this $\lambda$,  we find an open neighborhood $U_{\lambda, m}$ of $\lambda$ in $H\times \C$ and a positive constant $c_{\lambda, m}$ such that 
\begin{equation}\label{localest1}
\abs{L(\lambda')(h^+, h^-)}_{F_m}\geq  c_{\lambda, m}\abs{(h^+, h^-)}_{\wh{E}_m}
\end{equation}
for all $\lambda'\in U_{\lambda, m}$ and $(h^+, h^-)\in (\wh{E}_0)_m$.

We choose  $0<\sigma<\sigma'$. Since the closed ball $\ov{B}_{\sigma}(\lambda_0)$ is compact in a finite dimensional space $H\times \C$, we find finitely many open sets $U_{\lambda_1, m}, \ldots ,U_{\lambda_{k_m},m}$ covering $B_{\sigma}(\lambda_0)$ such that the estimate \eqref{localest1} holds for all $\lambda\in U_{\lambda_j, m}$ with   constants $c_{\lambda_j, m}$ replacing $c_{\lambda,m}$. Choosing  $C_m:=\min\{c_{\lambda_1,m},\ldots, c_{\lambda_{k_m},m}\}$,  we conclude
$$\abs{L(\lambda')(h^+, h^-)}_{F_m}\geq  C_m\abs{(h^+, h^-)}_{\wh{E}_m}
$$
for all $\lambda\in B_{\sigma}(\lambda_0)$ and $(h^+, h^-)\in (\wh{E}_0)_m$.  Since the constant $\sigma$ is independent of level $m$, 
we have proved that $(\ast\ast)$ indeed implies $(\ast)$.

It remains to prove $(\ast \ast)$. We first define the set $U\subset H\times \C$.
We recall that on the half-cylinders $\R^\pm\times S^1$ we are given  smooth families $v\mapsto j^\pm (v)$ of complex structures  satisfying $j^+(v)=i$ on $[s_0-1, \infty )\times S^1$ and $j^-(v)=i$ on $(-\infty, -s_0+1]\times S^1$.  We recall the abbreviations
\begin{align*}
\ov{\partial}_vh&=\frac{1}{2}\left[ Th +i\circ Th\circ j(v)\right]\frac{\partial}{\partial s}\\
\ov{\partial}_0h&=\frac{1}{2}\left[ Th +i\circ Th\circ i\right]\frac{\partial}{\partial s}
\end{align*}
of the Cauchy-Riemann operators.

 In view of the standard elliptic estimates we have 
$$\abs{\ov{\partial}_{v_0}h}_{H^{2+m}}\geq C\abs{h}_{H^{3+m}}$$ 
for some positive constant $C$ and all $h\in H^{3+m}([0, s_0]\times S^1)$ having compact supports in $[0, s_0]\times S^1$  and satisfying $h(\{0\}\times S^1)\subset \{0\}\times \R$. 
Observe that since   $0<\delta_m<2\pi$, the norms on the Sobolev spaces  $H^{2+m} ([0, s_0]\times S^1)$ and $H^{2+m, \delta_m} ([0, s_0]\times S^1)$ are equivalent so that the  above estimate can be restated as 
$$\abs{\ov{\partial}_{v_0}h}_{H^{2+m, \delta_m}}\geq C\abs{h}_{H^{3+m,\delta_m}}.$$ 
Since 
\begin{equation*}
\begin{split}
\care_{v}h=\care_{v_0}h+\frac{1}{2} [ i\circ (T h)\circ (j^+(v)-j^+(v_0)]\frac{\partial }{\partial s}
\end{split}
\end{equation*}
and the family $v\mapsto  j^+(v)$ of complex structures on the half-cylinder $\R^+\times S^1$ is smooth, we conclude that 
$$
\abs{\care_{v}h}_{H^{2+m, \delta_m}}\geq \abs{\care_{v_0}h}_{H^{2+m, \delta_m}} -c(v)\abs{h}_{H^{3+m, \delta_m}}
$$
where $c(v)$ is a function converging to $0$ as $v\to v_0$.   Consequently, we may choose positive constants $C_m$ and $\rho$ so that 
\begin{equation}\label{crestaimaten1}
\abs{\care_{v}h}_{H^{2+m, \delta_m}}\geq C_m\abs{h}_{H^{3+m, \delta_m}}
\end{equation}
for  all $v\in B_{\rho}(v_0)$ and all maps  $h\in H^{3+m}([0, s_0]\times S^1)$ having compact supports in $[0, s_0]\times S^1$  and satisfying $h(\{0\}\times S^1)\subset \{0\}\times \R$. 
Taking, if necessary, smaller constants $C_m$ and $\rho$, we have also  the estimate 
$$\abs{\care_{v}h}_{H^{2+m, \delta_m}}\geq C\abs{h}_{H^{3+m, \delta_m}}$$
for the  Cauchy-Riemann operator $\care_{v}$ acting on maps $h\in H^{3+m}([-s_0,0]\times S^1)$ having compact supports in $[-s_0,0]\times S^1$  and satisfying $h(\{0\}\times S^1)\subset \{0\}\times \R$ for all $v\in B_{\rho}(v_0)$. Having defined  $\rho$,  we choose a sufficiently small number  $\tau>0$ satisfying  $2s_0+4<\varphi (\tau)=e^{\frac{1}{\tau}}-e$ and  set  
$$U=B_{\rho}(v_0)\times B_{\tau}(0)\subset H\times \C.$$

With this choice of  the set $U$ we are ready to prove the  statement  $(\ast \ast)$. 
We fix a point $\lambda=(v,a)\in U$, a level $m$,  and take  two sequences $(h^+_k, h^-_k)\in (\wh{E}_0)_m$ and $\lambda_k=(v_k, a_k)\in U$ satisfying 
 $\abs{(h^+_k, h^-_k)}_{\wh{E}_m}=1$ and $\lambda_k\to \lambda=(v, a)$ and 
\begin{equation}\label{kling}
\abs{L(\lambda_k)(h^+_k,h^+_k)}_{F_m}\rightarrow 0.
\end{equation}
We shall show  that the sequence $((h^+_k, h^-_k))$ has a converging subsequence in $(\wh{E}_0)_m$. 
Since 
$$1=\abs{(h^+_k, h^-_k)}^2_{\wh{E}_m}=\abs{h^+_{k,\infty} }^2+ \abs{h^-_{k,\infty} }^2+\abs{r^+_k}^2_{H^{3+m, \delta_m}}+\abs{r^-_k}^2_{H^{3+m, \delta_m}},$$
where  $h^{\pm}_k=h^\pm_{k, \infty}+r^\pm_k$ in which $h^\pm_{k, \infty}$ are  the asymptotic constants and $r^\pm_k\in H^{3+m, \delta_m}(\R^\pm \times S^1)$,   the sequences   $h^+_{k, \infty}$ and $ h^-_{k,\infty} $ are bounded  in $\R^2$ so that  we  can assume  without loss of generality the convergence $(h^+_{k, \infty},  h^-_{k,\infty} )\to (h^+_{\infty},  h^-_{\infty})$.  In addition, the  compact embedding theorem implies that there  are  subsequences again denoted by $h^+_k$ and $h^-_k$  converging in $H^{2+m,\delta_m}$ on $[0, s_0+1]\times S^1$ and $[-s_0-1,0]\times S^1$. 

Next we choose  a smooth function $\alpha_+:\R\to [0,1]$  having derivative $\alpha_+'\leq 0$ and 
satisfying  $\alpha_+(s)=1$ on $(-\infty, s_0-1]$ and $\alpha(s)=0$ on  $[s_0,\infty)$.  We define 
$\alpha_-(s)=\alpha_+(-s)$ and set $\gamma_\pm=1-\alpha_\pm$.  

In order to prove our claim we shall show, using the decomposition $h^\pm_k=h^\pm_{k,\infty}+r^\pm_k$,  that the sequences $(\alpha_+r^+_k, \alpha_-r^-_k)$ and $(\gamma_+r^+_k, \gamma_-r^-_k)$ posses subsequences satisfying 
\begin{equation}\label{contwo1}
(\alpha_+r^+_k, \alpha_-r^-_k)\to (f^+, f^-)\quad \text{and}\quad (\gamma_+r^+_k, \gamma_-r^-_k)\to (g^+, g^-)
\end{equation}
in $\wh{E}_m$ for two pairs $(f^+, f^-)$ and $(g^+, g^-)$ belonging to $\wh{E}_m$.  We already know that the sequence $(h^+_{k,\infty}, h^-_{k,\infty})$ of asymptotic constants converges to 
$(h^+_{\infty}, h^-_{\infty})$.  Then  assuming  the convergence in \eqref{contwo1} and  setting 
$h^\pm=h^\pm_{\infty}+f^\pm+g^\pm$, we conclude, using 
$h^\pm_{k}=h^+_{k,\infty}+\alpha_\pm r^\pm_k+\gamma_\pm r^\pm_k$,  that 
$$(h^+_k, h^-_k)\to (h^+, h^-)\quad \text{in $\wh{E}_m$},$$
as claimed.

We begin with the sequence $(\alpha_+r^+_k, \alpha_-r^-_k)$, and abbreviate 
\begin{equation}\label{eqeta1}
L(\lambda_k)(h_k^+, h^-_k)=(\eta^+_k, \eta^-_k).
\end{equation}
It  follows from  formula \eqref{cr-formula} for the operator $L(\lambda )$ that 
\begin{equation*}
\begin{bmatrix}\eta^+_k\\ \eta^-_k\end{bmatrix}=
\begin{bmatrix}\ov{\partial}_{v_k}h^+_k \\ \ov{\partial}_{v_k}h^-_k
\end{bmatrix}+\begin{bmatrix}\Phi^+_{a_k}(h^+_k, h^-_k)\\ \Phi^-_{a_k}(h^+_k, h^-_k)\end{bmatrix}
\end{equation*}
where the maps $\Phi^\pm_{a_k}(h^+_k, h^-_k)$ vanish outside of the finite cylinders $[\frac{R_k}{2}-1, \frac{R_k}{2}+1]\times S^1$ and $[-\frac{R_k}{2}-1, -\frac{R_k}{2}+1]\times S^1$, respectively. We recall that $\abs{a_k}<\tau$.  Since 
$2s_0+4<\varphi (\tau)$, it follows that $2s_0+4<R_k$ for all $k$. Since the functions $\alpha_\pm$ vanish for $s\geq s_0$ and $s\leq -s_0$, respectively, we conclude that 
$$\alpha_+\eta^+_k=\alpha_+\ov{\partial}_{v_k}h^+_k\quad \text{and}\quad \alpha_-\eta^-_k=
\alpha_-\ov{\partial}_{v_k}h^-_k.$$
Our assumption $L(\lambda_k)(h_k^+, h^-_k)=(\eta^+_k, \eta^-_k)\to (0, 0)$ in $F_m$
implies  the convergence $(\alpha_+\eta^+_k, \alpha_-\eta^-_k)\to (0, 0)$  in $F_m$.  Consequently,  
$$
(\alpha_+\ov{\partial}_{v_k}h^+_k, \alpha_-\ov{\partial}_{v_k}h^-_k)\to (0, 0)\quad \text{in $F_m$.  }
$$
Then using 
$$\ov{\partial}_{v}h^+_k=\ov{\partial}_{v_k}h^+_k+\frac{1}{2}\left[i\circ Th^+_k\circ (j^+(v)-j^+(v_k))\right]\circ \frac{\partial }{\partial s}$$
and the fact that  the family $v\mapsto j^+(v)$ is smooth and that $\abs{h^+_k}_{H^{3+m,\delta_m}}\leq 1$,  we find  also that 
\begin{equation}\label{conttwo2}
(\alpha_+\ov{\partial}_{v}h^+_k, \alpha_-\ov{\partial}_{v}h^-_k)\to (0, 0)\quad \text{in $F_m$}.
\end{equation}

Since $\alpha_+'$ vanishes outside of the interval $[s_0-1, s_0]$ and $\abs{h^+}_{H^{3+m,\delta_m}}\leq 1$,  the sequence $(\alpha_+h^+_k)$ is bounded in the $H^{3+m,\delta_m}$-norm and, by the Sobolev compact embedding theorem, we may assume that the sequence  converges in  the $H^{2+m,\delta_m}$-norm.  Now the equality $\ov{\partial}_{v}(\alpha_+h^+_k)=\alpha_+\ov{\partial}_{v}h^+_k+\alpha_+'h^+_k$ for all $k$  and the estimate \eqref{crestaimaten1} applied to $\alpha_+h^+_k-\alpha_+h^+_n$ show that
\begin{equation*}
\begin{split}
C_m\abs{\alpha_+h^+_k-\alpha_-h^-_n}_{H^{3+m,\delta_m}}&
\leq \abs{\ov{\partial}_{v}(\alpha_+h^+_k-\alpha_+h^+_n)}_{H^{2+m,\delta_m}}\\
&\leq \abs{ \alpha_+\ov{\partial}_{v}h^+_k-\alpha_+\ov{\partial}_{v}h^+_n}_{H^{2+m,\delta_m}}\\
&\phantom{\leq }+\abs{ \alpha_+'h^+_k- \alpha_+'h^+_n}_{H^{2+m,\delta_m}}.
\end{split}
\end{equation*}
Since we  already know that the sequences $(\alpha'_+h^+_k)$ and $(\alpha_+\ov{\partial}_{v}h^+_k)$ converge in $H^{2+m, \delta_m}$ on $[0,s_0+1]\times S^1$,  we conclude that  the right-hand side converges to $0$. Hence the sequence $(\alpha_+h^+_k)$ is a Cauchy sequence  and so converges  in the $H^{3+m,\delta_m}$-norm. The same argument applied to the sequence $(\alpha_-h^-_k)$ shows that  also  the sequence 
$(\alpha_-h^-_k)$ converges in $H^{3+m,\delta_m}$ and we conclude the convergence 
$(\alpha_+h^+_k, \alpha_-h^-_k)\to (\wt{f}^+, \wt{f}^-)$ in $\wh{E}_m$ to  some $(\wt{f}^+, \wt{f}^-)\in \wh{E}_m$.  Moreover, 
$\wt{f}^\pm (0, 0)=(0, 0)$ and 
$\wt{f}^\pm (\{0\}\times S^1)\subset  \{0\}\times \R$. Since $\alpha_\pm r^\pm_k=\alpha_\pm h^\pm_{k}-\alpha_\pm h^\pm_{k,\infty}$ and $h^\pm_{k, \infty}\to h^\pm_\infty$ and $\alpha_\pm$ is equal to $0$ if $s\geq s_0$ and $s\leq -s_0$, respectively, we conclude that also  the sequence $(\alpha_+r^+_k, \alpha_-r^-_k)$ converges to $(f^+, f^-)=(\wt{f}^+-\alpha_+h^+_{\infty}, \wt{f}^- -\alpha_-h^-_{\infty})$ in $\wh{E}_m$. This finishes  the proof of the first convergence in \eqref{contwo1}.

Next we prove  the convergence of the sequence $(\gamma_+r^+, \gamma_-r^-)$. In order to do this, we consider the maps $\oplus_{a_k}(r^+_k, r^-_k)$ and $\ominus_{a_k}(r^+_k, r^-_k)$.
Recall the convergence
\begin{equation*}\label{eqeta1}
L(\lambda_k)(h_k^+, h^-_k)=(\eta^+_k, \eta^-_k)\to (0, 0)\quad \text{in $F_m$}.
\end{equation*}
Hence $(\gamma_+\eta^+_k, \gamma_-\eta^-_k)\to 0$ in $F_m$, and 
the estimate  for the total hat-gluing in  Theorem \ref{poker1}  leads to   
\begin{equation*}\label{newconv1}
e^{\delta_m\frac{R_k}{2}} \nr \wh{\oplus}_{a_k}(\gamma_+\eta^+_k, \gamma_-\eta^-_k)
\nr_{m+2,-\delta_{m}}\rightarrow 0
\end{equation*}
and
\begin{equation*}\label{newconv2}
e^{\delta_m\frac{R_k}{2}} \nr
 \wh{\ominus}_{a_k}(\gamma_+\eta^+_k, \gamma_-\eta^-_k)\nr_{m+2,-\delta_{m}}\rightarrow 0,
\end{equation*}
so that 
\begin{equation}\label{newconv3}
\qquad \nr \wh{\oplus}_{a_k}(\gamma_+\eta^+_k, \gamma_-\eta^-_k)
\nr_{m+2,-\delta_{m}}\leq \varepsilon_ke^{-\delta_m\frac{R_k}{2}}
\end{equation}
and
\begin{equation}\label{newconv4}
\qquad \nr \wh{\ominus}_{a_k}(\gamma_+\eta^+_k, \gamma_-\eta^-_k)\nr_{m+2,-\delta_{m}}\leq \varepsilon_ke^{-\delta_m\frac{R_k}{2}}
\end{equation}
for a  sequence $\varepsilon_k$ of positive numbers converging to $0$.

Now, in view of  the formula \eqref{CRUCIALLL}, 
the maps $\oplus_{a_k}(r^+_k, r^-_k)$ and $\ominus_{a_k}(r^+_k, r^-_k)$ solve the equations
\begin{equation}\label{again1}
\ov{\partial}_{v_k}\oplus_{a_k}(r^+_k, r^-_k)=\wh{\oplus}_{a_k}(\eta^+_k, \eta^-_k)
\end{equation}
\begin{equation}\label{again2}
\ov{\partial}_{0}\ominus_{a_k}(r^+_k, r^-_k)=\wh{\ominus}_{a_k}(\eta^+_k, \eta^-_k).
\end{equation}
Introducing the functions $\gamma_k(s)=\oplus_{a_k}(\gamma_+, \gamma_-)=\beta_{a_k}(s)\cdot \gamma_+(s)+(1-\beta_{a_k}(s))\cdot \gamma_-(s-R_k)$, we note that $\gamma_k$ is equal to $0$   on  the interval $[s_0-1, R_k-s_0+1]$ and equal to $1$ on $[s_0, R_k-s_0]$. 
Hence  $\gamma_k\oplus_{a_k}(r^+_k, r_k^-)=\oplus_{a_k}(\gamma_+r^+_k, \gamma_-r^-_k)$ and the same identity holds for the hat gluing.  Now, multiplying \eqref{again1} by $\gamma_k$, we obtain
\begin{equation*}
\begin{split}
\wh{\oplus}_{a_k}(\gamma_+\eta^+_k, \gamma_-\eta_k^- )&=\gamma_k \wh{\oplus}_{a_k}(\eta^+_k, \eta_k^-)\\
&=\gamma_k \ov{\partial}_{v_k}\oplus_{a_k}(r^+_k, r^-_k)\\
&=\ov{\partial}_{v_k}(\gamma_k\oplus_{a_k}(r^+_k, r^-_k))-(\gamma_k)'\oplus_{a_k}(r^+_k, r^-_k))\\
&=\ov{\partial}_{v_k}(\gamma_k\oplus_{a_k}(r^+_k, r^-_k))-
\oplus_{a_k}(\gamma_+'r^+_k,\gamma'_- r^-_k))\\
&=\ov{\partial}_{v_k}\oplus_{a_k}(\gamma_+r^+_k, \gamma_-r^-_k)-
\wh{\oplus}_{a_k}(\gamma_+'r^+_k,\gamma'_- r^-_k).
\end{split}
\end{equation*} 
Since $\gamma_k=0$ outside of $[s_0-1,R_k-s_0+1]$ and 
$j^\pm(v_k)=i$ on $[s_0-1,\infty )\times S^1$ and $(-\infty ,-s_0+1]\times S^1$, the above identity becomes 
\begin{equation}\label{last}
\ov{\partial}_{0}\oplus_{a_k}(\gamma_+r^+_k, \gamma_-r^-_k)=\wh{\oplus}_{a_k}(\gamma_+\eta^+_k, \gamma_-\eta_k^- )+\wh{\oplus}_{a_k}(\gamma_+'r^+_k,\gamma'_- r^-_k).
\end{equation}
The maps  $\oplus_{a_k}(\gamma_+r^+_k, \gamma_-r^-_k)$ and $ \wh{\oplus}_{a_k}(\gamma_+\eta^+_k, \gamma_-\eta^-_k):Z_{a_k}\to \R^2$ have  compact supports contained in the finite cylinders  $[s_0-1, R_k-s_0+1]\times S^1$ and hence we can view them as defined on the infinite cylinders $Z^*_{a_k}$.  By  Lemma \ref{cr-mean},  there are unique maps $\xi_k\in H^{3+m,-\delta_m}(Z^*_{a_k}, \R^2)$ having mean-values $[\xi_k]_{a_k}=0$ and satisfying 
\begin{equation}\label{poker100}
\ov{\partial}_0\xi_k=\wh{\oplus}_{a_k}(\gamma_+\eta^+_k, \gamma_-\eta^+_k).
\end{equation}
In addition, by \eqref{newconv3} and Lemma \ref{cr-mean}, 
\begin{equation}\label{newestcor1}
\nr \xi_k\nr^{\ast}_{3+m,-\delta_m}\leq C_m\varepsilon_ke^{-\delta_m\frac{R_k}{2}}
\end{equation}
where the constant $C_m$ is independent  of $k$.
Then, in view of \eqref{last}, 
\begin{equation}\label{lastminus}
\ov{\partial}_{0}[ \oplus_{a_k}(\gamma_+r^+_k, \gamma_-r^-_k)-\xi_k]=\wh{\oplus}_{a_k}(\gamma_+'r^+_k,\gamma'_- r^-_k).
\end{equation}
Abbreviating 
$$q_k=\oplus_{a_k}(\gamma_+r^+_k, \gamma_-r^-_k),$$
 we claim that $q_k-\xi_k-[q_k-\xi_k]_{a_k}$ belongs to $H^{3+(m+1),-\delta_{m+1}}(Z^*_{a_k})$. To see this, we first estimate the $H^{3+m,-\delta_{m+1}}(Z^*_{a_k})$-norms of the maps $\wh{\oplus}_{a_k}(\gamma_+'r^+_k,\gamma'_- r^-_k)$. 
The derivatives $\gamma'_+$ and $\gamma_-'$ vanish  outside of the intervals $[s_0-1,s_0]$ and $[-s_0, -s_0+1]$ so that the square of the $H^{3+m,-\delta_{m+1}}(Z^*_{a_k})$-norm of 
$\wh{\oplus}_{a_k}(\gamma_+'r^+_k,\gamma'_- r^-_k)$ is equal to a constant times the sum of the following integrals
\begin{equation*}
\int_{\Sigma}\abs{D^{\alpha}r^+_k}^2e^{-2\delta_{m+1}\abs{s-\frac{R_k}{2}}}\quad \text{and}\quad 
\int_{\Sigma}\abs{D^{\alpha}r^-_k(s-R_k,t-\vartheta_k)}^2e^{-2\delta_{m+1}\abs{s-\frac{R_k}{2}}}
\end{equation*}
where the sum is taken over all multi-indices $\alpha$ of length $\abs{\alpha}\leq 3+m$ and where $\Sigma=[s_0-1, s_0]\times S^1$.

Recalling that $s_0+2<\frac{R_k}{2}$ and  $\abs{r^+_k}_{H^{3+m, \delta_m}}\leq 1$, the  first integral can be estimated as follows,
\begin{equation*}
\begin{split}
&\int_{[s_0-1, s_0]\times S^1}\abs{D^{\alpha}r^+_k}^2e^{-2\delta_{m+1}\abs{s-\frac{R_k}{2}}}=
\int_{[s_0-1, s_0]\times S^1}\abs{D^{\alpha}r^+_k}^2e^{2\delta_ms} 
e^{2\delta_{m+1}(s-\frac{R_k}{2})-2\delta_ms}\\
&\phantom{==}\leq e^{-\delta_mR_k} e^{2(\delta_{m+1}-\delta_m)s_0} \int_{[s_0-1, s_0]\times S^1}\abs{D^{\alpha}r^+}^2e^{2\delta_ms}
\leq   e^{-\delta_{m+1}R_k} e^{2(\delta_{m+1}-\delta_m)s_0}.
\end{split}
\end{equation*}
Since the same estimate holds  for the second integral, 
\begin{equation*}
\int_{[R_k-s_0, R_k-s_0+1]\times S^1}\abs{D^{\alpha}r^-(s-R_k,t-\vartheta_k)}^2e^{-2\delta_{m+1}\abs{s-\frac{R_k}{2}}}\leq e^{-\delta_{m+1}R_k} e^{2(\delta_{m+1}-\delta_m)s_0},
\end{equation*}
we obtain
\begin{equation}\label{gammaeq1}
\nr \wh{\oplus}_{a_k}(\gamma_+'r^+_k,\gamma'_- r^-_k)\nr^\ast_{3+m, -\delta_{m+1}}\leq C_me^{-\delta_{m+1}\frac{R_k}{2}}.
\end{equation}
Since the  map $\wh{\oplus}_{a_k}(\gamma_+'r^+_k,\gamma'_- r^-_k)$ belongs to $H^{3+m,-\delta_{m+1}}(Z^*_{a_k})$, it follows from  Lemma \ref{cr-mean}  that there exists a unique map  $f_k\in H^{3+(m+1),-\delta_{m+1}}(Z^*_{a_k})$  having mean-value  $[f_k]_{a_k}=0$ and satisfying 
\begin{equation}\label{eqf1}
\ov{\partial}_0f_k=\wh{\oplus}_{a_k}(\gamma_+'r^+_k, \gamma_-'r^-_k).
\end{equation}
Moreover, by \eqref{gammaeq1}, the following estimate holds, 
\begin{equation}\label{estfk}
\nr f_k\nr^*_{3+(m+1), -\delta_{m+1}}\leq C_me^{-\delta_{m+1}\frac{R_k}{2}}.
\end{equation}
Now, observe that both maps $f_k$ and $q_k-\xi_k-[q_k-\xi_k]_{a_k}$ have their averages equal to $0$ and satisfy equations  \eqref{lastminus} and \eqref{eqf1} .
Moreover,  $q_k-\xi_k-[q_k-\xi_k]_{a_k}=q_k-\xi_k-[q_k]_{a_k}\in H^{3+m,-\delta_m}(Z^*_{a_k})\subset H^{3+m,-\delta_{m+1}}(Z^*_{a_k})$ and $f_k\in H^{3+(m+1), -\delta_{m+1}}(Z^*_{a_k})\subset H^{3+m, -\delta_{m+1}}(Z^*_{a_k})$. Since  $\wh{\oplus}_{a_k}(\gamma_+r^+_k, \gamma_-r^-_k)\in H^{3+m,-\delta_{m+1}}(Z^*_{a_k})\subset H^{2+m,-\delta_{m+1}}(Z^*_{a_k})$, it follows from Lemma \ref{cr-mean}  applied to the Cauchy-Riemann operator 
$\ov{\partial }_0:H^{3+m, -\delta_{m+1}}(Z^*_{a_k})\to H^{2+m, -\delta_{m+1}}(Z^*_{a_k})$  that  $f_k=q_k-\xi_k-[q_k-\xi_k]_{a_k}$, as claimed.   In addition,  in view of the inequality \eqref{estfk}, the  following estimate holds,
\begin{equation}\label{poker102}
\nr q_k-\xi_k-[q_k-\xi_k]_{a_k}\nr^*_{3+(m+1), -\delta_{m+1}}\leq C_me^{-\delta_{m+1}\frac{R_k}{2}}\end{equation}
where the constant $C_m$ is independent of $k$.

We will also need estimates for  $\ominus_{a_k}(\gamma_+r^+_k, \gamma_-r_k^-)$. We first observe that  $\wh{\ominus}_{a_k}(\eta^+_k, \eta_k^-)=\wh{\ominus}_{a_k}(\gamma_+\eta^+_k, \gamma_-\eta_k^-)$ and $\ominus_{a_k}(r^+_k, r_k^-)=\ominus_{a_k}(\gamma_+r^+_k, \gamma_-r_k^-)$ so that  the equation  \eqref{again2} can be written as
\begin{equation}\label{last0}
\ov{\partial}_{0}\ominus_{a_k}(\gamma_+r^+_k, \gamma_-r^-_k)=\wh{\ominus}_{a_k}(\gamma_+\eta^+_k, \gamma_-\eta_k^- ).
\end{equation}
Then, abbreviating  
$$\zeta_k:=\ominus_{a_k}(\gamma_+r^+_k, \gamma_-r^-)$$
and using $\wh{\ominus}_{a_k}(\gamma_+\eta^+_k, \gamma_-\eta^-)\in H^{2+m,\delta_m}(C_a)$, it follows from \eqref{last0} and  Lemma \ref{cr-asym} that $\zeta_k$ is a unique solution in $ H^{3+m,\delta_m}_c(C_a)$  of the equation
$$\ov{\partial}_0\zeta_k=\ominus_{a_k}(\gamma_+\eta^+_k, \gamma_-\eta^+_k).$$
Moreover, in view of \eqref{newconv4}, its $H_c^{3+m,\delta_m}$-norm is estimated as 
\begin{equation}\label{newestcor2} 
\abs{\zeta_k}_{H_c^{3+m,\delta_m}}=\left[ \abs{\zeta_{k,\infty}}^2+\nr \wh{\zeta}_k\nr^2_{3+m,\delta_m}\right]^{1/2}\leq C_m\varepsilon_ke^{-\delta_m \frac{R_k}{2}},
\end{equation}
where  $\wh{\zeta}_k=\zeta_k-(1-2\beta_{a_k}) \zeta_{k, \infty}$ and  $\zeta_{k, \infty}=\lim_{s\to \infty}\zeta_k (s, t)=\avk (r^+_k, r^-_k)$. Hence
\begin{equation*}
\ov{\partial}_{0}[ \ominus_{a_k}(\gamma_+r^+_k, \gamma_-r^-_k)-\zeta_k]=0.
\end{equation*}

We denote  the restrictions  of $q_k$ and $\xi_k$ to  the finite cylinder $Z_{a_k}$ by $\ov{q}_k$ and $\ov{\xi}_k$ and observe that the restriction of $q_k-\xi_k$ to $Z_{a_k}$ is equal to $\ov{q}_k-\ov{\xi}_k$ and 
$[q_k-\ov{\xi}_k]_{a_k}=[q_k]_{a_k}$ since $[\ov{\xi}_k]_{a_k}=0$.
In view of the estimates
\eqref{newestcor1}
 and \eqref{poker102}, we have 
\begin{equation}\label{xihat}
\nr \ov{\xi}_k\nr_{3+m,-\delta_m}\leq C_m\varepsilon_ke^{-\delta_m\frac{R_k}{2}}
\end{equation}
and
\begin{equation}\label{ovqk}
\nr \ov{q}_k-\ov{\xi}_k-[\ov{q}_k-\ov{\xi}_k]_{a_k}\nr_{3+(m+1), -\delta_{m+1}}\leq C_me^{-\delta_{m+1}\frac{R_k}{2}}.
\end{equation}

Now,  using Theorem \ref{propn-1.27}   we find  a unique pair $(p^+_k, p^-_k)\in \wh{E}_m$ satisfying 
\begin{equation}\label{poker99}
\begin{aligned}
\oplus_{a_k}(p^+_k, p^-_k)&=\ov{\xi}_k\\
\ominus_{a_k}(p^+_k, p^-_k)&=\zeta_k
\end{aligned}
\end{equation}
and we claim that the sequence of maps $(p^+_k, p^-_k)$ converges to $(0,0)$ in $\wh{E}_m$.  In view of of the estimate for the total gluing map in Theorem \ref{boxdot-est1} it suffices to prove the convergence 
$$\abs{(\ov{\xi}_k,\zeta_k)}_{G^{a_k}_m}^2\to 0.$$
Indeed, using \eqref{newestcor2} and \eqref{xihat} and $ [\ov{\xi}_k]_{a_k}]=0$  and the estimate \begin{equation*}
\nr \zeta_{k,\infty}\nr^2_{3+m,-\delta_m}=\int_{[0,R_k]\times S^1}\abs{\zeta_{k,\infty}}^2e^{-\delta_m\abs{s-\frac{R_k}{2}} }\leq \frac{ \abs{\zeta_{k,\infty} }^2}{\delta_m},
\end{equation*}
we obtain
\begin{equation*}
\begin{split}
\abs{(\ov{\xi}_k,\zeta_k)}_{G^{a_k}_m}^2&=\abs{\zeta_{k,\infty}}^2+e^{\delta_m R_k}\left[\nr \ov{\xi}_k+\zeta_{k, \infty}\nr^2_{3+m,-\delta_m}+
\nr \wh{\zeta}_{k}\nr^2_{m+3,\delta_m}\right]\\
&\leq C_m e^{\delta_m R_k} \left[\abs{\zeta_{k,\infty} }^2+\nr \ov{\xi}_k\nr^2_{3+m,-\delta_m}+\nr \wh{\zeta}_{k}\nr^2_{3+m,-\delta_m}\right]\\
&\leq C_m \varepsilon_k^2,
\end{split}
\end{equation*}
as claimed.

From \eqref{poker99} and $\oplus_{a_k}(\gamma_+r^+_k, \gamma_-r^-_k)=\ov{q}_k$ and $\ominus_{a_k}(\gamma_+r^+_k, \gamma_-r^-_k)=\ominus_{a_k}(p_k^+, p_k^-)=\zeta_k$, we have the following equations,
\begin{equation*}
\begin{aligned}
\oplus_{a_k}(\gamma_+r^+_k-p_k^+, \gamma_-r^-_k-p_k^-)&=\ov{q}_k-\ov{\xi}_k\\
\ominus_{a_k}(\gamma_+r^+_k-p_k^+, \gamma_-r^-_k-p_k^-)&=0.
\end{aligned}
\end{equation*}
We claim that the sequence $(\gamma_+r^+_k-p_k^+, \gamma_-r^-_k-p_k^-)$ is bounded in $\wh{E}_m$. By  Theorem \ref{boxdot-est1} is suffices to show that  the $G^{a_k}_{m+1}$-norm of 
$(\ov{q}_k-\ov{\xi}_k, 0)$ of maps  on the right-hand side is bounded.
Using \eqref{poker102} and the fact that $[\ov{\xi}_k]_{a_k}=0$,  we find that 
\begin{equation}\label{poker104}
\begin{split}
&\abs{(\ov{q}_k-\ov{\xi}_k, 0)}^2_{G^{a_k}_{m+1}}\\
&\phantom{===}=\abs{[q_k-\ov{\xi}_k]_{a_k}}^2+e^{\delta_{m+1}R_k}\cdot \nr \ov{q}_k-\ov{\xi}_k-[q_k-\ov{\xi}_k]_{a_k}\nr^2_{3+(m+1), -\delta_{m+1}}\\
&\phantom{===}=\abs{[q_k]_{a_k}}^2+e^{\delta_{m+1}R_k}\cdot \nr q_k-\ov{\xi}_k-[q_k]_{a_k}\nr^2_{3+(m+1), -\delta_{m+1}}\\
&\phantom{===}\leq \abs{[q_k]_{a_k}}^2 +C^2_m
\end{split}
\end{equation}
and  it remains to estimate $\abs{[q_k]_{a_k}}$.  Recall that $[q_k]_{a_k}=\avk (r^+_k, r^-_k)$ since $q_k=\oplus_{a_k}(\gamma_+r^+_k, \gamma_-r^-_k)$. Using  the Sobolev embedding theorem on bounded domains and the bound  
$\abs{r^+_k}_{H^{3+m,\delta_m}}\leq 1$, we estimate
$$\abs{e^{\delta_m \cdot }\cdot r^+_k}_{C^0(\Sigma_k)}\leq C_m\abs{e^{\delta_m \cdot }\cdot r^+_k}_{H^{m+3}(\Sigma_k)}\leq C_m$$
where $\Sigma_k=[\frac{R_k}{2}-1, \frac{R_k}{2}+1]\times S^1$.
This  implies 
\begin{equation*}
\abs{[r^+_k]_{a_k}}\leq C_me^{-\delta_m R_k/2}
\end{equation*}
and, similarly,  $\abs{[r^-_k]_{a_k}}\leq C_me^{-\delta_m R_k/2}$. Consequently,  
\begin{equation*}\label{average1}
\abs{ [q_k]_{a_k}}=\abs{\avk (r^+_k, r^-_k)}=\frac{1}{2}\abs{[r^+_k]_{a_k}+[r^-_k]_{a_k}}\leq C_me^{-\delta_m \frac{R_k}{2}}.
\end{equation*}
This implies that $\abs{(\ov{q}_k-\ov{\xi}_k, 0)}^2_{G^{a_k}_{m+1}}$ are  bounded and, in view by Theorem \ref{boxdot-est1},  that 
\begin{equation*}
\abs{(\gamma_+r^+_k-p_k^+, \gamma_-r^-_k-p_k^+)}_{E_{m+1}}\leq C_m.
\end{equation*}
Now, using the compact embedding $\wh{E}_{m+1}\to \wh{E}_m$, we  may assume after taking a subsequence that 
\begin{equation*}
(\gamma_+r^+_k-p_k^+, \gamma_-r^-_k-p_k^+)\to (g^+, g^-)\quad \text{in $\wh{E}_m$}
\end{equation*}
for some pair $(g^+, g^-)\in \wh{E}_m$. We have already proved that $(p_k^+, p_k^-)\to (0,0)$ in $\wh{E}_m$. Hence 
\begin{equation*}
(\gamma_+r^+_k, \gamma_-r^-_k)\to (g^+, g^-)\quad \text{in $\wh{E}_m$}.
\end{equation*}
This proves the second convergence in \eqref{contwo1}.

Summing up, we have proved that sequences $(\alpha_+r^+_k, \alpha_-r^-_k)$ and 
$(\gamma_+r^+_k, \gamma_-r^-_k)$ posses subsequences  which converge to some elements $(f^+, f^-)$ and $(g^+, g^-)$, respectively, in $\wh{E}_m$. Since the sequence of asymptotic constants $(h^+_{k,\infty}, h^-_{k,\infty})$ converges to $(h^+_{k}, h^-_{k})$, it follows that 
$(h^+_k, h^-_k)$ converges  in $\wh{E}_m$ to the element $(h^+, h^-)=(h^+_{\infty}+f^++g^+, h^-_{\infty}, f^-+g^).$  We  also have that $f^\pm (0, 0)=(0, 0)$ and 
$f^\pm (\{0\}\times S^1)\subset  \{0\}\times \R$. The proof of the proposition is complete.
\end{proof}

\end{document}